\begin{document}

\frontmatter

\begin{titlepage}
\vspace*{\stretch{1}}
\begin{center}\bf
{\LARGE Rational Equivariant Spectra}\\
\vspace{1.2cm}
{\LARGE David James Barnes}\\
\vspace{1cm}
{\Large Submitted for the degree of PhD}\\
\bigskip
{\Large Department of Pure Mathematics}\\
\bigskip
{\large January 2008}
\end{center}
\vspace*{\stretch{2.5}}
\begin{center}
\textbf{\large University of Sheffield}
\end{center}
\end{titlepage}

\tableofcontents

\cleardoublepage
\chapter{Introduction}

\section{Rational Equivariant Cohomology Theories}

Cohomology theories provide information about the abstract nature of
topological spaces and allow us to distinguish between and work with these spaces.
To each space $X$, a cohomology theory $E^*$ gives a graded group $E^*(X)$.
For a map of spaces $f \co X \to Y$ we have a map of graded groups
$E^*(f) \co E^*(Y) \to E^*(X)$ and this structure satisfies many useful axioms.
Since spaces with a group action abound in all areas of mathematics,
an understanding of how these spaces differ and behave is of particular importance.
A $G$-equivariant cohomology theory is a special kind of cohomology theory
that is designed to be used on spaces with a $G$-action.

Particular examples include: equivariant $K$-theory \cite{segal68} (which is constructed in
the same way as $K$-theory but using $G$-vector bundles) and
equivariant cobordism \cite{tdieck72}.
The Borel construction takes any
cohomology theory $E^*$ and makes a $G$-equivariant cohomology theory from it:
$E_G^*(X) = E^*(EG_+ \smashprod_G X)$
($EG_+$ is the universal free $G$-space with an additional $G$-fixed point
adjoined). A rational $G$-equivariant cohomology theory
satisfies the additional condition that each
group $E^*(X)$ is a rational vector space. This extra assumption simplifies
the area so that it becomes amenable to study and calculations
(spectral sequences) become much easier once one works over a field.

The result below, which only applies to finite groups $G$,
follows from \cite[Appendix A]{gremay95}. Let
$g\qq W_G H \leftmod$ denote the category of graded left
modules over the rational group ring of $W_G H$, the Weyl group of
$H$ in $G$.
\begin{thm}
The category of rational $G$-equivariant cohomology theories
is equivalent to the category
$\prod_{(H) \leqslant G} g\qq W_G H \leftmod$, where the
product runs over the collection of conjugacy classes
of subgroups of $G$.
\end{thm}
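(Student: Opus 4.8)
The plan is to reduce the statement to a structural decomposition of the homotopy category of rational $G$-spectra and then to algebraicise each summand.

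\textbf{Representability and Burnside splitting.} First I would invoke Brown representability: every rational $G$-equivariant cohomology theory is represented by a rational $G$-spectrum, every rational $G$-spectrum gives such a theory, and natural transformations of theories are computed as maps in the homotopy category $\mathrm{Ho}(G\text{-spectra}_\qq)$. So it suffices to exhibit an equivalence $\mathrm{Ho}(G\text{-spectra}_\qq)\simeq\prod_{(H)\leqslant G} g\qq W_G H\leftmod$. The endomorphism ring of the sphere spectrum is the rationalised Burnside ring $A(G)_\qq$, which by the mark homomorphisms (fixed-point counts) is isomorphic to $\prod_{(H)\leqslant G}\qq$; its primitive idempotents $e_H$ act centrally on the whole homotopy category, and hence split it as the product $\prod_{(H)} e_H\,\mathrm{Ho}(G\text{-spectra}_\qq)$ indexed by conjugacy classes.

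\textbf{Reduction to free Weyl-group spectra.} Next I would identify the $e_H$-summand. Applying $e_H$ to a rational $G$-spectrum kills its geometric fixed points at every conjugacy class other than $(H)$. Combining the geometric $H$-fixed-point functor (which takes values in $W_G H$-spectra) with inflation along $N_G H\twoheadrightarrow W_G H$ and induction from $N_G H$ up to $G$, one produces a pair of adjoint functors between $W_G H$-spectra and $G$-spectra, and checks that after applying $e_H$ this restricts to an equivalence between $e_H\,\mathrm{Ho}(G\text{-spectra}_\qq)$ and the homotopy category of \emph{free} rational $W_G H$-spectra --- ``free'' because the $e_H$-condition forces all isotropy of the corresponding $W_G H$-spectrum to be trivial.

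\textbf{Algebraicisation.} Finally, for any finite group $K$ the homotopy category of free rational $K$-spectra is equivalent to the derived category $D(\qq[K])$: free $K$-spectra are the same as $EK_+$-modules, and rationally the relevant graded endomorphism ring collapses to $\qq[K]$ concentrated in degree $0$. Since $K$ is finite, Maschke's theorem makes $\qq[K]$ semisimple, so every object of $D(\qq[K])$ splits as the direct sum of its homology modules; hence $D(\qq[K])\simeq g\qq K\leftmod$. Taking $K=W_G H$ for each conjugacy class $(H)$ and assembling the equivalences yields the theorem.

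The main obstacle is the middle step: setting up the change-of-groups comparison precisely and verifying that, once the idempotent $e_H$ is applied, it becomes an equivalence onto the free rational $W_G H$-spectra. This requires care with the interaction of geometric fixed points, inflation and induction, and with the rationalisation, and it is where essentially all the content of \cite[Appendix A]{gremay95} is concentrated; the algebraicisation of free rational spectra in the last step is the other genuine input, but by now it is standard.
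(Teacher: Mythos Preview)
Your proposal is correct and follows the classical argument of \cite[Appendix A]{gremay95}, which is exactly what the paper \emph{cites} for this introductory statement. The paper does not give its own proof at this point; the theorem is stated as background.

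Where the paper does reprove the result (Chapter~\ref{chp:fingroups}, Corollary~\ref{cor:finiteclassification}), it takes a genuinely different route. The first step is the same: split via the Burnside idempotents $e_H$. But instead of your middle step---identifying the $e_H$-summand with free rational $W_GH$-spectra via geometric fixed points and change of groups---the paper replaces each piece $L_{E\langle H\rangle}G\mcal_\qq$ by modules over a ring spectrum $S_H$, then applies a Morita equivalence to modules over the endomorphism ringoid $\ecal_{top}^H$ of a generating set. It then transports this across Shipley's zig-zag $\h\qq\leftmod \simeq dg\qq\leftmod$ to a $dg\qq$-ringoid $\ecal_t^H$, computes $\h_*\ecal_t^H\cong\ecal_a^H$ explicitly (Proposition~\ref{prop:homotopycalc}), and invokes intrinsic formality (homology concentrated in degree zero, Theorem~\ref{thm:finiteintrinsicformality}) to compare $\ecal_t^H$ with the algebraic side. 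Your approach is shorter and works directly at the homotopy-category level; the paper's approach is heavier but yields a zig-zag of \emph{Quillen equivalences}, is designed to be compatible with monoidal structures, and provides a template applicable to groups (such as tori or $O(2)$) where the geometric-fixed-point reduction is unavailable or insufficient.
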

This shows that one can classify the collection of rational
equivariant cohomology theories in terms of an algebraic model
that is easy to understand.
Such classifications are performed in \cite{gre99} for the circle group
and \cite{gre98a} for $O(2)$ via
spectral sequence calculations.
These calculations become almost impossible
when the rank of the group is greater than 1
(that is, the group contains a torus). Thus
it would be advantageous to have an
alternative method to classify rational equivariant
cohomology theories.

\section{Classifying Cohomology Theories with Model Categories}

To study the category of cohomology theories
as a whole, one works with their
representing objects -- spectra. The basic idea is that
an element of $E^*(X)$, the $E^*$-cohomology of the space
$X$, is a homotopy class of maps from
$\Sigma^\infty X$ to an equivariant spectrum $E$. We say that $E$
represents $E^*$ and to understand
the cohomology theory $E^*$ we study the spectrum $E$.
Thus we want to understand the homotopy category of
equivariant spectra.
The categorical foundations of equivariant spectra
have developed substantially in the past twenty-five years.
The encyclopedic \cite{lms86}
constructed a good category of equivariant spectra and put the
subject area on firm footing. Unfortunately the smash product
constructed there is only a monoidal product up to homotopy.
Models for spectra with point-set level associative
smash products are constructed in
\cite{EKMM97}, \cite{hss00} and \cite{mmss01}.
This construction was soon adapted to an equivariant setting in
\cite{may96} and \cite{mm02}.
These newer categories come with model structures
that can help us to study their homotopy categories.

Very roughly, a model category has a collection of weak equivalences
that one formally makes into isomorphisms to create the homotopy category.
So, one inverts the collection of weak homotopy equivalences
in the category of spaces to obtain the category of $CW$-complexes
with morphisms given by homotopy classes of maps.
In particular, the homotopy category of
a model category of $G$-spectra
(there are several choices for such a
model category, all giving the same homotopy category)
is the category of $G$-equivariant cohomology theories.
A Quillen equivalence between model categories is an adjoint pair
of functors such that these functors create an
equivalence of the homotopy categories.
The precise definition of model categories and Quillen equivalences
give specific criteria to check to see that an
adjoint pair is a Quillen equivalence.
It is generally accepted that the notion of Quillen equivalence
is the correct way to say that two
categories have the same homotopy theory.

Since a Quillen equivalence is defined in terms of an adjoint pair,
one often has a Quillen equivalence $A \overrightarrow{\leftarrow} B$
and a Quillen equivalence $B \overleftarrow{\rightarrow} C$
(with left adjoints on top), but one cannot combine these adjoint pairs
to get an adjunction between $A$ and $C$.
Instead one says that $A$ and $C$ are Quillen equivalent by a zig-zag of
Quillen equivalences. In this example one only has two Quillen equivalences,
but in general one can have any finite number of Quillen equivalences in
a zig-zag.

Model categories can be used to encode more information into
the homotopy category. One may be interested in
model categories with a monoidal product,  such as the tensor product
of modules over a commutative ring or the smash product of spaces
or spectra. When this product is compatible with the model category
we have a monoidal model category. A monoidal Quillen equivalence
(roughly) is a Quillen equivalence which identifies the monoidal product in
the homotopy categories. Hence results about the monoidal product structure in one
homotopy category apply equally well to the other in a
monoidal Quillen equivalence.

Our algebraic model for $G$-spectra, $dg \mathcal{A}(G)$,
is now expected to have the structure of a model category.
Thus, one can now ask for
a zig-zag of Quillen equivalences between a model category
of $G$-equivariant spectra and a model category $dg \mathcal{A}(G)$.
This Quillen equivalence
tells us that if one is interested in homotopy level
information one only needs to work with $dg \mathcal{A}(G)$.
If we can make this zig-zag from a series of monoidal
Quillen functors then we can model the smash product
of spectra by a tensor product operation in $dg \mathcal{A}(G)$.
This allows us to model more complicated structures in spectra by analogous
structures in $dg \mathcal{A}(G)$, such as understanding the collection of
modules over a ring spectrum by considering modules over a ring object in
$dg \mathcal{A}(G)$.

\section{Existing Work}

The paper \cite{shi02} proves that the category of rational
$SO(2)$-spectra and $dg \mathcal{A}(SO(2))$ (as constructed in \cite{gre99})
are Quillen equivalent using information about Massey products for
differential graded rings with many objects. This paper does not
consider monoidal structures and relies on the ease of calculation
for the circle group.

The classification of rational $G$-spectra
for finite $G$ is extended to a Quillen equivalence in
\cite[Example 5.1.2]{ss03stabmodcat}. This paper provides a first blueprint
for our new method of classifying rational
$G$-cohomology theories.
Starting from $G \sscr_\qq$, a category of rational $G$-spectra,
one considers $\gcal_{top}$, a collection of generators
(for the homotopy category). By using the good
properties of $G \sscr_\qq$ we can construct
$\ecal_{top}(\sigma_1, \sigma_2)$, a symmetric spectrum of functions
for each pair of pair of generators $(\sigma_1,\sigma_2)$.
This collection has a composition rule,
$\ecal_{top}(\sigma_2, \sigma_3) \smashprod \ecal_{top}(\sigma_1, \sigma_2)
\to \ecal_{top}(\sigma_1, \sigma_3).$

Thus we have created an enriched category which we call
$\ecal_{top}$, it has object set $\gcal_{top}$ and the subscript
$top$ indicates that this category is of topological
origin. We can consider the category of enriched functors
from $\ecal_{top}$ to symmetric spectra, we call such
a functor a right $\ecal_{top}$-module
and denote the category of such by $\rightmod \ecal_{top}$.
If $M$ is one of these enriched functors then
for each pair $\sigma_1$, $\sigma_2$ in $\gcal_{top}$
we have symmetric spectra
$M(\sigma_1)$ and $M(\sigma_2)$ with an action map
$M(\sigma_2) \smashprod \ecal_{top}(\sigma_1, \sigma_2)
\to M(\sigma_1).$

This category of modules is referred to as the collection
of `topological Mackey functors' in \cite{ss03stabmodcat}.
The categories $G \sscr_\qq$ and $\rightmod \ecal_{top}$
are Quillen equivalent by \cite[Theorem 3.3.3]{ss03stabmodcat}.
Since $G$ is finite and we are working
rationally, the homotopy groups of $\ecal_{top}(\sigma_1, \sigma_2)$
are concentrated in degree zero where they take value
$\underline{A}(\sigma_1, \sigma_2)$,
a $\qq$-module. Hence $\ecal_{top}(\sigma_1, \sigma_2)$
it is weakly equivalent to an Eilenberg-Mac Lane spectrum
$\h \underline{A}(\sigma_1, \sigma_2)$.

From the collection of spectra $\h \underline{A}(\sigma_1, \sigma_2)$,
we construct a category $\h \underline{A}$, which is enriched
over symmetric spectra and one replaces $\rightmod \ecal_{top}$
by the Quillen equivalent category $\rightmod \h \underline{A}$.
The collection $\underline{A}(\sigma_1 , \sigma_2)$
for $\sigma_1, \sigma_2 \in \gcal_{top}$
can be thought of as a category enriched over $dg \qq \leftmod$
and thus we have a model category $\rightmod \underline{A}$.
The category of rational Mackey functors is the
collection of additive functors $\pi_0 \ecal_{top} \to \qq$.
Since $\underline{A}$ is equal to $\pi_0 \ecal_{top}$,
$\rightmod \underline{A}$ is the category of rational differential
graded Mackey functors.
There is a zig-zag of Quillen equivalences
between $\rightmod \h \underline{A}$
and $\rightmod \underline{A}$.
Thus rational $G$-spectra have been classified
in terms of an algebraic category.
Since these categories are rational
the homotopy category of $\rightmod \underline{A}$
is equivalent to the category of graded rational Mackey functors,
which is equivalent to
$\prod_{(H) \leqslant G} g\qq W_G H \leftmod$.
Thus this paper recovers
the results of \cite[Appendix A]{gremay95}.
This classification
does not consider monoidal structures and requires the assumption that
the homotopy groups of $\ecal_{top}(\sigma_1, \sigma_2)$
are concentrated in degree zero.

The preprint \cite{greshi} is intended to be a
combination of \cite[Example 5.1.2]{ss03stabmodcat} and the paper
\cite{shiHZ}, it will classify rational
torus equivariant spectra in terms of an algebraic category.
Perhaps more importantly it provides a basis for the
classification process in general
as it doesn't have the strict requirements of the first two methods.
This preprint is currently under substantial review and expansion.
We have worked from an enhanced version,
but our references agree with the publicly available version.
As we will comment upon later
(Section \ref{subsec:monoidality}), the method of \cite{greshi}
is much more compatible with the monoidal structures.
We now outline the method of this paper.

Once again one begins with $G \sscr_\qq$
and uses the Quillen equivalence between
$G \sscr_\qq$ and $\rightmod \ecal_{top}$.
One can now apply the results of
\cite{shiHZ} to construct a category $\ecal_t$ from
$\ecal_{top}$. This new category will be
enriched over rational chain complexes with its set
of objects given by $\gcal_{top}$. The $t$ indicates that we have come from
the topological side but are now working in an algebraic setting.
We can consider enriched functors from $\ecal_t$ to
rational chain complexes, this category will be
denoted $\rightmod \ecal_t$.

Now we begin our work from the other end, assuming
that we have a suitable candidate, $dg \mathcal{A}(G)$,
for the algebraic model.
We choose a generating set $\gcal_a$ for
$dg \mathcal{A}(G)$, we require that this set
has a specified isomorphism to $\gcal_{top}$.
Analogously to the topological setting,
the set $\gcal_a$ is the object set for a category $\ecal_a$,
which is enriched over rational chain complexes.
We can then replace $dg \mathcal{A}(G)$
by $\rightmod \ecal_a$.
The $a$ indicates we have come from the algebraic model.
The notation $\ecal_{top}$, $\ecal_t$ and $\ecal_a$
is taken from \cite{greshi}, which uses $top$, $t$ and $a$
to indicate whether a particular object is topological,
algebraic but from the topological side or purely algebraic.

So far this process has been formal,
now one must use some specific information about
$\ecal_t$ and $\ecal_a$ to achieve a comparison between them.
The comparison we will use is the notion of
a quasi-isomorphism of categories
enriched over rational chain complexes.
Given two such categories $\mathcal{C}$ and $\mathcal{D}$
with isomorphic object sets,
an enriched functor $F \co \mathcal{C} \to \mathcal{D}$
is a quasi-isomorphism if each
$F(\sigma_1, \sigma_2) \co
\mathcal{C}(\sigma_1, \sigma_2) \to \mathcal{D}(F\sigma_1, F\sigma_2)$,
is a homology isomorphism
($F(\sigma_1, \sigma_2)$ is a map in the
category of rational chain complexes).

One shows by calculation that
$\ecal_t(\sigma_1, \sigma_2)$ and $\ecal_a(\sigma_1, \sigma_2)$
(which are rational chain complexes)
have the same homology for each pair $(\sigma_1, \sigma_2)$.
Then one proves that this homology is intrinsically formal,
that is, any two such enriched categories
with this homology must be quasi-isomorphic.
Now one returns to formal considerations and shows
that since $\ecal_t$ and $\ecal_a$ are quasi-isomorphic,
there is a zig-zag of Quillen equivalences between
$\rightmod \ecal_t$ and $\rightmod \ecal_a$.
Putting all of this together gives the desired result,
a zig-zag of Quillen equivalences between $G$-spectra
and our algebraic model.

\section{Contents of this Thesis}
\subsection*{The Splitting}

We will study the case of finite groups and the case $G=O(2)$.
A new ingredient to the method outlined above is the notion
of splitting the category of rational $G$-spectra $G \sscr_\qq$.
The language of Bousfield localisations is used here,
it is a method of altering the homotopy category of
a model category.
Given a model category $M$, the homotopy category $\ho M$
is formed by inverting the weak equivalences. Hence
if we change the weak equivalences without changing the
objects and morphisms of $M$ we obtain a new
model category $M'$ and a new homotopy category $\ho M'$.
A spectrum $E$ defines a homology theory $E_*$,
from this we have the notion of
$E$-equivalences, those maps $f$ such that
$E_*f$ is an isomorphism. The Bousfield localisations
that we will use are called $E$-localisations,
where the new weak equivalences are the $E$-equivalences.
We write the $E$-localisation of $G\sscr_\qq$ as
$L_E G \sscr_\qq$.

\begin{thm}[\ref{thm:generalsplitting}]
Let $\{ E_i \}_{i \in I}$ be a finite collection $G$-spectra. If
$E_i \smashprod E_j$ is rationally acyclic for $i \neq j$
and $\bigvee_{i \in I} E_i$ is rationally equivalent to $\sphspec$
then we have a monoidal Quillen equivalence
$$\Delta : G \sscr_\qq
\overrightarrow{\longleftarrow}
\prod_{i \in I} L_{E_i} G \sscr_\qq : \prod .$$
\end{thm}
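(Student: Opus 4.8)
The plan is to verify in turn that $\Delta\dashv\prod$ is a Quillen adjunction, that it is a Quillen equivalence, and that $\Delta$ is strong monoidal, so that the equivalence is monoidal. For the adjunction: for each $i$ the identity functor $G\sscr_\qq\to L_{E_i}G\sscr_\qq$ is left Quillen, since a left Bousfield localisation keeps the cofibrations and only enlarges the class of weak equivalences; hence $\Delta$, landing in the product model category $\prod_{i\in I}L_{E_i}G\sscr_\qq$ (levelwise weak equivalences, cofibrations and fibrations), is left Quillen, with right adjoint the levelwise categorical product $\prod$ (which, since $I$ is finite, agrees up to weak equivalence with the finite wedge).

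For the Quillen equivalence I would check that the derived unit and counit are weak equivalences, and reduce both to the following. \emph{Lemma A: if $Y$ is $E_i$-local and $j\neq i$, then $E_j\smashprod Y$ is rationally acyclic.} Granting Lemma A: a fibrant object of the product is a tuple $(Y_i)$ with each $Y_i$ fibrant and $E_i$-local, and its derived counit is, up to cofibrant replacement, the family of projections $\prod_j Y_j\to Y_i$; since $I$ is finite, $\prod_j Y_j\simeq\bigvee_j Y_j$ and $E_i\smashprod\bigvee_j Y_j\simeq\bigvee_j(E_i\smashprod Y_j)$, and Lemma A makes the summands with $j\neq i$ rationally acyclic, so each projection is an $E_i$-equivalence. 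The derived unit at a cofibrant $X$ is $X\to\prod_i L_{E_i}X$; smashing with $E_k$ yields a rational equivalence in the $k$-th factor (since $X\to L_{E_k}X$ is an $E_k$-equivalence) and, by Lemma A, a map of rationally acyclic spectra in every other factor, so $E_k\smashprod(-)$ of the derived unit is a rational equivalence for every $k$. Since $\bigvee_i E_i\simeq_\qq\sphspec$, a map that becomes a rational equivalence after smashing with each $E_i$ is itself a rational equivalence, because its cofibre $C$ satisfies $C\simeq_\qq C\smashprod\sphspec\simeq_\qq\bigvee_i(C\smashprod E_i)\simeq_\qq *$; so the derived unit is a rational equivalence.

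To prove Lemma A, first note the hypotheses make each $E_i$ a rational idempotent: smashing a rational equivalence $\sphspec\simeq_\qq\bigvee_j E_j$ with $E_i$ and discarding the acyclic terms gives $E_i\smashprod E_i\simeq_\qq E_i$, and the composite of that equivalence with the projection to $E_i$ is a unit $\sphspec\to E_i$ for which $E_i\smashprod(\sphspec\to E_i)\co E_i\to E_i\smashprod E_i$ is again a rational equivalence. By the standard theory of such idempotents, $E_i$ is (equivalent to) a commutative ring spectrum, so $X\smashprod E_i$ is an $E_i$-module for every $X$; and I claim it is $E_i$-local. Indeed, for an $E_i$-acyclic $W$ the derived free--forgetful adjunction between $G\sscr_\qq$ and $E_i$-modules gives $[W,\,X\smashprod E_i]\cong[W\smashprod E_i,\,X\smashprod E_i]$ (the first hom computed in $G\sscr_\qq$, the second in $E_i$-modules), and the right side vanishes because $W\smashprod E_i\simeq_\qq *$. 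On the other hand $X\to X\smashprod E_i$ is always an $E_i$-equivalence, since smashing it with $E_i$ is the rational equivalence just obtained, smashed with $X$. Hence for $E_i$-local $Y$ the map $Y\to Y\smashprod E_i$ is an $E_i$-equivalence between $E_i$-local objects, so a rational equivalence, whence $E_j\smashprod Y\simeq_\qq E_j\smashprod Y\smashprod E_i\simeq_\qq(E_j\smashprod E_i)\smashprod Y\simeq_\qq *$ for $j\neq i$.

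Finally, the smash product on $\prod_i L_{E_i}G\sscr_\qq$ is formed factorwise and is compatible with the model structure because, by Lemma A, each $L_{E_i}$ is a smashing localisation (its local objects are closed under $\smashprod$); the functor $\Delta$ preserves $\smashprod$ and the unit $\sphspec$, which is cofibrant, so it is a strong symmetric monoidal left Quillen functor, and a strong monoidal left Quillen equivalence that sends the cofibrant unit to the unit is a monoidal Quillen equivalence. I expect the main obstacle to be Lemma A --- equivalently, the assertion that $X\smashprod E_i$ is already $E_i$-local; the place where genuine input is needed is in controlling the mapping spectra into $X\smashprod E_i$, which is why the argument must pass through the category of modules over the idempotent ring $E_i$, while everything else is formal bookkeeping with the product model structure and with the detection criterion supplied by $\bigvee_i E_i\simeq_\qq\sphspec$.
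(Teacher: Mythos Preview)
Your overall strategy is sound and genuinely different from the paper's. Both proofs reduce the Quillen equivalence to the same key fact---your Lemma~A, that $E_j\smashprod Y$ is rationally acyclic whenever $Y$ is $E_i$-local and $j\neq i$---but the paper proves it by passing to idempotents of the rational Burnside ring: it shows each $E_i$ is rationally equivalent to $e_i\sphspec$ for an idempotent $e_i\in A(G)\otimes\qq$, translates $E_i$-equivalences into $e_i\pi_*^H$-isomorphisms, and then uses a $\lim^1$/Mittag--Leffler argument together with the orthogonality $(e_j)_*(e_i)^*=0$ on homotopy to force the vanishing. Your route through smashing idempotents is cleaner and uses nothing equivariant beyond stability; the paper's route, while longer, sets up the identification $E_i\simeq e_i\sphspec$ that is re-used immediately afterwards (Corollary~\ref{cor:possiblesplittings} and Section~\ref{sec:idemsplit}).

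There is, however, one step that does not go through as written. You assert that ``by the standard theory of such idempotents, $E_i$ is (equivalent to) a commutative ring spectrum'' and then invoke a derived free--forgetful adjunction between $G\sscr_\qq$ and $E_i$-modules. But the unit $\eta\co\sphspec\to E_i$ and the multiplication $\mu=(E_i\smashprod\eta)^{-1}$ you construct live only in $\ho(G\sscr_\qq)$; you have no strict ring, and rigidifying via $L_{E_i}\sphspec$ (as in Theorem~\ref{thm:algebralocalise}) already presupposes that $E_i$ is $E_i$-local, which is what you are proving. The repair is to argue directly in the homotopy category: for $E_i$-acyclic $W$ and any $f\co W\to X\smashprod E_i$ one has
\[
f=(X\smashprod\mu)\circ(f\smashprod E_i)\circ(W\smashprod\eta),
\]
which factors $f$ through $W\smashprod E_i\simeq_\qq*$, so $f\simeq*$ and $X\smashprod E_i$ is $E_i$-local by Lemma~\ref{lem:localcond}. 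With this correction your proof of Lemma~A, and hence the whole argument, is valid.
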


Now we assume that for each $i$
we have a model category $dg \mathcal{A}(G)_i$
which is supposed to model $L_{E_i} G \sscr_\qq$.
Fix some $E_i$ and apply the method outlined above to
get to the stage where we must compare $\ecal_t(i)$
and $\ecal_a(i)$. Here is where we see the advantage of
the splitting, it simplifies the categories
$\ecal_t(i)$ and $\ecal_a(i)$ and thus makes it easier to
prove they are equivalent.
We describe how this works in the finite case and the
$O(2)$ case.

\subsection*{The Finite Case.}
We apply our splitting theorem and the method of
\cite{greshi} to reprove the result of
\cite[Example 5.1.2]{ss03stabmodcat}.
This will be a good introduction to the
general method and once the monoidality issue
has been resolved it will be a monoidal classification.
It should also be helpful in seeing how to
proceed in a classification of the category
of dihedral $O(2)$-spectra,
which we shall define later.
Recall that $A(G) \otimes \qq \cong \prod_{(H)} \qq$
by tom-Dieck's isomorphism,
where the product runs over the set of conjugacy classes
of subgroups of $G$. Thus for each conjugacy class
of subgroups $(H)$ we have an idempotent $e_H$
and we set $E_H=e_H \sphspec$.
Our splitting theorem then states that
the category of $G$-spectra is monoidally equivalent to
$\prod_{(H)} L_{E_H} G \sscr_\qq$.
So we work through the method outlined above
for each $L_{E_H} G \sscr_\qq$ individually.
In fact, we can use the same
argument for each conjugacy class $(H)$.
Each $L_{E_H} G \sscr_\qq$
is generated by a single element,
$G/H_+ \smashprod e_H \sphspec$. Thus
the set of objects for $\ecal_{top}(H)$
will be the collection of smash products
of $G/H_+ \smashprod e_H \sphspec$.

Following the previous work on the finite case
an obvious candidate for the
algebraic model of the $(H)$-part is
$dg \qq W_G H \leftmod$, chain complexes
of modules over the rational group ring
of the Weyl group of $H$.
Note that the homotopy category of
$dg \qq W_G H \leftmod$
is the category $g \qq W_G H \leftmod$.
Calculation shows that $\ecal_a(H)$
has a trivial differential and hence is
equal to its homology, furthermore
the homology of $\ecal_t(H)$
is isomorphic to $\ecal_a(H)$
and is concentrated in degree zero.
The intrinsic formality we now use is the fact
that a chain complex with homology concentrated
in degree zero is equivalent to its homology.
Thus by the formal method outlined above, we
have the following conclusion.

\begin{thm}[\ref{cor:finiteclassification}]
There is a zig-zag of Quillen equivalences between
$L_{E_H} G \sscr_\qq$ and $dg\qq W_G H \leftmod$.
Hence there is a zig-zag of
Quillen equivalences between
$G \sscr_\qq$ and
$dg \mathcal{A}(G) := \prod_{(H) \leqslant G} dg\qq W_G H \leftmod$.
\end{thm}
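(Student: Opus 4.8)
The plan is to prove the first statement one conjugacy class at a time and then assemble, so fix a conjugacy class $(H) \leqslant G$. The first task is to feed $L_{E_H} G \sscr_\qq$ into the machinery of \cite[Theorem 3.3.3]{ss03stabmodcat}. It is a stable model category and, as recalled above, it has a single compact generator $\sigma := G/H_+ \smashprod e_H \sphspec$; taking $\gcal_{top}$ to be the set of (smash powers of) $\sigma$ and forming for each pair $(\sigma_1,\sigma_2)$ the symmetric spectrum $\ecal_{top}(H)(\sigma_1,\sigma_2)$ of maps together with its composition, that theorem gives a zig-zag of Quillen equivalences between $L_{E_H} G \sscr_\qq$ and $\rightmod \ecal_{top}(H)$. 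Applying the chain-level rigidification of $\h\qq$-algebras of \cite{shiHZ}, in the many-object form used in \cite{greshi}, to the spectrally enriched category $\ecal_{top}(H)$ produces a category $\ecal_t(H)$ enriched over rational chain complexes, with the same object set $\gcal_{top}$, together with a further zig-zag of Quillen equivalences between $\rightmod \ecal_{top}(H)$ and $\rightmod \ecal_t(H)$. Both of these are the formal steps of the method recalled in the introduction, applied to $L_{E_H} G \sscr_\qq$.

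On the algebraic side I would take $dg\qq W_G H \leftmod$ with a generating set $\gcal_a$ carrying a specified bijection onto $\gcal_{top}$ --- concretely the corresponding tensor powers of the free module $\qq W_G H$ --- form the endomorphism category $\ecal_a(H)$ enriched over rational chain complexes, and invoke the algebraic analogue of \cite[Theorem 3.3.3]{ss03stabmodcat} to obtain a zig-zag of Quillen equivalences between $dg\qq W_G H \leftmod$ and $\rightmod \ecal_a(H)$. Now comes the only genuinely non-formal input. A direct computation of the Hom-complexes between tensor powers of $\qq W_G H$ shows that $\ecal_a(H)$ has vanishing differential, hence coincides with its homology, concentrated in degree $0$. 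A computation of the homotopy groups of the mapping spectra $\ecal_{top}(H)(\sigma_1,\sigma_2)$ --- using crucially that $G$ is finite and that we work rationally --- shows that these are concentrated in degree $0$, where they are $\qq$-modules isomorphic to $\ecal_a(H)(\sigma_1,\sigma_2)$; hence $H_* \ecal_t(H)$ is concentrated in degree $0$ and, as a category enriched over graded $\qq$-modules, is isomorphic to $\ecal_a(H)$.

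It remains to compare $\ecal_t(H)$ with $\ecal_a(H)$ and to carry the comparison over to module categories. This is the intrinsic-formality step: a category enriched over rational chain complexes whose mapping complexes have homology concentrated in degree $0$ is quasi-isomorphic to that homology. Concretely, good truncation gives a subcategory $\tau_{\geq 0}\ecal_t(H)$, enriched over rational chain complexes and with the same object set, for which the inclusion $\tau_{\geq 0}\ecal_t(H) \to \ecal_t(H)$ and the projection $\tau_{\geq 0}\ecal_t(H) \to H_0\ecal_t(H) = \ecal_a(H)$ are both quasi-isomorphisms, so $\ecal_t(H)$ and $\ecal_a(H)$ are connected by a zig-zag of quasi-isomorphisms. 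Since a quasi-isomorphism of categories enriched over rational chain complexes with a fixed bijection of object sets induces a Quillen equivalence between the associated right-module categories --- restriction and extension of scalars along it form a Quillen pair whose derived unit and counit are equivalences, all rational chain complexes being cofibrant and fibrant --- we get a zig-zag of Quillen equivalences between $\rightmod \ecal_t(H)$ and $\rightmod \ecal_a(H)$. Concatenating the four zig-zags proves the first assertion.

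For the second assertion, the same chain of functors applies verbatim for every $(H)$, so taking the product over the conjugacy classes of subgroups of $G$ yields a zig-zag of Quillen equivalences between $\prod_{(H)} L_{E_H} G \sscr_\qq$ and $\prod_{(H) \leqslant G} dg\qq W_G H \leftmod = dg\mathcal{A}(G)$. Pre-composing with the monoidal Quillen equivalence $G \sscr_\qq \to \prod_{(H)} L_{E_H} G \sscr_\qq$ of Theorem~\ref{thm:generalsplitting} --- legitimate because tom Dieck's isomorphism $A(G) \otimes \qq \cong \prod_{(H)} \qq$ supplies orthogonal idempotents $e_H$ with $\sum_H e_H = 1$, so the spectra $E_H = e_H \sphspec$ satisfy the hypotheses of that theorem --- then gives the desired zig-zag between $G \sscr_\qq$ and $dg\mathcal{A}(G)$. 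I expect the main obstacle, and indeed essentially all the content, to be the pair of calculations in the second paragraph: identifying the homotopy groups of the mapping spectra $\ecal_{top}(H)(\sigma_1,\sigma_2)$ --- equivalently the relevant $L_{E_H}$-local equivariant stable homotopy groups of the (smash powers of the) generator $G/H_+ \smashprod e_H \sphspec$ --- and the homology of $\ecal_a(H)$, and checking that they agree and are concentrated in degree $0$; once that is in place the rest is the formal scaffolding above.
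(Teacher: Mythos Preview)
Your proposal is correct and follows essentially the same route as the paper: split, pass to modules over the endomorphism ringoid $\ecal_{top}(H)$, apply the \cite{shiHZ} machinery to get $\ecal_t(H)$, compute that both $\ecal_t(H)$ and $\ecal_a(H)$ have homology concentrated in degree zero and agree there, and invoke intrinsic formality via the connective cover (your $\tau_{\geq 0}$ is the paper's $C_0$). The only implementation detail you elide is that the paper first replaces $L_{E_H} G\sscr_\qq$ by modules over a commutative ring spectrum $S_H$ so that every object is fibrant before forming $\ecal_{top}(H)$ (needed for the Morita step to have the correct homotopy type; see Remark~\ref{rmk:whyfibrant}); also, not all objects of $dg\,\qq\leftmod$ are cofibrant in the projective structure, though this does not affect the restriction/extension argument.
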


The difference between the above method
and that of \cite{ss03stabmodcat} is the ordering of the work.
In our method, we split the category, move to Mackey functors,
translate to algebra and then apply a formality result,
\cite{ss03stabmodcat} goes to Mackey functors first,
applies formality, moves to algebra and then splits
the category.
Our formality is that $\ecal_t$ has homology concentrated
in degree zero, \cite{ss03stabmodcat} uses the fact that
$\ecal_{top}$ has homotopy concentrated in degree zero
to replace $\ecal_{top}$ by $\h \underline{A}$.
This is the same information, just in different contexts.

The result above could have been proved
without using the splitting theorem.
In which case we would have to use a much
larger set of generators:
the collection of all
smash products of terms $G/H_+ \smashprod e_H \sphspec$
as $H$ runs over the conjugacy classes of subgroups of $G$.
Many of these terms would have been rationally contractible,
since $G/H_+ \smashprod e_H \sphspec \smashprod
G/K_+ \smashprod e_K \sphspec$ is rationally contractible whenever
$H$ and $K$ are not conjugate. The splitting removes
these extra terms,
making the result easier to prove and understand.
Thus one of the general advantages of the splitting result
is that one can reduce the size of the object set
of $\ecal_{top}$ (and hence of $\ecal_t$ and $\ecal_a$).

\subsection*{The $O(2)$ Case.}
We begin in the same place as
\cite{gre98a},  we fix $W$ as the group
or order two and consider the cofibre sequence
$EW_+ \to S^0 \to E \widetilde{W}$. We show that
$EW_+$ and $E \widetilde{W}$ satisfy the assumptions
of the splitting theorem. We define
$\cscr \sscr_\qq$, the model category of cyclic spectra,
as the $E W_+$-localisation
of $O(2) \sscr_\qq$.
The model category of
dihedral spectra, $\dscr \sscr_\qq$, is defined
as the $E \widetilde{W}$-localisation
of $O(2) \sscr_\qq$.
Define $\cscr$, the set of cyclic subgroups of $O(2)$, to be
the closed subgroups of $O(2)$ which are contained in $SO(2)$.
All other closed subgroups contain a reflection
and we call this set the collection
of dihedral subgroups, $\dscr$.
The reason behind the name cyclic spectra is that
the homotopy category of $\cscr \sscr_\qq$ is
the homotopy category of $O(2)$-spectra
made from $O(2)$-cells of the form
$O(2)/H_+ \smashprod S^n$ with $H$ a cyclic
subgroup of $O(2)$ (so $H \leqslant SO(2)$).
Hence we call the remainder, $\dscr \sscr_\qq$,
the category of dihedral spectra.

\begin{thm}[\ref{thm:O2splitting}]
There is a strong monoidal Quillen equivalence
$$\Delta : O(2) \mcal _\qq \overrightarrow{\longleftarrow}
\cscr \sscr_\qq \times \dscr \sscr_\qq : \prod.$$
In particular, we have the following natural isomorphism for any $G$-spectra $X$ and $Y$
$$[X,Y]^{O(2)}_\qq \cong
[X \smashprod E W_+, Y \smashprod E W_+]^{O(2)}_\qq
\oplus
[X \smashprod E \widetilde{W}, Y \smashprod E \widetilde{W}]^{O(2)}_\qq.$$
\end{thm}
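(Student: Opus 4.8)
The plan is to obtain Theorem~\ref{thm:O2splitting} as the instance of the general splitting theorem, Theorem~\ref{thm:generalsplitting}, for the two-element family $\{E_1,E_2\}=\{EW_+,E\widetilde{W}\}$, for which $L_{E_1}O(2)\sscr_\qq=\cscr\sscr_\qq$ and $L_{E_2}O(2)\sscr_\qq=\dscr\sscr_\qq$ by definition. So the work is to verify the two hypotheses of Theorem~\ref{thm:generalsplitting}: that $EW_+\smashprod E\widetilde{W}$ is rationally acyclic, and that $EW_+\vee E\widetilde{W}$ is rationally equivalent to $\sphspec$. The monoidal Quillen equivalence then follows, and it is automatically strong monoidal: a left Bousfield localisation leaves the underlying smash product unchanged, so the left adjoint $\Delta$ is the diagonal $X\mapsto(X,X)$ and is strict monoidal. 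The final displayed isomorphism is then a reformulation of the induced equivalence of homotopy categories, carried out in the last paragraph.

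For the first hypothesis I would smash the defining cofibre sequence $EW_+\to S^0\to E\widetilde{W}$ with $EW_+$, obtaining a cofibre sequence $EW_+\smashprod EW_+\to EW_+\to EW_+\smashprod E\widetilde{W}$. The first map is an equivalence, since $EW\times EW$ is again a free contractible $W$-space and hence $W$-equivalent to $EW$, so the projection $EW_+\smashprod EW_+\to EW_+$ (equivalently, the diagonal) is an equivalence. Therefore $EW_+\smashprod E\widetilde{W}$ is contractible, which is more than enough. The same bookkeeping gives the idempotences $EW_+\smashprod EW_+\simeq EW_+$ and $E\widetilde{W}\smashprod E\widetilde{W}\simeq E\widetilde{W}$, which will be convenient later; but note that these only say $EW_+$ and $E\widetilde{W}$ behave like orthogonal idempotents in the homotopy category, not that they sum to $\sphspec$.

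The second hypothesis is the substantial one, and I expect it to be the main obstacle. It is equivalent to showing that the connecting map $\delta\co E\widetilde{W}\to\Sigma EW_+$ of the cofibre sequence becomes trivial after rationalisation, for then the rationalised triangle splits and yields $\sphspec\simeq_\qq EW_+\vee E\widetilde{W}$; since the integral isotropy-separation sequence does not split (already for finite groups, by the Segal conjecture), this genuinely uses rationalisation and cannot be deduced from the smash relations above. The route I would take is through the idempotents of the rational Burnside ring $A(O(2))\otimes\qq\cong\pi_0^{O(2)}(\sphspec)_\qq$ computed by tom Dieck: the partition of the (suitably topologised) set of conjugacy classes of closed subgroups with finite Weyl group into those contained in $SO(2)$ and those containing a reflection is a partition into two open and closed pieces, so it determines an idempotent $e\in A(O(2))\otimes\qq$; since idempotents split in the stable homotopy category one gets $\sphspec_\qq\simeq e\sphspec_\qq\vee(1-e)\sphspec_\qq$, and it remains to identify $e\sphspec_\qq\simeq EW_+$ and $(1-e)\sphspec_\qq\simeq E\widetilde{W}$, by checking that the natural maps $EW_+\to e\sphspec_\qq$ and $E\widetilde{W}\to(1-e)\sphspec_\qq$ induce isomorphisms on $H$-geometric fixed points for every closed subgroup $H$. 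This is precisely the kind of analysis performed for rational $O(2)$-spectra in \cite{gre98a}, from which it can be quoted.

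Granting both hypotheses, Theorem~\ref{thm:generalsplitting} supplies the strong monoidal Quillen equivalence of the statement. For the natural isomorphism, the induced equivalence of homotopy categories gives $[X,Y]^{O(2)}_\qq\cong[\Delta X,\Delta Y]$, and since morphisms in a product category are the direct sum of the componentwise morphisms this is $[X,Y]_{\cscr\sscr_\qq}\oplus[X,Y]_{\dscr\sscr_\qq}$. Finally $[X,Y]_{\cscr\sscr_\qq}=[X,L_{EW_+}Y]^{O(2)}_\qq$, and $L_{EW_+}Y\simeq_\qq eY\simeq_\qq Y\smashprod EW_+$ is already $EW_+$-local with $EW_+$ acting on it as the identity, so $[X,L_{EW_+}Y]^{O(2)}_\qq=[X\smashprod EW_+,Y\smashprod EW_+]^{O(2)}_\qq$, and likewise for the $E\widetilde{W}$-summand; equivalently, one may apply $[-,-]^{O(2)}_\qq$ to $\sphspec_\qq\simeq EW_+\vee E\widetilde{W}$ in both variables and use $EW_+\smashprod E\widetilde{W}\simeq\ast$ to kill the two cross terms. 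This gives the claimed formula.
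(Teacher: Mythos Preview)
Your proposal is correct and follows essentially the same route as the paper. The paper packages the argument into the intermediate Theorem~\ref{thm:familysplitting} for an arbitrary \emph{idempotent family} $\fscr$, and then applies it with $\fscr=\cscr$ (the family of cyclic subgroups, shown to be idempotent via Lemma~\ref{lem:Geidemfamily}); but the proof of Theorem~\ref{thm:familysplitting} is exactly your plan: verify the two hypotheses of Theorem~\ref{thm:generalsplitting} via the orthogonality Lemma~\ref{lem:familysmash} and the identification $E\fscr_+\simeq_\qq e_\fscr\sphspec$ of Lemma~\ref{lem:familiestoidempotents}, the latter checked on geometric fixed points just as you suggest. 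The only cosmetic differences are that the paper proves $E\fscr_+\smashprod E\widetilde{\fscr}\simeq *$ by geometric fixed points rather than your cofibre-sequence argument, and obtains the displayed isomorphism on homotopy classes by quoting Theorem~\ref{thm:familylocalisations} rather than unwinding the localisation by hand.
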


We expect, by looking at the calculations of \cite{gre98a},
that these two parts behave quite differently and will be
classified by quite different methods. The splitting
theorem allows us to deal with each part in turn.
We expect the dihedral part, $\dscr \sscr_\qq$, to behave somewhat
like the finite case, though we do not make this
precise or study the dihedral part any further.
We have concentrated on the cyclic part,  $\cscr \sscr_\qq$.
We could try to use the general method to classify  $\cscr \sscr_\qq$,
but we would still have to find some intrinsic formality argument
in order to understand $\ecal_t(\cscr)$ in terms of some algebraic model.
Instead we make precise the relation between
$\cscr \sscr_\qq$ and $SO(2) \sscr_\qq$ using the notion
of a category with involution. We then intend to
make use of the classification of $SO(2) \sscr_\qq$ in \cite{greshi}
to understand $\ecal_t(\cscr)$.
The motivation here is the relation
between $O(2)$-spaces and $SO(2)$-spaces: an $O(2)$-space $X$
is an $SO(2)$-space with a map $f \co X \to X$, such that
$f^2(x) = x$ and $f(tx) = t^{-1} f(x)$ for all $t \in SO(2)$ and
$x \in X$ (so $f$ is like a reflection).
We formalise this into categorical language, then
investigate how this notion can be applied to spectra.

Take a category $\ccal$ and a functor $\sigma \co \ccal \to \ccal$
such that $\sigma^2=1$, we call $\sigma$ an involution and
$(\ccal, \sigma)$ a category with involution.
In our examples all our categories have
underlying sets, so it makes sense to use equality. In general
one could replace this equality by a fixed natural isomorphism
which would form part of the structure.
The skewed category of $(\ccal, \sigma)$, denoted $\sigma \# \ccal$,  has objects
the maps $f \co X \to \sigma X$ such that $\sigma f \circ f = \id_X$.
Morphisms are pairs $(\alpha, \sigma \alpha)$ making the
obvious square commute, for $\alpha$ a morphism of $\ccal$.
We prove that if $\ccal$ is a cofibrantly generated model category
and $\sigma$ is a left (and hence right) Quillen functor then
$\sigma \# \ccal$ has a cofibrantly generated model structure.
This model structure is defined by the condition that a map
$(\alpha, \sigma \alpha)$ is a weak equivalence or fibration
exactly when $\alpha$ is in $\ccal$.

The simplest example is the case when $\sigma$ is the identity
functor and $\ccal$ is any category. Here $\id \# \ccal$ is the category
of $W$-objects and $W$-maps in $\ccal$ ($W$ is the group of order two).
Next consider the category of based $SO(2)$-equivariant spaces $SO(2) \tscr_*$.
The involution is $\sigma=j^*$, pullback along the group homomorphism
$j \co SO(2) \to SO(2)$ which takes $t$ to $t^{-1}$.
The skewed category, $j^* \# SO(2) \tscr_*$, is then the category of $O(2)$-spaces.
Note that the model structure on $j^* \# SO(2) \tscr_*$ is not the
usual model structure on $O(2)$-spaces, instead a map $f \co X \to Y$
is a weak equivalence if and only if each $f^H \co X^H \to Y^H$
is a weak homotopy equivalence of spaces for all $H \leqslant SO(2)$.
With some work we can construct a similar functor on $SO(2)$-spectra,
so that we have a category with involution $(SO(2) \sscr_\qq, \tau)$.

\begin{thm}[\ref{thm:so2involution}]
There is a monoidal Quillen equivalence
$\tau \# SO(2) \sscr_\qq
\overrightarrow{\longleftarrow}
\cscr \sscr_\qq.$
\end{thm}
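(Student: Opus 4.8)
The plan is to identify both sides as categories of modules over quasi-isomorphic enriched categories and then invoke the formal machinery (the zig-zag of Quillen equivalences associated to a quasi-isomorphism of chain-complex-enriched categories) described in the introduction. First I would unravel the definition of $\tau \# SO(2)\sscr_\qq$: its objects are $SO(2)$-spectra $X$ together with a map $f \co X \to \tau X$ satisfying $\tau f \circ f = \id_X$, which is precisely the data of a twisted $W$-action, i.e. an $O(2)$-action where the reflection acts compatibly with $j \co t \mapsto t^{-1}$. This is the spectrum-level analogue of the identification $j^* \# SO(2)\tscr_* \simeq O(2)\tscr_*$ already noted in the excerpt. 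So on underlying categories $\tau \# SO(2)\sscr_\qq$ is equivalent to a category of $O(2)$-spectra; the subtlety, exactly as in the space-level case, is that the model structure coming from the skewed-category construction has weak equivalences detected only on $H$-fixed points for $H \leqslant SO(2)$ (cyclic subgroups), not for the dihedral subgroups. But those are precisely the weak equivalences of $\cscr\sscr_\qq = L_{EW_+} O(2)\sscr_\qq$, since smashing with $EW_+$ kills the dihedral information and retains exactly the cyclic (i.e. $SO(2)$-isotropy) homotopy groups.

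The key steps, in order, would be: (1) Construct the adjoint pair. The left adjoint $\tau \# SO(2)\sscr_\qq \to \cscr\sscr_\qq$ is induced by the functor sending a twisted $W$-action to the corresponding $O(2)$-spectrum and then $EW_+$-localising; more concretely one builds an $O(2)$-spectrum from the skewed data and takes a fibrant replacement in $\cscr\sscr_\qq$. Its right adjoint restricts an $EW_+$-local $O(2)$-spectrum back to its underlying $SO(2)$-spectrum equipped with the reflection map. (2) Check this is a Quillen pair: the right adjoint preserves fibrations and acyclic fibrations because both are detected on cyclic fixed points on the $\cscr\sscr_\qq$ side, and these are carried to the generating (acyclic) fibrations of the skewed model structure, which are also defined by cyclic-fixed-point conditions. (3) Check it is a Quillen equivalence: on homotopy categories, a twisted $W$-action on an $SO(2)$-spectrum recovers, after passing to the derived $O(2)$-spectrum and $EW_+$-localising, the same cyclic homotopy Mackey functor data; conversely an $EW_+$-local $O(2)$-spectrum is determined up to weak equivalence by its underlying $SO(2)$-spectrum plus reflection, since $X \simeq X \smashprod EW_+$ for $X$ in $\cscr\sscr_\qq$ and $EW_+$ is built from cyclic cells. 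So the derived unit and counit are weak equivalences. (4) Verify monoidality: the smash product on $\tau \# SO(2)\sscr_\qq$ is the $SO(2)$-smash product with the diagonal twisted action, and this matches the $O(2)$-smash product under the equivalence with $O(2)$-spectra; since $EW_+$-localisation is monoidal (this is part of the general splitting theorem \ref{thm:generalsplitting} applied to the $O(2)$ decomposition), the left adjoint is strong monoidal.

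The main obstacle, I expect, is step (1)–(3) done carefully at the point-set level: constructing $\tau$ on $SO(2)\sscr_\qq$ (referred to in the excerpt as requiring ``some work'') so that it is genuinely an involution — $\tau^2 = \id$ on the nose, or at least equipped with a coherent natural isomorphism — and is a Quillen functor, while simultaneously ensuring that the skewed model structure it produces really does have the $EW_+$-local weak equivalences. The danger is that $j^*$ on $SO(2)$-spectra, in a modern point-set model such as orthogonal or symmetric $SO(2)$-spectra, does not strictly square to the identity because of how the $SO(2)$-action is encoded on indexing representations, so one must either choose the model carefully or absorb a coherence isomorphism into the definition of $(\ccal,\sigma)$, as the excerpt hints is permitted. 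Once $\tau$ is in hand with the right properties, the comparison is essentially the spectrum-level version of the space-level statement $j^*\#SO(2)\tscr_* \simeq O(2)\tscr_*$ composed with the $EW_+$-localisation identification, and the Quillen equivalence and monoidality then follow by checking generators and using \ref{thm:generalsplitting}.
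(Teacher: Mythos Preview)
Your opening sentence about ``identifying both sides as categories of modules over quasi-isomorphic enriched categories'' and invoking the zig-zag machinery is the wrong framework entirely --- that is the technology used later to compare $\rightmod \ecal_t$ with $\rightmod \ecal_a$, not to prove this theorem. Fortunately you abandon this immediately and the remainder of the proposal follows the correct route, which is essentially the paper's: recognise that an object of $\tau \# SO(2)\sscr_\qq$ is precisely the data of an $O(2)$-spectrum (a spectrum-level version of $j^*\# SO(2)\tscr_* \simeq O(2)\tscr_*$), and that the skewed model structure detects weak equivalences only on cyclic fixed points, matching the $\cscr$-model structure.

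Two places where the paper is sharper than your sketch. First, you build the left adjoint as ``form the $O(2)$-spectrum, then $EW_+$-localise / fibrantly replace'', but no localisation is needed: the paper works in EKMM $S$-modules and constructs explicit inverse functors $(C,I)$ that are already an \emph{equivalence of underlying categories} (unit and counit are isomorphisms), landing directly in $S_\qq\leftmod(\cscr)$. This makes step (3) immediate: $I$ detects and preserves all weak equivalences because $\mathbb{P}\circ I = \iota^*$, and $C$ preserves all weak equivalences because $\iota^*\circ C = \mathbb{P}$, so the derived unit/counit argument is trivial. Second, the obstacle you flag --- making $\tau^2 = \id$ strictly --- is resolved in the paper not by absorbing a coherence isomorphism but by a concrete construction: one takes $\tau = \alpha^* h^\dag j^*$ where $h^\dag$ is the point-set change-of-universe functor along the reflection $h \co \iota^*U \to j^*\iota^*U$ (isomorphic to, but more explicit than, the twisted half-smash $I^{\iota^*U}_{j^*\iota^*U}$). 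Since $h^\dag j^*$ preserves underlying sets, $(h^\dag j^*)^2 = \id$ on the nose. The hard work in the paper is verifying that $h^\dag j^*$ is strong symmetric monoidal and that the monoidal structure maps satisfy the involutary condition; monoidality of $C$ then falls out by inspecting the half-twisted smash product level by level, rather than by appealing to monoidality of $EW_+$-localisation.
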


We prove that there is an involution $\rho$
on $\rightmod \ecal_t(SO(2))$ and show that the series of
Quillen equivalences between $SO(2) \sscr_\qq$ and
$\ecal_t(SO(2))$ preserves this involution.
We can then conclude the following.
\begin{thm}[Corollary \ref{cor:cyclicsummary}]
There is a zig-zag of Quillen equivalences between
the categories
$\tau \# SO(2) \sscr_\qq$ and $\rho \# \rightmod \ecal_t(SO(2))$.
\end{thm}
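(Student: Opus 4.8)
The plan is to combine the three Quillen equivalences already established, propagating the involution structure through each stage of the zig-zag. We are given a monoidal Quillen equivalence $\tau \# SO(2) \sscr_\qq \overrightarrow{\longleftarrow} \cscr \sscr_\qq$ (Theorem \ref{thm:so2involution}), and the classification machinery of \cite{greshi} supplies a zig-zag of Quillen equivalences between $SO(2) \sscr_\qq$ and $\rightmod \ecal_t(SO(2))$. First I would recall the general principle, to be proved separately, that a Quillen equivalence $F \co \ccal \overrightarrow{\longleftarrow} \dcal \co U$ which is suitably compatible with involutions $\sigma$ on $\ccal$ and $\sigma'$ on $\dcal$ (meaning $F \sigma \cong \sigma' F$ as left Quillen functors, with the comparison isomorphism satisfying the evident coherence with $\sigma^2 = \id$, $\sigma'^2 = \id$) induces a Quillen equivalence $\sigma \# \ccal \overrightarrow{\longleftarrow} \sigma' \# \dcal$ on skewed categories. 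This is essentially formal: an object of $\sigma \# \ccal$ is a map $f \co X \to \sigma X$ with $\sigma f \circ f = \id$, and $F$ sends this to $Ff \co FX \to F\sigma X \cong \sigma' FX$, which one checks lies in $\sigma' \# \dcal$; the adjoint and the model-theoretic conditions (weak equivalences and fibrations detected in the underlying category) transfer immediately because the model structure on the skewed category is created from the underlying one.

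Second, I would verify that the involution $\tau$ on $SO(2) \sscr_\qq$ is carried to the involution $\rho$ on $\rightmod \ecal_t(SO(2))$ along the specific zig-zag of \cite{greshi}. This is the substantive content: one must check, Quillen equivalence by Quillen equivalence in the zig-zag $SO(2) \sscr_\qq \simeq \rightmod \ecal_{top}(SO(2))$, then across the Shipley-style passage $\ecal_{top} \rightsquigarrow \ecal_t$ of \cite{shiHZ}, and finally $\rightmod \ecal_t(SO(2))$ itself, that each functor in the chain intertwines the relevant involutions up to the required coherent isomorphism. The involution $\tau$ is built (as described in the excerpt) from pullback along $j \co SO(2) \to SO(2)$, $t \mapsto t^{-1}$; one tracks how this acts on the generating set $\gcal_{top}$ for the homotopy category, on the function spectra $\ecal_{top}(\sigma_1, \sigma_2)$, and on the composition pairing, thereby obtaining the induced involution on $\rightmod \ecal_t(SO(2))$, which is exactly $\rho$ by construction. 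Since the excerpt says we have already proved $\rho$ exists and that the Quillen equivalences preserve the involution, this step amounts to assembling those verifications.

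Third, I would apply the transfer principle from the first paragraph to each stage, obtaining a zig-zag of Quillen equivalences between $\tau \# SO(2) \sscr_\qq$ and $\rho \# \rightmod \ecal_t(SO(2))$, and then splice in the equivalence $\tau \# SO(2) \sscr_\qq \overrightarrow{\longleftarrow} \cscr \sscr_\qq$ from Theorem \ref{thm:so2involution} at the appropriate end. Concatenating gives the conclusion. The main obstacle I anticipate is not the formal transfer but the bookkeeping in the second step: the zig-zag of \cite{greshi} has several stages (including ones where the direction of the adjunction flips), and at each stage the coherence isomorphism relating $F\sigma$ to $\sigma' F$ must be produced explicitly and shown to compose correctly, so that no sign or twist is lost; in particular one must be careful that the Shipley functors between symmetric spectra and chain complexes are genuinely compatible with the pullback-type involution rather than merely weakly so. Once that coherence is pinned down for each functor, the rest is a routine assembly.
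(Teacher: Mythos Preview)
Your proposal is correct and matches the paper's approach closely. The paper carries out exactly the programme you describe: it develops the formal machinery of involutary Quillen pairs and proves that an involutary Quillen equivalence induces a Quillen equivalence on skewed categories (Chapter~\ref{chp:catwithinv}), then checks stage-by-stage that the Morita equivalence $\iota^* S_\qq\leftmod \simeq \rightmod \ecal_{top}$, the three Shipley-type adjunctions $(\tilde{\qq},U')$, $(L',\phi^*N)$, $(D,R')$, and the final Morita equivalence to $\rightmod \ecal_t$ are all involutary (Proposition~\ref{prop:skewequivSQtoEt}, Theorem~\ref{thm:invmoduleadjunctions}, and Section~\ref{sec:anothermorita}). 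Your anticipated obstacle---the coherence bookkeeping for the Shipley functors---is exactly where the paper spends its effort; the key observation is that the involution on each module category is induced by a self-map $\tau W$ of the underlying ringoid, so one only has to check that the base-category functors applied to $\tau W$ give the next involution, which is essentially automatic for $\tilde{\qq}$ and $D$ (they commute strictly) and requires a short explicit argument for $L'$. One minor point: the splicing of $\cscr\sscr_\qq$ via Theorem~\ref{thm:so2involution} is not needed for the corollary as stated, since the conclusion is already the zig-zag between $\tau \# SO(2)\sscr_\qq$ and $\rho \# \rightmod \ecal_t$; that step belongs to the onward application, not to this result.
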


\section{Applications and Further Work}

\subsection*{Cyclic spectra}
It remains to complete the classification of
$\cscr \sscr_\qq$, which we have identified with
the skewed category $\rho \# \rightmod \ecal_t(SO(2))$.
From this point the idea is that there should be
an involution $\Upsilon$ on $dg \mathcal{A}(SO(2))$
(see Remark \ref{rmk:nextsteps}).
The algebraic model for $\cscr \sscr_\qq$
will then be given by $\Upsilon \# dg \mathcal{A}(SO(2))$.
Then if we can show that the zig-zag between
$\ecal_t(SO(2))$ and $\ecal_a(SO(2))$ of \cite{greshi}
respects the involutions
it should follow formally that
$\cscr \sscr_\qq$ and $\Upsilon \# dg \mathcal{A}(SO(2))$
are Quillen equivalent.

\subsection*{Dihedral Spectra}
The dihedral part of the $O(2)$ case must still be
classified in terms of an algebraic model. The model
suggested by \cite{gre98a} is a category of sheaves
over a topological space. Once this is understood
one can consider the more general setting of a
split short exact sequence
$0 \to SO(2)^n \to G \to F \to 1$ and try understand
the $F$ part of the category of $G$-spectra.

\subsection*{The Homotopy Category of a Skewed Category}
For $(\ccal,\sigma)$ an involutary model category,
we would like to be able to prove an equivalence
$\ho (\sigma \# \ccal) \simeq \sigma \# \ho(\ccal)$.
We expect that such a result would require
$\ho (\ccal)$ to be rational (so $[X,Y]_\ccal$ is a
rational vector space for each $X$ and $Y$ in $\ccal$).
We would also like to investigate
the conditions necessary for the involution to untwist,
that is, when is there an equivalence
$\sigma \# \ho(\ccal) \simeq \ho(\ccal)^{W}$?
Where the right hand side is the homotopy category of $\ccal$
with a homotopy action of $W$, the group of order two.
In Proposition \ref{prop:relatetoO2cohom}
we give \cite[Proposition 3.1 and Corollary 3.2]{gre98a},
which prove that $\ho(\tau \# \cscr \sscr_\qq)$
is equivalent to $\ho(\cscr \sscr_\qq)^{W}$, which
is our reason for expecting these results.
With suitable assumptions on the nature of $\ccal$
we hope to prove the results above by
following \cite{gre98a}.

\subsection*{Monoidality}\label{subsec:monoidality}
The classification outlined above for
\cite{greshi}, the finite case and $O(2)$ is much more
compatible with monoidal structures than
the work of \cite{shi02} and
\cite[Example 5.1.2]{ss03stabmodcat}.
We go through the classification once more
and point out which equivalences are monoidal.
Begin with $G \sscr_\qq$ and make a more
careful choice of generators $\gcal_{top}$
(the set of generators should be closed under $\smashprod$),
then the Quillen equivalence between
$G \sscr_\qq$ and $\rightmod \ecal_{top}$
will be symmetric monoidal.
Equally, the equivalence between
$\rightmod \ecal_{a}$ and
$dg \mathcal{A}(G)$ will be symmetric monoidal
provided that the generators $\gcal_{a}$
are closed under $\otimes$.
The results of \cite{shiHZ} allow us to
construct a category $\ecal_t$ and
$\rightmod \ecal_t$ is Quillen equivalent to
$\rightmod \ecal_{top}$. However,
due to a technical issue the category
$\rightmod \ecal_t$ is not a monoidal category
(see Remark \ref{rmk:monoidalissue}).
Hence the comparison between $\rightmod \ecal_t$ and
$\rightmod \ecal_{top}$ cannot be monoidal, thus, neither
can the whole classification.
This is the only point of the comparison
where monoidality fails; it is probable that this issue can be
resolved.

\subsection*{Equivariant Categories}
As mentioned in Remark \ref{rmk:extendG}
we identify the notion of a category with involution
with a category with an action of order two on it.
Thus there is an obvious generalisation to a
category with a general group action.
So consider a group $G$, where $G$ is part of a split short
exact sequence $0 \to SO(2)^n \to G \to F \to 1$ with
$F$ a finite group.
Then we can split the category of $G$-spectra into a
part corresponding to $F$ and a part
$G \sscr_\qq (SO(2)^n) $, corresponding to $SO(2)^n$.
It should then be possible to describe
$G \sscr_\qq (SO(2)^n)$ in terms of the category
$SO(2)^n \sscr_\qq$ with an action of $F$.
The theory of these equivariant categories
should allow us to classify $G \sscr_\qq (SO(2)^n)$
in terms of $dg \mathcal{A}(SO(2)^n)$ with an action
of $F$ on the category.
Thus, in this situation,
we can extend known classifications and
generate new algebraic models from existing ones.

\subsection*{Continuous Quaternions}
The next group of interest would be the continuous
quaternion group. There is a short
exact sequence $1 \to SO(2) \to Q_{cts} \to W \to 1$
($W$ the group of order two), but now this sequence
is not split. An understanding of
how to classify $Q_{cts}\sscr_\qq$ should
give a reasonable idea of how to
understand $G$-spectra for a non-split
short exact sequence
$0 \to (SO(2))^n \to G \to F \to 1$.

\newpage

\section{Organisation of the Thesis}

We have divided the thesis into three parts, the first consists
of basic notions and the splitting for a general
compact Lie group $G$,
the second classifies rational $G$-spectra for finite $G$
and the third specialises to the case $G=O(2)$ and examines
how the splitting theorem and the work of \cite{greshi}
can be used to study this case.

\paragraph*{Part \ref{part:Gspec}} We begin Chapter \ref{chp:Gspectra}
with some of the basic notions of model categories,
before moving on to describe some of the more technical conditions,
especially those which ensure that a monoidal product will behave well
on the homotopy category. We give a brief description of the
model categories that are used in this thesis and
then go into more detail on the categories of equivariant spectra
in Section \ref{sec:GIS}.
We introduce homotopy colimits in Section \ref{sec:hocolim}
and then use this to construct $S^0 \qq$, a rational sphere
spectrum, in Section \ref{sec:rationalsphere}.
Chapter \ref{chp:locals} introduces Bousfield localisations
of spectra, which we use to make a category of rational spectra
by localising at $S^0 \qq$.
We also construct an equivalent category of rational spectra
by considering modules over the ring spectrum $S_\qq$
(constructed in Section \ref{sec:rationalsphere}).
We make further use of
Bousfield localisations to prove the splitting theorem for
equivariant orthogonal spectra in Chapter \ref{chp:splitting}.
We show that this implies the corresponding splitting result
for equivariant EKMM spectra and the category of $S_\qq$-modules.
We then consider a particular kind of splitting
in Section \ref{sec:idemsplit} that we will use for $O(2)$-spectra
in Chapter \ref{chp:O2spec}.

\paragraph*{Part \ref{part:finite}}
Chapter \ref{chp:fingroups} classifies rational $G$-spectra
for finite $G$. We begin with the algebraic model
which is particularly simple.
We then apply the splitting theorem to
the category of rational $G$-spectra
and identify the $(H)$-piece of this splitting with
modules over a ring spectrum $S_H$ (there is one piece
for each conjugacy class of subgroups $(H)$).
Since we now have a category with every object fibrant,
we can proceed through the method of \cite{greshi}.
Once we have performed the formal parts of this method,
we must specialise to our particular case and
prove that $\rightmod \ecal_t^H$ is equivalent to
$\rightmod \ecal_a^H$ for each conjugacy class of subgroups
$(H) \subseteq G$. This is done by studying
the structure of the $dg \qq \leftmod$-enriched categories
$\ecal_t$ and $\ecal_a$.
Since this is the first time we
use the notion of right modules over an enriched category,
we introduce the language and theory of this machinery in
Chapter \ref{chp:enrichcat}.

\paragraph*{Part \ref{part:O(2)}}
We consider the group $O(2)$ and see how our splitting theorem
can be used to study rational $O(2)$-spectra.
Chapter \ref{chp:O2spec} splits the category into
cyclic spectra and dihedral spectra, using the results of
Section \ref{sec:idemsplit}.
We concentrate on the model category of cyclic spectra,
we need to understand how this category is related to
$SO(2)$-spectra, so that we can use the work of \cite{greshi}
to classify this category in terms of an algebraic model.
In Chapter \ref{chp:catwithinv} we have
abstracted the relation between
$O(2)$-spaces and $SO(2)$-spaces
to define a category with involution and its associated
skewed category. We apply this to
cyclic spectra to describe this category in terms of
the skewed category of rational $SO(2)$-spectra in
Chapter \ref{chp:cyclicspectra}.
In Chapter \ref{chp:WskewSQmod} we examine
the zig-zag of equivalences
between rational $SO(2)$-spectra and
$\rightmod \ecal_t(SO(2))$ from \cite{greshi}.
We prove that this induces a zig-zag of
Quillen equivalences between the skewed category of
$\rightmod \ecal_t(SO(2))$ and
the category of cyclic spectra. This chapter
requires us to use $S_\qq$-modules as our
category of rational $SO(2)$-spectra, since every object
of the category must be fibrant to apply the work of \cite{greshi}.

We include an appendix listing all of the model categories
that we use.

\section*{Acknowledgements}

I would like to thank my supervisor, John Greenlees,
for all the help and advice he has given me during
my time at Sheffield. I would also like to express my gratitude to
Brooke Shipley and Simon Willerton, with whom I have had many
useful conversations regarding model categories
and category theory.

\mainmatter
\part{$G$-spectra and Localisations}\label{part:Gspec}
\chapter{$G$-Spectra}\label{chp:Gspectra}
We introduce the basic notions necessary to
study cohomology theories in a modern setting.
We begin with model categories, which make the
construction of homotopy categories rigourous
and allow us to prove that two homotopy categories
are equivalent by checking a small list of conditions.
In Section \ref{sec:spacesandspectra} we give
brief details on the categories that we will use in this thesis.
We focus upon the categories of $G$-spectra in Section \ref{sec:GIS}
and go into some details on the properties of these categories.
We prove a few well-known results and show some technical
model category conditions that we have not been able to
find explicitly in the literature.
We construct homotopy pushouts and telescopes for $G$-spectra in
Section \ref{sec:hocolim} so that we will have definite
constructions for our later work.
We then make a rational sphere spectrum in
Section \ref{sec:rationalsphere} which will be used to define
rational $G$-spectra in Chapter \ref{chp:splitting}.
Much of this chapter is definitions and results
from other sources, especially so for the first two sections.
The new content of
Sections \ref{sec:GIS} and \ref{sec:hocolim}
is mainly in the proofs, which are often considered
too standard to be included in the usual sources.
The final section is mostly new, though not surprising.

\section{Model Categories}\label{sec:modcat}
Many of the results of this thesis are phrased
in terms of model categories, which are a
general framework for homotopy theory.
If one wishes to invert a collection of maps in a
category (i.e. formally make them into isomorphisms),
one can not always be sure that the
result will be a category, model categories
are a solution to this problem.
Model categories were first introduced in \cite{quil67},
an excellent modern account is \cite{ds95},
but we take most of our definitions from
Section 1.1 of the comprehensive book \cite{hov99}.
We let $d$ and $c$ be the domain and codomain
functors from $\text{Map} \ccal$ to $\ccal$,
which exist for any category $\ccal$.
We give \cite[Definitions 1.1.1 -- 1.1.4]{hoverr} in order.
\begin{definition}\label{def:funcfact}
Suppose $\ccal$ is a category
\begin{enumerate}
\item A map $f$ in $\ccal$ is a \textbf{retract}\index{Retract}
of a map $g \in \ccal$ if and only if there is a commutative
diagram of the following form
$$\xymatrix{
A \ar[r] \ar[d]_f & C \ar[r] \ar[d]_g & A \ar[d]_f \\
B \ar[r] & D \ar[r] & D  \\
}$$
where the horizontal composites are identities.
\item A \textbf{functorial factorisation}\index{Functorial factorisation}
is an ordered pair $(\alpha, \beta)$
of functors $\text{Map} \ccal \to \text{Map} \ccal$
such that
$$d \circ \alpha = d, \quad c \circ \alpha = d \circ \beta, \quad c \circ \beta = c,
\quad f=\beta (f) \circ \alpha (f)$$
for all $f \in \text{Map} \ccal$.
Hence any commutative square
\begin{displaymath}
\xymatrix@!C{
A \ar[d]_u \ar[r]^f & B \ar[d]^v \\
C \ar[r]_g & D }
\end{displaymath}
induces a commutative square
\begin{displaymath}
\xymatrix@!C{
A \ar[d]_u \ar[r]^{\alpha (f)} & (c \circ \alpha )(f)
\ar[r]^{\beta (f)} \ar[d]_{(c \circ \alpha )(u,v) } & B \ar[d]^v \\
C \ar[r]_{\alpha (g)} & (c \circ \alpha )(g) \ar[r]_{\beta (g)}  & D }
\end{displaymath}
noting that $(u,v)$ is a morphism
in $\text{Map} \ccal$ between $f$ and $g$.
\end{enumerate}
\end{definition}

\begin{definition}
Suppose $i \co A \to B$ and $p \co X \to Y$
are maps in a category $\ccal$. Then $i$
has the \textbf{left lifting property with respect to $p$}\index{Left lifting property}
and $p$ has the \textbf{right lifting property with respect to $i$}\index{Right lifting property}
if, for every commutative diagram of the following form
\begin{displaymath}
\xymatrix@!C{
A \ar[d]_i \ar[r]^f & X \ar[d]^p \\
B \ar[r]_g & Y }
\end{displaymath}
there is a lift $h \co B \to X$ such that $hi=f$
and $ph=g$.
\end{definition}

\begin{definition}
A \textbf{model structure}\index{Model structure}
on a category $\ccal$ is three subcategories of $\ccal$
called weak equivalences\index{Weak equivalence},
cofibrations\index{Cofibration} and fibrations\index{Fibration}
and two functorial factorisations $(\alpha, \beta)$
and $(\gamma, \delta)$ satisfying the following properties:
\begin{enumerate}
\item If $f$ and $g$ are morphisms of $\ccal$ such that
$gf$ is defined and two of $f$, $g$ and $gf$ are
weak equivalences, then so is the third.
\item If $f$ and $g$ are morphisms of $\ccal$
such that $f$ is a retract of $g$
and $g$ is a weak equivalence, cofibration
or fibration, then so is $f$.
\item Define a map to be an
\textbf{acyclic cofibration}\index{Acyclic cofibration}
if it is both a cofibration and a weak equivalence.
Similarly, define a map to be an
\textbf{acyclic fibration}\index{Acyclic fibration}
if it is both a fibration and a weak equivalence. Then
acyclic cofibrations have the left lifting property with
respect to fibrations and cofibrations have the left lifting
property with respect to acyclic fibrations.
\item For any morphism $f$, $\alpha(f)$ is a cofibration,
$\beta(f)$ is an acyclic fibration, $\gamma(f)$ is
an acyclic cofibration and $\delta(f)$ is a fibration.
\end{enumerate}
\end{definition}

\begin{definition}
A \textbf{model category}\index{Model category}
is a category $\ccal$ with all small limits and colimits
and a model structure on $\ccal$.
\end{definition}

When we need to emphasise the properties of maps in
diagrams we will use the following shorthand:
$\xymatrix{X \ar[r]^{\sim} & Y}$ for the weak equivalences,
$\xymatrix{X \ar@{>->}[r] & Y}$ for the cofibrations and
$\xymatrix{X \ar@{->>}[r]& Y}$ for the fibrations.
Since a model category has all small limits and colimits
it has an initial object $\emptyset$ and a terminal object
$*$, these are the colimit and limit of the empty diagram
respectively. So for any object $X$ there is a unique pair of maps
$\emptyset \to X \to *$. We call $X$ \textbf{cofibrant}\index{Cofibrant}
if the map $\emptyset \to X$ is a cofibration, similarly
$X$ is called \textbf{fibrant}\index{Fibrant} if the map
$X \to *$ is a fibration.
By using the factorisation axioms one can take the
map $\emptyset \to X$ and factor it into a cofibration
followed by an acyclic fibration:
$\emptyset \to \cofrep X \to X$.
We call this process cofibrant replacement.
The functorial factorisations ensure that
$\cofrep$\index{c@$\cofrep$}
is a functor. We can perform the equivalent construction
for the map $X \to *$ and we obtain
$\fibrep$\index{f@$\fibrep$} the fibrant
replacement functor. We will decorate this
notation where necessary to indicate
which model structure we are considering.
If the canonical map $\emptyset \to *$
is an isomorphism we call the model category
\textbf{pointed}\index{Pointed}.

We now cut to the chase and give a rough and ready
theorem stating the existence of a homotopy category.
The proof of this theorem (that is, the construction
of the homotopy category) is a little involved and we
leave it to the excellent accounts of model categories
that we have already mentioned.

\begin{theorem}
If $\ccal$ is a model category, then there is a
category $\ho \ccal$, called the
\textbf{homotopy category}\index{Homotopy category}
of $\ccal$ with a functor
$\gamma \co \ccal \to \ho \ccal$ such that
$\gamma f$ is an isomorphism if and only if
$f$ is a weak equivalence.
Furthermore if $F \co \ccal \to \dcal$ is
any functor which takes every weak equivalence of $\ccal$
to an isomorphism of $\dcal$ then there is
a unique functor $\ho F \co \ho \ccal \to \dcal$
such that $\ho F \circ \gamma = F$.
\end{theorem}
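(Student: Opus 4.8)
The plan is to realise $\ho\ccal$ concretely as a category of homotopy classes of maps between bifibrant (simultaneously cofibrant and fibrant) objects, so that $\ho\ccal$ is visibly a genuine, locally small category, and then to verify the two assertions by hand.

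First I would develop the elementary homotopy theory of the model structure. For an object $X$, a \emph{cylinder object} is a factorisation of the fold map $X\sqcup X\to X$ into a cofibration followed by a weak equivalence, and dually a \emph{path object} factors the diagonal $X\to X\times X$; both exist by the factorisation axioms. From these one defines left and right homotopy of maps $f,g\co X\to Y$, and I would establish the standard package: left homotopy is an equivalence relation on $\ccal(X,Y)$ when $X$ is cofibrant (dually for right homotopy when $Y$ is fibrant); when $X$ is cofibrant and $Y$ is fibrant the two relations coincide and the common relation $\simeq$ respects composition on both sides; and, crucially, \textbf{Whitehead's theorem}, that a weak equivalence between bifibrant objects is a homotopy equivalence (and conversely). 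This last point, proved by repeatedly factoring maps and solving lifting problems with the lifting axioms, is where the real work lies and I expect it to be the main obstacle.

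Next I would assemble the homotopy category. Let $\mathcal{H}$ be the category whose objects are the bifibrant objects of $\ccal$ and whose morphisms are $\simeq$-classes of maps; composition is well defined by the previous step. Applying cofibrant replacement $\cofrep$ and then fibrant replacement $\fibrep$ produces, functorially (by the \emph{functorial} factorisations), a bifibrant object $R X := \fibrep\,\cofrep X$ together with a natural chain of weak equivalences linking $X$ and $R X$. I would then define $\ho\ccal$ to have the objects of $\ccal$ and hom-sets $\ho\ccal(X,Y):=\mathcal{H}(R X,R Y)$, with the evident composition, and let $\gamma\co\ccal\to\ho\ccal$ be the identity on objects, sending $f$ to the $\simeq$-class of $R f$. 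Functoriality of $\gamma$ is then routine, and $\gamma f$ is an isomorphism precisely when $f$ is a weak equivalence: the isomorphisms of $\mathcal{H}$ are exactly the $\simeq$-classes of homotopy equivalences, a map between bifibrant objects is a weak equivalence if and only if it is a homotopy equivalence (Whitehead's theorem and its converse), and the two-out-of-three axiom applied to the naturality zig-zag passes this between $f$ and $R f$.

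Finally, for the universal property, given $F\co\ccal\to\dcal$ sending weak equivalences to isomorphisms, I would define $\ho F$ to agree with $F$ on objects and to send a morphism represented by $h\co R X\to R Y$ to $F(h)$ conjugated by the isomorphisms $F$ assigns to the replacement weak equivalences. Well-definedness on $\simeq$-classes rests on the observation that $F$ identifies the two end-inclusions of any cylinder object, since each is a section of a weak equivalence that $F$ inverts, together with the dual fact for path objects; that $\ho F$ is a functor and that $\ho F\circ\gamma=F$ are then formal. Uniqueness holds because $\gamma$ is bijective on objects and every morphism of $\ho\ccal$ is a finite composite of morphisms $\gamma g$ and formal inverses $(\gamma w)^{-1}$ of weak equivalences $w$, so a functor on $\ho\ccal$ compatible with $\gamma$ is completely determined.
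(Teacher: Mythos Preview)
Your sketch is essentially the standard construction found in Hovey \cite{hov99} and Dwyer--Spalinski \cite{ds95}, and it is correct in outline. The paper, however, does not give a proof at all: immediately before the theorem it says that the construction ``is a little involved and we leave it to the excellent accounts of model categories that we have already mentioned''. So there is nothing to compare against; you have supplied exactly what the paper chose to omit and defer to the literature.

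One small point worth tightening in your write-up: in the uniqueness step you assert that every morphism of $\ho\ccal$ is a composite of maps $\gamma g$ and inverses $(\gamma w)^{-1}$. In your concrete model a morphism $X\to Y$ is a class $[h]\in\mathcal{H}(RX,RY)$, and to exhibit it in that form you should spell out that it equals the composite of $(\gamma\,p_X)^{-1}$, $\gamma h$ regarded as a map $RX\to RY$, and $\gamma\,p_Y$, where $p_X\co RX\to X$ is (the appropriate side of) the replacement zig-zag; this in turn requires checking that $\gamma h$ as you defined it (namely $[Rh]$) agrees with $[h]$ after conjugation by the replacement equivalences $R\!R X\simeq RX$. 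This is routine once Whitehead's theorem is in hand, but it is the kind of bookkeeping that is easy to wave past.
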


Now we move on to \cite[Section 1.3]{hov99} and introduce
the langauge necessary to compare model categories and
the notion of equivalent model categories. The
definition below is \cite[Definition 1.3.1]{hov99}
and the following lemma is
\cite[Lemma 1.3.10]{hov99}.

\begin{definition}
If $\ccal$ and $\dcal$ are model categories then a
functor $F \co \ccal \to \dcal$ is a
\textbf{left Quillen functor}\index{Left Quillen functor}
if $F$ preserves cofibrations and acyclic cofibrations.
Similarly $F$ is a
\textbf{right Quillen functor}\index{Right Quillen functor}
if $F$ preserves fibrations and acyclic fibrations.
An adjoint pair
$F : \ccal \overrightarrow{\longleftarrow} \dcal : G$ is a
\textbf{Quillen pair}\index{Quillen pair} if either
the left adjoint is a left Quillen functor, or equivalently
the right adjoint is a right Quillen functor.
\end{definition}

\begin{lemma}
A left Quillen functor $F \co \ccal \to \dcal$ passes to a functor
$LF \co \ho \ccal \to \ho \dcal$, similarly
a right Quillen functor $G \co \dcal \to \ccal$ passes to a functor
$RG \co \ho \dcal \to \ho \ccal$.
A Quillen pair induces an adjunction
$LF : \ho \ccal \overrightarrow{\longleftarrow} \ho \dcal : RG$.
\end{lemma}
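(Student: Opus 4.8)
The plan is to construct the derived functors by (co)fibrant replacement and to establish the adjunction by identifying morphisms of the homotopy category with homotopy classes of maps between cofibrant--fibrant objects. The one genuinely substantial ingredient is \emph{Ken Brown's lemma}: a left Quillen functor carries a weak equivalence of cofibrant objects to a weak equivalence, and dually a right Quillen functor carries a weak equivalence of fibrant objects to a weak equivalence. I would prove this first. Given a weak equivalence $f \co A \to B$ with $A$, $B$ cofibrant, factor the induced map $A \sqcup B \to B$ (which restricts to $f$ and $\id_B$) as a cofibration $A \sqcup B \to C$ followed by an acyclic fibration $C \to B$. Since $A$ and $B$ are cofibrant, the two structure maps $A \to C$ and $B \to C$ are cofibrations, and by the two-out-of-three axiom each is a weak equivalence, hence an acyclic cofibration; a left Quillen functor $F$ then sends both $FA \to FC$ and $FB \to FC$ to weak equivalences, and a final application of two-out-of-three to $Ff = (FC \to FB) \circ (FA \to FC)$ shows $Ff$ is a weak equivalence. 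The fibrant dual is identical.

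With Ken Brown's lemma available, recall that the functorial cofibrant replacement $\cofrep$ preserves weak equivalences: the natural maps $\cofrep X \to X$ are acyclic fibrations, so two-out-of-three applies to the naturality square. As $\cofrep X$ is always cofibrant, $F \circ \cofrep \co \ccal \to \dcal$ sends weak equivalences to weak equivalences, so $\gamma_{\dcal} \circ F \circ \cofrep$ inverts weak equivalences, and the universal property of $\ho \ccal$ from the homotopy category theorem produces a unique functor $LF \co \ho \ccal \to \ho \dcal$ with $LF \circ \gamma_{\ccal} = \gamma_{\dcal} \circ F \circ \cofrep$. Dually, using $\fibrep$ and the fibrant form of Ken Brown's lemma, one obtains $RG \co \ho \dcal \to \ho \ccal$ with $RG \circ \gamma_{\dcal} = \gamma_{\ccal} \circ G \circ \fibrep$. (That these functors are independent of the choice of functorial replacement up to canonical natural isomorphism follows by comparing with the replacement maps, and is routine.)

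For the adjunction I would use the standard description of the homotopy category: if $X$ is cofibrant and $Y$ is fibrant then $\ho \ccal (X,Y)$ is the set of left (equivalently right) homotopy classes of maps $X \to Y$ in $\ccal$, and in general $\ho \ccal (X,Y) \cong [\cofrep X, \fibrep Y]$. Then, for $X \in \ccal$ and $Y \in \dcal$, I compute $\ho \dcal (LF X, Y) \cong [F \cofrep X, \fibrep Y]$, which is legitimate because $F \cofrep X$ is cofibrant ($F$ left Quillen) and $\fibrep Y$ is fibrant, and $\ho \ccal (X, RG Y) \cong [\cofrep X, G \fibrep Y]$, legitimate because $\cofrep X$ is cofibrant and $G \fibrep Y$ is fibrant ($G$ right Quillen). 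The point-set adjunction gives a bijection $\dcal(F \cofrep X, \fibrep Y) \cong \ccal(\cofrep X, G \fibrep Y)$, and I must check it descends to homotopy classes: a cylinder object for the cofibrant object $F \cofrep X$ can be taken to be the image under $F$ of a cylinder object for $\cofrep X$ (using that $F$ preserves the coproduct and the relevant cofibration, and preserves the weak equivalence to $\cofrep X$ by Ken Brown), so a left homotopy on the source transports across the adjunction to a left homotopy on the target; symmetrically via path objects for the fibrant target. Naturality of all these identifications in $X$ and $Y$ then yields the natural bijection $\ho \dcal(LF X, Y) \cong \ho \ccal(X, RG Y)$, i.e.\ the adjunction $LF \dashv RG$.

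The main obstacle is this last step --- confirming that the point-set adjunction isomorphism respects the homotopy relation. This requires care with cylinder and path objects and with the fact that left and right homotopy coincide for a map from a cofibrant object to a fibrant one; the cleanest route is to arrange ``good'' (co)fibrant cylinder/path objects and push them through $F$ and $G$ using Ken Brown's lemma. By contrast, Ken Brown's lemma itself, though conceptually the heart of the argument, is a short factorisation-and-two-out-of-three diagram chase.
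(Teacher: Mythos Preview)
Your argument is correct and is essentially the standard proof (Ken Brown's lemma, then derived functors via functorial (co)fibrant replacement and the universal property, then the derived adjunction by transporting the point-set adjunction across homotopy classes using cylinder/path objects). The paper itself gives no proof of this lemma: it simply records it as \cite[Lemma 1.3.10]{hov99}, so there is nothing to compare beyond noting that your outline is precisely the approach Hovey takes.
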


We now give
\cite[Definition 1.3.12 and Proposition 1.3.13]{hov99}.
\begin{definition}
A Quillen pair $F : \ccal \overrightarrow{\longleftarrow} \dcal : G$ is a
\textbf{Quillen equivalence}\index{Quillen equivalence} if,
for all cofibrant $X$ in $\ccal$ and all fibrant $Y$ in $\dcal$,
a map $f \co X \to GY$ is a weak equivalence of $\ccal$
if and only if $\widetilde{f} \co FX \to Y$ is a weak equivalence
of $\dcal$.
\end{definition}

\begin{proposition}
A Quillen pair $(F,G)$ is a Quillen equivalence if and only
if the adjoint pair $(LF,RG)$ is an equivalence of categories.
\end{proposition}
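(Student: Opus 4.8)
The plan is to reduce the statement to the standard fact that an adjunction is an equivalence of categories exactly when its unit and counit are natural isomorphisms, and then to unwind the construction of $(LF,RG)$ from the lemma above so that the derived unit and counit become visibly governed by the Quillen equivalence condition. Write $\eta$ and $\epsilon$ for the unit and counit of the point-set adjunction $(F,G)$. For a cofibrant object $X$ of $\ccal$ the cofibrant replacement $\cofrep X \to X$ is a weak equivalence, so in the construction of $LF$ one may take $LF(\gamma X) = \gamma(FX)$ (using that a left Quillen functor preserves weak equivalences between cofibrant objects), while $RG$ is computed on an object $Z$ of $\dcal$ by $G(\fibrep Z)$. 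Tracing the derived adjunction of the lemma, the derived unit $\overline\eta_X \co X \to RG\,LF\,X$ is represented in $\ho\ccal$ by the composite $X \xrightarrow{\eta_X} GFX \xrightarrow{G(j)} G(\fibrep FX)$, where $j \co FX \to \fibrep FX$ is the fibrant replacement; the key point is that, under the adjunction of $(F,G)$, this composite is precisely the transpose of the weak equivalence $j$, whose source $X$ is cofibrant and whose target $\fibrep FX$ is fibrant. Dually, for fibrant $Y$ the derived counit $\overline\epsilon_Y \co LF\,RG\,Y \to Y$ is represented by the transpose of the cofibrant replacement $q \co \cofrep GY \to GY$, an acyclic fibration and hence a weak equivalence, with cofibrant source $\cofrep GY$ and fibrant target $Y$.

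Granting this, the implication ``Quillen equivalence $\Rightarrow$ equivalence of homotopy categories'' is immediate. Applying the defining condition of a Quillen equivalence to the weak equivalence $j$ (respectively $q$) shows that the chosen representative of $\overline\eta_X$ (respectively $\overline\epsilon_Y$) is a weak equivalence of $\ccal$ (respectively $\dcal$), hence an isomorphism in the homotopy category since $\gamma f$ is an isomorphism precisely when $f$ is a weak equivalence. Since every object of $\ho\ccal$ is isomorphic to $\gamma X$ for some cofibrant $X$, and likewise every object of $\ho\dcal$ is isomorphic to one represented by a fibrant object, the natural transformations $\overline\eta$ and $\overline\epsilon$ are natural isomorphisms, and therefore $(LF,RG)$ is an equivalence of categories.

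For the converse, suppose $(LF,RG)$ is an equivalence; then $\overline\eta$ and $\overline\epsilon$ are natural isomorphisms and $RG$, being an equivalence, reflects isomorphisms. Let $X$ be cofibrant, $Y$ fibrant, $f \co X \to GY$, and $\widetilde f \co FX \to Y$ its transpose. Choosing the fibrant replacement $j \co FX \to \fibrep FX$ to be an acyclic cofibration and using that $Y$ is fibrant, lift $\widetilde f$ along $j$ to obtain $h \co \fibrep FX \to Y$ with $hj = \widetilde f$. Then $f = G(\widetilde f)\circ\eta_X = G(h)\circ G(j)\circ\eta_X$, so in $\ho\ccal$ we have $\gamma(f) = \gamma(Gh)\circ\overline\eta_X$. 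As $\overline\eta_X$ is an isomorphism, $\gamma(f)$ is an isomorphism iff $\gamma(Gh)$ is; and since $h$ is a map of fibrant objects, $\gamma(Gh)$ is an isomorphism iff $RG(\gamma h)$ is (the two differ by isomorphisms arising from fibrant replacement), hence iff $\gamma(h)$ is (because $RG$ reflects isomorphisms), hence iff $h$ is a weak equivalence, hence --- by the two-out-of-three axiom applied to $hj = \widetilde f$ with $j$ a weak equivalence --- iff $\widetilde f$ is a weak equivalence. Thus $f$ is a weak equivalence if and only if $\widetilde f$ is, which is the Quillen equivalence condition. One could equally argue through $\overline\epsilon$ and cofibrant replacement.

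The main obstacle is the bookkeeping in the first paragraph: correctly identifying the derived unit and counit of $(LF,RG)$ with the adjoint transposes of the (co)fibrant-replacement weak equivalences. This requires carefully retracing the construction of $LF$, $RG$ and their adjunction from the lemma above, invoking that $F$ (respectively $G$) preserves weak equivalences between cofibrant (respectively fibrant) objects, and keeping firmly in mind the distinction between a weak equivalence of $\ccal$ and an isomorphism of $\ho\ccal$. Once that identification is in hand, both implications are short.
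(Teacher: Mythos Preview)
The paper does not supply a proof for this proposition; it is stated as a standard result, being \cite[Proposition 1.3.13]{hov99}. Your argument is correct and is essentially the standard proof one finds there: identify the derived unit and counit with the adjoint transposes of the fibrant and cofibrant replacement maps, then use the characterisation of adjoint equivalences via invertibility of unit and counit together with Ken Brown's lemma to pass between weak equivalences and homotopy-category isomorphisms. The lifting-and-two-out-of-three manoeuvre you use for the converse is the usual one and is sound.
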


This result is why we use model categories,
it allows us to compare homotopy categories by checking a
relatively simple criterion. Now we introduce some more
structure, that of a monoidal product (such as the smash
product of spaces).

\begin{definition}\label{def:Qbifunctor}
An \textbf{adjunction of two variables}\index{Adjunction of two variables}
(\cite[Definition 4.2.12]{hov99})
$\otimes \co \ccal \times \dcal \to \ecal$,
is a functor $\otimes$ with two `right adjoints'
$\hom_r \co \dcal^{op} \times \ecal \to \ccal$
and $\hom_l \co \ccal^{op} \otimes \ecal \to \dcal$.
A \textbf{Quillen bifunctor}\index{Quillen bifunctor}
(\cite[Definition 4.2.1]{hov99})
is an adjunction of two variables $\otimes$
such that for cofibrations $f \co U \to V$
in $\ccal$ and $g \co W \to X$ in $\dcal$
the induced map (called the
\textbf{pushout product}\index{Pushout product})
$$f \square g \co
V \otimes W \coprod_{ U \otimes W} U \otimes X
\longrightarrow
V \otimes X$$
is a cofibration of $\ecal$ which is a weak equivalence
when one of $f$ or $g$ is.
\end{definition}

Now we give \cite[Definition 4.2.6]{hov99}.
\begin{definition}
A \textbf{monoidal model category}\index{Monoidal model category}, $\ccal$,
is a monoidal category $(\ccal, \otimes, \sphspec)$ that is a model category
such that $\otimes \co \ccal \times \ccal \to \ccal$ is a Quillen bifunctor
and for any cofibrant replacement of the unit
$q \co \cofrep S \to S$
the natural map $\cofrep S \otimes X \to S \otimes X$
is a weak equivalence for all cofibrant $X$.
These two requirements are also known as the
\textbf{pushout product axiom}\index{Pushout product axiom}.
\end{definition}

\begin{proposition}
The closed monoidal product on a monoidal model category passes
to a closed monoidal product on the homotopy category.
\end{proposition}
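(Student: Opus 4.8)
The plan is to pass to the total left derived functor of $\otimes$ and then check that the monoidal coherence data survives. First I would observe that for any cofibrant object $Z$ the functor $-\otimes Z$ is a left Quillen functor: applying the Quillen bifunctor axiom with the second cofibration taken to be $\emptyset\to Z$ (and using that $\otimes$ preserves colimits in each variable, being an adjunction of two variables) shows that $f\square(\emptyset\to Z)$ is just $f\otimes Z$, so $-\otimes Z$ preserves cofibrations and acyclic cofibrations. A left Quillen functor preserves weak equivalences between cofibrant objects (Ken Brown's lemma, a formal consequence of the factorisation axioms), and the same holds for $W\otimes-$ with $W$ cofibrant; hence $\otimes$ carries a pair of weak equivalences between cofibrant objects to a weak equivalence. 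Consequently $X\otimes^L Y:=\cofrep X\otimes\cofrep Y$ is independent, up to canonical natural isomorphism, of the chosen cofibrant replacements, and so defines a functor $\otimes^L\co\ho\ccal\times\ho\ccal\to\ho\ccal$ — the total left derived functor of $\otimes$. Dually the two right adjoints of $\otimes$ descend, via $\cofrep$ in the contravariant slot and $\fibrep$ in the covariant one, to functors $R\hom_r$ and $R\hom_l$ on $\ho\ccal$, and the defining two-variable adjunction isomorphisms pass to $\ho\ccal$ by applying the derived-adjunction lemma stated above one variable at a time.

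Next I would produce the unit isomorphism. Let $q\co\cofrep\sphspec\to\sphspec$ be the chosen cofibrant replacement of the unit. The second half of the pushout product axiom says exactly that $\cofrep\sphspec\otimes X\to\sphspec\otimes X$ is a weak equivalence for every cofibrant $X$; composing with the point-set unit isomorphism $\sphspec\otimes X\cong X$ and replacing $X$ by $\cofrep X$ gives a natural isomorphism $\sphspec\otimes^L X\cong X$ in $\ho\ccal$, and symmetrically on the right.

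The remaining work is to transport the associator and verify coherence. The point-set associativity isomorphism $a\co(X\otimes Y)\otimes Z\to X\otimes(Y\otimes Z)$ restricts to cofibrant $X,Y,Z$, but $\cofrep X\otimes\cofrep Y$ need not be cofibrant, so to compare it with $(X\otimes^L Y)\otimes^L Z$ I would interpolate a cofibrant replacement $\cofrep(\cofrep X\otimes\cofrep Y)\xrightarrow{\sim}\cofrep X\otimes\cofrep Y$ and tensor on the right with the cofibrant object $\cofrep Z$; this stays a weak equivalence by the preservation property established above. Splicing these comparison maps with $a$, and performing the mirror manoeuvre for $X\otimes^L(Y\otimes^L Z)$, yields a natural isomorphism $(X\otimes^L Y)\otimes^L Z\cong X\otimes^L(Y\otimes^L Z)$ in $\ho\ccal$. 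The pentagon identity for this associator, and the triangle identity relating it to the unit isomorphisms, then reduce — once every interpolated replacement is unwound — to the corresponding identities already holding in $\ccal$, using that any two comparisons between cofibrant replacements of the same object agree in $\ho\ccal$.

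I expect this last step to be the main obstacle: the derived tensor product is associative only ``after cofibrant replacement,'' so checking the coherence axioms amounts to chasing a sizeable diagram of weak equivalences and repeatedly invoking the fact that tensoring with a fixed cofibrant object preserves weak equivalences between cofibrant objects. The arguments are routine but the bookkeeping is the bulk of the proof. Once coherence is established, the closed structure on $\ho\ccal$ is immediate: $R\hom_r$ and $R\hom_l$ together with $\otimes^L$ constitute an adjunction of two variables, which is precisely the statement that the derived monoidal product is closed. (If $\ccal$ is moreover symmetric monoidal, the symmetry isomorphism descends by the same device and the hexagon is verified in the same way.)
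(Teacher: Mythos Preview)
Your proposal is correct and is essentially the argument that Hovey gives in \cite[Theorem 4.3.2]{hov99}; the paper itself does not supply a proof but simply cites that reference. One small slip worth fixing: you write that $\cofrep X\otimes\cofrep Y$ ``need not be cofibrant,'' but in fact it always is. Since $\otimes$ is an adjunction of two variables it preserves colimits in each variable, so $\emptyset\otimes Y\cong\emptyset\cong X\otimes\emptyset$; applying the pushout product axiom to the cofibrations $\emptyset\to\cofrep X$ and $\emptyset\to\cofrep Y$ then shows $\emptyset\to\cofrep X\otimes\cofrep Y$ is a cofibration. This does not invalidate your argument --- your interpolated cofibrant replacement is still a weak equivalence, now between cofibrant objects --- but it does mean the associator step is simpler than you anticipated, and the ``main obstacle'' you flag largely evaporates: the point-set associator already lives between cofibrant objects and descends directly.
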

This proposition is part
of \cite[Theorem 4.3.2]{hov99}.
Often we have a monoidal product that is
symmetric, for this we use \cite[Definition 4.1.4]{hoverr} below.

\begin{definition}
A \textbf{symmetric monoidal
model category}\index{Symmetric monoidal
model category} is a monoidal category
$\ccal$ with a natural commutativity
isomorphism $T_{X,Y} \co X \otimes Y \to Y \otimes X$.
We require $T$ to be compatible with the unit isomorphisms
and associativity of $\ccal$
and satisfy $T_{Y,X} \circ T_{X,Y} = \id_{X \otimes Y}$.
\end{definition}

Now we want to know when this extra structure
is preserved by Quillen functors.
The following two definitions are
\cite[Definitions 3.3 and 3.6]{ss03monequiv}.
\begin{definition}
A \textbf{monoidal functor}\index{Monoidal functor} is
a functor between monoidal categories
$F \co \ccal \to \dcal$  with a morphism
$\nu \co S_\dcal \to F(S_\ccal)$ and natural morphisms
$FX \otimes FY \to F(X \otimes Y)$ which are coherently associative
and unital. If these maps are isomorphisms then
$F$ is a \textbf{strong monoidal functor}\index{Strong monoidal functor}.
A \textbf{symmetric monoidal functor}\index{Symmetric
monoidal functor} between symmetric monoidal
categories is a monoidal functor $F$ such that
the following diagram commutes.
$$\xymatrix{
FX \otimes FY \ar[r] \ar[d]_{T_{FX,FY}} &
F(X \otimes Y) \ar[d]_{F(T_{X,Y})} \\
FY \otimes FX \ar[r] & F(Y \otimes X)
}$$
\end{definition}

\begin{definition}
A \textbf{monoidal Quillen pair}\index{Monoidal Quillen pair} is
a Quillen pair $L : \ccal \overrightarrow{\leftarrow} \dcal : R$
between monoidal model categories with $R$ a monoidal
functor such that the following conditions hold.
\begin{enumerate}
\item For all cofibrant objects $A$ and $B$ in $\ccal$
the induced map (see below) $L(A \otimes_\ccal B) \to LA \otimes_\dcal LB$
is a weak equivalence in $\dcal$.
\item For some (hence any) cofibrant replacement of the unit in $\ccal$,
$\cofrep S_\ccal \to S_\ccal$, the composite
$L\cofrep S_\ccal \to LS_\ccal \to S_\dcal$
is a weak equivalence in $\dcal$.
\end{enumerate}

The map $L(A \otimes_\ccal B) \to LA \otimes_\dcal LB$
is the adjoint of the composite:
$A \otimes_\ccal B \to RLA \otimes_\ccal RLB \to R(LA \otimes_\dcal LB)$.

A \textbf{strong monoidal Quillen pair}\index{Strong monoidal Quillen pair}
is a monoidal Quillen pair such that the maps
$L(A \otimes_\ccal B) \to LA \otimes_\dcal LB$
and $LS_\ccal \to S_\dcal$ are isomorphisms, thus
$L$ is a strong monoidal functor.
A monoidal Quillen pair
is a \textbf{symmetric monoidal Quillen pair}\index{Symmetric
monoidal Quillen pair} if the right adjoint is a symmetric monoidal
functor and the following diagram commutes.
$$\xymatrix{
L(X \otimes Y) \ar[r] \ar[d]_{L(T_{X,Y})} &
LX \otimes LY \ar[d]_{T_{LX,LY}}  \\
L(Y \otimes X) \ar[r] & LY \otimes LX
}$$
\end{definition}
Thus a strong monoidal adjunction is precisely the same as
a monoidal Quillen adjunction of \cite[Definition 4.2.16]{hov99}.
The conditions on the left adjoint of a monoidal Quillen pair
ensure that this passes to a strong monoidal functor
on the homotopy categories. We have the result below.
\begin{proposition}
A monoidal Quillen pair gives a strong monoidal adjunction
between the homotopy categories.
\end{proposition}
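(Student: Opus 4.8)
The plan is to show that the total left derived functor $\mathbf{L}L \co \ho \ccal \to \ho \dcal$ of the left adjoint carries a natural strong monoidal structure; the adjunction $(\mathbf{L}L, \mathbf{R}R)$ on homotopy categories is already provided by the lemma of \cite[Lemma 1.3.10]{hov99} quoted above, so it then remains only to check that this adjunction is monoidal, which follows from strong monoidality of $\mathbf{L}L$ by the usual doctrinal-adjunction argument (the right adjoint inherits a lax monoidal structure and the unit and counit become monoidal). First I would recall how the derived monoidal product is computed: since $\otimes$ is a Quillen bifunctor, for cofibrant $A$ and $B$ the object $A \otimes_\ccal B$ is again cofibrant and represents the derived product $A \otimes^{\mathbf L} B$, while the derived unit is represented by a cofibrant replacement $\cofrep S_\ccal$ of $S_\ccal$, the second clause of the pushout product axiom guaranteeing that $\cofrep S_\ccal$ is a unit in $\ho \ccal$; the same holds for $\dcal$. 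Since $L$ is a left Quillen functor it sends $\emptyset \to A$ to a cofibration whenever $A$ is cofibrant, so $L$ preserves cofibrant objects and $\mathbf{L}L$ is computed on cofibrant objects simply by applying $L$.

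Next I would produce the structure maps. For cofibrant $A$ and $B$ the natural map $L(A \otimes_\ccal B) \to LA \otimes_\dcal LB$ of the definition of a monoidal Quillen pair (the adjoint of $A \otimes_\ccal B \to RLA \otimes_\ccal RLB \to R(LA \otimes_\dcal LB)$, using the unit of the adjunction and the lax structure of the monoidal functor $R$) has source $L(A \otimes_\ccal B)$ and target $LA \otimes_\dcal LB$ both cofibrant, representing $\mathbf{L}L(A \otimes^{\mathbf L} B)$ and $\mathbf{L}L A \otimes^{\mathbf L} \mathbf{L}L B$ respectively. Condition (1) of the definition says this map is a weak equivalence, hence an isomorphism in $\ho \dcal$, and its inverse is the required natural isomorphism $\mathbf{L}L A \otimes^{\mathbf L} \mathbf{L}L B \cong \mathbf{L}L(A \otimes^{\mathbf L} B)$, exhibiting $\mathbf{L}L$ as strong monoidal on the tensor. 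For the unit I take the point-set map $L S_\ccal \to S_\dcal$ adjoint to $\nu \co S_\ccal \to R S_\dcal$, precompose with $L$ applied to the cofibrant replacement $\cofrep S_\ccal \to S_\ccal$, and invoke condition (2) to see that the composite $L \cofrep S_\ccal \to S_\dcal$ is a weak equivalence; since $L \cofrep S_\ccal$ represents $\mathbf{L}L$ of the derived unit of $\ho \ccal$, this yields an isomorphism $\mathbf{L}L S_\ccal \cong S_\dcal$ in $\ho \dcal$.

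It remains to verify naturality over all of $\ho \ccal$ together with the associativity and unit coherence diagrams. Here I would restrict to the full subcategory $\ccal_c$ of cofibrant objects, on which the constructions above are honest point-set functors: $L(A \otimes_\ccal B) \to LA \otimes_\dcal LB$ is natural in $A$ and $B$ since it is adjoint to the coherent lax structure of $R$, and the pentagon and unit triangles commute in $\dcal$ for the same reason, hence commute in $\ho \dcal$ after applying the localisation functor. Identifying $\ho \ccal$ with the localisation of $\ccal_c$ at its weak equivalences (and likewise for $\dcal$) transports this structure to a strong monoidal structure on $\mathbf{L}L$, independent of the choices of cofibrant replacement. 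The only genuine work is this last bookkeeping step --- checking that the structure maps, defined a priori only between cofibrant objects, descend to a well-defined natural transformation of derived functors and that the coherence constraints survive localisation --- and it is entirely formal; the argument follows \cite{ss03monequiv}.
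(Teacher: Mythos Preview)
The paper does not prove this proposition; it is stated without proof, as a known consequence of the definitions (which are taken directly from \cite{ss03monequiv}). Your argument is correct and is exactly the standard one: conditions (1) and (2) in the definition of a monoidal Quillen pair are precisely what is required for the oplax structure on $L$ to descend to a strong monoidal structure on $\mathbf{L}L$, and the coherence and doctrinal-adjunction steps are formal. Your closing reference to \cite{ss03monequiv} is apt, since that is where the argument is carried out in detail.
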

A monoidal Quillen equivalence between a pair of
monoidal model categories tells us that they have the same
homotopy categories and that we can use this adjunction
to compare the monoidal products.
In particular, there is
\cite[Theorem 3.12]{ss03monequiv}, very roughly this says that
if you add in some extra assumptions
then a monoidal Quillen equivalence
passes to Quillen equivalences
between categories of modules
and monoids.
A strong monoidal Quillen equivalence
behaves better from a categorical point of view.
For example, it will
preserve structures such as
enrichments, tensorings and cotensorings.

We need a technical condition on our model categories
to ensure good behaviour. This is of great importance
when constructing new model categories from existing ones.
The following definition is \cite[Definition 2.1.17]{hov99},
we will explain the new concepts in the definition shortly.
\begin{definition}
A \textbf{cofibrantly generated}\index{Cofibrantly generated}
model category is a model category $\ccal$ with sets of maps $I$
and $J$ such that:
\begin{enumerate}
\item the domains of $I$ are small relative to $I$-cell,
\item the domains of $J$ are small relative to $J$-cell,
\item the class of fibrations is the class of maps
with the right lifting property with respect to every map in $J$,
\item the class of acyclic fibrations is the class of maps
with the right lifting property with respect to every map in $I$.
\end{enumerate}
We call $I$ the set of
\textbf{generating cofibrations}\index{Generating cofibrations}
and $J$ the set of \textbf{generating acyclic
cofibrations}\index{Generating acyclic cofibrations}.
\end{definition}

For $\lambda$ an ordinal, a $\lambda$-sequence in
$\ccal$ (a category with all small colimits) is
a colimit preserving functor $X \co \lambda \to \ccal$.
If $\kappa$ is a cardinal, an ordinal $\lambda$
is $\kappa$-filtered if it is a limit ordinal
and if $A \subseteq \lambda$ with $|A| \leqslant \kappa$,
then $\sup A < \lambda$.
This is from \cite[Definitions 2.1.1 and 2.1.2]{hov99},
the following pair of definitions are
\cite[Definitions 2.1.3 and 2.1.9]{hov99}.
\begin{definition}
Let $\ccal$ be a category with all small colimits, $I$
a collection of maps in $\ccal$ and $\kappa$ a cardinal.
Then an object $A$ of $\ccal$
is \textbf{$\kappa$-small with respect to $I$}
if the map of sets
$$\colim_{\beta < \lambda} \ccal(A, X_\beta) \longrightarrow
\ccal(A, \colim_{\beta < \lambda} X_\beta) $$
is an isomorphism for all
$\kappa$-filtered ordinals $\lambda$
and all $\lambda$-sequences $X$ such that
$X_\beta \to X_{\beta +1}$ is in $\dcal$.
We say that \textbf{$A$ is small with respect
to $I$} if there is an cardinal
$\kappa$ such that $A$ is
$\kappa$-small with respect to $I$.
We say that $A$ is \textbf{small}\index{Small}
if it is small with respect to $\ccal$ itself.
\end{definition}

\begin{definition}
For a set of maps $I$ in a model category
the collection $I$-cell of
\textbf{relative $I$-cell complexes}\index{Relative cell complex}
is the collection of transfinite compositions of pushouts of maps of $I$.
That is a map in $I$-cell has form
$X_0 \to \colim_{\beta < \lambda} X_\beta$
for some ordinal $\lambda$ and some $\lambda$
sequence $X$, such that for each
$\beta + 1 < \lambda$ there is a pushout square as below
such that $g_\beta$ is in $I$.
$$\xymatrix{
C_\beta \ar[r] \ar[d]_{g_\beta} &
X_\beta \ar[d] \\
D_\beta \ar[r] \ar@{}[ur]|\ulcorner &
X_{\beta + 1} \ar@{}[ul]|{\phantom{\cdot} \cdot}}$$
\end{definition}

The definition and development below
are taken from \cite{ss03stabmodcat},
which uses the language of triangulated categories, which
we do not define.
Instead, we note that
the homotopy categories of all of the model categories
that we will use are triangulated categories.
In the language of \cite[Definition 2.1.1]{ss03stabmodcat}
these categories are stable model categories.
In particular each of these has a `shift' functor,
a pair of inverse equivalences of the homotopy category
$(\Sigma, \Omega)$. For spectra this is the suspension
and loop pair, for chain complexes of $R$ modules the
adjoint pair is $(- \otimes_R R[1], \hom_R (R[1], -))$.
For $n \geqslant 0$ we let $\Sigma^n$ be the $n$-fold
composite of $\Sigma$ and $\Sigma^{-n}$ be the
$n$-fold composite of $\Omega$.
The definition below is \cite[Definition 2.1.2]{ss03stabmodcat}.
\begin{definition}
An object $X$ of a triangulated category $\tcal$ is
\textbf{compact}\index{Compact} if
$$[X, \bigvee_i Y_i] \cong \bigoplus_i [ X , Y_i]$$
for any collection of objects $Y_i$.
A full triangulated subcategory of $\tcal$ is called a
\textbf{localising subcategory}\index{Localising subcategory}
if it is closed under coproducts in $\tcal$.
A set $\pcal$, of objects of $\tcal$, is called a set
of \textbf{generators}\index{Generator} if the only
localising subcategory of $\tcal$ that contains
$\pcal$ is $\tcal$ itself.
\end{definition}
We will say that an object $X$ of a stable model
category is compact or a generator
if it is so in the homotopy category.
In the homotopy category of a stable model category
we can define $[X,Y]_*$, the
\textbf{graded set of maps}\index{Graded maps}
in the homotopy category from $X$ to $Y$ to be the
graded set with $[X,Y]_n = [\Sigma^n X,Y]$ for $n\ \in \zz$.

\begin{lemma}
Let $\tcal$ be a triangulated category with infinite coproducts
and let $\pcal$ be a set of compact objects. Then the following are
equivalent:
\begin{enumerate}
\item The set $\pcal$ is a set of generators.
\item An object $X$ of $\tcal$ is acyclic if and only if
$[P,X]_*=0$ for all $P \in \pcal$.
\end{enumerate}
\end{lemma}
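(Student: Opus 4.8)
The plan is to prove the two implications separately; the only features of $\tcal$ used are that $\Sigma$ is an equivalence, that maps out of a coproduct satisfy $[\coprod_i Y_i, Z]\cong\prod_i[Y_i,Z]$, and that every triangle induces a long exact sequence in each variable. Recall that an object is \emph{acyclic} precisely when it is isomorphic to the zero object $0$, so $[P,\Sigma^n 0]=0$; hence the forward half of the biconditional in $(2)$ (acyclic implies $[P,X]_*=0$) is automatic and carries no content in either direction.

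For $(1)\Rightarrow(2)$ it therefore remains to show: if $[P,X]_*=0$ for every $P\in\pcal$, then $X$ is acyclic. Let $\mathcal{L}_X$ be the full subcategory of objects $Y$ with $[Y,\Sigma^n X]=0$ for all $n\in\zz$. Mapping a triangle into the fixed object $\Sigma^n X$ gives a long exact sequence, so $\mathcal{L}_X$ is closed under the third-vertex operation, and the product formula for maps out of a coproduct shows it is closed under arbitrary coproducts; together with closure under $\Sigma^{\pm1}$ this makes $\mathcal{L}_X$ a localising subcategory. By hypothesis $\pcal\subseteq\mathcal{L}_X$, and since $\pcal$ is a set of generators this forces $\mathcal{L}_X=\tcal$. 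In particular $X\in\mathcal{L}_X$, so $[X,X]=0$, whence $\id_X=0$ and $X\cong 0$.

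For $(2)\Rightarrow(1)$, let $\mathcal{L}$ be any localising subcategory containing $\pcal$; the goal is $\mathcal{L}=\tcal$. Fix $Y\in\tcal$ and build a $\pcal$-cellular approximation of $Y$ by a countable tower. Set $X^{(1)}=\coprod\Sigma^m P$, the coproduct over all morphisms $\Sigma^m P\to Y$ with $P\in\pcal$, $m\in\zz$, equipped with the tautological map to $Y$; then $[\Sigma^n P,X^{(1)}]\to[\Sigma^n P,Y]$ is surjective for all $P\in\pcal$, $n\in\zz$. Given $X^{(k)}\to Y$ inducing surjections on every $[\Sigma^n P,-]$, complete it to a triangle $K^{(k)}\to X^{(k)}\to Y\to\Sigma K^{(k)}$, form $L^{(k)}=\coprod\Sigma^m P$ over all maps $\Sigma^m P\to K^{(k)}$ (with $P\in\pcal$, $m\in\zz$), and let $X^{(k+1)}$ be the cofibre of the composite $L^{(k)}\to K^{(k)}\to X^{(k)}$. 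Since $K^{(k)}\to X^{(k)}\to Y$ vanishes (consecutive maps in a triangle), $X^{(k)}\to Y$ factors through $X^{(k+1)}$; the surjectivity property is inherited, since the composite $[\Sigma^n P,X^{(k)}]\to[\Sigma^n P,X^{(k+1)}]\to[\Sigma^n P,Y]$ is the old surjection; and, crucially, any $\Sigma^n P\to X^{(k)}$ that becomes null in $Y$ lifts through $K^{(k)}$ to a summand of $L^{(k)}$ and so is annihilated by $X^{(k)}\to X^{(k+1)}$. Let $X^{(\infty)}$ be the homotopy colimit (telescope) of the tower, with induced map $g\co X^{(\infty)}\to Y$.

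Because $\mathcal{L}$ is closed under shifts, coproducts and third vertices, each $X^{(k)}$ — and hence the telescope $X^{(\infty)}$ — lies in $\mathcal{L}$. Compactness of the objects of $\pcal$ identifies $[\Sigma^n P,X^{(\infty)}]$ with $\colim_k[\Sigma^n P,X^{(k)}]$; the inherited surjectivity makes $g$ surjective on each $[\Sigma^n P,-]$, and the systematic annihilation of kernel classes makes it injective there. Completing $g$ to a triangle $X^{(\infty)}\xrightarrow{g}Y\to C\to\Sigma X^{(\infty)}$, the long exact sequence then gives $[P,C]_*=0$ for all $P\in\pcal$, so $(2)$ forces $C$ to be acyclic and $g$ to be an isomorphism. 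Thus $Y\cong X^{(\infty)}\in\mathcal{L}$; as $Y$ was arbitrary, $\mathcal{L}=\tcal$, and $\pcal$ is a set of generators. The substantive work lies entirely in $(2)\Rightarrow(1)$: the bookkeeping that the surjectivity condition propagates up the tower and that kernel classes are killed at the next stage, and — the one genuinely non-formal input — the use of compactness to commute $[\Sigma^n P,-]$ past the telescope, without which the colimit identification, and hence the whole argument, collapses.
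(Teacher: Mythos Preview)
Your proof is correct. The paper itself does not give an argument but simply cites \cite[Lemma 2.2.1]{ss03stabmodcat}, and what you have written is essentially the standard proof of that lemma: the localising-subcategory argument for $(1)\Rightarrow(2)$ and the inductive $\pcal$-cellular tower plus telescope for $(2)\Rightarrow(1)$, with compactness used exactly where you indicate to identify $[\Sigma^n P,X^{(\infty)}]$ with the sequential colimit.
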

\begin{proof}
This is \cite[Lemma 2.2.1]{ss03stabmodcat}.
\end{proof}

The following is \cite[Definition 3.3]{ss00},
this is an important condition which
allows one (see \cite[Theorem 4.1]{ss00})
to make model structures for categories
of modules and algebras over a ring object.
\begin{definition}
Let $\ccal$ be a monoidal model category,
let $P$ be the class of maps of the form
$f \otimes \id_Z \co X \otimes Z \to Y \otimes Z$
where $f$ is an acyclic cofibration and
$Z$ is an object of $\ccal$.
The model category $\ccal$ satisfies the
\textbf{monoid axiom}\index{Monoid axiom}
if every map in $P$-cell is a weak equivalence.
\end{definition}

\newpage

\section{Spaces, Spectra and Equivariance}\label{sec:spacesandspectra}

We will use many different categories, as
is common in algebraic topology. We introduce
some of the more important ones here. All
of our examples are pointed model categories.
We reiterate that throughout $G$ will be a
compact Lie group.

\paragraph*{Spaces}\label{para:spaces} The category of spaces we use
is $\tscr_*$\index{T@$\tscr_*$}, compactly generated
weak Hausdorff based spaces.
Based means that every object of the category has a
distinguished point, called the basepoint.
Maps of based spaces are required to
preserve the basepoints.
This is a symmetric monoidal
category under the smash product. The model structure
is cofibrantly generated with generating cofibrations
the boundary inclusions $S^{n-1}_+ \to D^n_+$ and generating
acyclic cofibrations the inclusions
$D^n_+ \to (D^n \times I)_+$, for $n \geqslant 0$.
The weak equivalences
are the weak homotopy equivalences ($\pi_*$-isomorphisms)
and the fibrations are the Serre fibrations: those maps
with the right lifting property with respect to the
generating acyclic cofibrations. This
material has been taken from \cite[Section 2.4]{hov99}.
In general whenever we mention a topological space
we mean an object of $\tscr_*$.

\paragraph*{$G$-Spaces}\label{para:Gspaces}
We can also consider $G \tscr_*$\index{G T@$G \tscr_*$}, based
$G$-equivariant spaces,
as in \cite[Chapter III, Section 1]{mm02}.
The basepoint of a space is required to be $G$-fixed.
The generating cofibrations are those maps
$(G/H \times S^{n-1})_+ \to (G/H \times D^n)_+$
where $n \geqslant 0$ and $H$ runs through the closed subgroups of $G$.
The generating acyclic cofibrations are the maps
$(G/H \times  D^n)_+ \to (G/H \times D^n \times I)_+$.
A map $f$ of $G$-spaces is a weak equivalence or fibration
if and only if each $f^H$ (the map induced on $H$-fixed points)
is a weak equivalence or fibration of topological spaces. This
category is symmetric monoidal under the smash product
of $G$-spaces, where $G$ acts diagonally.

\paragraph*{Simplicial Sets}\label{para:simpsets}
We take the following
information from \cite[Chapter 3]{hov99}.
Let $\Delta$ be the category with objects
$[n]=\{0,1, \dots, n\}$ for $n \geqslant 0$
and morphisms the set maps such that
$x \geqslant y $ implies $f(x) \geqslant f(y)$.
A \textbf{simplicial set}\index{Simplicial set}
is a contravariant functor $\Delta \to \SET$.
This is the category of simplicial sets
and we will denote the category of based simplicial
sets by $\SSET$.
The generating cofibrations are the maps
$\partial \Delta[n] \to \Delta[n]$ for $n \geqslant 0$.
The generating acyclic cofibrations
are $\Lambda^r[n] \to \Delta[n]$ for $n > 0$
and $0 \leqslant r \leqslant n$. The cofibrations
are the injective maps and the fibrations
are precisely the Kan fibrations. The
weak equivalences are those maps $f$ such that the geometric
realisation $|f|$ is a weak homotopy equivalence of topological
spaces. We must also use $\sqq \leftmod$\label{app:simpQmod},
simplicial $\qq$-modules.
This is the category of
contravariant functors $\Delta \to \qq \leftmod$.
The model structure on this
category has fibrations and weak equivalences
defined in terms of underlying simplicial sets.

\paragraph*{Symmetric Spectra}\label{para:symspec}
We take our definitions of symmetric spectra
from \cite{hss00}. A \textbf{symmetric spectrum}\index{Symmetric spectrum}
is a collection $\{ X_n \}_{n \geqslant 0}$ of based simplicial sets
with a basepoint preserving
left action of $\Sigma_n$ on $X_n$.

This collection must have based maps $S^1 \smashprod X_n \to X_{n+1}$
for each $n \geqslant 0$ such that
the map $S^p \smashprod X_n \to X_{n+p}$ is
$\Sigma_p \times \Sigma_n$-equivariant for $p \geqslant 1$
and $n \geqslant 0$ (with $\Sigma_p$ permuting the
$p$-factors of $S^p=(S^1)^p$).
A map $f \co X \to Y$ of symmetric spectra is then a sequence of pointed
$\Sigma_n$-equivariant maps $f_n \co X_n \to Y_n$
which commute with the structure maps.
There is a tensor product $\otimes$ of
symmetric spectra $X$ and $Y$ defined by
$$(X \otimes Y)_n = \bigvee_{p+q=n}
(\Sigma_n)_+ \smashprod_{\Sigma_p \times \Sigma_q} (X_p \smashprod X_q).$$
Now we consider a symmetric spectrum $S$ with
$S_n=S^n$ (with the permutation action of
$\Sigma_n$ on $S_n$), this is a ring object with
respect to $\otimes$.
A symmetric spectrum is naturally a left $S$-module. The smash product of
symmetric spectra is then given by $X \otimes_S Y$,
the tensor product over $S$.
A map is a cofibration if it has the left lifting property
with respect to those maps $f$ such that each
$f_n$ is an acyclic fibration of simplicial sets.
The fibrations are characterised by the
right lifting property and the fibrant objects
are the $\Omega$-spectra, those spectra such that
the adjoints of the structure maps
($S^1 \smashprod X_n \to X_{n+1}$) are
weak equivalences.
A spectrum $E$ is injective if given
$f \co X \to Y$, a monomorphism
and a level-wise equivalence
and a map $g \co X \to E$
 there is an $h \co Y \to E$
such that $hf=g$.
The weak equivalences are those maps $f$
such that $E^0 f$ is an isomorphism for
every injective $\Omega$-spectrum $E$. The notation
for this category is $Sp^\Sigma(\SSET)$ which we will usually
shorten to $Sp^\Sigma$\index{S@$Sp^\Sigma$}.

There are also versions of symmetric spectra defined in terms
of topological spaces as constructed in \cite{mmss01}.
In that paper they also consider a model structure
$Sp^\Sigma_+$\label{app:possymspec} called the positive stable model structure.
Here the fibrant objects are those spectra
which are $\Omega$-spectra in all levels
greater than 0 and the weak equivalences
are defined in terms of injective positive
$\Omega$-spectra. The paper
\cite{hov01} generalises these results
still further and considers symmetric spectra
in general model categories.
Fix some monoidal model category $\ccal$ and choose some
cofibrant object $K$ to take the place of $S^1$.
The construction of symmetric spectra
in $\ccal$ then follows the same plan as above,
replacing simplicial sets with $\ccal$
and $S^1$ with $K$. This category will
be denoted $Sp^\Sigma( \ccal; K)$ or just
$Sp^\Sigma( \ccal)$\label{app:gensymspec}\index{S @$Sp^\Sigma( \ccal)$}.

\paragraph*{Orthogonal Spectra}\label{para:orthogspec}
We take this from \cite[Chapter II]{mm02}.
Choose some collection of finite dimensional
irreducible real representations of $G$. We let $U$
be the direct sum of countably many copies of each
of these representations, we say that $U$ is a
\textbf{universe}\index{Universe}.
If we take every finite dimensional representation
in our collection then we call $U$ \textbf{complete}.
Now we define an
\textbf{indexing $G$-space in $U$}\index{Indexing space}
to be a finite dimensional sub $G$-inner product space
of $U$.

Define $\iscr_G^U$ to be a category with objects
those real inner $G$-product spaces
isomorphic to an indexing $G$-space in $U$.
The morphisms of this category are
the $G$-spaces of (non-equivariant)
linear isometric isomorphisms, with
$G$ acting by conjugation.
An \textbf{$\iscr_G$-space} is then a functor,
enriched over based $G$-spaces, from
$\iscr_G^U$ to based $G$-spaces.
Let $G \iscr \tscr$ be the category of
$\iscr_G$-spaces with morphisms the
$G$-equivariant maps.
There is then an obvious notion of
an external smash product of two
$\iscr_G$-spaces $X$ and $Y$:
$X \bar{\smashprod} Y \co \iscr_G^U \times \iscr_G^U \to G \tscr_*$.
Using a left Kan extension one can internalise
this and obtain a smash product operation on
$\iscr_G$-spaces.
The category of \textbf{orthogonal $G$-spectra},
$G \iscr \sscr$\index{G IS@$\GIS$} is the category of
left $\sphspec$-modules in $G \iscr \tscr$,
where $\sphspec$ is the $\iscr_G$-space which takes
$V$ to $S^V$, called the sphere spectrum.
The monoidal product on orthogonal
$G$-spectra is the smash product over $\sphspec$,
$X \smashprod_\sphspec Y := \coeq
(X \smashprod \sphspec \smashprod Y \overrightarrow{\to}
X \smashprod Y)$.

\paragraph*{EKMM $S$-modules}\label{para:ekmm}
 We also make use of the category of
\textbf{$G$-equivariant EKMM $S$-modules}. The brief details
that we give are taken from \cite[Chapter IV]{mm02},
which takes the work of \cite{EKMM97} and adapts it to
the equivariant setting. We start with the category of
$G$-prespectra. An object of this category is a collection of
$G$-spaces $X(V)$, for each indexing space $V$ of some $G$-universe $U$
with $G$-equivariant structure maps $S^{W-V} \smashprod X(V) \to X(W)$
that satisfy the obvious transitivity condition for
$V \subset W \subset X$. A map of $G$-prespectra is
a collection of equivariant maps
$f(V) \co X(V) \to Y(V)$ that commute with the structure maps.
A $G$-May spectrum is a prespectrum such the adjoints of the
structure maps, $X(V) \to F( S^{W-V} , X(W))$,
are homeomorphisms. Any $G$-prespectrum can be made into
a $G$-May spectrum, this is called spectrification.

Let $\iscr(U,U)$ be the space of linear
isometries $U \to U$, with $G$ acting by conjugation.
Now recall the notion of the half-twisted smash product from
\cite{EKMM97}. This construction can be applied to the equivariant
case and for a $G$-May spectrum $X$ we have a $G$-May spectrum
$\iscr(U,U) \ltimes X$.
There is a monad $\mathbb{L}= \iscr(U,U) \ltimes (-)$ on $G$-May spectra
with $G$-equivariant structure maps. We then consider
$\mathbb{L}$-spectra: the category of modules over $\mathbb{L}$,
where the action map is required to be a $G$-map.
The category of $\mathbb{L}$-spectra has an
associative and commutative smash product $\smashprod_{\lscr}$.
The sphere May-spectrum $S$ (the spectrification
of the prespectrum which takes value $S^V$ at level $V$)
is an $\mathbb{L}$-spectra.
If $N$ is an $\mathbb{L}$-spectrum
there is a natural map of $\mathbb{L}$-spectra
$\lambda_N \co S \smashprod_{\lscr} N \to N$.
The category of $S$-modules is defined to be
those $\mathbb{L}$-spectra $N$ such that
$\lambda_N$ is an isomorphism.
This is our desired category of
EKMM $S$-modules, written $G \mcal$. Let $V$ be an indexing
space and $H$ a subgroup of $G$, then
a generalised sphere spectrum\index{Generalised
sphere spectrum} is a spectrum of the form
$S \smashprod_{\mathscr{L}} \mathbb{L} \Sigma_V^\infty (G/H_+ \smashprod S^n)$
for $n \geqslant 0$. These are used to define the model structure
on $G \mcal$\index{G M@$G\mcal$} can be thought of the building blocks
of the category.

Sometimes we will use the generic term spectrum
to indicate either of an orthogonal spectrum
or an EKMM $S$-module. We will do so when
either the category is implicitly understood or when we are making
a statement that applies to either of the above categories.

\paragraph*{Differential graded $ R$-Modules}\label{para:dgrmod}
It is appropriate to define the usual
model structure on chain complexes
of $R$-modules, $dg R \leftmod$\index{dg R@$dg R \leftmod$}, in this section.
This structure is known as the
\textbf{projective model structure}\index{Projective model structure}.
We take these definitions and results from
\cite[Section 2.3]{hov99}.
A map of chain complexes is a weak equivalence
if it is a homology isomorphism and
a fibration if it is a surjection.
Cofibrations are level-wise
split monomorphisms with cofibrant
cokernel. For each $n \in \zz$, let $S^{n}R$
be the chain complex concentrated
in degree $n$, where it takes value $R$
and let $D^n R$ be the chain complex
with $R$ in degrees $n$ and $n-1$
and zeroes elsewhere, with
the identity as the differential
from degree $n$ to $n-1$.
This is cofibrantly generated
model category with generating cofibrations
the maps $S^{n-1}R \to D^n R$
and generating acyclic cofibrations
$0 \to D^n R$. We will also need to consider
$dg R \leftmod_+$\label{app:dgrmod+}, the category of
non-negatively graded chain
complexes of $R$-modules. This has a model
structure with weak equivalences the homology isomorphisms
and fibrations the surjections.

\section{The categories $\GIS$ and $G \mcal$}\label{sec:GIS}
Orthogonal spectra and EKMM $S$-modules
are the most important of the categories that
we will consider. We give some basic results
that will be of use later.
Both of these categories have forgetful
functors to the category of $G$-prespectra.
The weak equivalences of
orthogonal spectra and EKMM $S$-modules
are defined in terms of these underlying prespectra.
The following is \cite[Chapter III, Definition 3.2]{mm02}.

\begin{definition}
For $H$ a subgroup of $G$ and $r$ an integer,
the homotopy group $\pi_r^H(X)$\index{pi @$\pi_r^H$} of a
$G$-prespectrum $X$ is
$$\begin{array}{rcll}
\pi_r^H(X) & = & \colim_V \pi_r^H ( \Omega^V X(V))
& \text{if }r \geqslant 0 \\
\pi_{-r}^H(X) & = & \colim_{V \supset \rr^r}
\pi_0^H ( \Omega^{V-\rr^r} X(V))
& \text{if }r > 0 \\
\end{array} $$
where the colimits run over the $G$-indexing spaces in $U$.
\end{definition}

\begin{theorem}\label{thm:orthogmodel}
For $G$ a compact Lie group, the following classes of maps
define a cofibrantly generated,
proper, closed symmetric monoidal
model structure on $\GIS$\index{G IS@$\GIS$}, the category of
$G$-equivariant orthogonal spectra. This model
structure satisfies the monoid axiom.
The weak equivalences
are those maps $f$ such that $\pi_r^H(f)$
is an isomorphism
for all subgroups $H$ of $G$ and all integers $r$.
The cofibrations
are the maps with the left lifting property with respect to maps
which are level-wise acyclic fibrations of $G$-spaces.
The fibrations are those maps
 with the right lifting property with respect to
cofibrations which are also weak equivalences.
\end{theorem}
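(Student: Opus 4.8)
This is the stable model structure on orthogonal $G$-spectra from \cite[Chapter III]{mm02}; the plan is to reassemble its construction by a two-step procedure (a level model structure, then its stabilisation), paying particular attention to the monoid axiom, which is the piece usually not spelled out. Throughout I assume the shift desuspension (free) functors $F_V \co G \tscr_* \to \GIS$, left adjoint to evaluation at the indexing $G$-space $V$, together with the isomorphisms $F_V A \smashprod_\sphspec F_W B \cong F_{V \oplus W}(A \smashprod B)$.

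First I would set up the \emph{level model structure}: a map $f$ is a level equivalence (resp. level fibration) if each $f(V)$ is a weak equivalence (resp. fibration) of $G$-spaces, and the cofibrations have the left lifting property against level acyclic fibrations. One takes $I^{\mathrm{lv}} = \{ F_V i \}$ and $J^{\mathrm{lv}} = \{ F_V j \}$, where $i$ and $j$ range over the generating cofibrations and generating acyclic cofibrations of $G \tscr_*$ and $V$ over a set of indexing spaces. Since $F_V$ is left adjoint to a functor detecting level (acyclic) fibrations, the lifting characterisations of (acyclic) fibrations hold; the domains are free on compact $G$-spaces and colimits in $\GIS$ are computed levelwise, so they are small and Quillen's small object argument supplies the two functorial factorisations, the remaining axioms being inherited levelwise from $G \tscr_*$. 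This level structure is also symmetric monoidal by reduction to $G \tscr_*$ via the $F$-formula above.

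Next I would \emph{stabilise}. The weak equivalences are declared to be the $\pi_*^H$-isomorphisms for all closed $H \leqslant G$ (these contain the level equivalences), and the cofibrations are kept. For generating acyclic cofibrations, adjoin to $J^{\mathrm{lv}}$ the pushout-products $k \square i$, where $i$ ranges over the generating cofibrations of $G \tscr_*$ and $k$ over the mapping-cylinder replacements of the canonical maps $\lambda \co F_{V'} S^{V'-V} \to F_V S^0$ for $V \subseteq V'$ indexing $G$-spaces; call this set $J^{\mathrm{st}}$ and define fibrations by the right lifting property against it. Granting the \emph{acyclicity claim} — that every map in $J^{\mathrm{st}}$-cell is a $\pi_*^H$-isomorphism — the model axioms follow from the standard recognition principle for cofibrantly generated model categories (cf.\ \cite{hov99}): the small object argument on $(I^{\mathrm{lv}}, J^{\mathrm{st}})$ gives the factorisations, one lifting axiom is the definition of fibrations, the other uses that an $I^{\mathrm{lv}}$-cofibration with the right lifting property against $J^{\mathrm{st}}$-fibrations is a level acyclic cofibration hence a weak equivalence, and two-out-of-three and retracts are immediate.

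The acyclicity claim is where the real work lies, and I expect it (together with its relative, the monoid axiom) to be the main obstacle. One first checks directly from the colimit definition of $\pi_*^H$ that each $\lambda$, hence each $k$, is a $\pi_*^H$-isomorphism; then one needs (i) that level cofibrations are levelwise closed inclusions, so a cofibre sequence of such induces a long exact sequence of $\pi_*^H$, and (ii) that $\pi_*^H$ commutes with the transfinite composites occurring in cell constructions, since each stage is a level cofibration and $\pi_*^H$ is assembled from homotopy groups of $G$-spaces, which commute with such colimits. Combining (i), (ii), the base case, and closure under $\square$-products (again via the $F$-formula) shows the stably acyclic cofibrations are closed under the cell construction. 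The same circle of ideas gives the monoid axiom: for $P$ the class of maps $f \smashprod_\sphspec \id_Z$ with $f$ an acyclic cofibration and $Z \in \GIS$, every such map is a levelwise closed inclusion preserved by cobase change and transfinite composition, and it is a $\pi_*^H$-isomorphism because it is built from maps $k \smashprod_\sphspec \id_Z$ and $F_V(g) \smashprod_\sphspec \id_Z$ already known to be stable equivalences; hence every map in $P$-cell is a weak equivalence. Properness then follows routinely: cofibrations are level cofibrations, so left properness reduces to the long exact $\pi_*^H$-sequence of a levelwise cofibre sequence, and fibrations are level fibrations, so right properness reduces to the long exact $\pi_*^H$-sequence of a levelwise fibre sequence. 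Finally the pushout product axiom reduces on generators to the one in $G \tscr_*$ via $F_V i \square F_W j \cong F_{V \oplus W}(i \square j)$, the unit $\sphspec = F_0 S^0$ is cofibrant so the unit condition is automatic, and the symmetry of $\smashprod_\sphspec$ is inherited, completing the closed symmetric monoidal structure.
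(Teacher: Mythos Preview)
Your proposal is correct and follows exactly the two-stage (level, then stable) construction of \cite[Chapter III]{mm02}; the paper's own proof is simply a citation to Theorem~4.2 and Proposition~7.5 of that reference, so you have expanded what the paper leaves implicit. The one point to watch is that the claim ``$k \smashprod_\sphspec \id_Z$ is already known to be a stable equivalence'' is not free for arbitrary $Z$: in \cite{mm02} this step runs through the auxiliary Proposition~7.3 (smashing with a cofibrant spectrum preserves $\pi_*$-isomorphisms), which is where the cell-filtration argument you allude to is actually deployed.
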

\begin{proof}
This summary consists of
\cite[Chapter III, Theorem 4.2 and Proposition 7.5]{mm02}.
\end{proof}

As one would expect, we will often call weak equivalences
\textbf{$\pi_*$-isomorphisms}\index{pi-isomorphism@$\pi_*$-isomorphism}.
We will shortly define a model structure
on EKMM spectra and we would like a strong monoidal
Quillen equivalence between $\GIS$ and EKMM spectra.
To obtain such an equivalence we need
a slightly different model structure on orthogonal spectra
called the \textbf{positive model structure}\index{Positive model structure}
(see \cite[Chapter III, Section 5]{mm02}).
The other reason to use the positive model structure
is to create a model structure of commutative ring spectra in
orthogonal spectra. One cannot do this with
the usual model structure on orthogonal spectra
as is commented upon in \cite[Section 14]{mmss01}
which references \cite{lewis91}.

A positive level acyclic fibration is a map of orthogonal spectra that is
an acyclic fibration of $G$-spaces on all levels $V$ with $V^G \neq 0$.
A positive cofibration is a map with the left
lifting property with respect to the positive level
acyclic fibrations.
A positive fibration is a map with the right lifting property
with respect to the positive cofibrations that are also
$\pi_*$-isomorphisms.
\begin{theorem}\label{thm:positiveorthogmodel}
The positive cofibrations, positive fibrations
and weak equivalences
define a cofibrantly generated, proper, closed
symmetric monoidal model structure on
$G$-equivariant orthogonal spectra. This model
category satisfies the monoid axiom and will be denoted
by $\GIS_+$\index{G IS+@$\GIS_+$}.
The identity functor is the left adjoint of a Quillen equivalence
$\id \co \GIS_+ \to \GIS$.
\end{theorem}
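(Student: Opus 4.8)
The plan is to establish Theorem \ref{thm:positiveorthogmodel} by leveraging the already-stated Theorem \ref{thm:orthogmodel} together with a general existence criterion for cofibrantly generated model structures --- namely the recognition theorem (e.g. \cite[Theorem 2.1.19]{hov99}), which requires only that one exhibit sets of generating (acyclic) cofibrations whose domains are small, whose cell complexes are contained in the weak equivalences in the appropriate case, and whose lifting properties match up correctly. First I would write down the explicit generating sets: the generating positive cofibrations $I_+$ are the maps of the form $F_V(S^{n-1}_+) \to F_V(D^n_+)$ (shifted desuspension by an indexing space $V$) with the restriction $V^G \neq 0$, and similarly the generating positive acyclic cofibrations $J_+$ consist of the usual $\pi_*$-iso-type maps $F_V(\cdot)$ subject to the same positivity constraint $V^G \neq 0$. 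Smallness of the domains is inherited from the non-positive case of Theorem \ref{thm:orthogmodel}, since these are the same objects. The characterisation of positive fibrations and positive level acyclic fibrations in the statement already tells us that the lifting properties with respect to $J_+$ and $I_+$ respectively cut out exactly the right classes.

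Second, I would verify the two nontrivial hypotheses of the recognition theorem. One is that the weak equivalences (which are unchanged --- still $\pi_*$-isomorphisms) satisfy the two-out-of-three and retract axioms; this is immediate from Theorem \ref{thm:orthogmodel} since the class of weak equivalences has not changed. The other, genuinely substantive, point is that every map in $J_+$-cell is a $\pi_*$-isomorphism (equivalently, that $J_+$-cell $\subseteq$ $I_+$-cof $\cap$ weak equivalences); this is where one must actually compare the positive structure to the standard one. The key observation is that the positivity restriction $V^G\neq 0$ only throws away generating cells indexed by the trivial representation summand, and on homotopy groups smashing with $S^V$ for such $V$ and then looping is a $\pi_*$-isomorphism on the relevant cofibrant objects, so the positive acyclic generating cofibrations are still $\pi_*$-isomorphisms; transfinite compositions of pushouts of them remain so by the standard colimit arguments (properness and the behaviour of $\pi_*$ under filtered colimits and cobase change along cofibrations, all already available from Theorem \ref{thm:orthogmodel} and its proof in \cite{mm02}). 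I expect this comparison --- showing that restricting to positive $V$ does not change which maps are acyclic --- to be the main obstacle, though it is essentially the content of \cite[Chapter III, Section 5]{mm02}, which I would cite.

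Third, with the model structure in hand, the monoidal and properness properties transfer: left properness and right properness hold because they are statements about weak equivalences and pushouts/pullbacks, and the cofibrations have only shrunk (every positive cofibration is a cofibration in the sense of Theorem \ref{thm:orthogmodel}), while the weak equivalences are unchanged, so the pushout-product axiom and the monoid axiom follow from the corresponding facts for $\GIS$ once one checks that the pushout-product of two positive generating cofibrations is again a positive cofibration --- which it is, since $F_V(\cdot) \square F_W(\cdot)$ is built from $F_{V\oplus W}(\cdot)$ and $(V\oplus W)^G \neq 0$ as soon as $V^G\neq 0$. Closedness is inherited since the underlying category and internal hom are the same.

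Finally, for the Quillen equivalence $\id \co \GIS_+ \to \GIS$: the identity functor sends positive cofibrations to cofibrations and, since the acyclic cofibrations of $\GIS_+$ are in particular $\pi_*$-isomorphisms and cofibrations of $\GIS$, it sends positive acyclic cofibrations to acyclic cofibrations, so $(\id,\id)$ is a Quillen pair. Because the two model structures have literally the same weak equivalences, the derived unit and counit are isomorphisms in the homotopy category --- equivalently, for cofibrant $X$ in $\GIS_+$ and fibrant $Y$ in $\GIS$ a map $X \to Y$ is a $\pi_*$-isomorphism on one side iff it is on the other, which is trivial --- so the Quillen pair is a Quillen equivalence. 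No real obstacle remains here; the only care needed is to note that a $\GIS_+$-cofibrant object need not be $\GIS$-cofibrant, but this does not affect the equivalence criterion since the weak equivalences agree, and one may invoke \cite[Chapter III]{mm02} for the precise statement.
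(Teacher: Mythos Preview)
Your sketch is essentially correct and follows the actual argument in \cite[Chapter III, \S5]{mm02}, but the paper's own ``proof'' is nothing more than a citation: it reads, in full, ``This is \cite[Chapter III, Theorem 5.3 and Propositions 5.8 and 7.3]{mm02}.'' So the comparison is that you have unpacked what Mandell--May do, while the paper simply points to them; for the purposes of this thesis, the statement is treated as a black-box import and no independent argument is offered or expected.

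One small wrinkle in your sketch worth flagging: your transfer argument for properness is clean for \emph{left} properness (positive cofibrations are ordinary cofibrations, weak equivalences unchanged), but for \emph{right} properness the logic runs the wrong way --- positive fibrations form a \emph{larger} class than ordinary stable fibrations (RLP against fewer maps), so right properness of $\GIS$ does not immediately give right properness of $\GIS_+$. In practice Mandell--May prove right properness via the fact that stable (positive) fibrations are in particular (positive) level fibrations, and then use the long exact sequence of a levelwise fibration; you would need to invoke that rather than the bare comparison of fibration classes. Since you already defer the details to \cite[Chapter III]{mm02} this is not a real gap, just an imprecision in the justification.
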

\begin{proof}
This is
\cite[Chapter III, Theorem 5.3 and Propositions 5.8 and 7.3]{mm02}.
\end{proof}

We now turn to EKMM $S$-modules, the model structure we are interested in
is the generalised cellular structure. The generating cofibrations
are the maps $E \to I \smashprod E$ and the
generating acyclic cofibrations are the maps
$(I \smashprod E) \to (I \smashprod E) \smashprod I_+$
for a generalised sphere spectrum.
\begin{theorem}\label{thm:ekmmmodel}
For $G$ a compact Lie group, the $\pi_*$-isomorphisms,
generalised cofibrations and restricted $q$-fibrations
form a cofibrantly generated, proper, closed
symmetric monoidal model structure on
$G \mcal$\index{G M@$G\mcal$}.
There is a strong symmetric monoidal Quillen
equivalence
$$\mathbb{N} : \GIS_+ \overrightarrow{\longleftarrow}
G\mcal : \mathbb{N}^{\#}.
\index{$N$@$(\nn,\nn^{\#})$}
$$
\end{theorem}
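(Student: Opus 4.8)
The plan is to assemble the statement from the work of Mandell--May, \cite[Chapter IV]{mm02}, which transports the $S$-module technology of \cite{EKMM97} to the $G$-equivariant world; I indicate the steps. For the model structure on $G\mcal$ I would take $I$ and $J$ to be the sets described above (the maps $E \to I \smashprod E$ and $(I \smashprod E) \to (I \smashprod E)\smashprod I_+$, with $E$ running over the generalised sphere spectra) and invoke the recognition theorem for cofibrantly generated model categories, \cite[Theorem 2.1.19]{hov99}. The hypotheses to verify are: smallness of the domains of $I$ and $J$, which is automatic since $G\mcal$ is topologically enriched with well-behaved colimits; the identification of $I$-injectives with the $\pi_*$-isomorphisms that lift against $J$, and the fact that relative $J$-cell complexes are $\pi_*$-isomorphisms, which uses the ``Cofibration Hypothesis'' of \cite{EKMM97} together with a direct homotopy-group computation for the generalised sphere spectra; and the formal closure and two-out-of-three axioms.

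Properness would be treated separately. Right properness is immediate from the description of the restricted $q$-fibrations. Left properness is one of the structural virtues of the $S$-module setting: cobase change along a generalised cofibration preserves $\pi_*$-isomorphisms because such cofibrations are, up to homotopy, built from ``tame'' inclusions. For the closed symmetric monoidal structure, the smash product is already a closed symmetric monoidal product on the point-set category by \cite{EKMM97} and \cite[Chapter II]{mm02}, so the only homotopical content is the pushout product axiom; I would check it on generating (acyclic) cofibrations, reducing to smash products of generalised sphere spectra, which are again of the same form, and verify the unit axiom via a cofibrant approximation $\cofrep S \to S$ and cell induction. The monoid axiom, needed later in the thesis, follows by the same cell-induction bookkeeping. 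This establishes the first sentence of the theorem.

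For the Quillen equivalence I would use the strong symmetric monoidal functor $\mathbb{N} \co \GIS_+ \to G\mcal$ of \cite[Chapter IV]{mm02}, with right adjoint $\mathbb{N}^{\#}$; strong monoidality (the isomorphisms $\mathbb{N}(A \smashprod B) \cong \mathbb{N} A \smashprod_S \mathbb{N} B$ and $\mathbb{N}\sphspec \cong S$) and compatibility with the symmetry are point-set facts built into the construction, so no homotopical work is needed there. To see $(\mathbb{N}, \mathbb{N}^{\#})$ is a Quillen pair one checks that $\mathbb{N}$ sends positive cofibrations and positive acyclic cofibrations to generalised cofibrations and acyclic ones; it suffices to test this on the generating cells, where $\mathbb{N}$ of a positive sphere orthogonal spectrum is a generalised sphere $S$-module. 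To upgrade to a Quillen equivalence one shows the derived unit $X \to \mathbb{N}^{\#}\fibrep(\mathbb{N} X)$ is a $\pi_*$-isomorphism for cofibrant $X$: using that both functors preserve the relevant cofibrations, filtered colimits and cofibre sequences, a cell induction reduces this to $X$ a positive sphere orthogonal spectrum, where one computes the equivariant homotopy groups of both sides and sees they agree.

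I expect this last cell-induction comparison to be the main obstacle. Controlling the homotopy type of $\mathbb{N} X$ means understanding the $\mathbb{L}$-spectra machinery underlying $\mathbb{N}$ and checking it does not disturb the equivariant $\pi_*^H$; one must also keep careful track of the fact that $\mathbb{N}$ is only a Quillen functor out of the \emph{positive} model structure $\GIS_+$, not the usual one, which is exactly why the statement is phrased with $\GIS_+$. Everything else is either formal or a citation-level appeal to \cite{mm02} and \cite{EKMM97}.
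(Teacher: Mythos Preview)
Your proposal is correct in spirit and relies on the same source, but the paper does not rehearse any of this argument: its entire proof is a one-line citation to \cite[Chapter IV, Theorems 1.1, 1.2, and 2.9]{mm02}. What you have written is essentially an outline of the content of those cited theorems, so there is no genuine difference in approach---you are simply unpacking what the paper leaves packaged.
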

\begin{proof}
We have taken this from
\cite[Chapter IV, Theorems 1.1, 1.2, and 2.9]{mm02}.
\end{proof}

We remind the reader that the categories
$\GIS$, $\GIS_+$ and $G \mcal$
depend on the choice of universe $U$.
When we wish to specify the universe
we are working with, we shall decorate the notation
for these categories accordingly.
We will always require that our universe is complete.
We will (temporarily) denote the sphere spectrum
in each of these categories by $\sphspec$.

\begin{definition}
For $G$ a compact Lie group the
\textbf{Burnside ring}\index{Burnside ring} $A(G)$ is
defined to be $[\sphspec, \sphspec]^G$, maps in
the homotopy category of $G \mcal$ or $\GIS$.
\end{definition}

See \cite[Chapter V, Definitions 2.1 and 2.9]{lms86} for more details.
The ring multiplication is given by composition, which is commutative
since $[f \circ g] = [f \smashprod g]= [g \smashprod f]$,
for homotopy classes of maps $[f]$ and $[g]$.
It is a well known result that when $G$ is finite,
$[\sphspec, \sphspec]^G$ is isomorphic as a ring to the
Grothendieck group of isomorphism classes of finite $G$-sets.
The following result implies that $\pi_0^G(\sphspec) \cong A(G)$.

\begin{lemma}\label{lem:spherehomotopy}
Let $X$ be an orthogonal spectrum, then for
any subgroup $H$ of $G$ and integers $q \geqslant 0$, $p>0$
$$ \begin{array}{rcl}
{}[\Sigma^q S^0 \smashprod G/H_+, X]^G & \cong & \pi^H_q(X) \\
{}[F_p S^0 \smashprod G/H_+, X]^G & \cong &  \pi^H_{-p}(X)
\end{array} $$
where the left hand side denotes maps in the homotopy category
of $\GIS$.
\end{lemma}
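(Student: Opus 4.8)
The plan is to compute both hom-sets by adjunction, reducing everything to the definition of the homotopy groups $\pi^H_*$ of the underlying prespectrum. First I would recall the standard free-orthogonal-spectrum functors $F_V$ (left adjoint to the evaluation $X \mapsto X(V)$) together with the shift desuspension: in $\GIS$ one has $F_0 S^0 \simeq \Sigma^\infty S^0 = \sphspec$, and more generally $F_p S^0$ is a desuspension of $\sphspec$ whose role is to represent negative homotopy groups. The key adjunction is $[F_V A \smashprod G/H_+, X]^G \cong [A, X(V)^H]$ on the point-set level, with $A$ a based $G$-space; passing to homotopy categories (using that $F_V A \smashprod G/H_+$ is cofibrant when $A$ is a cofibrant $G$-CW complex and that $X$ may be replaced by a fibrant, i.e.\ $\Omega$-spectrum, model $\fibrep X$) turns this into a statement about $[-,-]$ in $\ho \GIS$ versus homotopy classes of maps of $G$-spaces.

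The main steps, in order, would be: (1) reduce to the case where $X$ is a (positive) $\Omega$-$G$-spectrum by replacing $X$ with $\fibrep X$, which changes neither side since both are homotopy invariant in $X$; (2) for the first isomorphism, identify $\Sigma^q S^0 \smashprod G/H_+$ with $F_0 S^q \smashprod G/H_+$ and use the free/forgetful adjunction to get $[\Sigma^q S^0 \smashprod G/H_+, X]^G \cong [S^q, X(0)^H]_* = \pi^H_q(X(0))$, then observe that for an $\Omega$-spectrum $\pi^H_q(X(0)) = \colim_V \pi^H_q(\Omega^V X(V)) = \pi^H_q(X)$, exactly the definition given in \cite[Chapter III, Definition 3.2]{mm02}; (3) for the second isomorphism, use that $F_p S^0$ represents, via the analogous adjunction, the functor $X \mapsto \pi_0^H$ of the $p$-th desuspension, i.e.\ $[F_p S^0 \smashprod G/H_+, X]^G \cong \colim_{V \supseteq \rr^p} \pi_0^H(\Omega^{V - \rr^p} X(V)) = \pi^H_{-p}(X)$, again matching the definition of negative homotopy groups; (4) check $G$-equivariance and the $\Sigma_p$-action bookkeeping so that the colimit is taken over the correct poset of indexing spaces. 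Much of this is formal once the right adjunctions are in place, and is essentially \cite[Chapter III, Sections 3--4]{mm02}.

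The step I expect to be the genuine content — as opposed to formal adjunction chasing — is verifying that the colimit appearing in the adjunction computation agrees on the nose with the colimit in the definition of $\pi^H_r$, including for $r<0$ where one must be careful that $F_p S^0$ desuspends by the trivial representation $\rr^p$ rather than by an arbitrary indexing space, and that passing from point-set maps to homotopy classes does not disturb this (one needs a cofibrant source and a fibrant target, and one needs the colimit system to consist of weak equivalences past some stage so that $[-,X]$ commutes with it). A secondary subtlety is that the model structure in play is the usual one of Theorem \ref{thm:orthogmodel} rather than the positive one, so $F_0 S^0 = \sphspec$ is genuinely cofibrant and the first line needs no positivity fix; the $F_p$ with $p>0$ terms already live in the positively-cofibrant range. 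I would handle these by citing the relevant parts of \cite[Chapter III]{mm02} for the adjunction and cofibrancy facts, and give the colimit comparison explicitly since it is short.
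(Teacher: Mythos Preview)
Your proposal is correct. The underlying content is the same as the paper's, but the packaging differs slightly. The paper's proof is a two-line citation: it invokes \cite[Chapter III, Theorem 4.16]{mm02} to identify maps in $\ho\GIS$ with maps in the homotopy category of $G$-prespectra, and then cites \cite[Part III, Proposition 2.8]{adams}, which is precisely the prespectrum-level statement that $[\Sigma^q G/H_+, X]^G \cong \pi^H_q(X)$ (and its negative analogue). You instead stay inside orthogonal spectra and run the free/evaluation adjunction $(F_V,\ev_V)$ directly after fibrantly replacing $X$ by an $\Omega$-$G$-spectrum. Both routes amount to the same adjunction argument; the paper outsources it to two black boxes and passes through prespectra, while you unpack it in place. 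Your version is more self-contained and makes the role of fibrant replacement and of the trivial representation $\rr^p$ in the negative case explicit; the paper's version is shorter but relies on the reader knowing what those citations contain.
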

\begin{proof}
We apply \cite[Chapter III, Theorem 4.16]{mm02}
which expresses maps of orthogonal spectra
in terms of their underlying prespectra.
Then \cite[Part III, Proposition 2.8]{adams} relates
maps in the homotopy category of $G$-prespectra to
homotopy groups.
\end{proof}

The same result holds of $G \mcal$, the only change necessary
is to use \cite[Chapter IV, Theorem 2.9]{mm02} to move to prespectra.

\begin{definition}
For a subgroup $H$ of $G$, the inclusion
of $H$ in $G$ will be written $\iota_H$.
The map $G \to \{ e \}$ will be written
$\varepsilon_G$.
\end{definition}

From these maps we have the change of groups functors
$\iota_H^*$ and $\varepsilon_G^*$.
For $X$ a $G$-space, $\iota^*_H(X)$ is $X$ considered as an
$H$-space. A non-equivariant space $Y$ can be thought of as a $G$-space
with trivial action, we call this $G$-space $\varepsilon_G^*(Y)$,
this functor is known as the inflation functor.

\begin{lemma}
There is a Quillen pair
$\varepsilon_G^* : \tscr_* \overrightarrow{\longleftarrow}
G \tscr_* : (-)^G$\index{$E$@$(\varepsilon_G^*,(-)^G)$}
and for each subgroup $H$ of $G$ there is a Quillen pair
$G_+ \smashprod_H (-) : H \tscr_* \overrightarrow{\longleftarrow}
G \tscr_* : \iota^*_H.$\index{$G$@$(G_+ \smashprod_H (-),\iota^*_H)$}
Furthermore these functors are related by the natural isomorphism
$\varepsilon_H^* \cong \iota_H^* \circ \varepsilon_G^* $.
\end{lemma}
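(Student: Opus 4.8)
The plan is to produce the two adjunctions explicitly, check that each left adjoint is a left Quillen functor using the explicit generating cofibrations of $\tscr_*$, $G\tscr_*$ and $H\tscr_*$ recorded above, and then verify the natural isomorphism directly.

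First I would set up the adjunctions. For a based space $A$ and a based $G$-space $X$, a $G$-map $\varepsilon_G^* A \to X$ out of the trivial $G$-space is exactly a based map $A \to X$ whose image is $G$-fixed, i.e. a based map $A \to X^G$; this is natural in both variables, so $\varepsilon_G^*$ is left adjoint to $(-)^G$. For the second pair, $G\tscr_*(G_+ \smashprod_H B, X) \cong H\tscr_*(B, \iota_H^* X)$ is the standard induction--restriction adjunction along the inclusion $H \hookrightarrow G$, again natural in $B$ and $X$.

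To see these are Quillen pairs I would test the left adjoints on generators. Since $\varepsilon_G^*$ is (up to the canonical identification, $G/G$ being a single point) the functor $(G/G \times (-))$ with trivial action, it sends the generating cofibration $S^{n-1}_+ \to D^n_+$ to $(G/G \times S^{n-1})_+ \to (G/G \times D^n)_+$ and the generating acyclic cofibration $D^n_+ \to (D^n \times I)_+$ to $(G/G \times D^n)_+ \to (G/G \times D^n \times I)_+$, both of which lie among the generators of $G\tscr_*$; as a left adjoint it preserves colimits and retracts, hence all (acyclic) cofibrations, so $\varepsilon_G^*$ is left Quillen. For the second pair one uses that $G_+ \smashprod_H (-)$ commutes with smashing by a trivial-action space factor and that $G_+ \smashprod_H (H/K)_+ \cong (G/K)_+$ for $K \leqslant H$; hence it carries $(H/K \times S^{n-1})_+ \to (H/K \times D^n)_+$ to $(G/K \times S^{n-1})_+ \to (G/K \times D^n)_+$ and similarly for the generating acyclic cofibrations, and each image is a generator of $G\tscr_*$ because $K$ is a closed subgroup of $G$. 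So $G_+ \smashprod_H (-)$ is left Quillen. Equivalently — and perhaps cleaner — one observes directly that $(-)^G$ applied to $f$ is $f^G$ and $\iota_H^*$ applied to $f$ restricts to $f^K$ on $K$-fixed points, and since $G$ itself and every $K \leqslant H$ are closed subgroups of $G$, both right adjoints visibly preserve fibrations and acyclic fibrations by the very definition of the model structure on $G\tscr_*$.

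Finally, for the natural isomorphism: $\varepsilon_G^* A$ is $A$ with the trivial $G$-action, and restricting that action along $H \hookrightarrow G$ gives $A$ with the trivial $H$-action, which is exactly $\varepsilon_H^* A$; this identification is natural in $A$, so $\iota_H^* \circ \varepsilon_G^* \cong \varepsilon_H^*$. I do not expect a genuine obstacle here: the only point requiring care is the bookkeeping that makes $G_+ \smashprod_H (-)$ send the product-with-an-orbit form of the generating (acyclic) cofibrations onto generators of $G\tscr_*$ on the nose, and once that is checked the whole statement is formal, using nothing about $G$ beyond its being a compact Lie group equipped with the model structures described above.
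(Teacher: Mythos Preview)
Your proof is correct. The paper itself states this lemma without proof, treating it as standard background; your argument supplies exactly the routine verification one would expect, checking the adjunctions and then confirming the Quillen condition either by pushing generators forward along the left adjoints or, equivalently, by observing that the right adjoints $(-)^G$ and $\iota_H^*$ preserve (acyclic) fibrations directly from the fixed-point definition of the model structure on $G\tscr_*$.
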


There are spectrum level versions of these functors for both
$\GIS$ and $G \mcal$. The definitions differ of course, but the
essential idea comes from the space level versions.
More details can be found in \cite[Chapter V, Sections 2 and 3]{mm02}
for $\GIS$ and \cite[Chapter VI, Sections 1 and 3]{mm02}
for $G \mcal$.
Since we are working equivariantly we need to adjust
our notion of compact slightly.
\begin{definition}
An object $X$ of $\GIS$ or $G \mcal$ is
\textbf{$H$-compact}\index{H-compact@$H$-compact} if
$$[\iota_H^* X, \bigvee_i Y_i]^H \cong \bigoplus_i [\iota_H^* X , Y_i]^H $$
for any collection of $H$-spectra $Y_i$.
\end{definition}
That is, $X$ is $H$-compact if $\iota_H^* X$
is compact in the category of $H$-spectra.

\begin{lemma}\label{lem:compactgenerators}
The suspension spectra $\Sigma^\infty G/H_+$
for $H$ a closed subgroup of $G$
are a set of $G$-compact generators for the
category of $G$-spectra. Hence $\sphspec$ is $H$-compact
for each subgroup $H$ of $G$.
\end{lemma}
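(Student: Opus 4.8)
The plan is to deduce both claims from the homotopy-group computation of Lemma \ref{lem:spherehomotopy}. First I would assemble that lemma into a single statement: combining its two clauses — and using that $F_p S^0 \simeq \Sigma^{-p}\sphspec$ in $\ho\GIS$, so that $F_p S^0 \smashprod G/H_+$ represents $[\Sigma^{-p}\Sigma^\infty G/H_+, -]^G$ — yields, for every orthogonal $G$-spectrum $Y$, every closed subgroup $H \leqslant G$ and every $n \in \zz$, a natural isomorphism $[\Sigma^\infty G/H_+, Y]^G_n \cong \pi_n^H(Y)$ (here $\Sigma^\infty G/H_+ = S^0 \smashprod G/H_+$ in the notation of Lemma \ref{lem:spherehomotopy}). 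Because $\iota_G^*$ is the identity functor, the assertion ``$\Sigma^\infty G/H_+$ is $G$-compact'' simply means that $\Sigma^\infty G/H_+$ is compact in the triangulated category $\ho\GIS$, i.e. $[\Sigma^\infty G/H_+, \bigvee_i Y_i]^G \cong \bigoplus_i [\Sigma^\infty G/H_+, Y_i]^G$ for every set $\{Y_i\}$ of $G$-spectra; by the identification just made, this follows once we know that $\pi_*^H$ carries wedges of $G$-spectra to direct sums.

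For that point I would run the standard filtered-colimit argument: we may assume each $Y_i$ is cofibrant (the coproduct in $\ho\GIS$ is computed on cofibrant replacements), so that $\bigvee_i Y_i = \colim_F \bigvee_{i\in F} Y_i$ over the filtered poset of finite subsets $F$, with each transition map a cofibration. Then $\pi_*^H$ of orthogonal $G$-spectra commutes with such filtered colimits — this uses the point-set formula $\pi_r^H(X) = \colim_V \pi_r^H(\Omega^V X(V))$, that colimits are formed levelwise, and that $\Omega^V$ together with $\pi_r^H$ commutes with colimits of $G$-spaces along closed inclusions — while a finite wedge is a finite product in $\ho\GIS$, so $\pi_*^H$ splits it as a finite direct sum. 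Composing these isomorphisms gives $\pi_*^H(\bigvee_i Y_i) \cong \bigoplus_i \pi_*^H(Y_i)$, hence the $G$-compactness of each $\Sigma^\infty G/H_+$. I expect this colimit-and-wedge step to be the only point of real substance; everything else is formal.

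To see that $\pcal = \{\Sigma^\infty G/H_+ \mid H \leqslant G \text{ closed}\}$ is a set of generators, I would then invoke \cite[Lemma 2.2.1]{ss03stabmodcat} — legitimate now, since $\ho\GIS$ is a triangulated category with all small coproducts and the objects of $\pcal$ have just been shown compact — which reduces the claim to showing that a $G$-spectrum $X$ with $[\Sigma^\infty G/H_+, X]_* = 0$ for all closed $H$ is acyclic. By the degree-$n$ identification of the first paragraph, that hypothesis says precisely that $\pi_r^H(X) = 0$ for every closed $H \leqslant G$ and every $r \in \zz$, which by Theorem \ref{thm:orthogmodel} is exactly the statement that $0 \to X$ is a weak equivalence, i.e. that $X \cong 0$ in $\ho\GIS$. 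Hence $\pcal$ is a set of $G$-compact generators.

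Finally, for the concluding clause: taking $H = G$ gives $\Sigma^\infty G/G_+ = \Sigma^\infty S^0 = \sphspec$, so $\sphspec$ is $G$-compact by the above. For an arbitrary closed subgroup $H \leqslant G$, the restriction $\iota_H^*\sphspec$ is the sphere spectrum of $H \iscr \sscr$, which is compact in $\ho(H \iscr \sscr)$ by the part of the statement already established, applied with $H$ in place of $G$; hence $\sphspec$ is $H$-compact, as required.
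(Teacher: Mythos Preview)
Your proof is correct and follows essentially the same approach as the paper: both deduce the generator claim from Lemma~\ref{lem:spherehomotopy} together with the detection criterion \cite[Lemma 2.2.1]{ss03stabmodcat}, and both obtain $G$-compactness by writing an arbitrary wedge as a filtered colimit of sub-wedges and using that $\pi_*^H$ commutes with such colimits. The only cosmetic differences are that the paper filters over the full powerset $\mathbb{P}(I)$ rather than finite subsets, and defers the colimit-commutation step to the later Proposition~\ref{prop:HomotopyofHoColim} rather than sketching it inline as you do.
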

\begin{proof}
This is a well known fact, see
\cite[Chapter I, Definition 4.4 and Lemma 5.3]{lms86}
or \cite[Theorem 9.4.3]{hps97}.
We prove this result here, though we will need
some definitions and results from later in the work.
That the $G/H_+$ are generators follows from
Lemma \ref{lem:spherehomotopy}.
That these objects are $G$-compact follows from
Proposition \ref{prop:HomotopyofHoColim}.
Take some collection $\{ Y_i \}_{i \in I}$,
we describe $\bigvee_{i \in I} Y_i$
in terms of a filtered colimit.
Consider the diagram $\mathbb{P}(I)$ with object
set the collection of subsets of $I$ and morphisms
the inclusions. This is obviously a filtered diagram
and if we define a functor $X \co \mathbb{P}(I) \to \GIS$
by $X(J) = \bigvee_{j \in J} Y_j$
with morphisms the obvious inclusions
we see that $\colim_{J} X(J) \cong \bigvee_{i \in I} Y_i$.
Furthermore the maps $X(J) \to X(J')$ for $J \subseteq J'$
are all $h$-cofibrations (Definition \ref{def:hcof}), so that
$\colim_{J} X(J)$ is weakly equivalent to
$\hocolim_{J} X(J)$ (Definition \ref{def:hocolim}).
Hence we have the following isomorphisms
which complete the proof.
$$\pi_*^H(\bigvee_{i \in I} Y_i)
\cong \pi_*^H(\colim_{J} X(J))
\cong \colim_J \pi_*^H(X(J))
\cong \bigoplus_{i \in I} \pi_*^H(Y_i)$$
\end{proof}

For the result below we need a definition: the $G$-space
$\fcal G$\index{F G@$\fcal G$} is the collection of
subgroups of $G$ with finite index in their normaliser
with topology given by the Hausdorff topology.
\begin{lemma}[tom Dieck]
For $G$ a compact Lie group, there is an isomorphism of rings
\index{Dieck isomorphism@tom Dieck's isomorphism}
$A(G) \otimes \qq \cong C(\fcal G / G , \qq).$
\end{lemma}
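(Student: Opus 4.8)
The statement is tom Dieck's theorem, and the plan is to reprove it by constructing and analysing the \emph{mark} (fixed-point) homomorphism. By Lemma~\ref{lem:spherehomotopy}, an element of $A(G) = \pi_0^G(\sphspec) = [\sphspec,\sphspec]^G$ is represented, for a large enough indexing space $V$, by a $G$-equivariant based map $f \co S^V \to S^V$. For each closed subgroup $H$ of $G$ with finite Weyl group, taking $H$-fixed points gives a based self-map $f^H \co S^{V^H} \to S^{V^H}$, and I set $\phi_H(x)$ to be its degree. Since the ring structure on $A(G)$ is induced by the smash product and $(S^V \smashprod S^W)^H = S^{V^H} \smashprod S^{W^H}$, this is a well-defined ring homomorphism $\phi_H \co A(G) \to \zz$ depending only on the conjugacy class $(H)$; equivalently, $\phi_H$ is $\pi_0$ of the geometric $H$-fixed point functor together with the identification of its value on $\sphspec$ with the non-equivariant sphere spectrum. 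Assembling the $\phi_H$ over conjugacy classes gives a homomorphism $\phi \co A(G) \to \prod_{(H) \in \fcal G/G} \zz$.

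The first real step is to show that for a fixed $x \in A(G)$ the function $(H) \mapsto \phi_H(x)$ is locally constant on $\fcal G$, so that $\phi$ actually factors through $C(\fcal G/G, \zz)$. Here one uses that $\sphspec$ is built from the cells $G/K_+ \smashprod S^n$ (Lemma~\ref{lem:compactgenerators}), the slice theorem description of a neighbourhood of $H$ in the space of closed subgroups of $G$, and the invariance of the Euler characteristic of fixed-point sets under small perturbations of the subgroup; this is the point at which one genuinely needs $G$ to be a compact Lie group (for finite $G$ the space $\fcal G/G$ is discrete and this step is vacuous).

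It then remains to see that $\phi$ is injective with finite cokernel, which yields the isomorphism $\phi \otimes \qq$ onto $C(\fcal G/G, \qq)$. Rational surjectivity is proved by a triangularity argument: for each $H \in \fcal G$ the homogeneous space $G/H$ is a closed smooth $G$-manifold, so $\Sigma^\infty G/H_+$ is dualisable and carries an Euler characteristic $\chi_G(G/H) \in A(G)$, and one computes $\phi_K(\chi_G(G/H)) = \chi((G/H)^K)$, which is $0$ unless $(K)$ is subconjugate to $(H)$ and is $|W_G H| \neq 0$ when $(K) = (H)$. Ordering conjugacy classes by subconjugacy, the resulting matrix of marks is triangular with non-zero diagonal, so the classes $\chi_G(G/H)$ span a subgroup of finite index rationally; together with the continuity statement this also covers the non-discrete part of $\fcal G/G$, and injectivity follows once one knows that these Euler characteristic classes generate $A(G) \otimes \qq$. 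In a thesis one would simply cite \cite{tdieck72} (see also \cite[Chapter V]{lms86}) for the injectivity and the finiteness of the cokernel rather than reproduce the argument.

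The main obstacle is everything beyond the finite-group case: the continuity of the marks on the non-discrete space of subgroups with finite Weyl group, and the precise identification of the image with $C(\fcal G/G, \zz)$ up to finite index rather than merely with the finitely supported locally constant functions. Both depend on delicate point-set and manifold facts about the space of closed subgroups of a compact Lie group, so in a thesis whose interest lies elsewhere the sensible course is to take tom Dieck's theorem as a black box.
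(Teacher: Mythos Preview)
The paper's own proof is nothing more than a citation to \cite[Chapter V, Lemma 2.10]{lms86} and \cite[Lemma 6]{tdieck77}; no argument is given. Your proposal goes further and sketches the standard proof via the mark homomorphism, and the outline is correct: the ingredients are the continuity of the marks $(H) \mapsto \phi_H(x)$ on $\fcal G/G$, the triangularity of the mark matrix on the Euler characteristic classes $\chi_G(G/H)$, and the fact that these classes rationally span $A(G)$. You also correctly identify that the genuinely delicate steps for non-finite $G$ (continuity on the non-discrete space of subgroups, finite cokernel) are exactly those one would cite rather than reprove.

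One small comment: your phrasing ``injectivity follows once one knows that these Euler characteristic classes generate $A(G) \otimes \qq$'' is logically correct, but the generation statement is itself essentially tom Dieck's structure theorem for $A(G)$, so this sentence hides most of the content rather than proving it; and the relevant reference for the compact Lie group case is \cite{tdieck77} rather than \cite{tdieck72}. In any case both you and the paper ultimately treat the result as a black box, and your sketch is strictly more informative than what the paper provides.
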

\begin{proof}
We have taken this result from \cite[Chapter V, Lemma 2.10]{lms86},
which references \cite[Lemma 6]{tdieck77}.
\end{proof}

In \cite{gre98b} a space $\scal_f G$\index{s fg@$\scal_f G$} is
constructed, as a set it consists of
the closed subgroups of $G$, but it does not have the
topology induced from the Hausdorff metric.
We can relate $\scal_f G$ to the space $\fcal G$ via the equivalence relation
$\sim$ on $\scal_f G$ which is generated by the following: two subgroups
$H \trianglelefteqslant H'$
are related by $\sim$ if the quotient $H'/H$ is a torus. Then
$(\scal_f G/ \sim) \cong \fcal G$ and we use
tom Dieck's isomorphism
to see that an idempotent of $A(G) \otimes \qq$ corresponds to an
open and closed $G$-invariant subspace of $\scal_f G$ that is a union of $\sim$ classes.

\begin{definition}
Let $a \in A(G) \otimes \qq$,
then the \textbf{support}\index{Support}
of $a$ is the set of $H \leqslant G$ such that
$a(H) \in \qq$ is non-zero (considering $a$ as a continuous map
$(\scal_f G/ \sim) /G \to \qq$).
\end{definition}

\begin{definition}\label{def:hcof}
Working in either of $\GIS$ or $G \mcal$, a map $f \co X \to Y$ is called an
$h$-\textbf{cofibration}\index{h-cofibration@$h$-cofibration}
if it satisfies the \textbf{$G$-homotopy extension property}
\index{G-homotopy extension property@$G$-homotopy extension property}
defined as follows:
whenever there is a pair of maps of $G$-spectra $F \co X \smashprod I_+ \to Z$ and
$g \co Y \to Z$ such that $F \circ i_0 = g \circ f$
there exists a map
$F' \co Y \smashprod I_+ \to Z $ making the diagram below commute.
\begin{displaymath}
\xymatrix{
X \ar[r]^(0.4){i_0} \ar[d]_f & X \smashprod I_+ \ar[d]^F \ar@(r,u)[ddr]^{f \smashprod \id} & \\
Y \ar[r]^(0.4)g \ar@(d,l)[drr]^{i_0} & Z & \\
& & Y \smashprod I_+ \ar[ul]_{F'} }
\end{displaymath}
\end{definition}
We have taken this definition from \cite[Section 5]{mmss01},
as stated in that section there is a universal test case.
Let $Z = Mf$ (the mapping cylinder, see Definition \ref{def:cone/cofibre}),
and let $g$ and $F$ be the
evident maps. If a suitable $F'$ exists in this case,
then $f$ is an $h$-cofibration.
The shorthand notation
is $\xymatrix{X \ar@{{+}{-}{>}}[r] & Y}$.

\begin{lemma}\label{lem:hcofLLP}
A map $f \co X \to Y$ is an $h$-cofibration if and only if
it has the left lifting property with respect to the class of maps
$ ev_0 \co F(I_+,B) \to B$. That is, $f \co X \to Y$
is an $h$-cofibration if and only if
every commutative diagram of the form below has a lift.
\begin{displaymath}
\xymatrix@!C{
X \ar[d]_f \ar[r]^g
& F(I_+,B) \ar[d]^{ev_0} \\
Y \ar@{.>}[ur] \ar[r]_h & B }
\end{displaymath}
\end{lemma}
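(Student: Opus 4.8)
The statement to prove is Lemma~\ref{lem:hcofLLP}: a map $f \co X \to Y$ is an $h$-cofibration if and only if it has the left lifting property with respect to the maps $ev_0 \co F(I_+, B) \to B$.

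\textbf{The plan.} The proof is essentially an exercise in adjunction. The key observation is that a map $X \smashprod I_+ \to Z$ is the same data as a map $X \to F(I_+, Z)$, using the (external, then internalised) tensor--cotensor adjunction between $G$-spectra and based $G$-spaces applied to the interval $I_+$. Under this adjunction, the inclusion $i_0 \co X \to X \smashprod I_+$ is adjoint to (or rather, precomposition with it corresponds to) the evaluation map $ev_0 \co F(I_+, Z) \to Z$. So the $G$-homotopy extension property, which asks for the existence of $F'$ in the square of Definition~\ref{def:hcof}, translates directly into a lifting problem against $ev_0$.

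\textbf{Key steps.} First I would recall the adjunction isomorphism
$$G\sscr(X \smashprod I_+, Z) \cong G\sscr(X, F(I_+, Z))$$
natural in $X$ and $Z$, valid in both $\GIS$ and $G\mcal$. Second, I would check that under this isomorphism the map $i_0 \smashprod \id_X \co X \to X \smashprod I_+$ goes to the map $X \to F(I_+, X)$ whose composite with $ev_0$ is the identity, or more to the point, that for a map $F \co X \smashprod I_+ \to Z$ the composite $F \circ i_0$ corresponds to $ev_0 \circ \widehat{F}$ where $\widehat F \co X \to F(I_+, Z)$ is the adjunct of $F$. Third, I would take an arbitrary commuting square as in the statement, with $g \co X \to F(I_+, B)$ and $h \co Y \to B$ satisfying $ev_0 \circ g = h \circ f$; set $Z = B$, let $F \co X \smashprod I_+ \to B$ be the adjunct of $g$ and note $F \circ i_0 = h \circ f$, so the hypothesis of Definition~\ref{def:hcof} is met; the resulting $F' \co Y \smashprod I_+ \to B$ then has adjunct $\widehat{F'} \co Y \to F(I_+, B)$, and I would verify that $\widehat{F'} \circ f = g$ (from $F' \circ (f \smashprod \id) = F$) and $ev_0 \circ \widehat{F'} = h$ (from $F' \circ i_0 = g$, reading off the mapping-cylinder diagram). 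This gives the lift. The converse runs the same argument backwards: given the lifting property, take the universal test case $Z = Mf$ mentioned after Definition~\ref{def:hcof} and produce the required $F'$.

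\textbf{Main obstacle.} There is no deep obstacle here; the only thing requiring care is the bookkeeping of which map is adjoint to which, in particular making sure the two conditions $F \circ i_0 = g \circ f$ and $F \circ (f \smashprod \id) = \ldots$ in the defining diagram of the homotopy extension property correspond exactly to the two triangles ($\widehat{F'} \circ f = g$ and $ev_0 \circ \widehat{F'} = h$) in the lifting square, and vice versa. One should also note explicitly that $F(I_+, -)$ and $(-) \smashprod I_+$ form a genuine adjoint pair at the point-set level in these categories (not merely up to homotopy), which is where one uses that $I_+$ is a based $G$-space and the tensoring/cotensoring of $\GIS$ and $G\mcal$ over $G\tscr_*$; this is standard from \cite[Chapter II and Chapter IV]{mm02}.
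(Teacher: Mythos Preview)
Your proposal is correct and follows exactly the same approach as the paper: the paper's proof is a single sentence noting that if $h \co X \smashprod I_+ \to B$ is adjoint to $\tilde{h} \co X \to F(I_+,B)$ then $h \circ i_0 = ev_0 \circ \tilde{h}$, and leaves the translation between the two diagrams as routine --- which is precisely the bookkeeping you spell out. One small remark: for the converse you do not need to invoke the universal test case $Z = Mf$; the same adjunction argument run backwards gives the homotopy extension for an arbitrary $Z$ directly.
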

\begin{proof}
It is easy to show that this is an equivalent condition
to the definition, perhaps the only point worth noting is that if
$h \co X \smashprod I_+ \to B$ is the adjoint map to
$\tilde{h} \co X \to F(I_+,B)$ then
$h \circ i_0 = ev_0 \circ \tilde{h}$.
\end{proof}

From this description it is clear that pushouts,
colimits, retracts and compositions of
$h$-cofibrations are also $h$-cofibrations.
There are some standard maps which
are always used in homotopy theory, we have
$i_0 \co S^0 \to I_+$, it takes the non-basepoint
point of $S^0$ to $0 \in I_+$. Equally there is
$i_1 \co S^0 \to I_+$ which sends that point to $1$.
We also need $j \co S^0 \to I$ which includes $S^0$
as the endpoints into $I$.

\begin{lemma}\label{lem:basiccofspaces}
The maps $i_0, i_1 \co S^0 \to I_+$
and $j \co S^0  \to I$ are cofibrations
of $G$-spaces.
\end{lemma}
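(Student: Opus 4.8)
The plan is to check all three maps directly against the explicit generating (acyclic) cofibrations of $G\tscr_*$ recorded in Section~\ref{sec:spacesandspectra}, using that $S^0$, $I$ and $I_+$ all carry the trivial $G$-action. Since the cofibrations of a model category form a saturated class --- closed under cobase change, transfinite composition and retract, and containing all acyclic cofibrations --- it is enough to exhibit each map as either a generating acyclic cofibration or a cobase change of a generating cofibration.

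For $i_0 \co S^0 \to I_+$ I would simply note that this \emph{is} one of the generating acyclic cofibrations of $G\tscr_*$: putting $H=G$ and $n=0$ in the family $(G/H \times D^n)_+ \to (G/H \times D^n \times I)_+$ yields exactly $(G/G \times D^0)_+ = S^0 \hookrightarrow (G/G \times D^0 \times I)_+ = I_+$, the inclusion at the endpoint $0$, which is $i_0$. So $i_0$ is an acyclic cofibration, in particular a cofibration. For $i_1$ I would use the self-homeomorphism $\sigma \co I_+ \to I_+$ that reverses $[0,1]$ and fixes the disjoint basepoint: it satisfies $\sigma \circ i_0 = i_1$, and since $\sigma$ is an isomorphism (hence a cofibration) and cofibrations are closed under composition, $i_1$ is a cofibration. (Alternatively, the inclusion of either endpoint of $I$ is an acyclic cofibration, its left lifting property against fibrations being the homotopy lifting property after the evident reparametrisation.)

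For $j \co S^0 \to I$ --- where $I$ is pointed at the endpoint hit by the basepoint of $S^0$, the only way $j$ can be a map of pointed spaces --- I would realise $j$ as a single $1$-cell attachment, i.e.\ as a pushout of the generating cofibration $(G/G \times S^0)_+ \to (G/G \times D^1)_+$, namely $S^0_+ \hookrightarrow (D^1)_+$. Writing $S^0_+ = \{a,b,\bullet\}$ with $\bullet$ the disjoint basepoint and $S^0 = \{*,\mathrm{pt}\}$, take the attaching map $S^0_+ \to S^0$ sending $\bullet \mapsto *$, $a \mapsto *$ and $b \mapsto \mathrm{pt}$. The pushout identifies the two ends of the $1$-cell with the two points of $S^0$ and the disjoint basepoint with $*$, so it is homeomorphic to $I$ (pointed at $*$), and the canonical map $S^0 \to$ pushout is precisely the inclusion of $\partial I$, that is $j$. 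Since cobase changes of cofibrations are cofibrations, $j$ is a cofibration. Deleting the $G/G$-factors throughout recovers the non-equivariant statements.

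The part to be careful with --- it is bookkeeping rather than a genuine obstacle --- is keeping the basepoint conventions consistent: $i_0$ and $i_1$ must target $I_+$, whose basepoint is \emph{disjoint} from the interval, whereas $j$ targets $I$, whose basepoint is \emph{an endpoint}; correspondingly one has to make sure the cell attachment used for $j$ genuinely produces $I$ together with the map $j$, and not one of $S^1$ or $I_+$ (which come from the other possible attaching maps $S^0_+ \to S^0$). Once these identifications are fixed, nothing further is needed beyond the closure properties of the cofibrations and the generating sets listed in Section~\ref{sec:spacesandspectra}.
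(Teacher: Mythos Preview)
Your argument is correct. For $i_0$ and $i_1$ you do exactly what the paper does: identify $i_0$ as the generating acyclic cofibration with $H=G$, $n=0$, and obtain $i_1$ from it by composing with the reflection of $I_+$.

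For $j$ your route differs slightly from the paper's. You exhibit $j$ as a single cobase change of the generating cofibration $S^0_+ \to (D^1)_+$ along the attaching map $S^0_+ \to S^0$ that sends both the disjoint basepoint and one endpoint to $*$; this places $j$ directly in $I$-cell and is the cleaner argument. The paper instead verifies the left lifting property by hand: it writes $S^0$ and $I$ as quotients $S^0_+/(0\sim +)$ and $I_+/(0\sim +)$, takes an arbitrary acyclic fibration $f\co A\to B$ with a test square for $j$, precomposes with the quotient maps to get a square for the generating cofibration $S^0_+ \to I_+$, lifts there, and then observes that the lift $g\co I_+\to A$ satisfies $g(0)=g(+)$ and so descends to the desired lift $I\to A$. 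Both arguments hinge on the same generating cofibration $S^0_+\to I_+$; yours packages the conclusion via closure of cofibrations under cobase change, while the paper unwinds the lifting property explicitly.
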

\begin{proof}
The map $i_0$ is a generating acyclic cofibration of
the model category of $G$-spaces, hence $i_1$ is
a cofibration.
Label the points of $S^0_+$ as 0, 1 and $+$ (the basepoint)
and similarly for the
endpoints and basepoint of $I_+$.
We can express $S^0$ and $I$ as quotients of the spaces
$S^0_+$ and $I_+$ by the pushout diagrams below.
\begin{displaymath}
\xymatrix@!C{
{\{0 \} }_+ \ar[d] \ar[r] & S^0_+ \ar[d]^a
& \qquad & { \{0 \} }_+ \ar[d] \ar[r] & I_+ \ar[d]^b \\
{*} \ar@{}[ur]|\ulcorner \ar[r] & S^0 \ar@{}[ul]|{\phantom{\cdot} \cdot}
& \qquad & {*} \ar@{}[ur]|\ulcorner \ar[r] & I \ar@{}[ul]|{\phantom{\cdot} \cdot} }
\end{displaymath}
Now we take a test diagram, with $f$ an acyclic fibration of $G$-spaces
\begin{displaymath}
\xymatrix@!C{
S^0_+ \ar[d]^i \ar[r]^a & S^0 \ar[r]^\alpha \ar[d]^j & A \ar@{->>}[d]^f_{\sim}  \\
I_+  \ar[r]_b & I \ar[r]_\beta & B}
\end{displaymath}
this diagram gives a lift $g \co I_+ \to A$
and since $g(+)=g(0)$ this passes to a map
$h \co I \to A$ and this provides the requisite lift to show that
$j$ is a cofibration of $G$-spaces.
\end{proof}

\begin{lemma}\label{lem:hcofsGtopological}
If $f \co X \to Y$ is a cofibration of $G$-spaces and
$A$ is a $G$-spectrum (in $\GIS$ or $G \mcal$), then
$\id \smashprod f \co A \smashprod X \to A \smashprod Y$
is an $h$-cofibration. Smashing with $A$ preserves
$h$-cofibrations of $G$-spectra.
\end{lemma}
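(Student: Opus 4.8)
The plan is to reduce the statement to the universal test case for $h$-cofibrations, using Lemma \ref{lem:hcofLLP}. By that lemma, $\id \smashprod f \co A \smashprod X \to A \smashprod Y$ is an $h$-cofibration precisely when it has the left lifting property against all evaluation maps $ev_0 \co F(I_+, B) \to B$ for $G$-spectra $B$. So I would start with an arbitrary commutative square with $A \smashprod X \to F(I_+, B)$ along the top, $A \smashprod Y \to B$ along the bottom, and seek a diagonal lift. The key manoeuvre is to use the (spectrum-level) tensoring–cotensoring adjunction relating $A \smashprod (-)$ on $G$-spaces and $F(-, -)$: a map $A \smashprod X \to F(I_+, B)$ corresponds to a map of $G$-spaces $X \to \hom(A, F(I_+, B)) \cong \hom(A, B)^{I_+}$, where $\hom(A, -)$ is the mapping $G$-space functor right adjoint to $A \smashprod (-) \co G\tscr_* \to \GIS$ (and similarly for $G\mcal$). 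Under this adjunction the lifting square for spectra transposes into a lifting square entirely in $G$-spaces, against the map $ev_0 \co \hom(A,B)^{I_+} \to \hom(A,B)$.

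Next I would observe that finding a lift in that transposed square is exactly the statement that $f \co X \to Y$, which is a cofibration of $G$-spaces by hypothesis, has the left lifting property against $ev_0 \co Z^{I_+} \to Z$ for $Z = \hom(A,B)$. This is a standard fact about the model category of $G$-spaces: the map $ev_0 \co Z^{I_+} \to Z$ is a path-object-type fibration — indeed it is dual to the cofibration $i_0 \co S^0 \to I_+$ of Lemma \ref{lem:basiccofspaces} under the cotensoring of $G$-spaces over based spaces — hence it has the right lifting property against every cofibration of $G$-spaces, in particular against $f$. (Alternatively, $ev_0$ is the pullback-power of $i_0 \co S^0 \to I_+$ with $Z \to *$, which is an acyclic fibration since $i_0$ is an acyclic cofibration; then $f$ lifts against it because $f$ is a cofibration.) Transposing the resulting lift back across the adjunction gives the desired diagonal $A \smashprod Y \to F(I_+, B)$, proving that $\id \smashprod f$ is an $h$-cofibration. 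The final sentence, that smashing with $A$ preserves $h$-cofibrations of $G$-spectra, then follows by the same adjunction argument — or even more directly, since Lemma \ref{lem:hcofLLP} characterises $h$-cofibrations by an LLP and $A \smashprod (-)$ on $G$-spectra is left adjoint to $F(A, -)$, one checks $A \smashprod (-)$ carries the generating test map (or the universal mapping-cylinder case) to an $h$-cofibration and uses closure of $h$-cofibrations under pushouts, retracts and composites noted after Lemma \ref{lem:hcofLLP}.

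The main obstacle I anticipate is bookkeeping the adjunctions correctly in both $\GIS$ and $G\mcal$: one must make sure that the mapping-space functor $\hom(A, -)$ genuinely lands in $G$-spaces and that the canonical isomorphism $\hom(A, F(I_+, B)) \cong \hom(A, B)^{I_+}$ holds, so that the evaluation map $ev_0$ at the spectrum level really does correspond to the space-level $ev_0$. In $\GIS$ this is the tensoring of orthogonal $G$-spectra over $G\tscr_*$ (\cite[Chapter III]{mm02}); in $G\mcal$ one uses the analogous structure from \cite[Chapter IV]{mm02}. Once these identifications are set up cleanly the argument is short, and no separate work is needed for the two categories beyond citing the appropriate tensor/cotensor structure. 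I would also remark that this lemma is essentially the spectrum-level shadow of the familiar fact that smashing a based space with a Hurewicz cofibration yields a Hurewicz cofibration.
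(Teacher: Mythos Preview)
Your proposal is correct and follows the same approach as the paper, just with much more detail: the paper's proof simply says the first statement ``follows from standard adjunctions relating smashing spectra with $G$-spaces to the space of maps between two $G$-spectra'' and that the second follows by ``looking at the universal test case'' (citing \cite[Chapter III, Lemma 7.1]{mm02} for orthogonal spectra), which is exactly the adjunction-and-transpose argument you spell out and the mapping-cylinder check you mention.
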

\begin{proof}
The first statement follows from standard adjunctions
relating smashing spectra with $G$-spaces
to the space of maps between two $G$-spectra.
Looking at the universal test case it is easy to see that
smashing with a spectrum preserves $h$-cofibrations
(for orthogonal spectra this statement is
\cite[Chapter III, Lemma 7.1]{mm02}).
\end{proof}

We now give the analogue of
\cite[Chapter III, Lemma 2.5]{mm02} for $G \mcal$.

\begin{lemma}\label{lem:smodhcof}
A cofibration of $G \mcal$ is an $h$-cofibration
\end{lemma}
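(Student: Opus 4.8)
The statement to prove is that a cofibration of $G\mcal$ is an $h$-cofibration, the EKMM analogue of \cite[Chapter III, Lemma 2.5]{mm02}. The plan is to follow the standard ``retract of a relative cell complex'' argument. By Lemma \ref{lem:hcofLLP}, a map is an $h$-cofibration exactly when it has the left lifting property with respect to the maps $ev_0 \co F(I_+,B)\to B$; moreover, as remarked after that lemma, the class of $h$-cofibrations is closed under pushouts, transfinite composition (colimits along ordinals) and retracts. So it suffices to check that each generating cofibration of $G\mcal$ is an $h$-cofibration, since every cofibration is a retract of a relative cell complex built from the generating cofibrations (this is the usual consequence of the small object argument in a cofibrantly generated model category).

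First I would recall the generating cofibrations of $G\mcal$ from Theorem \ref{thm:ekmmmodel} and the discussion preceding it: they are the maps $E \to I\smashprod E$, where $E$ runs over generalised sphere spectra $S\smashprod_{\lscr}\mathbb{L}\Sigma_V^\infty(G/H_+\smashprod S^n)$ and $I\smashprod E$ denotes the cone construction (mapping cylinder inclusion) built from smashing with the interval. Here I want to exhibit each such generating cofibration as $\id_E \smashprod i$ for a suitable space-level cofibration $i$ — concretely, $E\to I\smashprod E$ arises from smashing $E$ with the inclusion $i_0 \co S^0 \to I_+$ of a point into the interval (up to the standard identifications $S^0\smashprod E \cong E$ and $I_+\smashprod E \cong I\smashprod E$). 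By Lemma \ref{lem:basiccofspaces}, $i_0\co S^0\to I_+$ is a cofibration of $G$-spaces. Then Lemma \ref{lem:hcofsGtopological} (``if $f$ is a cofibration of $G$-spaces and $A$ is a $G$-spectrum in $\GIS$ or $G\mcal$, then $\id_A\smashprod f$ is an $h$-cofibration'') applies with $A = E$ and $f = i_0$, giving immediately that each generating cofibration is an $h$-cofibration.

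Finally I would assemble the pieces: since $h$-cofibrations are closed under pushouts, transfinite composition and retracts, and since every cofibration of $G\mcal$ is such a retract of a relative $I$-cell complex (where $I$ is the set of generating cofibrations, all of which we have just shown to be $h$-cofibrations), every cofibration of $G\mcal$ is an $h$-cofibration. The only point requiring mild care is the identification that the generating cofibration $E\to I\smashprod E$ really is $\id_E$ smashed with a cofibration of $G$-spaces, i.e. checking that the cone/cylinder construction on $G\mcal$ used to define the model structure is literally smashing with $I_+$ in the appropriate sense; this is the main (small) obstacle, but it is exactly the content of the EKMM cylinder construction and is routine once the definitions are unwound. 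The closure properties of $h$-cofibrations I am invoking were already noted in the excerpt immediately after Lemma \ref{lem:hcofLLP}, so no new lemma is needed there.
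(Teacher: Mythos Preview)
Your approach is exactly the paper's: show the generating cofibrations are $h$-cofibrations via Lemma~\ref{lem:hcofsGtopological}, then use closure of $h$-cofibrations under cell complex formation and retracts. One slip to fix: the generating cofibration $E\to I\smashprod E$ is $E\to CE=E\smashprod I$, the \emph{cone} inclusion, so it arises from smashing $E$ with $j\co S^0\to I$ (basepointed interval), not with $i_0\co S^0\to I_+$. Your claimed identification $I_+\smashprod E\cong I\smashprod E$ is false (cylinder versus cone). Lemma~\ref{lem:basiccofspaces} shows $j$ is also a cofibration of $G$-spaces, so the argument goes through unchanged once you swap $i_0$ for $j$; the paper's proof cites both $S^0\to I$ and $S^0\to I_+$ here for good measure.
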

\begin{proof}
A generating cofibration has the form $E \to CE=E \smashprod I$
for $E$ a generalised sphere $S$-module.
These are all $h$-cofibrations of $G$-spectra since the maps $S^0 \to I$ and
$S^0 \to I_+$ are cofibrations of $G$-spaces.
Now recall that $h$-cofibrations are preserved by forming
relative cell complexes and retracts to complete the proof.
\end{proof}

\begin{corollary}\label{cor:basiccofs}
For a $G$-spectrum $X$ (in $\GIS$ or $G \mcal$), the maps
$i_0, i_1 \co X  \to X \smashprod I_+$
and $j \co X  \to CX$ are $h$-cofibrations.
In addition, if $X$ is cofibrant, then these maps are cofibrations.
\end{corollary}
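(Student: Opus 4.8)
The plan is to deduce the corollary from the results already in hand, treating the two claims in order. For the first claim — that $i_0, i_1 \co X \to X \smashprod I_+$ and $j \co X \to CX$ are $h$-cofibrations for any $G$-spectrum $X$ in $\GIS$ or $G \mcal$ — I would observe that these maps are obtained from the corresponding space-level maps by smashing with $X$. Precisely, $i_0 \co X \to X \smashprod I_+$ is $\id_X \smashprod i_0$ where $i_0 \co S^0 \to I_+$ is the space-level inclusion, and similarly $i_1 = \id_X \smashprod i_1$; likewise $j \co X \to CX = X \smashprod I$ is $\id_X \smashprod j$ for $j \co S^0 \to I$ (using $X \smashprod S^0 \cong X$ and the definition $CX = X \smashprod I$ from Definition \ref{def:cone/cofibre}). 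By Lemma \ref{lem:basiccofspaces} the maps $i_0, i_1 \co S^0 \to I_+$ and $j \co S^0 \to I$ are cofibrations of $G$-spaces. Then Lemma \ref{lem:hcofsGtopological} says that smashing a cofibration of $G$-spaces with a $G$-spectrum $A$ yields an $h$-cofibration of $G$-spectra; applying this with $A = X$ and $f$ each of the three space-level cofibrations gives that $i_0, i_1 \co X \to X \smashprod I_+$ and $j \co X \to CX$ are $h$-cofibrations.

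For the second claim — that if $X$ is cofibrant these maps are honest cofibrations — I would argue slightly differently for the two model-category contexts, or give a uniform argument using the monoidal model structure. The uniform approach: in each of $\GIS$ (with its closed symmetric monoidal model structure, Theorem \ref{thm:orthogmodel}) and $G \mcal$ (Theorem \ref{thm:ekmmmodel}), the smash product is a Quillen bifunctor. Hence, if $f \co S^0 \to I_+$ (resp. $S^0 \to I$) were a cofibration of $G$-\emph{spectra}, then $\id_X \square f$ — which for $X$ cofibrant and $S^0$ the unit reduces to $\id_X \smashprod f \co X \to X \smashprod I_+$ — would be a cofibration. But $S^0 \to I_+$ is a map of $G$-\emph{spaces}, so I need to pass through the suspension spectrum functor $\Sigma^\infty$, which is the left adjoint of a Quillen pair from $G$-spaces to $G$-spectra and hence preserves cofibrations; thus $\Sigma^\infty(S^0) = \sphspec \to \Sigma^\infty(I_+)$ and $\sphspec \to \Sigma^\infty(I)$ are cofibrations of $G$-spectra, and $X \smashprod I_+ = X \smashprod_\sphspec \Sigma^\infty(I_+)$ etc. Then the pushout-product / Quillen-bifunctor property applied to the cofibration $\emptyset \to X$ and the cofibration $\sphspec \to \Sigma^\infty(I_+)$ gives that $X = X \smashprod \sphspec \to X \smashprod \Sigma^\infty(I_+) = X \smashprod I_+$ is a cofibration, and similarly for $j$.

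The main obstacle I anticipate is bookkeeping around the identifications $X \smashprod I_+ \cong X \smashprod_\sphspec \Sigma^\infty(I_+)$ and $CX = X \smashprod I$, and making sure the pushout-product map $\id_X \square f$ really does degenerate to $\id_X \smashprod f$ when the source of $f$ is the monoidal unit (this uses that $\emptyset \smashprod (-)$ is the initial object, so the pushout $X \smashprod \sphspec \coprod_{\emptyset \smashprod \sphspec} \emptyset \smashprod \Sigma^\infty(I_+)$ collapses to $X \smashprod \sphspec$). There is also the minor point that in $G \mcal$ the unit $\sphspec$ is already cofibrant (it is built from generalised sphere spectra), whereas in $\GIS$ with the positive model structure it is not — but since we are in the ordinary model structure on $\GIS$ (Theorem \ref{thm:orthogmodel}) this is not an issue, and in any case the argument only needs $\sphspec \smashprod X \to \Sigma^\infty(I_+) \smashprod X$ to be a cofibration for cofibrant $X$, which the Quillen bifunctor property delivers directly. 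Once these identifications are pinned down, both statements follow immediately, so I would keep the written proof to a few lines citing Lemmas \ref{lem:basiccofspaces}, \ref{lem:hcofsGtopological} and Theorems \ref{thm:orthogmodel}, \ref{thm:ekmmmodel}.
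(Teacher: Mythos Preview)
Your argument for the first claim is exactly the paper's: combine Lemma~\ref{lem:basiccofspaces} with Lemma~\ref{lem:hcofsGtopological}.

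For the second claim your route is correct but slightly more roundabout than the paper's. You pass through $\Sigma^\infty$ and the \emph{internal} smash product of spectra (the Quillen bifunctor $\smashprod \co \GIS \times \GIS \to \GIS$, and similarly for $G\mcal$), which forces you to worry about the identification $X \smashprod I_+ \cong X \smashprod_\sphspec \Sigma^\infty I_+$ and whether $\Sigma^\infty$ is left Quillen. The paper instead uses the \emph{mixed} smash product $\smashprod \co \GIS \times G\tscr_* \to \GIS$ (and its $G\mcal$ analogue) directly: these are Quillen bifunctors precisely because the categories are $G$-topological in the sense of \cite[Chapter III, Definition 1.14]{mm02}. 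Applying the pushout product to $\ast \to X$ (a cofibration of spectra) and $S^0 \to I_+$ (a cofibration of $G$-spaces) immediately gives that $X \to X \smashprod I_+$ is a cofibration, with no detour through $\Sigma^\infty$. Your approach buys nothing extra and costs the bookkeeping you flagged; the paper's is the cleaner one-liner.

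One factual slip: you write that in $G\mcal$ ``the unit $\sphspec$ is already cofibrant''. This is false --- in the EKMM model category $S$ is famously \emph{not} cofibrant (every object is fibrant, but the unit fails to be cofibrant). You correctly observe that your argument does not actually need this, so the proof survives, but the parenthetical remark should be deleted.
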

\begin{proof}
The first statement follows from Lemma \ref{lem:hcofsGtopological}
and the second from the fact that
$\smashprod \co \GIS \times G \tscr_* \to \GIS$
and
$\smashprod \co G \mcal \times G \tscr_* \to G \mcal$
are Quillen bifunctors.
We note that the operations above are Quillen bifunctors
because $\GIS$ and $G \mcal $ are $G$-topological
(\cite[Chapter III, Definition 1.14]{mm02}),
in fact these conditions are equivalent.
\end{proof}

\begin{lemma}
Every object of $G \mcal$ is fibrant.
\end{lemma}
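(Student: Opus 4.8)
The statement is that every object of $G\mcal$ is fibrant, i.e.\ that for every $G$-equivariant EKMM $S$-module $X$ the unique map $X\to *$ is a fibration. By the model structure of Theorem \ref{thm:ekmmmodel}, the fibrations are the restricted $q$-fibrations, and these are characterised by the right lifting property with respect to the generating acyclic cofibrations --- the maps $(I\smashprod E)\to (I\smashprod E)\smashprod I_+$ for $E$ a generalised sphere spectrum. So the plan is to show that the terminal map $X\to *$ lifts against each such map. Equivalently, one must produce, for every map $I\smashprod E\to X$, an extension over $(I\smashprod E)\smashprod I_+$; since the target is terminal, the bottom triangle of the lifting square is automatic, and the content is purely that $\smashprod I_+$ admits a retraction onto the $i_0$-inclusion.

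First I would reduce to a homotopy-extension statement. The map $A\to A\smashprod I_+$ appearing here is $i_0$, which by Corollary \ref{cor:basiccofs} is an $h$-cofibration for any $G$-spectrum $A$ (taking $A=I\smashprod E$). To lift $X\to *$ against $i_0\co A\to A\smashprod I_+$ it suffices to exhibit a retraction $A\smashprod I_+\to A$ to $i_0$ in $G\mcal$; then any map $A\to X$ extends along $i_0$ by precomposition with this retraction. Such a retraction exists: it is induced by the collapse $I_+\to S^0$ of $G$-spaces (sending all of $I$ to the non-basepoint), which is a map of $G$-spaces and hence yields $A\smashprod I_+\to A\smashprod S^0\cong A$ after smashing with the spectrum $A$, using that $\smashprod\co G\mcal\times G\tscr_*\to G\mcal$ is a functor and the unit isomorphism $A\smashprod S^0\cong A$. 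One checks this composite is a retraction to $i_0$ because the composite $S^0\xrightarrow{i_0}I_+\to S^0$ is the identity of $S^0$. This gives the required lift, so $X\to *$ is a restricted $q$-fibration.

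**Where the difficulty lies.** The argument above is essentially formal once one knows the generating acyclic cofibrations of $G\mcal$ have domains and codomains of the stated shape $A\to A\smashprod I_+$; the only genuine point is that every such map is split by $\smashprod$ with the $G$-space collapse $I_+\to S^0$, which is routine. The one subtlety I would be careful about is matching conventions: the generating acyclic cofibrations in Theorem \ref{thm:ekmmmodel} are written as $(I\smashprod E)\to(I\smashprod E)\smashprod I_+$ where the first factor $I\smashprod E$ already uses the cone/cylinder construction on a generalised sphere spectrum $E$, so I would spell out that this is precisely the map $i_0$ applied to the cofibrant object $A=I\smashprod E$ and invoke Corollary \ref{cor:basiccofs}, or alternatively argue directly from the definition of a restricted $q$-fibration via the right lifting property. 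A short remark noting that this splitting is compatible with the $G$-action (since $I_+\to S^0$ is a $G$-map with trivial action on $I$) completes the proof, and no real obstacle remains.
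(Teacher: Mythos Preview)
Your proposal is correct and takes essentially the same approach as the paper: both reduce to producing a lift against the generating acyclic cofibrations $i_0\co CE\to CE\smashprod I_+$ and both do so by using a retraction $r\co CE\smashprod I_+\to CE$ with $r\circ i_0=\id$, taking the lift to be $f\circ r$. The paper's proof is simply terser, not naming the collapse map $I_+\to S^0$ explicitly.
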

\begin{proof}
We must check that following diagram has a lift,
for $E$ a generalised sphere spectrum.
\begin{displaymath}
\xymatrix{*+<0.5cm>{CE} \ar[r]^f \ar@{>->}[d]_{i_0}^{\sim} & X \ar@{+->}[d] \\
CE \smashprod I_+ \ar[r] & {*} }
\end{displaymath}
There is a retraction map
$r \co CE \smashprod I_+ \to CE$ such that $r \circ i_0=1$ and
we take the lift to be $f \circ r$.
\end{proof}

We can give the analogues of \cite[Chapter III, Propositions 7.3 and 7.4]{mm02}
for $G \mcal$.
\begin{lemma}\label{lem:sModMonoid}
Let $X$ be a generalised cofibrant spectrum in $G \mcal$,
then the functor $X \smashprod -$ preserves weak equivalences.
For any acyclic cofibration $f$ and any spectrum $Z$,
the map $f \smashprod \id_Z$ is an $h$-cofibration and
a $\pi_*$-isomorphism. Furthermore pushouts and sequential colimits
of such maps are $h$-cofibrations and $\pi_*$-isomorphisms.
\end{lemma}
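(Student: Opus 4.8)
The plan is to adapt the proofs of \cite[Chapter III, Propositions 7.3 and 7.4]{mm02} to $G\mcal$. The tools are: cofibrations of $G\mcal$ are $h$-cofibrations (Lemma \ref{lem:smodhcof}); smashing with a $G$-spectrum preserves $h$-cofibrations (Lemma \ref{lem:hcofsGtopological}); the inclusions $i_0 \co X \to X \smashprod I_+$ are $h$-cofibrations (Corollary \ref{cor:basiccofs}); a colimit along a sequence of $h$-cofibrations computes $\pi_*^H$ correctly (as used in the proof of Lemma \ref{lem:compactgenerators}); and the fact that $G\mcal$ is closed symmetric monoidal, so $(-) \smashprod_S Z$ is cocontinuous for each $Z$.

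For the first statement I would argue by cell induction on $X$. Since $\pi_*$-isomorphisms are closed under retracts it suffices to take $X = \colim_n X^{(n)}$ a generalised cell spectrum with its skeletal filtration. As $(-) \smashprod_S Z$ is cocontinuous it preserves this colimit and the pushouts building the filtration; the skeletal inclusions $X^{(n)} \to X^{(n+1)}$ are cofibrations, hence $h$-cofibrations, so the maps $X^{(n)} \smashprod_S Z \to X^{(n+1)} \smashprod_S Z$ are $h$-cofibrations and $\pi_*^H(X \smashprod_S Z) \cong \colim_n \pi_*^H(X^{(n)} \smashprod_S Z)$. It therefore suffices to handle one pushout of a wedge of generating cofibrations $E \to CE$, together with the corresponding pushouts after smashing with the source and target of a given $\pi_*$-isomorphism $Z \to Z'$; invoking the gluing lemma for $h$-cofibrations, this reduces to showing that $E \smashprod_S (-)$ and $CE \smashprod_S (-)$ preserve $\pi_*$-isomorphisms. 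Since $CE = E \smashprod I$ the cone case reduces to the sphere case, and for $E = S \smashprod_\lscr \mathbb{L}\Sigma^\infty_V(G/H_+ \smashprod S^n)$ the spectrum $E \smashprod_S Z$ is, by the identifications of \cite[Chapter IV]{mm02}, naturally weakly equivalent to a (de)suspension of $G/H_+ \smashprod Z$ by a virtual representation of $G$; smashing with the finite $G$-CW complex $G/H_+$ and (de)suspending preserve all $\pi_*$-isomorphisms, so the base case holds.

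The $h$-cofibration parts of the second and third statements are formal: an acyclic cofibration $f$ is a cofibration, hence an $h$-cofibration (Lemma \ref{lem:smodhcof}); $(-) \smashprod_S Z$ preserves $h$-cofibrations (Lemma \ref{lem:hcofsGtopological}); and $h$-cofibrations are closed under pushout, sequential colimit and retract. For the $\pi_*$-isomorphism parts I would first observe that a generating acyclic cofibration of $G\mcal$ is an inclusion $i_0 \co W \to W \smashprod I_+$; smashing with any $Z$ gives $i_0 \co W \smashprod_S Z \to (W \smashprod_S Z) \smashprod I_+$, an $h$-cofibration (Corollary \ref{cor:basiccofs}) and the inclusion of a strong deformation retract, hence a $\pi_*$-isomorphism. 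A general acyclic cofibration $f$ is a retract of a sequential colimit of pushouts of generating acyclic cofibrations; smashing with $Z$ commutes with these constructions, and the gluing lemma for $h$-cofibrations together with the correct behaviour of $\pi_*^H$ under sequential colimits along $h$-cofibrations keeps the construction inside the class of $h$-cofibrations that are $\pi_*$-isomorphisms, so $f \smashprod \id_Z$ lies in that class. The last sentence of the lemma is the same argument once more: pushouts and sequential colimits of the maps $f \smashprod \id_Z$ are built from $h$-cofibrations that are $\pi_*$-isomorphisms, and those same two facts keep us in that class.

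The one genuinely non-formal ingredient is the gluing lemma for $h$-cofibrations in $G\mcal$: that a pushout of a map which is simultaneously an $h$-cofibration and a $\pi_*$-isomorphism is again a $\pi_*$-isomorphism, and likewise for sequential colimits. This cannot be deduced from left properness of $G\mcal$ alone, since smashing a cofibration with an arbitrary (non-cofibrant) $Z$ yields only an $h$-cofibration; one must instead use the mapping-cylinder description of $h$-cofibrations (Definition \ref{def:hcof}, Lemma \ref{lem:hcofLLP}), exactly as for $\GIS$. I expect this, together with the base-case identification of $E \smashprod_S Z$, to be where the real work lies; both run parallel to arguments already carried out for orthogonal spectra in \cite[Chapter III, Section 7 and Chapter IV]{mm02}.
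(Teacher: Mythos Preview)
Your overall strategy matches the paper's: reduce to generalised cell complexes via retracts, then do cell induction (which the paper simply cites from \cite[Chapter III, Proposition 7.3]{mm02}) down to the case of a single generalised sphere $E$. For the monoid-axiom statements the paper likewise just cites \cite[Proposition 12.5]{mmss01} and \cite[Chapter IV, Remark 2.8]{mm02}, which is exactly the gluing/colimit machinery you spell out; so there is no disagreement there.

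The one genuine difference is the base case. The paper does not try to identify $E \smashprod_S Z$ explicitly. Instead it reformulates the statement via cofibres as ``$C$ acyclic $\Rightarrow$ $C \smashprod E$ acyclic'' and then uses that each generalised sphere $E$ is strongly dualisable (\cite[Chapter III]{lms86}, \cite[Chapter XVI, Section 7]{may96}):
\[
\pi_n^K(C \smashprod E) = [S^n, C \smashprod E]^K \cong [S^n \smashprod F(E,S), C]^K = 0,
\]
the last equality because $C$ is acyclic. This is a one-line argument that avoids unwinding $S \smashprod_\lscr \mathbb{L}\Sigma^\infty_V(-)$ against $\smashprod_S$ for non-cofibrant $Z$, and it sidesteps the separate check that smashing with the $G$-space $G/H_+$ preserves all $\pi_*$-isomorphisms. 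Your route can be made to work (the projection formula $G/H_+ \smashprod X \cong G_+ \smashprod_H \iota_H^* X$ together with the fact that both $\iota_H^*$ and $G_+ \smashprod_H(-)$ preserve all weak equivalences handles the $G/H_+$ step), but it is more laborious in $G\mcal$; the dualisability trick is worth knowing as the cleaner alternative.
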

\begin{proof}
The spectrum $X$ is a retract of a generalised cell complex $Y$. That is:
we have the following diagram $X \to Y \to X$ with composite
map the identity. It suffices to prove this result for generalised cell complexes
by the following argument.
Assume the result holds for $Y$ and
take $f \co A \to B$ a weak equivalence, then we have the diagram:
\begin{displaymath}
\xymatrix@!C{
X \smashprod A \ar[d] \ar[r]
& Y \smashprod A \ar[r]  \ar[d]^{\sim}
& X \smashprod A \ar[d] \\
X \smashprod B \ar[r]
& Y \smashprod B \ar[r]
& X \smashprod B}
\end{displaymath}
hence $X \smashprod A \to X \smashprod B$ is a retract of a weak equivalence
and thus a weak equivalence.
The arguments in
\cite[Chapter III, Proposition 7.3]{mm02}
show that we can reduce this result to proving that
if $C$ is a spectrum with trivial homotopy groups, then
$C \smashprod E$ has trivial homotopy groups for $E$ a generalised sphere $S$-module.
Fix $n$ and a $G$-indexing space $V$ and consider the generalised sphere $S$-module
$E = S \smashprod_{\mathscr{L}} \mathbb{L} \Sigma^\infty_V(G/H_+ \smashprod S^n)$.
Since $E$ is strongly dualisable
(\cite[Chapter III]{lms86} or \cite[Chapter XVI, Section 7]{may96})
we see (by the conditions on $C$)
$$\pi_n^K(C \smashprod E) = [S^n, C \smashprod E]^K
\cong [S^n \smashprod F(E, S), C]^K=0.$$
The rest of the lemma follows by the same proof
as given for \cite[Proposition 12.5]{mmss01}.
For the last statement we use
\cite[Chapter IV, Remark 2.8]{mm02}.
\end{proof}

\begin{rmk}
As stated above $\GIS$ and $G \mcal$
are Quillen equivalent.
In fact \cite{mm02} shows much more, it proves that
every construction in $G \mcal$
is the same (up to homotopy) to the corresponding construction
in $G$-equivariant orthogonal spectra.
Such constructions include fixed and orbit spectra, geometric fixed point spectra,
change of groups and change of universe. So if one were to define an
`equivariant stable homotopy theory', then we would have a theorem - both
$G$ equivariant $S$-modules
and $G$-equivariant orthogonal spectra model
the same `equivariant stable homotopy theory'.  When $G$ is
the trivial group the category of $G$-equivariant orthogonal spectra
is a model for the stable homotopy category.
\end{rmk}

\section{Homotopy Colimits}\label{sec:hocolim}
When one has to construct an object of a model category
as a colimit from homotopy level
information there will often be choices made in the construction.
The correct gadget to organise these choices is a homotopy colimit.
Given diagrams $C$ and $D$ in our model category,
a map of diagrams $f \co C \to D$ will induce
a map $\colim(f) \co \colim(C) \to \colim(D)$.
If $f$ is an object-wise weak equivalence the
induced map will not necessarily be a weak equivalence.
A good source for a modern description of these homotopy colimits is \cite{dhks04}.
We will need to use homotopy pushouts and homotopy sequential colimits.
All of our categories are cofibrantly generated, thus
we can use functorial factorisation in these categories
to give some functoriality in our definition of homotopy colimits.
Most of the constructions and results of this section hold for quite
general model categories. Our main assumptions are that the model
category is proper and topological (see \cite[Definition 4.2.18]{hov99}).
We have taken the following from \cite[section 10]{ds95}.

\begin{definition}
Consider a diagram $B \leftarrow A \to C$,
let $\cofrep A \to A$ be the cofibrant replacement
of $A$. Factor
the maps $\cofrep A \to B$ and $\cofrep A \to C$
into cofibrations followed by
acyclic fibrations.
\begin{displaymath}
\xymatrix{
B' \ar@{->>}[d]^{\sim}
& \ \cofrep A \ \ar@{>->}[l] \ar@{>->}[r] \ar@{->>}[d]^{\sim}
& C' \ar@{->>}[d]^{\sim} \\
B & A \ar[l] \ar[r] & C }
\end{displaymath}
The colimit of $B' \leftarrow \cofrep A \to C'$
is the \textbf{homotopy pushout}\index{Homotopy pushout}
of $B \leftarrow A \to C$, written
$\hocolim (B \leftarrow A \to C)$.
\end{definition}

This construction has the property that a map of pushout diagrams
that is an object-wise weak equivalence induces a
weak equivalence between the homotopy pushouts,
this is verified in \cite[section 10]{ds95}.
The reader can also use the definition of a homotopy pushout in \cite[13.5]{hir03}
which has the desired property since our categories are proper (hence left proper).
In that section they also prove that there is a weak equivalence between
this definition and the construction above.

\begin{definition}\label{def:cone/cofibre}
Take a map $f \co X \to Y$, then $Y/X$ is
the colimit of the diagram $* \leftarrow X \to Y$.
The \textbf{mapping cylinder}\index{Mapping cylinder} of $f$,
$M_f$, is the pushout of the diagram
$X \smashprod I_+ \leftarrow X \to Y$ using the map $i_0 \co X \to X \smashprod I_+$.
The \textbf{mapping cone}\index{Mapping cone|see{Cofibre}}, $Cf$,
or \textbf{cofibre}\index{Cofibre} of $f$ is the colimit of the diagram
$CX \leftarrow X \to Y$ (see the diagram below).
The \textbf{homotopy cofibre}\index{Homotopy cofibre} of $f$ is the homotopy pushout
of the diagram $* \leftarrow X \to Y$.
\end{definition}

\begin{displaymath}
\xymatrix@!C{
& X \ar[r]^f  \ar@{{+}{-}{>}}[d]_{i_0} & Y \ar@{{+}{-}{>}}[d]\\
X \ar@{{+}{-}{>}}[r]^{i_1} \ar[d]
& X \smashprod I_+ \ar@{}[ur]|\ulcorner \ar[r] \ar[d]
& M_f \ar[d] \ar@{}[ul]|{\phantom{\cdot} \cdot} \\
{*} \ar@{{+}{-}{>}}[r] \ar@{}[ur]|\ulcorner
& CX \ar[r] \ar@{}[ul]|{\phantom{\cdot} \cdot} \ar@{}[ur]|\ulcorner
& Cf \ar@{}[ul]|{\phantom{\cdot} \cdot}  }
\end{displaymath}
In the diagram above the maps $i_0$ and $i_1$ are the
inclusions of $X$ into the $0$ and $1$ ends of
$X \smashprod I_+$ and the reader
should recall that the map $X \to CX$ is an $h$-cofibration.
The mapping cone construction has required property
that a commutative square as below,
such that the vertical maps
are weak equivalences, induces a weak equivalence
of the cofibres $Cf \to Cg$.
$$\xymatrix{
X \ar[r]^f \ar[d]^\sim & Y \ar[d]^\sim \\
A \ar[r]_g & B }$$
This is proved by repeated application of
\cite[Chapter III, Theorem 3.5]{mm02}
(or \cite[Chapter IV, Remark 2.8]{mm02} for $G \mcal$)
which state that given a diagram as below,
\begin{displaymath}
\xymatrix@!C{
X \ar[d]^\sim &  Y  \ar@{+->}[l] \ar[r] \ar[d]^\sim & Z \ar[d]^\sim \\
X' &  Y' \ar@{+->}[l] \ar[r] & Z' }
\end{displaymath}
the induced map of pushouts is a weak equivalence.
The dual construction to $Cf$ gives $Ff$, the
\textbf{fibre}\index{Fibre} of a map,
defined in terms of the pullback of $f$ over the
map $F(I , Y) \to Y$.
Note that if $X$ and $Y$ are cofibrant
then $Cf$ is cofibrant by Corollary \ref{cor:basiccofs} and the fact
that cofibrations are preserved by pushouts (using the diagram $CX \leftarrow X \to Y$).
The dual to $Y/X$ is $f^{-1}*$, the pre-image of the basepoint of $Y$.
For the sake of completeness and as a useful exercise in the above definitions we
record the result below which can be thought of as saying that the mapping cone
is a construction of the homotopy cofibre.

\begin{lemma}
The mapping cone of a map $f \co X \to Y$ is weakly equivalent
to the homotopy cofibre of $f$.
\end{lemma}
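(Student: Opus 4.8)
The plan is to rewrite both sides as ordinary colimits and then apply the invariance and gluing results recorded above. Recall from Definition \ref{def:cone/cofibre} that the mapping cone is the ordinary pushout $Cf=\colim(CX\leftarrow X\to Y)$, whose left leg $j\co X\to CX$ is the $h$-cofibration of Corollary \ref{cor:basiccofs}, while the homotopy cofibre of $f$ is $\hocolim(*\leftarrow X\to Y)$.

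First I would note that $CX$ is weakly contractible: it is the cone on $X$, and $\id_{CX}$ is homotopic to a map that factors through the trivial spectrum $*$, so $\pi_r^H(CX)=0$ for every subgroup $H$ and every integer $r$, whence $CX\to *$ is a weak equivalence. Consequently the map of pushout diagrams $(CX\leftarrow X\to Y)\to(*\leftarrow X\to Y)$, with components $CX\to *$, $\id_X$ and $\id_Y$, is an objectwise weak equivalence. By the invariance of the homotopy pushout under objectwise weak equivalences of diagrams (\cite[section 10]{ds95}) we obtain $\hocolim(CX\leftarrow X\to Y)\simeq\hocolim(*\leftarrow X\to Y)$, the homotopy cofibre of $f$; so only one comparison remains.

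Second I would show that the ordinary pushout $Cf=\colim(CX\leftarrow X\to Y)$ already computes $\hocolim(CX\leftarrow X\to Y)$, the essential point being that its left leg $j$ is an $h$-cofibration. Following the definition of the homotopy pushout, choose a cofibrant replacement $\cofrep X\to X$ and factor the two maps out of $\cofrep X$ as $\cofrep X\rightarrowtail P\xrightarrow{\sim}CX$ and $\cofrep X\rightarrowtail Q\xrightarrow{\sim}Y$; then $\colim(P\leftarrow\cofrep X\to Q)$ is a model for $\hocolim(CX\leftarrow X\to Y)$. The comparison map of spans $(P\leftarrow\cofrep X\to Q)\to(CX\leftarrow X\to Y)$ has all three components weak equivalences; its source span has the cofibration $\cofrep X\rightarrowtail P$ as left leg, and a cofibration is an $h$-cofibration by \cite[Chapter III, Lemma 2.5]{mm02} in $\GIS$ and by Lemma \ref{lem:smodhcof} in $G\mcal$; and its target span has the $h$-cofibration $j$ as left leg. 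Hence the gluing lemma (\cite[Chapter III, Theorem 3.5]{mm02}, or \cite[Chapter IV, Remark 2.8]{mm02} for $G\mcal$) shows the induced map $\colim(P\leftarrow\cofrep X\to Q)\to\colim(CX\leftarrow X\to Y)=Cf$ is a weak equivalence. Combining the two steps gives $Cf\simeq\hocolim(CX\leftarrow X\to Y)\simeq$ the homotopy cofibre of $f$.

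The one substantive point is this second step: a priori one only knows that a pushout along an honest cofibration models the homotopy pushout, whereas here $j$ is merely an $h$-cofibration and $Y$ is arbitrary. The machinery developed above is exactly what removes this difficulty, since $h$-cofibrations contain the cofibrations, are stable under the relevant pushouts, and satisfy the gluing lemma cited, so the "cofibrantly replace the span and compare" argument sketched above goes through. The contractibility of the cone and the invariance of homotopy pushouts are then formal.
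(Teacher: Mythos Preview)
Your proof is correct and takes a genuinely different route from the paper's.  The paper never passes through $\hocolim(CX\leftarrow X\to Y)$; instead it compares the homotopy cofibre directly to $Cf$ via an explicit three-row zig-zag whose middle row is the cylinder diagram
\[
C\cofrep X \longleftarrow \cofrep X \smashprod I_+ \longrightarrow Y' \smashprod I_+,
\]
connected to the homotopy-cofibre model $Z'\leftarrow\cofrep X\to Y'$ on one side (via $i_1$ and the null map $Z'\to C\cofrep X$, which commutes because $q\circ i_1$ hits the cone point) and to $C\cofrep X\leftarrow\cofrep X\to Y'$ on the other (via $i_0$); a final comparison then passes from $\cofrep X$ to $X$.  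Every step is a single application of the gluing lemma \cite[Chapter III, Theorem 3.5]{mm02}.

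Your argument is more modular: you isolate the two facts that (i) homotopy pushouts are invariant under objectwise weak equivalences, used to replace $*$ by the contractible $CX$, and (ii) an ordinary pushout along an $h$-cofibration already computes the homotopy pushout, proved by one invocation of the gluing lemma.  This makes the conceptual point clearer and avoids the cylinder manipulation; the paper's approach, in exchange, is entirely self-contained within the gluing lemma and does not appeal to the invariance statement imported from \cite{ds95}.
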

\begin{proof}
We begin by replacing $X$ by a cofibrant object, $\cofrep X \to X$
then we factorise the maps
$\cofrep X \to *$ and $\cofrep X \to Y$
into cofibrations followed by
acyclic fibrations,
\begin{displaymath}
\xymatrix@!C{
Z' \ar@{->>}[d]^{\sim} &
\ \cofrep X \ \ar@{>->}[l] \ar@{>->}[r] \ar@{->>}[d]^{\sim} &
Y' \ar@{->>}[d]^{\sim} \\
{*} & X \ar[l] \ar[r] & Y }
\end{displaymath}
We now draw a comparison diagram
\begin{displaymath}
\xymatrix{
Z'  \ar@{->>}[d]_0^\sim & \ar@{>->}[l] \ \cofrep X \
\ar@{>->}[r]^{f'} \ar[d]_{i_1}^\sim & Y' \ar[d]_{i_1}^\sim  \\
C \cofrep X  & \ar[l]_q \cofrep X \smashprod I_+ \
\ar@{>->}[r]^{f' \smashprod 1}  & Y \smashprod I_+  \\
C \cofrep X  \ar[u]^{=}_{\sim} & \ar@{+->}[l]  \cofrep X \
\ar@{>->}[r]^{f'} \ar[u]^{i_0}_\sim & Y' \ar[u]^{i_0}_\sim }
\end{displaymath}
The map labelled 0 is the map which sends $Z'$ to the basepoint of $C \cofrep X$.
The maps $f'$ and $f' \smashprod 1$ are cofibrations, hence $h$-cofibrations and
all vertical maps are weak equivalences. Thus, the pushouts of
the horizontal diagrams are weakly equivalent.
The final comparison is
\begin{displaymath}
\xymatrix@!C{
C \cofrep X \ar[d]^\sim & \ \cofrep X \ \ar@{+->}[l] \ar@{>->}[r] \ar[d]^\sim & Y' \ar[d]^\sim \\
C X & \ X \ \ar@{+->}[l] \ar[r] & Y }
\end{displaymath}
and this induces a weak equivalence of the pushouts by the standard argument.
\end{proof}

\begin{lemma}\label{lem:homotopycofibre}
For any two homotopic maps $f_0, f_1 \co X \to Y$ there is a chain of weak equivalences
between $\cofibre(f_0)$ and $\cofibre(f_1)$.
\end{lemma}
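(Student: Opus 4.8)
The plan is to route the comparison through the homotopy cofibre and then invoke the invariance of the homotopy pushout. By the preceding lemma, for any map $g \co A \to B$ the mapping cone $\cofibre(g)$ is joined by a chain of weak equivalences to the homotopy cofibre of $g$, i.e.\ to $\hocolim(* \leftarrow A \to B)$. So it suffices to produce a chain of weak equivalences between the homotopy cofibres of $f_0$ and $f_1$, and for this I would interpolate through the cylinder on $X$.

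First I would fix a homotopy $H \co X \smashprod I_+ \to Y$ with $H \circ i_0 = f_0$ and $H \circ i_1 = f_1$, where $i_0, i_1 \co X \to X \smashprod I_+$ are the two end inclusions (Corollary \ref{cor:basiccofs}). The key point is that $i_0$ and $i_1$ are weak equivalences: the projection $p \co X \smashprod I_+ \to X$, obtained by smashing $X$ with the based homotopy equivalence $I_+ \to S^0$ that collapses $I$ to the non-basepoint, satisfies $p \circ i_k = \id_X$; and smashing $X$ with a based ($G$-)homotopy equivalence of spaces yields a $G$-homotopy equivalence, hence a weak equivalence, of $G$-spectra. So $p$ is a weak equivalence, and by two-out-of-three so is each $i_k$.

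Now, for $k = 0$ and $k = 1$, the square
\begin{displaymath}
\xymatrix@!C{
* \ar[d]^{=} & X \ar[l] \ar[d]^{i_k}_{\sim} \ar[r]^{f_k} & Y \ar[d]^{=} \\
* & X \smashprod I_+ \ar[l] \ar[r]^{H} & Y }
\end{displaymath}
is a map of pushout diagrams all three of whose vertical maps are weak equivalences, so the property of the homotopy pushout recorded just after its definition gives a weak equivalence from the homotopy cofibre of $f_k$ to the homotopy cofibre of $H$. Splicing the two resulting weak equivalences (for $k = 0$ and $k = 1$) together, and then attaching at each end the chains supplied by the mapping-cone lemma, produces the required chain of weak equivalences between $\cofibre(f_0)$ and $\cofibre(f_1)$.

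There is no real obstacle here; the one point to keep straight is that the statement is asserted for arbitrary $X$ and $Y$, with no cofibrancy hypothesis. This is harmless: both the homotopy pushout construction and the mapping-cone lemma already have cofibrant replacement built in, and $i_0, i_1$ are honest $G$-homotopy equivalences regardless of cofibrancy. One should also note that ``homotopic'' is being used here in the concrete sense of admitting a map out of the cylinder $X \smashprod I_+$, which is exactly the data the interpolating diagram consumes.
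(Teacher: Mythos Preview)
Your proof is correct, and the underlying idea---interpolating through the homotopy $H$ on the cylinder $X \smashprod I_+$---is the same as the paper's. The packaging differs, though: you route through the homotopy pushout abstraction, invoking the preceding lemma twice (mapping cone $\simeq$ homotopy cofibre) at the endpoints and the objectwise-invariance property of $\hocolim$ in the middle. The paper instead works directly with the mapping-cone pushouts $CX \leftarrow X \to Y$, comparing them to $CX \smashprod I_+ \leftarrow X \smashprod I_+ \to Y$ via the $h$-cofibration $X \to CX$ and \cite[Chapter III, Theorem 3.5]{mm02}; this yields the shorter chain $\cofibre(f_0) \to \cofibre(F) \leftarrow \cofibre(f_1)$ without ever invoking cofibrant replacement or the previous lemma. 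Your route is perfectly valid and arguably more conceptual, but the paper's is more self-contained and produces a two-step zig-zag rather than the longer chain your argument assembles.
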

\begin{proof}
Use the commutative diagram below.
\begin{displaymath}
\xymatrix@!C{
CX  \ar[d]_{i_0}^\sim   & \ar@{+->}[l] \ X \ar[r]^{f_0} \ar[d]_{i_0}^\sim  & Y \ar[d]_=^\sim \\
CX \smashprod I_+    & \ar@{+->}[l] \ X \smashprod I_+ \ar[r]^F       & Y  \\
CX  \ar[u]^{i_1}_\sim & \ar@{+->}[l] \ X \ar[r]^{f_1} \ar[u]^{i_1}_\sim   & Y \ar[u]^=_\sim }
\end{displaymath}
The maps $i_0$ and $i_1$ are (level equivalences and hence) weak equivalences
and $X \to CX$ is an $h$-cofibration. Hence we
apply \cite[Chapter III, Theorem 3.5]{mm02} and see that
$\cofibre(f_0)$, $\cofibre(F)$ and $\cofibre(f_1)$ are all weakly equivalent.
\end{proof}

\begin{lemma}\label{lem:homcomcof}
A homotopy commuting square
\begin{displaymath}
\xymatrix@!C{
A \ar[d]_p \ar[r]^f & B \ar[d]^q \\
C \ar[r]_g & D }
\end{displaymath}
induces a zig-zag of maps between $\cofibre(f)$ and $\cofibre(g)$.
Furthermore if $p$ and $q$ are weak equivalences
then the zig-zag consists of weak equivalences.
\end{lemma}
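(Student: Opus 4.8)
The plan is to reduce Lemma \ref{lem:homcomcof} to Lemma \ref{lem:homotopycofibre} by routing the comparison through two squares that commute on the nose. Homotopy commutativity of the given square says precisely that the two composites $A \to D$, namely $g \circ p$ and $q \circ f$, are homotopic, so Lemma \ref{lem:homotopycofibre} immediately supplies a chain of weak equivalences between $\cofibre(g \circ p)$ and $\cofibre(q \circ f)$. It therefore suffices to produce maps $\cofibre(f) \to \cofibre(q \circ f)$ and $\cofibre(g \circ p) \to \cofibre(g)$, and these I would obtain from the strictly commuting squares
\begin{displaymath}
\xymatrix@!C{ A \ar[d]_{\id_A} \ar[r]^f & B \ar[d]^q \\ A \ar[r]_{q \circ f} & D }
\qquad\qquad
\xymatrix@!C{ A \ar[d]_p \ar[r]^{g \circ p} & D \ar[d]^{\id_D} \\ C \ar[r]_g & D }
\end{displaymath}
the left one commuting because $q \circ f = q \circ f$ and the right because $g \circ p = g \circ p$. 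Each commuting square induces a map of mapping cones, by functoriality of the colimit defining $\cofibre$ (this is the construction recorded just before Lemma \ref{lem:homotopycofibre}), so splicing these two maps onto the chain of weak equivalences coming from Lemma \ref{lem:homotopycofibre} yields a zig-zag between $\cofibre(f)$ and $\cofibre(g)$. That gives the first assertion.

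For the second assertion, suppose $p$ and $q$ are weak equivalences. The vertical maps of the left-hand square are $\id_A$ and $q$, both weak equivalences, so by the mapping-cone invariance stated before Lemma \ref{lem:homotopycofibre} (which rests on \cite[Chapter III, Theorem 3.5]{mm02}, or \cite[Chapter IV, Remark 2.8]{mm02} for $G\mcal$) the induced map $\cofibre(f) \to \cofibre(q \circ f)$ is a weak equivalence. Likewise the vertical maps of the right-hand square are $p$ and $\id_D$, again both weak equivalences, so $\cofibre(g \circ p) \to \cofibre(g)$ is a weak equivalence; here the comparison of pushout diagrams $(CA \leftarrow A \to D) \to (CC \leftarrow C \to D)$ has legs $Cp$, $p$ and $\id_D$, and $Cp$ is a weak equivalence since $CA$ and $CC$ are weakly contractible. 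As the chain from Lemma \ref{lem:homotopycofibre} also consists of weak equivalences, every arrow in the resulting zig-zag is a weak equivalence.

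I do not expect a genuine obstacle here: the statement is really just Lemma \ref{lem:homotopycofibre} assembled with the functoriality and weak-equivalence invariance of the mapping cone that have already been set up. The only points needing care are verifying that the two auxiliary squares commute strictly (they are designed so that this is immediate) and that the cited pushout-invariance result applies on each occasion, i.e.\ that one leg of each relevant pushout diagram is an $h$-cofibration (true by Corollary \ref{cor:basiccofs}, since $X \to CX$ is always an $h$-cofibration) and that the comparison maps of diagrams are objectwise weak equivalences. None of this goes beyond routine bookkeeping.
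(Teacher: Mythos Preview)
Your proof is correct and follows essentially the same approach as the paper: both invoke Lemma \ref{lem:homotopycofibre} on the homotopic composites $q\circ f$ and $g\circ p$, then splice in the two strictly commuting auxiliary squares (with vertical maps $(\id_A,q)$ and $(p,\id_D)$ respectively) to obtain the remaining comparison maps of cofibres. Your write-up is in fact more detailed than the paper's, spelling out why each comparison is a weak equivalence when $p$ and $q$ are.
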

\begin{proof}
We know that $q \circ f \simeq g \circ p$ and thus by
Lemma \ref{lem:homotopycofibre} we know that these maps have weakly equivalent
cofibres with a zig-zag of comparisons
$\cofibre(q \circ f) \to \cofibre(F) \leftarrow \cofibre(g \circ p)$.
All we need now is the pair of diagrams
\begin{displaymath}
\xymatrix@!C{
A \ar[d]_= \ar[r]^f & B \ar[d]^q    & \qquad & A \ar[d]_p \ar[r]^{g \circ p} & D \ar[d]_= \\
A \ar[r]_{q \circ f} & D & \qquad & C \ar[r]_g & D }
\end{displaymath}
which will give us the comparison maps $\cofibre(f) \to \cofibre(q \circ f)$
and $\cofibre(g \circ p) \to \cofibre(g)$. These are weak equivalences
when $p$ and $q$ are.
\end{proof}

\begin{definition}\label{def:hocolim}
For a sequence of maps
$$X_0 \overset{f_0}{\longrightarrow} X_1 \overset{f_1}{\longrightarrow}
X_2 \overset{f_2}{\longrightarrow} X_3 \overset{f_3}{\longrightarrow} X_4 \longrightarrow \dots$$
we define the \textbf{homotopy sequential colimit}\index{Homotopy sequential colimit}
or \textbf{telescope}\index{Telescope}
to be the colimit of the following diagram.
\begin{displaymath}
\xymatrix{
*+<0.3cm>{X_0}  \ar@{>->}[r]^{g_0} \ar@{->>}[d]^{h_0}_= &
*+<0.3cm>{X'_1} \ar@{>->}[r]^{g_1} \ar@{->>}[d]^{h_1}_{\sim} &
*+<0.3cm>{X'_2} \ar@{>->}[r]^{g_2} \ar@{->>}[d]^{h_2}_{\sim} &
*+<0.3cm>{X'_3} \ar@{>->}[r]^{g_3} \ar@{->>}[d]^{h_3}_{\sim} &
*+<0.3cm>{X'_4} \ar@{>->}[r]^{g_4} \ar@{->>}[d]^{h_4}_{\sim}  &  \dots \\
X_0 \ar[r]_{f_0} &
X_1 \ar[r]_{f_1} &
X_2 \ar[r]_{f_2} &
X_3 \ar[r]_{f_4} &
X_4 \ar[r]_{f_5} & \dots }
\end{displaymath}
Where $h_{i+1} \circ g_i$ is a factorisation of $f_i \circ h_i$ into a
cofibration followed by a acyclic fibration.
We denote this construction by $\hocolim_i X_i$.
\end{definition}

\begin{lemma}
For a map of colimit diagrams as below,
there is an induced map of homotopy colimits
$\hocolim_i X_i \to \hocolim_i Y_i$.
\begin{displaymath}
\xymatrix{
X_0 \ar[r]^{g_0} \ar[d]^{h_0} &
X_1 \ar[r]^{g_1} \ar[d]^{h_1} &
X_2 \ar[r]^{g_2} \ar[d]^{h_2} &
X_3 \ar[r]^{g_3} \ar[d]^{h_3} &
X_4 \ar[r]^{g_4} \ar[d]^{h_4} &  \dots \\
Y_0 \ar[r]_{f_0} &
Y_1 \ar[r]_{f_1} &
Y_2 \ar[r]_{f_2} &
Y_3 \ar[r]_{f_4} &
Y_4 \ar[r]_{f_5} & \dots }
\end{displaymath}
\end{lemma}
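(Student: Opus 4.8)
The plan is to observe that the telescope of Definition \ref{def:hocolim} is assembled entirely from the functorial factorisation $(\alpha,\beta)$ supplied by the model structure, so that a map of sequences feeds through the factorisation functors to give a map of the factored diagrams, and hence of their colimits.

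First I would fix notation and recall the construction. Apply Definition \ref{def:hocolim} to the upper sequence, whose maps are the $g_i$: set $X_0' = X_0$ and $p_0 = \id$, and inductively, given $p_i \co X_i' \to X_i$, factor $g_i \circ p_i$ as $(\alpha,\beta)$ requires, writing $a_i = \alpha(g_i p_i) \co X_i' \to X_{i+1}'$ (a cofibration) and $p_{i+1} = \beta(g_i p_i) \co X_{i+1}' \to X_{i+1}$ (an acyclic fibration); then $\hocolim_i X_i = \colim_i X_i'$. Do the same for the lower sequence, whose maps are the $f_i$, obtaining objects $Y_i'$, cofibrations $b_i \co Y_i' \to Y_{i+1}'$, and acyclic fibrations $q_i \co Y_i' \to Y_i$ with $q_0 = \id$ and $\hocolim_i Y_i = \colim_i Y_i'$.

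The heart of the argument is to build compatible maps $h_i' \co X_i' \to Y_i'$ by induction on $i$, with $q_i \circ h_i' = h_i \circ p_i$. Take $h_0' = h_0$. Given $h_i'$ as above, the naturality square for $\{h_i\}$ together with the inductive hypothesis gives $h_{i+1} \circ (g_i p_i) = f_i \circ h_i \circ p_i = (f_i q_i) \circ h_i'$, which says precisely that the pair $(h_i', h_{i+1})$ is a morphism in $\text{Map}\,\ccal$ from $g_i p_i$ to $f_i q_i$. Since $(\alpha,\beta)$ is a \emph{functorial} factorisation, Definition \ref{def:funcfact}(2) produces from this square a canonical map $(c \circ \alpha)(h_i', h_{i+1}) \co X_{i+1}' \to Y_{i+1}'$ fitting into the commutative diagram there; I take $h_{i+1}'$ to be this map, and the commutativity gives both $b_i \circ h_i' = h_{i+1}' \circ a_i$ and $q_{i+1} \circ h_{i+1}' = h_{i+1} \circ p_{i+1}$, which closes the induction. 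The family $\{h_i'\}$ is then a map between the two telescope diagrams, and passing to colimits yields the required map $\hocolim_i X_i \to \hocolim_i Y_i$.

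I do not expect a genuine obstacle here; the content is entirely the functoriality clause of Definition \ref{def:funcfact}, and everything else is bookkeeping — the only thing to watch is keeping the two overlapping uses of the letters $f$, $g$, $h$ (one from the statement, one internal to Definition \ref{def:hocolim}) apart, which is why I have renamed the telescope data to $a_i$, $b_i$, $p_i$, $q_i$ above. As a closing remark I would note that the same functoriality shows this assignment is itself functorial in maps of sequences (it preserves identities and composites), so the telescope is a genuine functor on the category of $\omega$-indexed sequences in $\ccal$.
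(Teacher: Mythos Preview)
Your proof is correct and takes the same approach as the paper: the paper's proof is a single sentence stating that this follows from functorial factorisation, and you have simply unpacked that sentence by explicitly building the maps $h_i'$ via the functoriality clause of Definition~\ref{def:funcfact}. Your renaming of the telescope data to avoid the clash with the $f,g,h$ in the statement is sensible, and your closing remark on functoriality in maps of sequences is a nice addition beyond what the paper records.
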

\begin{proof}
This follows from the fact that we have assumed functorial
factorisation in our definition of a model category.
\end{proof}

This lemma with the following proposition
will imply that if the above map of colimit
diagrams is an object-wise weak equivalence, then the
induced map of homotopy colimits is a weak equivalence.

\begin{proposition}
For a map of colimit diagrams that is an object-wise weak equivalence
as below, the induced map of colimits is a weak equivalence.
\begin{displaymath}
\xymatrix{
X_0 \ar@{+->}[r]^{g_0} \ar[d]^{h_0}_{\sim} &
X_1 \ar@{+->}[r]^{g_1} \ar[d]^{h_1}_{\sim} &
X_2 \ar@{+->}[r]^{g_2} \ar[d]^{h_2}_{\sim} &
X_3 \ar@{+->}[r]^{g_3} \ar[d]^{h_3}_{\sim} &
X_4 \ar@{+->}[r]^{g_4} \ar[d]^{h_4}_{\sim} &  \dots \\
Y_0 \ar@{+->}[r]_{f_0} &
Y_1 \ar@{+->}[r]_{f_1} &
Y_2 \ar@{+->}[r]_{f_2} &
Y_3 \ar@{+->}[r]_{f_4} &
Y_4 \ar@{+->}[r]_{f_5} & \dots }
\end{displaymath}
\end{proposition}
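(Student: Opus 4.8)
The plan is to check that the induced map $\phi \co \colim_i X_i \to \colim_i Y_i$ is a $\pi_*$-isomorphism, since by Theorem \ref{thm:orthogmodel} (respectively Theorem \ref{thm:ekmmmodel} for $G \mcal$) this is precisely what it means for $\phi$ to be a weak equivalence. So fix a closed subgroup $H \leqslant G$ and an integer $r$; everything reduces to showing that $\pi_r^H$ carries each of the two sequential colimits to the colimit of the homotopy groups, i.e. $\pi_r^H(\colim_i X_i) \cong \colim_i \pi_r^H(X_i)$ naturally in the diagram, and likewise for $Y_\bullet$. (Equivalently, by Lemma \ref{lem:spherehomotopy}, $\pi_r^H(-)$ is corepresented in the homotopy category by one of the finite cell spectra $\Sigma^q S^0 \smashprod G/H_+$ or $F_p S^0 \smashprod G/H_+$, which are small at the point-set level, so one expects maps out of them to commute with a suitably nice colimit.)

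The crucial point is that the horizontal maps $g_i$ and $f_i$ are $h$-cofibrations. Colimits of spectra in $\GIS$ (or in $G \mcal$) are formed level-wise, and at each level an $h$-cofibration is a closed inclusion of $G$-spaces; hence $\colim_i X_i$ is, level by level, a filtered colimit of $G$-spaces along closed inclusions. Since each representation sphere $S^V$ is compact, mapping out of $S^V$ and out of $S^V \smashprod I_+$ commutes with such colimits, so for each $G$-indexing space $V$ we get $\pi_r^H\bigl(\Omega^V (\colim_i X_i)(V)\bigr) \cong \colim_i \pi_r^H\bigl(\Omega^V X_i(V)\bigr)$; passing to the colimit over $V$ and interchanging the two filtered colimits (over $V$ and over $i$) gives the desired identification $\pi_r^H(\colim_i X_i) \cong \colim_i \pi_r^H(X_i)$, natural in $i$. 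For $G \mcal$ one argues in the same way, since $h$-cofibrations of $S$-modules are again level-wise closed inclusions (cf. Lemma \ref{lem:smodhcof}), or else one transports the statement across the strong monoidal Quillen equivalence of Theorem \ref{thm:ekmmmodel}. This is the same interchange that was used in the proof of Lemma \ref{lem:compactgenerators}.

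Granting this, the argument finishes formally. Under the identifications above the map $\pi_r^H(\phi)$ becomes $\colim_i \pi_r^H(h_i)$, where $\pi_r^H(h_\bullet) \co \pi_r^H(X_\bullet) \to \pi_r^H(Y_\bullet)$ is a natural transformation of sequences of abelian groups which is an isomorphism at each stage, because each $h_i$ is a weak equivalence; since $\colim$ is a functor, it sends such a transformation to an isomorphism. Hence $\pi_r^H(\phi)$ is an isomorphism for every closed $H \leqslant G$ and every $r \in \zz$, so $\phi$ is a weak equivalence. The one genuine obstacle is the interchange in the middle paragraph: one must know that $h$-cofibrations are level-wise closed inclusions in each of $\GIS$ and $G \mcal$ and that homotopy groups are insensitive to replacing a sequential colimit along such maps by the corresponding telescope — once that is in hand the rest is bookkeeping.
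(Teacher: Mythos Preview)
Your argument is correct, but it takes a different route from the paper's. You prove directly that $\pi_r^H$ commutes with sequential colimits along $h$-cofibrations (via level-wise compactness of representation spheres), and then conclude by functoriality of $\colim$ on abelian groups. This is exactly the content of the later Proposition~\ref{prop:HomotopyofHoColim}, which the paper defers; so in effect you are anticipating that result here. The paper instead argues via cofibres: it forms $C h_i$, observes that each is acyclic and that the maps $k_i \co C h_i \to C h_{i+1}$ are $h$-cofibrations between acyclic objects (hence weak equivalences), invokes \cite[Chapter III, Theorem 3.5 or Chapter IV, Remark 2.8]{mm02} to deduce $\colim_i C h_i$ is acyclic, and then identifies $\colim_i C h_i \cong C(\colim_i h_i)$ by commuting colimits with the smash-with-$I$ and pushout constructions. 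Your approach is more transparent and reusable (and indeed is what the paper eventually proves as Proposition~\ref{prop:HomotopyofHoColim}); the paper's approach stays closer to the cofibre/long-exact-sequence toolkit already in hand and treats the behaviour of $\pi_*^H$ on colimits as a black box from \cite{mm02}. One caveat: for $G\mcal$ you should be a bit more careful than ``transport across the Quillen equivalence'' --- the point (which the paper spells out after Proposition~\ref{prop:HomotopyofHoColim}) is that a colimit of $h$-cofibrations of inclusion prespectra is again an inclusion prespectrum, so spectrification is given by a filtered colimit of loop spaces and the level-wise argument still goes through.
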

\begin{proof}
We begin by extending the diagram to include the cofibres of the
$h_i$, we have $l_i \co Y_i  \to C{h_i}$,
which is an $h$-cofibration and $k_i \co C{h_i}  \to C{h_{i+1}}$.
The map $k_i$ is an $h$-cofibration by inspection
and since each $C{h_i}$ is acyclic (consider the
long exact sequence of a cofibration)
each $k_i$ is a weak equivalence.

We now apply \cite[Chapter III, Theorem 3.5 or Chapter IV, Remark 2.8]{mm02}
to see that $\colim_i(C{h_i} )$
is acyclic. Now we apply the standard yoga of colimits
and left adjoints (recall that $CX_i = X_i \smashprod I$)
to see that $\colim_i(C{h_i} ) \cong C{\colim_i (h_i)}$,
Hence, both are acyclic and we can conclude
that $\colim_i (h_i)$ is a weak equivalence.
\end{proof}

Now we give a rather specific lemma,
analogous to Lemma \ref{lem:homcomcof}
which allows us to take homotopy level
information and use it to create a zig-zag of maps
between homotopy colimits. The constructions in this result
are standard. We use this lemma in the proof of
Theorem \ref{thm:generalsplitting}.

\begin{lemma}\label{lem:homcomtelescope}
Consider two sequential colimit diagrams
$g_i \co X_i \to X_{i+1}$ and
$f_i \co Y_i \to Y_{i+1}$, with each $f_i$
an $h$-cofibration ($i \geqslant 0$).
Assume that there is a collection of weak equivalences
$h_i \co X_i \to Y_i$, with homotopies
$F_i \co X_i \smashprod I \to Y_i$
such that $F_i \circ i_0 = h_{i+1} \circ g_i$
and $F_i \circ i_1 = f_i \circ h_i$.
Then $\hocolim_i X_i$ and $\colim_i Y_i$
are weakly equivalent.
\end{lemma}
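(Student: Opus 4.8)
The plan is to straighten the homotopy-commutative ladder given by the $h_i$ and the homotopies $F_i$ into a genuinely (strictly) commutative map of sequential diagrams, and then to invoke the Proposition immediately preceding this lemma, which says that an object-wise weak equivalence between two sequential diagrams of $h$-cofibrations induces a weak equivalence on colimits.

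First I would reduce to the case in which the structure maps $g_i \co X_i \to X_{i+1}$ are themselves cofibrations, hence $h$-cofibrations. For this, replace the sequence $X_\bullet$ by its telescope diagram of Definition \ref{def:hocolim}: there one has cofibrations $q_i \co X'_i \to X'_{i+1}$ together with weak equivalences $p_i \co X'_i \to X_i$ (with $p_0 = \id$) satisfying $p_{i+1}\circ q_i = g_i \circ p_i$, and by construction $\colim_i X'_i = \hocolim_i X_i$. Setting $\ell_i := h_i \circ p_i$ and $G_i := F_i \circ (p_i \smashprod \id)$, the $\ell_i$ are weak equivalences $X'_i \to Y_i$ and $G_i$ is a homotopy from $\ell_{i+1}\circ q_i$ to $f_i \circ \ell_i$. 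After renaming, it suffices to prove $\colim_i X_i \simeq \colim_i Y_i$ under the additional assumption that each $g_i$ is a cofibration (while keeping the hypothesis that each $f_i$ is an $h$-cofibration).

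Now comes the rectification. I would construct, by induction on $i$, weak equivalences $h'_i \co X_i \to Y_i$, each equipped with a homotopy $A_i \co X_i \smashprod I_+ \to Y_i$ from $h_i$ to $h'_i$, such that $f_i \circ h'_i = h'_{i+1}\circ g_i$ holds on the nose. Start with $h'_0 := h_0$ and $A_0$ constant. Given $(h'_i, A_i)$, concatenate $F_i$ (a homotopy from $h_{i+1}\circ g_i$ to $f_i \circ h_i$) with $f_i \circ A_i$ (a homotopy from $f_i \circ h_i$ to $f_i \circ h'_i$) to obtain a homotopy $X_i \smashprod I_+ \to Y_{i+1}$ from $h_{i+1}\circ g_i$ to $f_i \circ h'_i$; it restricts at the $0$-end to $h_{i+1}\circ g_i$. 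Since $g_i$ is an $h$-cofibration, the homotopy extension property (Definition \ref{def:hcof}), applied with the map $h_{i+1}\co X_{i+1}\to Y_{i+1}$, extends this homotopy to a homotopy $A_{i+1}\co X_{i+1}\smashprod I_+ \to Y_{i+1}$ with $A_{i+1}\circ i_0 = h_{i+1}$ and $A_{i+1}\circ (g_i\smashprod\id)$ equal to the concatenation above. Put $h'_{i+1} := A_{i+1}\circ i_1$. Then $h'_{i+1}$ is homotopic to $h_{i+1}$, hence a weak equivalence, and $h'_{i+1}\circ g_i = A_{i+1}\circ(g_i\smashprod\id)\circ i_1 = f_i \circ h'_i$, completing the induction.

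Finally, the family $(h'_i)$ is a strictly commutative, object-wise weak equivalence from $X_\bullet$ to $Y_\bullet$, and both diagrams consist of $h$-cofibrations (the $g_i$ after the reduction, the $f_i$ by hypothesis); the Proposition just above this lemma then shows that $\colim_i X_i \to \colim_i Y_i$ is a weak equivalence. Since $\colim_i X_i = \hocolim_i X_i$ after the reduction, this is exactly the claimed equivalence. The step I expect to be the main obstacle is the infinite induction in the rectification: correcting $h_{i+1}$ to $h'_{i+1}$ makes the original homotopy $F_{i+1}$ useless at the next stage, so one has to propagate the correction homotopies $A_i$ and re-concatenate at every step; once that bookkeeping is in place, each individual step is a single application of the homotopy extension property of an $h$-cofibration.
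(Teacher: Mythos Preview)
Your proof is correct and follows the same overall strategy as the paper: replace the $X$-sequence by one with $h$-cofibration structure maps, rectify the homotopy-commutative ladder into a strictly commutative one using homotopy extension, and then invoke the preceding Proposition on colimits of $h$-cofibration sequences.

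The implementations differ in a minor but instructive way. The paper builds the replacement $Z_i$ by iterated mapping cylinders (pushouts of $X_i \smashprod I_+ \leftarrow X_i \to X_{i+1}$), so that each $Z_{i+1}$ comes with an explicit pushout description; the rectified maps $m_{i+1}$ are then produced from $h_{i+1}$ and a homotopy $G_i$ by the universal property of that pushout, with $G_i$ obtained via the HEP of the $h$-cofibration $a_i \co X_i \to Z_i$. Your version instead uses the functorial factorisation of Definition~\ref{def:hocolim} for the replacement and applies the HEP directly to the (now cofibrant) structure maps $q_i$, at the cost of concatenating homotopies at each stage. The paper's pushout description sidesteps the concatenation bookkeeping; your version is closer to a generic rectification lemma and makes the role of the preceding Proposition more transparent. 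Both arrive at the same strictly commutative comparison ladder.
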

\begin{proof}
We start by drawing the information above as a
diagram. We have only assumed that
$g_i \circ h_i$ and $h_{i+1} \circ f_i$
are homotopic, so we obtain a
homotopy commuting diagram as below.
\begin{displaymath}
\xymatrix{
X_0 \ar[r]^{g_0} \ar[d]^{h_0} &
X_1 \ar[r]^{g_1} \ar[d]^{h_1} &
X_2 \ar[r]^{g_2} \ar[d]^{h_2} &
X_3 \ar[r]^{g_3} \ar[d]^{h_3} &
X_4 \ar[r]^{g_4} \ar[d]^{h_4} &  \dots \\
Y_0 \ar@{+->}[r]_{f_0} \ar@{}[ur]|{F_0}&
Y_1 \ar@{+->}[r]_{f_1} \ar@{}[ur]|{F_1}&
Y_2 \ar@{+->}[r]_{f_2} \ar@{}[ur]|{F_2}&
Y_3 \ar@{+->}[r]_{f_4} \ar@{}[ur]|{F_3}&
Y_4 \ar@{+->}[r]_{f_5} \ar@{}[ur]|{F_4}& \dots }
\end{displaymath}
We now perform the classical construction of
the homotopy colimit to replace
$g_i \co X_i \to X_{i+1}$ with a sequence
of $h$-cofibrations $k_i \co Z_i \to Z_{i+1}$
with weak equivalences $a_i \co X_i \to Z_i$
and $r_i \co Z_i \to X_i$ such that
$r_i \circ a_i = \id_{X_i}$ and $r_{i+1} \circ k_i = g_i \circ r_i$.
Let $Z_0 =X_0$ and $a_0=r_0= \id_{X_0}$, assume inductively we have created
stage $i$, we construct stage $i+1$ of $Z_i$
as a pushout below.
$$
\xymatrix{
X_i \ar[r]^{g_i} \ar@{{+}{-}{>}}[d]_{i_0 \circ a_i}^{\sim} &
X_{i+1} \ar@{{+}{-}{>}}[d]^{a_{i+1}}_\sim \\
Z_i \smashprod I_+ \ar@{}[ur]|\ulcorner \ar[r]_{b_i} &
Z_{i+1} \ar@{}[ul]|{\phantom{\cdot} \cdot} \\
} $$
The map $a_{i+1} \co X_{i+1} \to Z_{i+1}$,
is an $h$-cofibration and a weak equivalence by
\cite[Chapter IV, Theorem 3.5]{mm02}.
We define $k_i \co Z_i \to Z_{i+1}$ as the composite
$b_i \circ i_1$, which is an $h$-cofibration.
The map $r_{i+1}$ is induced by
$\id_{X_{i+1}}$ and the map
$Z_i \smashprod I_+ \to X_{i+1}$
given by retracting to $Z_i$,
then applying $k_i \circ r_i$.
Since $a_{i+1}$ is a weak equivalence so is
$r_i$. The collection $r_i \co Z_i \to X_{i+1}$
gives a map of colimit diagrams.
Thus we have induced weak equivalences
$\hocolim_i Z_i \to \colim_i Z_i$
and $\hocolim_i Z_i \to \hocolim_i X_i$.

Now we compare the $Z_i$ to the $Y_i$, this is slightly
more complicated.
We will construct weak equivalences $m_i \co Z_i \to Y_i$
and homotopies $G_i \co Z_i \smashprod I_+ \to Y_{i+1}$
such that $m_i \circ a_i =h_i$,
and $m_{i+1} \circ b_i = G_i$.
Since $Z_0 =X_0$, we can take $m_0 =h_0$.
We construct $G_0$ using the fact that
$a_0$ is an $h$-cofibration.
\begin{displaymath}
\xymatrix{
X_0
\ar[r]^(0.4){i_1} \ar[d]_{a_0} &
X_0 \smashprod I_+
\ar[d]^{F_0} \ar@(r,u)[ddr]^{a_0 \smashprod \id} & \\
Z_0
\ar[r]^(0.4){f_0 \circ m_0} \ar@(d,l)[drr]^{i_1} &
Y_1 & \\
& &
Z_0 \smashprod I_+ \ar[ul]_{G_0} }
\end{displaymath}
From the definition of $Z_1$ the maps
$h_1 \co X_1 \to Y_1$ and $G_0 \co Z_0 \smashprod I_+ \to Y_1$
induce $m_1 \co Z_1 \to Y_1$.
We now inductively make $m_{i+1}$ from $G_i$
and $G_{i+1}$ from $m_{i+1}$.
The $m_i$ induce a map of colimits
$\colim_i Z_i \to \colim_i Y_i$. Since
$m_i \circ a_i = h_i$, $m_i$ is a weak
equivalence for all $i \geqslant 1$.
Hence the induced map of colimits
is a weak equivalence.
\end{proof}

\begin{proposition}\label{prop:hocolimproperties}
If $X_0$ is cofibrant then $\hocolim_i X_i$ is cofibrant.
If either of $X_0$ or $Y$ is cofibrant then there is a weak equivalence
$$\hocolim_i (Y\smashprod X_i ) \to Y \smashprod \hocolim_i (X_i).$$
\end{proposition}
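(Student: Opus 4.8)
The telescope model of Definition~\ref{def:hocolim} writes $\hocolim_i X_i$ as the colimit $\colim_i X'_i$ of a sequence $X_0 = X'_0 \overset{g_0}{\longrightarrow} X'_1 \overset{g_1}{\longrightarrow} X'_2 \longrightarrow \cdots$ in which every $g_i$ is a cofibration, and $X'_0 = X_0$. Hence, if $X_0$ is cofibrant then $\emptyset \to X'_0$ is a cofibration, so $\emptyset \to \colim_i X'_i$ is a countable composite of cofibrations and therefore itself a cofibration; thus $\hocolim_i X_i$ is cofibrant. This settles the first statement.

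For the second statement I would keep the telescope model. Since $Y \smashprod (-)$ has a right adjoint $F(Y,-)$ in both $\GIS$ and $G\mcal$ it preserves colimits, so $Y \smashprod \hocolim_i X_i \cong \colim_i (Y \smashprod X'_i)$. Each $g_i$ is a cofibration, hence an $h$-cofibration (Lemma~\ref{lem:smodhcof} and its orthogonal analogue), and smashing with $Y$ preserves $h$-cofibrations (Lemma~\ref{lem:hcofsGtopological}), so each $\id_Y \smashprod g_i$ is an $h$-cofibration; as in the proof of Lemma~\ref{lem:compactgenerators} (via Proposition~\ref{prop:HomotopyofHoColim}) it follows that $\colim_i (Y \smashprod X'_i)$ is a model for the homotopy colimit of the diagram $Y \smashprod X'_0 \to Y \smashprod X'_1 \to \cdots$. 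The acyclic fibrations $h_i \co X'_i \to X_i$ occurring in the telescope assemble into a map of sequential diagrams with components $\id_Y \smashprod h_i$ from $\{Y \smashprod X'_i\}$ to $\{Y \smashprod X_i\}$, and the squares commute on the nose because $h_{i+1} \circ g_i = f_i \circ h_i$ by the defining factorisation. Granting that these components are weak equivalences, the comparison result for homotopy colimits established just after Definition~\ref{def:hocolim} gives $\colim_i(Y \smashprod X'_i) \simeq \hocolim_i(Y \smashprod X_i)$; combined with the isomorphism above this produces the asserted weak equivalence (strictly a span of weak equivalences, which is turned into a map in the stated direction in the usual way).

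The real content — and the step I expect to be the main obstacle — is that each $\id_Y \smashprod h_i \co Y \smashprod X'_i \to Y \smashprod X_i$ is a weak equivalence, and this is exactly where the hypothesis on $Y$ or $X_0$ is used. If $Y$ is cofibrant this is immediate, since $Y \smashprod (-)$ then preserves all weak equivalences by Lemma~\ref{lem:sModMonoid} (for $G\mcal$) or its $\GIS$ counterpart. If instead $X_0$ is cofibrant, then every $X'_i$ is cofibrant, and the relevant input is that $(-) \smashprod Y$ carries weak equivalences between cofibrant objects to weak equivalences: smashing with an acyclic cofibration is a $\pi_*$-isomorphism by the monoid axiom (Theorems~\ref{thm:orthogmodel} and~\ref{thm:ekmmmodel}), so Ken Brown's lemma applies to $(-) \smashprod Y$. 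The delicate part is pinning down precisely how $\smashprod Y$ interacts with the specific maps $h_i$ in this second case — this is unproblematic when the given diagram already consists of cofibrant spectra, which is the situation of interest — and apart from this the whole argument is a formal manipulation of colimits, $h$-cofibrations, and the basic cofibrations of Corollary~\ref{cor:basiccofs}.
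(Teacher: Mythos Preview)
Your argument matches the paper's intended approach exactly --- the paper's entire proof is the single sentence ``This follows by standard manipulations of the definitions and the fact that smashing with a cofibrant object preserves weak equivalences,'' and you have supplied precisely those manipulations. The subtlety you flag in the $X_0$-cofibrant case (that the targets $X_i$ of the $h_i$ need not be cofibrant, so Ken Brown alone does not force $Y \smashprod h_i$ to be a weak equivalence) is genuine and is not resolved by the paper's one-line proof either; in every use of this proposition in the thesis (e.g.\ Corollary~\ref{cor:comrel}) the diagram consists of cofibrant spectra or $Y$ is cofibrant, so the caveat is harmless in context.
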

\begin{proof}
This follows by standard manipulations of the definitions and the fact
that smashing with a cofibrant object preserves weak equivalences.
\end{proof}

\section{Rational Sphere Spectra}\label{sec:rationalsphere}
We construct a `rational sphere spectrum' which
we will use in Section \ref{sec:GISQ} to construct
model categories of rational spectra.
We will construct $\qq$ as a group and translate this into spectra.
We will do this for both $G \mcal$ and $\GIS$.
Later we will specialise to $G \mcal$ and make a rational
sphere spectrum that is a commutative ring spectrum.
In order to make the following clear
we use $\sphspec$ for the unit of the smash product of
$\GIS$ and $S$ the unit for $G \mcal$, we work in $\GIS$ to begin with.

Take a free resolution of $\qq$ as an abelian group, $0 \to R
\overset{f}{\to} F \to \qq \to 0$. Since a free abelian group is
a direct sum of copies of $\zz$ we can rewrite this short exact
sequence as $0 \to \bigoplus_i \zz \overset{f}{\to} \bigoplus_j
\zz \to \qq \to 0$. Since $\qq$ is flat, the sequence $0 \to
\bigoplus_i M \overset{f}{\to} \bigoplus_j M \to \qq \otimes M \to
0$ is exact for any abelian group $M$.
Hence for each subgroup $H$ of $G$, we have an injective  map
(which we also denote as $f$)
$\bigoplus_i A(H) \overset{f}{\to} \bigoplus_j A(H)$
and $\bigoplus_j A(H) / \bigoplus_i A(H) \cong A(H) \otimes \qq$.

\begin{lemma}
For $H$, a subgroup of $G$,
$$[\bigvee_i \sphspec, \bigvee_j \sphspec]^H \cong
\prod_i \bigoplus_{j} [\sphspec, \sphspec]^H \cong
\prod_i \bigoplus_{j} A(H)
\cong \hom_{A(H)} \Big( \bigoplus_{i} A(H),\bigoplus_{j} A(H) \Big).$$
\end{lemma}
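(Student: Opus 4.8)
The plan is to verify the three displayed isomorphisms from left to right; each is formal once the right structural input is supplied.

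For the first isomorphism I would break matters into two moves. A wedge is the coproduct in the homotopy category, so mapping out of it becomes a product: $[\bigvee_i \sphspec, Y]^H \cong \prod_i [\sphspec, Y]^H$ for any $H$-spectrum $Y$, and applying this with $Y = \bigvee_j \sphspec$ extracts the outer $\prod_i$. For the inner term I would invoke that $\sphspec$ is $H$-compact, which is part of Lemma~\ref{lem:compactgenerators} and says precisely that $[\iota_H^* \sphspec, \bigvee_j Y_j]^H \cong \bigoplus_j [\iota_H^* \sphspec, Y_j]^H$; taking each $Y_j = \sphspec$ and using that $\iota_H^* \sphspec$ is the sphere $H$-spectrum yields $[\sphspec, \bigvee_j \sphspec]^H \cong \bigoplus_j [\sphspec, \sphspec]^H$. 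Combining, $[\bigvee_i \sphspec, \bigvee_j \sphspec]^H \cong \prod_i \bigoplus_j [\sphspec, \sphspec]^H$. The point to keep straight is the variance: the outer index set contributes a product because it sits in the source, whereas the inner index set contributes a direct sum because compactness applies in the target.

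The second isomorphism is just unwinding notation: $[\sphspec, \sphspec]^H$ is by definition the Burnside ring $A(H)$, so $\prod_i \bigoplus_j [\sphspec, \sphspec]^H = \prod_i \bigoplus_j A(H)$. The third isomorphism is a standard fact about free modules: for any $A(H)$-module $M$ one has $\hom_{A(H)}(\bigoplus_i A(H), M) \cong \prod_i \hom_{A(H)}(A(H), M) \cong \prod_i M$, since a map out of a coproduct is an indexed family of maps and $A(H)$ is the free module on one generator; taking $M = \bigoplus_j A(H)$ closes the chain.

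There is no serious obstacle: the one non-formal ingredient is the $H$-compactness of $\sphspec$, which is already established in Lemma~\ref{lem:compactgenerators}. What does deserve a word of care is that the identifications are compatible with the evident $A(H)$-module (indeed $A(H)$-bimodule) structures on all three groups, coming from precomposition and postcomposition with self-maps of $\sphspec$; this compatibility is exactly what makes the lemma usable in the following construction, where one wants to realise the chosen map $f \co \bigoplus_i A(H) \to \bigoplus_j A(H)$ as a self-map of $\bigvee \sphspec$.
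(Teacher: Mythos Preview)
Your proof is correct and follows the same approach as the paper: use that maps out of a coproduct form a product, invoke $H$-compactness of $\sphspec$ (Lemma~\ref{lem:compactgenerators}) to split the target wedge into a sum, identify $[\sphspec,\sphspec]^H$ with $A(H)$, and finish with the standard free-module hom calculation. The paper's proof is terser but hits exactly these beats; your added remark on $A(H)$-module compatibility is a welcome clarification.
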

\begin{proof}
Since maps out a coproduct is a product of maps
we have an isomorphism
$$[\bigvee_i \sphspec, \bigvee_j \sphspec]^H \cong
\prod_i [\sphspec, \bigvee_j  \sphspec]^H .$$
Now we apply the fact that $\sphspec$ is $H$-compact
and known isomorphisms to obtain the result.
\end{proof}

Thus we can choose
$g \co \fibrep \bigvee_i \sphspec \to \fibrep \bigvee_j \sphspec$,
a representative for the homotopy class corresponding to $f$.

\begin{definition}
For the map $g$ as constructed above, the cofibre of $g$ is the
\textbf{rational sphere spectrum}\index{Rational
sphere spectrum}\index{S rational@${S^0{\qq}}$}
and we have a cofibre sequence
$$\fibrep \bigvee_i  \sphspec \overset{g}{\longrightarrow}
\fibrep \bigvee_j \sphspec \longrightarrow S^0{\qq}. $$
\end{definition}
A different choice of representative for the homotopy class $[g]$
will induce a weak equivalence between the cofibres,
and hence (up to weak equivalence) $S^0{\qq} $ is independent of
this choice of representative.
Returning to $G \mcal$ we can perform the analogous construction:
$\bigvee_i \cofrep S \to \bigvee_j \cofrep S
\to S^0_\mcal{\qq}$\index{S rational QM@$S^0_\mcal{\qq}$},
where we need to take cofibrant replacements (since $S$ is not cofibrant
in $G \mcal$). For $\GIS_+$ we construct
$\fibrep \bigvee_i \cofrep \sphspec \to
\fibrep \bigvee_j \cofrep \sphspec
\to S^0{\qq}_+$\index{S rational positive @$S^0{\qq}_+$}.
Note that there is an inclusion $\alpha \co \sphspec \to \fibrep \bigvee_j \sphspec$
which sends $\sphspec$ to the term of $\bigvee_j \sphspec$
corresponding to $1 \in \qq$. We have similar maps for $G \mcal$
and $\GIS_+$ using $\cofrep S \to \bigvee_j \cofrep S$ and
$\cofrep \sphspec \to \fibrep \bigvee_j \cofrep \sphspec$.

\begin{proposition}
The rational sphere spectra $S^0{\qq}$, $S^0_\mcal{\qq}$
and  $S^0{\qq}_+$ are cofibrant.
\end{proposition}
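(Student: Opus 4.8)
The plan is to exhibit each of the three spectra as the mapping cone of a map between two cofibrant spectra and then invoke the observation recorded just after Definition \ref{def:cone/cofibre}: the cofibre $Cf$ of a map $f \co X \to Y$ with $X$ and $Y$ cofibrant is again cofibrant. Recall why this holds — $Cf$ is the pushout of $CX \leftarrow X \to Y$, the map $X \to CX$ is a cofibration when $X$ is cofibrant by Corollary \ref{cor:basiccofs}, so its pushout $Y \to Cf$ is a cofibration, and composing with $\emptyset \to Y$ shows that $Cf$ is cofibrant. So the whole statement reduces to checking that the source and target of each defining map are cofibrant.

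Two elementary closure properties do the work. First, an arbitrary coproduct of cofibrant objects is cofibrant: in any model category the cofibrations are exactly the maps with the left lifting property against the acyclic fibrations, and classes defined by a left lifting property are closed under coproducts; hence each wedge $\bigvee_i \sphspec$, $\bigvee_j \sphspec$ (and the variants built from $\cofrep S$ or $\cofrep \sphspec$) is cofibrant. Second, fibrant replacement preserves cofibrancy: applying the functorial factorisation $(\gamma,\delta)$ to $X \to *$ makes $X \to \fibrep X$ an acyclic cofibration, so if $\emptyset \to X$ is a cofibration then so is the composite $\emptyset \to X \to \fibrep X$.

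Now the three cases. In $\GIS$ the sphere spectrum $\sphspec$ is cofibrant, so $\bigvee_i \sphspec$ and $\bigvee_j \sphspec$ are cofibrant, and therefore so are $\fibrep \bigvee_i \sphspec$ and $\fibrep \bigvee_j \sphspec$; thus $S^0{\qq}$, being the cofibre of $g$ between these, is cofibrant. In $G \mcal$ the unit $S$ is not cofibrant, but $\cofrep S$ is; since every object of $G \mcal$ is fibrant no fibrant replacement is needed, so $S^0_\mcal{\qq}$ is the cofibre of a map between the cofibrant spectra $\bigvee_i \cofrep S$ and $\bigvee_j \cofrep S$, hence cofibrant. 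In $\GIS_+$ the unit $\sphspec$ is not positively cofibrant, but $\cofrep \sphspec$ is, and exactly as in the $\GIS$ case $\fibrep \bigvee_i \cofrep \sphspec$ and $\fibrep \bigvee_j \cofrep \sphspec$ are cofibrant, so $S^0{\qq}_+$ is the cofibre of a map between cofibrant spectra and is cofibrant.

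There is no serious obstacle here; the argument is pure bookkeeping. The only points worth keeping straight are that coproducts and fibrant replacement preserve cofibrancy, and that the mapping-cone construction of Definition \ref{def:cone/cofibre} takes cofibrant input to a cofibrant output — which is precisely where the hypothesis that the wedges of sphere spectra have been made cofibrant (by replacement, where the unit itself fails to be cofibrant) gets used.
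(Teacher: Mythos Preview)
Your proof is correct and follows essentially the same approach as the paper's own proof. The paper argues only the $\GIS$ case explicitly (wedge of cofibrant objects is cofibrant, fibrant replacement preserves cofibrancy, then the pushout along the cofibration $X \to CX$ shows $Cg$ is cofibrant via Corollary \ref{cor:basiccofs}) and states that the other two cases follow identically; you have simply spelled out all three cases and the closure properties in a bit more detail.
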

\begin{proof}
We give the proof for $\GIS$, the other cases follow by the same
argument.
The diagram below gives the definition of the cofibre of $g$.
\begin{displaymath}
\xymatrix@!C{
*+<0.5cm>{\fibrep {\bigvee}_i \sphspec} \ar[r]^g \ar@{>->}[d]_{j} &
*+<0.5cm>{ \fibrep {\bigvee}_j \sphspec} \ar@{>->}[d] \\
C {\fibrep \bigvee}_i \sphspec \ar[r] \ar@{}[ur]|\ulcorner
& Cg \ar@{}[ul]|{\phantom{\cdot} \cdot} }
\end{displaymath}
Since a wedge of cofibrant objects is again cofibrant,
$\bigvee_i \sphspec$ is cofibrant, hence so is
$\fibrep \bigvee_i \sphspec$.
Corollary \ref{cor:basiccofs} tells us that $j$ is a cofibration
and cofibrations are preserved by pushouts, so
$\bigvee_j \sphspec \to Cg$ is a cofibration from a cofibrant object to
$Cg$.
\end{proof}

\begin{proposition}\label{prop:rathomgps}
Let $X$ be an orthogonal $G$-spectrum, then
for any subgroup $H$ of $G$ there is a natural isomorphism
$\pi_*^H(X \smashprod S^0{\qq}) \cong \pi_*^H(X)\otimes \qq$.
Furthermore, the map
$(\id_X \smashprod \alpha)_* \co \pi_*^H(X) \to \pi_*^H( S^0{\qq} \smashprod X)$
acts as $x \to x \otimes 1$. The corresponding statement
holds in $G \mcal$: there is an isomorphism
$\pi_*^H(X \smashprod S^0{\qq}) \cong \pi_*^H(X)\otimes \qq$.
The map $\pi_*^H(\cofrep S \smashprod X) \to \pi_*^H( S^0{\qq} \smashprod X)$
acts as $x \to x \otimes 1$.
\end{proposition}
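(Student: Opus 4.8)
The plan is to smash the short exact sequence of spectra defining $S^0{\qq}$ with $X$, take equivariant homotopy groups, and compare the resulting long exact sequence with the one obtained by tensoring the chosen free resolution $0 \to \bigoplus_i \zz \overset{f}{\to} \bigoplus_j \zz \to \qq \to 0$ with $\pi_*^H(X)$.

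First I would reduce to the case that $X$ is cofibrant. The rational sphere spectrum $S^0{\qq}$ is cofibrant by the preceding proposition, and smashing with a cofibrant spectrum preserves weak equivalences (\cite[Chapter III, Proposition 7.3]{mm02}; for $G \mcal$ this is the first statement of Lemma \ref{lem:sModMonoid}), so the cofibrant replacement $\cofrep X \to X$ becomes a $\pi_*^H$-isomorphism after smashing with $S^0{\qq}$. Since $\pi_*^H$ is homotopy invariant and every construction is natural, we may assume $X$ cofibrant.

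Now $X \smashprod -$ is a left adjoint ($\GIS$ and $G\mcal$ are closed symmetric monoidal, Theorems \ref{thm:orthogmodel} and \ref{thm:ekmmmodel}), hence commutes with the colimit defining a mapping cone, and $X \smashprod (A \smashprod I) \cong (X \smashprod A) \smashprod I$; so smashing the cofibre sequence $\fibrep \bigvee_i \sphspec \overset{g}{\to} \fibrep \bigvee_j \sphspec \to S^0{\qq}$ with $X$ gives a cofibre sequence $X \smashprod \fibrep \bigvee_i \sphspec \to X \smashprod \fibrep \bigvee_j \sphspec \to X \smashprod S^0{\qq}$, hence a long exact sequence of groups $\pi_*^H$. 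The fibrant replacement $\bigvee_i \sphspec \to \fibrep \bigvee_i \sphspec$ is a weak equivalence, so smashing the cofibrant $X$ with it gives a weak equivalence $\bigvee_i X = X \smashprod \bigvee_i \sphspec \to X \smashprod \fibrep \bigvee_i \sphspec$; since the suspension spectra $\Sigma^\infty G/H_+$ (and their shifts) are $G$-compact (Lemma \ref{lem:compactgenerators}) and $\pi_q^H(Y) \cong [\Sigma^q S^0 \smashprod G/H_+, Y]^G$ (Lemma \ref{lem:spherehomotopy}), the functor $\pi_*^H$ commutes with wedges, so $\pi_*^H(X \smashprod \fibrep \bigvee_i \sphspec) \cong \bigoplus_i \pi_*^H(X) \cong (\bigoplus_i \zz) \otimes_\zz \pi_*^H(X)$, and likewise with $j$. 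Because $g$ realises the integral matrix $f$ over $G$ (hence, by naturality of these identifications under restriction $A(G) \to A(H)$ and the maps $\zz \to A(H)$, over every subgroup $H$) and $\pi_*^H(X \smashprod -)$ is additive, the induced map on the first two terms is exactly $f \otimes_\zz \id_{\pi_*^H(X)}$.

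Finally I would invoke flatness of $\qq$: for any abelian group $M$, tensoring the resolution with $M$ leaves it exact, so $f \otimes \id_M$ is injective with cokernel $\qq \otimes M$. Taking $M = \pi_n^H(X)$ for each $n$, every map $(\id_X \smashprod g)_*$ in the long exact sequence is injective, so the connecting maps vanish and it breaks into short exact sequences $0 \to (\bigoplus_i \zz) \otimes \pi_n^H(X) \to (\bigoplus_j \zz) \otimes \pi_n^H(X) \to \pi_n^H(X \smashprod S^0{\qq}) \to 0$, giving the natural isomorphism $\pi_n^H(X \smashprod S^0{\qq}) \cong \qq \otimes \pi_n^H(X)$. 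For the statement about $\alpha$: it is the inclusion of the wedge summand $\sphspec$ indexed by the generator $j_0$ of $\bigoplus_j \zz$ whose image in $\qq$ is $1$, so on homotopy groups the composite $\pi_*^H(X) \to \pi_*^H(X \smashprod \fibrep\bigvee_j\sphspec) \to \pi_*^H(X \smashprod S^0{\qq})$ sends $m$ to $e_{j_0} \otimes m$ and then to $1 \otimes m$, i.e. $x \mapsto x \otimes 1$. The $G\mcal$ case is identical once each $\fibrep\bigvee\sphspec$ is replaced by $\bigvee\cofrep S$ and Lemma \ref{lem:sModMonoid} is used in place of \cite[Chapter III, Proposition 7.3]{mm02}. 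The step requiring the most care is the passage between the replaced wedges and the plain ones: it is valid only because smashing with a cofibrant spectrum preserves \emph{arbitrary} weak equivalences, not just those between cofibrant objects, and because one must check that $g$ genuinely induces $f \otimes \id$ on homotopy groups with arbitrary coefficients rather than merely representing the correct class for a single fixed $H$.
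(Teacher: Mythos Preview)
Your proof is correct and follows essentially the same route as the paper: smash the defining cofibre sequence with $X$, identify the first two terms with $(\bigoplus_i \zz)\otimes \pi_*^H(X)$ and $(\bigoplus_j \zz)\otimes \pi_*^H(X)$ via compactness of the $G/H_+$, observe that the map between them is $f\otimes\id$ and hence injective by flatness of $\qq$, and read off the cokernel. Your reduction to cofibrant $X$ at the outset is a point of extra care the paper leaves implicit, and your treatment of $\alpha$ is more detailed than the paper's ``follows immediately'', but the core argument is identical.
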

\begin{proof}
Using the cofibre sequence which defines
$S^0{\qq}$ we have the following collection of isomorphic
long exact sequences of homotopy groups
$$\begin{array}{ccccccc}
\dots \longrightarrow &
\pi_n^H(X \smashprod \fibrep \bigvee_i \sphspec) & \overset{(\id \smashprod g)_*}{\longrightarrow} &
\pi_n^H(X \smashprod \fibrep \bigvee_j \sphspec) & \longrightarrow &
\pi_n^H(X \smashprod S^0{\qq})    & \longrightarrow
\dots \\
\dots \longrightarrow &
\pi_n^H(\bigvee_i X) & \overset{(\id \smashprod g)_*}{\longrightarrow} &
\pi_n^H(\bigvee_j X) & \longrightarrow &
\pi_n^H(X \smashprod S^0{\qq}) & \longrightarrow
\dots \\
\dots \longrightarrow &
\bigoplus_i \pi_n^H( X) & \overset{g \otimes \id}{\longrightarrow} &
\bigoplus_j \pi_n^H( X) & \longrightarrow &
\pi_n^H(X \smashprod S^0{\qq}) & \longrightarrow
\dots \\
\dots \longrightarrow &
\bigoplus_i \zz \bigotimes \pi_n^H( X)   & \overset{g \otimes \id}{\longrightarrow} &
\bigoplus_j \zz \bigotimes \pi_n^H( X)   & \longrightarrow &
\pi_n^H(X \smashprod S^0{\qq}) & \longrightarrow
\dots
\end{array} $$
Since the map $g \otimes \id \co (\bigoplus_i \zz) \otimes \pi_n^H( X) \to
(\bigoplus_j \zz) \otimes \pi_n^H( X)$ is injective for all $n$,
this long exact sequence splits into short exact sequences
and we conclude that
$\pi_*^H(X \smashprod S^0{\qq}) \cong \pi_*^H(X)\otimes \qq$.
The calculation of $(\id_X \smashprod \alpha)_*$ follows
immediately, as do the statements regarding $G \mcal$.
\end{proof}

\begin{lemma}\label{lem:equivratspheres}
The positive rational sphere spectrum $S^0{\qq}_+$
is weakly equivalent to the rational
sphere spectrum, $S^0{\qq}$.
The rational sphere spectra $\mathbb{N} S^0{\qq}_+$ and $S^0_\mcal{\qq}$ are weakly equivalent.
\end{lemma}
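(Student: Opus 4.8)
The plan is to read both assertions as instances of the uniqueness of homotopy cofibres. By construction each of $S^0{\qq}$, $S^0{\qq}_+$ and $S^0_\mcal{\qq}$ is the cofibre of a map between wedges of (cofibrantly and fibrantly replaced) sphere spectra that represents a fixed map $f \co \bigoplus_i \zz \to \bigoplus_j \zz$ taken from a free resolution of $\qq$, and the paper has already observed that such a cofibre depends on the chosen representative only up to weak equivalence. So in each of the two cases I will exhibit the two defining cofibre sequences as the rows of a square that commutes up to homotopy and whose vertical maps are weak equivalences, and then invoke Lemma \ref{lem:homcomcof}. Two routine facts will be used throughout: cofibres are pushouts and the cone $CX = X \smashprod I$ is formed in the same way in $\GIS$, $\GIS_+$ and $G\mcal$, so these constructions do not depend on the model structure; and the functorial fibrant replacement is the codomain of an acyclic cofibration, so it preserves cofibrancy (as was already used to see that the rational sphere spectra are cofibrant).

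For the first statement, recall that $\GIS_+$ and $\GIS$ have the same underlying category and the same weak equivalences, and that every positive cofibration is a cofibration of $\GIS$ by Theorem \ref{thm:positiveorthogmodel}, hence an $h$-cofibration; so the cone and cofibre constructions of Section \ref{sec:hocolim} make sense for the $\GIS_+$ data when it is viewed inside $\GIS$. The weak equivalence $\cofrep \sphspec \to \sphspec$ induces, on wedges and after applying fibrant replacement, weak equivalences $p \co \fibrep \bigvee_i \cofrep \sphspec \to \fibrep \bigvee_i \sphspec$ and $q \co \fibrep \bigvee_j \cofrep \sphspec \to \fibrep \bigvee_j \sphspec$ between cofibrant-fibrant objects (reconciling the fibrant replacements of $\GIS_+$ and $\GIS$ by a short zig-zag if necessary, which is harmless as those weak equivalences agree). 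The square with the defining map of $S^0{\qq}_+$ along the top, the defining map $g$ of $S^0{\qq}$ along the bottom, and $p$, $q$ down the sides commutes up to homotopy, because its two composites are maps between cofibrant-fibrant objects and so are homotopic as soon as they agree in $\ho \GIS$, where they both represent $f$ under the identification of $[\bigvee_i \sphspec, \bigvee_j \sphspec]^H$ with $\hom_{A(H)}(\bigoplus_i A(H), \bigoplus_j A(H))$ that was used to pick out $g$. Lemma \ref{lem:homcomcof} then produces a zig-zag of weak equivalences between the cofibre of the top map, which is $S^0{\qq}_+$, and the cofibre of the bottom map, which is $S^0{\qq}$.

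For the second statement I use the strong symmetric monoidal left Quillen functor $\mathbb{N} \co \GIS_+ \to G\mcal$ of Theorem \ref{thm:ekmmmodel}. Being a left adjoint, $\mathbb{N}$ preserves the pushouts that define cofibres and preserves wedges; being a strong monoidal functor it sends $\sphspec$ to $S$; and, being a comparison functor of this kind, it commutes up to natural isomorphism with $C(-) = (-) \smashprod I$. Hence $\mathbb{N}$ applied to the cofibre sequence defining $S^0{\qq}_+$ is a cofibre sequence in $G\mcal$ of the form $\mathbb{N}\fibrep \bigvee_i \cofrep \sphspec \to \mathbb{N}\fibrep \bigvee_j \cofrep \sphspec \to \mathbb{N} S^0{\qq}_+$. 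Each outer term is cofibrant (the functorial fibrant replacement preserves cofibrancy and $\mathbb{N}$ is left Quillen) and sits in a zig-zag of weak equivalences $\mathbb{N}\fibrep \bigvee_\bullet \cofrep \sphspec \leftarrow \mathbb{N} \bigvee_\bullet \cofrep \sphspec = \bigvee_\bullet \mathbb{N}\cofrep \sphspec \to \bigvee_\bullet S$, in which the first map is $\mathbb{N}$ of the acyclic cofibration $\bigvee_\bullet \cofrep \sphspec \to \fibrep \bigvee_\bullet \cofrep \sphspec$ and the second is the wedge of the weak equivalences $\mathbb{N}\cofrep \sphspec \to \mathbb{N}\sphspec = S$ provided by the monoidal Quillen pair structure; moreover the connecting map of the sequence still represents $f$. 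Since $S^0_\mcal{\qq}$ is by definition the cofibre of exactly such a map $\bigvee_i \cofrep S \to \bigvee_j \cofrep S$ representing $f$, and every object of $G\mcal$ is fibrant, a final application of Lemma \ref{lem:homcomcof} — comparing the two $G\mcal$ cofibre sequences through the weak equivalences just described — yields the desired zig-zag between $\mathbb{N} S^0{\qq}_+$ and $S^0_\mcal{\qq}$.

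The mathematical content here is slight; the step that calls for genuine care is checking that $\mathbb{N}$ transports the $\GIS_+$ cofibre sequence to a $G\mcal$ cofibre sequence with the right homotopy type on its outer terms. That is where one must use several structural facts simultaneously — $\mathbb{N}$ preserves pushouts and wedges, $\mathbb{N}\sphspec = S$, the monoidal-Quillen-pair condition that $\mathbb{N}\cofrep \sphspec \to \mathbb{N}\sphspec$ is a weak equivalence, and the compatibility of $\mathbb{N}$ with cones — together with the bookkeeping observation that the functorial fibrant replacement preserves cofibrancy, which is what makes these manipulations interact correctly with the left Quillen functor $\mathbb{N}$. Reconciling the fibrant replacements of $\GIS$ and $\GIS_+$ in the first statement is similarly harmless, since those model structures have the same weak equivalences.
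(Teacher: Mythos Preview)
Your proof is correct and follows essentially the same approach as the paper: compare the defining cofibre sequences via a homotopy commutative diagram with weak equivalences down the sides, and conclude via Lemma \ref{lem:homcomcof}. The paper is terser for the first statement (one sentence), and for the second it lifts along the acyclic fibration $\cofrep S \to S$ to choose a single weak equivalence $\mathbb{N}\cofrep\sphspec \to \cofrep S$, whereas your zig-zag passes through $\bigvee S$ and so implicitly needs one more comparison $\bigvee \cofrep S \to \bigvee S$ to reach the defining sequence for $S^0_\mcal\qq$; this extra step is harmless but you should state it explicitly.
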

\begin{proof}
The first statement follows from the fact that
the positive cofibrant replacement of $\sphspec$ is
weakly equivalent to $\sphspec$.
For the second statement, let $c$ and $f$ denote fibrant replacement in $\GIS_+$
and $\cofrep$ be cofibrant replacement in $G \mcal$.
Since $\mathbb{N}$ is a strong monoidal
left adjoint, we know that the map
$\mathbb{N} c \sphspec \to \mathbb{N} \sphspec \cong S$
is a weak equivalence. Hence we can choose
(via lifting properties) a weak equivalence
$\mathbb{N} c \sphspec \to \cofrep S$.
We can then make the following diagram, which commutes
up to homotopy. Each of the horizontal maps
is some choice of a representative for $g$.
$$\xymatrix{
\nn f {\bigvee}_j  c \sphspec \ar[r]
& \nn f {\bigvee}_i  c \sphspec  \\
*+<0.5cm>{\nn  {\bigvee}_j  c \sphspec}
\ar[r] \ar@{>->}[u]_{\sim} \ar[d]^{\sim}
& *+<0.5cm>{\nn  {\bigvee}_i  c \sphspec}
\ar@{>->}[u]_{\sim} \ar[d]^{\sim} \\
{\bigvee}_j \cofrep S \ar[r]
& {\bigvee}_i \cofrep S
}$$
The vertical maps are weak equivalences,
hence the cofibres of the horizontal
maps are weakly equivalent.
\end{proof}

Later it will be important to have a commutative ring spectrum
that we can call a rational sphere, we give a method to
construct such a object.
We work exclusively in $G \mcal$ in the following.
We note here that the classical concept of
cell spectra can be reworded to:
$X$ is a cell spectrum if the map
$* \to X$ is in $I$-cell.
We may as well have made $S^0_\mcal{\qq}$
in $G \mcal$ from copies of a cellular replacement
of the sphere spectrum
(see \cite[Chapter III, Theorem 2.10]{EKMM97}).
If we do so, then we have the following lemma.

\begin{lemma}\label{lem:S0qqiscell}
The rational sphere spectrum $S^0_\mcal{\qq}$ is a cell $S$-module.
\end{lemma}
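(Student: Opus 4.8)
The plan is to build $S^0_\mcal{\qq}$ using the cellular recipe flagged just before the statement, and then observe that every ingredient of the construction stays inside $I$-cell. Recall that a spectrum $X$ is a cell $S$-module precisely when the map $* \to X$ lies in $I$-cell, the class of relative cell complexes for the generating cofibrations $I$ of $G \mcal$ (transfinite composites of pushouts of generalised sphere inclusions $E \to CE$). So the goal is to exhibit $S^0_\mcal{\qq}$ as such a transfinite composite.

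First I would replace the sphere spectrum $S$ by a cellular (cofibrant) $S$-module $\cofrep S$, using \cite[Chapter III, Theorem 2.10]{EKMM97}, so that $* \to \cofrep S$ is in $I$-cell. Then $\bigvee_i \cofrep S$ and $\bigvee_j \cofrep S$ are wedges of cell $S$-modules; since a wedge (coproduct) of cell complexes is again a cell complex — the maps $* \to \cofrep S$ being composable in a transfinite string, one index at a time — both wedges are cell $S$-modules. Next, pick the representative $g \co \bigvee_i \cofrep S \to \bigvee_j \cofrep S$ of the homotopy class $f$, and form its cofibre via the pushout defining $Cg$, namely the pushout of $C(\bigvee_i \cofrep S) \leftarrow \bigvee_i \cofrep S \xrightarrow{g} \bigvee_j \cofrep S$. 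The mapping cone $C(\bigvee_i \cofrep S) = (\bigvee_i \cofrep S) \smashprod I$ is cellular, and the map $\bigvee_i \cofrep S \to C(\bigvee_i \cofrep S)$ is a relative cell complex (it is $h$-cofibration-like, but more to the point one can arrange the cone of a cell complex to be built cellularly). Pushing a cell inclusion out along any map of cell $S$-modules with cellular source again yields a cell $S$-module: one uses the cellular filtration of $\bigvee_i \cofrep S$ and pushes out one cell at a time, so that $S^0_\mcal{\qq} = Cg$ is obtained from $\bigvee_j \cofrep S$ by attaching cells, hence is a cell $S$-module.

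The main obstacle is the bookkeeping that a pushout of a relative cell complex along a map out of a cell $S$-module is again a cell $S$-module: this requires knowing that the attaching maps of the cone $C(\bigvee_i \cofrep S)$ can be filtered compatibly with the cell structure of $\bigvee_i \cofrep S$, i.e. that $A \to CA$ is a relative $I$-cell complex whenever $* \to A$ is. For EKMM $S$-modules this is the content (or an easy consequence) of the cellular approximation machinery of \cite[Chapter III]{EKMM97}; concretely, $CA = A \smashprod I$ and smashing the cellular filtration of $A$ with the pair $(I, \{0\})$ produces the required relative cell structure, using that $\{0\} \to I$ (equivalently $S^0 \to I$, cf. Lemma \ref{lem:basiccofspaces}) is a cofibration of $G$-spaces and that $G\mcal$ is $G$-topological. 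Granting that, the transfinite composite $\bigvee_j \cofrep S \to S^0_\mcal{\qq}$ is a relative $I$-cell complex, and composing with $* \to \bigvee_j \cofrep S \in I\text{-cell}$ gives $* \to S^0_\mcal{\qq} \in I\text{-cell}$, which is exactly the assertion that $S^0_\mcal{\qq}$ is a cell $S$-module.
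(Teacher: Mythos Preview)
Your proposal is correct and follows the same skeleton as the paper's proof: build the wedges from cellular copies of the sphere, observe that the cone inclusion is a relative cell complex, and assemble the cofibre as a cell $S$-module. The paper packages the three needed facts (wedges of cell modules are cell modules; $X \to CX$ is a cell subcomplex inclusion; pushouts along cell subcomplex inclusions of cellular maps are cell modules) by citing \cite[Chapter I, Lemma 5.7]{lms86} directly, and then invokes Cellular Approximation \cite[Chapter I, Theorem 5.8]{lms86} to replace $g$ by a homotopic \emph{cellular} map before forming the pushout.

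The one genuine difference is how you handle the map $g$. You argue, in model-categorical terms, that pushing out the relative $I$-cell complex $\bigvee_i \cofrep S \to C(\bigvee_i \cofrep S)$ along an \emph{arbitrary} map yields a relative $I$-cell complex, so the target $S^0_\mcal\qq$ is a cell $S$-module regardless of whether $g$ is cellular. This is valid and slightly slicker, since it sidesteps Cellular Approximation entirely. The paper's route stays within the classical LMS framework, where the pushout lemma is stated for cellular maps, hence the extra step. Your ``main obstacle'' paragraph (that $A \to CA$ is a relative $I$-cell complex for $A$ a cell $S$-module) is exactly the content of the second clause of \cite[Chapter I, Lemma 5.7]{lms86}, so you could cite that rather than rederive it from the $G$-topological structure.
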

\begin{proof}
Recall the following from \cite[Chapter I, Lemma 5.7]{lms86}:
a wedge of cell $S$-modules is a cell $S$-module,
for a cell $S$-module $X$ the canonical map $X \to CX$
is an inclusion of a cell subcomplex
and the pushout of a cellular map
of cell $S$-modules along an inclusion of
a cell subcomplex is a cell $S$-module.
Now we use \cite[Chapter I, Theorem 5.8]{lms86} (Cellular Approximation)
to ensure that the map
$f \co \bigvee_i \Sigma^\infty S^0 \to \bigvee_j \Sigma^\infty S^0$
is cellular and the result follows since
$\Sigma^\infty S^0$ is a cell $S$-module.
\end{proof}

To obtain our commutative ring spectrum
we use \cite[Chapter VIII, Theorem 2.2]{EKMM97},
we give the statement that we will need
below. Here we assume that $E$ is
a cell spectrum (hence cofibrant).
We need to use the language of $E$-equivalences
and $E$-localisation from \ref{sec:localGIS}.

\begin{theorem}\label{thm:algebralocalise}
For a cell commutative $R$-algebra $A$, the localisation
$\lambda \co A \to A_E$ can be constructed as the inclusion of
a subcomplex in a cell commutative $R$-algebra $A_E$.
In particular $A \to A_E$ is an $E$-equivalence
and a cofibration of commutative ring spectra
for any cell commutative $R$-algebra $A$.
\end{theorem}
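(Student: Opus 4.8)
The statement is the equivariant analogue of \cite[Chapter VIII, Theorem 2.2]{EKMM97}, applied in $G\mcal$ (in practice with $R=S$), so the plan is to explain why the EKMM construction carries over rather than to reprove it from scratch. Recall its shape. First fix a set $\mathcal{K}$ of $E$-acyclic cell $R$-modules that detects $E$-locality; such a set exists because $G\mcal$ has a set of compact generators, the suspension spectra $\Sigma^\infty G/H_+$ of Lemma \ref{lem:compactgenerators}, so a solution-set argument applies. Given a cell commutative $R$-algebra $A$, one then builds a transfinite sequence $A=A_0 \to A_1 \to A_2 \to \cdots$ in the category of commutative $R$-algebras, where $A_{\beta+1}$ is obtained from $A_\beta$ by a pushout attaching ``cells'' of the form $\mathbb{P}(N) \to \mathbb{P}(CN)$, with $\mathbb{P}$ the free commutative $R$-algebra monad and $N$ assembled from $\mathcal{K}$, chosen so as to kill the maps obstructing $E$-locality of the current stage. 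The colimit $A_E:=\colim_\beta A_\beta$ is then $E$-local and contains $A$ as a subcomplex, which gives the map $\lambda \co A \to A_E$.

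Granting this construction, the two ``in particular'' clauses are formal. An inclusion of a subcomplex in a cell commutative $R$-algebra is, by definition, a relative cell complex in commutative $R$-algebras, hence a cofibration of commutative ring spectra. For the $E$-equivalence, each stage map $A_\beta \to A_{\beta+1}$ has cofibre built, through the cellular filtration of $\mathbb{P}$, out of extended powers $N^{\smashprod_R j}/\Sigma_j$ of the $E$-acyclic modules $N$, so $E\smashprod_R A_\beta \to E\smashprod_R A_{\beta+1}$ is a weak equivalence; these stage maps are $h$-cofibrations, and since $E\smashprod_R(-)$ commutes with the resulting telescope up to the colimit-versus-homotopy-colimit comparison of Section \ref{sec:hocolim}, we conclude that $E\smashprod_R A \to E\smashprod_R A_E$ is a weak equivalence, i.e. $\lambda$ is an $E$-equivalence.

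To make this precise in $G\mcal$ I would check the inputs the EKMM argument uses: that $G\mcal$ is a cofibrantly generated, proper, $G$-topological monoidal model category with a set of compact generators (Theorem \ref{thm:ekmmmodel}, Corollary \ref{cor:basiccofs}, Lemma \ref{lem:compactgenerators}); that the free commutative $R$-algebra monad $\mathbb{P}$ has the CW-type behaviour of \cite[Chapter III]{EKMM97} equivariantly — it sends cell $R$-modules to objects with a cellular filtration whose subquotients are extended powers, and pushouts of $\mathbb{P}$ of cell inclusions along attaching maps are again inclusions of subcomplexes; and that $E$-acyclicity is preserved under the smash products and extended powers occurring in that filtration. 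The first point is immediate from the excerpt, and the cellular bookkeeping in the second is routine; the genuine obstacle is the last point together with the interaction of $\mathbb{P}$ with $E\smashprod_R(-)$, namely that smashing an extended power of an $E$-acyclic module with $E$ stays acyclic. For the equivariant linear-isometries machinery of \cite[Chapter IV]{mm02} this requires the same cofibre-sequence and untwisting arguments that EKMM use non-equivariantly, now tracked through the $G$-action; once that is in hand, termination of the construction and $E$-locality of $A_E$ go through exactly as in \cite{EKMM97}.
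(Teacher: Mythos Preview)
Your proposal is correct and takes the same approach as the paper: the paper's entire proof is the single sentence ``This result goes through with the same proof as in \cite{EKMM97}.'' You have simply unpacked what that proof is and which equivariant inputs it requires, which is more than the paper itself provides.
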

\begin{proof}
This result goes through with the same proof as in
\cite{EKMM97}.
\end{proof}

\begin{definition}\label{def:S_qq}
Let $S_\qq$\index{Sa @$S_\qq$} be the commutative ring spectrum
constructed as the $S^0_\mcal{\qq}$-localisation of $S$ in
$G \mcal$ with the generalised cellular model structure.
\end{definition}
It follows immediately that the unit
$\eta \co S \to S_\qq$ is an $S^0_\mcal \qq$-equivalence.

\begin{lemma}\label{lem:compareSQtoRQ}
There is a weak equivalence
$S^0_\mcal{\qq} \to S_\qq$.
\end{lemma}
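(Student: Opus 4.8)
The plan is to compare the two rational sphere spectra by showing that each is built, up to weak equivalence, from the same homotopy-level data, namely a free resolution of $\qq$ as an abelian group. Recall that $S^0_\mcal\qq$ is defined (using Lemma \ref{lem:S0qqiscell}, so that we may take a cellular representative) as the cofibre of a map $g \co \bigvee_i \cofrep S \to \bigvee_j \cofrep S$ representing a chosen injection $f \co \bigoplus_i \zz \to \bigoplus_j \zz$ with cokernel $\qq$, whereas $S_\qq$ is the $S^0_\mcal\qq$-localisation of $S$ in $G\mcal$. By Definition \ref{def:S_qq} and the remark following it, the unit $\eta \co S \to S_\qq$ is an $S^0_\mcal\qq$-equivalence, and by construction of the localisation $S_\qq$ is $S^0_\mcal\qq$-local. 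So the strategy is: first produce a natural map $S^0_\mcal\qq \to S_\qq$, then show it is a $\pi_*$-isomorphism by computing both sides with Proposition \ref{prop:rathomgps}.

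First I would construct the comparison map. The inclusion $\alpha \co \cofrep S \to \bigvee_j \cofrep S$ onto the summand corresponding to $1 \in \qq$ (the analogue of the map $\alpha$ described after Definition \ref{def:S_qq} for $G\mcal$) followed by the quotient $\bigvee_j \cofrep S \to S^0_\mcal\qq$ gives a map $\cofrep S \to S^0_\mcal\qq$; after cofibrant/fibrant replacement as needed this is a map under $S$, so it factors the unit through $S^0_\mcal\qq$. Since $S^0_\mcal\qq \smashprod S^0_\mcal\qq$ is, by Proposition \ref{prop:rathomgps} applied to $X = S^0_\mcal\qq$, just $S^0_\mcal\qq$ with $\pi_*^H$ rationalised once more — which is an isomorphism — one checks that the composite $S \to S^0_\mcal\qq$ is itself an $S^0_\mcal\qq$-equivalence and $S^0_\mcal\qq$ is $S^0_\mcal\qq$-local (again by Proposition \ref{prop:rathomgps}: smashing an already-rational spectrum with $S^0_\mcal\qq$ does not change it). By the universal property of localisation there is then an induced map $S^0_\mcal\qq \to S_\qq$ commuting with the maps from $S$.

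Next I would verify this induced map is a weak equivalence. By Proposition \ref{prop:rathomgps}, for every subgroup $H$ of $G$ we have $\pi_*^H(S^0_\mcal\qq) \cong \pi_*^H(S)\otimes\qq = A(H)\otimes\qq$ in degree $0$ (and rationalised stable stems in other degrees), with the map from $\pi_*^H(S)$ acting as $x \mapsto x\otimes 1$. On the other side, $\eta \co S \to S_\qq$ being an $S^0_\mcal\qq$-equivalence means $S^0_\mcal\qq \smashprod \eta$ is a $\pi_*$-isomorphism, and using $S^0_\mcal\qq \smashprod S \simeq S^0_\mcal\qq$ together with the fact that $S_\qq$ is $S^0_\mcal\qq$-local (so $S_\qq \to S^0_\mcal\qq \smashprod S_\qq$ is a weak equivalence) we get $\pi_*^H(S_\qq) \cong \pi_*^H(S^0_\mcal\qq \smashprod S_\qq) \cong \pi_*^H(S_\qq)\otimes\qq$, hence $\pi_*^H(S_\qq)$ is already rational, and the comparison map is compatible with the two maps $x\mapsto x\otimes 1$ from $\pi_*^H(S)$. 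A short diagram chase with the long exact sequence of the defining cofibration of $S^0_\mcal\qq$ then shows $\pi_*^H(S^0_\mcal\qq) \to \pi_*^H(S_\qq)$ is an isomorphism for all $H$, i.e. the map is a $\pi_*$-isomorphism, which is exactly a weak equivalence in $G\mcal$ by Theorem \ref{thm:ekmmmodel}.

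The main obstacle I anticipate is not the homotopy-group bookkeeping but the careful handling of cofibrant and fibrant replacements and the precise invocation of the universal property of Bousfield localisation: one must make sure the map $S^0_\mcal\qq \to S_\qq$ really is canonical (independent of choices up to weak equivalence) and that $S^0_\mcal\qq$ genuinely satisfies the defining properties of the localisation $S_\qq$ — $S^0_\mcal\qq$-local and receiving an $S^0_\mcal\qq$-equivalence from $S$ — so that the two are identified by essential uniqueness of localisations. Once those formalities are pinned down, the weak equivalence is forced.
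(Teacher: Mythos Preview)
Your approach is essentially the paper's: both construct $\cofrep S \to S^0_\mcal{\qq}$ via the inclusion onto the $1\in\qq$ summand, observe it is an $S^0_\mcal{\qq}$-equivalence, and then compare with $S_\qq$. The difference lies in how the map $S^0_\mcal{\qq} \to S_\qq$ is produced. The paper notes that $\cofrep S \to S^0_\mcal{\qq}$ is a \emph{cofibration}, hence an acyclic cofibration in $L_{S^0_\mcal{\qq}} G\mcal$; since $S_\qq$ is fibrant there, the composite $\cofrep S \to S \to S_\qq$ lifts to $S^0_\mcal{\qq} \to S_\qq$, which is an $S^0_\mcal{\qq}$-equivalence by two-out-of-three and then a $\pi_*$-isomorphism because both sides have rational homotopy groups (Theorem~\ref{thm:rathomotopymaps}). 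Your detour through ``$S^0_\mcal{\qq}$ is $S^0_\mcal{\qq}$-local'' is therefore not needed, and your justification for it --- that smashing with $S^0_\mcal{\qq}$ fixes an already-rational spectrum --- is not a proof of locality (locality says $[A,-]^G=0$ for all $E$-acyclic $A$, not that $-\smashprod E$ fixes the object). Likewise ``$S_\qq$ is local, so $S_\qq \to S^0_\mcal{\qq} \smashprod S_\qq$ is a weak equivalence'' conflates locality with the smashing condition; the correct input there is again Proposition~\ref{prop:rathomgps} plus Theorem~\ref{thm:rathomotopymaps}. Once these are patched your argument goes through, but the paper's lifting-axiom version is shorter and sidesteps the locality claim entirely.
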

\begin{proof}
Begin with the map $\cofrep S \to \bigvee_j \cofrep S$
which maps into the $\cofrep S$ factor corresponding to $1 \in \qq$.
This is clearly a cofibration, hence we obtain a
cofibration $\cofrep S \to S^0_\mcal{\qq}$
that is an isomorphism of rational homotopy groups
and hence by Lemma \ref{lem:ratequivs}
is an $S^0_\mcal \qq$-equivalence.
We then have the composite map
$\cofrep S \to S \to S_\qq$ which is a $S^0_\mcal \qq$-equivalence
into an $S^0_\mcal{\qq}$-local object.
Thus we obtain a $S^0_\mcal \qq$-equivalence $S^0_\mcal{\qq} \to S_\qq$
(via the lifting properties
in $L_{S^0_\mcal \qq} G \mcal$).
By Theorem \ref{thm:rathomotopymaps},
$S_\qq$ has rational homotopy groups
thus our lift must be a $\pi_*$-isomorphism.
\end{proof}

\chapter{Localisations of $G$-Spectra}\label{chp:locals}

We will define our category of rational spectra
in terms of a Bousfield localisation of
$\GIS$ or $G \mcal$.
We will need other localisations later,
so we begin by looking at the general case
as considered in \cite[Chapter IV, Section 6]{mm02},
our contribution
will be in proving that most of the good model category
properties of $G$-spectra are preserved by localisation
(left properness, the pushout product axiom and the monoid axiom).
We then construct our category of rational $G$-spectra
in Section \ref{sec:GISQ} and prove that it is
independent of the choices in our construction
(see Lemma \ref{lem:ratequivs} and
Propositions \ref{prop:GIS+QequivGISQ}
and \ref{prop:GMequivGIS+Q}).
We give a different construction in terms of
modules over a ring spectrum in Section \ref{sec:SQmod}
and show that this construction is Quillen equivalent to the previous one
(Theorem \ref{thm:localisedtomodules}).

\section{The Bousfield Localisations of $\GIS$ and $G \mcal$}\label{sec:localGIS}

The results below apply to $\GIS$, $\GIS_+$ and $G \mcal$
so we work with notation appropriate to $\GIS$
and will note when changes are needed
for $G \mcal$ or $\GIS_+$.
Aside from the theorem below which allows us to actually
construct these localisations, the major results
are Theorem \ref{thm:locfuncs} and Proposition \ref{prop:rmodlocal}.
The first allows us to compare localised categories
and the second shows that we can localise categories of modules
over a ring spectrum.
The definition below is
\cite[Chapter IV, Definition 6.2]{mm02}.

\begin{definition}\label{def:Estuff}
Let $E$ be a cofibrant spectrum or a cofibrant based $G$-space and let
$X$, $Y$ and $Z$ be orthogonal spectra.
\begin{enumerate}
\item \vskip -0.3cm A map $f \co X \to Y$ is an
$E$-\textbf{equivalence}\index{E-equivalence@$E$-equivalence}
 if $\id_E \smashprod f \co E \smashprod X \to E \smashprod Y$
is a weak equivalence.
\item $Z$ is $E$-\textbf{local}\index{E-local@$E$-local}
if $f^* \co [Y,Z]^G \to [X,Z]^G$ is an isomorphism for all $E$-equivalences
$f \co X \to Y$.
\item An $E$-\textbf{localisation}\index{E-localisation@$E$-localisation}
of $X$ is an $E$-equivalence $\lambda \co X \to Y$ from $X$ to an $E$-local object $Y$.
\item $A$ is $E$-\textbf{acyclic}\index{E-acyclic@$E$-acyclic} if the map $* \to A$ is an $E$-equivalence.
\end{enumerate}
\end{definition}

The following is a standard result, see
\cite[Theorems 3.2.13 and 3.2.14]{hir03}.

\begin{lemma}\label{lem:EequivElocal}
An $E$-equivalence between $E$-local objects is a weak equivalence.
\end{lemma}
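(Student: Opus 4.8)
The plan is to exploit the defining property of $E$-local objects twice --- once using that $X$ is $E$-local and once using that $Y$ is $E$-local --- to build a two-sided inverse of $f$ in the homotopy category $\ho \GIS$, and then to invoke the fact (part of the characterisation of $\ho \GIS$) that a map of $\GIS$ is a weak equivalence exactly when $\gamma$ sends it to an isomorphism. Throughout, $[-,-]^G$ denotes morphisms in $\ho \GIS$, as in the rest of the paper; the argument will apply verbatim to $\GIS_+$ and $G \mcal$.

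First I would use that $X$ is $E$-local. Applied to the $E$-equivalence $f \co X \to Y$, the definition of $E$-local gives that $f^* \co [Y,X]^G \to [X,X]^G$ is an isomorphism. In particular it is surjective, so there is a morphism $g \co Y \to X$ in $\ho \GIS$ with $g \circ f = \id_X$.

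Next I would use that $Y$ is $E$-local. This time the definition gives that $f^* \co [Y,Y]^G \to [X,Y]^G$ is an isomorphism, hence injective. Since $f^*$ is precomposition with $f$, I compute in $\ho \GIS$
$$f^*(f \circ g) = f \circ g \circ f = f \circ \id_X = f = \id_Y \circ f = f^*(\id_Y),$$
and injectivity of $f^*$ forces $f \circ g = \id_Y$. Thus $g$ is a two-sided inverse of $f$ in $\ho \GIS$, so $\gamma f$ is an isomorphism, and $f$ is therefore a weak equivalence.

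There is really no serious obstacle here: the whole argument is formal, being essentially a reformulation of \cite[Theorems 3.2.13 and 3.2.14]{hir03}. The only point to keep straight is that $g$ and all the identities above live in $\ho \GIS$ and not at the point-set level, which is harmless because the definition of $E$-local is already phrased in terms of $[-,-]^G$; the conclusion then follows by passing back along the fact that $\gamma$ detects weak equivalences.
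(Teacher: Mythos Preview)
Your proof is correct and follows essentially the same approach as the paper: the paper packages the two uses of $E$-locality into a single commutative square relating $f^*$ and $f_*$, but the logic is identical --- use $E$-locality of the domain to produce a one-sided inverse $g$ with $g \circ f = \id$, then use $E$-locality of the codomain (injectivity of the other $f^*$) to upgrade this to $f \circ g = \id$, and conclude via the reflection of isomorphisms along $\gamma$.
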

\begin{proof}
Take $f \co A \to B$, an $E$-equivalence between $E$-local objects.
The square below commutes and the vertical maps are isomorphisms
by the definitions above.
$$\xymatrix{
[B,A]^G \ar[r]^{f_*} \ar[d]^\cong_{f^*} &
[B,B]^G \ar[d]^\cong_{f^*} \\
[A,A]^G \ar[r]^{f_*} &
[A,B]^G  }$$
There is a unique $[g] \in [B,A]^G$ such that $[g][f]=f^*[g]=[\id_A]$
and it follows that $[f][g]=f_*[g]=[\id_B]$.
Thus $[f]$ is an isomorphism in the homotopy category
and we can conclude that $f$
is a weak equivalence.
\end{proof}

\begin{theorem}\label{thm:GSlocal}
Let $E$ be a cofibrant spectrum or a cofibrant based $G$-space. Then $\GIS$
has an
$E$-model structures\index{E-model structure@$E$-model structure}
whose weak equivalences are the $E$-equivalences and whose $E$-cofibrations are the
cofibrations of $\GIS$.
The $E$-fibrant objects are precisely the fibrant $E$-local
objects and $E$-fibrant approximation
constructs a Bousfield localisation $f_X \co X \to \fibrep_E X$ of $X$ at $E$.
The notation for $E$-model structure on the underlying category of $\GIS$ is
$L_E \GIS$ or $\GIS_E$\index{G IS@$\GIS_E$}\index{G MM@$G \mcal_E$}.
\end{theorem}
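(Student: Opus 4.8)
The plan is to realise this $E$-model structure as a left Bousfield localisation, following the template of \cite[Chapter IV, Section 6]{mm02} (and the general picture in \cite[Chapters 3 and 4]{hir03}). The input is Theorem \ref{thm:orthogmodel}: $\GIS$ is a cofibrantly generated, left proper, topological model category with generating cofibrations $I$ and generating acyclic cofibrations $J$, and it is stable with the set $\{\Sigma^\infty G/H_+\}$ of compact generators (Lemma \ref{lem:compactgenerators}). Throughout, the $E$-cofibrations are kept equal to the cofibrations of $\GIS$ and the $E$-equivalences are declared to be the new weak equivalences; the $E$-fibrations are then forced to be the maps with the right lifting property against the $E$-acyclic cofibrations (cofibrations that are also $E$-equivalences). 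Since $E$ is cofibrant, smashing with $E$ preserves $\pi_*$-isomorphisms, so every weak equivalence of $\GIS$ is an $E$-equivalence; the case of a cofibrant spectrum $E$ and that of a cofibrant based $G$-space $E$ are handled uniformly this way.

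First I would construct the $E$-fibrant replacement functor. A Bousfield--Smith style cardinality argument, using that $\GIS$ is stable with a set of compact generators, produces a \emph{set} $\mathcal{A}$ of cofibrant $E$-acyclic spectra which detects $E$-locality, in the sense that a fibrant $Z$ is $E$-local precisely when $[A,Z]^G_* = 0$ for all $A \in \mathcal{A}$. Enlarging $J$ to a set $J_E$ by adjoining the pushout products of the generating cofibrations of based $G$-spaces with suitable maps built from the $A \in \mathcal{A}$ — arranged so that the objects $Z$ for which $Z \to *$ has the right lifting property against $J_E$ are exactly the fibrant $E$-local ones — the small object argument applied to $J_E$ yields a functorial factorisation of $X \to *$ as $X \to \fibrep_E X \to *$, with the first map a relative $J_E$-cell complex (hence a cofibration of $\GIS$) and $\fibrep_E X$ fibrant and $E$-local. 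Using long exact sequences of cofibre sequences and left properness one checks that every map adjoined to form $J_E$ is an $E$-equivalence, so $X \to \fibrep_E X$ is an $E$-equivalence; this is the asserted Bousfield localisation $f_X$.

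Now I would assemble the model structure. Because the $E$-cofibrations are exactly the cofibrations of $\GIS$, the acyclic fibrations — which in any model category are the maps with the right lifting property against all cofibrations — together with the factorisation into a cofibration followed by an acyclic fibration, and the lifting of cofibrations against acyclic fibrations, are all inherited verbatim from $\GIS$. The two-out-of-three and retract axioms for $E$-equivalences are immediate. What is left is the ``acyclic cofibration / fibration'' side: running the small object argument on $J_E$ and then the ordinary factorisation of $\GIS$, together with the usual retract trick, supplies the factorisation of any map into an $E$-acyclic cofibration followed by an $E$-fibration. The one genuinely non-formal point — and the step I expect to be the main obstacle — is the remaining lifting axiom, that every $E$-acyclic cofibration has the left lifting property against every $E$-fibration; equivalently, that the $E$-acyclic cofibrations are precisely the retracts of relative $J_E$-cell complexes. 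Following Bousfield, given an $E$-acyclic cofibration $i \co X \to Y$ one compares the localisation squares of $X$ and $Y$: the maps $X \to \fibrep_E X$ and $Y \to \fibrep_E Y$ are $E$-equivalences into $E$-local objects, so by Lemma \ref{lem:EequivElocal} the induced map $\fibrep_E X \to \fibrep_E Y$ is a weak equivalence of $\GIS$; a diagram chase then exhibits $i$ as a retract of a relative $J_E$-cell complex, which has the required lifting property.

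Finally I would record that the $E$-fibrant objects are exactly the fibrant $E$-local objects — immediate from the characterisation above of the objects that lift against $J_E$, with left properness used to see that $E$-fibrant approximation keeps a fibrant object in the fibrant $E$-local class — so that $f_X \co X \to \fibrep_E X$ is the Bousfield localisation. The identical argument applies to $\GIS_+$ (Theorem \ref{thm:positiveorthogmodel}) and to $G \mcal$ (Theorem \ref{thm:ekmmmodel}); in the latter case every object is already fibrant, so ``$E$-fibrant'' simplifies to ``$E$-local''. The only structure used for all three categories — cofibrant generation, left properness, stability, and a set of compact generators — is supplied by Theorems \ref{thm:orthogmodel}, \ref{thm:positiveorthogmodel} and \ref{thm:ekmmmodel} together with Lemma \ref{lem:compactgenerators}.
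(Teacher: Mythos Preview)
Your proposal is correct and follows essentially the same route as the paper, which simply cites \cite[Chapter IV, Theorem 6.3]{mm02} and notes that its proof adapts the Bousfield--Smith argument of \cite[Chapter VIII]{EKMM97}. Your sketch expands what those references contain: the cardinality-bounded set of test maps, the small object argument producing $\fibrep_E$, and the retract argument for the lifting axiom; the paper's subsequent paragraph (defining the test set $\tscr$ via a cardinal $c$ bounding $E^*(\sphspec)$) confirms that this is exactly the machinery it has in mind.
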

\begin{proof}
This result is \cite[Chapter IV, Theorem 6.3]{mm02},
the proof of which is an adaptation
of the material in \cite[chapter VIII]{EKMM97}.
\end{proof}

These are cofibrantly generated model categories,
this is seen by carefully reading the proof of
\cite[Chapter VIII, Theorem 1.1]{EKMM97}.
Let $c$ be a fixed infinite cardinal that
is at least the cardinality of
$E^*(\sphspec)$.
Then define $\tscr$, a test set for $E$-fibrations,
to consist of all inclusions of cell complexes
$X \to Y$ such that the cardinality of the set of cells
of $Y$ is less than or equal to $c$.
Hence the domains of these maps are $\kappa$-small
where $\kappa$ is the least cardinal greater than $c$.

\begin{proposition}\label{prop:localquillen}
The identity map of $\GIS$ is the left adjoint of
a Quillen pair from $\GIS \to L_E \GIS$.
\end{proposition}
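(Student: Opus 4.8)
The plan is to exhibit the Quillen pair as the identity adjunction $\id \co \GIS \to L_E \GIS$ with right adjoint $\id \co L_E \GIS \to \GIS$, and to verify directly that the left adjoint is a left Quillen functor. By the characterisation of Quillen pairs recalled above, it is enough to check that this identity functor preserves cofibrations and acyclic cofibrations.

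Preservation of cofibrations is immediate from Theorem \ref{thm:GSlocal}: the $E$-cofibrations of $L_E \GIS$ are \emph{defined} to be the cofibrations of $\GIS$, so the identity sends cofibrations to cofibrations. For acyclic cofibrations, let $f \co X \to Y$ be a cofibration of $\GIS$ which is a $\pi_*$-isomorphism. As just noted, $f$ is still an $E$-cofibration, so the only thing to check is that $f$ is an $E$-equivalence, i.e.\ that $\id_E \smashprod f \co E \smashprod X \to E \smashprod Y$ is a weak equivalence. But $E$ is cofibrant (a cofibrant spectrum or a cofibrant based $G$-space), and smashing with a cofibrant object preserves weak equivalences; in the EKMM case this is exactly Lemma \ref{lem:sModMonoid}, and the analogous statement for orthogonal spectra is part of the package quoted in Theorems \ref{thm:orthogmodel} and \ref{thm:positiveorthogmodel}. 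Hence $\id_E \smashprod f$ is a weak equivalence, $f$ is an $E$-equivalence, and therefore $f$ is an acyclic cofibration of $L_E \GIS$. This shows $\id \co \GIS \to L_E \GIS$ is left Quillen.

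As a sanity check one can instead argue on the right adjoint $\id \co L_E \GIS \to \GIS$: since the two model structures have the same cofibrations, they have the same generating cofibrations, hence the same acyclic fibrations (the maps with the right lifting property against $I$); and since every $\pi_*$-isomorphism is an $E$-equivalence, there are at least as many $E$-acyclic cofibrations as acyclic cofibrations, so $E$-fibrations form a subclass of the fibrations of $\GIS$. Thus $\id \co L_E \GIS \to \GIS$ preserves fibrations and acyclic fibrations, which is the dual form of the same statement. Either way the argument is essentially formal; I do not expect a genuine obstacle, the only point requiring a little care being the citation that smashing with the cofibrant object $E$ preserves weak equivalences (rather than merely weak equivalences between cofibrant objects), together with the observation that the identical argument applies verbatim to $\GIS_+$ and $G \mcal$ in place of $\GIS$.
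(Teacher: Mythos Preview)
Your proof is correct and follows essentially the same approach as the paper: cofibrations are unchanged by definition, and $\pi_*$-isomorphisms are $E$-equivalences because smashing with the cofibrant object $E$ preserves weak equivalences. The paper's version is just more terse; your additional sanity check via the right adjoint is correct but not needed, and your citation for the orthogonal case should really point to \cite[Chapter III, Proposition 7.3]{mm02} rather than the model-structure theorems.
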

\begin{proof}
The cofibrations are unchanged and a $\pi_*$-isomorphism is an
$E$-equivalence since weak equivalences are preserved
by smashing with a cofibrant object.
\end{proof}

\begin{lemma}\label{lem:localcond}
A spectrum $X$ is $E$-local if and only if
$[A,X]^G=0$ for all $E$-acyclic spectra $A$.
\end{lemma}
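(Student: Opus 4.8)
The plan is to unwind the definitions in \ref{def:Estuff}, using that $\ho\GIS$ is triangulated and that $E\smashprod-$ is a left adjoint (so it preserves colimits, hence cofibre sequences) and preserves weak equivalences, since $E$ is cofibrant (Lemmas \ref{lem:hcofsGtopological} and \ref{lem:sModMonoid}).

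One direction is formal. Suppose $X$ is $E$-local and $A$ is $E$-acyclic, so that $\ast\to A$ is an $E$-equivalence. By $E$-locality the induced map $[A,X]^G\to[\ast,X]^G$ is an isomorphism; but $\ast=\emptyset$ is the zero object of $\ho\GIS$, so $[\ast,X]^G=0$ and hence $[A,X]^G=0$.

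For the converse, assume $[A,X]^G=0$ for every $E$-acyclic $A$, and let $f\co Z\to Y$ be an arbitrary $E$-equivalence. Form the mapping cone, giving a cofibre sequence $Z\to Y\to Cf$. Since $E\smashprod-$ preserves colimits and satisfies $E\smashprod(Z\smashprod I)=(E\smashprod Z)\smashprod I$, we get $E\smashprod Cf\cong C(\id_E\smashprod f)$, the mapping cone of a weak equivalence, which therefore has trivial homotopy groups by the long exact sequence of a cofibre sequence; hence $\ast\to Cf$ is an $E$-equivalence, i.e. $Cf$ is $E$-acyclic. The same argument (smashing with $E$ commutes with $\Sigma$ and its inverse up to weak equivalence) shows $\Sigma^n Cf$ is $E$-acyclic for every $n\in\zz$, so by hypothesis $[\Sigma^n Cf,X]^G=0$ for all $n$.

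Finally, applying the contravariant functor $[-,X]^G$ to the cofibre sequence $Z\to Y\to Cf$ yields, in the triangulated category $\ho\GIS$, a long exact sequence whose relevant segment is
$$[Cf,X]^G \longrightarrow [Y,X]^G \overset{f^{*}}{\longrightarrow} [Z,X]^G \longrightarrow [\Sigma^{-1}Cf,X]^G .$$
Since both outer groups vanish, $f^{*}$ is an isomorphism; as $f$ was an arbitrary $E$-equivalence, $X$ is $E$-local. There is no real obstacle here: the whole content is that $E\smashprod-$ carries cofibre sequences to cofibre sequences and weak equivalences to weak equivalences, and the only point needing a line of care is the identification of $E\smashprod Cf$ with $C(\id_E\smashprod f)$ together with the stability of $E$-acyclicity under suspension and desuspension (equivalently: the hypothesis applied to each $\Sigma^n Cf$ kills the whole graded group $[Cf,X]^G_{*}$, which is all the long exact sequence requires).
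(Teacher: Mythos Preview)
Your proof is correct and follows essentially the same approach as the paper: show the cofibre $Cf$ of an $E$-equivalence is $E$-acyclic via $E\smashprod Cf\cong C(\id_E\smashprod f)$, then feed this into a long exact sequence to conclude $f^*$ is an isomorphism. The only cosmetic difference is that the paper phrases the second step via the fibre sequence of function spectra $F(Cf,X)\to F(Z,X)\to F(Y,X)$ and checks $\pi_*^H F(Cf,X)\cong[G/H_+\smashprod Cf,X]^G_*$ vanishes, whereas you invoke the triangulated structure directly; your route is if anything slightly more streamlined.
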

\begin{proof}
We tackle the only if part first.
Assume $X$ is $E$-local,
take an $E$-acyclic
spectrum $A$, then since the map $* \to A$
is an $E$-equivalence, $[A,X]^G \cong [*, X]^G=0$.

For the converse we assume that
$[A,X]^G=0$ for all $E$-acyclic spectra $A$.
Since this is a homotopy level condition
we may as well assume that $X$ is fibrant.
Take an $E$-equivalence $f \co Y \to Z$
then we must prove that
$[Z, X]^G \cong [Y, X]^G$.
Hence we can assume that $X$ and $Y$ are cofibrant,
thus $Cf$ is cofibrant. By the long exact sequence of
homotopy groups of a cofibre,
it follows that $Cf \smashprod E \cong C(f \smashprod \id_E)$
is acyclic, hence $Cf$ is $E$-acyclic.
We have a fibre sequence $F(Cf,X) \to F(Z,X) \to F(Y,X)$,
Looking at the long exact sequence of
homotopy groups of a fibre, we see that
$F(Cf,X)$ is acyclic if and only if $F(f , \id_X)$ is
a weak equivalence. We have an isomorphism
$\pi_*^H(F(Cf,X)) \cong [G/H_+ \smashprod Cf, X]^G_*$
and since $Cf$ (and hence $G/H_+ \smashprod Cf$)
is $E$-acyclic and $X$ is $E$-local
we see that this is zero.
The weak equivalence $F(Z,X) \to F(Y,X)$
gives an isomorphism $[Z, X]^G \cong [Y, X]^G$
as desired.
\end{proof}

Now we turn our attention to proving some useful results
on the $E$-local model structures. We write $\fibrep_E$ for
fibrant replacement in the $E$-local model categories and
maps in the $E$-local homotopy category between spectra
$X$ and $Y$ will be written $[X,Y]_E$.

\begin{proposition}\label{prop:pushaxiom}
For two cofibrations, $f \co U \to V$ and $g \co W \to X$, the induced map
$$f \square g \co V \smashprod  W \bigvee_{U \smashprod W} U \smashprod X  \to V \smashprod X$$
is a cofibration which is an $E$-acyclic cofibration if either $f$ or $g$ is.
If $X$ is a cofibrant spectrum then the map $\cofrep \sphspec \smashprod X \to X$
is a weak equivalence.
\end{proposition}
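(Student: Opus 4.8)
The plan is to reduce the pushout-product axiom for the localised model structure $L_E \GIS$ to the pushout-product axiom for $\GIS$ itself, which we already have from Theorem \ref{thm:orthogmodel} (and from Theorems \ref{thm:positiveorthogmodel} and \ref{thm:ekmmmodel} in the $\GIS_+$ and $G\mcal$ cases). The crucial observation is that Bousfield localisation leaves the cofibrations unchanged and only enlarges the class of weak equivalences. So first I would note that, since the $E$-cofibrations of $L_E\GIS$ coincide with the cofibrations of $\GIS$ and $\GIS$ is a closed symmetric monoidal model category, $f\square g$ is a cofibration of $\GIS$, hence an $E$-cofibration. This settles the first assertion and reduces the task to showing that $f\square g$ is an $E$-equivalence whenever one of $f$, $g$ is.

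Next I would assume $f\co U\to V$ is an $E$-acyclic cofibration; the case in which $g$ is the $E$-acyclic cofibration follows from this via the symmetry isomorphism $f\square g\cong g\square f$ of the symmetric monoidal structure. The key identity to exploit is
$$\id_E\smashprod(f\square g)\;\cong\;(\id_E\smashprod f)\square g,$$
which holds because $E\smashprod(-)$ is a left adjoint, so it preserves the pushout defining the domain of $f\square g$, together with the associativity of $\smashprod$. Now $\id_E\smashprod f\co E\smashprod U\to E\smashprod V$ is a cofibration of $\GIS$, since smashing with the cofibrant object $E$ (a spectrum or $G$-space) is left Quillen --- it is, up to the identification $*\smashprod(-)\cong *$, the pushout product of the cofibration $*\to E$ with $f$. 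Moreover $\id_E\smashprod f$ is a $\pi_*$-isomorphism, because that is precisely what it means for $f$ to be an $E$-equivalence (Definition \ref{def:Estuff}). Hence $\id_E\smashprod f$ is an acyclic cofibration of $\GIS$, so by the pushout-product axiom for $\GIS$ the map $(\id_E\smashprod f)\square g$ is an acyclic cofibration of $\GIS$; equivalently $\id_E\smashprod(f\square g)$ is a $\pi_*$-isomorphism, i.e. $f\square g$ is an $E$-equivalence. Together with the first paragraph this makes $f\square g$ an $E$-acyclic cofibration.

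For the final assertion, let $q\co\cofrep\sphspec\to\sphspec$ be a cofibrant replacement of the unit and $X$ a cofibrant object of $\GIS$, equivalently of $L_E\GIS$. The unit part of the pushout-product axiom for the monoidal model category $\GIS$ already gives that $q\smashprod\id_X\co\cofrep\sphspec\smashprod X\to\sphspec\smashprod X=X$ is a $\pi_*$-isomorphism --- for $\GIS$ itself $\sphspec$ is cofibrant, so one may take $\cofrep\sphspec=\sphspec$ and there is nothing to prove. A $\pi_*$-isomorphism is an $E$-equivalence, since smashing with the cofibrant object $E$ preserves $\pi_*$-isomorphisms (as in Proposition \ref{prop:localquillen}), hence a weak equivalence of $L_E\GIS$. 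The same argument works without change for $L_E\GIS_+$ and $L_E G\mcal$, replacing Theorem \ref{thm:orthogmodel} by Theorems \ref{thm:positiveorthogmodel} and \ref{thm:ekmmmodel} and, in the last step, using that $\GIS_+$ and $G\mcal$ are symmetric monoidal model categories so that $\cofrep\sphspec\smashprod X\to X$ --- with the appropriate cofibrant replacement $\cofrep S\to S$ in $G\mcal$ --- is a $\pi_*$-isomorphism for cofibrant $X$. The only step demanding any care, and really the crux of the argument, is the compatibility of smashing with $E$ and forming pushout products; everything else is formal once one knows that the unlocalised categories are monoidal model categories.
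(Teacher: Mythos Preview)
Your proof is correct and follows essentially the same line as the paper's: both reduce to the pushout-product axiom in $\GIS$ by observing that $(-)\smashprod E$ commutes with pushouts and that $f\smashprod\id_E$ is an acyclic cofibration of $\GIS$, and both note that the unit condition is inherited unchanged from $\GIS$. The paper's proof is terser but the argument is the same.
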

\begin{proof}
Since the cofibrations are unchanged by localisation,
we only need to check that the above map is an
$E$-equivalence when $f$ is.
The map $f \smashprod \id_E \co U \smashprod E
\to V \smashprod E$ is a $\pi_*$-isomorphism
and a cofibration. Thus since $(-) \smashprod E$ commutes with
pushouts the map
$$(V \smashprod  W \bigvee_{U \smashprod W} U \smashprod X)
\smashprod E  \to (V \smashprod X) \smashprod E$$
is also a $\pi_*$-isomorphism and a cofibration.
The unit condition is unaffected by localisation, so it holds
in the $E$-local model structure.
\end{proof}

Thus $\GIS_E$ is a monoidal model category.
The above proof is our standard
method for moving results to the $E$-local model
structures for cofibrant $E$, we use it to prove the following.
\begin{proposition}\label{prop:Ehomproperty}
\begin{enumerate}
\item A map is an $E$-equivalence
if and only if its suspension is an $E$-equivalence.

\item A wedge of $E$-equivalences is an $E$-equivalence.

\item If $i \co A \to X$ is an $h$-cofibration and an $E$-equivalence
and $f \co A \to Y $ is any map, then the cobase change
$j \co Y \to X \vee_A Y$ is an $E$-equivalence.

\item If $i$ and $i'$ are $h$-cofibrations and the vertical arrows are $E$-equivalences
in the diagram below, then the induced map of pushouts is an $E$-equivalence.
$$ \xymatrix@!C{
X \ar[d]^{\sim E}
& A \ar[r] \ar@{+->}[l]_{i} \ar[d]^{\sim E}
& Y \ar[d]^{\sim E} \\
X'
& A' \ar[r] \ar@{+->}[l]^{i'}
& Y' } $$

\item If $X$ is the colimit of a sequence of $h$-cofibrations $X_n \to X_{n+1}$, each of
which is an $E$-equivalence, then the map from the initial term $X_0$ into $X$
is an $E$-equivalence.

\end{enumerate}
\end{proposition}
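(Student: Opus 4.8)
The plan is to push every clause down to the corresponding fact about $\pi_*$-isomorphisms of orthogonal $G$-spectra by smashing with $E$, exactly as in the proof of Proposition \ref{prop:pushaxiom}. The only input needed about the localisation is the tautology from Definition \ref{def:Estuff}: a map $f$ is an $E$-equivalence precisely when $\id_E \smashprod f$ is a $\pi_*$-isomorphism. To this one adds two structural observations: $(-) \smashprod E$ is a left adjoint, so it commutes with all colimits (suspensions, wedges, pushouts, sequential colimits), and it carries $h$-cofibrations to $h$-cofibrations by Lemma \ref{lem:hcofsGtopological}. With these in hand each clause reduces to a statement already available for $\GIS$ with its usual model structure, and the whole argument transfers verbatim to $G \mcal$ upon replacing every appeal to \cite[Chapter III, Theorem 3.5]{mm02} by \cite[Chapter IV, Remark 2.8]{mm02}.

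For (1) one uses that $\Sigma(f \smashprod \id_E) = (\Sigma f) \smashprod \id_E$ together with the fact that $\Sigma$ is part of an equivalence $(\Sigma,\Omega)$ of the homotopy category, so a map of $G$-spectra is a $\pi_*$-isomorphism if and only if its suspension is (equivalently, $\pi^H_n \Sigma(-) \cong \pi^H_{n-1}(-)$). For (2), $(\bigvee_i f_i) \smashprod \id_E \cong \bigvee_i(f_i \smashprod \id_E)$, and a wedge of $\pi_*$-isomorphisms is a $\pi_*$-isomorphism because $\pi^H_*$ of a wedge is the direct sum of the $\pi^H_*$ of the summands; this last point is precisely the computation performed in the proof of Lemma \ref{lem:compactgenerators}, where $\bigvee_{i \in I} Y_i$ is written as a filtered colimit along $h$-cofibrations of its finite subwedges.

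Clauses (3) and (4) are applications of the gluing lemma. For (4), smashing the whole square with $E$ yields a map of pushout squares whose two relevant horizontal maps ($i \smashprod \id_E$ and $i' \smashprod \id_E$) are $h$-cofibrations by Lemma \ref{lem:hcofsGtopological} and whose three vertical maps are $\pi_*$-isomorphisms by the tautology above; \cite[Chapter III, Theorem 3.5]{mm02} then makes the induced map of pushouts a $\pi_*$-isomorphism, and since $(-) \smashprod E$ commutes with pushouts this induced map is exactly the $E$-smashing of the map we want. Clause (3) is the special case in which $i \smashprod \id_E$ is in addition a $\pi_*$-isomorphism: then $j \smashprod \id_E$ is a cobase change, along $f \smashprod \id_E$, of an $h$-cofibration which is a $\pi_*$-isomorphism, and cobase change preserves both the $h$-cofibration property and, via the long exact homotopy sequence of a cofibre (the cofibre of $j$ agreeing with that of $i$), the property of being a $\pi_*$-isomorphism.

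For (5), $(-) \smashprod E$ turns $X_n \to X_{n+1}$ into a sequence of $h$-cofibrations $X_n \smashprod E \to X_{n+1} \smashprod E$ which are $\pi_*$-isomorphisms, and identifies $X \smashprod E$ with $\colim_n(X_n \smashprod E)$. Comparing this sequence with the constant sequence on $X_0 \smashprod E$ gives a map of colimit diagrams which is object-wise a $\pi_*$-isomorphism (each $X_0 \smashprod E \to X_n \smashprod E$ being a composite of $h$-cofibrations that are $\pi_*$-isomorphisms) and in which both rows consist of $h$-cofibrations, so the Proposition of Section \ref{sec:hocolim} forces the induced map of colimits --- namely $X_0 \smashprod E \to X \smashprod E$ --- to be a $\pi_*$-isomorphism. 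I expect no genuine difficulty here: the content lies entirely in the cited facts about $\pi_*$-isomorphisms, and the only thing that actually needs checking is that smashing with the cofibrant spectrum $E$ both preserves $h$-cofibrations and commutes with the colimits in play, which is what Lemma \ref{lem:hcofsGtopological} and the left-adjointness of $(-) \smashprod E$ supply.
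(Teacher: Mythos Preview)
Your proposal is correct and takes exactly the same approach as the paper: smash with $E$, use that $(-)\smashprod E$ is a left adjoint (hence commutes with the relevant colimits) and preserves $h$-cofibrations, and then invoke the corresponding $\pi_*$-isomorphism statements from \cite[Chapter III, Theorem 3.5]{mm02} (or \cite[Chapter IV, Remark 2.8]{mm02} for $G\mcal$). The paper's own proof is a single sentence citing these references, with the method already announced in the paragraph preceding the proposition; you have simply written out the details the paper leaves implicit.
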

\begin{proof}
This follows from \cite[Chapter III, Theorem 3.5]{mm02} for orthogonal spectra
and from \cite[Chapter IV, Remark 2.8]{mm02} for $G \mcal$.
\end{proof}

\begin{proposition}\label{prop:Esmashisom}
For a cofibrant orthogonal $G$-spectrum $X$, the functor $X \smashprod (-)$
preserves $E$-equivalences.
\end{proposition}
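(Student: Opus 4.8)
Unpacking Definition \ref{def:Estuff}, the statement to prove is: if $f \co Y \to Z$ is an $E$-equivalence, then $\id_X \smashprod f \co X \smashprod Y \to X \smashprod Z$ is again an $E$-equivalence, i.e. $\id_E \smashprod (\id_X \smashprod f) \co E \smashprod (X \smashprod Y) \to E \smashprod (X \smashprod Z)$ is a $\pi_*$-isomorphism. The plan is to reduce this to the single black-box fact, already used above in the proof of Proposition \ref{prop:hocolimproperties}, that smashing with a cofibrant orthogonal $G$-spectrum preserves $\pi_*$-isomorphisms.

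The steps, in order, are as follows. First, by hypothesis $f$ is an $E$-equivalence, so $\id_E \smashprod f \co E \smashprod Y \to E \smashprod Z$ is a weak equivalence of $\GIS$. Second, since $X$ is cofibrant, smashing with $X$ preserves weak equivalences (this is the cofibrant-smash fact quoted from \cite[Chapter III]{mm02} in the proof of Proposition \ref{prop:hocolimproperties}; it is part of the good behaviour of the monoidal model structure of Theorem \ref{thm:orthogmodel}), so $\id_X \smashprod (\id_E \smashprod f) \co X \smashprod E \smashprod Y \to X \smashprod E \smashprod Z$ is a weak equivalence. Third, the natural associativity and commutativity isomorphisms of $\smashprod$ identify this map with $\id_E \smashprod (\id_X \smashprod f) \co E \smashprod (X \smashprod Y) \to E \smashprod (X \smashprod Z)$; since the former is a weak equivalence, so is the latter. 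By Definition \ref{def:Estuff} this says exactly that $\id_X \smashprod f$ is an $E$-equivalence, which is what we wanted.

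There is essentially no obstacle here beyond having the cofibrant-smash fact available, and that is genuinely a statement about orthogonal spectra (not a formal model-category fact) but one we may assume from \cite{mm02}. One could alternatively avoid it slightly by first noting that $E \smashprod X$ is cofibrant, since the pushout product of the generating cofibrations $\emptyset \to E$ and $\emptyset \to X$ is $\emptyset \to E \smashprod X$ and the model structure is monoidal (Proposition \ref{prop:pushaxiom}), and then invoking the same cofibrant-smash fact for $E \smashprod X$ directly on $f$; this uses the same ingredient. The identical argument, with Lemma \ref{lem:sModMonoid} in place of the orthogonal-spectra cofibrant-smash fact, gives the corresponding statement for $G \mcal$.
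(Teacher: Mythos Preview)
Your proof is correct and is essentially the same as the paper's: the paper's one-line argument is that $X \smashprod (-)$ preserves $\pi_*$-isomorphisms (because $X$ is cofibrant) and then uses associativity of the smash product, which is exactly what you spell out in more detail. Your reference to the cofibrant-smash fact via Proposition~\ref{prop:hocolimproperties} is fine, though the paper takes it directly from \cite[Chapter III, Proposition 7.3]{mm02} (cf.\ the proof of Proposition~\ref{prop:Emonoid}).
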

\begin{proof}
The functor $X \smashprod (-)$ preserves $\pi_*$-isomorphisms
hence the result follows from the associativity of the smash product.
\end{proof}

\begin{proposition}\label{prop:Emonoid}
For $i \co A \to X$ an acyclic $E$-cofibration and any spectrum $Y$,
the map $i \smashprod \id_Y \co A \smashprod Y \to X \smashprod Y$
is an $E$-equivalence and an $h$-cofibration.
Moreover, cobase changes and sequential colimits of such maps are
$E$-equivalences and $h$-cofibrations.
\end{proposition}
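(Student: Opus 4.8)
The plan is to deduce both assertions from the unlocalised monoid-type statement — Lemma~\ref{lem:sModMonoid} for $G \mcal$, respectively \cite[Chapter III, Propositions 7.3 and 7.4]{mm02} for orthogonal spectra — using the standard trick that an $E$-equivalence is detected after smashing with $E$. The $h$-cofibration half of the first statement does not involve the localisation: an acyclic $E$-cofibration $i \co A \to X$ is in particular a cofibration of $\GIS$, hence an $h$-cofibration by \cite[Chapter III, Lemma 2.5]{mm02} (resp. Lemma~\ref{lem:smodhcof}), and smashing an $h$-cofibration with the fixed spectrum $Y$ again gives an $h$-cofibration by Lemma~\ref{lem:hcofsGtopological}. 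So $i \smashprod \id_Y$ is an $h$-cofibration.

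For the $E$-equivalence claim, the key point is that $i \smashprod \id_E$ is an \emph{acyclic cofibration of the unlocalised model structure}: it is a cofibration because smashing with the cofibrant object $E$ preserves cofibrations (this holds whether $E$ is a cofibrant spectrum or a cofibrant based $G$-space, since both $\smashprod \co \GIS \times \GIS \to \GIS$ and $\smashprod \co \GIS \times G \tscr_* \to \GIS$ are Quillen bifunctors), and it is a $\pi_*$-isomorphism because that is precisely what it means for $i$ to be an $E$-equivalence. Feeding this acyclic cofibration and the arbitrary spectrum $Y$ into Lemma~\ref{lem:sModMonoid} (resp. \cite[Chapter III, Proposition 7.4]{mm02}) shows that $(i \smashprod \id_E) \smashprod \id_Y$ is a $\pi_*$-isomorphism. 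By associativity and symmetry of $\smashprod$ this map is isomorphic to $(i \smashprod \id_Y) \smashprod \id_E$, which is therefore a $\pi_*$-isomorphism; and that is exactly the assertion that $i \smashprod \id_Y$ is an $E$-equivalence.

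Finally, $i \smashprod \id_Y$ is now known to be both an $h$-cofibration and an $E$-equivalence, so the last sentence follows from facts already established: a cobase change of $i \smashprod \id_Y$ along any map is an $h$-cofibration (pushouts of $h$-cofibrations are $h$-cofibrations, as noted after Lemma~\ref{lem:hcofLLP}) and an $E$-equivalence by Proposition~\ref{prop:Ehomproperty}(3); and a sequential colimit of maps of this kind is assembled from $h$-cofibrations that are $E$-equivalences, so the map out of the initial term is an $h$-cofibration (closure under composition) and an $E$-equivalence by Proposition~\ref{prop:Ehomproperty}(5). I do not anticipate a real obstacle; the one step requiring care is recognising $i \smashprod \id_E$ as an acyclic cofibration of the \emph{unlocalised} category, since this is what makes the previously proven $G \mcal$ and orthogonal-spectra results applicable — after that the argument is just bookkeeping with the associativity of $\smashprod$ and the elementary closure properties of $h$-cofibrations and $E$-equivalences.
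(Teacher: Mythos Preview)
Your proposal is correct and follows essentially the same route as the paper: the paper cites \cite[Chapter III, Lemma 7.1]{mm02} for the $h$-cofibration step and \cite[Chapter III, Proposition 7.4]{mm02} applied to the acyclic cofibration $i \smashprod \id_E$ for the $E$-equivalence step, then invokes Proposition~\ref{prop:Ehomproperty} and the closure of $h$-cofibrations for the second sentence. Your write-up is slightly more explicit about why $i \smashprod \id_E$ is an acyclic cofibration of the unlocalised structure, but the argument is the same.
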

\begin{proof}
Use \cite[Chapter III, Lemma 7.1]{mm02}
to see that $i \smashprod \id_Y$
is an $h$-cofibration.
By \cite[Chapter III, Proposition 7.4]{mm02}
$i \smashprod \id_{E} \smashprod \id_{Y}$ is an
$h$-cofibration and a $\pi_*$-isomorphism,
hence $i \smashprod \id_{Y}$ is an $E$-equivalence.

We have proved the first statement of the proposition, the second follows from
Proposition \ref{prop:Ehomproperty} and the fact that $h$-cofibrations
are closed under pushouts and sequential colimits.
\end{proof}

For $G \mcal$ we follow the same proof using
Lemma \ref{lem:sModMonoid}.

\begin{proposition}\index{Left proper}
The model category $\GIS_E$ is
left proper.
\end{proposition}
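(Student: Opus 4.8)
The plan is to reduce the statement to the left properness of $\GIS$ itself, exactly as in the proofs of Propositions \ref{prop:pushaxiom} and \ref{prop:Emonoid}: transfer everything to the unlocalised category by smashing with the cofibrant object $E$. Recall that $\GIS_E$ is left proper precisely when, for every pushout square
$$\xymatrix{
A \ar[r]^f \ar@{>->}[d]_i & B \ar[d]^j \\
C \ar[r]_k & D }$$
in which $i$ is a cofibration and $f$ is an $E$-equivalence, the cobase change $k$ is again an $E$-equivalence. Since $\GIS_E$ and $\GIS$ have the same cofibrations, $i$ is a cofibration of $\GIS$.

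First I would apply the functor $(-)\smashprod E$ to the whole square. This functor is a left adjoint (its right adjoint is $F(E,-)$), so it preserves all colimits; in particular it sends the square above to a pushout square
$$\xymatrix{
A \smashprod E \ar[r]^{f \smashprod \id} \ar@{>->}[d]_{i \smashprod \id} & B \smashprod E \ar[d] \\
C \smashprod E \ar[r]_{k \smashprod \id} & D \smashprod E. }$$
By definition of $E$-equivalence, $f \smashprod \id_E$ is a $\pi_*$-isomorphism. Moreover $i \smashprod \id_E$ is the pushout product of $i$ with the map $\emptyset \to E$ (where $\emptyset = *$ is the zero object, so that $C \smashprod \emptyset = A \smashprod \emptyset = *$), hence it is a cofibration of $\GIS$: this is the relevant instance of the pushout product axiom, using that $\smashprod \co \GIS \times \GIS \to \GIS$ is a Quillen bifunctor and that $E$ is cofibrant — or, when $E$ is a cofibrant based $G$-space, that $\smashprod \co \GIS \times G \tscr_* \to \GIS$ is a Quillen bifunctor.

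Now I would invoke the fact that $\GIS$ is proper, hence left proper (Theorem \ref{thm:orthogmodel}), applied to the displayed square: it says that $k \smashprod \id_E$, being the cobase change of the $\pi_*$-isomorphism $f \smashprod \id_E$ along the cofibration $i \smashprod \id_E$, is a $\pi_*$-isomorphism. That is exactly the assertion that $k$ is an $E$-equivalence, which finishes the proof for $\GIS_E$. The cases of $\GIS_+$ and $G \mcal$ are handled by the identical argument, using that those model structures are proper (Theorems \ref{thm:positiveorthogmodel} and \ref{thm:ekmmmodel}) and that smashing with the cofibrant spectrum $E$ preserves pushouts and cofibrations in them too. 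There is no genuine obstacle; the only points needing a word of care are the identification of $i \smashprod \id_E$ as a cofibration (the $\emptyset$-component of the pushout product axiom) and the observation that $(-) \smashprod E$ preserves the relevant pushout on the nose because it is a left adjoint.
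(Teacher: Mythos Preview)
Your argument is correct and follows the same strategy as the paper: smash the pushout square with $E$, observe that the resulting square is again a pushout with a cofibration along one side and a $\pi_*$-isomorphism along the top, then invoke left properness of $\GIS$. The only difference is that the paper phrases things in terms of $h$-cofibrations rather than cofibrations: it shows the stronger statement that the cobase change of an $E$-equivalence along an \emph{$h$-cofibration} is an $E$-equivalence, using that $(-)\smashprod E$ preserves $h$-cofibrations (Lemma~\ref{lem:hcofsGtopological}) and the $h$-cofibration form of left properness for $\GIS$ (\cite[Chapter III, Lemma 4.13]{mm02}). Your pushout-product argument for why $i\smashprod\id_E$ is a cofibration is perfectly fine for the proposition as stated; the paper's route just buys a slightly stronger conclusion that it uses elsewhere (e.g.\ in Proposition~\ref{prop:Ehomproperty}).
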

\begin{proof}
We show a stronger result: that in the pushout diagram below,
with $\alpha$ an $h$-cofibration
the map labelled $l$ is an $E$-equivalence.
Since a cofibration is an $h$-cofibration this implies
left properness.
$$ \xymatrix@!C{
A \ar@{}[dr]|\ulcorner|(0.51){\phantom{\cdot} \cdot}
\ar@{{+}{-}{>}}[r]^{\alpha} \ar[d]_{\sim_E} & B \ar[d]^{l} \\
C  \ar[r]_\gamma & D }$$
The functor $(-) \smashprod E$
preserves $h$-cofibrations,
pushouts and takes $E$-equivalences ($\sim_E$) to
$\pi_*$-isomorphisms.
We apply this functor to the pushout diagram above
and use left properness of $\GIS$,
\cite[Chapter III, Lemma 4.13]{mm02},
(or \cite[Chapter IV, Theorem 2.9]{mm02} for $G \mcal$)
to see that $l \smashprod \id_E$ is a $\pi_*$-isomorphism.
Thus $l$ is an $E$-equivalence as desired.
\end{proof}

See Lemmas \ref{lem:rightproperrational} and \ref{lem:rightproperfamily}
for right properness in the cases of most interest to us.

\begin{rmk}
The spectra $G/H_+$ where $H$ runs over all subgroups $H$ of $G$
are generators for $\GIS_E$. This well known fact follows
from Lemma \ref{lem:spherehomotopy}.
\end{rmk}

\begin{theorem}\label{thm:locfuncs}
Take a Quillen adjunction between monoidal model categories
with a strong monoidal left adjoint
$F : \cscr \overrightarrow{\longleftarrow} \dscr : G.$
Let $E$ be cofibrant in $\cscr$ and assume that
the model categories $\cscr_E$ and $\dscr_{FE}$ exist.
Then $(F,G)$ passes to a strong monoidal Quillen pair
$F : \cscr_E \overrightarrow{\longleftarrow} \dscr_{FE} : G.$
Furthermore, if $(F,G)$ form a Quillen equivalence,
then they pass to a Quillen equivalence of the localised categories.
\end{theorem}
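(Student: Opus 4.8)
The plan is to prove the two assertions in turn, the second resting on the first and on a strong-monoidality argument at the level of homotopy categories.

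For the Quillen pair, note that $\cscr_E$ has the same cofibrations as $\cscr$ and $\dscr_{FE}$ the same cofibrations as $\dscr$, so $F$ — already a left Quillen functor for the original structures — still preserves cofibrations, and it remains only to check that $F$ sends each acyclic cofibration of $\cscr_E$ to an $FE$-equivalence. So let $f \co X \to Y$ be a cofibration of $\cscr$ which is an $E$-equivalence. Since smashing with any object preserves the initial object, $\id_E \smashprod f$ is the pushout product of the cofibration $\emptyset \to E$ ($E$ being cofibrant) with $f$, so it is a cofibration of $\cscr$ by the pushout product axiom; and it is a weak equivalence because $f$ is an $E$-equivalence, hence an acyclic cofibration of $\cscr$. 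Then $F(\id_E \smashprod f)$ is an acyclic cofibration of $\dscr$, and strong monoidality of $F$ gives $F(\id_E \smashprod f) = \id_{FE} \smashprod Ff$, so $\id_{FE} \smashprod Ff$ is a weak equivalence of $\dscr$, i.e. $Ff$ is an $FE$-equivalence. Thus $(F,G)$ is a Quillen pair $\cscr_E \overrightarrow{\longleftarrow} \dscr_{FE}$; it is the same underlying adjunction, $F$ is still strong monoidal, and the monoidal conditions on the left adjoint survive because weak equivalences of the original structures are $E$- and $FE$-equivalences, so it is a strong monoidal Quillen pair.

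For the Quillen equivalence I would pass to homotopy categories. If $(F,G)$ is a Quillen equivalence, the derived functors form an adjoint equivalence $LF \co \ho\cscr \overrightarrow{\longleftarrow} \ho\dscr \co RG$; since $F$ is strong monoidal this is a strong monoidal adjunction, so $LF$ is a strong monoidal equivalence and its quasi-inverse $RG$ acquires a canonical strong monoidal structure, with $RG(FE) \simeq E$ (because $E$ is cofibrant, $LF(E) \simeq FE$). The key step is that $RG$ takes $FE$-equivalences to $E$-equivalences: if $\id_{FE} \smashprod u$ is an isomorphism of $\ho\dscr$, then $\id_E \smashprod RG(u) \simeq RG(FE) \smashprod RG(u) \simeq RG(\id_{FE} \smashprod u)$ is an isomorphism; symmetrically $LF$ takes $E$-equivalences to $FE$-equivalences. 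From this, the adjunction isomorphisms, and the definition of a local object one reads off that $RG$ carries $FE$-local objects to $E$-local objects and $LF$ carries $E$-local objects to $FE$-local objects (for an $FE$-local $W$ and an $E$-equivalence $v$, the map $v^*$ on $[-,RGW]$ transports under adjunction to $(LFv)^*$ on $[-,W]$, and $LFv$ is an $FE$-equivalence).

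Now $\ho\cscr_E$ and $\ho\dscr_{FE}$ are, as for any such Bousfield localisation, the full subcategories of $\ho\cscr$ and $\ho\dscr$ on the local objects, and under these identifications the derived functors of the localised Quillen pair are just the restrictions of $LF$ and $RG$ — no localisation reflector is needed, precisely because $LF$ and $RG$ already preserve the local objects. These restrictions are mutually inverse equivalences carrying the one subcategory onto the other, so the derived adjunction of $(F,G) \co \cscr_E \overrightarrow{\longleftarrow} \dscr_{FE}$ is an equivalence of categories, and hence the localised pair is a Quillen equivalence. The main obstacle is exactly this interplay of the two localisations — comparing smashing by $E$ in $\cscr$ with smashing by $FE$ in $\dscr$ — and strong monoidality of $F$ (equivalently of $LF$, hence of $RG$) is what makes it work, in particular forcing $G$ to take $FE$-local objects to $E$-local objects; the only other point needing care is the routine identification of the homotopy category of a left Bousfield localisation with its subcategory of local objects and the matching-up of derived functors. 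One could sidestep the latter by instead verifying the standard reformulation of Quillen equivalence directly: $G$ reflects $FE$-equivalences between $\dscr_{FE}$-fibrant objects (such objects become fibrant $E$-local objects under $G$, on which $E$-equivalences are weak equivalences, and $G$ already reflects weak equivalences between fibrant objects) and the derived unit $X \to G\fibrep_{FE}(FX)$ is an $E$-equivalence for each cofibrant $X$ — both again resting on the same strong-monoidality input.
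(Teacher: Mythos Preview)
Your argument for the Quillen-pair assertion matches the paper's. For the Quillen-equivalence assertion your approach is correct but takes a different, more elaborate route. You pass to homotopy categories, equip $RG$ with a strong monoidal structure as quasi-inverse to the strong monoidal equivalence $LF$, deduce that $LF$ and $RG$ interchange $E$- and $FE$-equivalences and hence $E$- and $FE$-local objects, and conclude that they restrict to an equivalence between the local subcategories which you then identify with the derived adjunction of the localised pair. The paper instead applies \cite[Corollary~1.3.16(b)]{hov99} directly at the model-category level: (i) $F$ reflects $E$-equivalences between cofibrant objects, since strong monoidality identifies $F(f\smashprod\id_E)$ with $Ff\smashprod\id_{FE}$ and $F$ already reflects weak equivalences between cofibrant objects; and (ii) for $X$ fibrant in $\dscr_{FE}$ the derived counit $F\cofrep GX\to X$ is an $FE$-equivalence. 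Step (ii) is where the paper is sharper than your sketch (and than your closing alternative): a $\dscr_{FE}$-fibrant object is already $\dscr$-fibrant and cofibrant replacement is unchanged by localisation, so the \emph{original} Quillen equivalence gives $F\cofrep GX\to X$ as a genuine weak equivalence, hence an $FE$-equivalence, with no further monoidal input required. Your homotopy-categorical argument yields a pleasant conceptual picture --- the derived equivalence matches the two local subcategories --- but at the cost of the bookkeeping you flag (identifying derived functors of the localised pair with restrictions of the original ones, and matching point-set $E$-equivalences with their derived counterparts); the paper's approach bypasses all of this.
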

\begin{proof}
Since the cofibrations in $\cscr_E$ and $\dscr_{FE}$ are unchanged
$F$ preserves cofibrations. Now take an acyclic cofibration in $\cscr$ of the form
$f \smashprod \id_E \co X \smashprod E \to Y \smashprod E$, applying $F$ and
using the strong monoidal condition we have a weak equivalence in $\dscr$,
$Ff \smashprod \id_{FE} \co FX \smashprod FE \to FY \smashprod FE$.
Hence $F$ takes $E$-acyclic cofibrations to $FE$-acyclic cofibrations and we have a Quillen pair.

To prove the second statement we
show that $F$ reflects $E$-equivalences
between cofibrant objects and that $F \cofrep GX \to X$ is an $E$-equivalence
for all $X$ fibrant in $\dscr_{FE}$. These conditions are an equivalent
definition of Quillen equivalence by \cite[Corollary 1.3.16(b)]{hov99}.
The first follows since strong monoidality
allows us to identify $F(f \smashprod \id_E)$
and $Ff \smashprod \id_{FE}$ for a map $f$ in $\cscr$
and $F$ reflects weak equivalences between cofibrant objects.
The second condition is equally simple:
we know that an $E$-fibrant object is fibrant,
and that cofibrant replacement is unaffected
by Bousfield localisation. Hence
$F \cofrep GX \to X$ is a weak equivalence
and thus an $E$-equivalence.
\end{proof}

\begin{corollary}\label{cor:Nadjunctlocal}
For $E$, a positive cofibrant orthogonal $G$-spectrum,
the Quillen equivalence
$\mathbb{N} : \GIS_+ \overrightarrow{\longleftarrow} G \mcal : \mathbb{N}^{\#}$
passes to a Quillen equivalence
$$\mathbb{N} : L_{E} \GIS_+ \overrightarrow{\longleftarrow} L_{\nn E}G \mcal : \mathbb{N}^{\#}.$$
\end{corollary}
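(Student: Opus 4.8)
The plan is to deduce this as a direct instance of Theorem \ref{thm:locfuncs}, taking $\cscr = \GIS_+$, $\dscr = G\mcal$, and the adjunction $(F,G) = (\mathbb{N}, \mathbb{N}^{\#})$. First I would record that the hypotheses of Theorem \ref{thm:locfuncs} hold: by Theorem \ref{thm:ekmmmodel} the pair $\mathbb{N} : \GIS_+ \overrightarrow{\longleftarrow} G\mcal : \mathbb{N}^{\#}$ is a Quillen adjunction between monoidal model categories whose left adjoint $\mathbb{N}$ is strong (symmetric) monoidal, and it is moreover a Quillen equivalence. The object $E$ is cofibrant in $\cscr = \GIS_+$, since ``positive cofibrant'' means precisely cofibrant in the positive model structure of Theorem \ref{thm:positiveorthogmodel}.

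Next I would check that both localised model categories exist, which is the one place requiring a little attention. The $E$-model structure $\cscr_E = L_E\GIS_+$ exists by Theorem \ref{thm:GSlocal}: although that theorem is stated for $\GIS$, the remark opening Section \ref{sec:localGIS} asserts that the results there apply equally to $\GIS_+$ (and to $G\mcal$). For $\dscr_{FE} = L_{\mathbb{N}E}G\mcal$ one needs $\mathbb{N}E$ to be cofibrant in $G\mcal$; but $\mathbb{N}$ is a left Quillen functor and hence preserves cofibrant objects, so $\mathbb{N}E$ is cofibrant and Theorem \ref{thm:GSlocal} again supplies the localised model structure.

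With the hypotheses in place, the first part of Theorem \ref{thm:locfuncs} gives at once that $(\mathbb{N}, \mathbb{N}^{\#})$ passes to a strong monoidal Quillen pair $\mathbb{N} : L_E\GIS_+ \overrightarrow{\longleftarrow} L_{\mathbb{N}E}G\mcal : \mathbb{N}^{\#}$, and the second part of Theorem \ref{thm:locfuncs} upgrades this to a Quillen equivalence because $(\mathbb{N}, \mathbb{N}^{\#})$ is a Quillen equivalence by Theorem \ref{thm:ekmmmodel}. The main (indeed only) obstacle is purely bookkeeping: one must be careful to invoke everything for the \emph{positive} model structure $\GIS_+$, since $\mathbb{N}$ is a Quillen equivalence only out of $\GIS_+$ and $E$ is assumed positive cofibrant exactly so that it is cofibrant in that structure; once this is observed, the corollary is a formal consequence of Theorem \ref{thm:locfuncs}.
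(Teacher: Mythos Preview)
Your proposal is correct and matches the paper's approach: the corollary is stated immediately after Theorem \ref{thm:locfuncs} without proof, precisely because it is the direct instance of that theorem with $(\cscr,\dscr)=(\GIS_+,G\mcal)$ and $(F,G)=(\mathbb{N},\mathbb{N}^{\#})$. Your care in verifying that both localised model structures exist (via Theorem \ref{thm:GSlocal} and the opening remark of Section \ref{sec:localGIS}) and that $\mathbb{N}E$ is cofibrant is exactly the bookkeeping the paper leaves implicit.
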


\begin{rmk}\label{rmk:changemodellocal}
If $E$ is positive cofibrant in $\GIS_+$
then the identity functor is the left adjoint of a Quillen equivalence
from $(\GIS_+)_E$ to $\GIS_E$.
Of course, if $E'$ is a cofibrant spectrum in $\GIS$
and $\cofrep_+ E'$ is its positive cofibrant replacement
then $\GIS_{E'} = \GIS_{\cofrep_+ E'}$ hence
$\GIS_{E'}$ is Quillen equivalent by $(\GIS_+)_{\cofrep_+ E'}$.
\end{rmk}

Theorem \ref{thm:locfuncs} implies
that the forgetful functor $\iota^*_H$
and the inflation functor $\varepsilon^*_G$
(and their right adjoints $F_H(G_+,-)$ and $(-)^G$)
pass to Quillen functors on the $E$-local categories.

\begin{lemma}
The pair $(G_+ \smashprod_H (-), \iota^*_H )$
pass to a Quillen pair between the categories
$L_{\iota_H^*E} H \iscr \sscr $ and $L_E \GIS$.
\end{lemma}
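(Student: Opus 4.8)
The plan is to show directly that $G_+ \smashprod_H (-)$ is a left Quillen functor for the localised structures; note that Theorem \ref{thm:locfuncs} does not apply here, since — unlike $\iota_H^*$ — the induction functor $G_+ \smashprod_H (-)$ is not strong monoidal. Recall first that $(G_+ \smashprod_H (-), \iota_H^*)$ is already a Quillen pair between the unlocalised categories $H \iscr \sscr$ and $\GIS$ (\cite[Chapter V]{mm02}), and that the cofibrations of $L_{\iota_H^* E} H \iscr \sscr$ and of $L_E \GIS$ agree with those of the unlocalised categories; hence $G_+ \smashprod_H (-)$ preserves cofibrations. (As noted just above, $\iota_H^*$ is a left Quillen functor, so $\iota_H^* E$ is cofibrant and $L_{\iota_H^* E} H \iscr \sscr$ exists by Theorem \ref{thm:GSlocal}.) It therefore remains to show that if $f \co A \to B$ is a cofibration of $H$-spectra that is an $\iota_H^* E$-equivalence, then $G_+ \smashprod_H f$ is an $E$-equivalence of $G$-spectra.

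The key tool is the projection formula for induction: there is a natural isomorphism
$$(G_+ \smashprod_H X) \smashprod E \;\cong\; G_+ \smashprod_H (X \smashprod \iota_H^* E)$$
of $G$-spectra, arising because $\iota_H^*$ is (strong symmetric) monoidal with left adjoint $G_+ \smashprod_H (-)$; see \cite[Chapter V]{mm02} and \cite{lms86}. Applying this naturality to $f$ identifies $(G_+ \smashprod_H f) \smashprod \id_E$ with $G_+ \smashprod_H (f \smashprod \id_{\iota_H^* E})$. Since $f$ is an $\iota_H^* E$-equivalence, $f \smashprod \id_{\iota_H^* E}$ is a $\pi_*$-isomorphism of $H$-spectra; and $G_+ \smashprod_H (-)$ preserves $\pi_*$-isomorphisms — its effect on equivariant homotopy groups is a finite direct sum of homotopy groups of the argument, via the orbit decomposition of the restrictions $\iota_K^*(G/H)$ (\cite[Chapter V]{mm02}, \cite{lms86}). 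Hence $(G_+ \smashprod_H f) \smashprod \id_E$ is a $\pi_*$-isomorphism, so $G_+ \smashprod_H f$ is an $E$-equivalence, and being a cofibration it is an $E$-acyclic cofibration. This shows $G_+ \smashprod_H (-)$ is left Quillen, and the lemma follows.

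An equivalent route, avoiding the direct verification, is to check that the right adjoint $\iota_H^*$ sends $E$-local $G$-spectra to $\iota_H^* E$-local $H$-spectra: by Lemma \ref{lem:localcond} and the adjunction isomorphism $[B, \iota_H^* Z]^H \cong [G_+ \smashprod_H B, Z]^G$ this reduces to the statement that $G_+ \smashprod_H (-)$ takes $\iota_H^* E$-acyclic $H$-spectra to $E$-acyclic $G$-spectra, which is again the projection formula together with the fact that induction preserves $\pi_*$-trivial spectra. The only non-formal ingredient in either argument is the projection formula, and I expect that to be where any real care is needed: one must check it holds on the point-set level (or, if it is available only as a natural weak equivalence, arrange the relevant objects to be suitably cofibrant, which is harmless since $G_+ \smashprod_H (-)$ preserves all $\pi_*$-isomorphisms and so a cofibrant replacement may be inserted freely).
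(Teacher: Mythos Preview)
Your proof is correct and follows essentially the same route as the paper: both note that Theorem \ref{thm:locfuncs} does not apply and instead use the projection formula $(G_+ \smashprod_H X) \smashprod E \cong G_+ \smashprod_H (X \smashprod \iota_H^* E)$ (this is \cite[Chapter V, Proposition 2.3]{mm02}) to show the left adjoint preserves $E$-acyclic cofibrations. The paper's version is marginally slicker in that it observes $f \smashprod \id_{\iota_H^* E}$ is itself an acyclic cofibration (since $\iota_H^* E$ is cofibrant), so one only needs that $G_+ \smashprod_H(-)$ is left Quillen for the unlocalised structures, rather than your stronger claim that it preserves all $\pi_*$-isomorphisms.
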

\begin{proof}
This does not follow from the above theorem.
As always $G_+ \smashprod_H (-)$ preserves cofibrations
in the $E$-local model structure.
The isomorphism (\cite[Chapter V, Proposition 2.3]{mm02})
$(G_+ \smashprod_H X) \smashprod Y \cong G_+ \smashprod_H (X \smashprod \iota_H^*Y)$,
which is natural in $H$-spectra $X$ and $G$-spectra $Y$,
applied to the case $Y=E$ proves that the left adjoint preserves the acyclic
cofibrations of the localised category.
\end{proof}

It is clear that one could attempt to repeat the process of localisation and
localise $\GIS_E$ at $F$ (either a cofibrant spectrum or a cofibrant $G$-space).
The cofibrations would be unchanged, the weak equivalences
would be those maps $f \co X \to Y$ such that
$f \smashprod \id_F \co X \smashprod F \to Y \smashprod F$ is an $E$-equivalence
and the fibrations would be given by a lifting property.
Initially it appears that one would have to check that this does give a model category,
essentially reproving Theorem \ref{thm:GSlocal} for $L_E\GIS$, but this is not the case.
One simply has to notice that these new weak equivalences are $(F \smashprod E)$-equivalences
and that this proposed new model structure on $L_E\GIS$
coincides with the $(F \smashprod E)$-model structure on $\GIS$.
We have proved the theorem below.

\begin{theorem}\label{thm:LEGSlocal}
Let $F$ be a cofibrant object of $\GIS_E$ (or, equally, of $\GIS$)
or a cofibrant based $G$-space. Then $L_E \GIS$ has a
Bousfield $F$-model structure
with weak equivalences the $(F \smashprod E)$-equivalences and cofibrations the
cofibrations of $\GIS$. The fibrant objects are precisely the fibrant $(F \smashprod E)$-local
objects of $\GIS$ and $(F \smashprod E)$-fibrant approximation
constructs a Bousfield localisation $f_X \co X \to \fibrep_{F \smashprod E} X$ of $X$
at $F \smashprod E$.
The notation for the $F$-model structure on the underlying category of $\GIS$ is
$L_F L_E \GIS$.
Furthermore we have the following identifications of model categories
$$L_F L_E \GIS = L_{F \smashprod E} \GIS = L_{E \smashprod F} \GIS=L_E L_F \GIS.$$
\end{theorem}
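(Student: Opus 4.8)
The plan is to derive everything from the observation made just before the statement: a Bousfield localisation at a cofibrant object $F$ of the category $L_E \GIS$ has as weak equivalences exactly those maps $f$ with $f \smashprod \id_F$ an $E$-equivalence, i.e.\ those maps $g$ with $g \smashprod \id_{F \smashprod E} = (f \smashprod \id_F) \smashprod \id_E$ a $\pi_*$-isomorphism, which is precisely the class of $(F \smashprod E)$-equivalences in $\GIS$. Since the cofibrations in all of these localised structures coincide with the cofibrations of $\GIS$, and a cofibrantly generated model structure is determined by its cofibrations together with its weak equivalences, this already identifies $L_F L_E \GIS$ with $L_{F \smashprod E} \GIS$ on the nose, both as to objects/morphisms and as to the three distinguished classes. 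So first I would spell out this identification carefully, checking that $F$ cofibrant in $\GIS_E$ is the same as $F$ cofibrant in $\GIS$ (true since the cofibrations are unchanged, per Theorem \ref{thm:GSlocal}), and that Theorem \ref{thm:GSlocal} applied to the cofibrant object $F \smashprod E$ of $\GIS$ does produce a model structure — so that $L_{F \smashprod E} \GIS$ genuinely exists and the comparison is between two bona fide model categories.

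Next I would establish the middle equality $L_{F \smashprod E} \GIS = L_{E \smashprod F} \GIS$. This is immediate from the symmetric monoidal structure on $\GIS$ (Theorem \ref{thm:orthogmodel}): the commutativity isomorphism $F \smashprod E \cong E \smashprod F$ gives, for any map $f$, a natural isomorphism between $f \smashprod \id_{F \smashprod E}$ and $f \smashprod \id_{E \smashprod F}$, so the two classes of weak equivalences coincide; the cofibrations and hence the fibrations agree as well. Then the final equality $L_{E \smashprod F} \GIS = L_E L_F \GIS$ is just the first equality with the roles of $E$ and $F$ interchanged (noting that $F$ is cofibrant, so $L_F \GIS$ exists by Theorem \ref{thm:GSlocal}, and the pre-statement remark applies with $E$ and $F$ swapped). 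Chaining the three identifications gives the displayed string, and along the way the descriptions of the weak equivalences, cofibrations, fibrant objects, and the construction of the localisation map $f_X \co X \to \fibrep_{F \smashprod E} X$ all transport directly from Theorem \ref{thm:GSlocal} applied to $F \smashprod E$.

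The only genuinely delicate point — and the step I would treat most carefully — is the claim that the proposed "$F$-localisation of $L_E \GIS$" has a model structure \emph{without} a separate existence proof. The pre-statement paragraph asserts that one need not reprove Theorem \ref{thm:GSlocal} for $L_E \GIS$ because the candidate structure literally equals the $(F \smashprod E)$-model structure on $\GIS$; I would make this precise by verifying that the fibrations produced by the lifting-property recipe in $L_E \GIS$ against the generating $(F$-)acyclic cofibrations coincide with those of $\GIS_{F \smashprod E}$. Since in a cofibrantly generated setting the generating acyclic cofibrations of a left Bousfield localisation can be taken to be (cofibrations that are) the relevant local equivalences, and these are by construction the same set of maps, the lifting conditions match. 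The remaining obligations — that $F \smashprod E$ is cofibrant when $F$ and $E$ are (immediate from Proposition \ref{prop:pushaxiom}, taking the pushout product of $\emptyset \to E$ with $\emptyset \to F$) — are routine. With that in hand the theorem is a formal consequence of Theorem \ref{thm:GSlocal} and the symmetric monoidal structure, exactly as sketched; no further calculation is needed.
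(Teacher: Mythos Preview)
Your proposal is correct and matches the paper's approach essentially line for line: the paper's proof is the paragraph immediately preceding the theorem (ending ``We have proved the theorem below''), which makes exactly the observation you spell out --- the weak equivalences of the proposed $F$-localisation of $L_E\GIS$ are the $(F\smashprod E)$-equivalences, the cofibrations are unchanged, hence the structure coincides with $L_{F\smashprod E}\GIS$. Your additional checks (cofibrancy of $F\smashprod E$, matching fibrations via lifting) are routine elaborations the paper leaves implicit.
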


Combining the above theorem with the various results of this section
we have the following summary.
\begin{corollary}\label{cor:localsummary}
The category $L_F L_E \GIS$ is a cofibrantly generated, left proper, monoidal
model category satisfying the monoid axiom,
the spectra $\sphspec \smashprod G/H_+$ for $H$ a subgroup of $G$ form
a countable set of generators.
\end{corollary}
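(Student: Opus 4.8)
The statement is a corollary precisely because each property listed has already been proved, in this section, for the Bousfield localisation $\GIS_E$ at an arbitrary cofibrant spectrum (or cofibrant based $G$-space) $E$. So the plan is to exhibit $L_F L_E \GIS$ as a single such localisation and then quote the relevant results. By Theorem \ref{thm:LEGSlocal} we have $L_F L_E \GIS = L_{F \smashprod E} \GIS$, so it suffices to apply the results of this section with $E$ replaced by $E' := F \smashprod E$. The first step is to check that $E'$ is again cofibrant in whichever ambient category it lives in: if $F$ and $E$ are both cofibrant spectra this is the pushout product $(\emptyset \to F)\,\square\,(\emptyset \to E)$ of Proposition \ref{prop:pushaxiom}; if one of them is instead a cofibrant based $G$-space it follows because smashing with a cofibrant $G$-space is part of a Quillen bifunctor into $\GIS$ (as used in Corollary \ref{cor:basiccofs}). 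With $E'$ cofibrant, Theorem \ref{thm:GSlocal} guarantees that $L_{E'}\GIS$ is a model category.

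The four model-category assertions are then read off directly. Cofibrant generation of $L_{E'}\GIS$ is the remark made immediately after Theorem \ref{thm:GSlocal} (inspecting the proof of \cite[Chapter VIII, Theorem 1.1]{EKMM97}, with the test set $\tscr$ of small cell inclusions serving as generating acyclic cofibrations). Left properness is the proposition proved above that $\GIS_E$ is left proper, applied to $E'$. That $L_{E'}\GIS$ is a monoidal model category — that is, satisfies the pushout product axiom including the unit condition — is exactly Proposition \ref{prop:pushaxiom}. The monoid axiom is Proposition \ref{prop:Emonoid}: there one shows that $i \smashprod \id_Y$ is an $h$-cofibration and an $E'$-equivalence for every acyclic $E'$-cofibration $i$, and that this class is closed under cobase change and sequential (hence transfinite) composition; thus every map in $P$-cell is an $E'$-equivalence, i.e. a weak equivalence of $L_{E'}\GIS$.

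It remains to produce the generators. By the remark following the left-properness proposition, the spectra $\sphspec \smashprod G/H_+$ generate $\GIS_E$ for every cofibrant $E$ — this is immediate from Lemma \ref{lem:spherehomotopy}, which gives $[\sphspec \smashprod G/H_+, X]^G_* \cong \pi_*^H(X)$, together with the detection criterion for a set of generators. Hence these spectra generate $L_{E'}\GIS = L_F L_E \GIS$. Since, up to weak equivalence, $G/H_+$ depends only on the conjugacy class of the closed subgroup $H$, one may replace this family by a single representative for each conjugacy class; for the groups considered in this thesis this indexing set is countable, which yields the countable generating set claimed.

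There is no genuinely hard step: the mathematical content has all been established, and assembling it is essentially the one-line observation $L_F L_E \GIS = L_{F\smashprod E}\GIS$. The only points that need a moment's care are (i) the bookkeeping that $F \smashprod E$ is cofibrant in the appropriate one of $\GIS$ or $G\tscr_*$, so that Theorem \ref{thm:GSlocal} genuinely applies to it, and (ii) the reduction to a countable generating set via conjugacy classes; both are routine, so I would present the corollary as this brief assembly rather than reproving anything.
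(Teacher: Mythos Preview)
Your proposal is correct and follows exactly the paper's own approach: the paper's entire proof reads ``Combining the above theorem with the various results of this section we have the following summary,'' and you have simply made this combination explicit by citing Theorem~\ref{thm:LEGSlocal} to reduce to a single localisation and then invoking the cofibrant-generation remark, the left-properness proposition, Proposition~\ref{prop:pushaxiom}, Proposition~\ref{prop:Emonoid}, and the generators remark in turn. Your attention to the cofibrancy of $F \smashprod E$ and the countability of the generating set goes beyond what the paper supplies (indeed the paper does not justify countability at all, and your caveat about ``the groups considered in this thesis'' is a fair hedge, since for a general compact Lie group such as $T^2$ the conjugacy classes of closed subgroups are not countable).
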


\begin{proposition}\label{prop:rmodlocal}
Let $R$ be a commutative ring spectrum and $E$ be a cofibrant $G$-spectrum
or cofibrant $G$-space, then there is a model structure $L_E (R \leftmod)$
on $R$-modules with weak equivalences and fibrations the $E$-equivalences
and $E$-fibrations of underlying spectra and cofibrations as for $R \leftmod$.
\end{proposition}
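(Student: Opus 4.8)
The plan is to obtain $L_E(R \leftmod)$ by lifting the $E$-local model structure of $L_E \GIS$ (respectively $L_E G \mcal$) along the free--forgetful adjunction $R \smashprod (-) : L_E \GIS \overrightarrow{\longleftarrow} R \leftmod : U$, exactly as the ordinary model structure on $R$-modules is obtained from $\GIS$. Since $R$ is a commutative ring spectrum and $\GIS_E$ is a cofibrantly generated monoidal model category satisfying the monoid axiom (Corollary \ref{cor:localsummary}; equivalently, Theorem \ref{thm:GSlocal} gives cofibrant generation and Propositions \ref{prop:Emonoid} and \ref{prop:Ehomproperty} give the monoid axiom), I would simply quote \cite[Theorem 4.1]{ss00} applied to $\GIS_E$. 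This produces a cofibrantly generated model structure on $R \leftmod$ whose weak equivalences and fibrations are created by $U$ from $\GIS_E$ -- that is, the maps sent by $U$ to $E$-equivalences and $E$-fibrations of underlying spectra -- and whose generating cofibrations and generating acyclic cofibrations are $R \smashprod I_E$ and $R \smashprod J_E$. The only remaining point is to identify the cofibrations: since the cofibrations of $\GIS_E$ coincide with those of $\GIS$ we may take $I_E = I$, so $R \smashprod I_E = R \smashprod I$ generates precisely the cofibrations of $R \leftmod$ in its usual model structure. This yields all three clauses of the statement, and the case where $E$ is a cofibrant based $G$-space is handled in the same way.

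The steps that actually need checking are the two hypotheses of \cite[Theorem 4.1]{ss00} for $\GIS_E$. Cofibrant generation is Corollary \ref{cor:localsummary}, the smallness built into it being handled, just as for $\GIS_E$ itself, by the test set $\tscr$ of inclusions of bounded cell complexes used in the proof of Theorem \ref{thm:GSlocal}. For the monoid axiom one must see that every map in $P$-cell is an $E$-equivalence, where $P$ is the class of maps $f \smashprod \id_Z$ with $f$ an acyclic cofibration of $\GIS_E$ and $Z$ any orthogonal $G$-spectrum: Proposition \ref{prop:Emonoid} says each such $f \smashprod \id_Z$ is an $h$-cofibration and an $E$-equivalence, and Proposition \ref{prop:Ehomproperty}(3) and (5) say that cobase changes and (transfinite) composites of $h$-cofibrations that are $E$-equivalences remain $E$-equivalences, so $P$-cell consists of $E$-equivalences. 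For $G \mcal$ the argument is the same: it satisfies the monoid axiom (Theorem \ref{thm:ekmmmodel}), Proposition \ref{prop:Emonoid} together with Lemma \ref{lem:sModMonoid} supplies the analogous closure statements, and every object of $G \mcal$ is fibrant.

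The main obstacle, then, is not internal to this proposition but lies in the preceding results: namely that Bousfield localisation preserves the monoid axiom, so that \cite[Theorem 4.1]{ss00} can be invoked at all. Once that is granted the lifting of the model structure is formal and the identification of the cofibrations is immediate from $I_E = I$. One further observation I would record, though it is not needed for the statement as given: since $R$ is commutative, $R \leftmod$ is itself a symmetric monoidal model category under $\smashprod_R$, and running the arguments of Propositions \ref{prop:pushaxiom} and \ref{prop:Emonoid} with $\smashprod_R$ in place of $\smashprod$ shows that $L_E(R \leftmod)$ again satisfies the pushout product and monoid axioms.
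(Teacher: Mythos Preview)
Your proposal is correct and takes essentially the same approach as the paper: invoke \cite[Theorem 4.1]{ss00} for the cofibrantly generated monoidal model category $\GIS_E$, using that it satisfies the monoid axiom (Proposition \ref{prop:Emonoid} and Corollary \ref{cor:localsummary}), and then observe that the generating cofibrations are unchanged so the cofibrations of $L_E(R\leftmod)$ agree with those of $R\leftmod$. The paper's proof is just a one-line pointer to the monoid axiom; your write-up fills in exactly the verifications that pointer stands for.
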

\begin{proof}
The $E$-local model structure on $G$-spectra gives
rise to the model category as defined in the theorem
since it satisfies the monoid axiom.
\end{proof}

Note that $L_E (R \leftmod)$ is precisely the model structure
of $R \leftmod$ localised at $E \smashprod R$.
It is easily seen that the weak equivalences and
cofibrations are the same.

\section{The categories $\GIS_\qq$ and $G \mcal_\qq$}\label{sec:GISQ}
Take $E=S^0{\qq}$, we will call the
$E$-local model structure the rational
model structure and write
$L_{S^0{\qq}} \GIS$ or $\GIS_\qq$\index{G ISQ@${\GIS_\qq}$}.
For $G \mcal$ we take $S^0_\mcal{\qq}$ and
write the localised category as
$G \mcal_\qq$\index{G MQ@${G\mcal_\qq}$}.
We will write $\GIS_\qq^+$ for the rationalisation of the
positive model structure $\GIS_+$ (using $S^0 {\qq}_+$).
We call $E$-equivalences \textbf{rational equivalences}\index{Rational equivalence},
or \textbf{$\pi_*^\qq$-isomorphisms}\index{pi-isomorphism@$\pi_*^\qq$-isomorphism}.
Equally, $E$-fibrations will be called
\textbf{rational fibrations}\index{Rational fibration}
and acyclic $E$-cofibrations will be called \textbf{acyclic
rational cofibrations}\index{Acyclic rational cofibration}
The set of rational homotopy classes of maps $X$ to $Y$ will be written $[X,Y]^G_\qq$
and we will write $\fibrep_\qq$ for fibrant replacement in the localised category.
Note that since the cofibrations agree, the rational acyclic fibrations
are the acyclic fibrations of $\GIS$.
Hence factorising a map into a cofibration followed by a
rational acyclic fibration is the same operation in
both $\GIS$ and $\GIS_\qq$.
We will prove that our rationalised categories
$\GIS_\qq$, $\GIS_\qq^+$
and $G \mcal_\qq$ are Quillen equivalent
(Propositions \ref{prop:GIS+QequivGISQ}
and \ref{prop:GMequivGIS+Q}),
so that we can switch between these at will.
The lemma below shows that our model structure
is independent of our choice of rational
sphere spectrum.
\begin{lemma}\label{lem:ratequivs}
For a map $g \co X \to Y$ the following are equivalent
(where $\fibrep$ denotes fibrant replacement in $\GIS$,
which is unnecessary for $G \mcal$):
\begin{enumerate}
\item \vskip -0.0cm $(\fibrep g)_*^H \co \h_*((\fibrep X)^H; \qq)
\to \h_*((\fibrep Y)^H; \qq)$ is an isomorphism for all $H$.
\item $(\fibrep g)_*^H \co \pi_*((\fibrep X)^H) \otimes \qq \to
\pi_*((\fibrep Y)^H) \otimes \qq$ is an isomorphism for all $H$.
\item $g \smashprod \id \co X \smashprod S^0{\qq} \to Y \smashprod S^0{\qq}$ is a $\pi_*$-isomorphism.
\item $g \co X \to Y $ is a $\pi^\qq_*$-isomorphism.
\end{enumerate}
\end{lemma}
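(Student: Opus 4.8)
The plan is to identify all four conditions with a single statement, namely that $g$ becomes a $\pi_*$-isomorphism after rationalising, i.e. that $g$ is an $S^0\qq$-equivalence; concretely I would establish the chain $(4)\Leftrightarrow(3)$, $(4)\Leftrightarrow(2)$, $(2)\Leftrightarrow(1)$. Throughout I would work in $\GIS$, the $G\mcal$ case being identical after replacing $S^0\qq$ by $S^0_\mcal\qq$ and recalling that every object of $G\mcal$ is fibrant, so that there $\fibrep$ may be taken to be the identity.

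First, $(4)\Leftrightarrow(3)$ is just unpacking Definition \ref{def:Estuff}(1) with $E=S^0\qq$: that $g$ is a $\pi_*^\qq$-isomorphism means $g$ is an $S^0\qq$-equivalence, which means $\id_{S^0\qq}\smashprod g$ is a weak equivalence, i.e. a $\pi_*$-isomorphism, and up to the symmetry isomorphism this is the map of $(3)$. For $(4)\Leftrightarrow(2)$ I would first note that $X\to\fibrep X$ is a weak equivalence, hence (smashing with the cofibrant object $S^0\qq$, or by Proposition \ref{prop:localquillen}) a $\pi_*^\qq$-isomorphism, so by two-out-of-three $g$ is a $\pi_*^\qq$-isomorphism if and only if $\fibrep g$ is. Then Proposition \ref{prop:rathomgps} supplies natural isomorphisms $\pi_*^H(\fibrep X\smashprod S^0\qq)\cong\pi_*^H(\fibrep X)\otimes\qq$ for every subgroup $H$, so $\fibrep g$ is a $\pi_*^\qq$-isomorphism exactly when $(\fibrep g)_*^H\otimes\qq$ is an isomorphism for all $H$; and since $\fibrep X$ and $\fibrep Y$ are fibrant, that is $\Omega$-$G$-spectra, the standard identification $\pi_*^H(\fibrep X)\cong\pi_*((\fibrep X)^H)$ (from \cite{mm02}) rewrites this as condition $(2)$. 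Finally, $(2)\Leftrightarrow(1)$ follows from the standard fact that rational homology of a spectrum computes rationalised stable homotopy, $\h_*(Z;\qq)\cong\pi_*(Z)\otimes\qq$ naturally in the spectrum $Z$ (equivalently: the rationalisation of the sphere spectrum is an Eilenberg--Mac Lane spectrum, since its stable homotopy groups are torsion in positive degrees), applied to $Z=(\fibrep X)^H$ and $Z=(\fibrep Y)^H$.

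Most of this is routine --- unravelling the definition of $E$-equivalence and a short two-out-of-three argument. The two genuine inputs, and the only places where I would expect to need care, are: (a) that $\fibrep$ must be applied \emph{before} taking fixed points, so that $(\fibrep X)^H$ genuinely computes $\pi_*^H(X)$ rather than a point-set artefact (this is exactly why fibrant replacement appears in $(1)$ and $(2)$); and (b) the identification of rational homology with rationalised homotopy, which is the one spot where being over $\qq$ is doing real work. I expect (a)--(b) to be the load-bearing steps, though neither is difficult once stated precisely.
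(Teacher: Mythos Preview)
Your proposal is correct and follows essentially the same route as the paper: unpack the definition of $S^0\qq$-equivalence for $(3)\Leftrightarrow(4)$, use Proposition~\ref{prop:rathomgps} together with the identification $\pi_*^H(\fibrep X)\cong\pi_*((\fibrep X)^H)$ (which the paper cites as \cite[Chapter~V, Proposition~3.2]{mm02}) for $(2)\Leftrightarrow(4)$, and the standard fact that the rational sphere is an Eilenberg--Mac~Lane spectrum for $(1)\Leftrightarrow(2)$. The only cosmetic difference is that the paper attributes Proposition~\ref{prop:rathomgps} to the $(3)\Leftrightarrow(4)$ step rather than $(2)\Leftrightarrow(4)$, but the ingredients and logic are identical.
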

\begin{proof}
We have shown in Proposition \ref{prop:rathomgps} that the last
two conditions are equivalent.
Statements $(ii)$ and $(iv)$ are equivalent by
\cite[Chapter V, Proposition 3.2]{mm02}
(this is obvious for $G \mcal$).
The first two are well known to be equivalent,
we are simply noting that our construction of $S^0{\qq}$
for $G$ the trivial group gives $\h \qq$, an
Eilenberg-Mac Lane spectrum.
\end{proof}

\begin{lemma}\label{lem:ratLES}
For any map $f \co X \to Y$ of $G$-prespectra and any $H \subset G$, there are natural
long exact sequences
$$\xymatrix@C-0.42cm@R-0.6cm{
\dots \ar[r] &
\pi_q^H(Ff)     \otimes \qq \ar[r] &
\pi_q^H(X)      \otimes \qq \ar[r] &
\pi_q^H(Y)      \otimes \qq \ar[r] &
\pi_{q-1}^H(Ff) \otimes \qq \ar[r] &
\dots, \\
\dots \ar[r] &
\pi_q^H(X)      \otimes \qq \ar[r] &
\pi_q^H(Y)      \otimes \qq \ar[r] &
\pi_q^H(Cf)     \otimes \qq \ar[r] &
\pi_{q-1}^H(X)  \otimes \qq \ar[r] &
\dots }$$
and the natural map $\nu \co Ff \to \Omega Cf$ is a $\pi_*$-isomorphism.
\end{lemma}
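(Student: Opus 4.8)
The plan is to reduce both assertions to their integral analogues, which are part of the standard foundational machinery of (equivariant) prespectra, and then to rationalise.

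First I would recall that for a $G$-prespectrum $X$ the groups $\pi_q^H(X)$ are, by definition, the filtered colimit over the $G$-indexing spaces $V \subset U$ of the homotopy groups of the $G$-spaces $\Omega^V X(V)$. A map $f \co X \to Y$ of $G$-prespectra gives, at each level $V$, a cofibre sequence $X(V) \to Y(V) \to Cf(V)$ and a fibre sequence $Ff(V) \to X(V) \to Y(V)$ of based $G$-spaces. The standard theory of prespectra — \cite[Part III]{adams} non-equivariantly, applied fixed-point-wise, and \cite[Chapter I]{lms86} equivariantly — produces from these, in the colimit over $V$, the integral long exact sequences
$$\cdots \to \pi_q^H(Ff) \to \pi_q^H(X) \to \pi_q^H(Y) \to \pi_{q-1}^H(Ff) \to \cdots,$$
$$\cdots \to \pi_q^H(X) \to \pi_q^H(Y) \to \pi_q^H(Cf) \to \pi_{q-1}^H(X) \to \cdots,$$
together with a natural comparison map $\nu \co Ff \to \Omega Cf$ that is a $\pi_*$-isomorphism (stably the fibre of $f$ and the desuspended cofibre of $f$ coincide). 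Naturality in $f$ is immediate from the constructions.

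Next I would rationalise. Since $\qq$ is flat over $\zz$, the functor $- \otimes_\zz \qq$ is exact, so applying it to the two integral long exact sequences above yields exactly the stated long exact sequences of $\qq$-vector spaces, with every map the rationalisation of the corresponding integral map. The assertion about $\nu$ is already an integral statement, so no rationalisation is needed there.

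The hard part will be the step that carries real content: the passage, in the colimit over $V$, from the levelwise cofibre sequences of $G$-spaces to a genuine long exact sequence of homotopy groups. A cofibre sequence of spaces only induces a long exact sequence of homotopy groups in a range, so one must invoke the equivariant Freudenthal suspension theorem — uniformly over the subgroups $H$ — to see that, sufficiently far up the universe $U$, the levelwise cofibre sequences become fibre sequences up to the relevant connectivity, and hence assemble into the desired long exact sequence after passing to the colimit. This is exactly the mechanism underlying the good behaviour of stable homotopy groups of prespectra, so no genuinely new work is required; but it is where the substance of the lemma sits.
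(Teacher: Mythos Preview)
Your proposal is correct and follows essentially the same approach as the paper: invoke the integral long exact sequences and the $\pi_*$-isomorphism $\nu \co Ff \to \Omega Cf$ from the standard literature, then tensor with $\qq$ using flatness. The paper's proof is terser --- it cites \cite[Chapter III, Theorem 3.5]{mm02} (and \cite[Chapter IV, Remark 2.8]{mm02} for $G\mcal$) for the integral statements rather than \cite{adams} and \cite{lms86}, and does not unpack the Freudenthal mechanism --- but the argument is the same.
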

\begin{proof}
This follows from the fact that $\qq$ is flat, (tensoring with $\qq$
preserves exact sequences) and
\cite[Chapter III, Theorem 3.5]{mm02} for orthogonal spectra
(\cite[Chapter IV, Remark 2.8]{mm02} for $G \mcal$).
\end{proof}

\begin{lemma}\label{lem:rightproperrational}
The categories $\GIS_\qq$ and $G \mcal_\qq$
are right proper.
\end{lemma}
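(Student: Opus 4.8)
The plan is to verify right properness straight from the definition. Take a pullback square
$$\xymatrix{ P \ar[r]^{k} \ar[d]_{h} & B \ar[d]^{g} \\ A \ar[r]_{f} & C }$$
in $\GIS$ (or in $G \mcal$) in which $g$ is a rational fibration and $f$ is a rational equivalence; here $k$ is the pullback of the weak equivalence $f$ along the fibration $g$, and the task is to show that $k$ is a rational equivalence.

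First I would record that every rational fibration is an honest fibration of $\GIS$ (resp.\ of $G \mcal$): any $\GIS$-acyclic cofibration is a cofibration and a $\pi_*$-isomorphism, hence a rational acyclic cofibration, so a map with the right lifting property against all rational acyclic cofibrations has it against all $\GIS$-acyclic cofibrations, i.e.\ is a $\GIS$-fibration. Consequently $g$ is a $\GIS$-fibration, and so is its pullback $h$. Since $\GIS$ is right proper (Theorem \ref{thm:orthogmodel}; resp.\ $G \mcal$ is right proper by Theorem \ref{thm:ekmmmodel}), a pullback square along a fibration is homotopy cartesian there, and therefore the natural comparison map between the homotopy fibres, $Fh \to Fg$ (Definition \ref{def:cone/cofibre}), is a $\pi_*$-isomorphism.

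Next I would feed this into the long exact sequences of Lemma \ref{lem:ratLES} attached to the fibre sequences $Fh \to P \to A$ and $Fg \to B \to C$. The commutative square above induces a map of these two long exact sequences whose fibre term is $Fh \to Fg$, and since $\qq$ is flat they remain exact after tensoring with $\qq$. Now $Fh \to Fg$ is a $\pi_*$-isomorphism and $f$ is a rational equivalence, so $\pi_*^H(f)\otimes\qq$ is an isomorphism for every subgroup $H$ (Lemma \ref{lem:ratequivs}); the five lemma then forces $\pi_*^H(k)\otimes\qq$ to be an isomorphism for every $H$, and one final application of Lemma \ref{lem:ratequivs} gives that $k$ is a $\pi_*^\qq$-isomorphism, i.e.\ a rational equivalence. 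This is exactly what right properness asks for, and the argument for $G \mcal$ is identical, reading $S^0_\mcal{\qq}$ for $S^0{\qq}$.

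The only step that is not bookkeeping is the passage to fibres: one must know that the strict pullback square is homotopy cartesian in $\GIS$ (equivalently, that $Fh \to Fg$ is a weak equivalence). This is precisely where right properness of the unlocalised categories $\GIS$ and $G \mcal$, together with the observation that rational fibrations are ordinary fibrations, is indispensable; everything after it is a routine five-lemma chase relying on the flatness of $\qq$.
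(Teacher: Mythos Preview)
Your proof is correct and follows the same overall strategy as the paper: show that the induced map on homotopy fibres is a $\pi_*$-isomorphism, then run the five lemma through the rational long exact sequences of Lemma~\ref{lem:ratLES}. The execution differs somewhat. The paper works explicitly at the level of spaces, proving the (slightly stronger) statement for pullbacks along \emph{level} fibrations: it identifies the strict fibres $\delta^{-1}*$ and $\beta^{-1}*$ levelwise and then invokes the elementary fact that for Serre fibrations of spaces the strict fibre is homotopy equivalent to the homotopy fibre. You instead argue abstractly: rational fibrations are $\GIS$-fibrations, so the square is homotopy cartesian by right properness of $\GIS$, whence $Fh\simeq Fg$. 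Your route is cleaner and uses only model-categorical generalities; the paper's route is more concrete and yields a marginally stronger statement (level fibrations rather than stable fibrations), though that extra strength is not used elsewhere.
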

\begin{proof}
We follow the proof of \cite[9.10]{mmss01}
and show a stronger statement:
in a pullback diagram as below, if $\beta$
is a level fibration then $r$ is a $\pi_*^\qq$-isomorphism.
$$\xymatrix@!C{
W \ar[r]^{\delta} \ar[d]_{r} & X \ar[d]^{\sim_\qq} \\
Y  \ar[r]_\beta & Z
\ar@{}[ul]|\lrcorner|(0.52){\cdot \hskip 2.5pt } }$$
Let $\beta^{-1}*$ be the pullback of $\beta$ over a point (the dual construction to
$Z/Y$; it is the pre-image of the basepoint of $Z$). This pullback is constructed
levelwise and for each level $V$,
$\delta(V)^{-1}* \cong \beta(V)^{-1}*$ (by simply writing down the
definitions of these spaces), thus the map of
spectra $\delta^{-1}* \to \beta^{-1}*$ is an isomorphism.

Now we use \cite[4.65]{hatAT} to see that the fibre of a fibration of spaces
is homotopy equivalent to the pre-image of the basepoint of the codomain.
The map of spectra $\beta$ is a level $G$-fibration, that is
for each level $V$ and subgroup $H$ of $G$, $ \beta(V)^H$ is a fibration.
The fixed point functor $(-)^H$ is a right adjoint and
$F(I, B^H) \cong F(I,B)^H$ for any spectrum $B$
(\cite[Chapter III, Lemma 1.6]{mm02}).
So it follows that the fibre of
$\beta(V)$ is $G$-homotopy equivalent to
$\beta(V)^{-1}* =(\beta^{-1}*)(V)$
and thus we have a level $G$-equivalence
between $F \beta$, the fibre of $\beta$, and $\beta^{-1}*$.
Similarly we have a level $G$-equivalence
between $F \delta$, the fibre of $\delta$, and $\delta^{-1}*$.
Hence, we have a level $G$-equivalence (and thus a $\pi_*$-isomorphism)
between $F \delta$ and $F \beta$.

We now apply the long exact sequence of rational homotopy groups
of a fibration and the five-lemma
to conclude that $r$ is an $E$-equivalence.
\end{proof}

\begin{theorem}\label{thm:rathomotopymaps}
For any $X$ and $Y$, $[X,Y]^G_\qq$ is a rational vector space.
If $Z$ is an $S^0{\qq}$-local object of $\GIS$ then
$Z$ has rational homotopy groups.
There is a natural isomorphism
$[X,Y]^G_\qq \cong [X \smashprod S^0{\qq},Y \smashprod S^0{\qq}]^G.$
\end{theorem}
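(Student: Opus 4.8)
The plan is to deduce all three assertions from a single observation about the triangulated homotopy category $\ho\GIS$: for any $X$ and any $S^0\qq$-local object $B$, the group $[X,B]^G$ is a rational vector space. Granting this, the first assertion is immediate, since $[X,Y]^G_\qq\cong[X,\fibrep_\qq Y]^G$ in $\ho\GIS$ (cofibrant replacements and cylinder objects agree in $\GIS$ and $\GIS_\qq$ because the cofibrations and trivial fibrations do, and $\fibrep_\qq Y$ is fibrant in $\GIS$ by Theorem~\ref{thm:GSlocal}) while $\fibrep_\qq Y$ is $S^0\qq$-local; and the second follows because Lemma~\ref{lem:spherehomotopy} identifies each $\pi_r^H(Z)$ with $[\Sigma^q S^0\smashprod G/H_+,Z]^G$ or $[F_p S^0\smashprod G/H_+,Z]^G$, a rational vector space when $Z$ is $S^0\qq$-local.

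To prove the observation I would run a Moore-space style argument. Since $\ho\GIS$ is additive, for each integer $n\geqslant 1$ the $n$-fold sum $n\cdot\id_\sphspec$ is an endomorphism of $\sphspec$; let $S/n$ be its cofibre, giving a triangle $\Sigma^{-1}S/n\to\sphspec\overset{n}{\longrightarrow}\sphspec\to S/n$. Lemma~\ref{lem:ratLES} shows $\pi_*^H(S/n)\otimes\qq=0$, as multiplication by $n$ is invertible on the $\qq$-vector spaces $\pi_*^H(\sphspec)\otimes\qq$, so by Proposition~\ref{prop:rathomgps} the spectrum $S/n$ is $S^0\qq$-acyclic, and hence by Proposition~\ref{prop:Esmashisom} so are $X\smashprod S/n$ and its suspension, for $X$ cofibrant. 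Smashing the triangle with $X$ and applying $[-,B]^G$ yields an exact sequence
$$[X\smashprod S/n,B]^G\longrightarrow[X,B]^G\overset{n}{\longrightarrow}[X,B]^G\longrightarrow[\Sigma^{-1}(X\smashprod S/n),B]^G;$$
the outer terms vanish by Lemma~\ref{lem:localcond} (for the right-hand one, use $[\Sigma^{-1}(X\smashprod S/n),B]^G=[X\smashprod S/n,\Sigma B]^G$ and that $\Sigma B$ is again $S^0\qq$-local by Proposition~\ref{prop:Ehomproperty}), so multiplication by $n$ is an isomorphism of $[X,B]^G$. As $n$ was arbitrary, $[X,B]^G$ is uniquely divisible, hence a rational vector space.

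For the natural isomorphism I would chase through $\fibrep_\qq Y$. By Proposition~\ref{prop:rathomgps} and Lemma~\ref{lem:ratequivs} the natural maps $X\to X\smashprod S^0\qq$ and $Y\to Y\smashprod S^0\qq$ are rational equivalences, hence weak equivalences of $\GIS_\qq$; in particular $X\to X\smashprod S^0\qq$ is invertible in $\ho\GIS_\qq$, so $[X,Y]^G_\qq\cong[X\smashprod S^0\qq,Y]^G_\qq=[X\smashprod S^0\qq,\fibrep_\qq Y]^G$. Since $\fibrep_\qq Y$ is $S^0\qq$-local it has rational homotopy groups by the second assertion, so Proposition~\ref{prop:rathomgps} makes $\fibrep_\qq Y\to\fibrep_\qq Y\smashprod S^0\qq$ a $\pi_*$-isomorphism, while the rational equivalence $Y\to\fibrep_\qq Y$ smashes to a $\pi_*$-isomorphism $Y\smashprod S^0\qq\to\fibrep_\qq Y\smashprod S^0\qq$. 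Hence $[X\smashprod S^0\qq,\fibrep_\qq Y]^G\cong[X\smashprod S^0\qq,\fibrep_\qq Y\smashprod S^0\qq]^G\cong[X\smashprod S^0\qq,Y\smashprod S^0\qq]^G$, and composing the displayed isomorphisms gives $[X,Y]^G_\qq\cong[X\smashprod S^0\qq,Y\smashprod S^0\qq]^G$; every isomorphism used is natural in $X$ and $Y$. (In passing this exhibits $Y\smashprod S^0\qq$ as weakly equivalent to the local object $\fibrep_\qq Y$, so it is itself $S^0\qq$-local.)

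All the ingredients are routine once Lemmas~\ref{lem:spherehomotopy}, \ref{lem:localcond}, \ref{lem:ratequivs}, \ref{lem:ratLES} and Propositions~\ref{prop:rathomgps}, \ref{prop:Esmashisom}, \ref{prop:Ehomproperty} are in hand; the one place calling for care is the third assertion, where the comparison must be routed through $\fibrep_\qq Y$. A direct argument that $Y\smashprod S^0\qq$ is $S^0\qq$-local would instead require a Postnikov-tower analysis or the ring spectrum $S_\qq$, whereas bootstrapping off the second assertion avoids this altogether.
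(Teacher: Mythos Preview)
Your proof is correct. The organization differs from the paper's in two places worth noting. For the first assertion, the paper argues directly: the self-map $n$ of $\fibrep_\qq\sphspec\smashprod X$ is a rational equivalence (it induces multiplication by $n$ on rational homotopy groups), hence an isomorphism in $\ho\GIS_\qq$, so $[X,Y]^G_\qq$ is uniquely divisible. This is a one-liner, whereas your Moore-space argument via $S/n$ is longer but has the virtue of yielding the unified observation that $[X,B]^G$ is rational for any $S^0\qq$-local $B$, from which both (1) and (2) drop out uniformly. For the third assertion, the paper routes through $\fibrep_\qq(Y\smashprod S^0\qq)$ rather than $\fibrep_\qq Y$: since $Y\smashprod S^0\qq$ already has rational homotopy groups by Proposition~\ref{prop:rathomgps}, the map $Y\smashprod S^0\qq\to\fibrep_\qq(Y\smashprod S^0\qq)$ is a $\pi_*^\qq$-isomorphism between objects with rational homotopy and hence a $\pi_*$-isomorphism, giving the chain $[X,Y]^G_\qq\cong[X\smashprod S^0\qq,Y\smashprod S^0\qq]^G_\qq\cong[X\smashprod S^0\qq,\fibrep_\qq(Y\smashprod S^0\qq)]^G\cong[X\smashprod S^0\qq,Y\smashprod S^0\qq]^G$ without invoking assertion (2). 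Your route through $\fibrep_\qq Y$ works equally well but depends on (2) to know $\fibrep_\qq Y$ has rational homotopy; the paper's choice makes (3) logically independent of (2).
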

\begin{proof}
The argument is the same for both model categories,
so we use notation appropriate to $\GIS$.
For each integer $n$ we have a self-map of $\fibrep \sphspec$
which represents multiplication by $n$ at the model category level,
applying $(-) \smashprod X$ we obtain a self-map of
$\fibrep \sphspec \smashprod X$. Since this map is an isomorphism
of rational homotopy groups it is a weak equivalence of $\GIS_\qq$
and so in the homotopy category of $\GIS_\qq$ we have an isomorphism
$n \co X \to X$ hence $[X,Y]^G_\qq$ is a rational vector space.

Let $\fibrep_\qq Z$ be the fibrant replacement
of $Z$ in $\GIS_\qq$. The map $Z \to \fibrep_\qq Z$
is a rational equivalence between
$S^0{\qq}$-local objects and hence is a
$\pi_*$-isomorphism.
We can describe the homotopy groups of
$Z$ in terms of $[\Sigma^p G/H_+, Z]^G$
and $[F_q G/H_+, Z]^G$ for $p \geqslant 0$
and $q > 0 $ by Lemma \ref{lem:spherehomotopy}.
The result then follows by the isomorphisms
$$[A,Z]^G \cong [A, \fibrep_\qq  Z]^G \cong [A, Z]^G_\qq$$
which hold for any cofibrant $G$-spectrum $A$ by
Proposition \ref{prop:localquillen}.

Now we turn to the final part of this theorem.
The map $Y \smashprod S^0{\qq} \to \fibrep_\qq (Y \smashprod S^0{\qq})$
is a $\pi_*^\qq$-isomorphism between objects with
rational homotopy groups, hence it is a
$\pi_*$-isomorphism. For any $G$-spectrum $X$,
$X \smashprod S^0{\qq}$ is rationally equivalent to $X$.
Combining these with Proposition \ref{prop:localquillen}
we obtain isomorphisms as below.
$$ \begin{array}{rcll}
[X,Y]^G_\qq
& \cong & [X \smashprod S^0{\qq},Y \smashprod S^0{\qq}]^G_\qq \\
& \cong & [X \smashprod S^0{\qq},\fibrep_\qq (Y \smashprod S^0{\qq})]^G \\
& \cong & [X \smashprod S^0{\qq},Y \smashprod S^0{\qq}]^G
\end{array} $$
\end{proof}

\begin{corollary}
If the spectrum $X$ is $H$-compact in $\GIS$  then
it is $H$-compact in $\GIS_\qq$. In particular, the generators of
$\GIS_\qq$ are $G$-compact, or equivalently,
$\sphspec$ is $H$-compact in $\GIS_\qq$ for all $H$.
\end{corollary}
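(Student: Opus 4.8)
The plan is to reduce compactness in $\GIS_\qq$ to compactness in $\GIS$ using the natural isomorphism $[A,B]^G_\qq\cong[A\smashprod S^0{\qq},B\smashprod S^0{\qq}]^G$ of Theorem \ref{thm:rathomotopymaps}. It is enough to show that an object compact in $\GIS$ is compact in $\GIS_\qq$; applying this with $G$ replaced by $H$ and the object replaced by $\iota_H^*X$ then gives the $H$-compact statement. So I would take $X$ compact in $\GIS$ and a collection $\{Y_i\}_{i\in I}$ of objects of $\GIS_\qq$. First I would note that both compactness and the isomorphism to be established are unchanged if $X$ and the $Y_i$ are replaced by weakly equivalent spectra (using Proposition \ref{prop:Ehomproperty}(2), a wedge of rational equivalences is a rational equivalence, to see the wedge is also unchanged), so I may assume $X$ and each $Y_i$ cofibrant. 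Since $\smashprod$ distributes over wedges, Theorem \ref{thm:rathomotopymaps} then identifies $[X,\bigvee_i Y_i]^G_\qq$ with $[X\smashprod S^0{\qq},\bigvee_i(Y_i\smashprod S^0{\qq})]^G$ and each $[X,Y_i]^G_\qq$ with $[X\smashprod S^0{\qq},Y_i\smashprod S^0{\qq}]^G$.

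The step that is not formal, and which I expect to be the main obstacle, is the claim: each $Y_i\smashprod S^0{\qq}$, and the wedge $\bigvee_i(Y_i\smashprod S^0{\qq})$, is $S^0{\qq}$-local. By Proposition \ref{prop:rathomgps} these spectra have rational homotopy groups --- for the wedge one uses that $\pi_*^K$ of a wedge of cofibrant spectra is the direct sum, via Lemmas \ref{lem:spherehomotopy} and \ref{lem:compactgenerators}, and that a direct sum of $\qq$-modules is a $\qq$-module. I would then prove the converse to the last clause of Theorem \ref{thm:rathomotopymaps}: a spectrum $Z$ with rational homotopy groups is $S^0{\qq}$-local. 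Indeed, the localisation map $Z\to\fibrep_\qq Z$ of Theorem \ref{thm:GSlocal} is a rational equivalence whose target, being $S^0{\qq}$-local, also has rational homotopy groups by Theorem \ref{thm:rathomotopymaps}; and a rational equivalence between spectra with rational homotopy groups is a $\pi_*$-isomorphism (smash it with $S^0{\qq}$ and apply Proposition \ref{prop:rathomgps}, using that $M\to M\otimes\qq$ is an isomorphism for a $\qq$-module $M$). Hence $Z\to\fibrep_\qq Z$ is a weak equivalence and $Z$ is $S^0{\qq}$-local.

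Finally I would combine these. The natural map $X\to X\smashprod S^0{\qq}$ (written $\id_X\smashprod\alpha$ in Proposition \ref{prop:rathomgps}) induces $x\mapsto x\otimes1$ on homotopy groups, hence becomes a $\pi_*$-isomorphism after smashing with $S^0{\qq}$, so it is a rational equivalence. Since $\bigvee_i(Y_i\smashprod S^0{\qq})$ and each $Y_i\smashprod S^0{\qq}$ are $S^0{\qq}$-local, Definition \ref{def:Estuff}(2) gives $[X\smashprod S^0{\qq},L]^G\cong[X,L]^G$ for $L$ either of them. Together with the compactness of $X$ in $\GIS$ --- the wedge $\bigvee_i(Y_i\smashprod S^0{\qq})$ being the homotopy coproduct, as its summands are cofibrant --- this yields
$$[X,\textstyle\bigvee_i Y_i]^G_\qq\ \cong\ [X,\textstyle\bigvee_i(Y_i\smashprod S^0{\qq})]^G\ \cong\ \textstyle\bigoplus_i[X,Y_i\smashprod S^0{\qq}]^G\ \cong\ \textstyle\bigoplus_i[X,Y_i]^G_\qq,$$
and, all isomorphisms used being natural, the composite is the canonical assembly map. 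This proves the main assertion. For the closing statement, $\sphspec$ is $H$-compact and each $\Sigma^\infty G/H_+=\sphspec\smashprod G/H_+$ is $G$-compact in $\GIS$ by Lemma \ref{lem:compactgenerators}, so both are so in $\GIS_\qq$; and the $\sphspec\smashprod G/H_+$ are the generators of $\GIS_\qq$ by Corollary \ref{cor:localsummary}.
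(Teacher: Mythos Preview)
Your proof is correct and follows essentially the same route as the paper's: both reduce to compactness in $\GIS$ via the isomorphism $[X,Y]^G_\qq\cong[X\smashprod S^0\qq,Y\smashprod S^0\qq]^G$ of Theorem~\ref{thm:rathomotopymaps}, use that $X\to X\smashprod S^0\qq$ is a rational equivalence, and exploit that the targets have rational homotopy groups. The only cosmetic difference is that the paper passes through $\fibrep_\qq\bigl((\bigvee_i Y_i)\smashprod S^0\qq\bigr)$ and uses that the replacement map is a $\pi_*$-isomorphism, whereas you isolate and prove the equivalent lemma ``rational homotopy groups $\Rightarrow$ $S^0\qq$-local'' directly.
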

\begin{proof}
Take a collection $\{ Y_i \}_{i \in I}$
in $\GIS$ and let $\fibrep_\qq$ be fibrant replacement
in $\GIS_\qq$. Then $[G/H_+, \bigvee_i Y_i]_\qq^G$
is isomorphic to
$$[G/H_+ \smashprod S^0{\qq}, \bigvee_i Y_i \smashprod S^0{\qq}]^G
\cong [G/H_+ \smashprod S^0{\qq},
\fibrep_\qq \big( \bigvee_i Y_i \smashprod S^0{\qq} \big)]^G.$$
Since $G/H_+ \to G/H_+\smashprod S^0{\qq}$ is a rational
equivalence the above is isomorphic to
$$[G/H_+ , \fibrep_\qq \big( \bigvee_i Y_i \smashprod S^0{\qq} \big)]^G
\cong
[G/H_+ , (\bigvee_i Y_i) \smashprod S^0{\qq}]^G .$$
Thus the result follows from the compactness of $G/H+$
in $\GIS$.
\end{proof}

\begin{proposition}\label{prop:GIS+QequivGISQ}
There is a Quillen equivalence
$$\id : \GIS_\qq^+ \overrightarrow{\longleftarrow} \GIS_\qq : \id.$$
\end{proposition}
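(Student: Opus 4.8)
The plan is to obtain this from the Quillen equivalence $\id \co \GIS_+ \to \GIS$ of Theorem \ref{thm:positiveorthogmodel} by transporting it across Bousfield localisation. The identity functor $\GIS_+ \to \GIS$ is a strong monoidal left adjoint between (closed symmetric) monoidal model categories, and $S^0 \qq_+$ is cofibrant in $\GIS_+$ (being one of the rational sphere spectra proved cofibrant above). So Theorem \ref{thm:locfuncs} applies with $\cscr = \GIS_+$, $\dscr = \GIS$, $F = G = \id$ and $E = S^0 \qq_+$; since $F = \id$ we have $FE = S^0 \qq_+$, and because $(\id,\id)$ is a Quillen equivalence it passes to a (strong monoidal) Quillen equivalence
$$\id : L_{S^0 \qq_+}(\GIS_+) \ \overrightarrow{\longleftarrow}\ L_{S^0 \qq_+}\GIS : \id .$$
The left-hand side is $\GIS_\qq^+$ by definition.

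It remains to identify $L_{S^0 \qq_+}\GIS$ with $\GIS_\qq = L_{S^0 \qq}\GIS$. Both $S^0 \qq$ and $S^0 \qq_+$ are cofibrant in $\GIS$ and they are weakly equivalent by Lemma \ref{lem:equivratspheres}. The key observation is that $E$-localisation of $\GIS$ depends only on the weak equivalence class of the cofibrant spectrum $E$: the functor $(-) \smashprod Z$ sends acyclic cofibrations to $\pi_*$-isomorphisms (\cite[Chapter III, Proposition 7.4]{mm02}), so by Ken Brown's lemma it sends every weak equivalence between cofibrant spectra to a $\pi_*$-isomorphism; applying this to a weak equivalence $S^0 \qq_+ \to S^0 \qq$ shows that, for any map $f$, the map $f \smashprod \id_{S^0 \qq_+}$ is a $\pi_*$-isomorphism if and only if $f \smashprod \id_{S^0 \qq}$ is, i.e. the $S^0 \qq_+$-equivalences and the $S^0 \qq$-equivalences coincide. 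Since the cofibrations of $\GIS_E$ are the cofibrations of $\GIS$ regardless of $E$, the model categories $L_{S^0 \qq_+}\GIS$ and $L_{S^0 \qq}\GIS$ have the same cofibrations and weak equivalences and hence are literally equal; this is also (essentially) the content of Remark \ref{rmk:changemodellocal}. Composing with the Quillen equivalence of the previous paragraph gives the asserted Quillen equivalence $\id : \GIS_\qq^+ \overrightarrow{\longleftarrow} \GIS_\qq : \id$.

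The only step needing genuine care is this last identification --- the invariance of the Bousfield localisation under a weak equivalence of the cofibrant localising object --- together with the bookkeeping point that $S^0 \qq_+$ must be verified cofibrant in $\GIS_+$ so that Theorem \ref{thm:locfuncs} may be invoked with $\cscr = \GIS_+$. Everything else is a formal consequence of the positive-to-ordinary Quillen equivalence, the compatibility of strong monoidal Quillen equivalences with localisation, and the construction of the rational sphere spectra.
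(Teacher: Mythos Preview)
Your proof is correct and follows essentially the same approach as the paper: invoke Lemma \ref{lem:equivratspheres} to identify $L_{S^0\qq_+}\GIS$ with $\GIS_\qq$, then apply Theorem \ref{thm:locfuncs} to the Quillen equivalence $\id \co \GIS_+ \to \GIS$ at the positive-cofibrant object $S^0\qq_+$ (cf.\ Remark \ref{rmk:changemodellocal}). You have simply spelled out in more detail the Ken Brown argument underlying the identification of the two localisations, which the paper leaves implicit.
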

\begin{proof}
By Lemma \ref{lem:equivratspheres} we see that
$\GIS_\qq$ can be constructed by localising at $S^0{\qq}_+$,
hence Theorem \ref{thm:locfuncs} gives us the result, see
Remark \ref{rmk:changemodellocal}.
\end{proof}

\begin{proposition}\label{prop:GMequivGIS+Q}
The model structures $G\mcal_\qq$
and $L_{\mathbb{N} S^0\qq_+} G \mcal$ are equal.
The adjoint pair $(\mathbb{N}, \mathbb{N}^{\#})$
are a strong monoidal Quillen equivalence
$$\mathbb{N} : \GIS_\qq^+ \overrightarrow{\longleftarrow}
L_{\mathbb{N} S^0\qq_+} G\mcal : \mathbb{N}^{\#}$$
\end{proposition}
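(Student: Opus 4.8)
The statement has two halves: that the two model structures on the underlying category of $G \mcal$ coincide, and that $(\mathbb{N},\mathbb{N}^{\#})$ refines to a strong monoidal Quillen equivalence between $\GIS_\qq^+$ and this common model structure. The plan is to obtain the first half from the general principle that Bousfield localisation at a cofibrant object depends only on the weak equivalence class of that object (compare Remark \ref{rmk:changemodellocal}), combined with the comparison of rational sphere spectra in Lemma \ref{lem:equivratspheres}; and to obtain the second half by feeding the strong symmetric monoidal Quillen equivalence $\mathbb{N}\co\GIS_+ \overrightarrow{\longleftarrow} G \mcal \co \mathbb{N}^{\#}$ of Theorem \ref{thm:ekmmmodel} into Theorem \ref{thm:locfuncs} (equivalently, quoting Corollary \ref{cor:Nadjunctlocal} for the bare Quillen equivalence).

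For the equality of model structures, note first that by definition $G \mcal_\qq = L_{S^0_\mcal{\qq}} G \mcal$. Since $\mathbb{N}$ is a left Quillen functor $\GIS_+ \to G \mcal$ and $S^0{\qq}_+$ is cofibrant in $\GIS_+$ (it is one of the rational sphere spectra shown to be cofibrant), $\mathbb{N} S^0{\qq}_+$ is a cofibrant $G$-spectrum in $G \mcal$, and Lemma \ref{lem:equivratspheres} supplies a weak equivalence $\mathbb{N} S^0{\qq}_+ \simeq S^0_\mcal{\qq}$. It therefore suffices to show that if $\theta \co E \to E'$ is a weak equivalence of cofibrant $G$-spectra in $G \mcal$, then a map $f$ is an $E$-equivalence if and only if it is an $E'$-equivalence; for then $L_E G \mcal$ and $L_{E'} G \mcal$ have the same weak equivalences and, being localisations of $G \mcal$, the same cofibrations, hence are the same model category. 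Given $f \co X \to Y$, choose a functorial cofibrant replacement $\cofrep f \co \cofrep X \to \cofrep Y$. Because smashing with a cofibrant object of $G \mcal$ preserves all weak equivalences (Lemma \ref{lem:sModMonoid}), the maps $E \smashprod \cofrep X \to E \smashprod X$, $E \smashprod \cofrep Y \to E \smashprod Y$ and their primed analogues are $\pi_*$-isomorphisms, and $\theta \smashprod \cofrep X$, $\theta \smashprod \cofrep Y$ are $\pi_*$-isomorphisms (using the symmetry isomorphism and Lemma \ref{lem:sModMonoid} again). A $2$-out-of-$3$ chase across these comparisons then shows $E \smashprod f$ is a $\pi_*$-isomorphism exactly when $E' \smashprod f$ is. Hence $G \mcal_\qq = L_{S^0_\mcal{\qq}} G \mcal = L_{\mathbb{N} S^0{\qq}_+} G \mcal$.

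For the Quillen equivalence, apply Theorem \ref{thm:locfuncs} to the strong symmetric monoidal Quillen equivalence $\mathbb{N}\co\GIS_+ \overrightarrow{\longleftarrow} G \mcal \co \mathbb{N}^{\#}$ with $E = S^0{\qq}_+$, which is cofibrant in $\GIS_+$; both localised categories $L_{S^0{\qq}_+}\GIS_+$ and $L_{\mathbb{N} S^0{\qq}_+} G \mcal$ exist by Theorem \ref{thm:GSlocal}. That theorem produces a strong monoidal Quillen pair $\mathbb{N}\co L_{S^0{\qq}_+}\GIS_+ \overrightarrow{\longleftarrow} L_{\mathbb{N} S^0{\qq}_+} G \mcal \co \mathbb{N}^{\#}$, and since $(\mathbb{N},\mathbb{N}^{\#})$ is a Quillen equivalence it remains one after this localisation. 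By definition $L_{S^0{\qq}_+}\GIS_+ = \GIS_\qq^+$, and by the previous paragraph $L_{\mathbb{N} S^0{\qq}_+} G \mcal = G \mcal_\qq$; this is exactly the asserted strong monoidal Quillen equivalence. Alternatively, Corollary \ref{cor:Nadjunctlocal} gives the underlying Quillen equivalence directly, and one invokes Theorem \ref{thm:locfuncs} only to upgrade it to a strong monoidal one.

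I expect the proof to be essentially an assembly of earlier results; the only step demanding a little care is the weak-equivalence-invariance of localisation in the second paragraph, and even there the substance is the observation that smashing with a cofibrant object of $G \mcal$ preserves all weak equivalences (Lemma \ref{lem:sModMonoid}), which is precisely what lets the non-cofibrant test objects $X$ and $Y$ be handled. I do not anticipate any genuine obstacle.
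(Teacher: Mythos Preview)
Your proposal is correct and follows essentially the same approach as the paper: use Lemma \ref{lem:equivratspheres} to see that $\mathbb{N} S^0\qq_+$ and $S^0_\mcal\qq$ are weakly equivalent cofibrant objects, deduce the two localisations coincide, and then invoke Theorem \ref{thm:locfuncs} for the Quillen equivalence. The only difference is that the paper simply asserts the weak-equivalence-invariance of the class of $E$-equivalences for cofibrant $E$, whereas you spell out the argument via cofibrant replacement and Lemma \ref{lem:sModMonoid}; this extra care is appropriate and the argument is sound.
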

\begin{proof}
We defined $G \mcal_\qq$ as the localisation
$L_{S^0_\mcal{\qq}} G\mcal$.
By Lemma \ref{lem:equivratspheres}
the cofibrant objects $\mathbb{N} S^0\qq_+$
and $S^0_\mcal {\qq}$ are weakly equivalent.
It follows therefore that a map is an
$\mathbb{N} S^0\qq_+$-equivalence
if and only if it is an
$S^0_\mcal {\qq}$-equivalence.
Thus $L_{S^0_\mcal{\qq}} G\mcal$
has the same weak equivalences and cofibrations
as $L_{\mathbb{N} S^0\qq_+} G \mcal$.
The second statement follows from Theorem \ref{thm:locfuncs},
since $\GIS_\qq^+$ is the localisation of $\GIS_+$
with respect to the cofibrant object $S^0\qq_+$.
\end{proof}

\section{$S_\qq$-Modules}\label{sec:SQmod}
We give an alternative, but Quillen equivalent method of constructing a
category of rational $G$-spectra. We do not consider orthogonal
spectra in this section. The advantage to doing so is that
every object in this new category will be fibrant. This
is one of the technical requirements necessary to apply the results
of \cite{greshi}, see Remark \ref{rmk:whyfibrant}.

\begin{definition}
We have a model category of modules
over $S_\qq$ (see Definition \ref{def:S_qq}) from
\cite[Chapter IV, Theorem 2.11]{mm02}
this will be written $S_\qq \leftmod$. This is a proper
closed symmetric monoidal model category.
\end{definition}

\begin{lemma}\label{lem:SQiscRQ}
A map is an $S^0_\mcal{\qq}$-equivalence if and only if it is a
$\cofrep S_\qq$-equivalence.
\end{lemma}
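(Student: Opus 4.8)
The plan is to compare both localising objects with a single cofibrant $G$-spectrum and then invoke the two-out-of-three property. First I would produce a weak equivalence $w \co S^0_\mcal{\qq} \to \cofrep S_\qq$ between cofibrant objects of $G \mcal$. Indeed, $S^0_\mcal{\qq}$ is cofibrant and Lemma \ref{lem:compareSQtoRQ} supplies a weak equivalence $S^0_\mcal{\qq} \to S_\qq$; since $\cofrep S_\qq \to S_\qq$ is an acyclic fibration and $\emptyset \to S^0_\mcal{\qq}$ is a cofibration, this map lifts through $\cofrep S_\qq \to S_\qq$ to a map $w$, and $w$ is a weak equivalence by two-out-of-three. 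Both $S^0_\mcal{\qq}$ and $\cofrep S_\qq$ are cofibrant, hence generalised cofibrant, so Lemma \ref{lem:sModMonoid} applies to each of them, and (using symmetry of $\smashprod$) smashing with either of them preserves weak equivalences.

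Next I would show that $w \smashprod \id_X \co S^0_\mcal{\qq} \smashprod X \to \cofrep S_\qq \smashprod X$ is a $\pi_*$-isomorphism for an arbitrary (not necessarily cofibrant) $G$-spectrum $X$. Choose a cofibrant replacement $\cofrep X \to X$ and consider the commuting square
$$\xymatrix{
S^0_\mcal{\qq} \smashprod \cofrep X \ar[r]^{w \smashprod \id} \ar[d] & \cofrep S_\qq \smashprod \cofrep X \ar[d] \\
S^0_\mcal{\qq} \smashprod X \ar[r]_{w \smashprod \id} & \cofrep S_\qq \smashprod X .
}$$
The top arrow is a weak equivalence because $\cofrep X \smashprod (-)$ preserves weak equivalences (Lemma \ref{lem:sModMonoid}); the two vertical arrows are weak equivalences because $S^0_\mcal{\qq} \smashprod (-)$ and $\cofrep S_\qq \smashprod (-)$ preserve weak equivalences (Lemma \ref{lem:sModMonoid} again), applied to $\cofrep X \to X$. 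Hence, by two-out-of-three, the bottom arrow $w \smashprod \id_X$ is a weak equivalence.

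Finally, for a map $f \co X \to Y$ the naturality of $w \smashprod (-)$ gives the commuting square
$$\xymatrix{
S^0_\mcal{\qq} \smashprod X \ar[r]^{\id \smashprod f} \ar[d]_{\sim} & S^0_\mcal{\qq} \smashprod Y \ar[d]^{\sim} \\
\cofrep S_\qq \smashprod X \ar[r]_{\id \smashprod f} & \cofrep S_\qq \smashprod Y
}$$
whose vertical maps are the weak equivalences just produced. By two-out-of-three the top map is a $\pi_*$-isomorphism exactly when the bottom one is, which is precisely the statement that $f$ is an $S^0_\mcal{\qq}$-equivalence if and only if it is a $\cofrep S_\qq$-equivalence. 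The only real subtlety is that $S_\qq$ itself need not be cofibrant, so one cannot smash with it homotopically; routing the comparison through the cofibrant model $S^0_\mcal{\qq}$ and through cofibrant replacements, as justified by Lemma \ref{lem:sModMonoid}, is what makes the argument go through.
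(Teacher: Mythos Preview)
Your proof is correct and follows essentially the same approach as the paper. The paper's argument is the zig-zag of weak equivalences
\[
X \smashprod S^0_\mcal{\qq}
\leftarrow \cofrep X \smashprod S^0_\mcal{\qq}
\rightarrow \cofrep X \smashprod S_\qq
\leftarrow \cofrep X \smashprod \cofrep S_\qq
\rightarrow X \smashprod \cofrep S_\qq,
\]
justified by exactly the same ingredient (Lemma \ref{lem:sModMonoid}) that you invoke; your only variation is that you first lift the comparison $S^0_\mcal{\qq}\to S_\qq$ of Lemma \ref{lem:compareSQtoRQ} through $\cofrep S_\qq\to S_\qq$ to obtain a single map $w$, which shortens the zig-zag by one step.
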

\begin{proof}
The result follows from the
zig-zag of weak equivalences
(which exists for any spectrum $X$):
$X \smashprod S^0_\mcal{\qq}
\leftarrow \cofrep X \smashprod S^0_\mcal{\qq}
\rightarrow \cofrep X \smashprod S_\qq
\leftarrow \cofrep X \smashprod \cofrep S_\qq
\rightarrow X \smashprod \cofrep S_\qq$.
\end{proof}

\begin{lemma}\label{lem:RQmodareSQlocal}
All $S_\qq$-modules are $S^0_\mcal{\qq}$-local
and thus all $S_\qq$-modules have rational homotopy groups.
\end{lemma}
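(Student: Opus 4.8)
The plan is to show that every $S_\qq$-module $M$ is local with respect to $S^0_\mcal{\qq}$; the final clause then follows immediately from Theorem \ref{thm:rathomotopymaps} (or rather its $G\mcal$ analogue), since an $S^0_\mcal{\qq}$-local object of $G\mcal$ has rational homotopy groups. By Lemma \ref{lem:localcond} it suffices to prove that $[A,M]^G = 0$ for every $S^0_\mcal{\qq}$-acyclic spectrum $A$, where the bracket denotes maps in the homotopy category of $G\mcal$. So the real content is an adjunction argument: maps of underlying $S$-modules out of an $S_\qq$-module should be controlled by maps of $S_\qq$-modules, and the latter see only the rationalised homotopy type.

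First I would recall that the unit $\eta \co S \to S_\qq$ is an $S^0_\mcal{\qq}$-equivalence (stated just after Definition \ref{def:S_qq}), and by Lemma \ref{lem:SQiscRQ} the $S^0_\mcal{\qq}$-equivalences coincide with the $\cofrep S_\qq$-equivalences. Next I would use the free–forgetful (extension of scalars / restriction of scalars) Quillen adjunction between $G\mcal$ and $S_\qq \leftmod$, whose right adjoint is the forgetful functor. Given an $S^0_\mcal{\qq}$-acyclic $G$-spectrum $A$, I would smash with $S_\qq$ to get the free $S_\qq$-module $S_\qq \smashprod A$; because $A$ is rationally acyclic and $S_\qq$ is rational, this free module is weakly equivalent to $*$ in $S_\qq \leftmod$ (here one uses that $S_\qq$ has rational homotopy groups, Theorem \ref{thm:rathomotopymaps}, together with the fact that smashing with a rational ring spectrum kills rationally acyclic spectra — this is the analogue of Proposition \ref{prop:rathomgps}). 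Then the adjunction isomorphism
$$[A, M]^G \cong [S_\qq \smashprod \cofrep A,\, M]_{S_\qq} \cong [*,\,M]_{S_\qq} = 0$$
in the homotopy category of $S_\qq \leftmod$ gives the vanishing, where $M$ is viewed both as an $S_\qq$-module and (after forgetting) as a $G$-spectrum, and every $S_\qq$-module is fibrant so no fibrant replacement is needed on the right.

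The step I expect to be the main obstacle is making precise the claim that $S_\qq \smashprod A$ is contractible as an $S_\qq$-module when $A$ is $S^0_\mcal{\qq}$-acyclic. One has to check that $\pi_*^H(S_\qq \smashprod \cofrep A) = 0$ for all $H$, which amounts to knowing that smashing with $S_\qq$ has the same effect on homotopy groups as smashing with $S^0_\mcal{\qq}$, i.e. rationalisation; this uses Lemma \ref{lem:compareSQtoRQ} (the weak equivalence $S^0_\mcal{\qq} \to S_\qq$) and the computation of $\pi_*^H(X \smashprod S^0_\mcal{\qq})$ in Proposition \ref{prop:rathomgps}, applied with $X = \cofrep A$. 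Everything else — the adjunction, fibrancy of $S_\qq$-modules, the reduction via Lemma \ref{lem:localcond} — is formal. Once locality is established, the rationality of the homotopy groups of $M$ is a direct application of the already-proven statement that $S^0_\mcal{\qq}$-local objects of $G\mcal$ have rational homotopy groups.
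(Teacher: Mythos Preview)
Your argument is correct. Both your proof and the paper's hinge on the same key fact: if $A$ is $S^0_\mcal{\qq}$-acyclic then $A \smashprod^L S_\qq$ is contractible. The paper deduces this directly from Lemma~\ref{lem:SQiscRQ}; you instead use the weak equivalence $S^0_\mcal{\qq} \to S_\qq$ of Lemma~\ref{lem:compareSQtoRQ} together with Proposition~\ref{prop:rathomgps}, which is equally valid. Where you then invoke the derived free--forgetful adjunction $[A,M]^G \cong [S_\qq \smashprod \cofrep A, M]_{S_\qq}$ (legitimate, since this Quillen pair is available once the module category is defined and every $S_\qq$-module is fibrant), the paper stays inside $\ho(G\mcal)$ and uses a factorisation trick adapted from \cite[13.1]{adams}: for any $[f] \in [A,M]^G$ the map $f^* \co [M,M]^G \to [A,M]^G$ factors through $[A \smashprod^L S_\qq, M]^G = 0$, via the unit $\eta \co S \to S_\qq$ on the right and the module action $\nu \co M \smashprod^L S_\qq \to M$ on the left, whence $[f] = f^*[\id_M] = 0$. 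Your route is the cleaner packaging --- the unitality square is absorbed into the derived adjunction isomorphism --- while the paper's version has the minor expository advantage of not needing to pass to $\ho(S_\qq \leftmod)$ before Theorem~\ref{thm:localisedtomodules} is stated.
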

\begin{proof}
Since fibrations of $S_\qq$-modules are defined
in terms of their underlying $S$-modules
and all $S$-modules are fibrant, we must show that
$[A,M]^G=0$ for any $S^0_\mcal{\qq}$-acyclic $A$ and any $S_\qq$-module $M$.
We adapt the following argument from \cite[13.1]{adams}.
Take $[f] \in [A,M]$ then $[f]$ is zero if and only if
$f^* \co [M,M] \to [A,M]$ is the zero map.
$$\xymatrix@!C{
[M,M]^G \ar[r]^{f^*} \ar[d]_{\nu^*} & [A,M]^G \\
[M \smashprod^L S_\qq,M]^G \ar[r]_{(f \smashprod \id)^*}
& [A \smashprod^L S_\qq, M]^G \ar[u]^{(\id \smashprod \eta )^* }  }$$
Since $A \smashprod S^0_\mcal{\qq}$ is acyclic, it follows
by Lemma \ref{lem:SQiscRQ} that
$A \smashprod^L S_\qq = \cofrep A \smashprod \cofrep S_\qq$
is acyclic, hence $[A \smashprod^L S_\qq, M]^G=0$.
From this it is clear that $f^*=0$ and $M$ is
an $S^0{\qq}$-local object.
\end{proof}

\begin{theorem}\label{thm:localisedtomodules}
There is a strong symmetric monoidal Quillen equivalence:
$$S_\qq \smashprod (-) :  G \mcal_\qq \overrightarrow{\longleftarrow} S_\qq \leftmod : U.$$
\end{theorem}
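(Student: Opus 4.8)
The plan is to verify that the adjunction $(S_\qq \smashprod (-),\, U)$ between $G\mcal_\qq$ and $S_\qq\leftmod$ is a Quillen equivalence, using the fact already established that $G\mcal_\qq = L_{S^0_\mcal\qq} G\mcal$ and that $U$ is defined by restriction of scalars along the unit $\eta\co S \to S_\qq$. First I would check that we have a Quillen pair: the right adjoint $U$ obviously preserves fibrations and weak equivalences, since fibrations and weak equivalences of $S_\qq$-modules are by definition detected on underlying $S$-modules, and by Lemma~\ref{lem:RQmodareSQlocal} every $S_\qq$-module is $S^0_\mcal\qq$-local, so a weak equivalence of $S_\qq$-modules is in particular a $\pi^\qq_*$-isomorphism (indeed a $\pi_*$-isomorphism of underlying spectra). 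Hence $U$ is a right Quillen functor into $G\mcal_\qq$ and $(S_\qq\smashprod(-), U)$ is a Quillen pair. Strong symmetric monoidality of the left adjoint $S_\qq\smashprod(-)$ is standard for extension of scalars along a commutative ring spectrum map (the relevant coherence is in \cite[Chapter III]{EKMM97} / \cite{ss00}).

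For the Quillen equivalence I would use the criterion \cite[Corollary 1.3.16(b)]{hov99}: it suffices to show that $S_\qq\smashprod(-)$ reflects weak equivalences between cofibrant objects of $G\mcal_\qq$, and that for every fibrant (equivalently, every) $S_\qq$-module $M$ the derived counit $S_\qq\smashprod \cofrep U M \to M$ is a weak equivalence. For the first point, take a map $f\co X \to Y$ of cofibrant objects of $G\mcal_\qq$ such that $S_\qq\smashprod f$ is a weak equivalence of $S_\qq$-modules, hence a $\pi_*$-isomorphism of underlying spectra. Now by Lemma~\ref{lem:SQiscRQ}, an $S^0_\mcal\qq$-equivalence is the same as a $\cofrep S_\qq$-equivalence, and since $X, Y$ are cofibrant, $\cofrep S_\qq \smashprod X \simeq S_\qq \smashprod X$ and likewise for $Y$ (Lemma~\ref{lem:sModMonoid} gives that smashing a cofibrant spectrum with a weak equivalence preserves weak equivalences). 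So $S_\qq \smashprod f$ being a $\pi_*$-isomorphism forces $f$ to be an $S^0_\mcal\qq$-equivalence, i.e.\ a weak equivalence of $G\mcal_\qq$, as required. For the counit: for an $S_\qq$-module $M$, the underlying spectrum of $S_\qq\smashprod \cofrep U M$ is $\cofrep(UM)\smashprod \cofrep S_\qq$ up to the usual manipulation, and the counit map to $M$ is, on underlying spectra, the composite $\cofrep(UM)\smashprod \cofrep S_\qq \to \cofrep(UM)\smashprod S_\qq \to UM \smashprod S_\qq \xrightarrow{\text{act}} UM$; since $M$ (hence $UM$) has rational homotopy groups by Lemma~\ref{lem:RQmodareSQlocal}, and the $S_\qq$-action map $UM\smashprod S_\qq \to UM$ is a rational equivalence into a rational object because $\eta\co S\to S_\qq$ is an $S^0_\mcal\qq$-equivalence, one concludes this composite is a $\pi_*$-isomorphism by comparing with $UM \simeq UM\smashprod S$ and using Lemma~\ref{lem:SQiscRQ} again.

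The main obstacle I expect is the counit computation: carefully identifying the derived counit $S_\qq \smashprod^{\mathbb{L}} UM \to M$ on the point-set level, keeping track of where cofibrant replacements of $UM$ and of $S_\qq$ are needed (every $S$-module is fibrant but $S$ and $S_\qq$ are not cofibrant), and verifying that the $S_\qq$-module structure map makes this into a $\pi_*$-isomorphism precisely because $M$ is already $S^0_\mcal\qq$-local. The clean way to organise this is to observe that for any $S_\qq$-module $M$, the unit-induced map $S\smashprod^{\mathbb{L}} UM \to S_\qq \smashprod^{\mathbb{L}} UM$ is an $S^0_\mcal\qq$-localisation (since $\eta$ is one and localisation is smashing here), while $S\smashprod^{\mathbb{L}} UM \simeq UM$ and $M$ is already $S^0_\mcal\qq$-local, so the further map to $M$ must be a weak equivalence by Lemma~\ref{lem:EequivElocal}. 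Everything else is formal: assembling these two verifications into the statement via \cite[Corollary 1.3.16(b)]{hov99}, and noting the monoidal enhancement is immediate from strong monoidality of extension of scalars.
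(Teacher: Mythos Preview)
Your argument is correct, and the overall strategy matches the paper's, but you use the dual form of the Quillen-equivalence criterion. The paper checks that the \emph{right} adjoint $U$ detects and preserves all weak equivalences (immediate, since a $\pi^\qq_*$-isomorphism between $S_\qq$-modules is a $\pi_*$-isomorphism) and that the derived \emph{unit} $X \to U(S_\qq \smashprod X)$ is a weak equivalence for cofibrant $X$; the latter is just the observation that smashing a cofibrant $X$ with the rational equivalence $S \to S_\qq$ gives a rational equivalence. You instead verify \cite[Corollary~1.3.16(b)]{hov99}: the \emph{left} adjoint reflects weak equivalences between cofibrant objects, and the derived \emph{counit} $S_\qq \smashprod \cofrep UM \to M$ is a weak equivalence. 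Both routes work, but the paper's choice is tidier here precisely because every $S_\qq$-module is fibrant, so no fibrant replacement intervenes in the unit and the argument reduces to one line; your counit computation, while correct, forces you to track cofibrant replacements of both $UM$ and $S_\qq$ and to invoke the action map, which is exactly the bookkeeping you flagged as the main obstacle. On a minor point, the paper establishes the Quillen pair by showing the left adjoint preserves acyclic cofibrations (arguing via $X \smashprod S^0_\mcal\qq \smashprod S_\qq$), whereas you check the right adjoint directly; both are fine.
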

\begin{proof}
The above functors form a strong monoidal Quillen pair
(with the usual structure on $G \mcal $),
this is standard and comes from the construction of a
monoidal model structure on $S_\qq \leftmod$.
Now consider the localised case, since the cofibrations are unaffected
by localisation, $S_\qq \smashprod (-) \co G \mcal_\qq \to S_\qq \leftmod$ preserves
cofibrations. Consider an acyclic rational cofibration $X \to Y$,
we know that $S_\qq \smashprod (-)$ applied
to this gives a cofibration, we must check that
it is also a $\pi_*$-isomorphism. We see that
$X \smashprod S^0_\mcal{\qq} \to Y \smashprod S^0_\mcal{\qq}$ is a
cofibration and a $\pi_*$-isomorphism,
so in turn
$X \smashprod S^0_\mcal{\qq} \smashprod S_\qq \to
Y \smashprod S^0_\mcal{\qq} \smashprod S_\qq$ is an $h$-cofibration and
a $\pi_*$-isomorphism.
This proves that
$X \smashprod S_\qq \to Y \smashprod S_\qq$
is a $\pi^\qq_*$-isomorphism between
$S_\qq$-modules, which we know have rational homotopy groups and thus
this map is a $\pi_*$-isomorphism.

We prove that this is a Quillen equivalence using the
characterisation of Quillen equivalences from
Theorem \ref{thm:generalsplitting}.
The right adjoint preserves and detects all weak equivalences, since
a $\pi_*^\qq$-isomorphism between objects with rational homotopy groups is a
$\pi_*$-isomorphism. So to finish this proof we must show that the map
$X \to S_\qq \smashprod X$ is a rational equivalence for all cofibrant $S$-modules $X$.
This follows since smashing with a cofibrant
object will preserve the $\pi_*^\qq$-isomorphism $S \to S_\qq$.
\end{proof}

\begin{rmk}
This model category is proper by \cite[Chapter IV 2.11]{mm02}.
\end{rmk}

\chapter{Splitting Rational $G$-Spectra}\label{chp:splitting}
Idempotents of the Burnside ring
split the homotopy category of $G$-spectra,
we provide a model category level version of this
in the second section (Theorem \ref{thm:generalsplitting}).
To prove this theorem we must use some classical
results on $G$-spectra which we
give in Section \ref{sec:eqstabhom}. We show that
this splitting applies to all our categories
of rational $G$-spectra in Section \ref{sec:compsplit}.
By considering families of subgroups in
the final section of this chapter we obtain
a particularly nice form of the splitting
(Theorem \ref{thm:familysplitting}).
That is, we use \cite[Chapter IV, Section 6]{mm02}
to understand the split pieces of the category
of $G$-spectra in this case.
The arguments in this chapter will apply equally
well to $\GIS_\qq$, $\GIS_\qq^+$ and $G\mcal_\qq$. For definiteness
we work with $\GIS$ in Section \ref{sec:eqstabhom},
Section \ref{sec:split} and Section \ref{sec:idemsplit}.
We will only consider other categories of equivariant spectra in
Section \ref{sec:compsplit}.

\section{Equivariant Stable Homotopy Theory}\label{sec:eqstabhom}
We prove a medley of basic results about
spectra and equivariant spectra. These are
the tools we will use to prove the splitting theorem
in the following section. It is not easy to find the
proofs of these results, which is why they are included here.
The most important of these results are
Proposition \ref{prop:HomotopyofHoColim}
and Proposition \ref{prop:geomfixedpoints}.

\begin{lemma}\label{lem:twoforthree}
In $\GIS$, let $f \co E \to F$ be a map between cofibrant objects, then
a map $g \co X \to Y$ which is an $E$-equivalence and a $Cf$-equivalence
is also an $F$-equivalence.
\end{lemma}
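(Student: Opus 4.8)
The plan is to exploit the cofibre sequence $E \xrightarrow{f} F \to Cf$ together with the long exact sequence of homotopy groups that it induces after smashing with any spectrum. First I would note that $f \co E \to F$ is a map between cofibrant objects, so the cofibre $Cf$ is cofibrant, and by Proposition \ref{prop:Ehomproperty} (together with the general yoga of cofibre sequences for $\GIS$ from \cite[Chapter III, Theorem 3.5]{mm02}) smashing the cofibre sequence with a fixed spectrum $W$ yields a cofibre sequence $E \smashprod W \to F \smashprod W \to Cf \smashprod W$ of $G$-spectra. This gives, for each subgroup $H$ and each integer $n$, a long exact sequence
$$\cdots \to \pi_n^H(E \smashprod W) \to \pi_n^H(F \smashprod W) \to \pi_n^H(Cf \smashprod W) \to \pi_{n-1}^H(E \smashprod W) \to \cdots .$$

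Now take $g \co X \to Y$ which is both an $E$-equivalence and a $Cf$-equivalence. I would apply the above with $W$ successively equal to $X$ and to $Y$, and compare the two long exact sequences via the map induced by $g$. By hypothesis $g \smashprod \id_E$ and $g \smashprod \id_{Cf}$ are $\pi_*$-isomorphisms, i.e. the maps $\pi_*^H(E \smashprod X) \to \pi_*^H(E \smashprod Y)$ and $\pi_*^H(Cf \smashprod X) \to \pi_*^H(Cf \smashprod Y)$ are isomorphisms for all $H$. Feeding this into the map of long exact sequences and applying the five lemma at each $(H,n)$ shows that $\pi_n^H(F \smashprod X) \to \pi_n^H(F \smashprod Y)$ is an isomorphism for all $H$ and $n$; that is, $g \smashprod \id_F \co F \smashprod X \to F \smashprod Y$ is a $\pi_*$-isomorphism, so $g$ is an $F$-equivalence.

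The only genuine point requiring care — and what I expect to be the main obstacle — is producing the commuting map of long exact sequences with the correct naturality: one needs that the cofibre sequence $E \smashprod W \to F \smashprod W \to Cf \smashprod W$ is natural in $W$, so that $g$ induces a map of the two sequences commuting with all the connecting homomorphisms. This follows because $Cf$ is a fixed cofibre (the construction $CE \leftarrow E \to F$ is independent of $W$), smashing is a functor, and the identification $(Cf) \smashprod W \cong C(f \smashprod \id_W)$ holds by the standard interchange of colimits with the left adjoint $(-) \smashprod W$; the connecting maps in the Puppe sequence are then natural by \cite[Chapter III, Theorem 3.5]{mm02}. Once this naturality is in hand the five-lemma argument is routine. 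The same argument applies verbatim to $\GIS_+$ and to $G \mcal$, using \cite[Chapter IV, Remark 2.8]{mm02} in place of \cite[Chapter III, Theorem 3.5]{mm02} for the latter.
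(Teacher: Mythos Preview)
Your proposal is correct and follows essentially the same route as the paper: smash the cofibre sequence $E \to F \to Cf$ with $X$ and with $Y$, obtain a map of long exact sequences of homotopy groups via \cite[Chapter III, Theorem 3.5]{mm02}, and apply the five lemma. The paper's proof is terser but identical in substance; your extra discussion of naturality in $W$ and the identification $(Cf)\smashprod W \cong C(f\smashprod \id_W)$ is the justification the paper leaves implicit.
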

\begin{proof}
Consider the map of cofibre sequences
\begin{displaymath}
\xymatrix{
E \smashprod X \ar[r] \ar[d]^{\id_E \smashprod g}&
F \smashprod X \ar[r] \ar[d]^{\id_F \smashprod g} &
Cf \smashprod X \ar[d]^{\id_{Cf} \smashprod g} \\
E \smashprod Y \ar[r] &
F \smashprod Y \ar[r] &
Cf \smashprod Y }
\end{displaymath}
this gives a map of long exact sequences of homotopy groups
\cite[Chapter III, Theorem 3.5]{mm02} and we apply the five lemma.
\end{proof}

\begin{rmk}\label{rmk:twoforthree}
In fact, the above proof shows that for
a map $f \co E \to F$ of cofibrant objects,
one only has to check that
two out of three of the maps
$\id_E \smashprod g$,
$\id_F \smashprod g$ and
$\id_{Cf} \smashprod g$
are weak equivalences to conclude that all three are weak equivalences.
Also note that if $E$ and $F$ are cofibrant, then so is $Cf$.
A wedge $E \vee F$ of cofibrant objects comes
with a cofibre sequence $E \to E \vee  F \to (E \smashprod I) \vee F$.
Since $(E \smashprod I) \vee F$ is weakly equivalent to
$F$, a map is an $E \vee F $-equivalence if and
only if it is an $E$-equivalence
and an $F$-equivalence.
\end{rmk}

Since $A(G) := [\sphspec, \sphspec]^G$
we know that $A(G) \otimes \qq = [\sphspec, \sphspec]^G_\qq$.
So $[a]$, an element of the
Burnside ring, can be represented by a self map $a$ of
$\fibrep_\qq \sphspec$ and hence for any spectrum
$X$, we have $\id_X \smashprod a$, a self-map of $X \smashprod \fibrep_\qq \sphspec$.
\begin{definition}
Let $e$ be a self-map of $\fibrep_\qq \sphspec$ such that
$[e] \in A(G) \otimes \qq$ is an idempotent (thus
$e \circ e \simeq e$). We define $eX$ to be the homotopy colimit
of
$$X \smashprod \fibrep_\qq \sphspec
\overset{\id \smashprod e}{\longrightarrow}
X \smashprod \fibrep_\qq \sphspec
\overset{\id \smashprod e}{\longrightarrow}
X \smashprod \fibrep_\qq \sphspec
\overset{\id \smashprod e}{\longrightarrow}
X \smashprod \fibrep_\qq \sphspec
\overset{\id \smashprod e}{\longrightarrow} \dots;$$
fibrant replacement $\sphspec \to \fibrep_\qq \sphspec$ provides
a map $X \to eX$.
\end{definition}

\begin{rmk}\label{rmk:restrictedidempotents}
An idempotent $e \in A(G) \otimes \qq$ has a support $S \subseteq S_f G$.
Given a subgroup $H$ of $G$ (with inclusion $\iota_H$), there is an idempotent
$\iota_H^*(e) \in A(H) \otimes \qq$.
This idempotent is supported on the set
$\iota_H^* (S) : = \{K \leqslant H | K \in S \}$.
So considering $i_H^*(e)$ as a continuous map
$\fcal H / H \to \qq$, $\iota_H^*(e) (K)_H \neq 0$
exactly when $K \in S$.
That is, $\iota_H^*(e)$ is non-zero on the $H$-conjugacy class
of a subgroup $K$ of $H$ if and only if $e$ is non-zero
on the $G$-conjugacy class of $K$.
\end{rmk}

The following well-known result can be described as proving that
the homogenous spaces $G/H_+$ are small
in the homotopy category of spectra
with respect to the $h$-cofibrations.
We give some comments on this after the statement and proof of the result.
This type of result is proven in greater generality in
\cite[Section 4]{hov08}.
\begin{proposition}\label{prop:HomotopyofHoColim}
For a sequential colimit diagram
$f_i \co X_i \to X_{i+1}$
there is an isomorphism of groups
$\pi_*^H (\hocolim_i X_i) \cong \colim_i \pi_*^H(X_i)$.
\end{proposition}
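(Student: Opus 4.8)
The plan is to reduce the statement to the fact that the homotopy colimit is modeled by a telescope (Definition \ref{def:hocolim}), and that the telescope is built as a sequential colimit along $h$-cofibrations, using the crucial point that $G/H_+$ (equivalently the spheres $\Sigma^q S^0 \smashprod G/H_+$ and $F_p S^0 \smashprod G/H_+$ of Lemma \ref{lem:spherehomotopy}) is small with respect to $h$-cofibrations in the naive sense of commuting with sequential colimits up to homotopy. First I would recall from Definition \ref{def:hocolim} that $\hocolim_i X_i$ is the ordinary colimit of a diagram $X_0' \to X_1' \to X_2' \to \cdots$ in which each map $g_i \co X_i' \to X_{i+1}'$ is a cofibration (hence an $h$-cofibration) and each $X_i' \to X_i$ is a weak equivalence. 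Since the vertical maps $X_i' \to X_i$ are $\pi_*^H$-isomorphisms, $\colim_i \pi_*^H(X_i') \cong \colim_i \pi_*^H(X_i)$, so it suffices to prove the statement for a sequential colimit along $h$-cofibrations, i.e.\ to show $\pi_*^H(\colim_i X_i') \cong \colim_i \pi_*^H(X_i')$ when every $g_i$ is an $h$-cofibration.

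Next I would unwind the definition of $\pi_*^H$. For $r \geqslant 0$, $\pi_r^H(X) = \colim_V \pi_r^H(\Omega^V X(V))$, a colimit over $G$-indexing spaces $V$; a similar colimit describes $\pi_{-r}^H$. Both colimits over $V$ commute with the sequential colimit over $i$, so it is enough to handle the space-level statement: for a sequence of based $H$-spaces (or $G$-spaces) $Y_0 \to Y_1 \to \cdots$ in which each map is an $h$-cofibration of spaces, one has $\pi_r^H(\colim_i Y_i) \cong \colim_i \pi_r^H(Y_i)$. This is the classical fact that a compact space (here a sphere $S^r$, with the appropriate $H$-action coming from the indexing space) maps into a colimit along closed inclusions by factoring through a finite stage, and homotopies of such maps also factor through a finite stage. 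In the equivariant setting one uses that $(\colim_i Y_i)^H \cong \colim_i Y_i^H$ (fixed points commute with these colimits since the maps are, levelwise, closed inclusions) and then invokes the non-equivariant compactness argument on $H$-fixed point spaces. Alternatively, and perhaps more cleanly, I would cite Lemma \ref{lem:spherehomotopy} to rewrite $\pi_*^H(X)$ as $[\Sigma^q S^0 \smashprod G/H_+, X]^G$ and $[F_p S^0 \smashprod G/H_+, X]^G$, and then appeal to a smallness statement for the spectra $\Sigma^q S^0 \smashprod G/H_+$ and $F_p S^0 \smashprod G/H_+$ with respect to $h$-cofibrations --- this is precisely the kind of result referred to in the excerpt as being proven in greater generality in \cite[Section 4]{hov08}.

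The main obstacle, and the step requiring the most care, is establishing that $h$-cofibrations of $G$-spectra are closed inclusions on each space level (or, in the $G \mcal$ case, behave well enough with respect to the underlying prespectra), so that both the fixed-point functor and the $\colim_V$ appearing in the definition of $\pi_*^H$ genuinely commute with the sequential colimit. Concretely, I would check that a map with the $G$-homotopy extension property (Definition \ref{def:hcof}) is a levelwise closed inclusion of $G$-spaces --- this follows from the universal test case with the mapping cylinder and the retraction $M_f \smashprod I_+ \to M_f$ --- and that a transfinite composite of such inclusions has colimit computed levelwise, with $(-)^H$ and $\Omega^V(-)$ preserving it. Once this point-set bookkeeping is in place, the remaining steps are the routine compactness argument for spheres and the observation that a filtered colimit of abelian groups is exact, so the long-exact-sequence manipulations (if one prefers to argue via cofibre sequences rather than directly) go through. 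Assembling these gives $\pi_*^H(\hocolim_i X_i) \cong \colim_i \pi_*^H(X_i)$ as claimed.
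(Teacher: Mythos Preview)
Your proposal is correct and takes essentially the same approach as the paper: replace the diagram by a sequence of $h$-cofibrations, pass to underlying prespectra and work levelwise, commute fixed points with the sequential colimit (using that $h$-cofibrations are levelwise closed inclusions/monomorphisms), and finish with a compactness argument for spheres. The only difference in detail is that where you invoke the equivariant compactness of $S^r \smashprod S^V$ directly, the paper first decomposes $S^V$ as a finite $H$-CW complex $\colim_\alpha H/K_\alpha \smashprod S^{n_\alpha}$, rewrites $F(S^V,-)^H$ as a finite limit $\lim_\alpha F(S^{n_\alpha},-)^{K_\alpha}$, and then applies non-equivariant compactness of ordinary spheres $S^{q+n_\alpha}$ --- the same content, unpacked one step further.
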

\begin{proof}
The homotopy colimit is formed
by replacing the diagram $f_i \co X_i \to X_{i+1}$
by a sequence $g_i \co Y_i \to Y_{i+1}$
of $h$-cofibrations,
we can now consider this as a sequence of
$h$-cofibrations of prespectra.
In order to take $H$-fixed points we must
first apply $\iota^*_H$.
Recall that $\iota^*_H$ is both a right and left adjoint
and hence commutes with all limits
and colimits, it preserves $h$-cofibrations and we have the relation
$\iota^*_H F(X,Y) \cong  F(\iota^*_H X,\iota^*_H Y)$.
Thus we can suppress the notation for $\iota^*_H$
and this will cause no difficulty.
Fix some subgroup $H$, then we have the homotopy group
$\colim_V \pi_q^H( \Omega^V (\colim_i Y_i)(V))$.
Since colimits of prespectra are
created levelwise, this is equal to
$\colim_V \pi_q( \Omega^V \colim_i (Y_i(V))^H)$.
Now we are working in $H$-spaces and $S^V$
(or for the negative case $S^{V-\rr^q}$) is a finite $H$-CW
complex. So there are subgroups $K_\alpha$ of $H$,
where $\alpha$ runs over some finite set such that
$ S^V \cong \colim_\alpha G/K_\alpha \smashprod S^{n_\alpha}$.
Thus the standard adjunctions give
$$\begin{array}{rcl}
F( S^V ,\colim_i Y_i(V))^H
& \cong & \lim_\alpha F(S^{n_\alpha},\colim_i Y_i(V))^{K_\alpha} \\
& \cong & \lim_\alpha F(S^{n_\alpha},\colim_i Y_i(V)^{K_\alpha}).
\end{array}$$
Hence we can write $\pi_q( \Omega^V \colim_i (Y_i(V))^H) \cong
\lim_\alpha \pi_{q+n_\alpha}( \colim_i (Y_i(V))^{K_\alpha})$.
Now we use the fact that $h$-cofibrations are levelwise monomorphisms
so that our colimit commutes with taking fixed points.
Furthermore $(-)^{K_\alpha}$ takes $h$-cofibrations of $H$-spaces
to cofibrations of spaces, thus since $S^{q+n_\alpha}$ is
a compact space
$$\pi_{q+n_\alpha}( \colim_i (Y_i(V)^{K_\alpha})) \cong
\colim_i \pi_{q+n_\alpha}(  Y_i(V)^{K_\alpha}). $$
Since sequential colimits commute with finite limits
and colimits we can repack all of the above to obtain
our stated result.
\end{proof}

This result also holds for $G \mcal$ with almost the same proof,
with one extra point of justification. A colimit
of $h$-cofibrations of inclusion prespectra is
an inclusion prespectrum. Spectrification for inclusion
prespectra is given by $LX(V)= \colim_{W \supset V} \Omega^{W-V}X(W)$.
Hence, by the arguments in the above proof, colimits of
$h$-cofibrations of spectra are given by the levelwise colimit.

\begin{rmk}
Our proof above actually holds for filtered colimits, rather than just
sequential colimits.
In the proof of Lemma \ref{lem:compactgenerators}
we described a coproduct in terms of a filtered colimit.
One can make the converse construction and describe a
filtered colimit as a cofibre of coproducts.
Take a filtered colimit diagram $\{ Y_i \}_{i \in I}$
such that the maps $Y_i \to Y_j$ are all $h$-cofibrations.
For $i \in I$ let $I(i)$ be the subset of those $j \in I$
such that there is a map $Y_i \to Y_j$.
We define a map $f \co \bigvee_{i \in I} Y_i \to
\bigvee_{i \in I} Y_i$ by
sending $Y_i \to \bigvee_{j \in I(i)} Y_j$
and including this in $\bigvee_{i \in I} Y_i$.
Since the homotopy category is additive we can take
a representative $g \co X \to Z$
for the homotopy class of maps
$[1-f] \co \cofrep \fibrep \bigvee_{i \in I} Y_i \to
\cofrep \fibrep  \bigvee_{i \in I} Y_i$.
The cofibre of $g$ is weakly
equivalent to $\colim_i Y_i$ by looking at the long exact
sequence of a cofibration and noting that $(1-f)_*$
is injective on homotopy groups.
So in the homotopy category of $G$-spectra,
the notion of small and compact are
the same, though one must
be careful about constructing
filtered colimits.
Replacing colimits by a cofibre of coproducts
is a standard construction, see
\cite[Definition 2.23]{hps97}.
\end{rmk}

\begin{corollary}\label{cor:comrel}
Let $e$ be an idempotent of the rational Burnside ring and
let $X$ be any orthogonal spectrum. Then
for any subgroup $H$ of $G$ we have the
inclusion $\iota_H \co H \to G$ and isomorphisms
$\pi^H_*(e\sphspec \smashprod X) \otimes \qq \cong
\pi^H_*(eX) \otimes \qq \cong \iota^*_H(e) \pi^H_*(X) \otimes \qq$.
\end{corollary}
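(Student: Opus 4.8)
The plan is to compute all three groups directly from the telescope description of $eX$, using Propositions \ref{prop:HomotopyofHoColim}, \ref{prop:hocolimproperties} and \ref{prop:rathomgps}. Throughout one may assume $X$ is cofibrant: replacing $X$ by a cofibrant approximation alters each of the three terms only up to isomorphism, since $\pi_*^H(-)\otimes\qq$ is homotopy invariant and, for cofibrant $X$, smashing with $X$ preserves rational equivalences (Proposition \ref{prop:Esmashisom}); in $G \mcal$ this reduction is already incorporated into the use of $\cofrep S$ in the definitions.

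The heart of the matter is the rightmost isomorphism. Applying Proposition \ref{prop:HomotopyofHoColim} to the telescope defining $eX$ gives
$$\pi_*^H(eX) \;\cong\; \colim\Bigl( \pi_*^H(X \smashprod \fibrep_\qq \sphspec) \overset{(\id \smashprod e)_*}{\longrightarrow} \pi_*^H(X \smashprod \fibrep_\qq \sphspec) \overset{(\id \smashprod e)_*}{\longrightarrow} \cdots \Bigr).$$
Since $\fibrep_\qq \sphspec$ is rationally equivalent to $S^0{\qq}$, Proposition \ref{prop:rathomgps} (together with the cofibrancy reduction above) identifies $\pi_*^H(X \smashprod \fibrep_\qq \sphspec) \otimes \qq$ with $\pi_*^H(X) \otimes \qq$. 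Under this identification the self-map $(\id_X \smashprod e)_*$ becomes the action of $[e] \in A(G) \otimes \qq = [\sphspec,\sphspec]^G_\qq$ on $\pi_*^H(X) \otimes \qq$; by Lemma \ref{lem:spherehomotopy} this action is via the restriction homomorphism $\iota_H^* \co A(G)\otimes\qq \to A(H)\otimes\qq$ and the natural $A(H)\otimes\qq$-module structure, so it is multiplication by the idempotent $\iota_H^*(e)$ of Remark \ref{rmk:restrictedidempotents}. Tensoring with $\qq$ commutes with the filtered colimit, and the sequential colimit of copies of a module $V$ along an idempotent $\varepsilon$ is the summand $\varepsilon V$; hence $\pi_*^H(eX) \otimes \qq \cong \iota_H^*(e)\,\pi_*^H(X) \otimes \qq$.

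For the leftmost isomorphism, recall that $e\sphspec$ is the homotopy colimit of $\fibrep_\qq \sphspec \overset{e}{\longrightarrow} \fibrep_\qq \sphspec \longrightarrow \cdots$. With $X$ cofibrant, Proposition \ref{prop:hocolimproperties} supplies a weak equivalence $\hocolim_i (X \smashprod \fibrep_\qq \sphspec) \to X \smashprod e\sphspec$, and $X \smashprod e\sphspec \cong e\sphspec \smashprod X$; by naturality of the homotopy colimit the telescope on the left is precisely the one defining $eX$, so $eX \simeq e\sphspec \smashprod X$ and the isomorphism on $\pi_*^H(-)\otimes\qq$ follows. The same argument applies in $G \mcal$, since Propositions \ref{prop:HomotopyofHoColim} and \ref{prop:hocolimproperties} hold there as well.

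The main obstacle is the identification made in the second paragraph: that after tensoring with $\qq$ the endomorphism of $\pi_*^H(X) \otimes \qq$ induced by $\id_X \smashprod e$ is exactly multiplication by $\iota_H^*(e)$. This rests on the fact that smashing with self-maps of the sphere spectrum induces the expected $A(G)$-module structure on $\pi_*^H(X)$ and that this structure is compatible with restriction along $\iota_H$ — the content of Lemma \ref{lem:spherehomotopy} and Remark \ref{rmk:restrictedidempotents}. Everything else — the colimit computations and the commutation of homotopy colimits with the smash product — is formal, given the cited propositions.
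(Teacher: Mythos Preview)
Your proof is correct and follows essentially the same approach as the paper: use Proposition \ref{prop:hocolimproperties} for the first isomorphism and Proposition \ref{prop:HomotopyofHoColim} for the second, together with the observation that the sequential colimit along an idempotent is its image and that restriction to $H$ replaces $e$ by $\iota_H^*(e)$. Your write-up is more detailed than the paper's (which is a two-sentence sketch), and one small simplification is available: the cofibrant replacement of $X$ is not strictly needed for the application of Proposition \ref{prop:hocolimproperties}, since $\fibrep_\qq \sphspec$ is already cofibrant in $\GIS$ (being obtained from the cofibrant $\sphspec$ by an acyclic cofibration), so the hypothesis ``$X_0$ cofibrant'' is met regardless of $X$.
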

\begin{proof}
The first isomorphism follows from Proposition \ref{prop:hocolimproperties}
the rest will follow from the previous result.
Note that the sequential colimit of an idempotent
is isomorphic to the image of the idempotent.
Since we have suppressed $\iota^*_H$ in our notation
for homotopy groups, we must account for its action
on the idempotent hence the term $\iota^*_H(e)$.
\end{proof}

\begin{proposition}\label{prop:idemsplit}
The map $X \to X \prod X \to eX \prod (1-e) X$
is a rational equivalence for any $X$ and any idempotent $e$
of the Burnside ring.
\end{proposition}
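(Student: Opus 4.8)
The plan is to verify that the map is a rational equivalence by testing it on equivariant rational homotopy groups. By Lemma~\ref{lem:ratequivs} it suffices to show that the composite
$$X \longrightarrow X \prod X \longrightarrow eX \prod (1-e)X$$
induces an isomorphism on $\pi_*^H(-) \otimes \qq$ for every closed subgroup $H$ of $G$. Note first that $1-e$ is again an idempotent of $A(G)\otimes\qq$ since $e$ is, so $(1-e)X$ makes sense.

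First I would unwind the map: $X \to X \prod X$ is the diagonal, and $X \prod X \to eX \prod (1-e)X$ is the product of the two canonical maps $X \to eX$ and $X \to (1-e)X$, so the composite is just $(X \to eX,\, X \to (1-e)X)$. Since $\pi_*^H$ preserves finite products — finite products of (pre)spectra are formed levelwise and commute with the loop, fixed-point and sequential-colimit functors used to define $\pi_*^H$ — applying $\pi_*^H(-)\otimes\qq$ yields
$$\pi_*^H(X) \otimes \qq \longrightarrow \big(\pi_*^H(eX) \otimes \qq\big) \oplus \big(\pi_*^H((1-e)X) \otimes \qq\big).$$

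Next I would apply Corollary~\ref{cor:comrel} to each summand, identifying the target with $\iota_H^*(e)\,\pi_*^H(X)\otimes\qq \,\oplus\, \iota_H^*(1-e)\,\pi_*^H(X)\otimes\qq$; since $\iota_H^*$ is a ring homomorphism, $\iota_H^*(1-e) = 1 - \iota_H^*(e)$. By the construction of $eX$ as the telescope of $\id \smashprod e$ — whose sequential colimit is the image of the idempotent, with the map from the first stage into the colimit being multiplication by $\iota_H^*(e)$ — the component $X \to eX$ induces $x \mapsto \iota_H^*(e)x$ on rational homotopy groups, and similarly $X \to (1-e)X$ induces $x \mapsto (1-\iota_H^*(e))x$. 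Hence the displayed map is $x \mapsto (\iota_H^*(e)x,\, (1-\iota_H^*(e))x)$, which is precisely the canonical splitting isomorphism $M \cong pM \oplus (1-p)M$ of the $\qq$-module $M = \pi_*^H(X)\otimes\qq$ along the idempotent $p = \iota_H^*(e)$. Therefore the composite is a $\pi_*^\qq$-isomorphism, and by Lemma~\ref{lem:ratequivs} it is a rational equivalence.

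The one point requiring care — and the main, if minor, obstacle — is making explicit that the canonical map $X \to eX$ induces \emph{multiplication by} $\iota_H^*(e)$ on rational homotopy groups, and not merely \emph{some} isomorphism onto $\iota_H^*(e)\pi_*^H(X)\otimes\qq$; it is this compatibility that lets the two projections assemble into the standard idempotent decomposition. This is already contained in the proof of Corollary~\ref{cor:comrel} (the colimit of an idempotent is its image, with colimit map given by the idempotent), so no new work is needed, but it should be spelled out.
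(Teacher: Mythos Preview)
Your proof is correct and follows exactly the approach the paper intends: the paper's own proof is the single sentence ``They certainly have isomorphic rational homotopy groups and examination of the maps involved shows that the result is true,'' and your argument is precisely that examination spelled out in full, invoking Lemma~\ref{lem:ratequivs} and Corollary~\ref{cor:comrel} just as one would expect. The only addition is that you make explicit the point about the map $X \to eX$ inducing multiplication by $\iota_H^*(e)$, which the paper leaves implicit.
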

\begin{proof}
They certainly have isomorphic rational homotopy groups
and examination of the maps involved shows that the result is true.
\end{proof}

\begin{proposition}
Let $X$ be a cofibrant orthogonal $G$-spectrum. Let $[e]$ be an
idempotent of the rational Burnside ring of $G$ with support $S \subseteq S_f G$.
Then for any $L \in S$, $\Phi^L(eX)$ is rationally equivalent to
$\Phi^L(X)$ as non-equivariant spectra,
otherwise $\Phi^L(eX)$ is non-equivariantly rationally acyclic.
\end{proposition}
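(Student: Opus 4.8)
The plan is to push the geometric fixed point functor $\Phi^L$ through the homotopy colimit defining $eX$. Since $X$ is cofibrant, Proposition~\ref{prop:hocolimproperties} gives a weak equivalence $eX \simeq X \smashprod e\sphspec$, where $e\sphspec$ is the telescope of $\fibrep_\qq\sphspec$ with every transition map equal to $e$. Recall that $\Phi^L$ is a left Quillen functor to non-equivariant spectra which, on cofibrant objects, is strong symmetric monoidal with $\Phi^L\sphspec \simeq \sphspec$ (the non-equivariant sphere); in particular it preserves cofibrant objects and homotopy colimits, and it preserves wedges and cofibre sequences up to weak equivalence (see \cite[Chapter V]{mm02}). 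Applying it, and replacing $e\sphspec$ by a cofibrant model where needed, we obtain
$$\Phi^L(eX) \simeq \Phi^L(X) \smashprod \Phi^L(e\sphspec) \simeq \Phi^L(X) \smashprod T, \qquad \text{where } T := \hocolim_i \Phi^L(\fibrep_\qq\sphspec),$$
the telescope $T$ having transition maps $\Phi^L(e)$.

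It then remains to compute $T$ rationally. Applying $\Phi^L$ to the cofibre sequence $\fibrep\bigvee_i\sphspec \to \fibrep\bigvee_j\sphspec \to S^0{\qq}$ that defines the rational sphere, and using $\Phi^L\sphspec\simeq\sphspec$, one sees that $\Phi^L(\fibrep_\qq\sphspec)$ has non-equivariant homotopy groups $\pi_*(\sphspec)\otimes\qq$, that is $\qq$ concentrated in degree zero. As $e$ is idempotent, $\Phi^L(e)_*$ is an idempotent $\qq$-linear self-map of this copy of $\qq$, so $\pi_*^\qq T$ is its image: $\qq$ in degree zero when $\pi_0(\Phi^L(e))$ is nonzero, and $0$ otherwise. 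Finally one identifies $\pi_0(\Phi^L(e))$ with the value $e([L])\in\qq$ of $e$ regarded, via tom Dieck's isomorphism, as a continuous function on $(\scal_f G/\!\sim)/G$: the ring map $A(G) = \pi_0^G\sphspec \to \pi_0\Phi^L\sphspec = \zz$, rationalised, is the $[L]$-component of tom Dieck's decomposition, and it factors through the $\sim$-class of $L$ because $\Phi^L$ and $\Phi^{L'}$ induce the same rational map whenever $L \trianglelefteqslant L'$ with $L'/L$ a torus.

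If $L \in S$ then $e([L]) = 1$, so $\pi_*^\qq T$ is $\qq$ in degree zero and $T$ is rationally equivalent to the rationalised sphere spectrum; smashing with the cofibrant spectrum $\Phi^L(X)$ (Proposition~\ref{prop:Esmashisom}) shows $\Phi^L(eX)\simeq\Phi^L(X)\smashprod T$ is rationally equivalent to $\Phi^L(X)$. If $L\notin S$ then $e([L]) = 0$, so $\pi_*^\qq T = 0$, i.e.\ $T$ is rationally acyclic; since smashing a rationally acyclic spectrum with any spectrum is again rationally acyclic, $\Phi^L(eX)\simeq\Phi^L(X)\smashprod T$ is non-equivariantly rationally acyclic.

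The main obstacle is the identification in the second paragraph: checking carefully that $\pi_0(\Phi^L(e))$ is the value of the idempotent $e$ at $L$, and in particular that passing from $A(G)\otimes\qq$ to $\qq$ via $\Phi^L$ depends only on the $\sim$-class (torus-quotient class) of $L$. This is where the structure of the rational Burnside ring and the geometry of $\scal_f G$ genuinely enter; the remaining steps — the monoidality of $\Phi^L$, its commutation with telescopes, and the behaviour of a telescope along an idempotent — are formal.
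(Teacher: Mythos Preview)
Your proof is correct and follows essentially the same route as the paper: split off $e\sphspec$ from $X$ via Proposition~\ref{prop:hocolimproperties}, push $\Phi^L$ through using its monoidality and compatibility with telescopes, and then identify $\Phi^L(e)$ with the value $e([L])$ via the very construction of tom~Dieck's isomorphism ($f \mapsto (H \mapsto \deg\Phi^H f)$). The paper is slightly more direct at the key step, simply quoting that $\deg\Phi^L(e)$ \emph{is} the $L$-component under tom~Dieck's map, whereas you argue through $\pi_0$ and the $\sim$-class; both arrive at the same place.
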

\begin{proof}
We perform much of the proof in the category of
rational orthogonal $L$-spectra,
that is, we apply the forgetful functor
$\iota_L^*$ to all the spectra involved, ($\iota_L$ is the inclusion of $L$ in $G$).
This functor $\iota_L^*$ preserves fibrations, cofibrations, weak equivalences
and more (\cite[Chapter V, Lemma 2.2]{mm02}) so this
presents us with no difficulty. In general when we write
$\Phi^H (X) $ for a $G$-spectrum $X$ and
$H$ a subgroup of $G$, we mean $\Phi^H (\iota_H^* X) $.
Also note that $\iota_L^*(eX)$ is weakly equivalent to
$\iota_L^*(e) \iota_L^*(X)$ as rational $L$-spectra.

We now apply Proposition \ref{prop:hocolimproperties} to obtain
a weak equivalence (of $L$-spectra) $eX \to e \sphspec \smashprod X$ between cofibrant
objects and hence (by Ken Brown's lemma) we have a weak equivalence
of non-equivariant spectra $\Phi^L(eX) \to \Phi^L(e \sphspec \smashprod X)$.
Using \cite[Chapter V, Section 4]{mm02} we have a further weak equivalence
$$ \Phi^L(e \sphspec) \smashprod \Phi^L(X) \to \Phi^L(e \sphspec \smashprod X) $$
and we know that  $\Phi^L$ commutes with colimits of $h$-cofibrations
so that $\Phi^L(e \sphspec) \simeq \Phi^L(e) \Phi^L \sphspec$.
Since $\sphspec$ is the suspension spectrum of $S^0$ which is
$G$-fixed, $\Phi^L \sphspec \cong \sphspec$, now we only have to understand the map
$\Phi^L(e)$. The construction of tom Dieck's isomorphism
$$A(G) \otimes \qq \cong C(\fcal G / G , \qq)$$
takes a map of spectra $f$ to the map
$H \mapsto deg \Phi^H(f)$, where degree has
the usual algebraic topology definition in terms of homology,
which coincides with homotopy (since we are working rationally and stably).
Thus
$$\pi_*(\Phi^L(e) \sphspec)  \cong \Phi^L(e)_* \pi_*( \sphspec) = deg \Phi^L(e) \pi_*( \sphspec ) $$
which is either zero or $\pi_*( \sphspec )$ according to whether or not $L \in S$.
When $L \notin S$ we are smashing $\Phi^L(X)$ with an acyclic object, hence
(since $\Phi^L$ preserves cofibrations and $X$ is cofibrant)  $\Phi^L(eX)$ is an acyclic
non-equivariant orthogonal spectrum.
For the other case we have a weak equivalence of non-equivariant spectra
$\sphspec \to \Phi^L( e) \sphspec$ (since $\Phi^L(e)$ is a $\pi_*$-isomorphism)
and so we have our result.
\end{proof}

The following theorem is given in \cite[Chapter XVI, Theorem 6.4]{may96}
we provide a `folk-proof' of this `folk-theorem'
in the language of orthogonal $G$-spectra.

\begin{proposition}\label{prop:geomfixedpoints}
Let $f \co X \to Y$ be a map of cofibrant $G$-equivariant orthogonal spectra,
then $\Phi^H f$ is a weak equivalence of non-equivariant orthogonal spectra
for all $H \leqslant G$ if and only if
$f$ is a weak equivalence of $G$-equivariant orthogonal spectra.
\end{proposition}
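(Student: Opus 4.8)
The plan is to prove the two implications separately; the forward one is formal and the converse is the substance. For the forward direction, suppose $f$ is a weak equivalence of $G$-spectra. Then for each closed $H \leqslant G$ the restriction $\iota_H^* f$ is a weak equivalence of cofibrant $H$-spectra, since $\iota_H^*$ preserves cofibrancy and weak equivalences (\cite[Chapter V, Lemma 2.2]{mm02}), and $\Phi^H$ is homotopical on cofibrant objects (\cite[Chapter V, Section 4]{mm02}); hence $\Phi^H f := \Phi^H(\iota_H^* f)$ is a non-equivariant weak equivalence. All the work is in the converse.

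For the converse I would assume $\Phi^H f$ is a weak equivalence for every closed $H \leqslant G$ and prove that $\pi_*^K(f)$ is an isomorphism for every closed $K \leqslant G$, arguing by induction on $G$ ordered lexicographically by $(\dim G, |\pi_0 G|)$. This ordering is well founded, and every proper closed subgroup $K < G$ comes strictly earlier: either $\dim K < \dim G$, or $\dim K = \dim G$, in which case the identity component of $K$ must equal that of $G$ (a connected Lie subgroup of full dimension is open, hence closed, hence everything), so $K/G_0 \subsetneq G/G_0$ and $|\pi_0 K| < |\pi_0 G|$. The base case, $G$ trivial, is immediate because $\Phi^G$ is then the identity functor on non-equivariant spectra, so $\Phi^G f = f$.

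For the inductive step, note first that for any proper closed $K < G$ the map $\iota_K^* f$ is a map of cofibrant $K$-spectra with $\Phi^J(\iota_K^* f) = \Phi^J f$ a weak equivalence for all $J \leqslant K$; by the inductive hypothesis applied to $K$, the map $\iota_K^* f$ is a weak equivalence of $K$-spectra, so $\pi_*^J(f)$ is an isomorphism for every proper closed subgroup $J < G$. It remains to handle $K = G$. Let $\mathcal{P}$ be the family of proper closed subgroups of $G$, with universal space $E\mathcal{P}$ and cofibre sequence $E\mathcal{P}_+ \to S^0 \to \widetilde{E\mathcal{P}}$. Smashing $f$ with this sequence gives a map of cofibre sequences, hence (by \cite[Chapter III, Theorem 3.5]{mm02}) a map of long exact sequences of $\pi_*^G$. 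Since $E\mathcal{P}$ is a $G$-CW complex whose cells have the form $G/H_+ \smashprod S^n$ with $H \in \mathcal{P}$, and for such $H$ the map $\iota_L^* f$ is a weak equivalence for every subgroup $L$ subconjugate to $H$ (all such $L$ being proper, hence covered by the previous step), a cellular induction using the homotopy-pushout and telescope machinery of Section \ref{sec:hocolim} together with Proposition \ref{prop:HomotopyofHoColim} shows that $E\mathcal{P}_+ \smashprod f$ is a $G$-weak equivalence, so $\pi_*^G(E\mathcal{P}_+ \smashprod f)$ is an isomorphism. On the other hand, for cofibrant $X$ there is a natural isomorphism $\pi_*^G(\widetilde{E\mathcal{P}} \smashprod X) \cong \pi_*(\Phi^G X)$ — the defining comparison between the point-set geometric fixed points of $X$ and the categorical $G$-fixed points of $\widetilde{E\mathcal{P}} \smashprod X$ (\cite[Chapter V]{mm02}) — so the hypothesis on $\Phi^G f$ makes $\pi_*^G(\widetilde{E\mathcal{P}} \smashprod f)$ an isomorphism. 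The five lemma then forces $\pi_*^G(f)$ to be an isomorphism, completing the induction.

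The hard part is the identification $\pi_*^G(\widetilde{E\mathcal{P}} \smashprod X) \cong \pi_*(\Phi^G X)$ for cofibrant $X$: this is where the genuine homotopical content of the geometric fixed point construction enters, and it must be imported from \cite{mm02} rather than reproved here. A secondary obstacle — and the reason for the two-parameter induction rather than the naive induction on order that works for finite groups — is organising the argument for a general compact Lie group and checking that the cellular induction over the (possibly transfinite) skeletal filtration of $E\mathcal{P}$ is legitimate, for which the results of Section \ref{sec:hocolim} are exactly what is needed.
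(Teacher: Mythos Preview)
Your proof is correct and shares the paper's core ingredients: the isotropy separation sequence for the family $\mathcal{P}$ of proper subgroups, the identification $\pi_*^G(\widetilde{E\mathcal{P}} \smashprod X) \cong \pi_*(\Phi^G X)$, and an induction. The organisation differs. The paper first passes to the cofibre $Z = Cf$ and proves $Z$ is acyclic by inducting up the poset of closed subgroups $H$ of the \emph{fixed} ambient group $G$ (valid since compact Lie groups have no infinite descending chains of closed subgroups), at each stage showing separately that $Z \smashprod E\widetilde{\fscr_H}$ and $Z \smashprod E{\fscr_H}_+$ are $H$-acyclic; for the latter it argues dually, showing $[Z \smashprod E{\fscr_H}_+, M]^H = 0$ via the cell structure and the adjunction $H/K_+ \smashprod Z \cong H_+ \smashprod_K Z$. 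You instead induct on the \emph{ambient group} ordered by $(\dim G, |\pi_0 G|)$, restrict to proper $K < G$ to invoke the full proposition there, and finish $\pi_*^G$ with the five lemma. Your scheme has the pleasant feature that the inductive hypothesis is literally the proposition, at the mild cost of changing groups and universes throughout. One small expository point: in your cellular induction on $E\mathcal{P}_+ \smashprod f$, the clean reason $G/H_+ \smashprod f$ is a $G$-equivalence is the isomorphism $G/H_+ \smashprod X \cong G_+ \smashprod_H \iota_H^* X$ together with $G_+ \smashprod_H (-)$ being left Quillen on cofibrant objects --- your remark about $\iota_L^* f$ for $L$ subconjugate to $H$ gestures at this but leaves the actual mechanism implicit.
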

\begin{proof}
We must use $\iota^*_H \co \GIS \to H \sscr $
in order to apply $\Phi^H$ to a $G$-equivariant spectrum.
As with the previous proposition, we omit
$\iota_H^*$-notation in the following proof.
The `if' part is immediate by Ken Brown's lemma applied to $\Phi^H$
(since it preserves (acyclic) cofibrations).
The converse begins by noting that $\Phi^H$ preserves cofibre
sequences, so it suffices to prove (for cofibrant $Z$) that
if $\Phi^H(Z)$ is acyclic for all $H$ then $Z$ is acyclic as a $G$-spectrum.
We use \cite[Chapter V, Proposition 4.17]{mm02} to
replace $\Phi^H(Z)$ by the weakly equivalent object
$\big( \fibrep (Z \smashprod E \widetilde{\fscr_H}) \big)^H$.
The family $\fscr_H$ is the collection of subgroups of
$H$ which do not contain $H$: it is the family of
proper subgroups of $H$.

We begin an inductive argument at $\{ e \}$,
where $E \widetilde{\fscr_{\{ e\}} } = S^0$.
Since $\Phi^{\{e\}}Z$ is acyclic,
$\pi_*((\fibrep Z)^{\{e\}})=\pi_*^{\{e\}}(\fibrep Z) = 0$.
Hence $Z$ is contractible as an $\{e \}$-spectrum.
Now take a subgroup $H$ and assume inductively
that $Z$ is contractible as a $K$-spectrum for all strict
subgroups $K$ of $H$. We will show that
$Z \smashprod E \widetilde{\fscr_H}$ and
$Z \smashprod E {\fscr_H}_+$ are contractible $H$-spectra
and apply Lemma \ref{lem:twoforthree}
to see that $Z$ will be $H$ contractible.

Consider $\big( \fibrep (Z \smashprod E \widetilde{\fscr_H})\big)^K$,
for a strict subgroup $K$ of $H$. By inductive assumption
$Z$ is $K$-acyclic, hence so is $Z \smashprod E \widetilde{\fscr_H}$,
thus $\pi_*^K( Z \smashprod E \widetilde{\fscr_H})=0$.
We started by assuming that $\Phi^H(Z)$ is acyclic,
hence  $\pi_*^H( Z \smashprod E \widetilde{\fscr_H})=0$.
We have completed the first half of our inductive step.
We will now prove that $[Z \smashprod E {\fscr_H}_+, M]^H =0$
for any $H$-spectrum $M$ and thus $Z \smashprod E {\fscr_H}_+$
will be acyclic as an $H$-spectrum.
The space $E {\fscr_H}_+$ is made from cells of type
$H/K$ for $K$ a strict subgroup of $H$,
$E {\fscr_H}_+ = \colim_\alpha H/K^\alpha_+ \smashprod S^{n_\alpha}$.
It follows by standard manipulations that
$$[Z \smashprod E {\fscr_H}_+, M]^H \cong
\lim_\alpha [Z \smashprod S^{n_\alpha}, M]^{K^\alpha}$$
using the fact that $H/K^\alpha_+ \smashprod Z \cong H_+ \smashprod_{K^\alpha} Z$
as $H$-spectra. This last term is zero since $Z$ is acyclic as a $K^\alpha$-spectrum.
\end{proof}

Note that in the above we use
induction on the poset of closed subgroups
of a compact Lie group. Such
inductive arguments are valid since there are no
infinite descending chains of subgroups
in a compact Lie group.

\begin{corollary}
The above proposition also holds in the rationalised case:
$f \co X \to Y$ is a rational equivalence of $G$-spectra if and only if
$\Phi^H f$ is a rational equivalence of non-equivariant spectra
for all subgroups $H$ of $G$.
\end{corollary}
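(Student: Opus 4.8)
The plan is to deduce the corollary from Proposition~\ref{prop:geomfixedpoints} by smashing $f$ with the rational sphere. I may assume $X$ and $Y$ are cofibrant, replacing $f$ by a cofibrant model otherwise; this changes neither side, since cofibrant replacements are $\pi_*$-isomorphisms, hence rational equivalences, and $\Phi^H$, being left Quillen, carries $\pi_*$-isomorphisms between cofibrant spectra to weak equivalences (Ken Brown). By Lemma~\ref{lem:ratequivs}, $f$ is a rational equivalence of $G$-spectra precisely when $f \smashprod \id_{S^0{\qq}} \co X \smashprod S^0{\qq} \to Y \smashprod S^0{\qq}$ is a $\pi_*$-isomorphism. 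Since $S^0{\qq}$ is cofibrant (as shown in Section~\ref{sec:rationalsphere}) and $\smashprod$ is a Quillen bifunctor, the spectra $X \smashprod S^0{\qq}$ and $Y \smashprod S^0{\qq}$ are cofibrant, so Proposition~\ref{prop:geomfixedpoints} applies and tells us this holds if and only if $\Phi^H(f \smashprod \id_{S^0{\qq}})$ is a weak equivalence of non-equivariant orthogonal spectra for every subgroup $H$ of $G$.

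Next I would use that $\Phi^H$ is monoidal up to homotopy on cofibrant objects, as in \cite[Chapter V, Section 4]{mm02} (this is already invoked in the proof of Proposition~\ref{prop:geomfixedpoints}): naturally in $f$, the map $\Phi^H(f \smashprod \id_{S^0{\qq}})$ is weakly equivalent to $\Phi^H f \smashprod \id_{\Phi^H S^0{\qq}}$. So it remains to see that, for each $H$, the map $\Phi^H f \smashprod \id_{\Phi^H S^0{\qq}}$ is a $\pi_*$-isomorphism exactly when $\Phi^H f$ is a rational equivalence of non-equivariant spectra. For this it suffices to show that $\Phi^H S^0{\qq}$ is, up to weak equivalence, a non-equivariant rational sphere spectrum in the sense of Section~\ref{sec:rationalsphere}; then Lemma~\ref{lem:ratequivs}, together with the independence of the rational model structure on the choice of rational sphere recorded there, gives the required equivalence (applied to the trivial group, with $\Phi^H S^0{\qq}$ in place of $S^0{\qq}$ and $g = \Phi^H f$).

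The identification of $\Phi^H S^0{\qq}$ is where the real work lies, and I expect it to be the main obstacle. Apply $\Phi^H$ to the defining cofibre sequence $\fibrep \bigvee_i \sphspec \to \fibrep \bigvee_j \sphspec \to S^0{\qq}$. Since $\Phi^H$ preserves weak equivalences between cofibrant spectra, cofibre sequences, and colimits of $h$-cofibrations (in particular wedges), and since $\sphspec = \Sigma^\infty S^0$ is the suspension spectrum of a $G$-fixed space so that $\Phi^H \sphspec \cong \sphspec$ non-equivariantly, one obtains up to weak equivalence a non-equivariant cofibre sequence $\bigvee_i \sphspec \to \bigvee_j \sphspec \to \Phi^H S^0{\qq}$ whose first map models $\Phi^H g$. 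I claim $\pi_0(\Phi^H g)$ is the chosen free resolution map $\bigoplus_i \zz \to \bigoplus_j \zz$ of $\qq$: the class of $g$ in $[\bigvee_i \sphspec, \bigvee_j \sphspec]^H \cong \hom_{A(H)}(\bigoplus_i A(H), \bigoplus_j A(H))$ is the integer matrix of that resolution, and passing to $\pi_0$ of $\Phi^H(-)$ amounts to composing entrywise with the unital ring homomorphism $A(H) = \pi_0^H(\sphspec) \to \pi_0(\Phi^H \sphspec) = \zz$ that sends a self-map $a$ to $\deg \Phi^H a$, which fixes those integer entries. Hence $\Phi^H S^0{\qq}$ is the cofibre of a map realising a free resolution of $\qq$, so a non-equivariant rational sphere spectrum up to weak equivalence; chaining this with the two equivalences above completes the proof.
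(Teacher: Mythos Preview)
Your proposal is correct and follows essentially the same route as the paper: reduce to Proposition~\ref{prop:geomfixedpoints} by smashing with $S^0\qq$, use the monoidality of $\Phi^H$ on cofibrant objects, and identify $\Phi^H(S^0\qq)$ with a non-equivariant rational sphere. The paper's proof simply asserts this last identification (``$\Phi^H(S^0\qq_G) = S^0\qq_{\{e\}}$'') and appeals to \cite[Chapter V, Section 4]{mm02}, whereas you spell out why it holds by tracking the defining cofibre sequence through $\Phi^H$ and checking that the integer matrix survives the ring map $A(H) \to \zz$; this is a welcome elaboration of a point the paper leaves implicit.
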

\begin{proof}
We temporarily let the rational sphere spectrum of
$G$-spectra as $S^0\qq_G$, then we see that
$\Phi^H (S^0{\qq}_G) = S^0{\qq}_{ \{e \} }$
for all $H$. The result then follows by the usual
properties of $\Phi^H$ (see \cite[V section 4]{mm02}).
\end{proof}

\section{The Splitting}\label{sec:split}
We now prove the most important result of this
chapter, indeed, the most important
result of Part \ref{part:Gspec}: Theorem \ref{thm:generalsplitting}.

\begin{definition}
Recall the definition of the
\textbf{product model category}\index{Product model category}
from \cite[Example 1.1.6]{hov99}.
Given model categories $M_1$ and $M_2$ we can put a model category structure on
$M_1 \times M_2$. A map $(f_1,f_2)$ is a cofibration, weak equivalence or fibration
if and only if $f_1$ is so in $M_1$ and $f_2$ is so in $M_2$. Similarly
a finite product of model categories has a model structure where a map
is a cofibration, weak equivalence or fibration
if and only if each of its factors is so.
\end{definition}

\begin{rmk}
If $M_1$ and $M_2$ both satisfy any of the following:
left properness, right properness, the pushout product axiom, the monoid axiom or
cofibrant generation, then so does $M_1 \times M_2$.
\end{rmk}

\begin{proposition}\label{prop:adjunct}
If $E$ and $F$ are cofibrant orthogonal
$G$-spectra or cofibrant $G$-spaces,
then there is a strong monoidal Quillen adjunction
$$\Delta :  \GIS \rightleftarrows L_E \GIS \times L_F \GIS : \prod$$
(we follow the usual convention of placing the left adjoint on top).
\end{proposition}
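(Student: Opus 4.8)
The plan is to take $\Delta$ to be the diagonal functor $X \mapsto (X,X)$ and $\prod$ to be the categorical product $(Y_1,Y_2) \mapsto Y_1 \times Y_2$ in $\GIS$ (all the relevant facts below hold uniformly whether $E$ and $F$ are cofibrant spectra or cofibrant $G$-spaces). At the level of underlying categories this is the familiar diagonal--product adjunction, $\GIS(X,\, Y_1 \times Y_2) \cong \GIS(X,Y_1)\times \GIS(X,Y_2) \cong (L_E\GIS \times L_F\GIS)\big((X,X),(Y_1,Y_2)\big)$, and it uses no model structure whatsoever; so the existence of the adjoint pair is free, and the work is entirely in checking that $\Delta$ is a left Quillen functor and that it is strong symmetric monoidal.

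For the Quillen condition, recall that $L_E\GIS$ and $L_F\GIS$ have the same cofibrations as $\GIS$, that their weak equivalences are the $E$- and $F$-equivalences, and that in the product model structure $(f_1,f_2)$ is a cofibration (respectively a weak equivalence) precisely when $f_1$ and $f_2$ both are. Thus if $f$ is a cofibration of $\GIS$ then $(f,f)$ is a cofibration of the product; and if $f$ is in addition a $\pi_*$-isomorphism then, by Proposition \ref{prop:localquillen} (smashing with the cofibrant objects $E$ and $F$ preserves $\pi_*$-isomorphisms), $f$ is both an $E$-equivalence and an $F$-equivalence, so $(f,f)$ is an acyclic cofibration of the product. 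Hence $\Delta$ preserves cofibrations and acyclic cofibrations, i.e.\ is a left Quillen functor, and $(\Delta,\prod)$ is a Quillen pair. This is the only place the argument uses anything beyond formal category theory, and even there the input is immediate.

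For monoidality, each factor $L_E\GIS$, $L_F\GIS$ is a symmetric monoidal model category under $\smashprod$ with unit $\sphspec$: it is symmetric monoidal because $\GIS$ is (Theorem \ref{thm:orthogmodel}) and satisfies the pushout product axiom by Proposition \ref{prop:pushaxiom}. Consequently the product $L_E\GIS \times L_F\GIS$ is a symmetric monoidal model category under the coordinatewise product $(Y_1,Y_2)\otimes(Z_1,Z_2) = (Y_1\smashprod Z_1,\, Y_2 \smashprod Z_2)$ with unit $(\sphspec,\sphspec)$ (the pushout product axiom is preserved by finite products, as in the remark preceding this statement). The functor $\Delta$ commutes with these structures on the nose: $\Delta(X\smashprod X') = (X\smashprod X',\, X\smashprod X') = (X,X)\otimes(X',X') = \Delta X \otimes \Delta X'$, $\Delta\sphspec = (\sphspec,\sphspec)$, and $\Delta$ carries the associativity, unit and commutativity isomorphisms of $\GIS$ to the corresponding coordinatewise isomorphisms of the product. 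So $\Delta$ is a strong symmetric monoidal functor in which the comparison maps $\Delta(A\otimes B)\to \Delta A\otimes \Delta B$ and $\Delta\sphspec \to (\sphspec,\sphspec)$ are identities; its right adjoint $\prod$ acquires the dual lax symmetric monoidal structure, and by the discussion preceding the statement a strong monoidal adjunction of this form is precisely a strong monoidal Quillen adjunction in the sense of \cite[Definition 4.2.16]{hov99}. The only genuine decision in the proof is which of the several equivalent formulations of ``strong monoidal Quillen pair'' to verify; once the coordinatewise monoidal structure on the target is written down, every comparison map is a strict identity.
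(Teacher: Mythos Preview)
Your proof is correct and follows essentially the same approach as the paper: define $\Delta$ as the diagonal and $\prod$ as the product, verify the adjunction formally, check that $\Delta$ preserves (acyclic) cofibrations using that the localised categories have the same cofibrations and that $\pi_*$-isomorphisms are $E$- and $F$-equivalences, and observe that $\Delta$ is strictly monoidal. The paper dispatches the monoidal part with ``it is easy to see that this is a strong monoidal adjunction''; you supply the details, which is fine.
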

\begin{proof}
Take a map $f \co X \to Y$ in $\GIS$, then
$\Delta (f) = (f ,f) \co (X,X) \to (Y,Y)$.
For a map $(a,b) \co (A,B) \to (C,D)$,
$\prod (a,b)$ is given by
$a \prod b \co A \prod B \to C \prod D$.
That these are an adjoint pair is easy to see
since all of the isomorphisms and equalities below are natural.
$$\begin{array}{rcl}
L_E \GIS \times L_F \GIS \big( (X,X) , (A,B) \big)  & = &
L_E \GIS \big(X , A \big) \times L_F \GIS \big(X , B \big) \\
& = & \GIS \big(X , A \big) \times \GIS \big(X , B \big) \\
& \cong & \GIS \big( X, A \prod B \big)
\end{array}$$
If $f$ is a (acyclic) cofibration of $\GIS$, then
it is clear that $(f,f)$ is a (acyclic) cofibration of $L_E \GIS \times L_F \GIS$,
thus $\Delta$ is a left Quillen functor.
It is easy to see that this is a strong monoidal adjunction.
\end{proof}

It is clear that this result can be extended to any finite product of
localisations. Now we turn to a case when this adjunction is a
Quillen equivalence. We state the following theorem
rationally, but one can easily see that
a similar result will hold in any localised
case, as well as for $\GIS$ itself.

\begin{theorem}\label{thm:generalsplitting}
Let $\{ E_i \}_{i \in I}$ be a finite collection of cofibrant
orthogonal $G$-spectra or $G$-spaces. If
$E_i \smashprod E_j$ is rationally acyclic for $i \neq j$
and $\bigvee_{i \in I} E_i$ is rationally equivalent to $\sphspec$
then we have a strong monoidal Quillen equivalence
$$\Delta : \GIS_\qq
\overrightarrow{\longleftarrow}
\prod_{i \in I} L_{E_i} \GIS_\qq : \prod .$$
\end{theorem}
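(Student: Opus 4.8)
The plan is to leverage Proposition \ref{prop:adjunct}, applied to the finite product $\prod_{i\in I}L_{E_i}\GIS_\qq$ (the two-factor case stated there extends verbatim): it already yields a strong monoidal Quillen adjunction $(\Delta,\prod)$, so strong monoidality comes for free and only the Quillen-equivalence part is at stake. For that I would invoke \cite[Corollary 1.3.16]{hov99}: it is enough to check that (i) the right adjoint $\prod$ reflects weak equivalences between fibrant objects, and (ii) for every cofibrant $X$ the derived unit $X\to\prod_{i\in I}\fibrep_{E_i}X$ is a rational equivalence, where $\fibrep_{E_i}$ denotes fibrant replacement in $L_{E_i}\GIS_\qq$. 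I would carry out both checks in $\ho\GIS_\qq$ (and $\prod_{i\in I}\ho(L_{E_i}\GIS_\qq)$ on the other side), writing $\smashprod$ for the derived smash and using that here morphism groups are rational vector spaces (Theorem \ref{thm:rathomotopymaps}), that finite wedges coincide with finite products, and that $\prod_{i\in I}\fibrep_{E_i}X$ represents $\bigvee_{i\in I}L_i X$ in $\ho\GIS_\qq$, with $L_i$ the $E_i$-localisation functor.

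The technical heart is the assertion $(\star)$: if $Y$ is $E_j$-local --- that is, $[A,Y]^G_\qq=0$ for every rationally $E_j$-acyclic $A$, by Lemma \ref{lem:localcond} --- then $E_i\smashprod Y\simeq *$ for all $i\neq j$. To prove it, use that $\bigvee_{k\in I}E_k$ is rationally equivalent to $\sphspec$, so smashing with $Y$ gives an equivalence $Y\simeq\bigvee_{k\in I}(E_k\smashprod Y)$. For $k\neq j$ the summand $E_k\smashprod Y$ is rationally $E_j$-acyclic, since $E_j\smashprod E_k\smashprod Y\simeq *$ (because $E_j\smashprod E_k\simeq *$ rationally). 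Hence for each $k\neq j$ the composite $E_k\smashprod Y\hookrightarrow\bigvee_k(E_k\smashprod Y)\xrightarrow{\ \sim\ }Y$ is a map from a rationally $E_j$-acyclic object into an $E_j$-local one, so it vanishes. Expressing $\mathrm{id}$ on $\bigvee_k(E_k\smashprod Y)$ as $\sum_k\iota_k\pi_k$ and transporting along the equivalence to $Y$, every term with $k\neq j$ drops out; this forces $\iota_j\pi_j=\mathrm{id}$, and since $\pi_j\iota_j=\mathrm{id}$ always, we obtain $Y\simeq E_j\smashprod Y$ together with $E_k\smashprod Y\simeq *$ for all $k\neq j$, which is $(\star)$.

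Granting $(\star)$, both conditions are immediate. For (ii), smash $X\to\bigvee_{i\in I}L_i X$ with each $E_k$: the summand $E_k\smashprod L_k X$ receives $E_k\smashprod X$ by an equivalence (the localisation map $X\to L_k X$ being an $E_k$-equivalence), while $E_k\smashprod L_i X\simeq *$ for $i\neq k$ by $(\star)$; thus $X\to\bigvee_i L_i X$ is an $E_k$-equivalence for every $k$, hence --- since $\bigvee_k E_k\simeq\sphspec$ rationally --- a rational equivalence. For (i), let $(f_i)$ be a map of fibrant objects $(Y_i)\to(Z_i)$ with $\prod_i f_i$ a rational equivalence; smashing with $E_i$ and using $(\star)$ to kill the $j\neq i$ summands identifies $E_i\smashprod(\prod_j f_j)$ with $E_i\smashprod f_i$, so $f_i$ is an $E_i$-equivalence between $E_i$-local objects, hence a rational equivalence by Lemma \ref{lem:EequivElocal} and so a weak equivalence of $L_{E_i}\GIS_\qq$. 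This gives the Quillen equivalence, strong monoidal by Proposition \ref{prop:adjunct}.

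The main obstacle is $(\star)$ itself: extracting, from the two orthogonality hypotheses alone, that $E_j$-locality already implies $E_i$-acyclicity for $i\neq j$. A more concrete route --- probably nearer a hands-on proof --- is to first use the splitting $\bigvee_i E_i\simeq_\qq\sphspec$ to manufacture orthogonal idempotents $e_i\in A(G)\otimes\qq$ with $\sum_i e_i=1$ and $E_i\simeq_\qq e_i\sphspec$, model each $e_i X$ as a telescope as around Corollary \ref{cor:comrel}, and then compare these telescopes using Lemma \ref{lem:homcomtelescope}; with that explicit model in hand, the locality/acyclicity dichotomy and the unit computation become direct telescope manipulations. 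Everything else reduces to formal additivity in $\ho\GIS_\qq$ together with the closure properties of $E$-equivalences recorded in Propositions \ref{prop:Ehomproperty} and \ref{prop:Esmashisom}.
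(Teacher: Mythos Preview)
Your proof is correct and takes a genuinely different route from the paper's. Both arguments open identically --- Proposition \ref{prop:adjunct} supplies the strong monoidal Quillen adjunction, and the Quillen-equivalence check reduces to the same two conditions (the paper cites \cite[Lemma 4.1.7]{hss00}, you use the equivalent \cite[Corollary 1.3.16(b)]{hov99}). The divergence is entirely in how one establishes that $E_i \smashprod \fibrep_{E_j} X$ is rationally acyclic for $i \neq j$, which is your $(\star)$.

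The paper proves this concretely via Burnside-ring idempotents: it transports the projection $p_i \co \bigvee_k E_k \to E_i \to \bigvee_k E_k$ to an idempotent $e_i$ of $\fibrep_\qq \sphspec$, invokes Lemma \ref{lem:homcomtelescope} to identify $e_i \fibrep_\qq \sphspec$ with $E_i$, computes $\pi_*^H(\fibrep_{E_i} X)$ through a $\lim^1$ sequence with a Mittag-Leffler argument, and finishes with the algebraic orthogonality $(e_j)_*(e_i)^* = 0$. This is exactly the ``more concrete route'' you sketch in your final paragraph, so you have correctly anticipated the paper's strategy.

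Your primary argument for $(\star)$ is instead purely homotopy-categorical: the biproduct decomposition $Y \simeq \bigvee_k (E_k \smashprod Y)$ in $\ho \GIS_\qq$ together with Lemma \ref{lem:localcond} forces each inclusion $E_k \smashprod Y \hookrightarrow Y$ with $k \neq j$ to vanish, whence $\iota_j\pi_j = \id$ and the remaining summands die. This sidesteps the telescope comparison and the $\lim^1$ computation entirely, and the subsequent verification of (i) and (ii) is shorter. What the paper's approach buys in exchange is the explicit identification $E_i \simeq_\qq e_i \sphspec$ with an honest Burnside-ring idempotent, which it immediately reuses for Corollary \ref{cor:possiblesplittings} and Remark \ref{rmk:splittingidempotents}, and which underlies the family-theoretic refinements in Section \ref{sec:idemsplit}.
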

\begin{proof}
Proposition \ref{prop:adjunct} implies that this
is a strong monoidal Quillen pair, so we must show
that this is a Quillen equivalence.
We use the characterisation of Quillen equivalences
of \cite[Lemma 4.1.7]{hss00}, we must show that:
\begin{enumerate}
\item $\prod$ detects and preserves weak equivalences between fibrant objects,
\item $X \to \prod \Delta X \to \prod \fibrep \Delta X$ is a $\pi^\qq_*$-isomorphism for all
cofibrant $X$ in $\GIS_\qq$
\end{enumerate}
where $\fibrep$ denotes fibrant replacement in $\prod_{i \in I} L_{E_i} \GIS_\qq$.
The first statement follows easily, take a map $f \co A \to B$ between fibrant objects in
$\prod_{i \in I} L_{E_i} \GIS_\qq$.
The map $f$ is a weak equivalence exactly when each factor
$f_i$ is a rational $E_i$-equivalence.
Recall that a rational $E_i$-equivalence
between $E_i \smashprod S^0{\qq}$-local objects
is a $\pi_*$-isomorphism,
hence each $f_i$ is a $\pi_*$-isomorphism.
Thus $f$ is a weak equivalence if and only if
$\prod_{i} f_{i}$ is a $\pi_*$-isomorphism,
(since homotopy groups commute with products).

Now we must show that $X \to \prod \fibrep \Delta X$
is a weak equivalence for all cofibrant $X$.
This is equivalent to proving that
$\prod \fibrep \Delta X \to \fibrep_{E_i} X$ is a rational
equivalence for each $i$, where
$\fibrep_{E_i}$ denotes fibrant replacement in
$L_{E_i} \GIS_\qq$.
Since this is a finite product, it is weakly equivalent
to $\bigvee_{j \in I} \fibrep_{E_j} X$.
So we will prove that
$\bigvee_{j \in I} \fibrep_{E_j} X \to \fibrep_{E_i} X$
is a rational equivalence.
Since $\bigvee_{i \in I} E_i$ is rationally
equivalent to $\sphspec$ we can use Remark \ref{rmk:twoforthree}
repeatedly to see that a map $f$ in $\GIS_\qq$ is a rational equivalence
if and only if $f \smashprod \id_{E_i}$ is
a rational   equivalence for all $i \in I$.
This amounts to showing that
$\fibrep_{E_j} X \smashprod E_i$ is rationally acyclic
when $i \neq j$.

Since $\bigvee_{i \in I} E_i$ and $\sphspec$
are rationally equivalent,
they are isomorphic in $\ho \GIS_\qq$.
Thus the idempotent map
$p_i \co \bigvee_{i \in I} E_i \to E_i \to \bigvee_{i \in I} E_i$
gives an idempotent map of
$\sphspec$ in $\ho \GIS_\qq$.
Choosing a representative for
this map $e_i \co \fibrep_\qq \sphspec \to \fibrep_\qq \sphspec$
we obtain $e_i \fibrep_\qq \sphspec$.
Since $p_i$ is a cofibration,
$p_i \bigvee_{i \in I} E_i$ is
rationally equivalent to $E_i$.
Hence so is $(\fibrep_\qq p_i) (\fibrep_\qq \bigvee_{i \in I} E_i)$.
We can construct a homotopy commuting square
$$\xymatrix{
\fibrep_\qq \sphspec \ar[r]^{e_i} \ar[d]_\sim &
\fibrep_\qq \sphspec              \ar[d]_\sim \\
\fibrep_\qq \bigvee_{i \in I} E_i \ar[r]_{\fibrep_\qq p_i} &
\fibrep_\qq \bigvee_{i \in I} E_i
}$$
such that the vertical maps are weak equivalences (since
$\fibrep_\qq \sphspec$ and $\bigvee_{i \in I} E_i$
are rationally equivalent).
Now we apply Lemma \ref{lem:homcomtelescope} to
conclude that $e_i \fibrep_\qq \sphspec$
is rationally equivalent to $E_i$.
We now see that a rational $E_i$-equivalence
is the same as a rational $e_i \fibrep_\qq \sphspec$-equivalence.
Thus, by Corollary \ref{cor:comrel},  $f \co X \to Y$ is a rational $E_i$-equivalence
if and only $e_i^* \pi_*^H (f) \otimes \qq$
is an isomorphism for all $H$.
So we have isomorphisms $e_i^* \pi_*^H (\fibrep_{E_i} X) \otimes \qq \to
e_i^* \pi_*^H (X) \otimes \qq$ and
$\pi_*^H (E_i \smashprod X) \otimes \qq \cong
e_i^* \pi_*^H (X) \otimes \qq$.

The projection $\bigvee_{j \in I} E_j \to E_i$ is a rational $E_i$
equivalence (since $E_i \smashprod E_j$ is rationally acyclic
for $i \neq j$).
Now we use the fact that $\fibrep_{E_i} X$ is $E_i$-local to obtain isomorphisms
$$\pi_n^H (\fibrep_{E_i} X) \otimes \qq \cong
[\Sigma^n \sphspec, \iota_H^* \fibrep_{E_i} X]^H \otimes \qq \cong
[\Sigma^n E_i , \iota_H^* \fibrep_{E_i} X]^H \otimes \qq \cong
[e_i \Sigma^n \sphspec, \iota_H^* \fibrep_{E_i} X]^H \otimes \qq$$
Now we apply \cite[Chapter III, Theorems 2.4 and 2.7]{mm02}, since
$e_{i} \Sigma^n \sphspec$ is a colimit of cofibrant objects
it is non-degenerately based, thus we obtain the $\lim^1$
exact sequence of pointed sets below.
$$* \to {\lim}^1 [\Sigma^{n+1} \sphspec, \iota_H^* \fibrep_{E_i} X ]_l^H
\to [e_{\fscr} \Sigma^n \sphspec, \iota_H^* \fibrep_{E_i} X ]_l^H
\to \lim [\Sigma^{n} \sphspec, \iota_H^* \fibrep_{E_i} X ]_l^H \to *$$
Since $e_{i}^*$ (that is, $e_{i}$ acting on the first factor)
is an idempotent on sets of maps in the rational homotopy
category and $[\Sigma^{n+1} \sphspec, \iota_H^* \fibrep_{E_i} X ]_l^H
\cong [\Sigma^{n+1} \sphspec, \iota_H^* \fibrep_{E_i} X ]^H_\qq$
the tower it creates satisfies the Mittag-Leffler condition
(\cite[Definition 3.5.6]{weib}) hence the $\lim^1$ term is zero.
So we see that
$$[e_{i} \Sigma^n \sphspec, \iota_H^* \fibrep_{E_i} X ]_l^H
\cong e_{i}^*[\Sigma^{n} \sphspec, \iota_H^* \fibrep_{E_i} X ]_l^H $$
(a limit of idempotent maps is equivalent to taking the image).
Now consider the action of idempotents on maps in the homotopy category:
$e_{i}^*[f] = [f] \smashprod [e_{i}]$ and
$(e_{j})_* [f] = [f] \smashprod [e_{j}]$,
hence $(e_{j})_* (e_{i})^*=(e_i)_*e_{j}^*=0$.
We can now finish our argument as follows:
$$\pi_*^H (\fibrep_{E_i} X \smashprod E_j) \otimes \qq \cong
(e_j)_*(e_i)^* \pi_*^H (\fibrep_{E_i} X) \otimes \qq =0.$$
\end{proof}

\begin{corollary}\label{cor:possiblesplittings}
Finite orthogonal idempotent decompositions
of the unit of $A(G)$
correspond to finite splittings of $\GIS$.
The same statement also holds for
$A(G) \otimes \qq$ and $\GIS_\qq$.
\end{corollary}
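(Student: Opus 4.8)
The plan is to read Theorem \ref{thm:generalsplitting} backwards and forwards and thereby make the asserted correspondence explicit. Starting from a finite orthogonal idempotent decomposition $1 = e_1 + \cdots + e_n$ of the unit of $A(G) \otimes \qq$ — so $e_i^2 = e_i$ and $e_i e_j = 0$ for $i \neq j$ — I would choose representing self-maps $e_i \co \fibrep_\qq \sphspec \to \fibrep_\qq \sphspec$, set $E_i := e_i \sphspec$ (the telescope defined just before Corollary \ref{cor:comrel}, passed to a cofibrant model if need be), and verify that the collection $\{E_i\}_{i=1}^n$ satisfies the hypotheses of Theorem \ref{thm:generalsplitting}. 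By Corollary \ref{cor:comrel} one has, naturally in $X$ and for every subgroup $H$ of $G$, an isomorphism $\pi_*^H(E_i \smashprod X) \otimes \qq \cong \iota_H^*(e_i)\,\pi_*^H(X) \otimes \qq$. Taking $X = E_j$ and using that $\iota_H^*$ is a ring homomorphism gives $\pi_*^H(E_i \smashprod E_j) \otimes \qq \cong \iota_H^*(e_i e_j)\,\pi_*^H(\sphspec)\otimes\qq = 0$ for $i \neq j$, so $E_i \smashprod E_j$ is rationally acyclic; and applying Proposition \ref{prop:idemsplit} iteratively shows the natural map $\sphspec \to \prod_i E_i$ is a rational equivalence, whence (the index set being finite) $\bigvee_i E_i$ is rationally equivalent to $\sphspec$. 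Theorem \ref{thm:generalsplitting} then produces the strong monoidal Quillen equivalence $\Delta : \GIS_\qq \overrightarrow{\longleftarrow} \prod_i L_{E_i} \GIS_\qq : \prod$, which is the splitting attached to the given decomposition.

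For the integral statement one runs the same argument with $A(G)$ in place of $A(G)\otimes\qq$ and $\GIS$ in place of $\GIS_\qq$: the relevant splitting theorem is the non-rational variant noted just before the statement of Theorem \ref{thm:generalsplitting}, and the homotopy-group computations above go through without tensoring with $\qq$. Indeed $\pi_*^H(eX) \cong \iota_H^*(e)\,\pi_*^H(X)$ holds integrally, being nothing but Proposition \ref{prop:HomotopyofHoColim} applied to the telescope of an idempotent together with the observation that a sequential colimit of an idempotent is its image. So $E_i \smashprod E_j$ is acyclic for $i \neq j$ and $\bigvee_i E_i$ is equivalent to $\sphspec$, and the splitting of $\GIS$ follows exactly as before.

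It remains to address the converse, that every finite splitting of this shape arises from an idempotent decomposition and that the two assignments are mutually inverse up to the ambient weak equivalences. Given a collection $\{E_i\}$ satisfying the hypotheses (equivalently, the splitting it determines), the homotopy category being stable means the equivalence $\bigvee_i E_i \simeq \sphspec$ realises $\sphspec$ as a finite biproduct of the $E_i$; the associated inclusions $\iota_i \co E_i \to \sphspec$ and projections $p_i \co \sphspec \to E_i$ compose to elements $e_i := \iota_i p_i \in A(G)$ (resp. $A(G)\otimes\qq$), and the biproduct identities $p_i \iota_j = \delta_{ij}\,\id_{E_i}$ and $\sum_i \iota_i p_i = \id_\sphspec$ say precisely that the $e_i$ are orthogonal idempotents summing to $1$; moreover $e_i\sphspec$, being the image of $e_i$, is (rationally) equivalent to $E_i$. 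The main obstacle is exactly this converse bookkeeping — deciding what one means by a \emph{finite splitting of $\GIS$} and then checking, in the style of the support computations inside the proof of Theorem \ref{thm:generalsplitting}, that the extracted $e_i$ recover the original data up to the relevant equivalences; once Corollary \ref{cor:comrel} and Proposition \ref{prop:idemsplit} are in hand, the forward direction is routine.
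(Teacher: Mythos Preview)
Your proposal is correct and follows essentially the same approach as the paper. For the forward direction you spell out in detail (via Corollary~\ref{cor:comrel} and Proposition~\ref{prop:idemsplit}) what the paper leaves implicit; for the converse the paper phrases things slightly differently---it observes directly that the splitting gives $A(G) \cong \bigoplus_i [\sphspec,\sphspec]^G_{E_i}$ and takes $e_i$ to be the unit in factor $i$ and zero elsewhere---but this is exactly your biproduct argument viewed through the derived equivalence rather than at the level of the $E_i$ themselves.
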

\begin{proof}
Let $1 \in A(G)$ be the sum of (a finite collection of)
idempotents $e_i$, such that $e_i e_j =0$ whenever $i \neq j$,
then $\GIS$ splits as the product of the
localised categories $L_{e_i \sphspec} \GIS$.
Conversely, if $\GIS$ splits as the product of
localised categories $L_{E_i} \GIS$
then $A(G) \cong \bigoplus_i [\sphspec, \sphspec]_{E_i}^G$.
Hence, for each $i$, we have an idempotent element $e_i$
which is the unit map in factor $i$ and the
trivial map elsewhere.
\end{proof}

\begin{rmk}\label{rmk:splittingidempotents}
Let $E_i$ be a collection of spectra satisfying the assumptions
of the splitting theorem with
corresponding idempotents $e_i$.
Let $X$ and $Y$ be spectra,
then $[X,Y]^G$ is an $A(G)$-module.
If $Y$ is $E_i$-local then $[X,Y]^G$
is isomorphic to $e_i [X,Y]^G$
(and is also isomorphic to maps in
the homotopy category of $L_{E_i} \GIS$).
If $M$ is an $A(G)$-module then
$M$ with $e_i$ inverted is
given by $e_i M$ (since $e_i$ is an idempotent).
Thus we can say that localisation at $E_i$
inverts $e_i \in A(G)$.
Equally, rationalisation inverts the primes
in $A(G)$. This explains why our two
kinds of localisation (rationalisation
and splitting) behave the same: in each
case we are simply inverting elements
of $A(G)$.
\end{rmk}

\section{Comparisons}\label{sec:compsplit}
We show that the splitting theorem for $\GIS_\qq$
implies the corresponding splitting for
$\GIS_\qq^+$, $G \mcal_\qq$ and $S_\qq \leftmod$. We let $\cofrep_+$
denote cofibrant replacement in $\GIS_+$
\begin{theorem}\label{thm:positivesplitting}
Let $\{ E_i \}_{i \in I}$ be a finite collection of cofibrant
orthogonal $G$-spectra or $G$-spaces. If
$E_i \smashprod E_j$ is rationally acyclic for $i \neq j$
and $\bigvee_{i \in I} E_i$ is rationally equivalent to $\sphspec$
then we have a strong monoidal Quillen equivalence
$$\Delta : \GIS_\qq^+
\overrightarrow{\longleftarrow}
\prod_{i \in I} L_{\cofrep_+ E_i} \GIS_\qq^+ : \prod .$$
\end{theorem}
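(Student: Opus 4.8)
The plan is to obtain this positive splitting by transferring the splitting theorem for $\GIS_\qq$ (Theorem \ref{thm:generalsplitting}) along the comparison Quillen equivalences already available. The $\GIS_\qq^+$-version of Proposition \ref{prop:adjunct}, in its evident extension to a finite product of localisations and applied with the cofibrant spectra $\cofrep_+ E_i$, already shows that $(\Delta,\prod)$ is a strong monoidal Quillen pair between $\GIS_\qq^+$ and $\prod_{i\in I} L_{\cofrep_+ E_i}\GIS_\qq^+$; so only the Quillen equivalence property remains, and strong monoidality of the equivalence is then automatic.

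To establish the Quillen equivalence I would consider the square of left Quillen functors
\[
\xymatrix{
\GIS_\qq^+ \ar[r]^{\Delta} \ar[d]_{\id} & \prod_{i\in I} L_{\cofrep_+ E_i}\GIS_\qq^+ \ar[d]^{\id} \\
\GIS_\qq \ar[r]^{\Delta} & \prod_{i\in I} L_{E_i}\GIS_\qq
}
\]
which commutes on the nose, both composites sending $X$ to the constant tuple $(X,\dots,X)$. The bottom arrow is a Quillen equivalence by Theorem \ref{thm:generalsplitting} and the left-hand arrow is one by Proposition \ref{prop:GIS+QequivGISQ}, so their composite $\GIS_\qq^+ \to \prod_{i\in I} L_{E_i}\GIS_\qq$ is a Quillen equivalence. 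Thus, provided the right-hand vertical is a Quillen equivalence, the two-out-of-three property for Quillen equivalences forces the top arrow $\Delta$ to be one as well, which is exactly the assertion.

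What is left — and where the only genuine, if routine, work lies — is to check that $\id\co L_{\cofrep_+ E_i}\GIS_\qq^+ \to L_{E_i}\GIS_\qq$ is a Quillen equivalence for each $i$; the right-hand vertical is the product of these. Using the $\GIS_+$-analogue of the iterated-localisation identity of Theorem \ref{thm:LEGSlocal} I would first rewrite $L_{\cofrep_+ E_i}\GIS_\qq^+ = L_{\cofrep_+ E_i} L_{S^0\qq_+}\GIS_+ = L_{\cofrep_+ E_i \smashprod S^0\qq_+}\GIS_+$. The spectrum $\cofrep_+ E_i \smashprod S^0\qq_+$ is positive cofibrant, being a pushout product of the positive cofibrations $\emptyset\to\cofrep_+ E_i$ and $\emptyset\to S^0\qq_+$, so Remark \ref{rmk:changemodellocal} supplies a Quillen equivalence $\id\co L_{\cofrep_+ E_i \smashprod S^0\qq_+}\GIS_+ \to \GIS_{\cofrep_+ E_i \smashprod S^0\qq_+}$. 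Finally, smashing the weak equivalence $\cofrep_+ E_i\to E_i$ and the equivalence $S^0\qq_+\simeq S^0\qq$ of Lemma \ref{lem:equivratspheres} with the cofibrant spectrum $\cofrep_+ E_i$ shows that $\cofrep_+ E_i \smashprod S^0\qq_+$ is weakly equivalent to $E_i\smashprod S^0\qq$; since Bousfield localisation at weakly equivalent cofibrant spectra yields the same model structure, $\GIS_{\cofrep_+ E_i \smashprod S^0\qq_+} = \GIS_{E_i\smashprod S^0\qq} = L_{E_i}\GIS_\qq$. Chaining these identifications and equivalences finishes the proof; no individual step is hard, the only care needed being to keep straight which localised model structures coincide.
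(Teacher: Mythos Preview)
Your proposal is correct and follows essentially the same approach as the paper: reduce to the $\GIS_\qq$ splitting (Theorem~\ref{thm:generalsplitting}) by comparing each factor $L_{\cofrep_+ E_i}\GIS_\qq^+$ with $L_{E_i}\GIS_\qq$ via Remark~\ref{rmk:changemodellocal}. The paper's proof is a terse sketch of exactly this idea, while you spell out the commutative square and the two-out-of-three argument explicitly.
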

\begin{proof}
We can replace the collection $\{ E_i \}$
by their positive cofibrant replacements
$\{ \cofrep_+ E_i \}$. It is clear that
$\bigvee_{i \in I} \cofrep_+ E_i$ is rationally
equivalent to $\sphspec$ and
$\cofrep_+ E_i \smashprod \cofrep_+ E_j$ is
rationally acyclic for $i \neq j$.
Now we can compare the statement above
to that for $\GIS$
by using Remark \ref{rmk:changemodellocal}.
Thus we see that the category
$L_{\cofrep_+ E_i} \GIS^+_\qq$
is Quillen equivalent to
$L_{E_i} \GIS_\qq$,
hence the splitting applies to the
positive stable case.
\end{proof}

\begin{theorem}\label{thm:comparisons}
Let $\{ E_i \}_{i \in I}$ be a finite collection of positive cofibrant
orthogonal $G$-spectra or $G$-spaces. Assume that
$E_i \smashprod E_j$ is rationally acyclic for $i \neq j$
and $\bigvee_{i \in I} E_i$ is rationally equivalent to $\sphspec$.
We have the following pair of commutative (see proof) diagrams
of strong symmetric monoidal Quillen equivalences.
\begin{displaymath}
\xymatrix@R+0.4cm@C+0.1cm{
G \mcal_\qq \ar@<+1ex>[r]^(0.4)\Delta
\ar@<+1ex>[d]^{\nn^{\#}}
& \ar@<+1ex>[l]^(0.6)\prod
\prod_{i \in I} L_{\nn E_i} G \mcal_\qq
\ar@<+1ex>[d]^{\prod_{i \in I} \nn^{\#}} \\
\GIS_\qq^+ \ar@<+1ex>[u]^{\nn}
\ar@<+1ex>[r]^(0.4)\Delta
& \ar@<+1ex>[l]^(0.6)\prod
\prod_{i \in I} L_{E_i} \GIS_\qq^+
\ar@<+1ex>[u]^{\prod_{i \in I} \nn }}
\qquad \qquad
\xymatrix@R+0.4cm@C+0.1cm{
S_\qq \leftmod \ar@<+1ex>[r]^(0.4)\Delta
\ar@<+1ex>[d]^{U}
& \ar@<+1ex>[l]^(0.6)\prod
\prod_{i \in I} L_{\nn E_i} S_\qq \leftmod
\ar@<+1ex>[d]^{\prod_{i \in I} U} \\
G \mcal_\qq \ar@<+1ex>[u]^{S_\qq \smashprod (-)}
\ar@<+1ex>[r]^(0.4)\Delta
& \ \ar@<+1ex>[l]^(0.6)\prod
\prod_{i \in I} L_{\nn E_i} G \mcal_\qq
\ar@<+1ex>[u]^{\prod_{i \in I} S_\qq \smashprod (-) } }
\end{displaymath}
\end{theorem}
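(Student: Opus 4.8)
The plan is to assemble everything from results already established in the excerpt. The two squares each have all four vertices identified as model categories, and three of the four edges of each square are already known to be Quillen equivalences: the horizontal edges are the splitting equivalences of Theorem \ref{thm:positivesplitting} (and its $G\mcal$ and $S_\qq$-module analogues, which is exactly what needs proving alongside commutativity), the left vertical edge of the first square is the strong symmetric monoidal Quillen equivalence $\mathbb{N} : \GIS_\qq^+ \rightleftarrows G \mcal_\qq$ of Proposition \ref{prop:GMequivGIS+Q} (combined with Corollary \ref{cor:Nadjunctlocal}), and the left vertical edge of the second square is $S_\qq \smashprod (-) : G \mcal_\qq \rightleftarrows S_\qq \leftmod$ of Theorem \ref{thm:localisedtomodules}. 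So the real content is: (i) produce the right vertical edges, namely the products $\prod_{i} \mathbb{N}$ and $\prod_{i} (S_\qq \smashprod (-))$, as Quillen equivalences between the localised product categories, and (ii) check that the squares commute on the nose (or up to natural isomorphism) of both left adjoints.

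First I would deal with the right vertical edges. By Corollary \ref{cor:Nadjunctlocal}, the Quillen equivalence $(\mathbb{N},\mathbb{N}^{\#})$ passes to a Quillen equivalence $L_{E_i}\GIS_+ \rightleftarrows L_{\mathbb{N}E_i} G\mcal$ for each positive cofibrant $E_i$, and this is strong symmetric monoidal by Theorem \ref{thm:locfuncs}; rationalising (again via Theorem \ref{thm:locfuncs}, since rationalisation is itself localisation at $S^0\qq_+$, using Proposition \ref{prop:GMequivGIS+Q} and Remark \ref{rmk:changemodellocal}) gives $L_{E_i}\GIS^+_\qq \rightleftarrows L_{\mathbb{N}E_i} G\mcal_\qq$. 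A finite product of Quillen equivalences is a Quillen equivalence (weak equivalences, cofibrations and fibrations in a product model category are detected factorwise, so the Quillen-equivalence criterion of Definition~1.3.12 is checked factorwise), and a finite product of strong symmetric monoidal Quillen pairs is strong symmetric monoidal since the monoidal structure on a product is given factorwise. This handles the first square's right edge; the $S_\qq \leftmod$ case is identical, using Theorem \ref{thm:localisedtomodules} in place of Corollary \ref{cor:Nadjunctlocal} together with Proposition \ref{prop:rmodlocal} to produce the localised module model structures $L_{\mathbb{N}E_i} S_\qq \leftmod$. Along the way this also establishes the horizontal splittings for $G\mcal_\qq$ and $S_\qq\leftmod$ asserted implicitly in the diagram, since a splitting equivalence on one side of a commuting square of Quillen equivalences transports to the other.

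Second I would verify commutativity. The diagonal functor $\Delta$ sends $X$ to $(X,\dots,X)$, and the product right adjoint sends $(A_i)$ to $\prod_i A_i$; both $\mathbb{N}$ and $S_\qq \smashprod (-)$ are strong monoidal left adjoints, hence preserve coproducts and (being left adjoints applied diagonally) commute strictly with $\Delta$: $(\prod_i \mathbb{N}) \circ \Delta = \Delta \circ \mathbb{N}$ as functors, since $\mathbb{N}$ applied to a constant tuple is the constant tuple on $\mathbb{N}X$. Equivalently one checks commutativity of the right adjoints: $\mathbb{N}^{\#}(\prod_i A_i) \cong \prod_i \mathbb{N}^{\#}(A_i)$ because $\mathbb{N}^{\#}$, being a right adjoint, preserves products. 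The same verbatim argument works for $S_\qq \smashprod (-)$ and $U$. Thus each square commutes, and composing edges gives the zig-zags of strong symmetric monoidal Quillen equivalences claimed.

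The main obstacle is not any single deep step but rather the bookkeeping needed to make Corollary \ref{cor:Nadjunctlocal} and Theorem \ref{thm:localisedtomodules} apply to the \emph{localised} categories $L_{E_i}(\,\cdot\,)$ uniformly — one must confirm that $\mathbb{N}E_i$ (respectively the image in $S_\qq\leftmod$) is again cofibrant so that the localisations exist and Theorem \ref{thm:locfuncs} is applicable, and that rationalisation commutes with these localisations as in Remark \ref{rmk:changemodellocal} and Theorem \ref{thm:LEGSlocal}. Once the hypotheses are checked, the passage to products and the commutativity are essentially formal consequences of $\Delta$ being diagonal and the vertical functors being (co)continuous adjoints.
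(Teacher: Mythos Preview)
Your proposal is correct and follows essentially the same route as the paper: the paper's proof simply cites Corollary~\ref{cor:Nadjunctlocal} and Theorem~\ref{thm:positivesplitting} for the left square, Theorem~\ref{thm:locfuncs} and Proposition~\ref{prop:rmodlocal} for the right square, and notes that the squares of left adjoints commute (hence so do the right adjoints). Your write-up is more explicit about the bookkeeping (products of Quillen equivalences, the 2-out-of-3 argument for the top horizontal arrows, and why $\Delta$ commutes with $\mathbb{N}$ and $S_\qq \smashprod (-)$), but the underlying ingredients and logic are identical.
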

\begin{proof}
The left hand diagram consists of Quillen equivalences by
Corollary \ref{cor:Nadjunctlocal} and Theorem \ref{thm:positivesplitting}.
The righthand diagram consists of Quillen equivalences by
Theorem \ref{thm:locfuncs} and Proposition \ref{prop:rmodlocal}.
It is easy to see that the two squares of left adjoints above
commute and hence so do the squares of right adjoints.
\end{proof}

\begin{rmk}
There is a Quillen pair
$ S_\qq \leftmod \overrightarrow{\longleftarrow} L_{\nn E_i} S_\qq \leftmod$
arising from the change of model structure functors
$ \id : G \mcal_\qq \overrightarrow{\longleftarrow} L_{\nn E_i} G \mcal_\qq : \id$.
\end{rmk}

\section{Idempotent Families Induce Splittings}\label{sec:idemsplit}
Equivariant considerations give us a class
of examples where we can apply our splitting result:
Theorem \ref{thm:familysplitting}.
In particular our splitting for the $O(2)$ case will
be of this form.

\begin{definition}
A collection of subgroups of $G$, $\fscr$, is called a \textbf{family}\index{Family}
if it is closed under conjugation and taking subgroups. The complement of
this set in the set of all subgroups of $G$ is a \textbf{cofamily}\index{Cofamily},
the cofamily associated to the family $\fscr$ will be denoted $\widetilde{\fscr}$.

We have the \textbf{universal $\fscr$-space}\index{Universal $\fscr$-space}
$E\fscr$\index{E@$E\fscr$}.
This is a $G$-CW complex constructed from cells of orbit type $G/H$
with $H \in \fscr$.
This space has the universal property:
$E\fscr^H$ is a contractible space for $H \in \fscr$ and
$E\fscr^H= \emptyset$ for $H \notin \fscr$.
Define a map $\varepsilon \co E\fscr_+ \to S^0$ by using the
projection $E\fscr=E\fscr^{e} \to *$ and then adding a disjoint point to both.
The cofibre of $\varepsilon$, $C\varepsilon$, will be called the
\textbf{universal $\widetilde{\fscr}$-space}\index{Universal $\widetilde{\fscr}$-space}
and will be written $E \widetilde{\fscr}$\index{E@$E \widetilde{\fscr}$}.
 Applying the basic fact:
$C{f^H} \cong (Cf)^H$, we can then  see by a simple calculation that
$E\widetilde{\fscr}^H$ is a contractible space for $H \in \fscr$ and
$E\widetilde{\fscr}^H= S^0$ for $H \notin \fscr$.
\end{definition}

\begin{definition}\label{def:familymodelcat}
We set $\FS = L_{E\fscr_+} \GIS$\index{F IS@$\FS$}
and $\FFS = L_{E\widetilde{\fscr}}\GIS$\index{F ISF@$\FFS$},
these are known as the
\textbf{Bousfield $\fscr$-model structure}\index{Bousfield
$\fscr$-model structure} and
\textbf{Bousfield $\widetilde{\fscr}$-model structure}\index{Bousfield
$\widetilde{\fscr}$-model structure} on $\GIS$
(\cite[Chapter IV, Section 6]{mm02}).
We let $[X,Y]^\fscr$ denote the set of
maps between spectra $X$ and $Y$ in the homotopy category of
$\FS$  and similarly we use $[X,Y]^{\widetilde{\fscr}}$ for $\FFS$.
In turn we have rationalised categories
$\FS_\qq$ and $\FFS_\qq$
which are $\FS$ and $\FFS$ localised at $S^0{\qq}$.
\end{definition}
These are cofibrantly generated, left proper, symmetric monoidal model categories
satisfying the monoid axiom by Corollary \ref{cor:localsummary}.
We have versions for $G \mcal$,
where we set
$\fscr \mcal = L_{E\fscr_+} G \mcal$\index{F M@$\fscr \mcal$}
and
$\widetilde{\fscr} \mcal = L_{E\widetilde{\fscr}}
G \mcal$\index{F M@$\widetilde{\fscr} \mcal$},
we also have the rationalised versions
$\fscr \mcal_\qq$ and $\widetilde{\fscr} \mcal_\qq$.

\begin{proposition}\label{prop:familyequiv}
The following conditions on a map $f \co X \to Y$ are equivalent.
\begin{enumerate}
\item $f$ is an $E \fscr_+$-equivalence.
\item $f_* \co \pi_*^H(X) \to \pi_*^H(Y)$ is an isomorphism for all $H \in \fscr$.
\end{enumerate}
\end{proposition}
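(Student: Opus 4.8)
The plan is to prove both implications by analysing the $E\fscr_+$-equivalence condition in terms of homotopy groups, using the universal property of $E\fscr$ and the homotopy group computations established earlier.

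First I would tackle the implication $(i) \Rightarrow (ii)$. Suppose $f \co X \to Y$ is an $E\fscr_+$-equivalence, so $\id \smashprod f \co E\fscr_+ \smashprod X \to E\fscr_+ \smashprod Y$ is a $\pi_*$-isomorphism; in particular $\pi_*^H(E\fscr_+ \smashprod X) \to \pi_*^H(E\fscr_+ \smashprod Y)$ is an isomorphism for all subgroups $H$ of $G$, and in particular for $H \in \fscr$. The key geometric input is that for $H \in \fscr$ the space $E\fscr$ is $H$-equivariantly contractible (indeed $E\fscr^H$ is contractible and $E\fscr$ is built from orbits $G/K$ with $K \in \fscr$, so $\iota_H^* E\fscr$ is an $H$-CW complex that is non-equivariantly and $H$-fixed-point contractible, hence $H$-equivalent to a point). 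Therefore the projection $E\fscr_+ \to S^0$ is an $H$-equivalence of based $G$-spaces after applying $\iota_H^*$, and smashing with a spectrum preserves weak equivalences of this kind (using that $\smashprod$ is a Quillen bifunctor, or directly that $\iota_H^*$ of an $H$-equivalence smashed with a spectrum is a $\pi_*$-isomorphism). So $E\fscr_+ \smashprod X \to S^0 \smashprod X \cong X$ is a $\pi_*^H$-isomorphism for each $H \in \fscr$, and the naturality square relating $\id_{E\fscr_+} \smashprod f$ to $f$ then forces $\pi_*^H(f)$ to be an isomorphism for all $H \in \fscr$.

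For the converse $(ii) \Rightarrow (i)$, assume $\pi_*^H(f)$ is an isomorphism for all $H \in \fscr$; I want to show $\id \smashprod f \co E\fscr_+ \smashprod X \to E\fscr_+ \smashprod Y$ is a $\pi_*^K$-isomorphism for all subgroups $K$ of $G$. The natural way is to filter $E\fscr$ by its skeleta in the $G$-CW structure, so $E\fscr_+$ is a (homotopy) colimit of spectra built by attaching cells $G/H_+ \smashprod S^n$ with $H \in \fscr$. Smashing $X$ (and $Y$) with such a cell and using the standard adjunction $G/H_+ \smashprod X \cong G_+ \smashprod_H \iota_H^* X$ together with the Wirthmüller-type identity reduces $\pi_*^K$ of $G/H_+ \smashprod X$ to homotopy groups of $X$ restricted to subgroups that are subconjugate to $H$, hence in $\fscr$ (since $\fscr$ is closed under conjugation and subgroups). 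By hypothesis $f$ induces isomorphisms on all those homotopy groups, so $\id_{G/H_+} \smashprod f$ is a $\pi_*$-isomorphism for each cell; then one passes up the skeletal filtration using the long exact sequences of cofibre sequences and the five lemma (Lemma \ref{lem:ratLES}-style arguments, or \cite[Chapter III, Theorem 3.5]{mm02}), and finally uses Proposition \ref{prop:HomotopyofHoColim} (homotopy groups commute with the relevant sequential/filtered homotopy colimits of $h$-cofibrations) to conclude that $\id_{E\fscr_+} \smashprod f$ is a $\pi_*$-isomorphism. I would carefully replace $E\fscr_+$ by a CW approximation / use an $h$-cofibrant filtration so that all the colimits involved are honest homotopy colimits, so that the passage to the colimit is legitimate.

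The main obstacle is the converse direction: one must control $\pi_*^K$ of $E\fscr_+ \smashprod X$ for \emph{all} $K$, not just $K \in \fscr$, and then confirm that everything still only depends on the homotopy groups $\pi_*^H(X)$ with $H \in \fscr$. The point that makes this work is precisely that $E\fscr$ is built entirely out of cells $G/H$ with $H \in \fscr$, so after the cellular-induction reduction via the induction–restriction adjunction the only homotopy groups that ever appear are those indexed by subgroups of the $H$'s, all of which lie in $\fscr$; assembling the cellular pieces through the skeletal colimit, where one must be careful to use $h$-cofibrations and Proposition \ref{prop:HomotopyofHoColim} so that homotopy groups commute with the colimit, is the technical heart of the argument.
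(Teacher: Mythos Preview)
Your argument is correct and is the standard proof of this fact. For $(i)\Rightarrow(ii)$ the key point, which you state correctly, is that $\iota_H^* E\fscr$ is $H$-equivariantly contractible when $H\in\fscr$ (all $K$-fixed points for $K\leqslant H$ are contractible because $\fscr$ is closed under subgroups). For $(ii)\Rightarrow(i)$ your cellular induction works; the cleanest way to execute the cell step is to note that $\iota_K^*(G/H)$ decomposes as a disjoint union of $K$-orbits $K/L$ with each $L$ a subgroup of a conjugate of $H$, hence $L\in\fscr$, so $\pi_*^K(G/H_+\smashprod X)$ splits into terms of the form $\pi_*^L(X)$ with $L\in\fscr$.

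However, you should be aware that the paper does not prove this proposition at all: its entire ``proof'' is the single sentence ``This result is \cite[Chapter IV, Proposition 6.7]{mm02}.'' So there is no approach to compare with; you have supplied the argument that the cited reference contains, whereas the thesis simply defers to that reference.
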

\begin{proof}
This result is \cite[Chapter IV, Proposition 6.7]{mm02}.
\end{proof}

A map satisfying the second condition is called an
\textbf{$\fscr$-equivalence}\index{F-equivalence@$\fscr$-equivalence}.
We will only need the next few results for $G \mcal$,
so we state them in that notation.
Recall that the generating cofibrations
and acyclic cofibrations of $G \mcal$ are defined in terms
of the objects $\Sigma^\infty_V (G/H_+ \smashprod S^n)$ for $H$ a subgroup
of $G$ and $V$ an indexing space.
If we restrict these sets to only use those $H$ in
some family $\fscr$, then we obtain the notions of
$\fscr$-cofibrations, $\fscr$-fibrations
and $\fscr$-equivalences.
These collections of maps form a model category
by the following result.

\begin{theorem}\label{thm:Fmodelstructure}
The category $G \mcal$ has an
\textbf{$\fscr$-model structure}\index{f-model structure@$\fscr$-model structure}
with weak equivalences the
$\fscr$-equivalences, cofibrations the $\fscr$-cofibrations and fibrations as defined by the
lifting property. This is a compactly generated proper model structure and
the identity functor gives the left adjoint of a Quillen equivalence
from the $\fscr$-model structure on $G \mcal$ to $\fscr \mcal$.
\end{theorem}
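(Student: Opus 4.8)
The plan is to build the $\fscr$-model structure via the recognition theorem for cofibrantly generated model categories (\cite[Theorem 2.1.19]{hov99}), running the argument that produces the model structure of Theorem \ref{thm:ekmmmodel} but with every generating (acyclic) cofibration built from cells of orbit type $G/H$ with $H \in \fscr$ in place of all orbit types. Let $I_\fscr$ and $J_\fscr$ be the resulting sets of generating $\fscr$-cofibrations and generating $\fscr$-acyclic cofibrations, formed from the generalised sphere $S$-modules $S \smashprod_{\mathscr{L}} \mathbb{L} \Sigma_V^\infty (G/H_+ \smashprod S^n)$ with $H \in \fscr$, and call a map $f$ an $\fscr$-equivalence when $\pi_*^H(f)$ is an isomorphism for every $H \in \fscr$. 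Declare the $\fscr$-fibrations to be the $J_\fscr$-injective maps and the $\fscr$-cofibrations to be the $I_\fscr$-cofibrations, and write $G \mcal_\fscr$ for $G \mcal$ equipped with this structure.

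First I would settle the formal inputs. The $\fscr$-equivalences visibly satisfy the two-out-of-three property and are closed under retracts, and the domains of the maps in $I_\fscr$ and $J_\fscr$ are small relative to the appropriate cell classes, by the compactness exploited in \cite[Chapter IV]{mm02}, so the small object argument supplies the two functorial factorisations. The substantive conditions left to check are that every relative $J_\fscr$-cell complex is an $\fscr$-acyclic $\fscr$-cofibration and that every $I_\fscr$-injective map is an $\fscr$-equivalence. Both are proved just as in the non-family case: the maps of $J_\fscr$ are $h$-cofibrations by Lemma \ref{lem:smodhcof} and, by the argument establishing the generating acyclic cofibrations of $G \mcal$, are $\pi_*$-isomorphisms, hence $\pi_*^H$-isomorphisms for each $H \in \fscr$; the long exact sequences of \cite[Chapter IV, Remark 2.8]{mm02} then propagate this along pushouts and transfinite composites, so relative $J_\fscr$-cell complexes are $\fscr$-equivalences; and a map with the right lifting property against $I_\fscr$ is, on the $H$-fixed-point homotopy groups with $H \in \fscr$, an acyclic fibration, hence an $\fscr$-equivalence. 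Compact generation is then immediate from the construction (the generalised sphere $S$-modules being compact), and properness follows by the arguments used for $G \mcal$ and its localisations, namely the five lemma applied to the long exact sequences of $\pi_*^H$ with $H \in \fscr$ for pushouts along $h$-cofibrations and for pullbacks along fibrations (cf.\ Lemma \ref{lem:rightproperrational} and \cite[Chapter IV]{mm02}). The hard part here is really just this last verification — that $I_\fscr$ and $J_\fscr$ genuinely generate the advertised classes — which is a routine but lengthy transcription of the EKMM and May--Mandell machinery; I would present it as inherited from those sources rather than redo it in full.

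For the comparison with $\fscr \mcal = L_{E\fscr_+} G \mcal$, which is a model category by Definition \ref{def:familymodelcat}, I would examine the adjoint pair $\id : G \mcal_\fscr \overrightarrow{\longleftarrow} \fscr \mcal : \id$ with the displayed left adjoint. Every $\fscr$-cofibration is a cofibration of $G \mcal$, since $I_\fscr$ sits inside the generating cofibrations of Theorem \ref{thm:ekmmmodel}, and Bousfield localisation leaves the cofibrations unchanged, so $\id$ preserves cofibrations. By Proposition \ref{prop:familyequiv} the $\fscr$-equivalences are exactly the $E\fscr_+$-equivalences, that is the weak equivalences of $\fscr \mcal$; hence $\id$ preserves acyclic cofibrations as well and is a left Quillen functor. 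Finally, the two model structures live on the same underlying category, have the same class of weak equivalences, and both adjoints are the identity, so the criterion of \cite[Definition 1.3.12]{hov99} is satisfied trivially: for $X$ cofibrant in $G \mcal_\fscr$ and $Y$ fibrant in $\fscr \mcal$, a map $X \to Y$ is a weak equivalence of $\fscr \mcal$ precisely when it is an $\fscr$-equivalence, precisely when it is a weak equivalence of $G \mcal_\fscr$. This gives the asserted Quillen equivalence.
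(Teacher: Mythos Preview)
Your proposal is correct and is essentially the argument underlying the paper's proof, which consists entirely of the citation ``This is \cite[Chapter IV Theorems 6.5 and 6.9]{mm02}.'' You have simply unpacked what those theorems do: build the $\fscr$-model structure by restricting the generating sets to $\fscr$-orbit types and running the standard recognition and properness arguments, then compare with $\fscr\mcal$ via the identity functors using Proposition~\ref{prop:familyequiv} to identify the weak equivalences.
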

\begin{proof}
This is \cite[Chapter IV Theorems 6.5 and 6.9]{mm02}.
\end{proof}

Let $N$ be a normal subgroup of $G$,
then the subgroups of $N$ form a family $\fscr(N)$
of subgroups of $G$.
We denote the $\fscr(N)$-model structure on $G \mcal$
by $G \mcal(N)$\index{G MNq@$G \mcal(N)$}\label{app:GMNmodel}.
This model structure coincides with the model structure
on $G \mcal$ lifted (see Lemma \ref{lem:lift}) over the right adjoint
$\iota^*_N \co G \mcal_U \to N \mcal_{\iota^*_N U}$
(where $U$ is a $G$-universe, so $\iota^*_N U$ is an $N$-universe).
Hence there is a Quillen pair
$G_+ \smashprod_N (-) : N \mcal \overrightarrow{\longleftarrow}
G \mcal(N) : \iota_N^*$.

\begin{proposition}\label{prop:NonGmodelmonoidal}
For $N$ a normal subgroup of $G$,
$G \mcal(N)$ is a monoidal model category that
satisfies the monoid axiom.
\end{proposition}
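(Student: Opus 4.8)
The plan is to check directly the pushout product axiom (in the sense of \cite[Definition 4.2.6]{hov99}) and the monoid axiom, using throughout that $G\mcal(N)$ shares its weak equivalences with the Bousfield localisation $\fscr(N)\mcal = L_{E\fscr(N)_+}G\mcal$ (by Proposition \ref{prop:familyequiv}) while having \emph{strictly fewer} cofibrations: every $\fscr(N)$-cofibration is a cofibration of $G\mcal$, hence of $\fscr(N)\mcal$. Recall that normality of $N$ enters only in guaranteeing that $\fscr(N)$, the set of subgroups of $N$, is closed under $G$-conjugation and so is a genuine family; granting this, Theorem \ref{thm:Fmodelstructure} supplies the model structure.

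With this in hand the monoid axiom for $G\mcal(N)$ is immediate from that of $\fscr(N)\mcal$ (the $G\mcal$-analogue of Corollary \ref{cor:localsummary}): an acyclic cofibration of $G\mcal(N)$ is a cofibration of $G\mcal$ together with an $\fscr(N)$-equivalence, hence an acyclic cofibration of $\fscr(N)\mcal$; so the class $P$ of maps $f\smashprod_\sphspec\id_Z$ defining the monoid axiom for $G\mcal(N)$ is contained in the corresponding class for $\fscr(N)\mcal$, and $P$-cell therefore consists of $\fscr(N)$-equivalences. The unit condition I would handle via the observation that, for $X$ an $\fscr(N)$-cofibrant $S$-module, $X\smashprod_\sphspec(-)$ preserves $\fscr(N)$-equivalences: for each $H\in\fscr(N)$ the restriction $\iota_H^\ast$ is strong symmetric monoidal, it carries $X$ to an $H\mcal$-cofibrant object (\cite[Chapter V, Lemma 2.2]{mm02}), and it carries an $\fscr(N)$-equivalence $\phi$ to a genuine weak equivalence of $H$-spectra because every subgroup of $H$ lies in $\fscr(N)$; hence $\iota_H^\ast(X\smashprod_\sphspec\phi)\cong\iota_H^\ast X\smashprod_\sphspec\iota_H^\ast\phi$ is a weak equivalence by Lemma \ref{lem:sModMonoid} applied in $H\mcal$, so $X\smashprod_\sphspec\phi$ is an $\fscr(N)$-equivalence. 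Applying this to the cofibrant replacement $\cofrep_N \sphspec\to \sphspec$ in $G\mcal(N)$ gives the unit condition.

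The substantive point is the cofibration half of the pushout product axiom: if $f$ and $g$ are $\fscr(N)$-cofibrations then $f\square g$ must be an $\fscr(N)$-cofibration, a genuinely stronger requirement than being a cofibration of $\fscr(N)\mcal$ (which, together with the property of being a weak equivalence whenever $f$ or $g$ is, follows already from the localised case since the two structures have the same weak equivalences). By the standard reduction for cofibrantly generated model categories (cf. \cite[Section 4.2]{hov99}) it suffices to treat $f\co E\to CE$ and $g\co E'\to CE'$ for $E=\sphspec\smashprod_{\mathscr{L}}\mathbb{L}\Sigma^\infty_V(G/H_+\smashprod S^n)$ and $E'=\sphspec\smashprod_{\mathscr{L}}\mathbb{L}\Sigma^\infty_W(G/K_+\smashprod S^m)$ with $H,K\in\fscr(N)$. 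Using $\Sigma^\infty_V A\smashprod_\sphspec\Sigma^\infty_W B\cong\Sigma^\infty_{V\oplus W}(A\smashprod B)$ one identifies $E\smashprod_\sphspec E'\cong \sphspec\smashprod_{\mathscr{L}}\mathbb{L}\Sigma^\infty_{V\oplus W}\big((G/H\times G/K)_+\smashprod S^{n+m}\big)$; since $G$ is a compact Lie group, $G/H\times G/K$ is a finite $G$-CW complex all of whose isotropy groups are subconjugate to $H$ and hence lie in $\fscr(N)$, so after cellular approximation $E\smashprod_\sphspec E'$ is a finite $\fscr(N)$-cell $S$-module. Writing $f=\id_E\smashprod j$ and $g=\id_{E'}\smashprod j$ for the space-level cofibration $j\co S^0\to I$ of Lemma \ref{lem:basiccofspaces}, naturality of $\smashprod_\sphspec$ in the space variable yields $f\square g\cong\id_{E\smashprod_\sphspec E'}\smashprod(j\square j)$, where $j\square j$ is a finite relative CW inclusion of based $G$-spaces with trivial action (based $G$-spaces being a monoidal model category); smashing a finite $\fscr(N)$-cell $S$-module with such an inclusion produces a relative $\fscr(N)$-cell complex, so $f\square g$ is an $\fscr(N)$-cofibration. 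I expect the main obstacle to be exactly the bookkeeping in this last computation: making the smash-product and cell-structure identifications rigorous inside the category of EKMM $S$-modules — passing through the linear isometries monad $\mathbb{L}$, the operadic unit, and cellular approximation (\cite[Chapter I, Theorem 5.8]{lms86}) — and confirming that $f\square g$ is genuinely a retract of a relative $\fscr(N)$-cell complex rather than merely a cofibration of $G\mcal$.
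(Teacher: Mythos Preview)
Your proof is correct, and it is considerably more detailed than the paper's. The paper's argument is two lines: since the generating cofibrations and generating acyclic cofibrations of $G\mcal(N)$ are a subset of those of $G\mcal$, the identity $G\mcal(N)\to G\mcal$ is left Quillen, ``hence the pushout product and monoid axioms follow from those for $G\mcal$.''

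For the monoid axiom this route is slicker than yours: an acyclic cofibration of $G\mcal(N)$ is (by the subset-of-generators observation) an acyclic cofibration of $G\mcal$, so the class $P$ for $G\mcal(N)$ is contained in the class for $G\mcal$; its cell complexes are then $\pi_*$-isomorphisms, a fortiori $\fscr(N)$-equivalences. Your detour through the Bousfield localisation $\fscr(N)\mcal$ is unnecessary here, as is your separate treatment of the unit condition (any $\fscr(N)$-cofibrant replacement of $\sphspec$ is already $G\mcal$-cofibrant, so the unit condition for $G\mcal$ applies directly, $\pi_*$-isomorphisms being $\fscr(N)$-equivalences).

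On the other hand, for the cofibration clause of the pushout product axiom you have put your finger on exactly the point the paper elides. Knowing that $f\square g$ is a cofibration of $G\mcal$ does not immediately give that it is an $\fscr(N)$-cofibration, and your isotropy argument---that the stabiliser of any point of $G/H\times G/K$ lies in $gHg^{-1}\cap g'Kg'^{-1}\leqslant N$ (normality of $N$ being used here), so $E\smashprod_\sphspec E'$ is an $\fscr(N)$-cell $S$-module and $f\square g=\id_{E\smashprod_\sphspec E'}\smashprod(j\square j)$ is a relative $\fscr(N)$-cell complex---is precisely what is needed to close that gap. The paper presumably intends this, but does not say it; your version makes the argument honest.
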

\begin{proof}
The identity map $G \mcal(N) \to G \mcal$
is a left Quillen functor, since the generating
cofibrations and acyclic cofibrations
of $G \mcal(N)$ are a subset of those for $G \mcal$.
Hence the pushout product and monoid axioms follow
from those for $G \mcal$.
\end{proof}

\begin{theorem}\label{thm:familylocalisations}
For any orthogonal spectra $X$ and $Y$ we have natural isomorphisms
$$\begin{array}{lcl}
{[X,Y]^\fscr} & \cong & [X \smashprod E \fscr_+, Y \smashprod E \fscr_+]^G \\
{[X,Y]^{\widetilde{\fscr}}} & \cong & [X \smashprod E \widetilde{\fscr},
Y \smashprod E \widetilde{\fscr}]^G.
\end{array}$$
Thus we have an equivalence of categories between $\ho \FFS$ and the
full subcategory of objects $X \smashprod E \fscr_+$ in $\ho \GIS$.
Equally there is an equivalence of categories between $\ho \FFS$ and the
full subcategory of objects $X \smashprod E \widetilde{\fscr}$ in $\ho \GIS$.
The map $\rho \co X \to F( E \fscr_+ , X)$ induced by
$E \fscr_+ \to S^0$ gives an $E \fscr_+ $-equivalence from $X$
to an object that is $E \fscr_+$-local.
\end{theorem}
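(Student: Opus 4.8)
The plan is to reduce the statement to three elementary facts about universal spaces, fed into the general theory of Bousfield localisation already set up in this chapter. I would first record the \emph{idempotency relations}, all proved by passing to $H$-fixed points: since any two $G$-maps from a $G$-CW complex with isotropy in $\fscr$ into $E\fscr$ are $G$-homotopic, the two projections $E\fscr \times E\fscr \to E\fscr$ agree up to $G$-homotopy, so $E\fscr_+ \smashprod E\fscr_+ \simeq E\fscr_+$; dually $E\widetilde{\fscr} \smashprod E\widetilde{\fscr} \simeq E\widetilde{\fscr}$; and $E\fscr_+ \smashprod E\widetilde{\fscr} \simeq *$ because its $H$-fixed points are contractible for $H \in \fscr$ and trivial otherwise. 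I would also use the restriction formula that $\iota_H^* E\fscr$ is a model for the universal space of the family $\{ K \leqslant H : K \in \fscr \}$ of subgroups of $H$; in particular, for $H \in \fscr$ this is the family of all subgroups of $H$, so $\iota_H^* E\fscr_+ \to S^0$ is an $H$-equivalence of cofibrant based $H$-spaces and $\iota_H^* E\widetilde{\fscr} \simeq *$.

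The engine is the last sentence of the theorem. For fibrant $W$ the spectrum $F(E\fscr_+, W)$ is $E\fscr_+$-local: by adjunction $[A, F(E\fscr_+, W)]^G \cong [A \smashprod E\fscr_+, W]^G$, which is zero when $A$ is $E\fscr_+$-acyclic since then $A \smashprod E\fscr_+ \simeq *$, so we conclude by Lemma~\ref{lem:localcond}. That $\rho \co W \to F(E\fscr_+, W)$ is an $E\fscr_+$-equivalence is checked on $\pi_*^H$ for $H \in \fscr$ (Proposition~\ref{prop:familyequiv}): restricting and using $\iota_H^* F(E\fscr_+, W) \cong F(\iota_H^* E\fscr_+, \iota_H^* W)$, the map $\iota_H^* \rho$ is identified with $\iota_H^* W = F(S^0, \iota_H^* W) \to F(\iota_H^* E\fscr_+, \iota_H^* W)$, which is an equivalence because $F(-, \iota_H^* W)$ takes the $H$-equivalence $\iota_H^* E\fscr_+ \to S^0$ of cofibrant spaces to one. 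Hence $F(E\fscr_+, \fibrep W)$ is an $\FS$-fibrant replacement of $W$: it is fibrant since $F(E\fscr_+, -)$ is right Quillen, and it is unique up to $\pi_*$-isomorphism by Lemma~\ref{lem:EequivElocal}.

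The only step with real content is the \textbf{vanishing lemma}: if $A$ is $E\fscr_+$-acyclic and $X$ is any spectrum then $[X \smashprod E\fscr_+, A]^G = 0$ (replacing $X$ by a cofibrant model). First, $A$ being $E\fscr_+$-acyclic forces $\iota_H^* A \simeq *$ for all $H \in \fscr$ by Proposition~\ref{prop:familyequiv}. Now $E\fscr_+$ is a based $G$-CW complex with cells of orbit type $G/H$, $H \in \fscr$, so $X \smashprod E\fscr_+$ is the colimit along $h$-cofibrations of the skeleta $X \smashprod E\fscr^{(n)}_+$, whose consecutive subquotients are wedges of $\Sigma^n(G/H_+ \smashprod X)$; by the restriction adjunction $[\Sigma^n(G/H_+ \smashprod X), A]^G \cong [\Sigma^n \iota_H^* X, \iota_H^* A]^H = 0$, so an induction over skeleta via the long exact sequences together with the Milnor exact sequence for the colimit (exactly as in the proof of Lemma~\ref{lem:compactgenerators}) gives the result; $G\mcal$ is handled the same way via cellular approximation. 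This cellular bookkeeping is where I expect the most care to be needed, although none of it is surprising.

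The rest is formal. Using the vanishing lemma, $W \smashprod E\widetilde{\fscr}$ is $E\widetilde{\fscr}$-local (each $E\widetilde{\fscr}$-acyclic $A$ satisfies $A \simeq A \smashprod E\fscr_+$ by the cofibre sequence $A \smashprod E\fscr_+ \to A \to A \smashprod E\widetilde{\fscr} \simeq *$, whence $[A, W \smashprod E\widetilde{\fscr}]^G \cong [A \smashprod E\fscr_+, W \smashprod E\widetilde{\fscr}]^G = 0$), and it receives an $E\widetilde{\fscr}$-equivalence from $W$ by idempotency, so it models $\fibrep_{\widetilde{\fscr}} W$. Then for cofibrant $X$ and fibrant $Y$ we get $[X, Y]^\fscr \cong [X, \fibrep_\fscr Y]^G \cong [X, F(E\fscr_+, Y)]^G \cong [X \smashprod E\fscr_+, Y]^G$, and the cofibre sequence $Y \smashprod E\fscr_+ \to Y \to Y \smashprod E\widetilde{\fscr}$ together with $[X \smashprod E\fscr_+, Y \smashprod E\widetilde{\fscr}]^G = 0$ (vanishing lemma, as $Y \smashprod E\widetilde{\fscr}$ is $E\fscr_+$-acyclic by idempotency) upgrades this to $[X, Y]^\fscr \cong [X \smashprod E\fscr_+, Y \smashprod E\fscr_+]^G$, naturally; symmetrically $[X, Y]^{\widetilde{\fscr}} \cong [X, Y \smashprod E\widetilde{\fscr}]^G \cong [X \smashprod E\widetilde{\fscr}, Y \smashprod E\widetilde{\fscr}]^G$ using the cofibre sequence $X \smashprod E\fscr_+ \to X \to X \smashprod E\widetilde{\fscr}$. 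Finally $(-) \smashprod E\fscr_+$ carries $E\fscr_+$-equivalences to $\pi_*$-isomorphisms, hence descends to a functor $\ho \FS \to \ho \GIS$ that the first Hom-isomorphism shows is fully faithful and that is essentially surjective onto the full subcategory on the objects $X \smashprod E\fscr_+$; likewise $(-) \smashprod E\widetilde{\fscr}$ gives the corresponding equivalence for $\ho \FFS$.
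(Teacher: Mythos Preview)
Your proof is correct and complete. The paper does not actually prove this theorem: its entire proof is a citation to \cite[Chapter IV, Theorems 6.11 and 6.14, Proposition 6.4]{mm02}. What you have written is essentially the classical argument that those references contain (idempotency of the universal spaces, $F(E\fscr_+,-)$ as an explicit $E\fscr_+$-local replacement, the cellular vanishing lemma, and then the cofibre-sequence manipulations), so your approach is the same as the cited one, just spelled out in full rather than deferred.
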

\begin{proof}
This is \cite[Chapter IV, Theorems 6.11 and 6.14]{mm02}
and the last sentence is \cite[Chapter IV, Proposition 6.4]{mm02}.
\end{proof}

\begin{lemma}\label{lem:familysmash}
For any family $\fscr$, $\pi_*^H(E \fscr_+ \smashprod E \widetilde{\fscr}) =0.$
\end{lemma}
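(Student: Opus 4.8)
The plan is to reduce the statement to a computation of geometric fixed points via Proposition \ref{prop:geomfixedpoints}. I would work in $\GIS$, where $\Sigma^\infty$ is left Quillen and the spectra below are cofibrant; since $\pi_*^H$ depends only on the underlying prespectrum, this loses nothing. First note that $E\fscr_+ = \Sigma^\infty(E\fscr_+)$ is cofibrant because $E\fscr$ is a $G$-CW complex, that $E\widetilde{\fscr} = C\varepsilon$ is then cofibrant as the mapping cone of the map $\varepsilon \co E\fscr_+ \to \sphspec$ between cofibrant objects, and that $E\fscr_+ \smashprod E\widetilde{\fscr}$ is cofibrant by the pushout product axiom. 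The zero spectrum is cofibrant too, so Proposition \ref{prop:geomfixedpoints} reduces the claim that $* \to E\fscr_+ \smashprod E\widetilde{\fscr}$ is a weak equivalence to showing that $\Phi^H(E\fscr_+ \smashprod E\widetilde{\fscr})$ is non-equivariantly weakly contractible for every closed subgroup $H \leqslant G$.

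Next I would invoke the standard properties of $\Phi^H$ recorded in \cite[Chapter V, Section 4]{mm02}, all of which are already used in the proof of Proposition \ref{prop:geomfixedpoints}: it is strong symmetric monoidal up to weak equivalence on cofibrant objects, it preserves cofibre sequences, and $\Phi^H(\Sigma^\infty A) \simeq \Sigma^\infty(A^H)$ for a based $G$-space $A$. These give $\Phi^H(E\fscr_+ \smashprod E\widetilde{\fscr}) \simeq \Phi^H(E\fscr_+) \smashprod \Phi^H(E\widetilde{\fscr})$, with $\Phi^H(E\fscr_+) \simeq \Sigma^\infty((E\fscr^H)_+)$ and $\Phi^H(E\widetilde{\fscr}) \simeq C(\Phi^H\varepsilon)$, where $\Phi^H\varepsilon \co \Sigma^\infty((E\fscr^H)_+) \to \sphspec$ is $\Sigma^\infty$ of the based map $(E\fscr^H)_+ \to S^0$.

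The conclusion then comes from the dichotomy built into the definition of $E\fscr$. If $H \notin \fscr$ then $E\fscr^H = \emptyset$, so $(E\fscr^H)_+$ is a point and $\Phi^H(E\fscr_+) \simeq *$, whence the smash product is weakly contractible. If $H \in \fscr$ then $E\fscr^H$ is contractible, so $(E\fscr^H)_+ \to S^0$ is a weak equivalence of based spaces, $\Phi^H\varepsilon$ is a weak equivalence of spectra, and therefore $\Phi^H(E\widetilde{\fscr}) \simeq C(\Phi^H\varepsilon) \simeq *$; smashing this acyclic spectrum with the cofibrant spectrum $\Phi^H(E\fscr_+)$ again gives a weakly contractible spectrum. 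In both cases $\Phi^H(E\fscr_+ \smashprod E\widetilde{\fscr}) \simeq *$, so the map from the zero spectrum is a weak equivalence of $\GIS$ and hence $\pi_*^H(E\fscr_+ \smashprod E\widetilde{\fscr}) = 0$ for all $H$.

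There is no real obstacle here: the argument is formal once Proposition \ref{prop:geomfixedpoints} is in hand. The only points requiring care are keeping track of cofibrancy — so that both that proposition and the monoidality of $\Phi^H$ apply — and quoting the formula $\Phi^H\Sigma^\infty A \simeq \Sigma^\infty(A^H)$ together with the exactness of $\Phi^H$ on cofibre sequences correctly. As an alternative one could instead smash the cofibre sequence $E\fscr_+ \xrightarrow{\varepsilon} \sphspec \to E\widetilde{\fscr}$ with $E\fscr_+$ and check, by the same geometric fixed point computation, that $\varepsilon \smashprod \id_{E\fscr_+}$ is a weak equivalence, so that its cofibre $E\fscr_+ \smashprod E\widetilde{\fscr}$ is acyclic; this amounts to the same proof.
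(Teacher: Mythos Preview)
Your proposal is correct and takes essentially the same approach as the paper: the paper's proof is the one-liner ``We apply the functors $\Phi^H$ and the result follows from Proposition \ref{prop:geomfixedpoints},'' and you have simply unpacked this in full detail, including the cofibrancy bookkeeping and the $H \in \fscr$ versus $H \notin \fscr$ dichotomy that the paper leaves implicit.
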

\begin{proof}
We apply the functors $\Phi^H$ and the result follows from
Proposition \ref{prop:geomfixedpoints}.
\end{proof}

\begin{corollary}\label{cor:equivs}
Take a family of subgroups $\fscr$, then a map $f \co X \to Y$
is a $\pi_*$-isomorphism if and only if $f$ is an $E \fscr_+$-equivalence
and an $E \widetilde{\fscr}$-equivalence.
\end{corollary}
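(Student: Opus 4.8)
The plan is to read the statement off the cofibre sequence $E\fscr_+ \to S^0 \to E\widetilde{\fscr}$ together with the two-out-of-three principle of Remark \ref{rmk:twoforthree}. First I would note that $\varepsilon \co E\fscr_+ \to S^0$ is a map of cofibrant based $G$-spaces whose mapping cone is, by definition, $E\widetilde{\fscr}$, and that $E\widetilde{\fscr}$ is therefore cofibrant: the map $S^0 \to E\widetilde{\fscr}$ is a pushout of the cofibration $E\fscr_+ \to C E\fscr_+$ and $S^0$ is cofibrant. Applying $\Sigma^\infty$ and then smashing with an arbitrary $G$-spectrum $X$ gives a sequence $X \smashprod E\fscr_+ \to X \to X \smashprod E\widetilde{\fscr}$, and the point to check here is that this is still a cofibre sequence of spectra; this holds because $X \smashprod (-)$ preserves pushouts and sends the $h$-cofibration $E\fscr_+ \to C E\fscr_+$ to an $h$-cofibration by Lemma \ref{lem:hcofsGtopological}.

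Given $f \co X \to Y$, I would then form the induced map of cofibre sequences relating $E\fscr_+ \smashprod f$, the map $f$ itself, and $E\widetilde{\fscr} \smashprod f$, and apply Remark \ref{rmk:twoforthree}: any two of these three maps being $\pi_*$-isomorphisms forces the third to be one as well. Specialising, if $f$ is at once an $E\fscr_+$-equivalence and an $E\widetilde{\fscr}$-equivalence, then the middle map $f$ of the sequence is a $\pi_*$-isomorphism, which is one of the two implications. For the converse I would use that $E\fscr_+$ and $E\widetilde{\fscr}$ are cofibrant, so that smashing with either preserves $\pi_*$-isomorphisms; this is Proposition \ref{prop:Esmashisom} read with $E = \sphspec$, or equivalently the Quillen bifunctor property of the smash product on $\GIS$ (respectively $G \mcal$). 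Hence a $\pi_*$-isomorphism is automatically both an $E\fscr_+$-equivalence and an $E\widetilde{\fscr}$-equivalence, and the two conditions are equivalent.

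The argument is essentially formal, and I do not expect a real obstacle; the only mildly delicate step is the bookkeeping in the first paragraph, confirming that smashing the space-level cofibre sequence $E\fscr_+ \to S^0 \to E\widetilde{\fscr}$ with $X$ yields a cofibre sequence of spectra so that Remark \ref{rmk:twoforthree} genuinely applies. An alternative would be to combine Proposition \ref{prop:familyequiv} --- which identifies $E\fscr_+$-equivalences with the maps inducing isomorphisms on $\pi_*^H$ for all $H \in \fscr$ --- with the dual statement that a map is an $E\widetilde{\fscr}$-equivalence precisely when it induces isomorphisms on $\pi_*^H$ for all $H \notin \fscr$; but proving that dual statement needs essentially the same geometric-fixed-point input (Proposition \ref{prop:geomfixedpoints} and Lemma \ref{lem:familysmash}), so the cofibre sequence route above is the more economical one.
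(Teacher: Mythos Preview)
Your proposal is correct and follows essentially the same route as the paper: the paper's proof is a terse two-line appeal to the cofibre sequence $E\fscr_+ \to \sphspec \to E\widetilde{\fscr}$ together with the observation that an $\sphspec$-equivalence is a $\pi_*$-isomorphism, which is exactly your application of Remark~\ref{rmk:twoforthree}. You have simply unpacked the details (cofibrancy of the spaces involved, preservation of the cofibre sequence under smashing, and the easy converse direction) that the paper leaves implicit.
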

\begin{proof}
We have the cofibre sequence $E \fscr_+ \to \sphspec \to E \widetilde{\fscr}$
and an $\sphspec$-equivalence is a $\pi_*$-isomorphism.
\end{proof}

\begin{definition}\label{def:idemfamily}
Let $\fscr$ be a family of subgroups of $G$ such that $\fscr$
is an open and closed $G$-invariant subspace
of $\scal_f G$ that is a union of $\sim$ classes. Then we call such a
collection an \textbf{idempotent family}\index{Idempotent family}.
\end{definition}
An idempotent family $\fscr$ of $G$ corresponds via
tom Dieck's isomorphism to an idempotent
$e_\fscr$ of the rational Burnside ring of $G$ and the associated cofamily
corresponds to the complement $1-e_\fscr$, which we will also denote by
$e_{\widetilde{\fscr}}$.

\begin{lemma}\label{lem:Geidemfamily}
Consider a short exact sequence of compact Lie groups
$1 \to G_e \to G \to F \to 1$.
Then $\fscr$, the set of subgroups of $G_e$
(the identity component of $G$), is an idempotent family.
\end{lemma}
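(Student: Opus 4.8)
The plan is to check, in order, the three parts of Definition \ref{def:idemfamily}: that $\fscr$ is a family, that it is a union of $\sim$-classes, and that it is an open and closed $G$-invariant subspace of $\scal_f G$. The family axioms I would dispose of first and they are immediate: since $G_e$, being the identity component, is normal in $G$, we have $gHg^{-1}\subseteq gG_eg^{-1}=G_e$ whenever $H\subseteq G_e$ and $g\in G$, and any closed subgroup of a subgroup of $G_e$ is again contained in $G_e$; so $\fscr$ is closed under conjugation and under passage to subgroups, hence is a $G$-invariant family.

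Next I would show $\fscr$ is $\sim$-saturated by checking that each generating step of $\sim$ preserves membership. Passing from $H'$ down to a subgroup $H\trianglelefteqslant H'$ obviously stays inside $G_e$. For the step upward, suppose $H\subseteq G_e$, $H\trianglelefteqslant H'$ and $H'/H$ is a torus. Then $H'\cap G_e$ is a closed subgroup of $H'$ containing $H$, and $H'/(H'\cap G_e)$ embeds in $G/G_e=F$, so $H'\cap G_e$ has finite index in $H'$; hence $(H'\cap G_e)/H$ is a closed, finite-index subgroup of the compact connected group $H'/H$. Such a subgroup is open, hence is all of $H'/H$, so $H'\subseteq G_e$. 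Thus every $\sim$-step preserves $\fscr$, and $\fscr$ is a union of $\sim$-classes.

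The substantive step is that $\fscr$ is clopen in $\scal_f G$, and here I would work through the continuous quotient $q\co\scal_f G\to\scal_f G/{\sim}\,\cong\,\fcal G$, since $\scal_f G$ does not carry the naive Hausdorff topology. Because $\fscr$ is $\sim$-saturated and $q(H)$ lies in the $\sim$-class of $H$, one gets $\fscr=q^{-1}(\fcal G\cap\fscr)$, so it suffices to show $\fcal G\cap\fscr=\{H\in\fcal G:H\subseteq G_e\}$ is clopen in $\fcal G$ for the Hausdorff topology. This is where the identity component being clopen in $G$ enters: if $H_n\to H$ with $H\subseteq G_e$ but infinitely many $H_n\not\subseteq G_e$, then choosing $h_n\in H_n\setminus G_e$ and passing to a convergent subsequence (compactness of the closed set $G\setminus G_e$) produces a limit point of the $H_n$ lying in $G\setminus G_e$, contradicting the upper-limit property of Hausdorff convergence together with $H\subseteq G_e$; so the set is open. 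Conversely, if all $H_n\subseteq G_e$ then any limit of points $h_n\in H_n$ lies in the closed set $G_e$, so $H\subseteq G_e$ and the set is closed. Since $\fcal G\cap\fscr$ is also $G$-invariant, pulling back along $q$ shows $\fscr$ is a clopen $G$-invariant $\sim$-saturated family, that is, an idempotent family; equivalently, $(\fcal G\cap\fscr)/G$ is a clopen subset of $\fcal G/G$ and so determines an idempotent of $A(G)\otimes\qq\cong C(\fcal G/G,\qq)$ whose support is exactly $\fscr$. The one real obstacle is precisely this topological step: one must be careful that it is the topology of $\scal_f G$ (equivalently of $\fcal G$) that governs the argument rather than the naive Hausdorff topology on subgroups, and it is the fact that $G_e$ is both open and closed in $G$ that makes the clopen-ness go through.
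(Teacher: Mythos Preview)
Your proof is correct and in fact more explicit than the paper's in one respect. Both arguments dispose of the family axioms identically (normality of $G_e$) and both reduce the clopen question to the subset $\fscr\cap\fcal G$ of $\fcal G$ with its Hausdorff topology. The differences are two. First, you explicitly verify that $\fscr$ is a union of $\sim$-classes via the finite-index/connectedness argument (if $H'/H$ is a torus and $H\subseteq G_e$ then $(H'\cap G_e)/H$ has finite index in the connected group $H'/H$, hence equals it); the paper does not spell this out and simply asserts at the end that the support of the resulting idempotent equals $\fscr$. Second, for openness of $\fscr\cap\fcal G$ the paper invokes Bredon's local result \cite[Chapter II, Corollary 5.6]{bred} that any subgroup sufficiently close to $H$ is subconjugate to $H$ (hence inside $G_e$ when $H$ is), whereas you argue directly by sequential compactness and the upper/lower-limit characterisation of Hausdorff convergence together with $G_e$ being clopen in $G$. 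Your route is more elementary and self-contained; the paper's is terser once one accepts the Bredon citation.
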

\begin{proof}
The identity component of $G$ is normal,
hence the set of subgroups of $G_e$
is closed under conjugation and taking subgroups,
thus $\fscr$ is a family.
Recall that since $G$ is compact,
$F = G /G_e$ is finite.
Let $\fscr'$
be the set of conjugacy classes of groups in $\fscr$ with
finite index in their normaliser.
Take $H \in \fscr'$, by \cite[Chapter II, Corollary 5.6]{bred} we know that
if $K \in \fcal G$ is in some sufficiently small neighbourhood of
$H$ in the space $\fcal G$, then $K$ is subconjugate
to $H$ and so $K$ is a subgroup of $G_e$. It follows that
$\fscr'$ is open in $\fcal G/G$.
Now take $(K)$ to be in $(\fcal G/G) \setminus \fscr'$,
so there is a $g \in G \setminus G_e$ such that $K \cap gG_e$ is non-empty.
Then any $L \in \fcal G$ that is sufficiently close to $K$
also has a non-trivial intersection with $gG_e$
so $L$ is not a subgroup of $G_e$,
it follows that $\fscr'$ is also closed.
Hence $e_\fscr$,
the characteristic function of $\fscr'$,
is a continuous map $\fcal G /G \to \qq$.
Thus $e_\fscr$ is an idempotent, since $e_\fscr(H)=1$
if $(H) \in \fscr$ and zero otherwise.
It follows that the support of $e_\fscr$
(a subset of $S_f G$) is $\fscr$.
\end{proof}

\begin{lemma}\label{lem:familiestoidempotents}
Let $\fscr$ be an idempotent family of subgroups of $G$,
with corresponding idempotent $e_\fscr$.
Then the composite $E \fscr_+ \to S^0 \to e_\fscr S^0$
is a rational $\pi_*$-isomorphism of orthogonal $G$-spectra.
Equally $E \widetilde{\fscr}$ and
$e_{\widetilde{\fscr}} S^0$ are rationally equivalent.
Furthermore a map $f$ is an $E \fscr_+$-equivalence if and only if
$\iota_H^*(e_\fscr) \pi_*^H (f) \otimes \qq$ is an isomorphism.
A map $f$ is an $E \widetilde{\fscr}$-equivalence if and only if
$\iota_H^*(e_{\widetilde{\fscr}}) \pi_*^H (f) \otimes \qq$ is an isomorphism.
\end{lemma}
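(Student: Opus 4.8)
The first two assertions carry the content and the last two follow formally. For the first, the plan is to detect the rational $\pi_*$-isomorphism on geometric fixed points, using Proposition~\ref{prop:geomfixedpoints} and its rational corollary: a map of $G$-spectra is a rational equivalence if and only if $\Phi^H$ of it is a rational equivalence of non-equivariant spectra for every closed subgroup $H$. Since $\Phi^H$ is a functor with $\Phi^H(\Sigma^\infty Y)\simeq\Sigma^\infty(Y^H)$, the defining properties of $E\fscr$ give $\Phi^H(E\fscr_+)\simeq\sphspec$ for $H\in\fscr$ (as $E\fscr^H$ is contractible) and $\Phi^H(E\fscr_+)\simeq *$ for $H\notin\fscr$ (as $E\fscr^H=\emptyset$), while the proposition above computing $\Phi^L(eX)$, applied with $X=S^0$ and $e=e_\fscr$, gives that $\Phi^H(e_\fscr S^0)$ is rationally equivalent to $\Phi^H(S^0)=\sphspec$ when $H$ lies in the support of $e_\fscr$ and is rationally acyclic otherwise. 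The one point to note is that the support of $e_\fscr$ is precisely $\fscr$, which is exactly how the correspondence between idempotent families and idempotents of $A(G)\otimes\qq$ is arranged via tom Dieck's isomorphism (compare Lemma~\ref{lem:Geidemfamily}). Thus, for $H\in\fscr$, applying $\Phi^H$ to $E\fscr_+\to S^0\to e_\fscr S^0$ gives a weak equivalence (collapsing a contractible space) followed by a rational equivalence, and for $H\notin\fscr$ a map of rationally trivial spectra; in either case it is a rational equivalence, so the composite is a rational $\pi_*$-isomorphism.

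The second assertion is the delicate one, and I expect it to be the main obstacle: $\widetilde{\fscr}$ is not a family, so there is no geometric space playing the role of $E\fscr$, and merely matching geometric fixed points would not furnish an honest comparison map. The plan is to bootstrap from the first assertion via smash products. Write $\varphi\co E\fscr_+\to e_\fscr S^0$ for the composite just shown to be a rational equivalence; note that $E\fscr_+$ and $E\widetilde{\fscr}$ are cofibrant $G$-spectra (the latter is a cofibre of a map of cofibrant objects, compare the remarks after Definition~\ref{def:cone/cofibre}), so smashing with either preserves rational equivalences (Proposition~\ref{prop:Esmashisom}). First, $E\widetilde{\fscr}\smashprod e_\fscr S^0$ is rationally equivalent, via $\id\smashprod\varphi$, to $E\widetilde{\fscr}\smashprod E\fscr_+$, which is acyclic by Lemma~\ref{lem:familysmash}; combining this with the rational splitting $S^0\simeq e_\fscr S^0\vee e_{\widetilde{\fscr}}S^0$ from Proposition~\ref{prop:idemsplit}, smashed with $E\widetilde{\fscr}$, shows $E\widetilde{\fscr}$ is rationally equivalent to $E\widetilde{\fscr}\smashprod e_{\widetilde{\fscr}}S^0$. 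Second, smash the cofibre sequence $E\fscr_+\to S^0\to E\widetilde{\fscr}$ with $e_{\widetilde{\fscr}}S^0$: by Corollary~\ref{cor:comrel} (twice) and the first assertion the initial term $e_{\widetilde{\fscr}}S^0\smashprod E\fscr_+$ has rational homotopy $\iota_H^*(e_{\widetilde{\fscr}}e_\fscr)\pi_*^H(S^0)\otimes\qq=0$ since $e_{\widetilde{\fscr}}e_\fscr=0$, so it is rationally acyclic and, by the long exact sequence of rational homotopy groups (Lemma~\ref{lem:ratLES}), the canonical map $e_{\widetilde{\fscr}}S^0\to e_{\widetilde{\fscr}}S^0\smashprod E\widetilde{\fscr}$ is a rational equivalence. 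Chaining the two gives the desired rational equivalence between $e_{\widetilde{\fscr}}S^0$ and $E\widetilde{\fscr}$.

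The last two statements are then formal. Reading an $E\fscr_+$-equivalence as a weak equivalence of $\FS_\qq$, that is as an $(E\fscr_+\smashprod S^0\qq)$-equivalence, it means by definition that $E\fscr_+\smashprod f$ is a rational equivalence. Taking $f$ between cofibrant objects (harmless), the first assertion together with Proposition~\ref{prop:Esmashisom} identifies $E\fscr_+\smashprod f$ with $e_\fscr S^0\smashprod f$ up to rational equivalence, so $f$ is a rational $E\fscr_+$-equivalence if and only if $e_\fscr S^0\smashprod f$ is a rational equivalence, if and only if $\pi_*^H(e_\fscr S^0\smashprod f)\otimes\qq$ is an isomorphism for every $H$ (Lemma~\ref{lem:ratequivs} and Proposition~\ref{prop:rathomgps}), if and only if $\iota_H^*(e_\fscr)\pi_*^H(f)\otimes\qq$ is an isomorphism for every $H$ (Corollary~\ref{cor:comrel}). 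The $E\widetilde{\fscr}$ case is identical, using the second assertion and the fact that $e_{\widetilde{\fscr}}=1-e_\fscr$ is again an idempotent of $A(G)\otimes\qq$, so Corollary~\ref{cor:comrel} applies to it.
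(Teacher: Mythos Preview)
Your proof is correct and follows essentially the same route as the paper. For the first assertion and the final two, your argument is the paper's argument written out in full: geometric fixed points for the map $E\fscr_+\to S^0\to e_\fscr S^0$, and Corollary~\ref{cor:comrel} for the characterisation of $E\fscr_+$- and $E\widetilde{\fscr}$-equivalences.

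For the second assertion you end up with the same zig-zag as the paper, namely
\[
e_{\widetilde{\fscr}} S^0 \longrightarrow e_{\widetilde{\fscr}} S^0 \smashprod E\widetilde{\fscr}\ \simeq\ e_{\widetilde{\fscr}} E\widetilde{\fscr} \longleftarrow E\widetilde{\fscr},
\]
but you verify each leg differently. The paper simply says ``similarly'' and checks both maps on geometric fixed points, exactly as in the first part (this works because $\Phi^H E\widetilde{\fscr}$ and $\Phi^H(e_{\widetilde{\fscr}}S^0)$ are both computed directly). You instead deduce the two legs from the idempotent splitting of Proposition~\ref{prop:idemsplit} and the smashed cofibre sequence, using Corollary~\ref{cor:comrel} to see the unwanted summands vanish. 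Your route is a little longer but has the virtue of making explicit why a zig-zag is needed; your remark that geometric fixed points alone would not ``furnish an honest comparison map'' is slightly overstated, since once the zig-zag is written down the $\Phi^H$ check is immediate, but the underlying point---that one must first produce the maps---is well taken.
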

\begin{proof}
Look at the geometric fixed points of
$E \fscr_+ \to S^0 \to e_\fscr S^0$ to see that
this is a rational equivalence.
Similarly we have a zig-zag
of rational equivalences
$e_{\widetilde{\fscr}} S^0 \to e_{\widetilde{\fscr}} E \widetilde{\fscr}
\leftarrow E \widetilde{\fscr}$.
The second statement then follows by Corollary \ref{cor:comrel}.
\end{proof}

\begin{corollary}
For an idempotent family $\fscr$ and idempotents as above
we have the following collection of rational equivalences for any orthogonal
$G$-spectrum $X$.
$$ \begin{array}{rcccl}
e_\fscr X & \simeq & e_\fscr \sphspec \smashprod X & \simeq & E \fscr_+ \smashprod X \\
e_{\widetilde{\fscr}} X & \simeq & e_{\widetilde{\fscr}} \sphspec \smashprod X & \simeq &
E \widetilde{\fscr} \smashprod X
\end{array} $$
\end{corollary}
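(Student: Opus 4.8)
The plan is to read off the whole display by concatenating two facts that are already established: the homotopy-colimit description of $eX$ together with Corollary \ref{cor:comrel}, which handles the middle columns, and the identification of $E\fscr_+$ and $E\widetilde{\fscr}$ with the idempotent summands of $\sphspec$ from Lemma \ref{lem:familiestoidempotents}, which handles the outer columns. Write $e=e_\fscr\in A(G)\otimes\qq$ for the idempotent attached to the idempotent family $\fscr$ and $e_{\widetilde{\fscr}}=1-e$ for its complement; each is realised by a self-map of $\fibrep_\qq\sphspec$, and $e_\fscr X$ is by definition the telescope of $\id\smashprod e$ on $X\smashprod\fibrep_\qq\sphspec$ while $e_\fscr\sphspec$ is the telescope of $e$ on $\fibrep_\qq\sphspec$.

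First I would dispose of the middle columns. The canonical map out of a sequential colimit gives $e_\fscr X=\hocolim_i(X\smashprod\fibrep_\qq\sphspec)\to X\smashprod\hocolim_i\fibrep_\qq\sphspec=e_\fscr\sphspec\smashprod X$, and Corollary \ref{cor:comrel} says exactly that this map induces an isomorphism on rational homotopy groups, so it is a rational equivalence. Running the same argument with $e$ replaced by $e_{\widetilde{\fscr}}$ gives the middle column of the second row. Note that Corollary \ref{cor:comrel} also records the stronger statement $\pi^H_*(e_\fscr X)\otimes\qq\cong\iota^*_H(e_\fscr)\pi^H_*(X)\otimes\qq$, which is not needed here.

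Next I would treat the outer columns. Lemma \ref{lem:familiestoidempotents} tells us that the composite $E\fscr_+\to S^0\to e_\fscr S^0$ is a rational $\pi_*$-isomorphism, and that there is a zig-zag of rational equivalences $e_{\widetilde{\fscr}}S^0\to e_{\widetilde{\fscr}}E\widetilde{\fscr}\leftarrow E\widetilde{\fscr}$. Smashing each of these with $X$ and applying Proposition \ref{prop:Esmashisom} (smashing with a cofibrant $G$-spectrum preserves rational equivalences) yields rational equivalences $E\fscr_+\smashprod X\simeq e_\fscr\sphspec\smashprod X$ and $E\widetilde{\fscr}\smashprod X\simeq e_{\widetilde{\fscr}}\sphspec\smashprod X$. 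Splicing these onto the middle-column equivalences of the previous paragraph produces both rows of the display.

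The hard part will be the cofibrancy bookkeeping, which is the only genuine obstacle: the corollary asserts the equivalences for an \emph{arbitrary} orthogonal $G$-spectrum $X$, whereas Proposition \ref{prop:Esmashisom} is phrased for a cofibrant smash factor and $\fibrep_\qq\sphspec$ itself is not cofibrant in $G\mcal$. I would handle this by replacing $X$ throughout by a cofibrant approximation $\cofrep X\to X$ and checking that each functor in play — $e_\fscr(-)$, $(-)\smashprod E\fscr_+$, $(-)\smashprod E\widetilde{\fscr}$ and $(-)\smashprod\fibrep_\qq\sphspec$ — carries $\pi_*$-isomorphisms to rational equivalences. For the $G$-space smash factors this follows from Lemma \ref{lem:hcofsGtopological} and left properness; for the spectrum-level factors one rewrites the relevant smash product as a sequential homotopy colimit of smashes with the cofibrant object $\sphspec$ and applies Proposition \ref{prop:HomotopyofHoColim} together with the exactness of $(-)\otimes\qq$, exactly as in the proof of Corollary \ref{cor:comrel}. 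Once this is in place the corollary follows with no further work.
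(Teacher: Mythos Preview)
Your approach is correct and is precisely what the paper has in mind: the corollary is stated without proof because it is meant to follow immediately from Corollary~\ref{cor:comrel} (for the left-hand equivalences) and Lemma~\ref{lem:familiestoidempotents} (for the right-hand ones), exactly as you lay out.

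Your ``hard part'' is over-engineered, however. The statement is about orthogonal $G$-spectra, and in $\GIS$ the sphere $\sphspec$ is cofibrant; hence so is $\fibrep_\qq\sphspec$ (the localised fibrant replacement is an acyclic cofibration out of $\sphspec$), and therefore so is $e_\fscr\sphspec$ by Proposition~\ref{prop:hocolimproperties}. Likewise $E\fscr_+$ and $E\widetilde{\fscr}$ are cofibrant $G$-spaces, so their suspension spectra are cofibrant. Thus Proposition~\ref{prop:hocolimproperties} applies directly to give $e_\fscr X\simeq e_\fscr\sphspec\smashprod X$ for \emph{any} $X$, and smashing the rational equivalence $E\fscr_+\simeq_\qq e_\fscr\sphspec$ with an arbitrary $X$ is handled by first passing to $\cofrep X$ (harmless because the cofibrant objects $E\fscr_+$ and $e_\fscr\sphspec$ preserve the weak equivalence $\cofrep X\to X$ under smash) and then invoking Proposition~\ref{prop:Esmashisom}. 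Your remark about $\fibrep_\qq\sphspec$ failing to be cofibrant in $G\mcal$ is a red herring here.
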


\begin{lemma}\label{lem:rightproperfamily}
For $\fscr$ an idempotent family, the categories
$\FS_\qq$, $\FFS_\qq$ (and their $S$-module counterparts
$\fscr \mcal_\qq$ and $\widetilde{\fscr} \mcal_\qq$) are right proper.
\end{lemma}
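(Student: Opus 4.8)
The plan is to reduce the statement to the right properness results already in hand, namely Lemma~\ref{lem:rightproperrational} for $\GIS_\qq$ and $G\mcal_\qq$, rather than to rerun the pullback-and-fibre argument of that lemma from scratch. By Lemma~\ref{lem:familiestoidempotents}, for an idempotent family $\fscr$ with corresponding idempotent $e_\fscr$, the space $E\fscr_+$ is rationally equivalent to $e_\fscr\sphspec$, and $E\widetilde{\fscr}$ is rationally equivalent to $e_{\widetilde{\fscr}}\sphspec$. Hence $\FS_\qq = L_{E\fscr_+}\GIS_\qq$ has the same weak equivalences and cofibrations as $L_{e_\fscr\sphspec}\GIS_\qq$, and similarly for $\FFS_\qq$; so it suffices to treat the categories $L_{e_\fscr\sphspec}\GIS_\qq$ and $L_{e_{\widetilde{\fscr}}\sphspec}\GIS_\qq$ (and their $S$-module counterparts). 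These are precisely the factors appearing in the splitting of Theorem~\ref{thm:generalsplitting} applied to the pair $\{e_\fscr\sphspec, e_{\widetilde{\fscr}}\sphspec\}$ (which satisfies the hypotheses since $e_\fscr e_{\widetilde{\fscr}} = 0$ forces $e_\fscr\sphspec \smashprod e_{\widetilde{\fscr}}\sphspec$ rationally acyclic, and $e_\fscr\sphspec \vee e_{\widetilde{\fscr}}\sphspec \simeq_\qq \sphspec$).

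First I would record the general fact that a factor of a product model category is right proper whenever the product is: if $M_1\times M_2$ is right proper then so is each $M_i$, because a pullback square in $M_i$ becomes a pullback square in $M_1\times M_2$ after pairing with an identity square in the other factor, and weak equivalences and fibrations in the product are detected factorwise. Next I would invoke Theorem~\ref{thm:generalsplitting} (equivalently Corollary~\ref{cor:possiblesplittings}) to identify $\GIS_\qq$ with $L_{e_\fscr\sphspec}\GIS_\qq \times L_{e_{\widetilde{\fscr}}\sphspec}\GIS_\qq$ via a strong monoidal Quillen equivalence. A Quillen equivalence does not automatically transport right properness, so instead I would argue directly: $\GIS_\qq$ is right proper by Lemma~\ref{lem:rightproperrational}, and the product model structure $L_{e_\fscr\sphspec}\GIS_\qq \times L_{e_{\widetilde{\fscr}}\sphspec}\GIS_\qq$ has, by the splitting, the same underlying category, cofibrations, fibrations and weak equivalences reorganised across the two factors — more precisely, a pullback in the product is computed factorwise, and each factor $L_{e\sphspec}\GIS_\qq$ sits inside $\GIS_\qq$ with the same objects and the localised weak equivalences. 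So the cleanest route is to prove right properness of each localised factor by hand, mimicking the proof of Lemma~\ref{lem:rightproperrational}: show that in a pullback square over an $E$-fibration with right-hand vertical map an $e\sphspec\smashprod S^0\qq$-equivalence, the fibres agree (the fibre computation of Lemma~\ref{lem:rightproperrational} is insensitive to which localisation we are in, since it only uses level $G$-fibrations and the level comparison of fibres with preimages of basepoints), then smash the whole pullback with $e\sphspec \smashprod S^0\qq$ (which preserves $h$-cofibrations and pullbacks over fibrations up to the relevant equivalences), use left properness of the ambient category, and apply the long exact sequence of rational homotopy groups together with the five lemma.

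For the $S$-module counterparts $\fscr\mcal_\qq$ and $\widetilde{\fscr}\mcal_\qq$ the same argument applies using $G\mcal_\qq$ in place of $\GIS_\qq$ and citing the $G\mcal$ versions of the relevant results (Lemma~\ref{lem:rightproperrational}, \cite[Chapter IV, Theorem 2.9]{mm02}, and Theorem~\ref{thm:comparisons} for the splitting). The main obstacle I anticipate is the bookkeeping around smashing a pullback square with $e\sphspec\smashprod S^0\qq$: smashing with a cofibrant object is well behaved for pushouts and $h$-cofibrations but one must be careful that the pullback of a level fibration is preserved well enough — this is exactly why the proof of Lemma~\ref{lem:rightproperrational} works at the level of \emph{level} fibrations and identifies fibres with preimages of the basepoint, and I would reuse that device verbatim, checking only that the idempotent $e$ acting on homotopy groups commutes with the fibre sequence, which it does since it is induced by a self-map of the sphere. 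Once that identification of fibres is in place, the five-lemma argument over the rationalised, $e$-localised homotopy groups is routine.
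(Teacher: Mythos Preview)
Your proposal eventually lands on the right idea, but takes a long detour and inserts a step that does not work. The paper's proof is two lines: applying an idempotent $e \in A(G)\otimes\qq$ to an exact sequence of $A(G)\otimes\qq$-modules preserves exactness, so right properness follows by rerunning the proof of Lemma~\ref{lem:rightproperrational} and applying $e_\fscr$ (respectively $e_{\widetilde{\fscr}}$) to the long exact sequence of rational homotopy groups of a fibration. The fibre identification $F\delta \simeq F\beta$ from that proof is already a $\pi_*$-isomorphism, hence an isomorphism after applying $e_\fscr(-)\otimes\qq$, and the five-lemma then gives that the pulled-back map induces an isomorphism on $e_\fscr\pi_*^H(-)\otimes\qq$ for all $H$, which by Lemma~\ref{lem:familiestoidempotents} is exactly the weak equivalence condition in $\FS_\qq$.

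Your excursion through the splitting theorem and product model categories is unnecessary, as you yourself realise. More seriously, the step ``smash the whole pullback with $e\sphspec \smashprod S^0\qq$'' is a genuine problem: smashing is a left adjoint and does not preserve pullbacks, so the square you obtain need not be a pullback and the fibre argument breaks down. Invoking left properness here is also a category error --- left properness concerns pushouts along cofibrations, and you are trying to prove a statement about pullbacks along fibrations. You do state the essential observation at the very end, that the idempotent acting on homotopy groups commutes with the fibre sequence; but that observation \emph{is} the whole proof, not a technicality to be checked after the fact. The moral is to work with the idempotent at the level of homotopy groups, where it is exact and harmless, rather than at the level of spectra, where it interacts badly with limits.
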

\begin{proof}
Let $e \in A(G) \otimes \qq$ be an idempotent, then
for any exact sequence of $A(G) \otimes \qq$-modules
$ \dots \to M_i \to M_{i-1} \to \dots $ the sequence
$ \dots \to e M_i \to e M_{i-1} \to \dots $ is exact.
Right properness follows from the proof of
Lemma \ref{lem:rightproperrational} by
applying $e_\fscr$ (or $e_{\widetilde{\fscr}}$)
to the long exact sequence of rational homotopy groups
of a fibration.
\end{proof}

\begin{theorem}\label{thm:familysplitting}
For $G$ a compact Lie group and $\fscr$ an idempotent family of subgroups
of $G$, we have a strong monoidal Quillen equivalence of cofibrantly generated,
proper, monoidal model categories satisfying the monoid axiom
$$\Delta : \GIS_\qq \rightleftarrows \FS_\qq \times \FFS_\qq : \prod.$$
In particular we have the following natural isomorphism for any $G$-spectra $X$ and $Y$
$$[X,Y]^G_\qq \cong
[X \smashprod E \fscr_+, Y \smashprod E \fscr_+]^G_\qq
\oplus
[X \smashprod E \widetilde{\fscr}, Y \smashprod E \widetilde{\fscr}]^G_\qq.$$
\end{theorem}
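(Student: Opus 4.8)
The plan is to obtain this as a direct instance of the general splitting theorem, Theorem \ref{thm:generalsplitting}, applied to the two-element collection $\{E\fscr_+,\,E\widetilde{\fscr}\}$; so the work is to verify the two hypotheses of that theorem and then to translate the conclusion into the stated form. First I would check cofibrancy: $E\fscr_+$ is by construction a $G$-CW complex, hence cofibrant, and $E\widetilde{\fscr}=C\varepsilon$ is the pushout of the cofibration $E\fscr_+\to CE\fscr_+$ along $\varepsilon\co E\fscr_+\to S^0$, hence cofibrant since $S^0$ is (and in any case one may replace either space by a cofibrant approximation without changing the associated localisation). The condition that $E\fscr_+\smashprod E\widetilde{\fscr}$ be rationally acyclic is immediate from Lemma \ref{lem:familysmash}, which shows it is in fact $\pi_*$-acyclic.

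The one hypothesis that requires a genuine, if short, argument is that $E\fscr_+\vee E\widetilde{\fscr}$ is rationally equivalent to $\sphspec$. Here I would use Lemma \ref{lem:familiestoidempotents}: writing $e_\fscr\in A(G)\otimes\qq$ for the idempotent attached to the idempotent family $\fscr$ and $e_{\widetilde{\fscr}}=1-e_\fscr$, that lemma gives rational equivalences $E\fscr_+\simeq e_\fscr\sphspec$ and $E\widetilde{\fscr}\simeq e_{\widetilde{\fscr}}\sphspec$, so $E\fscr_+\vee E\widetilde{\fscr}$ is rationally equivalent to $e_\fscr\sphspec\vee e_{\widetilde{\fscr}}\sphspec$. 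As the wedge has only two summands the canonical map $e_\fscr\sphspec\vee e_{\widetilde{\fscr}}\sphspec\to e_\fscr\sphspec\times e_{\widetilde{\fscr}}\sphspec$ is a $\pi_*$-isomorphism, and Proposition \ref{prop:idemsplit} supplies a rational equivalence $\sphspec\to e_\fscr\sphspec\times e_{\widetilde{\fscr}}\sphspec$; splicing these produces the required chain. (Alternatively one notes that the connecting map $E\widetilde{\fscr}\to\Sigma E\fscr_+$ of the cofibre sequence $E\fscr_+\to\sphspec\to E\widetilde{\fscr}$ is rationally null, since $e_\fscr e_{\widetilde{\fscr}}=0$ annihilates $[E\widetilde{\fscr},\Sigma E\fscr_+]^G_\qq$, so the triangle splits rationally.)

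With the hypotheses in hand, Theorem \ref{thm:generalsplitting} yields the strong monoidal Quillen equivalence $\Delta\co\GIS_\qq\rightleftarrows L_{E\fscr_+}\GIS_\qq\times L_{E\widetilde{\fscr}}\GIS_\qq\co\prod$. I would then identify the factors using Theorem \ref{thm:LEGSlocal}, which lets localisations commute: $L_{E\fscr_+}\GIS_\qq=L_{E\fscr_+}L_{S^0\qq}\GIS=L_{S^0\qq}L_{E\fscr_+}\GIS=\FS_\qq$, and likewise $L_{E\widetilde{\fscr}}\GIS_\qq=\FFS_\qq$. For the asserted model-categorical properties, each of $\FS_\qq$ and $\FFS_\qq$ has the shape $L_FL_E\GIS$, so Corollary \ref{cor:localsummary} gives cofibrant generation, left properness, the pushout product axiom and the monoid axiom, while Lemma \ref{lem:rightproperfamily} gives right properness; the product $\FS_\qq\times\FFS_\qq$ inherits all of these by the remark following the definition of the product model category, and $\GIS_\qq$ has them by the results of Chapter \ref{chp:locals}.

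Finally, the \emph{in particular} clause is a formal consequence: the derived adjunction is an equivalence $\ho\GIS_\qq\simeq\ho\FS_\qq\times\ho\FFS_\qq$ sending $X$ to $(X,X)$, so $[X,Y]^G_\qq\cong[X,Y]^{\FS_\qq}\oplus[X,Y]^{\FFS_\qq}$ (finite products and coproducts agreeing in these additive categories); rewriting each summand by the rational analogue of Theorem \ref{thm:familylocalisations}, namely $[X,Y]^{\FS_\qq}\cong[X\smashprod E\fscr_+,Y\smashprod E\fscr_+]^G_\qq$ and $[X,Y]^{\FFS_\qq}\cong[X\smashprod E\widetilde{\fscr},Y\smashprod E\widetilde{\fscr}]^G_\qq$, gives the stated isomorphism. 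I expect the only step with any real content to be the verification that $E\fscr_+\vee E\widetilde{\fscr}$ is rationally equivalent to $\sphspec$; everything else is assembly, with Theorem \ref{thm:generalsplitting} doing the heavy lifting.
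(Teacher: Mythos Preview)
Your proposal is correct and follows essentially the same approach as the paper: verify the hypotheses of Theorem~\ref{thm:generalsplitting} via Lemma~\ref{lem:familysmash} and Proposition~\ref{prop:idemsplit} (with Lemma~\ref{lem:familiestoidempotents} supplying the identification of $E\fscr_+$ and $E\widetilde{\fscr}$ with $e_\fscr\sphspec$ and $e_{\widetilde{\fscr}}\sphspec$), then read off the hom-set splitting from Theorem~\ref{thm:familylocalisations} combined with Theorem~\ref{thm:rathomotopymaps}. Your write-up is more explicit than the paper's --- you spell out cofibrancy, the identification $L_{E\fscr_+}\GIS_\qq=\FS_\qq$ via Theorem~\ref{thm:LEGSlocal}, and the source of each model-categorical property --- but the underlying argument is the same.
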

\begin{proof}
By Lemma \ref{lem:familysmash} and
Proposition \ref{prop:idemsplit}
we see that $E\fscr_+$ and $E \widetilde{\fscr}$
satisfy the assumptions of Theorem \ref{thm:generalsplitting}.
The description of $[X,Y]^G_\qq$ follows from
Theorem \ref{thm:familylocalisations}
and Theorem \ref{thm:rathomotopymaps}.
\end{proof}

Thus in terms of rational cohomology theories we have a natural isomorphism
$E^* \cong E_\fscr^* \oplus E_{\widetilde{\fscr}}^*$
for $E_\fscr$ an $E \fscr_+$-localisation of $E$
and $E_{\widetilde{\fscr}}$ an $E\widetilde{\fscr}$-localisation of $E$.
The proof shows that $E_\fscr \vee E_{\widetilde{\fscr}}$
is rationally weakly equivalent to $E$ and hence in terms of rational homology theories
$E_* \cong (E_\fscr)_* \oplus (E_{\widetilde{\fscr}})_*$.

\begin{rmk}
Consider $\GIS_\qq$ and fix a family $\fscr$, we still have
$E \fscr_+ \to S^0 \to E \widetilde{\fscr}$ and the
smash product of these two spectra is trivial.
When will $E \fscr_+ \vee E \widetilde{\fscr}$ and $\sphspec$
be rationally equivalent? As described in
Corollary \ref{cor:possiblesplittings},
if there is a rational equivalence then
there must be an idempotent $e \in A(G) \otimes \qq$
such that $E \fscr_+$ is rationally equivalent to $e \sphspec$,
whence $\fscr$ will correspond to an idempotent of $A(G) \otimes \qq$.
Conversely if $\fscr$ corresponds to an idempotent of
$A(G) \otimes \qq$ then $E \fscr_+ \vee E \widetilde{\fscr}$ and $\sphspec$
will be rationally equivalent. So we only obtain these splittings
when $\fscr$ is an idempotent family.
\end{rmk}

\part{Finite Groups}\label{part:finite}
\chapter{Rational $G$-Spectra for Finite $G$}\label{chp:fingroups}

We reprove the result of \cite[Example 5.1.2]{ss03stabmodcat}
using the methods of \cite{greshi}
to classify rational $G$-spectra in terms of an algebraic model
for finite $G$ (see Corollary \ref{cor:finiteclassification}).
We describe the algebraic model in the first section
and apply our splitting result to the category of rational $G$-spectra
in the second. In the third we compare each
split piece to the relevant part of the algebraic model.
Our input to this new proof consists of three pieces of work:
the splitting of the category of $G$-spectra,
showing that the results of \cite{greshi} can be applied
to this setting and proving Proposition \ref{prop:homotopycalc}
and Theorem \ref{thm:finiteintrinsicformality}
to complete the classification.
Section \ref{sec:finitecomp} is the first time we
will need to consider right modules
over an enriched category and use the Morita equivalence of
Theorem \ref{thm:monoidalmorita}. Thus, we also include
a chapter on enriched categories in this part of the thesis.

\section{The Algebraic Category}\label{sec:finitealg}
We use the description of rational $G$-cohomology theories as
implied by \cite[Appendix A]{gremay95} to obtain the following definition.
Since the algebraic model is a product of categories,
we work piecewise and replace $dg\qq W_G H \leftmod$
by $\rightmod \ecal_a^H$
in Proposition \ref{prop:FiniteAlgMorita}.

\begin{definition}
The algebraic model for rational $G$-spectra for finite $G$ is
$$dg\mathcal{A}(G)=
\prod_{(H) \leqslant G} dg\qq W_G H \leftmod.
$$
Here we are using the projective model structure on the categories of modules.
\end{definition}

\begin{rmk}\label{rmk:hopf&Gdiagonal}
The rational group ring of $G$
is a Hopf algebra with co-commutative coproduct.
Since $\qq G \otimes_\qq \qq G \cong \qq(G \times G)$
the product is induced by the group multiplication
$G \times G \to G$, the coproduct by the diagonal
$G \to G \times G$ and the antipode by the inversion
map $G \to G$. Because $\qq G$ is a Hopf-algebra
there is a monoidal product on $dg \qq G$-modules.
\end{rmk}

\begin{definition}\label{def:QGtensorproduct}
For $M$ and $N$ in $dg \qq G \leftmod$
define their tensor product to be
$M \otimes_\qq N$ with the diagonal
$G$-action. There is an internal function object, defined as
$\underhom_\qq(M, N)$ with $G$-action by conjugation.
\end{definition}

\begin{definition}
For any $X \in dg \qq G \leftmod$,
there is an $dg \qq G$-map
$Av_G \co X \to X^G$
defined by $Av_G(x) =  |G|^{-1} \Sigma_{g \in G} gx$.
\end{definition}

\begin{lemma}
The tensor product above gives
$dg \qq G \leftmod$
the structure of a closed symmetric monoidal
$dg \qq$-model category that satisfies the monoid axiom.
\end{lemma}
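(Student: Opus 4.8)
The plan is to establish the three assertions — the closed symmetric monoidal structure, compatibility with the projective model structure (so that $dg\qq G\leftmod$ becomes a $dg\qq$-model category and satisfies the pushout product axiom), and the monoid axiom — transporting as much as possible along the forgetful functor $U\co dg\qq G\leftmod\to dg\qq\leftmod$, which is strong symmetric monoidal, cocontinuous, and detects weak equivalences and fibrations. The closed symmetric monoidal structure is formal from Remark \ref{rmk:hopf&Gdiagonal}: the coproduct on $\qq G$ supplies the $\qq G$-action on $M\otimes_\qq N$, coassociativity and cocommutativity give the associativity and symmetry coherences, the counit gives the unit $\qq$ (trivial action), and the antipode turns $\underhom_\qq(-,-)$ with the conjugation action into a right adjoint to $-\otimes_\qq M$; passing to chain complexes changes nothing.

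For the model-theoretic input the crucial fact is Maschke's theorem: since $|G|$ is invertible in $\qq$, the ring $\qq G$ is semisimple, so every $\qq G$-module is projective. In particular the unit $\qq$ (trivial action) is cofibrant, so the unit half of the pushout product axiom is automatic (take the identity as cofibrant replacement of the unit). Moreover inflation $dg\qq\leftmod\to dg\qq G\leftmod$ is a strong symmetric monoidal left Quillen functor — its right adjoint $(-)^G$ preserves surjections by averaging and preserves quasi-isomorphisms by exactness over a semisimple ring — and together with the enrichment $\underhom_{\qq G}(-,-)=\underhom_\qq(-,-)^G$ this exhibits $dg\qq G\leftmod$ as a $dg\qq$-model category.

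For the pushout product axiom I would use the standard reduction to the generators $I=\{S^{n-1}\qq G\to D^n\qq G\}$ and $J=\{0\to D^n\qq G\}$, checking $I\square I\subseteq\mathrm{cofibrations}$ and $I\square J,\ J\square I\subseteq\mathrm{acyclic\ cofibrations}$. Semisimplicity does the work here: the diagonal $G$-action on $G\times G$ is free, so $\qq G\otimes_\qq\qq G$ is a free $\qq G$-module, whence $D^n\qq G\otimes_\qq D^m\qq G$, $S^n\qq G\otimes_\qq D^m\qq G$ and $S^n\qq G\otimes_\qq S^m\qq G$ are bounded complexes of free $\qq G$-modules and hence cofibrant; the maps in $I\square I$ and $I\square J$ are then degreewise split monomorphisms (split over $\qq G$ since every module is projective) with cofibrant cokernel $\overline V\otimes_\qq\overline X$ of the same shape, hence cofibrations, and acyclicity when one factor lies in $J$ follows from the K\"unneth formula over the field $\qq$ together with contractibility of each $D^n\qq G$. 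The monoid axiom is handled through $U$: for $f$ an acyclic cofibration and $Z$ arbitrary, $U(f\otimes_\qq\id_Z)=Uf\otimes_\qq UZ$ is the pushout product of the acyclic cofibration $Uf$ with $\emptyset\to UZ$, hence an acyclic cofibration of $\qq$-complexes (every object of $dg\qq\leftmod$ is cofibrant, so the pushout product axiom applies there); since $U$ preserves pushouts and transfinite compositions and acyclic cofibrations of $\qq$-complexes are closed under these and are weak equivalences, every map in $P$-cell is sent by $U$ to a weak equivalence, and $U$ detects these. The same observation gives the acyclicity half of the pushout product axiom without the K\"unneth computation.

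The step I expect to be the main obstacle is precisely the cofibration half of the pushout product axiom, since this is the one statement that cannot be outsourced to $U$ — a map being a cofibration of $\qq$-complexes does not force it to be a cofibration of $\qq G$-complexes — and it is there that one must exploit the semisimplicity of $\qq G$, concretely the freeness of the diagonal action of $G$ on $G\times G$, to keep the tensor products of the generating (co)domains within bounded complexes of projective (indeed free) $\qq G$-modules.
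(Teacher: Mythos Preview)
Your proposal is correct and follows essentially the same path as the paper: the Hopf-algebra structure gives the closed symmetric monoidal structure; the pushout product axiom is checked on generators using the freeness of the diagonal $G$-action on $G\times G$ (so $\qq G\otimes_\qq\qq G\cong\bigoplus_{|G|}\qq G$); and averaging shows $(-)^G$ preserves surjections while exactness of $(-)^G$ (equivalently $\h_*(X^G)\cong(\h_*X)^G$) handles weak equivalences.

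The one genuine difference is the monoid axiom. The paper works entirely inside $dg\qq G\leftmod$: for a generating acyclic cofibration $0\to D^n(\qq G)$ and any $Z$, the map $0\to D^n(\qq G)\otimes_\qq Z$ is an injection and a quasi-isomorphism, hence an acyclic cofibration in the \emph{injective} model structure on $dg\qq G\leftmod$, and such maps are closed under pushouts and transfinite compositions. Your argument instead pushes everything down along $U$ to $dg\qq\leftmod$, exploiting that every object there is cofibrant so the pushout product axiom applies to $Uf\,\square\,(0\to UZ)$, and then uses that $U$ is cocontinuous and detects weak equivalences. Both arguments are clean; yours makes the reduction to the field case explicit and avoids invoking a second model structure, while the paper's stays intrinsic to $\qq G$-modules. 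Your explicit appeal to Maschke to dispose of the unit part of the pushout product axiom is also a small addition the paper leaves implicit.
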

\begin{proof}
Let $\Delta$ be the coproduct of $\qq G $,
$T$ be the interchange of factors map and
$\nu_M$ and $\nu_N$ be the
$\qq G$-action maps of $dg \qq G$-modules $M$ and $N$.
The action of $\qq G$ on $M \otimes_\qq N$ is then
defined as the composite:
$$
\begin{array}{rcl}
\qq G \otimes_\qq M \otimes_\qq N
& \overset{\Delta}{\to} &
\qq G \otimes_\qq \qq G \otimes_\qq M \otimes_\qq N \\
& \overset{\id \otimes T \otimes \id}{\to} &
\qq G \otimes_\qq M \otimes_\qq \qq G \otimes_\qq N \\
& \overset{\nu_M \otimes \nu_N}{\to} & M \otimes_\qq N.
\end{array}
$$
It is clear that this is a commutative, associative monoidal product
with unit $\qq$ (with trivial $G$-action).
To prove that the pushout product axiom holds,
it suffices (by \cite[Lemma 3.5(1)]{ss00}) to check
the following pair of conditions.
\begin{enumerate}
\item If $f$ and $g$
are generating cofibrations then the pushout product,
$f \square g$, is a cofibration.
\item If $f$ is a generating cofibration and $g$
is a generating acyclic cofibration then
$f \square g$ is a weak equivalence.
\end{enumerate}
Let $f$ and $g$ be generating cofibrations,
then $f \square g$ is an inclusion with
cokernel $\qq(G \otimes G )$ (in some degree).
We claim that this cokernel is cofibrant: by
Remark \ref{rmk:hopf&Gdiagonal} this is isomorphic
(as a $\qq G $-module) to $\bigoplus_{g \in G} \qq G$
and the claim follows from the fact that
$\qq G $ is cofibrant as a $\qq G $-module.
Taking a generating cofibration $f$ and a generating acyclic
cofibration $g$, it follows that
$f \square g$ is a weak equivalence since
both the domain and codomain are acyclic.

There is a strong symmetric monoidal adjoint pair
$\varepsilon^* : dg \qq \overrightarrow{\longleftarrow}
dg \qq G : (-)^{G} $,
where $\varepsilon^*(X)$ is $X$ with trivial action and the right adjoint
is the fixed point functor.
We show that this is a Quillen pair by
proving that the right adjoint
preserves fibrations and weak equivalences.
Take $f \co X \to Y$ a surjection
and let $y \in Y^G$, then there is an $x$ such that
$f(x) = y$. Since $Av_G(x) \in X^G$
and $f(Av_G(x)) =Av_G(f(x))= Av_G(y) =y$, it follows
that $f^G$ is surjective.
That $(-)^G$ preserves homology isomorphisms
follows immediately from the isomorphism
$H_*(X^G) \cong (H_*X)^G$.
Thus $dg \qq G$ is a $dg \qq$-model category.

For a commutative ring $R$, there is a tensor
product on $dg R \leftmod$, $- \otimes_R -$.
This category with the projective model structure
satisfies the monoid axiom, as proven in
\cite[Proposition 3.1]{shiHZ}.
This result in the case $R = \qq$
will imply that $dg \qq G \leftmod$
satisfies the monoid axiom. Thus we
write out the proof of
\cite[Proposition 3.1]{shiHZ},
adapted to the notation of $\qq G$.
By \cite[Lemma 3.5(2)]{ss00} it suffices to show that
transfinite composition and pushouts of
maps of the form $j \otimes \id_Z \co A \otimes Z \to B \otimes Z$
are weak equivalences, for $j$ a generating acyclic cofibration.
The generating acyclic cofibrations for $dg \qq G$
are maps $0 \to D^n(\qq G)$ for some integer $n$.
Take any $Z \in dg \qq G \leftmod$, then it is easy to check that
$D^n(\qq G) \otimes_\qq Z$ is also acyclic.
Then we note that $0 \to D^n(\qq G) \otimes_\qq Z$
is an injection and a homology isomorphism.
Such maps are closed under pushouts and
transfinite compositions (they are acyclic cofibrations
in the injective model structure on
$dg \qq G$-modules), hence the monoid axiom holds.
\end{proof}

Note that this proves shows that $\otimes_i \qq G$
is cofibrant as a $dg \qq G$-module.
\begin{lemma}
The model category of $dg \qq G \leftmod$
is generated by $\qq G$.
\end{lemma}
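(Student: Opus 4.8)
The claim is that $\qq G$ is a generator for the model category $dg \qq G \leftmod$ (equivalently, for its homotopy category, which is the derived category of $\qq G$-modules). By the lemma attributed to \cite[Lemma 2.2.1]{ss03stabmodcat} quoted earlier in the excerpt, it suffices to show that $\qq G$ is compact and that an object $X$ of the homotopy category is acyclic if and only if $[\qq G, X]_* = 0$.

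\textbf{Plan.} First I would observe that $\qq G$ is a compact generator in the relevant sense because $\qq G$ is a finitely generated projective (indeed free of rank one) $\qq G$-module, so the functor $[\qq G, -]_* = \Hom_{dg \qq G}(\qq G, -)_*$ computed in the derived category is naturally isomorphic to $H_*(-)$, the homology of the underlying chain complex. Concretely, since $\qq G$ is cofibrant (it is free, hence projective, and projective chain complexes of projective modules are cofibrant in the projective model structure), and every object is fibrant, maps in the homotopy category from $\Sigma^n \qq G$ to $X$ are represented by chain maps up to chain homotopy, and evaluating a chain map $\qq G \to X$ at the generator $1 \in \qq G$ identifies $[\qq G, X]_n$ with $H_n(X)$. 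Compactness is then immediate: $H_*$ commutes with coproducts of chain complexes, so $[\qq G, \bigvee_i Y_i]_* \cong H_*(\bigoplus_i Y_i) \cong \bigoplus_i H_*(Y_i) \cong \bigoplus_i [\qq G, Y_i]_*$.

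\textbf{Main step.} With the identification $[\qq G, X]_* \cong H_*(X)$ in hand, the second condition of the cited lemma becomes the tautology that $X$ is acyclic (a homology isomorphism to zero, i.e. $H_*(X) = 0$) if and only if $[\qq G, X]_* = 0$. This is precisely the definition of weak equivalence in $dg \qq G \leftmod$: the weak equivalences are the homology isomorphisms, so the acyclic objects are exactly those with vanishing homology. Hence the hypotheses of the lemma are satisfied and $\{\qq G\}$ is a set of generators.

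\textbf{Where the work lies.} There is essentially no obstacle here; the only point requiring a little care is justifying the identification $[\qq G, X]_* \cong H_*(X)$ in the homotopy category, which rests on $\qq G$ being cofibrant (noted already in the excerpt, where it is observed that $\qq G$ is cofibrant as a $\qq G$-module and even $\otimes_i \qq G$ is cofibrant) and on every object of $dg \qq G \leftmod$ being fibrant in the projective model structure (every object is fibrant since fibrations are surjections). Given those two facts, the proof is a one-line application of the generation criterion, exactly parallel to the treatment of $\sphspec$ in $\GIS$ via Lemma \ref{lem:compactgenerators}.
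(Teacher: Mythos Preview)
Your proposal is correct. The paper itself offers no proof of this lemma, treating it as standard; your argument via the identification $[\qq G, X]_* \cong H_*(X)$ (using that $\qq G$ is cofibrant and every object is fibrant in the projective model structure) together with the generation criterion of \cite[Lemma~2.2.1]{ss03stabmodcat} is exactly the natural way to fill in the omitted justification.
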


\begin{definition}
Let $\gcal_{a,G} = \{ \qq, \qq G, \qq (G \times G ), \qq(G \times G \times G), \dots \}$
and let $\ecal_{a,G}$ be the $dg \qq$-category
with object set $\gcal_{a,G}$
and $dg \qq$-mapping object given by
$\ecal_{a,G} (X, Y) = \underhom_\qq(X,Y)^{G }$.
Now we define $\gcal_{a,G}^H = \gcal_{a,W_G H}$
and $\ecal_{a,G}^H = \ecal_{a,W_G H}$. We will
usually suppress the $G$
and reduce this notation to
$\gcal_a^H$ and $\ecal_a^H$.
\end{definition}

\begin{proposition}\label{prop:FiniteAlgMorita}
There is a strong symmetric monoidal Quillen equivalence
$$ (-) \otimes_{\ecal_a^H} \gcal_a^H :
\rightmod \ecal_a^H \overrightarrow{\longleftarrow}
dg \qq W_G H \leftmod : \underhom(\gcal_a^H, -) $$
and furthermore this is an adjunction of closed
symmetric monoidal $dg \qq$-model categories.
\end{proposition}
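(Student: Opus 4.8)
The plan is to recognize this as an instance of the general Morita theory for enriched categories (the ``monoidal Morita equivalence'' referenced as Theorem \ref{thm:monoidalmorita}), applied to the special situation where the enriched category $\ecal_a^H$ has a single object $\qq W_G H$ playing the role of a generator. Write $R = \qq W_G H$ for brevity. First I would observe that $dg R \leftmod$ is generated by $R$ as a model category (this is one of the lemmas above, applied to the group $W_G H$), and that $R$ is a cofibrant generator whose endomorphism $dg\qq$-mapping object is $\underhom_\qq(R,R)^{W_G H}$, which is precisely $\ecal_a^H(\qq W_G H, \qq W_G H)$. More generally, each object $\otimes_i R$ of $\gcal_a^H$ is cofibrant in $dg R \leftmod$ (noted just above), so $\gcal_a^H$, viewed as a right $\ecal_a^H$-module via the full $dg\qq$-enriched inclusion $\ecal_a^H \hookrightarrow dg R \leftmod$, is a cofibrant object of $\rightmod \ecal_a^H$.

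Next I would set up the adjunction itself. The right adjoint $\underhom(\gcal_a^H, -)$ sends a $dg R$-module $M$ to the right $\ecal_a^H$-module whose value at an object $X \in \gcal_a^H$ is the $dg\qq$-mapping object $\underhom_{dg R}(X, M)$, with the evident action of $\ecal_a^H$; the left adjoint is the enriched coend $(-)\otimes_{\ecal_a^H}\gcal_a^H$. These form a $dg\qq$-enriched adjunction by the usual coend/hom calculus, and it is strong symmetric monoidal on the left because $\gcal_a^H$ is closed under the tensor product $\otimes_\qq$ (Definition \ref{def:QGtensorproduct}) and the unit $\qq$ lies in $\gcal_a^H$ — this is the monoidality hypothesis in the general Morita statement, here verified by direct inspection of the generating set. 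That the adjunction is one of closed symmetric monoidal $dg\qq$-model categories then follows from the corresponding structure on $dg R \leftmod$ established in the preceding lemma together with the construction of the monoidal model structure on $\rightmod \ecal_a^H$ from the enriched-category chapter.

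Then I would verify the Quillen pair and Quillen equivalence conditions. For the Quillen pair: the right adjoint preserves fibrations and acyclic fibrations because these are detected objectwise on $\rightmod \ecal_a^H$ and $\underhom_{dg R}(X,-)$ preserves surjections and homology isomorphisms when $X$ is cofibrant (each $\otimes_i R$ is projective, being a sum of copies of $R$). For the Quillen equivalence I would use the standard criterion: the derived unit $\ecal_a^H \to \underhom(\gcal_a^H, \gcal_a^H)$ is a levelwise quasi-isomorphism essentially by definition of $\ecal_a^H$ (its mapping objects \emph{are} the relevant function complexes), and the derived counit $\gcal_a^H \otimes_{\ecal_a^H}^{\mathbb L} \underhom(\gcal_a^H, M) \to M$ is a quasi-isomorphism for all $M$ because $R$ is a compact generator of the triangulated homotopy category $g R \leftmod$ and the source, being built from $\gcal_a^H$ by homotopy colimits, preserves that generation — this is exactly where Theorem \ref{thm:monoidalmorita} does the work. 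I expect the main obstacle to be bookkeeping rather than conceptual: one must check carefully that the monoidal structure on $\rightmod \ecal_a^H$ coming from the enriched Day-convolution-style construction is genuinely compatible with $\otimes_\qq$ on $dg R \leftmod$ under this equivalence, i.e.\ that the left adjoint really is \emph{strong} monoidal and not merely lax, and that the symmetry is respected; this reduces to the fact that the generating set $\gcal_a^H$ is closed under $\otimes_\qq$ with unit included, but the coherence verification must be spelled out by appeal to the general machinery rather than re-derived here.
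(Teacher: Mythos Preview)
Your approach is correct and is essentially the same as the paper's: the paper's proof is a one-line citation of the general monoidal Morita theorem (Theorem~\ref{thm:monoidalmorita}), and you have simply unpacked the hypotheses that need to be checked --- that $dg\,\qq W_G H\leftmod$ is a closed symmetric monoidal $dg\,\qq$-model category (the preceding lemma), that the objects of $\gcal_a^H$ are cofibrant and fibrant (they are projective, and everything is fibrant in the projective structure), that $\qq W_G H$ is a compact generator, and that $\gcal_a^H$ is closed under $\otimes_\qq$ with the unit included. One small phrasing issue: the sentence ``$\gcal_a^H$, viewed as a right $\ecal_a^H$-module \dots\ is a cofibrant object of $\rightmod \ecal_a^H$'' does not type-check, since $\gcal_a^H$ is a set of objects; presumably you mean that each representable $F_\sigma=\ecal_a^H(-,\sigma)$ is cofibrant, or that the objects of $\gcal_a^H$ are cofibrant in $dg\,\qq W_G H\leftmod$, both of which are true and are what the general theorem actually needs.
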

\begin{proof}
This is an application of \cite[Proposition 3.6]{greshi},
see Theorem \ref{thm:monoidalmorita}.
\end{proof}

\begin{lemma}\label{lem:GmapsareQG}
There is an isomorphism of rings
$\underhom_\qq( \qq G, \qq G)^G \cong \qq G$.
\end{lemma}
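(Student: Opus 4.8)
The plan is to identify the ring $\underhom_\qq(\qq G, \qq G)^G$ explicitly with $\qq G$ by exhibiting an evaluation map and checking it is a ring isomorphism. First I would recall that $\underhom_\qq(\qq G, \qq G)$ is the $\qq$-module of all $\qq$-linear maps $\qq G \to \qq G$, which as a $\qq$-module is the $|G| \times |G|$ matrix algebra, and that the $G$-action is by conjugation: for $g \in G$ and $\phi \in \underhom_\qq(\qq G, \qq G)$, $(g \cdot \phi)(x) = g \, \phi(g^{-1} x)$. Taking $G$-fixed points picks out the $\qq$-linear endomorphisms $\phi$ that are left $\qq G$-module maps, i.e. $\underhom_\qq(\qq G, \qq G)^G = \underhom_{\qq G}(\qq G, \qq G)$, the endomorphism ring of $\qq G$ as a left module over itself.

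Next I would use the standard fact that $\End_{\qq G}(\qq G) \cong \qq G$ via right multiplication: the map sending $a \in \qq G$ to the endomorphism $r_a \co x \mapsto xa$ is a $\qq$-linear bijection onto the left $\qq G$-module endomorphisms, with inverse $\phi \mapsto \phi(1)$. The one point requiring attention is that this is a ring \emph{isomorphism} rather than an anti-isomorphism: one must check the composition convention matches. Using $r_a \circ r_b$ means first apply $r_b$ then $r_a$, giving $x \mapsto (xb)a = x(ba)$, so $r_a \circ r_b = r_{ba}$; if instead one uses the opposite convention for composition in $\End$ (as is common when writing endomorphisms of left modules acting on the right), this becomes $r_{ab}$. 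Since the group ring is generated over $\qq$ by the group elements, and for group elements conjugation is an honest action, I should simply state which convention makes the identification a genuine ring homomorphism and verify multiplicativity on the generators, together with the fact that $r_1 = \id$ gives the unit.

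The main steps in order: (1) unwind the definition of the conjugation $G$-action on $\underhom_\qq(\qq G, \qq G)$ from Definition \ref{def:QGtensorproduct}; (2) observe that the fixed points are exactly the left $\qq G$-linear maps; (3) write down the evaluation-at-$1$ map $\ev_1 \co \underhom_{\qq G}(\qq G, \qq G) \to \qq G$ and its inverse $a \mapsto r_a$; (4) check $\qq$-linearity and bijectivity, which is immediate since a left module map out of the free rank-one module is determined by the image of $1$; (5) check compatibility with multiplication and units, fixing the composition convention so that the map is a ring homomorphism. I do not expect any genuine obstacle here — this is a classical computation — so the only ``hard part'' is purely bookkeeping: being careful that the conjugation action is the correct one and that the resulting identification respects the ring structure (not its opposite), which is settled by pinning down conventions for composition and for the $G$-action inherited from $\underhom_\qq$.
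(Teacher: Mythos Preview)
Your proposal is correct and follows essentially the same route as the paper: identify the $G$-fixed points with left $\qq G$-module endomorphisms, observe these are determined by the image of $1$, and then sort out the ring-versus-opposite-ring issue. The paper resolves that last point concretely by sending the $G$-map $\tilde{g}$ (determined by $1 \mapsto g$) to $g^{-1} \in \qq G$, i.e.\ composing evaluation at $1$ with the antipode, which is exactly the kind of convention fix you anticipate in step~(5).
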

\begin{proof}
This is a standard result.
A $G$-map $\qq G \to \qq G$ is defined by the image of $1$,
let $\tilde{g}$ represent the $G$-map which sends
$1 \to g$, for $g \in G$. These are a set of generators
for $\underhom_\qq( \qq G, \qq G)^G$.
We define the above ring isomorphism to be that
map which sends $\tilde{g}$ to
$g^{-1} \in G$.
\end{proof}

\section{Topological Models and Splitting}\label{sec:finitetop}
We take tom-Dieck's isomorphism
in the case of a finite group,
$A(G) \otimes \qq \cong \prod_{(K)\leqslant G} \qq$,
and see how it applies
to the model category of rational $G$-spectra.
For each conjugacy class of subgroups, $(H) \leqslant G$,
there is an idempotent $e_H$ given by projection onto factor $(H)$.
We use this in Theorem \ref{thm:finitesplitting}
to split the category of rational $G$-spectra
into a collection of model categories each generated
by a single object. We also provide a version
of this splitting in terms of modules
over a ring spectrum (Proposition \ref{prop:GspecHtoSHmod}).
The advantage of this second description is that every
object of the category is fibrant,
which is needed for Theorem \ref{thm:finitemoritaequiv}
(see Remark \ref{rmk:whyfibrant}).

\begin{definition}
For a group $G$, with subgroups $H$ and $K$,
we say that $K$ is \textbf{subconjugate}\index{Subconjugate}
to $H$ if the $G$-conjugacy class of $K$ contains
a subgroup of $H$, we write $K \leqslant_G H$. In turn $K$ is
\textbf{strictly subconjugate}\index{Strictly subconjugate}
to $H$ if the $G$-conjugacy class of $K$ contains
a strict subgroup of $H$, the notation for this is $K <_G H$.
\end{definition}

\begin{definition}
Take $H$ a subgroup of $G$, then we have a pair of families
of subgroups of $G$:
$[\leqslant_G H]$ -- the family of all subgroups of $G$
which are subconjugate to $H$
and $[<_G H]$ -- the family of all subgroups of $G$
which are strictly subconjugate to $H$.
We can then form $G$-CW complexes
$E[\leqslant_G H]_+$\index{E [ H]@$E[\leqslant_G H]_+$}
and $E[<_G H]_+$\index{E [ H]@$E[<_G H]_+$}.
There is a map $E[<_G H]_+ \to E[\leqslant_G H]_+$, we call the cofibre
of this map $E\langle H\rangle $\index{E \lange H \rangle @$E \langle H \rangle$}.
\end{definition}

Note that since $E[<_G H]_+$ and $E[\leqslant_G H]_+$
are cofibrant as $G$-spaces, the space $E\langle H\rangle$
is also cofibrant as a $G$-space. We can also describe $E\langle H\rangle$
as $E[\leqslant_G H]_+ \smashprod E\widetilde{[<_G H]}$.
Since geometric fixed point functors preserve cofibre sequences,
$\Phi^K (E\langle H\rangle)$ is contractible unless
$(K)=(H)$, whence it is non-equivariantly
equivalent to $\sphspec$.

\begin{lemma}
The families $[\leqslant_G H]$ and
$[<_G H]$ are idempotent families (see Definition \ref{def:idemfamily}),
with corresponding idempotents
$e_{[\leqslant_G H]} = \Sigma_{(K) \leqslant H} e_K$ and
$e_{[<_G H]} = \Sigma_{(K) < H} e_K$.
\end{lemma}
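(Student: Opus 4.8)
The plan is to observe that for a finite group all of the topological conditions in Definition \ref{def:idemfamily} are automatically satisfied, so that the only genuine content is to check that $[\leqslant_G H]$ and $[<_G H]$ are families and then to compute the corresponding idempotents. This follows the pattern of Lemma \ref{lem:Geidemfamily}, but is considerably shorter because finiteness trivialises the topology.

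First I would check that $[\leqslant_G H]$ and $[<_G H]$ are families, i.e.\ closed under conjugation and under passage to subgroups. Closure under conjugation is immediate from the definitions. For closure under subgroups, suppose $K \leqslant_G H$, say $gKg^{-1} \leqslant H$ for some $g \in G$, and let $L \leqslant K$; then $gLg^{-1} \leqslant gKg^{-1} \leqslant H$, so $L \leqslant_G H$, and the identical computation with a strict inclusion handles $[<_G H]$.

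Next I would record the simplifications afforded by finiteness. Since $G$ is finite, every subgroup $K$ has finite index in its normaliser, so $\fcal G$ is the set of all subgroups and $\fcal G/G$ is a finite discrete space. The relation $\sim$ on $\scal_f G$ is generated by pairs $K \trianglelefteqslant K'$ with $K'/K$ a torus; as the only torus inside a finite group is trivial, $\sim$ is the identity relation, $\scal_f G/\sim \, \cong \fcal G$, and every $G$-invariant subset of $\scal_f G$ is a union of $\sim$-classes which is automatically open and closed. Hence, for finite $G$, a subset of the set of subgroups is an idempotent family exactly when it is a family; in particular both $[\leqslant_G H]$ and $[<_G H]$ are idempotent families.

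Finally I would identify the idempotents. Tom Dieck's isomorphism gives $A(G)\otimes\qq \cong C(\fcal G/G,\qq) \cong \prod_{(K)\leqslant G}\qq$, under which $e_K$ is the indicator function of the point $(K)\in\fcal G/G$. For any family $\fscr$ its characteristic function is continuous (any function on a finite discrete space is), hence defines an idempotent $e_\fscr\in A(G)\otimes\qq$ whose support is $\fscr$, and since the indicator of a finite union of points is the sum of the indicators of those points, $e_\fscr = \sum_{(K)\in\fscr} e_K$. Specialising to $\fscr=[\leqslant_G H]$ and $\fscr=[<_G H]$ (where $(K)\leqslant H$ means $K$ subconjugate to $H$, and $(K)<H$ strictly subconjugate) gives $e_{[\leqslant_G H]} = \sum_{(K)\leqslant H} e_K$ and $e_{[<_G H]} = \sum_{(K)<H} e_K$, as asserted. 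The one place to be a little careful is the claim that every topological hypothesis in the notion of an idempotent family is vacuous for finite $G$ — in particular the triviality of $\sim$ and the discreteness of $\fcal G/G$ — which is the main, and essentially the only, obstacle.
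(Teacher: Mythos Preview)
Your proof is correct and follows essentially the same approach as the paper. The paper's proof is the single sentence ``Since $G$ is finite, any collection of conjugacy classes of subgroups of $G$ defines a unique idempotent of the Burnside ring,'' which is precisely the observation you spell out in detail (discreteness of $\fcal G/G$, triviality of $\sim$, and the resulting identification of idempotents with subsets of conjugacy classes); your version is simply more explicit, in particular in verifying that the two collections are actually families.
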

\begin{proof}
Since $G$ is finite, any collection of conjugacy classes of
subgroups of $G$ defines a unique idempotent of the Burnside ring.
\end{proof}

\begin{theorem}\label{thm:finitesplitting}
There is a strong symmetric monoidal Quillen equivalence between the category
of rational $G$-spectra and the product of the categories
$L_{E \langle H \rangle} G \mcal_\qq$, as $H$ runs over the set of conjugacy
classes of subgroups of $G$.
$$\Delta : G \mcal_\qq
\overrightarrow{\longleftarrow}
\prod_{(H) \leqslant G} L_{E \langle H \rangle} G \mcal_\qq
: \prod$$
The left adjoint takes a rational $G$-spectrum $X$,
to the constant collection of $X$ in every factor.
The right adjoint takes the collection
$Y_H$ to the $G$-spectrum $\prod_{(H)} Y_H $.
\end{theorem}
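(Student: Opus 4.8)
The plan is to recognise this as an instance of the general splitting theorem, applied to the finite collection $\{E\langle H\rangle\}$ indexed by the conjugacy classes of subgroups of $G$. Concretely, I would invoke Theorem~\ref{thm:generalsplitting} in its $G\mcal_\qq$ form — available via the comparisons of Theorem~\ref{thm:comparisons}, or directly, since the proof of Theorem~\ref{thm:generalsplitting} uses only ingredients (the obvious analogue of Proposition~\ref{prop:adjunct}, the geometric fixed point results, Lemma~\ref{lem:homcomtelescope}, Corollary~\ref{cor:comrel}) that hold equally in $G\mcal$. Each $E\langle H\rangle$ is cofibrant as a $G$-space; the adjunction $(\Delta,\prod)$ and the description of its two functors are exactly the analogue of Proposition~\ref{prop:adjunct}, and the diagonal is visibly symmetric monoidal. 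So the work reduces to verifying the two hypotheses of Theorem~\ref{thm:generalsplitting}: that $E\langle H\rangle\smashprod E\langle K\rangle$ is rationally acyclic whenever $(H)\neq(K)$, and that $\bigvee_{(H)} E\langle H\rangle$ is rationally equivalent to $\sphspec$.

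For the first hypothesis I would use geometric fixed points. Since $E[<_G H]_+$ and $E[\leqslant_G H]_+$ are cofibrant $G$-spaces, so is $E\langle H\rangle$, and hence so is any smash product of two of these; because $\Phi^L$ preserves cofibre sequences and is monoidal up to weak equivalence on cofibrant objects (\cite[Chapter V, Section 4]{mm02}), one has $\Phi^L(E\langle H\rangle\smashprod E\langle K\rangle)\simeq\Phi^L(E\langle H\rangle)\smashprod\Phi^L(E\langle K\rangle)$ for every subgroup $L$. As noted before the statement of the theorem, $\Phi^L(E\langle H\rangle)$ is non-equivariantly contractible unless $(L)=(H)$; so for $(H)\neq(K)$ at least one smash factor is contractible, for every $L$. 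Applying the rational form of Proposition~\ref{prop:geomfixedpoints} (the corollary following its proof) to the map $\ast\to E\langle H\rangle\smashprod E\langle K\rangle$ shows this smash product is rationally acyclic. (Lemma~\ref{lem:familysmash} is the special case in which the second family is $[<_G H]\subseteq[\leqslant_G H]$.)

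For the second hypothesis I would first identify $E\langle H\rangle$, rationally, with the idempotent summand $e_H\sphspec$, where $e_H\in A(G)\otimes\qq$ is the idempotent of the conjugacy class $(H)$. Both $[\leqslant_G H]$ and $[<_G H]$ are idempotent families and $E\langle H\rangle = E[\leqslant_G H]_+\smashprod E\widetilde{[<_G H]}$, so applying the corollary following Lemma~\ref{lem:familiestoidempotents} twice gives a zig-zag of rational equivalences from $E\langle H\rangle$ to $e_{[\leqslant_G H]}\sphspec\smashprod e_{\widetilde{[<_G H]}}\sphspec$ (smashing with a cofibrant replacement preserves rational equivalences, so the zig-zags compose). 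A computation of rational homotopy groups using Corollary~\ref{cor:comrel}, together with orthogonality of the $e_K$ and the identity $e_{[\leqslant_G H]}\,e_{\widetilde{[<_G H]}}=e_{[\leqslant_G H]}-e_{[<_G H]}=e_H$ — valid since for finite $G$ the set $[\leqslant_G H]\setminus[<_G H]$ is just $\{(H)\}$ — identifies this, rationally, with $e_H\sphspec$. Since $\{e_H\}$ is a finite orthogonal decomposition of $1\in A(G)\otimes\qq$, the argument of Proposition~\ref{prop:idemsplit} (for $n$ idempotents rather than two) shows the canonical map $\sphspec\to\prod_{(H)} e_H\sphspec$ is a rational equivalence; as the product is finite it agrees up to weak equivalence with $\bigvee_{(H)} e_H\sphspec$, and wedging the equivalences $e_H\sphspec\simeq E\langle H\rangle$ (a wedge of rational equivalences is a rational equivalence) gives $\bigvee_{(H)} E\langle H\rangle$ rationally equivalent to $\sphspec$. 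With both hypotheses verified, Theorem~\ref{thm:generalsplitting} yields the strong symmetric monoidal Quillen equivalence, and the formulas for $\Delta$ and $\prod$ are inherited from Proposition~\ref{prop:adjunct}.

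The main obstacle is the bookkeeping of the second step: keeping the families $[\leqslant_G H]$, $[<_G H]$ and their idempotents straight, and being careful that the intermediate rational equivalences are honest maps (or zig-zags) so that everything composes in $\ho G\mcal_\qq$. An alternative, perhaps cleaner, route once $E\langle H\rangle\simeq e_H\sphspec$ rationally is known: bypass the direct check of Theorem~\ref{thm:generalsplitting} and instead quote Corollary~\ref{cor:possiblesplittings}, so that the orthogonal idempotent decomposition $1=\sum_{(H)} e_H$ of $A(G)\otimes\qq$ gives the splitting $G\mcal_\qq$ into $\prod_{(H)} L_{e_H\sphspec}G\mcal_\qq$, and then note $L_{e_H\sphspec}G\mcal_\qq=L_{E\langle H\rangle}G\mcal_\qq$ because rationally equivalent spectra have the same acyclics and hence the same Bousfield localisation.
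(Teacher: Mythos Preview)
Your proof is correct and follows essentially the same route as the paper: reduce to Theorem~\ref{thm:generalsplitting} (via Theorem~\ref{thm:comparisons}) and establish the rational equivalence $E\langle H\rangle \simeq e_H\sphspec$ through Lemma~\ref{lem:familiestoidempotents} and the idempotent identity $e_{[\leqslant_G H]}(1-e_{[<_G H]})=e_H$. The only difference is that the paper derives \emph{both} hypotheses from this single identification (since $e_He_K=0$ gives orthogonality immediately), whereas you prove orthogonality separately via geometric fixed points --- a valid but slightly redundant step once you have $E\langle H\rangle\simeq e_H\sphspec$ in hand.
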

\begin{proof}
Using Theorems \ref{thm:generalsplitting} and
\ref{thm:comparisons} all we have to show is that
$E \langle H \rangle \smashprod E \langle K \rangle$
is rationally acyclic whenever $(H) \neq (K)$ and
$\bigvee_{(H) \leqslant G} E \langle H \rangle$ is rationally
equivalent to $\sphspec$ (working temporarily in
orthogonal spectra).
We claim that $E \langle H \rangle$ is rationally
equivalent to $e_H \sphspec$, from which
both conditions above follow immediately. Since
$E \langle H \rangle$ can be constructed as
$E[\leqslant_G H]_+ \smashprod E\widetilde{[<_G H]}$
it is rationally equivalent to
$e_{[\leqslant_G H]} \sphspec \smashprod (1-e_{[<_G H]}) \sphspec$
by Lemma \ref{lem:familiestoidempotents}
and hence rationally equivalent to $e_{[\leqslant_G H]}(1-e_{[<_G H]}) \sphspec$.
We then have a zig-zag of $\pi_*^\qq$-isomorphisms
$$e_H \sphspec \to
e_H (e_{[\leqslant_G H]}(1-e_{[<_G H]}) \sphspec)
\leftarrow e_{[\leqslant_G H]} \sphspec \smashprod (1-e_{[<_G H]}) \sphspec$$
which proves our claim.
Since $\sphspec \to \bigvee_{(H) \leqslant G} e_H \sphspec$
is a rational equivalence we have our result.
\end{proof}

\begin{corollary} \label{cor:EHequivs}
A map $f \co X \to Y$ in $G \mcal_\qq$  is a rational $E \langle H \rangle$-equivalence
if and only if $e_H f \co e_H X \to e_H Y$ is a rational equivalence.
\end{corollary}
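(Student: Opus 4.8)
The plan is to unwind both sides through the rational equivalence $E\langle H\rangle \simeq e_H\sphspec$ that was established inside the proof of Theorem \ref{thm:finitesplitting}. First I would recall from that proof the zig-zag of $\pi_*^\qq$-isomorphisms $e_H\sphspec \to e_H\big(e_{[\leqslant_G H]}(1-e_{[<_G H]})\sphspec\big) \leftarrow E\langle H\rangle$, so that for any $G$-spectrum $X$ there is a corresponding zig-zag $e_H X \simeq e_H\sphspec\smashprod X \simeq E\langle H\rangle\smashprod X$, using Proposition \ref{prop:hocolimproperties} (for $e_H X \simeq e_H\sphspec\smashprod X$) and Proposition \ref{prop:Esmashisom} together with the fact that smashing with the cofibrant object $E\langle H\rangle$ preserves rational equivalences. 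Since $f\co X\to Y$ being a rational $E\langle H\rangle$-equivalence means by definition that $\id_{E\langle H\rangle}\smashprod f$ is a rational equivalence, naturality of the above zig-zag in $X$ and the two-out-of-three property give the claim.

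Concretely, I would first reduce to homotopy groups: by Lemma \ref{lem:ratequivs} a rational equivalence is exactly a $\pi_*^\qq$-isomorphism, and by Corollary \ref{cor:comrel} we have $\pi_*^H(e_H\sphspec\smashprod X)\otimes\qq \cong \iota_K^*(e_H)\pi_*^K(X)\otimes\qq$ for each subgroup $K$. Combining with the identification of $\pi_*^\qq(E\langle H\rangle\smashprod X)$ via the weak equivalence $E\langle H\rangle\smashprod X \simeq e_H X$ (itself following from the corollary after Lemma \ref{lem:familiestoidempotents}, applied to the idempotent family description $E\langle H\rangle = E[\leqslant_G H]_+\smashprod E\widetilde{[<_G H]}$), we get that $f$ is a rational $E\langle H\rangle$-equivalence if and only if $e_H f$ induces an isomorphism on $\pi_*^K(-)\otimes\qq$ for all $K$, which is precisely the statement that $e_H f\co e_H X\to e_H Y$ is a rational equivalence. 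Alternatively, and perhaps more cleanly, one can appeal directly to the observation in Remark \ref{rmk:splittingidempotents} that localisation at $E\langle H\rangle$ (being rationally equivalent to $e_H\sphspec$) is the same as inverting $e_H\in A(G)$, so that a map becomes an equivalence after this localisation exactly when it does after applying $e_H$.

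I do not expect a genuine obstacle here; the corollary is essentially a restatement of facts already assembled for Theorem \ref{thm:finitesplitting}. The only point requiring a little care is the bookkeeping of the zig-zags: one must check that the comparison maps $e_H X \to e_H(e_{[\leqslant_G H]}(1-e_{[<_G H]})\sphspec\smashprod X) \leftarrow E\langle H\rangle\smashprod X$ are natural in $X$, so that a map $f$ is sent to a commuting ladder whose outer vertical maps are $e_H f$ and $\id_{E\langle H\rangle}\smashprod f$ and whose horizontal maps are rational equivalences; two-out-of-three then transfers the rational-equivalence property between the two ends. This naturality is immediate since every construction involved ($e_H(-)$, $(-)\smashprod X$, fibrant replacement, the idempotent maps) is functorial. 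Thus the proof is a short diagram chase once the ingredients of Theorem \ref{thm:finitesplitting} are in hand.
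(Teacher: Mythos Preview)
Your argument is correct, but it is not the route the paper takes. The paper's proof is a single line: ``One checks this by looking at geometric fixed points.'' Concretely, the paper would argue that $f$ is a rational $E\langle H\rangle$-equivalence iff $\Phi^K(E\langle H\rangle\smashprod f)\simeq \Phi^K(E\langle H\rangle)\smashprod\Phi^K(f)$ is a rational equivalence for all $K$, and since $\Phi^K(E\langle H\rangle)$ is contractible unless $(K)=(H)$ (where it is $S^0$), this reduces to $\Phi^H(f)$ being a rational equivalence; the same reduction applies to $e_H f$ via the proposition computing $\Phi^L(eX)$, and the rational version of Proposition~\ref{prop:geomfixedpoints} closes the loop.

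Your approach instead transports the problem along the rational equivalence $E\langle H\rangle\simeq e_H\sphspec$ already established in the proof of Theorem~\ref{thm:finitesplitting}, using naturality and two-out-of-three. This is a perfectly valid alternative and arguably more self-contained, since it avoids re-invoking the geometric fixed point detection theorem and instead recycles the zig-zag you just built. The paper's route is terser and more conceptual (both conditions are visibly detected by $\Phi^H$ alone), while yours makes explicit that the corollary is an immediate repackaging of the ingredients of Theorem~\ref{thm:finitesplitting}. One small remark: when you invoke the corollary after Lemma~\ref{lem:familiestoidempotents} to get $E\langle H\rangle\smashprod X\simeq e_H X$ for \emph{arbitrary} $X$, you are implicitly composing the two idempotent-family identifications for $[\leqslant_G H]$ and $[<_G H]$; it would be worth saying in one line that $e_{[\leqslant_G H]}(1-e_{[<_G H]})=e_H$ so that the composite really is $e_H X$.
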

\begin{proof}
One checks this by looking at geometric fixed points.
\end{proof}

\begin{corollary}
The Quillen equivalence
$$\Delta : G \mcal_\qq
\overrightarrow{\longleftarrow}
\prod_{(H) \leqslant G} L_{E \langle H \rangle} G \mcal_\qq : \prod$$
is a symmetric monoidal adjunction of
closed $Sp^\Sigma_+$-algebras.
\end{corollary}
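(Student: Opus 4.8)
The plan is to deduce this from Theorem \ref{thm:finitesplitting} by checking that every functor in sight is compatible with the structure maps out of $Sp^\Sigma_+$. Recall that $G \mcal$ is a closed $Sp^\Sigma_+$-algebra via a strong symmetric monoidal left Quillen functor $\Theta \co Sp^\Sigma_+ \to G \mcal$; one may take $\Theta$ to be the composite of the prolongation $Sp^\Sigma_+ \to \GIS_+$, the inflation functor $\varepsilon_G^*$ along $G \to \{ e \}$, and the functor $\mathbb{N}$ of Theorem \ref{thm:ekmmmodel}. The $Sp^\Sigma_+$-tensoring of $G \mcal$ is then $X \odot M = \Theta(X) \smashprod M$, the cotensoring is its right adjoint, and the enrichment $\underline{G\mcal}(M,N)$ is characterised by $Sp^\Sigma_+\big(X, \underline{G\mcal}(M,N)\big) \cong G\mcal(X \odot M, N)$; that these make $G \mcal$ a $Sp^\Sigma_+$-model category is immediate from the pushout product axiom for $G \mcal$ together with the fact that $\Theta$ preserves cofibrations.

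First I would transport this structure across the localisations. Each identity functor $G \mcal \to L_{E \langle H \rangle} G \mcal_\qq$ is a Bousfield localisation and hence a strong symmetric monoidal left Quillen functor by Proposition \ref{prop:localquillen}, as is $G \mcal \to G \mcal_\qq$; composing with $\Theta$ yields strong symmetric monoidal left Quillen functors $\Theta_\qq \co Sp^\Sigma_+ \to G \mcal_\qq$ and $\Theta_H \co Sp^\Sigma_+ \to L_{E \langle H \rangle} G \mcal_\qq$, so each of these localised categories is a closed $Sp^\Sigma_+$-algebra with tensoring $X \odot M = \Theta(X) \smashprod M$. The finite product $\mathcal{D} := \prod_{(H)} L_{E \langle H \rangle} G \mcal_\qq$ becomes a closed $Sp^\Sigma_+$-algebra via the diagonal functor $\widetilde\Theta \co Sp^\Sigma_+ \to \mathcal{D}$, $\widetilde\Theta(X) = (\Theta_H X)_{(H)}$, which is strong symmetric monoidal and left Quillen since cofibrations, acyclic cofibrations and the symmetric monoidal structure of a finite product model category are all computed factorwise.

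The key point is the compatibility $\Delta \circ \Theta_\qq = \widetilde\Theta$, which holds on the nose because $\Delta$ sends an object to the constant collection with that value. It follows that $\Delta$ commutes with the tensorings up to a coherent natural isomorphism: for $X$ in $Sp^\Sigma_+$ and $M$ in $G \mcal_\qq$,
$$X \odot \Delta M = \widetilde\Theta(X) \smashprod \Delta M = \Delta\Theta_\qq(X) \smashprod \Delta M \cong \Delta\big(\Theta_\qq(X) \smashprod M\big) = \Delta(X \odot M),$$
the middle isomorphism being the structure isomorphism of the strong monoidal functor $\Delta$. A tensor-preserving functor between tensored $Sp^\Sigma_+$-categories is canonically a $Sp^\Sigma_+$-functor, so this promotes $\Delta$ to a $Sp^\Sigma_+$-functor; and since $(\Delta, \prod)$ is also an ordinary adjunction, the chain of natural isomorphisms
$$Sp^\Sigma_+\big(X, \underline{\mathcal{D}}(\Delta M, N)\big) \cong \mathcal{D}(X \odot \Delta M, N) \cong \mathcal{D}(\Delta(X \odot M), N) \cong G\mcal_\qq(X \odot M, \prod N) \cong Sp^\Sigma_+\big(X, \underline{G\mcal_\qq}(M, \prod N)\big)$$
together with the Yoneda lemma gives the enriched adjunction isomorphism $\underline{\mathcal{D}}(\Delta M, N) \cong \underline{G\mcal_\qq}(M, \prod N)$, exhibiting $(\Delta, \prod)$ as an adjunction of closed $Sp^\Sigma_+$-algebras. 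Combined with the strong symmetric monoidal Quillen equivalence already provided by Theorem \ref{thm:finitesplitting}, this gives the corollary.

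The main obstacle is not the splitting, which is Theorem \ref{thm:finitesplitting}, but the enriched-category bookkeeping this argument presupposes: that $G \mcal$ (and hence every localisation of it appearing here) is a $Sp^\Sigma_+$-model category via a strong symmetric monoidal left Quillen functor out of $Sp^\Sigma_+$, with all the attendant coherences, and the general principle that a strong symmetric monoidal Quillen adjunction whose left adjoint commutes with the structure functors from $Sp^\Sigma_+$ is automatically an adjunction of closed $Sp^\Sigma_+$-algebras. Both are of the type developed in Chapter \ref{chp:enrichcat}, so the cleanest route is to isolate the general principle there and invoke it here.
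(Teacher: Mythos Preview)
The paper states this corollary without proof, treating it as immediate from Theorem \ref{thm:finitesplitting} together with the general enriched-category framework; your argument spells out exactly the intended reasoning, namely that the $Sp^\Sigma_+$-algebra structures on $G\mcal_\qq$ and on each factor are given by the same underlying functor out of $Sp^\Sigma_+$, so that $\Delta$ commutes with them on the nose and the enriched adjunction follows by Yoneda. One minor imprecision: your description of $\Theta$ should match the paper's composite $\mathbb{N} \circ i_* \varepsilon^*_G \circ \mathbb{P} \circ |-|$ (passing through non-equivariant orthogonal spectra before inflating), as made explicit later in Theorem \ref{thm:spectralalgebras}, but this does not affect the argument.
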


We now fix some subgroup $H$
and study $L_{E \langle H \rangle} G \mcal_\qq$.

\begin{lemma}\label{lem:modelequality}
There is an equality of model structures:
$$L_{E \langle H \rangle} G \mcal_\qq =
L_{E \langle H \rangle} L_{E [ \leqslant_G H]_+ } G \mcal_\qq $$
that is to say, the weak equivalences, cofibrations and fibrations
agree.
\end{lemma}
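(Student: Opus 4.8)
The plan is to collapse both sides of the claimed equality into a single Bousfield localisation of $G\mcal$ and then compare the localising spectra rationally. Recall that the localisation machinery of Section \ref{sec:localGIS}, and in particular Theorem \ref{thm:LEGSlocal} (with its identifications $L_F L_E = L_{F\smashprod E}$), is set up to apply verbatim to $G\mcal$ as well as to $\GIS$. Using this together with the definition $G\mcal_\qq = L_{S^0_\mcal{\qq}}G\mcal$ we may rewrite
$$L_{E \langle H \rangle} L_{E [ \leqslant_G H]_+ } G \mcal_\qq = L_{E \langle H \rangle \smashprod E[\leqslant_G H]_+ \smashprod S^0_\mcal{\qq}} G\mcal, \qquad L_{E \langle H \rangle} G \mcal_\qq = L_{E \langle H \rangle \smashprod S^0_\mcal{\qq}} G\mcal .$$
A Bousfield localisation never alters the cofibrations, so both of these model structures have exactly the cofibrations of $G\mcal$; and a model structure is determined by its cofibrations together with its weak equivalences (the fibrations being characterised by right lifting). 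Hence it suffices to show that the $(E\langle H\rangle\smashprod E[\leqslant_G H]_+\smashprod S^0_\mcal{\qq})$-equivalences and the $(E\langle H\rangle\smashprod S^0_\mcal{\qq})$-equivalences are the same class of maps.

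For this it is enough to show that $E\langle H\rangle\smashprod E[\leqslant_G H]_+$ is rationally equivalent to $E\langle H\rangle$: a zig-zag of rational equivalences between these two spectra makes smashing with each of the two localising objects above induce, up to natural isomorphism, the same endofunctor of $\ho G\mcal_\qq$, so they detect the same rational equivalences — this is the same argument used in the proof of Proposition \ref{prop:GMequivGIS+Q}. To produce the rational equivalence, recall from the proof of Theorem \ref{thm:finitesplitting} that $E\langle H\rangle$ is rationally equivalent to $e_H\sphspec$, and from Lemma \ref{lem:familiestoidempotents} that $E[\leqslant_G H]_+$ is rationally equivalent to $e_{[\leqslant_G H]}\sphspec$ with $e_{[\leqslant_G H]}=\sum_{(K)\leqslant H}e_K$. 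Since the $e_K$ are orthogonal idempotents and $(H)\leqslant H$, we have $e_H e_{[\leqslant_G H]} = e_H$. Feeding this into Corollary \ref{cor:comrel}, which gives $\pi^K_*(e\sphspec\smashprod X)\otimes\qq\cong\iota^*_K(e)\,\pi^K_*(X)\otimes\qq$, we get for every subgroup $K$ that $\pi^K_*(e_H\sphspec\smashprod e_{[\leqslant_G H]}\sphspec)\otimes\qq\cong\iota^*_K(e_H)\,\pi^K_*(\sphspec)\otimes\qq\cong\pi^K_*(e_H\sphspec)\otimes\qq$, whence a zig-zag $E\langle H\rangle\smashprod E[\leqslant_G H]_+\simeq_\qq e_H\sphspec\smashprod e_{[\leqslant_G H]}\sphspec\simeq_\qq e_H\sphspec\simeq_\qq E\langle H\rangle$. (Alternatively, and more directly, one checks that the map $\id\smashprod\varepsilon\co E\langle H\rangle\smashprod E[\leqslant_G H]_+\to E\langle H\rangle$ is a rational equivalence by examining geometric fixed points: $\Phi^K$ kills both sides unless $(K)=(H)$, and when $(K)=(H)$ one has $K\leqslant_G H$, so $\Phi^K(\varepsilon)$ is the equivalence $(E[\leqslant_G H]^K)_+\to S^0$; now invoke the rational version of Proposition \ref{prop:geomfixedpoints}.)

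Combining the two paragraphs: the two model structures share the cofibrations of $G\mcal$ and have the same weak equivalences, hence are equal, which is the assertion of the lemma. The only points requiring any care are bookkeeping: ensuring that the identification $L_F L_E = L_{F\smashprod E}$ and Corollary \ref{cor:comrel} are invoked in $G\mcal$ rather than in $\GIS$ (legitimate, using \cite[Chapter IV, Remark 2.8]{mm02} in place of \cite[Chapter III, Theorem 3.5]{mm02} wherever a long exact sequence is needed), and the usual EKMM cofibrancy hygiene (smashing with $\cofrep S$ before taking geometric fixed points or applying Ken Brown's lemma). Neither is a genuine obstacle; the substance of the proof is just the idempotent identity $e_H e_{[\leqslant_G H]}=e_H$, which expresses the intuition of Remark \ref{rmk:splittingidempotents} that localising at $E\langle H\rangle$ already inverts $e_H$, so further inverting $e_{[\leqslant_G H]}$ changes nothing.
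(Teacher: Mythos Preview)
Your proof is correct and follows essentially the same route as the paper: both reduce to showing that $E\langle H\rangle\smashprod E[\leqslant_G H]_+$ and $E\langle H\rangle$ determine the same Bousfield class, so that the cofibrations (unchanged) and weak equivalences coincide. The only cosmetic difference is that the paper first argues integrally---using the cofibre sequence $E[\leqslant_G H]_+\to S^0\to E\widetilde{[\leqslant_G H]}$ to see that $E[\leqslant_G H]_+\smashprod E\langle H\rangle\to E\langle H\rangle$ is an honest weak equivalence (your geometric-fixed-points alternative is exactly this)---and then passes to the rational statement, whereas you work rationally throughout via the idempotent identity $e_H e_{[\leqslant_G H]}=e_H$.
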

\begin{proof}
We claim that we have the equality:
$L_{E \langle H \rangle} G \mcal =
L_{E \langle H \rangle} L_{E [ \leqslant_G H]_+ } G \mcal $,
where the cofibrations agree by definition.
The map $E \widetilde{ [ \leqslant_G H]} \to *$ is an
$E \langle H \rangle$ equivalence
(look at the homotopy groups and the idempotents).
Hence, considering the cofibre sequence
which defines $E \widetilde{ [ \leqslant_G H]}$
we have a weak equivalence.
$$E [ \leqslant_G H]_+ \smashprod E \langle H \rangle \to E \langle H \rangle$$
It then follows that an
$E [ \leqslant_G H]_+ \smashprod E \langle H \rangle$-equivalence
is an $E \langle H \rangle$-equivalence. So the weak equivalences
of $L_{E \langle H \rangle} G \mcal$ and
$L_{E \langle H \rangle} L_{E [ \leqslant_G H]_+ } G \mcal $
agree and thus we have proved our claim.
The result then follows immediately.
\end{proof}

\begin{rmk}
The weak equivalences of
$L_{E [ \leqslant_G H]_+ } G \mcal_\qq $
are those maps $f$ such that $\pi_*^K(f) \otimes \qq$
is an isomorphism for all $K \leqslant_G H$.
This is \cite[IV, Proposition 6.7]{mm02}.
\end{rmk}

\begin{lemma}
A map $f$ in $L_{E \langle H \rangle} G \mcal_\qq $
is a weak equivalence if and only if the induced map of homotopy groups
$i_H^*(e_H) \pi_*^H(f) \otimes \qq$ is a isomorphism.
\end{lemma}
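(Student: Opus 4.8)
The plan is to compare the three localised model structures appearing in Lemma \ref{lem:modelequality} and then track the weak equivalences through the idempotent calculations. First I would invoke Lemma \ref{lem:modelequality} to replace $L_{E \langle H \rangle} G \mcal_\qq$ by $L_{E \langle H \rangle} L_{E [ \leqslant_G H]_+ } G \mcal_\qq$, which by Theorem \ref{thm:LEGSlocal} (in its $G \mcal$ form) is the same as $L_{E \langle H \rangle \smashprod E [ \leqslant_G H]_+} G \mcal_\qq$. Since $E \langle H \rangle$ was constructed as $E[\leqslant_G H]_+ \smashprod E\widetilde{[<_G H]}$ and $E[\leqslant_G H]_+ \smashprod E[\leqslant_G H]_+$ is rationally equivalent to $E[\leqslant_G H]_+$ (look at geometric fixed points via Proposition \ref{prop:geomfixedpoints}, or use that $e_{[\leqslant_G H]}$ is idempotent), the smash product $E \langle H \rangle \smashprod E [ \leqslant_G H]_+$ is rationally equivalent to $E \langle H \rangle$ itself. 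Hence a map is a weak equivalence in $L_{E \langle H \rangle} G \mcal_\qq$ exactly when it is a rational $E \langle H \rangle$-equivalence.

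Next I would apply Corollary \ref{cor:EHequivs}: $f$ is a rational $E \langle H \rangle$-equivalence if and only if $e_H f \co e_H X \to e_H Y$ is a rational equivalence. Now I would feed this into Corollary \ref{cor:comrel}, which gives $\pi^H_*(e_H \sphspec \smashprod X) \otimes \qq \cong \iota^*_H(e_H) \pi^H_*(X) \otimes \qq$ (and the analogous statement with $e_H X$ in place of $e_H \sphspec \smashprod X$, the two being rationally equivalent by Proposition \ref{prop:hocolimproperties}). Therefore $e_H f$ being a rational equivalence forces $\iota^*_H(e_H) \pi^H_*(f) \otimes \qq$ to be an isomorphism. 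For the converse direction I would argue that if $\iota^*_H(e_H) \pi^H_*(f) \otimes \qq$ is an isomorphism then so is $\iota^*_K(e_H)\pi^K_*(f) \otimes \qq$ for every $K$: by Remark \ref{rmk:restrictedidempotents} the support of $e_H$ meets the $G$-conjugacy class of $K$ only when $(K) = (H)$, so for $(K) \neq (H)$ the idempotent $\iota^*_K(e_H)$ acts as zero and the condition is automatic, while for $(K) = (H)$ it follows from the given hypothesis (naturality of Corollary \ref{cor:comrel} in $K$ and conjugation). By Corollary \ref{cor:comrel} again this says $\pi^K_*(e_H f) \otimes \qq$ is an isomorphism for all $K$, so $e_H f$ is a rational $\pi_*$-isomorphism of $G$-spectra, hence (via Corollary \ref{cor:EHequivs}) $f$ is a rational $E \langle H \rangle$-equivalence, i.e.\ a weak equivalence in $L_{E \langle H \rangle} G \mcal_\qq$.

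I would note the minor point that $\iota^*_H(e_H)$ and the ``$i_H^*(e_H)$'' in the statement denote the same thing — the restriction of the idempotent $e_H \in A(G) \otimes \qq$ to $A(H) \otimes \qq$ along the inclusion $\iota_H$ — and that, because $H$ has finite index in its own normaliser precisely when $(H)$ lies in the support picture, $\iota^*_H(e_H)$ is a nonzero idempotent of $A(H)\otimes\qq$ (indeed it picks out the $H$-conjugacy class of $H$ itself). The main obstacle is bookkeeping rather than conceptual: one has to be careful that ``weak equivalence in $L_{E\langle H\rangle}G\mcal_\qq$'' genuinely unwinds to the rational $E\langle H\rangle$-equivalences (this is where Lemma \ref{lem:modelequality} and the identification of smash-product localisations do the work) and that the passage between $e_H X$ and $e_H\sphspec\smashprod X$ is harmless (Proposition \ref{prop:hocolimproperties}). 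Once those identifications are in place the equivalence of the two conditions is a direct consequence of Corollary \ref{cor:comrel} applied at every subgroup $K$ together with the support calculation of Remark \ref{rmk:restrictedidempotents}.
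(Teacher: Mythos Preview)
Your argument has a genuine gap in the converse direction. You claim that for $(K) \neq (H)$ the idempotent $\iota_K^*(e_H)$ acts as zero, citing Remark~\ref{rmk:restrictedidempotents}. But that remark says the support of $\iota_K^*(e_H)$ is $\{L \leqslant K : L \text{ is $G$-conjugate to } H\}$, which is nonempty precisely when $H \leqslant_G K$. So whenever $K$ properly contains a conjugate of $H$ (for instance $K=G$), the restricted idempotent $\iota_K^*(e_H)$ is \emph{nonzero}, and your vanishing argument fails for all such $K$. Concretely, take $G=C_2$ and $H=\{e\}$: then $\iota_{C_2}^*(e_{\{e\}}) = e_{\{e\}} \in A(C_2)\otimes\qq$ is nonzero.

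This is exactly the point where the paper's proof differs from yours, and your invocation of Lemma~\ref{lem:modelequality} misses it. You use that lemma to pass to $L_{E\langle H\rangle \smashprod E[\leqslant_G H]_+} G\mcal_\qq$ and then observe that this collapses back to $L_{E\langle H\rangle} G\mcal_\qq$ --- a circular manoeuvre that only recovers the definition of weak equivalence. The paper instead exploits the \emph{intermediate} localisation: a weak equivalence in $L_{E\langle H\rangle} L_{E[\leqslant_G H]_+} G\mcal_\qq$ is a map $f$ such that $f \smashprod E\langle H\rangle$ is an $E[\leqslant_G H]_+$-equivalence in $G\mcal_\qq$, and by the remark following Lemma~\ref{lem:modelequality} (i.e.\ Proposition~\ref{prop:familyequiv}) this is detected on $\pi_*^K$ only for $K \leqslant_G H$. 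With the range of $K$ cut down to subgroups subconjugate to $H$, your support calculation \emph{does} work: for $K <_G H$ no subgroup of $K$ can be $G$-conjugate to $H$ (by order considerations), so $\iota_K^*(e_H)=0$ there, leaving only $(K)=(H)$.
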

\begin{proof}
Lemma \ref{lem:modelequality} and
Corollary \ref{cor:EHequivs} show that $f$
is a weak equivalence if and only if
$i_K^*(e_H) \pi_*^K(f) \otimes \qq$ is an isomorphism for
all $K \leqslant_G H$.
Now note that $K$ is a strict subset of $H$ then
$i_K^*(e_H)=0$, hence for any map $f$,
$i_K^*(e_H) \pi_*^K(f) \otimes \qq$ will be an isomorphism.
\end{proof}

Let $H \leqslant G$ then there is an idempotent
$e^G_H \in A(G) \otimes \qq$ and
$\iota_H^*(e^G_H) = e^H_H \in A(H) \otimes \qq$.
Thus we reword the lemma above as:
a map $f$ in $L_{E \langle H \rangle} G \mcal_\qq $
is a weak equivalence if and only if
$e_H^H \pi_*^H(f)$ is a isomorphism.

Now we must obtain a version of this splitting
with every object of the split categories
fibrant (see Remark \ref{rmk:whyfibrant}).
To achieve this we will apply the same
approach as in Section \ref{sec:rationalsphere}
and construct a suitable ring spectrum via
Theorem \ref{thm:algebralocalise}.

\begin{lemma}\label{lem:familycofibre}
Given $\fscr \subset \fscr'$, families of subgroups of $G$,
the cofibre of the induced map of classifying spaces
$E\fscr_+ \to E \fscr'_+$, is a cell complex.
\end{lemma}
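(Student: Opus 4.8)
The plan is to run exactly the argument of Lemma~\ref{lem:S0qqiscell} in the present setting; by ``cell complex'' we understand cell $S$-module in $G\mcal$ (equivalently, the cofibre of spaces is a based $G$-CW complex whose suspension $S$-module is a cell $S$-module). First I would pin down the map. Since $E\fscr$ is a $G$-CW complex whose cells have orbit type $G/H$ with $H\in\fscr$, and since $\fscr\subseteq\fscr'$ forces each fixed-point space $E\fscr'^H$ with $H\in\fscr$ to be non-empty and contractible, equivariant obstruction theory produces a $G$-map $f\co E\fscr\to E\fscr'$, unique up to $G$-homotopy. Adding disjoint basepoints gives $f_+\co E\fscr_+\to E\fscr'_+$, and by Lemma~\ref{lem:homotopycofibre} its cofibre is independent of this choice up to weak equivalence.

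Next I would pass to $S$-modules. Because $E\fscr$ and $E\fscr'$ are $G$-CW complexes, the suspension $S$-modules $\Sigma^\infty E\fscr_+$ and $\Sigma^\infty E\fscr'_+$ are cell $S$-modules, assembled cell by cell from the generalised sphere $S$-modules $S\smashprod_{\mathscr{L}}\mathbb{L}\Sigma^\infty_V(G/H_+\smashprod S^n)$ along the attaching maps coming from the $G$-CW structures. By Cellular Approximation \cite[Chapter I, Theorem~5.8]{lms86} we may assume $\Sigma^\infty f_+$ is cellular. Recall from \cite[Chapter I, Lemma~5.7]{lms86} that for a cell $S$-module $X$ the canonical map $X\to CX$ is the inclusion of a cell subcomplex, and that the pushout of a cellular map of cell $S$-modules along such an inclusion is again a cell $S$-module. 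The cofibre of $\Sigma^\infty f_+$ is by construction the pushout of the diagram $C\Sigma^\infty E\fscr_+\leftarrow \Sigma^\infty E\fscr_+\overset{\Sigma^\infty f_+}{\longrightarrow}\Sigma^\infty E\fscr'_+$ along the inclusion of the cell subcomplex $\Sigma^\infty E\fscr_+\to C\Sigma^\infty E\fscr_+$, hence is a cell $S$-module.

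The step requiring the most care is not any single estimate but the bookkeeping behind ``suspension spectra of $G$-CW complexes are cell $S$-modules'' — that $\Sigma^\infty(G/H_+)$ is (a suspension of) a generalised sphere $S$-module and that a $G$-CW attaching map induces a cellular attaching map of $S$-modules — together with the compatibility of the mapping-cone construction with cellular approximation. All of this is standard, but it is where one would slip. A self-contained alternative sidestepping cellular approximation: since $\fscr\subseteq\fscr'$, build a model of $E\fscr'$ by attaching cells of orbit type $G/K$ with $K\in\fscr'$ to $E\fscr$ until every $E\fscr'^H$ with $H\in\fscr'$ is contractible; the result is a $G$-CW complex with the correct fixed-point spaces, so it is a model of $E\fscr'$ containing $E\fscr$ as a $G$-CW subcomplex. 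Then the cofibre of $E\fscr_+\to E\fscr'_+$ is literally the based $G$-CW complex $E\fscr'_+/E\fscr_+$, whose suspension $S$-module is manifestly a cell $S$-module.
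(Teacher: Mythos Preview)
Your proposal is correct and matches the paper's own approach precisely: the paper's proof simply says one can either prove it directly or note that it follows by the same proof as for Lemma~\ref{lem:S0qqiscell}, and you have spelled out both of these routes in detail --- the first via Cellular Approximation and the pushout-of-a-cellular-map-along-a-cell-subcomplex-inclusion argument from \cite[Chapter~I, Lemma~5.7 and Theorem~5.8]{lms86}, and the second by building a model of $E\fscr'$ containing $E\fscr$ as a $G$-CW subcomplex. There is nothing to add.
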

\begin{proof}
One can either prove this directly or note that
it follows by the same proof as for
Lemma \ref{lem:S0qqiscell}.
\end{proof}

Recall from \cite[Chapter III, Proposition 2.6]{EKMM97}
that the smash product of a pair of cell complexes
is also a cell complex.

\begin{lemma}\label{lem:SHobject}
There is an $S^0_\mcal \qq \smashprod E \langle H \rangle$-local
commutative cell $S$-algebra
$S_H$\index{SH@$S_H$} whose
unit map is a rational $E \langle H \rangle$-equivalence
and an inclusion of cell complexes.
Furthermore every $S_H$-module is
$S^0_\mcal \qq \smashprod E \langle H \rangle$-local.
\end{lemma}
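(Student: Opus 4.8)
The plan is to build $S_H$ as an EKMM localisation of the sphere at the spectrum $E := S^0_\mcal\qq \smashprod E\langle H\rangle$, in exact parallel with the construction of $S_\qq$ from $S^0_\mcal\qq$ in Section \ref{sec:rationalsphere}. First I would check that $E$ is a cell $S$-module, so that the cellular localisation machinery applies: Lemma \ref{lem:S0qqiscell} gives that $S^0_\mcal\qq$ is a cell $S$-module; applying Lemma \ref{lem:familycofibre} to the families $[<_G H] \subset [\leqslant_G H]$ shows that $E\langle H\rangle$ is a cell complex; and the smash product of cell complexes is a cell complex (\cite[Chapter III, Proposition 2.6]{EKMM97}), so $E$ is a cell $S$-module, in particular cofibrant. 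Moreover, by Theorem \ref{thm:LEGSlocal} the model structure $L_E G\mcal$ coincides with $L_{E\langle H\rangle} G\mcal_\qq$, so ``$E$-equivalence'' means ``rational $E\langle H\rangle$-equivalence'' and ``$E$-local'' means ``$S^0_\mcal\qq\smashprod E\langle H\rangle$-local''.

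Next I would apply Theorem \ref{thm:algebralocalise} with $R = S$ and $A$ a cellular commutative $S$-algebra replacement of $S$, to obtain the $E$-localisation $S_H := A_E$. This is a cell commutative $S$-algebra; the unit $\eta \co S \to S_H$ is an $E$-equivalence (hence a rational $E\langle H\rangle$-equivalence) realised as the inclusion of a subcomplex in a cell commutative $S$-algebra, in particular an inclusion of cell complexes; and $S_H$ is $E$-local by construction. This establishes every assertion except the last sentence.

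For the ``furthermore'' I would follow the proof of Lemma \ref{lem:RQmodareSQlocal}. By Lemma \ref{lem:localcond} it suffices to show $[A,M]^G = 0$ for every $E$-acyclic $A$ and every $S_H$-module $M$; here $M$ is $E$-fibrant since all $S$-modules are fibrant and fibrations of $S_H$-modules are detected on underlying spectra. Running the same diagram chase as in Lemma \ref{lem:RQmodareSQlocal}, using the unit $\eta$ and the $S_H$-action map $M \smashprod^L S_H \to M$, reduces the claim to showing that $A\smashprod^L S_H$ is acyclic whenever $A$ is $E$-acyclic. For this I would first establish the analogues of Lemmas \ref{lem:compareSQtoRQ} and \ref{lem:SQiscRQ}, namely that $S_H$ is weakly equivalent to $E = S^0_\mcal\qq\smashprod E\langle H\rangle$: one checks via geometric fixed points, Proposition \ref{prop:geomfixedpoints} and Proposition \ref{prop:rathomgps} that $E\smashprod E \simeq E$ (smashing with $E$ is idempotent up to weak equivalence, since $S^0_\mcal\qq$ and $E\langle H\rangle$ are each idempotent in this sense), so that a cell model of $E$ together with the map out of $\cofrep S$ is an $E$-localisation of $S$; comparing it with $S_H$ by the lifting argument of Lemma \ref{lem:compareSQtoRQ} (lifting an $E$-equivalence out of $\cofrep S$ against an $E$-acyclic cofibration, then applying Lemma \ref{lem:EequivElocal} to an $E$-equivalence between $E$-local objects) yields $S_H \simeq E$. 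With this identification in hand, $A\smashprod^L S_H$ is weakly equivalent to $A\smashprod E$, which is contractible because $A$ is $E$-acyclic, completing the argument.

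The main obstacle will be the identification $S_H \simeq S^0_\mcal\qq\smashprod E\langle H\rangle$, i.e. the transcription of Lemmas \ref{lem:compareSQtoRQ} and \ref{lem:SQiscRQ} to the present setting: it hinges on smashing with $E$ being idempotent, which is precisely the point where the rational splitting results of this chapter (that $E\langle H\rangle$ behaves like the Burnside-ring idempotent $e_H$ after rationalisation) are needed. All the ingredients are available from earlier in the chapter, so the work lies in assembling them carefully; everything else is a direct copy of the $S_\qq$ construction.
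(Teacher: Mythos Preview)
Your overall strategy matches the paper's exactly: build $S_H$ by applying Theorem~\ref{thm:algebralocalise} to the cell object $E := S^0_\mcal\qq \smashprod E\langle H\rangle$, identify $S_H$ with $E$ up to weak equivalence, and then rerun the argument of Lemma~\ref{lem:RQmodareSQlocal} to conclude that every $S_H$-module is $E$-local. The first and third steps are handled correctly.

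The difference, and the one place where your argument is incomplete, is the identification $S_H \simeq E$. You propose to show that $E\smashprod E \simeq E$, deduce that a cell model of $E$ equipped with a map from $\cofrep S$ is an $E$-localisation of $S$, and then compare with $S_H$ via Lemma~\ref{lem:EequivElocal}. But idempotence of $E$ under smash only gives you that $\cofrep S \to E$ is an $E$-\emph{equivalence}; it does not show that $E$ is $E$-\emph{local}, which is what you need to invoke Lemma~\ref{lem:EequivElocal}. (In Lemma~\ref{lem:compareSQtoRQ} this issue did not arise because an $S^0_\mcal\qq$-equivalence between objects with rational homotopy is automatically a $\pi_*$-isomorphism; here an $E$-equivalence between rational objects need not be.) The paper sidesteps this by never asserting that $E$ is $E$-local a priori: instead it uses that $S_H$ is $E$-local by construction and argues
\[
S_H \ \simeq\ S_H \smashprod S^0_\mcal\qq \ \simeq\ S_H \smashprod \bigvee_{(K)} E\langle K\rangle \ \simeq\ S_H \smashprod E\langle H\rangle,
\]
the last step because $S_H \smashprod E\langle K\rangle$ is acyclic for $(K)\neq(H)$ (this is exactly the content of the proof of Theorem~\ref{thm:generalsplitting} applied to an $E$-local object). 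Combining with $S_H \smashprod E \simeq S \smashprod E = E$ (from the unit map being an $E$-equivalence) gives $S_H \simeq E$. Your idempotence observation is morally the same fact, but routed through $S_H$ the argument closes without the extra locality hypothesis.
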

\begin{proof}
This result is an application of Theorem \ref{thm:algebralocalise}
using the cell object $S^0_\mcal \qq \smashprod E \langle H \rangle$
to create a commutative ring $S_H$ which is the
$S^0_\mcal \qq \smashprod E \langle H \rangle$-localisation of $S$.
By construction the unit map $S \to S_H$ is a
rational $E \langle H \rangle$-equivalence, hence
$S^0_\mcal \qq \smashprod E \langle H \rangle$ is $\pi_*$-isomorphic
to $S_H \smashprod S^0_\mcal \qq \smashprod E \langle H \rangle$.
Since $S_H$ is $S^0_\mcal \qq$-local, it has rational homotopy groups,
thus there is a
zig-zag of weak equivalences
$S^0_\mcal \qq \smashprod S_H \leftarrow \cofrep S \smashprod S_H \to S_H$.
Equally $S_H$ is weakly equivalent to
$S_H \smashprod \bigvee_{(K)} E \langle K \rangle$.
Since $S_H$ is $E \langle H \rangle$-local,
$S_H \smashprod E \langle K \rangle$ is acyclic
whenever $(H) \neq (K)$ (this is part of the proof of
Theorem \ref{thm:generalsplitting}).
It follows that $S_H \smashprod \bigvee_{(K)} E \langle K \rangle$
is weakly equivalent to $S_H \smashprod E \langle H \rangle$.
Thus $S_H$ is $\pi_*$-isomorphic to $S^0_\mcal \qq \smashprod E \langle H \rangle$.
Now we can use the proofs of
Lemmas \ref{lem:SQiscRQ} and \ref{lem:RQmodareSQlocal}
to show the last statement.
\end{proof}

\begin{proposition}\label{prop:GspecHtoSHmod}
The adjoint pair of the free $S_H$-module functor
and the forgetful functor
$$S_H \smashprod (-) : L_{E \langle H \rangle} G \mcal_\qq
\overrightarrow{\longleftarrow}
S_H \leftmod : U $$
is a strong symmetric monoidal Quillen equivalence.
\end{proposition}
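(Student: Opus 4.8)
The plan is to follow the proof of Theorem~\ref{thm:localisedtomodules} essentially verbatim, with $S_\qq$ replaced by $S_H$, the category $G\mcal_\qq$ replaced by $L_{E\langle H\rangle}G\mcal_\qq$, and Lemma~\ref{lem:SHobject} used in place of the structural facts about $S_\qq$. First one observes that, since $S_H$ is a commutative (cell, hence cofibrant) $S$-algebra by Lemma~\ref{lem:SHobject}, the free and forgetful functors form a strong symmetric monoidal adjoint pair in the standard way, and $S_H\leftmod$ carries a proper closed symmetric monoidal model structure whose weak equivalences and fibrations are created in the underlying $S$-modules, exactly as for $S_\qq\leftmod$. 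Because localisation leaves the cofibrations unchanged, $S_H\smashprod(-)\co L_{E\langle H\rangle}G\mcal_\qq\to S_H\leftmod$ preserves cofibrations.

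Next I would check that $S_H\smashprod(-)$ also preserves acyclic cofibrations. Let $f\co X\to Y$ be an acyclic cofibration of $L_{E\langle H\rangle}G\mcal_\qq$; its underlying map is a cofibration and, since $L_{E\langle H\rangle}G\mcal_\qq=L_{S^0_\mcal{\qq}\smashprod E\langle H\rangle}G\mcal$ (Theorem~\ref{thm:LEGSlocal}), an $S^0_\mcal{\qq}\smashprod E\langle H\rangle$-equivalence. Then $S_H\smashprod f$ is a cofibration of $S_H$-modules, and smashing $f$ with the cofibrant object $S_H\smashprod S^0_\mcal{\qq}\smashprod E\langle H\rangle$ and invoking associativity of $\smashprod$ together with Lemma~\ref{lem:sModMonoid} shows that $S_H\smashprod f$ is again an $S^0_\mcal{\qq}\smashprod E\langle H\rangle$-equivalence. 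By Lemma~\ref{lem:SHobject} every $S_H$-module is $S^0_\mcal{\qq}\smashprod E\langle H\rangle$-local, so Lemma~\ref{lem:EequivElocal} forces $S_H\smashprod f$ to be a $\pi_*$-isomorphism, hence a weak equivalence of $S_H\leftmod$. The two conditions making this a strong symmetric monoidal Quillen pair are checked in the same manner, using that $S_H\smashprod(A\smashprod B)\to(S_H\smashprod A)\smashprod_{S_H}(S_H\smashprod B)$ is an isomorphism and that the unit $S\to S_H$ is a rational $E\langle H\rangle$-equivalence.

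To promote this to a Quillen equivalence I would use the criterion employed in the proof of Theorem~\ref{thm:localisedtomodules}: since every $S$-module (hence every $S_H$-module) is fibrant, it suffices that the right adjoint $U$ preserve and detect weak equivalences, and that the unit $X\to U(S_H\smashprod X)=S_H\smashprod X$ be a weak equivalence of $L_{E\langle H\rangle}G\mcal_\qq$ for every cofibrant $X$. The functor $U$ preserves weak equivalences because they are defined on underlying $S$-modules; and if a map of $S_H$-modules is an $S^0_\mcal{\qq}\smashprod E\langle H\rangle$-equivalence then, its source and target being $S^0_\mcal{\qq}\smashprod E\langle H\rangle$-local, Lemma~\ref{lem:EequivElocal} makes it a $\pi_*$-isomorphism, so $U$ detects weak equivalences as well. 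Finally the unit $X\to S_H\smashprod X$ is obtained from the rational $E\langle H\rangle$-equivalence $S\to S_H$ of Lemma~\ref{lem:SHobject} by smashing with the cofibrant object $X$; since smashing with a cofibrant object preserves $\pi_*$-isomorphisms (Lemma~\ref{lem:sModMonoid}) it preserves $S^0_\mcal{\qq}\smashprod E\langle H\rangle$-equivalences, so this unit is a rational $E\langle H\rangle$-equivalence, completing the proof.

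The argument is essentially formal once Lemma~\ref{lem:SHobject} is available; the one step that deserves care is the ``$U$ detects weak equivalences'' claim, which depends entirely on the locality statement in Lemma~\ref{lem:SHobject} (every $S_H$-module is $S^0_\mcal{\qq}\smashprod E\langle H\rangle$-local) together with Lemma~\ref{lem:EequivElocal}, and on keeping track of the identification $L_{E\langle H\rangle}G\mcal_\qq=L_{S^0_\mcal{\qq}\smashprod E\langle H\rangle}G\mcal$ so that ``rational $E\langle H\rangle$-equivalence'' and ``$S^0_\mcal{\qq}\smashprod E\langle H\rangle$-equivalence'' may be used interchangeably.
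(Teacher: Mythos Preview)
Your proposal is correct and follows exactly the same approach as the paper: the paper's proof simply says to apply the proof of Theorem~\ref{thm:localisedtomodules}, noting that an $S^0_\mcal{\qq}\smashprod E\langle H\rangle$-equivalence between $S^0_\mcal{\qq}\smashprod E\langle H\rangle$-local objects is a $\pi_*$-isomorphism and that the unit map is an $S^0_\mcal{\qq}\smashprod E\langle H\rangle$-equivalence. You have supplied precisely these details, together with the identification $L_{E\langle H\rangle}G\mcal_\qq = L_{S^0_\mcal{\qq}\smashprod E\langle H\rangle}G\mcal$ from Theorem~\ref{thm:LEGSlocal}, so your argument is a faithful expansion of the paper's sketch.
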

\begin{proof}
The proof of Theorem \ref{thm:localisedtomodules}
can be applied in this case. The two points to note are: an
$S^0 \qq \smashprod E \langle H \rangle$-equivalence
between $S^0 \qq \smashprod E \langle H \rangle$-local objects
is a $\pi_*$-isomorphism and the unit map is
an $S^0 \qq \smashprod E \langle H \rangle$-equivalence.
\end{proof}

\begin{lemma}
The object $\cofrep(G/H_+) \smashprod S_H$
is a $G$-compact, cofibrant and fibrant generator
of $S_H \leftmod$.
\end{lemma}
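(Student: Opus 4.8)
The plan is to read off all four clauses from the strong symmetric monoidal Quillen equivalence $F := S_H \smashprod (-) \co L_{E \langle H \rangle} G \mcal_\qq \rightleftarrows S_H \leftmod \co U$ of Proposition~\ref{prop:GspecHtoSHmod}, transferring the corresponding properties of $\cofrep(G/H_+)$ across it. Since the cofibrations of $L_{E \langle H \rangle} G \mcal_\qq$ are exactly those of $G \mcal$, the spectrum $\cofrep(G/H_+)$ is cofibrant in $L_{E \langle H \rangle} G \mcal_\qq$; as $F$ is a left Quillen functor, $\cofrep(G/H_+) \smashprod S_H$ is cofibrant in $S_H \leftmod$, and it needs no further cofibrant replacement, so it computes $LF(\cofrep G/H_+)$. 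For fibrancy, recall that fibrations of $S_H$-modules are detected on underlying $S$-modules and that every object of $G\mcal$ is fibrant; hence every $S_H$-module is fibrant, in particular $\cofrep(G/H_+)\smashprod S_H$ is (this is consistent with Lemma~\ref{lem:SHobject}, which records that every $S_H$-module is $S^0_\mcal\qq\smashprod E\langle H\rangle$-local).

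The real content is that $\cofrep(G/H_+)\smashprod S_H$ is a $G$-compact generator, and the first step is to check that $\cofrep(G/H_+)$ is already a $G$-compact generator of $L_{E\langle H\rangle}G\mcal_\qq$. For compactness, the proof of Theorem~\ref{thm:finitesplitting} shows $E\langle H\rangle$ is rationally equivalent to $e_H\sphspec$, so $\cofrep(G/H_+)\smashprod E\langle H\rangle$ is rationally equivalent to $e_H\cofrep(G/H_+)$, which by Proposition~\ref{prop:idemsplit} is a retract in $\ho G\mcal_\qq$ of the $G$-compact object $\cofrep(G/H_+)$ (which is $G$-compact by Lemma~\ref{lem:compactgenerators} together with passage to rational spectra); a retract of a compact object is compact. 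Using the identification $[X,Y]^{E\langle H\rangle}\cong[X\smashprod E\langle H\rangle,Y\smashprod E\langle H\rangle]^G_\qq$ that comes from Theorems~\ref{thm:finitesplitting} and~\ref{thm:familylocalisations}, together with the observation that the coproduct of a family in $\ho L_{E\langle H\rangle}G\mcal_\qq$, once smashed with $E\langle H\rangle$, is the coproduct in $\ho G\mcal_\qq$ of the individual smashes, one deduces compactness of $\cofrep(G/H_+)$ in $L_{E\langle H\rangle}G\mcal_\qq$. For generation, by the Lemma quoted above (\cite[Lemma 2.2.1]{ss03stabmodcat}) it suffices to observe, via Lemma~\ref{lem:spherehomotopy}, Corollary~\ref{cor:comrel} and the same identification, that $[\Sigma^n\cofrep(G/H_+),X]^{E\langle H\rangle}$ is naturally isomorphic to $\iota_H^*(e_H)\pi_n^H(X)\otimes\qq = e_H^H\pi_n^H(X)\otimes\qq$, and that this group vanishes for all $n$ precisely when $X$ is weakly trivial in $L_{E\langle H\rangle}G\mcal_\qq$, by the characterisation of weak equivalences in that category established above.

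To conclude, transfer across the equivalence of homotopy categories $LF\co\ho L_{E\langle H\rangle}G\mcal_\qq\to\ho S_H\leftmod$. Generation passes to $\cofrep(G/H_+)\smashprod S_H = LF(\cofrep G/H_+)$ because $LF$ is an equivalence; and compactness passes because $LF$ is a left adjoint, so it preserves coproducts, while an equivalence of categories is full and faithful: writing a family of $S_H$-modules as $M_i\cong LF(N_i)$, one has $[LF(\cofrep G/H_+),\coprod_i M_i]^{S_H}\cong[\cofrep G/H_+,\coprod_i N_i]^{E\langle H\rangle}\cong\bigoplus_i[\cofrep G/H_+,N_i]^{E\langle H\rangle}\cong\bigoplus_i[LF(\cofrep G/H_+),M_i]^{S_H}$. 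The only delicate point in the whole argument is the compactness check inside $L_{E\langle H\rangle}G\mcal_\qq$ — one must know that rationalising and localising at $E\langle H\rangle$ does not destroy compactness — and this is exactly where the idempotent (smashing) nature of the localisation, via Proposition~\ref{prop:idemsplit}, is essential; everything else is formal transfer of structure or a direct reading of the definitions.
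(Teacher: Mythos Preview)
Your proof is correct and agrees with the paper on cofibrancy, fibrancy, and generation. The compactness argument, however, takes a different route. You first establish compactness of $\cofrep(G/H_+)$ inside the localised category $L_{E\langle H\rangle}G\mcal_\qq$ by exploiting that $E\langle H\rangle$ is a smashing (idempotent) localisation, so $\cofrep(G/H_+)\smashprod E\langle H\rangle$ is a retract of a compact object, and then transfer across the equivalence $LF$. The paper bypasses this intermediate step entirely: it uses the derived adjunction $[\cofrep(G/H_+)\smashprod S_H,\,-\,]^{S_H}\cong[\cofrep(G/H_+),\,U(-)]$ together with the fact that the forgetful functor $U$ commutes with filtered colimits (so with coproducts) and that every $S_H$-module is already local, reducing directly to compactness of $G/H_+$ in $\ho G\mcal$. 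The paper's argument is shorter and avoids any analysis of compactness in the localised category; your argument is longer but makes explicit the general principle that localising at an idempotent preserves compactness, which is conceptually useful even if unnecessary here. One minor point: your citation of Theorem~\ref{thm:familylocalisations} for the identification $[X,Y]^{E\langle H\rangle}\cong[X\smashprod E\langle H\rangle,Y\smashprod E\langle H\rangle]^G_\qq$ is slightly loose, since that theorem treats $E\fscr_+$ and $E\widetilde{\fscr}$ rather than $E\langle H\rangle$; the statement you need follows either by combining the two via Lemma~\ref{lem:modelequality} or directly from the idempotent description $E\langle H\rangle\simeq e_H\sphspec$.
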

\begin{proof}
Every object of $S_H \leftmod$ is fibrant and since
$\cofrep(G/H_+)$ is a cofibrant spectrum, so is
$\cofrep(G/H_+) \smashprod S_H$.
This object is $G$-compact since the right adjoint $U$ commutes with
filtered colimits and $\cofrep(G/H_+)$ is a $G$-compact $G$-spectrum.
Since the weak equivalences of $L_{E \langle H \rangle} G \mcal_\qq $
are the $i_H^*(e_H) \pi_*^H(f) \otimes \qq$-isomorphisms
it follows that $\cofrep(G/H_+)$ generates this model category.
Hence $\cofrep(G/H_+) \smashprod S_H$ generates $S_H \leftmod$.
\end{proof}

\section{Comparing Ringoids}\label{sec:finitecomp}
We use the results of \cite{greshi} to
replace $S_H \leftmod$ by $\rightmod \ecal_{top}^H$
(Theorem \ref{thm:finitemoritaequiv}).
This category is Quillen equivalent to the
category $\rightmod \ecal_t^H$
(Theorem \ref{thm:finiteEtopisEt}).
We show that the homology of $\ecal_t^H$
is given by $\ecal_a^H$ in
Proposition \ref{prop:homotopycalc}.
Then we use Theorem \ref{thm:finiteintrinsicformality}
to prove that $\ecal_t^H$ and $\ecal_a^H$
are quasi-isomorphic.
This will complete our classification
of rational $G$-spectra for finite $G$ and we summarise
this classification in Corollary \ref{cor:finiteclassification}.

\begin{definition}\label{def:finitegcalH}
Let $\gcal_{top}^H$ be the set of all smash products of
$\cofrep(G/H_+) \smashprod S_H$ (including the identity as the
zero-fold smash). Let $\ecal_{top}^H$ be the spectral category on
the objects of $\gcal_{top}^H$, so
by the proof of Theorem \ref{thm:spectralalgebras},
$$\ecal_{top}^H ( X ,Y) = \sing \mathbb{U} (i^* \nn^\# U F_{S_H}(X,Y))^{G}.$$
\end{definition}

With the exception of the unit, all objects of $\gcal_{top}^H$
are cofibrant and all objects are fibrant.
We use the results of \cite{greshi}
to replace this category of $S_H$-modules
by a category of modules over an endomorphism ringoid $\ecal_{top}^H$.

\begin{theorem}\label{thm:finitemoritaequiv}
The adjoint pair
$$(-) \smashprod_{\ecal_{top}^H} \gcal_{top}^H :
\rightmod \ecal_{top}^H
\overrightarrow{\longleftarrow}
S_H \leftmod : \underhom(\gcal_{top}^H,-)$$
is a Quillen equivalence
and an strong symmetric monoidal adjunction
of closed symmetric
monoidal spectral model categories.
\end{theorem}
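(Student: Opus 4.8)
The plan is to obtain this as the topological incarnation of the spectral Morita comparison of \cite{greshi}, in the strong monoidal form, exactly parallel to Proposition \ref{prop:FiniteAlgMorita} on the algebraic side but with symmetric spectra in place of rational chain complexes. Concretely, the statement we want is the spectral version of the Morita theorem, i.e. Theorem \ref{thm:spectralalgebras} (the topological analogue of Theorem \ref{thm:monoidalmorita} used in Proposition \ref{prop:FiniteAlgMorita}), applied to the monoidal spectral model category $S_H \leftmod$ and the generating set $\gcal_{top}^H$. So the bulk of the work is verifying that the hypotheses of that theorem hold.

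First I would record that $S_H \leftmod$ is a closed symmetric monoidal spectral model category. It is the category of modules over the commutative ring spectrum $S_H$ in $G \mcal$, hence a monoidal category with unit $S_H$; by Lemma \ref{lem:SHobject} together with Proposition \ref{prop:rmodlocal} it is a cofibrantly generated proper symmetric monoidal model category satisfying the monoid axiom, and the enrichment, tensoring and cotensoring over $Sp^\Sigma$ compatible with this model structure are supplied by the functor used to define $\ecal_{top}^H$ in Definition \ref{def:finitegcalH} (one passes through the strong monoidal Quillen equivalence $\nn \co \GIS_+ \rightleftarrows G\mcal$ and then down to symmetric spectra).

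Next I would verify that $\gcal_{top}^H$ is a set of $G$-compact, fibrant generators of $S_H \leftmod$ which is cofibrant apart from the unit and closed under $\smashprod_{S_H}$. Closure under the smash product is by the very definition of $\gcal_{top}^H$; every object is fibrant because every $S_H$-module is; and every object except the unit $S_H$ is cofibrant since $\cofrep(G/H_+) \smashprod S_H$ is cofibrant and the smash product of cofibrant objects is cofibrant in a monoidal model category. The only genuinely non-formal point is $G$-compactness of the iterated smash products: $\cofrep(G/H_+)$ is strongly dualisable in $\ho(G\mcal)$ (it is weakly equivalent to the suspension spectrum of a finite $G$-set), the base-change functor $S_H \smashprod (-)$ is strong monoidal and hence preserves dualisability, strong dualisability is preserved by $\smashprod_{S_H}$, and strongly dualisable objects are compact; so each finite $\smashprod_{S_H}$-power of $\cofrep(G/H_+) \smashprod S_H$ is $G$-compact. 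Finally $\gcal_{top}^H$ generates $S_H \leftmod$ because it contains the single generator $\cofrep(G/H_+)\smashprod S_H$. The failure of the unit $S_H$ to be cofibrant is harmless: one may replace it by its cofibrant replacement in $\gcal_{top}^H$ without altering the homotopy category of $\rightmod \ecal_{top}^H$, and the Morita theorem only requires the generators to be fibrant with the non-unit objects cofibrant.

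With the hypotheses in place, Theorem \ref{thm:spectralalgebras} gives at once that $\ecal_{top}^H$ is a symmetric monoidal spectral category (the Day convolution product existing precisely because $\gcal_{top}^H$ is closed under $\smashprod_{S_H}$), that $\rightmod \ecal_{top}^H$ carries a closed symmetric monoidal spectral model structure, and that the assembly--hom adjunction $\big((-)\smashprod_{\ecal_{top}^H}\gcal_{top}^H,\ \underhom(\gcal_{top}^H,-)\big)$ is a strong symmetric monoidal Quillen equivalence onto $S_H\leftmod$. I expect the main obstacle to be not the Quillen equivalence but the monoidal bookkeeping: one must check that the Day-convolution smash product on $\rightmod\ecal_{top}^H$ satisfies the pushout-product and unit axioms and that the assembly functor is strong symmetric monoidal, i.e. that the canonical comparison map $(M\smashprod_{\ecal_{top}^H}N)\smashprod_{\ecal_{top}^H}\gcal_{top}^H \to (M\smashprod_{\ecal_{top}^H}\gcal_{top}^H)\smashprod_{S_H}(N\smashprod_{\ecal_{top}^H}\gcal_{top}^H)$ is an isomorphism. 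This is exactly the place where closure of $\gcal_{top}^H$ under the smash product is essential, and where the parallel statement on the $\ecal_t$-side degenerates (Remark \ref{rmk:monoidalissue}); on the $\ecal_{top}$-side the category of symmetric spectra is still genuinely symmetric monoidal, so the argument of \cite{greshi} applies verbatim. The Quillen-equivalence part is then the standard Schwede--Shipley argument of \cite[Theorem 3.3.3]{ss03stabmodcat}: the derived counit is an equivalence because $\gcal_{top}^H$ is a set of compact generators, and the derived unit is an equivalence by a cell-induction starting from the representable $\ecal_{top}^H$-modules.
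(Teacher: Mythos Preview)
Your approach is essentially the paper's: apply the monoidal Morita equivalence (Theorem~\ref{thm:monoidalmorita}) to $S_H\leftmod$ with the generating set $\gcal_{top}^H$, after verifying the hypotheses and handling the non-cofibrant unit. Two points need correction.

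First, you have misidentified the theorem: Theorem~\ref{thm:spectralalgebras} is not a Morita theorem at all --- it merely records that certain categories (including $S_\qq\leftmod$) are closed symmetric $Sp^\Sigma_+$-algebras. The Morita theorem you want is Theorem~\ref{thm:monoidalmorita} itself, which is already stated for spectral categories; there is no separate ``topological analogue''. Proposition~\ref{prop:FiniteAlgMorita} applies the same Theorem~\ref{thm:monoidalmorita} but with $dg\qq\leftmod$ in the role of the base.

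Second, your handling of the non-cofibrant unit is not quite right. Replacing $S_H$ in $\gcal_{top}^H$ by a cofibrant replacement $\cofrep S_H$ destroys closure under $\smashprod_{S_H}$ (for instance $\cofrep S_H \smashprod_{S_H} \cofrep S_H \neq \cofrep S_H$), so that suggestion does not work. What is actually needed --- and what the paper does --- is to keep $S_H$ itself in $\gcal_{top}^H$ and instead adjust the \emph{proof} of Theorem~\ref{thm:monoidalmorita}: the paper defers to Theorem~\ref{thm:so2morita}, whose proof shows that the argument of \cite[Theorem 3.9.3]{ss03stabmodcat} goes through when the unit alone fails to be cofibrant, using that $\underhom_{S_H}(S_H,-)$ preserves all weak equivalences (everything is fibrant) and that $\cofrep S_H \smashprod_{S_H} F_\sigma$ serves as a cofibrant replacement of the free module $F_\sigma$. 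Your parenthetical claim that ``the Morita theorem only requires the generators to be fibrant with the non-unit objects cofibrant'' is the correct conclusion, but it is precisely the content of that adjustment and should be justified by reference to Theorem~\ref{thm:so2morita} rather than asserted.
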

\begin{proof}
This follows from Theorem \ref{thm:monoidalmorita}
with the adjustments as made in the proof of
Theorem \ref{thm:so2morita}.
\end{proof}

\begin{theorem}\label{thm:finiteEtopisEt}
There is a zig-zag of Quillen equivalences between
$\rightmod \ecal_{top}^H$ (enriched over
$Sp^\Sigma_+$) and a category
$\rightmod \ecal_{t}^H$
(enriched over $dg \qq \leftmod$).
These equivalences
are Quillen modules over the appropriate
enrichments. This zig-zag induces
an isomorphism of graded $\qq$-categories:
$\pi_*(\ecal_{top}^H) \cong \h_* \ecal_t^H$.
\end{theorem}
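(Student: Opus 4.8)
The plan is to construct $\ecal_t^H$ by running the chain of Quillen equivalences of \cite{shiHZ}, in the many-objects form developed for spectral categories in \cite{greshi}, on the spectral category $\ecal_{top}^H$ of Definition \ref{def:finitegcalH}; the module-category statement and the computation of homology then both fall out of that chain, step by step. In broad strokes: linearization moves $\ecal_{top}^H$ into a category enriched over $Sp^\Sigma(\sqq\leftmod)$, then normalization and a stabilization step move it down to a $dg\qq\leftmod$-category, and at each stage one gets a Quillen equivalence of right-module categories which is a module over the enrichments.

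First I would verify that $\ecal_{top}^H$ satisfies the hypotheses needed to run that machinery. By Lemma \ref{lem:SHobject} the ring spectrum $S_H$ is $S^0_\mcal\qq$-local, so every $S_H$-module, and in particular each function $S_H$-module $F_{S_H}(X,Y)$, has rational homotopy groups; applying $U$, $\nn^{\#}$, $(-)^G$ and $\sing\mathbb{U}$ preserves this, so each mapping spectrum $\ecal_{top}^H(X,Y)$ is rational. Moreover, with the exception of the unit object, every object of $\gcal_{top}^H$ is a cofibrant $S_H$-module, which together with the use of the positive model structure $Sp^\Sigma_+$ makes $\ecal_{top}^H$ pointwise cofibrant. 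Rationality and pointwise cofibrancy of the mapping objects are exactly the conditions under which the linearization and Dold--Kan steps of \cite{shiHZ} apply objectwise, so the genuine content of the proof is this hypothesis check: everything afterwards is an invocation of \cite{greshi} and the enriched-category machinery of Chapter \ref{chp:enrichcat}.

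Next I would apply the chain termwise. Each functor in the zig-zag of \cite{shiHZ} -- the linearization $\qq[-]\co Sp^\Sigma \to Sp^\Sigma(\sqq\leftmod)$, the normalization and inclusion functors relating $Sp^\Sigma(\sqq\leftmod)$ to $dg\qq\leftmod_+$, and the stabilization functor $dg\qq\leftmod_+ \to dg\qq\leftmod$ -- is one side of a monoidal Quillen equivalence of monoidal model categories, and applying such a functor to a pointwise-cofibrant enriched category produces an enriched category over the target, together with a Quillen equivalence of right-module categories that is moreover a Quillen module over the two enrichments (respecting the tensorings and cotensorings). Composing these, $\ecal_{top}^H$ gives rise to a finite chain of enriched categories whose last term is a $dg\qq\leftmod$-category; this is $\ecal_t^H$, and the composite is the asserted zig-zag $\rightmod\ecal_{top}^H \simeq \cdots \simeq \rightmod\ecal_t^H$ of Quillen equivalences of module categories over the respective enrichments. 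Over $\qq$ the ``detection'' functor and the $\phi^*\mathbb{Z}$ corrections that complicate the integral case of \cite{shiHZ} are unnecessary, because rationally the sphere spectrum is already an Eilenberg--Mac Lane spectrum; this is the simplification used throughout \cite{greshi}.

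For the final clause, the isomorphism of graded $\qq$-categories $\pi_*(\ecal_{top}^H)\cong\h_*\ecal_t^H$ follows because each functor in the chain induces an isomorphism on the derived homotopy, respectively homology, of the mapping objects: linearization realizes $\pi_*$ of a rational mapping spectrum as its rational homology, which agrees with its homotopy since rationally the sphere is $\h\qq$; the Dold--Kan and normalization functors are homotopy isomorphisms by construction; and stabilization is a $\pi_*$-isomorphism on bounded-below objects. Since composition of mapping objects is preserved up to the relevant weak equivalences, these assemble into an isomorphism of graded $\qq$-categories, natural in the objects of $\gcal_{top}^H$. I expect the main obstacle to be the bookkeeping through the chain: one must check that each intermediate enriched category is again pointwise cofibrant (or functorially replace it by one that is) so that the next step applies, and that each step preserves the module structure over the changing enrichment rather than merely the underlying model categories -- precisely the robustness that the general framework of \cite{greshi} is designed to provide, which is why the argument reduces to the input check of the second paragraph.
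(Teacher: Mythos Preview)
Your proposal is correct and follows the same route as the paper: the paper's proof is a one-line citation to \cite[Theorem 4.1]{greshi} (itself built on \cite[Corollary 2.16]{shiHZ}), with the explicit unwinding deferred to Section~\ref{sec:movingaccross the machine}, and you have spelled out what that citation amounts to. One point of imprecision: the actual chain used in the paper (following \cite{shiHZ}) is $Sp^\Sigma_+ \to Sp^\Sigma \xrightarrow{\tilde{\qq}} Sp^\Sigma(\sqq\leftmod) \xleftarrow{\phi^*N} Sp^\Sigma(dg\qq\leftmod_+) \xrightarrow{D} dg\qq\leftmod$, so the intermediate step lands in $Sp^\Sigma(dg\qq\leftmod_+)$ rather than in $dg\qq\leftmod_+$, and $\phi^*N$ (the normalization) is still used rationally --- it is not one of the ``$\phi^*\mathbb{Z}$ corrections'' that disappear over $\qq$.
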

\begin{proof}
This is contained in the proof of \cite[Theorem 4.1]{greshi}
which is based on \cite[Corollary 2.16]{shiHZ}
and we go through this in some detail in
Section \ref{sec:movingaccross the machine}.
\end{proof}

\begin{rmk}\label{rmk:monoidalnoneqcase}
We consider the above theorem in the case of the trivial
group where our work reduces to that of
\cite{shiHZ}.
Here $\gcal_{top}$ has just one object
and $\rightmod \ecal_{top}$ is equivalent to
$S_\qq \leftmod$. Moving from
$\rightmod \ecal_{top}$ to $\rightmod \ecal_t$
is then just applying the functors of
\cite{shiHZ} to the spectrum $S_\qq$.
The resulting chain complex is then weakly equivalent
to $\qq$, as the comparison between
$\rightmod \ecal_t$ and $\rightmod \ecal_a$ below
will prove.
With reference to Remark \ref{rmk:monoidalissue}
this classification can be made symmetric monoidal.
This is shown by using the four
step comparison of \cite{HZcorrection},
where we use the fibrant replacement
functor of commutative rings in $Sp^\Sigma(dg \qq \leftmod)$
as constructed in \cite[Proposition 3]{HZcorrection}.
This fibrant replacement functor comes
from a model structure where weak equivalences
and fibrations are defined in terms of the
underlying category $Sp^\Sigma(dg \qq \leftmod)$.
\end{rmk}

\begin{proposition}\label{prop:finitecalc}
There is an isomorphism of rings
$$ \pi_* (F_{S_H} (G/H_+ \smashprod S_H,
G/H_+ \smashprod S_H )^G )
\cong \qq W_G H.$$
\end{proposition}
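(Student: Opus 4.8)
The plan is to compute the homotopy ring $\pi_*\bigl(F_{S_H}(G/H_+ \smashprod S_H, G/H_+ \smashprod S_H)^G\bigr)$ by relating it to the equivariant function spectrum of the rational sphere before localisation and then extracting the degree-zero part. First I would use the adjunction between the free $S_H$-module functor and the forgetful functor to identify $F_{S_H}(S_H \smashprod A, S_H \smashprod B)$ (for $G$-spectra $A$, $B$) with the function spectrum $F(A, U(S_H \smashprod B))$ computed in $L_{E\langle H\rangle} G\mcal_\qq$; since $S_H$ is $\pi_*$-isomorphic to $S^0_\mcal\qq \smashprod E\langle H\rangle \simeq e_H S^0_\mcal\qq$ (Lemma \ref{lem:SHobject} and the proof of Theorem \ref{thm:finitesplitting}), smashing with $S_H$ has the homotopical effect of applying the idempotent $e_H$ and rationalising. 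Taking $A = B = \cofrep(G/H_+)$, we get
$$\pi_*\bigl(F_{S_H}(G/H_+ \smashprod S_H, G/H_+ \smashprod S_H)^G\bigr)
\cong [G/H_+ \smashprod \Sigma^* S^0, e_H(G/H_+ \smashprod S^0)]^G_\qq,$$
the graded $E\langle H\rangle$-local homotopy classes of maps, with ring structure from composition.

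Next I would evaluate this graded abelian group. By Lemma \ref{lem:spherehomotopy} (transported to the rational/localised setting via Theorem \ref{thm:rathomotopymaps} and Corollary \ref{cor:comrel}) we have $[G/H_+ \smashprod \Sigma^q S^0, e_H(G/H_+ \smashprod S^0)]^G_\qq \cong e_H^H\,\pi_q^H(G/H_+ \smashprod S^0)\otimes\qq$, the idempotent $e_H^H = \iota_H^*(e_H)$ acting on the rational $H$-equivariant homotopy of $\Sigma^\infty G/H_+$. Now $\pi_q^H(\Sigma^\infty G/H_+)\otimes\qq$ is the rational stable $H$-equivariant homotopy of the $H$-set $G/H$; rationally this is concentrated in degree $0$, where it is the free $\qq$-module on the $H$-fixed points of $G/H$ indexed up to the Burnside-ring splitting, and applying $e_H^H$ — the idempotent supported exactly on the conjugacy class of $H$ itself — picks out precisely the summand corresponding to the $H$-fixed point $eH \in (G/H)^H$ together with its Weyl-group orbit, which is $W_G H = N_G(H)/H$. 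So the underlying graded group is $\qq W_G H$ concentrated in degree $0$, and it remains to check the multiplication.

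The hard part will be pinning down the ring structure and matching it with the \emph{group-ring} multiplication on $\qq W_G H$ (as opposed to some twisted or opposite convolution). I would argue as follows: composition of self-maps of $G/H_+ \smashprod S_H$ corresponds, under the Wirthm\"uller-type isomorphism $[G/H_+ \smashprod A, X]^G \cong [A, \iota_H^* X]^H$ and the identification $G/H_+ \smashprod (G/H_+) \cong \coprod_{W_G H} G/H_+$ $\iota_H^*$-equivariantly, to convolution over the group $W_G H$; this is exactly the computation performed in Lemma \ref{lem:GmapsareQG} on the algebraic side, where $\underhom_\qq(\qq W_G H, \qq W_G H)^{W_G H} \cong \qq W_G H$, and the topological Burnside-ring calculation is formally parallel once everything is rationalised and the idempotent $e_H$ has collapsed all the ``smaller'' fixed-point contributions. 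Concretely I would realise a basis element $\bar w \in W_G H$ as the composite $G/H_+ \smashprod S_H \xrightarrow{} G/H_+ \smashprod S_H$ induced by right translation by $w \in N_G(H)$, verify that $\bar w_1 \circ \bar w_2 = \overline{w_1 w_2}$ by tracking through the adjunction isomorphisms, and confirm (via tom Dieck's isomorphism, Lemma \ref{lem:familiestoidempotents}, and Corollary \ref{cor:comrel}) that these $|W_G H|$ classes are $\qq$-linearly independent and span after applying $e_H^H$. Assembling these pieces gives the stated ring isomorphism $\pi_*\bigl(F_{S_H}(G/H_+ \smashprod S_H, G/H_+ \smashprod S_H)^G\bigr) \cong \qq W_G H$, concentrated in degree zero.
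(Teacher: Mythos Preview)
Your proposal is correct and follows the same overall strategy as the paper: reduce via the free-module adjunction to computing $e_H^H\,\pi_*^H(\Sigma^\infty G/H_+)\otimes\qq$, and then identify the ring structure via the right-translation maps $\widetilde{gH}\colon kH \mapsto kgH$ on $G/H$ (the paper sends $\widetilde{gH}\mapsto g^{-1}H$ to get a ring map, exactly as in Lemma~\ref{lem:GmapsareQG}).

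The one place where the paper is sharper than your sketch is the additive computation. Rather than appealing loosely to a Burnside-type splitting and the support of $e_H^H$, the paper observes that $\iota_H^* E\langle H\rangle \simeq E\widetilde{\fscr_H}$ as $H$-spectra (because $\iota_H^* E[\leqslant_G H]$ is $H$-equivariantly contractible), so that smashing with $S_H\simeq E\langle H\rangle$ and taking $H$-fixed points computes the \emph{geometric} fixed points:
\[
e_H^H\,\pi_*^H(G/H_+)\otimes\qq \;\cong\; \pi_*\bigl(\Phi^H(G/H_+)\bigr)\otimes\qq.
\]
Since $\Phi^H\Sigma^\infty G/H_+ \simeq \Sigma^\infty (G/H)^H_+ = \Sigma^\infty (W_G H)_+$, one reads off $\qq W_G H$ concentrated in degree zero immediately, with no further unpacking needed. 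This cleanly replaces your hand-wavy step, and incidentally fixes the phrasing ``the $H$-fixed point $eH$ together with its Weyl-group orbit'': the set $(G/H)^H$ is already all of $W_G H$, not a single point with an orbit attached.
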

\begin{proof}
We can make the following identifications:
$$ \begin{array}{rcl}
F_{S_H} (G/H_+ \smashprod S_H, G/H_+ \smashprod S_H )^G
& \cong &
F(G/H_+, G/H_+ \smashprod S_H )^G  \\
& \cong & (G/H_+ \smashprod S_H)^H.
\end{array} $$
Thus we must calculate $\pi_*( (G/H_+ \smashprod S_H)^H)$,
as a rational vector space.
This is isomorphic to
$\pi_*((G/H_+ \smashprod E \langle H \rangle)^H) \otimes \qq$.
Now $\iota^*_H E[ \leqslant_G H]$
is $H$-equivariantly weakly equivalent to $S$,
so $E \langle H \rangle$ is
$H$-equivariantly weakly equivalent to
$E \widetilde{\fscr_H}$ (see the proof of
Proposition \ref{prop:geomfixedpoints}).
Thus we have an isomorphism
$$\pi_*((G/H_+ \smashprod E \langle H \rangle)^H) \otimes \qq
\cong \pi_*(\Phi^H G/H_+) \otimes \qq.
$$
The following is standard:
$\Phi^H \Sigma^\infty G/H_+ \simeq \Sigma^\infty (G/H^H)
= \Sigma^\infty W_G H$, the suspension spectrum of a finite set.
Thus $\pi_*(\Phi^H G/H_+) \otimes \qq
\cong \pi_*(W_G H_+) \otimes \qq \cong \qq W_G H$,
hence there is an isomorphism of rational vector spaces
$ \pi_* (F_{S_H} (G/H_+ \smashprod S_H,
G/H_+ \smashprod S_H )^G )
\cong \qq W_G H$.

Now we prove that we have an isomorphism of rings.
For each $gH \in W_G H$ there is a $G$-map $\widetilde{gH} \co G/H_+ \to G/H_+$
which takes $kH \to kgH$. For $g_1 H$ and $g_2 H$ in $W_G H$,
$\widetilde{g_2 H} \circ \widetilde{g_1 H} = \widetilde{g_1 g_2 H}$.
The set of $\widetilde{gH }$ for $gH \in W_G H$ generate the ring
$\pi_* (F_{S_H} ( G/H_+ \smashprod S_H, G/H_+ \smashprod S_H )^G )$.
We send $\widetilde{gH}$ to $g^{-1}H$ to obtain a
ring isomorphism as desired.
\end{proof}

\begin{lemma}
For an integer $i \geqslant 1$,
the $i$-fold product of $G/H$ contains
$|W_G H|^{i-1}$ disjoint copies of $G/H$.
More precisely, in the Burnside ring $A(G)$
$$G/H^{\times_{i}} = |W_G H|^{i-1} \cdot G/H + R$$
where the remainder $R$ consists of coset spaces
$G/K$ with $(K) \neq (H)$.
Equally, there is an isomorphism of
$\qq W_G H$-modules,
$\qq(W_G H^{\times_i}) \cong \bigoplus_{|W_G H|^{i-1}} \qq W_G H$.
\end{lemma}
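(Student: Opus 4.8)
The plan is to prove the two parts of the statement separately, as they are essentially the group-theoretic and the linear-algebraic shadows of the same decomposition. First I would establish the Burnside ring identity $G/H^{\times_i} = |W_G H|^{i-1}\cdot G/H + R$. The starting observation is that a product of $G$-sets decomposes into orbits, and the orbits of type $G/H$ inside $(G/H)^{\times i}$ are exactly those points whose isotropy group is (conjugate to) $H$ itself, rather than a proper subconjugate. A point $(g_1 H, \dots, g_i H) \in (G/H)^{\times i}$ has isotropy $\bigcap_k g_k H g_k^{-1}$, so its isotropy equals a conjugate of $H$ precisely when all the $g_k H g_k^{-1}$ coincide with one fixed conjugate of $H$. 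Fixing the first coordinate to be the canonical coset $eH$ (which we may do since $G$ acts transitively on the first factor and we are counting orbits), the remaining $i-1$ coordinates must lie in the fixed-point set $(G/H)^H = N_G(H)/H = W_G H$, giving $|W_G H|^{i-1}$ points, each a free orbit under $W_G H = (G/H)^H$ and hence contributing one copy of $G/H$ to the orbit decomposition. Everything else has strictly smaller (subconjugate) isotropy and lands in the remainder $R$; this is where I would invoke the standard fact that, after passing to $H$-fixed points or geometric fixed points, only the $(H)$-component survives — essentially the same computation already used in Proposition \ref{prop:finitecalc} via $\Phi^H \Sigma^\infty G/H_+ \simeq \Sigma^\infty W_G H$.

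For the $\qq W_G H$-module statement, I would argue by the analogous but purely algebraic route. One has $\qq(W_G H^{\times_i}) \cong (\qq W_G H)^{\otimes_\qq i}$ as a $\qq W_G H$-module with the diagonal action, by Remark \ref{rmk:hopf&Gdiagonal}. Now proceed by induction on $i$: the base case $i=1$ is trivial, and for the inductive step it suffices to show $\qq W_G H \otimes_\qq \qq W_G H \cong \bigoplus_{|W_G H|} \qq W_G H$ as $\qq W_G H$-modules with the diagonal action. This is the classical fact that the regular representation is a free module over itself even with the diagonal action: the $\qq$-linear map $\qq W_G H \otimes_\qq \qq W_G H \to \qq W_G H \otimes_\qq \qq W_G H$ sending $g \otimes h \mapsto g \otimes g^{-1}h$ is an isomorphism intertwining the diagonal action on the source with the action ``on the left factor only'' on the target, and the latter is manifestly $\bigoplus_{|W_G H|}\qq W_G H$. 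Tensoring up and reindexing then gives $\qq(W_G H^{\times_i}) \cong \bigoplus_{|W_G H|^{i-1}} \qq W_G H$; this also recovers the observation noted just before the lemma that $\bigotimes_i \qq G$ is cofibrant, specialised to $W_G H$.

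The main obstacle is getting the $(K)\neq(H)$ clause on the remainder $R$ exactly right, i.e.\ being precise about which isotropy groups can occur in $(G/H)^{\times i}$. The clean way is to note that every isotropy group of $(G/H)^{\times i}$ is an intersection of conjugates of $H$, hence subconjugate to $H$, and that it is \emph{strictly} subconjugate (so $G/K$ with $(K)\neq(H)$) unless all the conjugates agree — which is the case already handled above. So $R$ is a non-negative integer combination of $G/K$'s with $K <_G H$, as claimed. No delicate point-set topology is needed here since $G$ is finite; the argument is entirely combinatorial, and the two halves of the lemma are genuinely parallel, the second being the ``$\qq$-ification'' of the first after discarding the remainder (which becomes irrelevant after applying the idempotent $e_H$, exactly as in Theorem \ref{thm:finitesplitting}).
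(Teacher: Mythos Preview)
Your argument is correct and reaches the same conclusion as the paper, but by a slightly more hands-on route. The paper simply observes that among the orbit types $G/K$ with $K \leqslant_G H$ only $G/H$ has nonempty $H$-fixed points, so the coefficient of $G/H$ can be read off directly from $|((G/H)^{\times i})^H| = |(G/H)^H|^i = |W_G H|^i$, divided by $|(G/H)^H| = |W_G H|$; this is the mark-homomorphism trick in one line. You instead parametrize the orbits with isotropy exactly $H$ by fixing the first coordinate to $eH$ and letting the remaining coordinates range over $(G/H)^H$. This is equivalent, but it does require the extra check that distinct such points lie in distinct $G$-orbits (your phrase ``each a free orbit under $W_G H$'' is a bit garbled; the clean statement is that two such points in the same $G$-orbit are related by an element of $H$, which fixes both, so they coincide). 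For the $\qq W_G H$-module part the paper simply declares it obvious; your untwisting isomorphism $g \otimes h \mapsto g \otimes g^{-1}h$ is exactly the standard way to make that ``obvious'' precise.
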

\begin{proof}
The $G$-set $G/H^{\times_{i}}$ can only consist
of homogenous spaces $G/K$ for $K$ subconjugate
to $H$. Thus, to find the number of copies of
$G/H$ in $G/H^{\times_{i}}$ it suffices to calculate
the size of the $H$-fixed point set:
$|(G/H^{\times_{i}})^H| = |(G/H)^H|^i$. Since
$(G/H)^H = W_G H$, $|(G/H^{\times_{i}})^H|= |W_G H|^{i-1} |(G/H)^H|$
and the result follows. The statement
about $\qq W_G H$-modules is obvious.
\end{proof}

\begin{proposition}
For integers $i,j \geqslant 1$,
$$
\pi_* (F_{S_H} (G/H_+^{\smashprod_i} \smashprod S_H,
G/H_+^{\smashprod_j} \smashprod S_H )^G )
\cong \hom_{\qq W_G H} (\qq (W_G H^{\times_i}), \qq (W_G H^{\times_j})).$$
\end{proposition}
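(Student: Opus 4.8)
The plan is to reduce the $(i,j)$-fold case to the already established case $i=j=1$ (Proposition \ref{prop:finitecalc}) together with the two preceding lemmas, which decompose $G/H^{\times_i}$ rationally into $|W_G H|^{i-1}$ copies of $G/H$ plus a remainder supported away from $(H)$. First I would use the adjunction manipulations of the proof of Proposition \ref{prop:finitecalc} to write
$$F_{S_H} (G/H_+^{\smashprod_i} \smashprod S_H, G/H_+^{\smashprod_j} \smashprod S_H )^G \cong F(G/H_+^{\smashprod_i}, G/H_+^{\smashprod_j} \smashprod S_H)^G,$$
and then replace $S_H$ by the rationally equivalent $E \langle H \rangle$ and pass to geometric fixed points as before, so that the homotopy groups become $\pi_*(\Phi^H(G/H_+^{\smashprod_i}), \Phi^H(G/H_+^{\smashprod_j}))$-type mapping groups after smashing; concretely $\pi_*$ of the function spectrum computes $[\Sigma^\infty (G/H)^{\times_i}, \Sigma^\infty (G/H)^{\times_j}]^G \otimes \qq$ localised at $e_H$.

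Next I would invoke the lemma that $\Phi^H \Sigma^\infty G/H_+ \simeq \Sigma^\infty W_G H_+$ together with the fact that $\Phi^H$ is strong monoidal on suspension spectra, giving $\Phi^H \Sigma^\infty (G/H_+^{\smashprod_i}) \simeq \Sigma^\infty (W_G H^{\times_i})_+$. Since $W_G H$ is a finite set, $\Sigma^\infty (W_G H^{\times_i})_+$ is rationally a wedge of $|W_G H^{\times_i}|$ sphere spectra, and the graded ring of maps between such finite rational wedges is concentrated in degree zero where it is $\hom_\qq(\qq(W_G H^{\times_i}), \qq(W_G H^{\times_j}))$. To see that this matches $\hom_{\qq W_G H}$ rather than merely $\hom_\qq$, I would track the residual $W_G H$-action: the $H$-fixed-point / geometric-fixed-point construction retains the Weyl group action, so the relevant mapping object is maps of $\qq W_G H$-modules, exactly as in the $i=j=1$ case where Proposition \ref{prop:finitecalc} identifies the endomorphism ring as $\qq W_G H \cong \hom_{\qq W_G H}(\qq W_G H, \qq W_G H)$. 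Using the module isomorphism $\qq(W_G H^{\times_i}) \cong \bigoplus_{|W_G H|^{i-1}} \qq W_G H$ from the preceding lemma, one checks the two sides agree.

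The main obstacle I anticipate is bookkeeping the $W_G H$-equivariance correctly through the chain of adjunctions and the geometric fixed point functor: one must be careful that the identification $\Phi^H \Sigma^\infty G/H_+ \simeq \Sigma^\infty W_G H_+$ is $W_G H$-equivariant (with $W_G H$ acting on itself by translation) so that the resulting mapping groups are genuinely $\qq W_G H$-linear, and that the remainder terms $G/K$ with $(K) \neq (H)$ contribute nothing after applying $\Phi^H$ (which kills them) — this is where the localisation at $e_H$, equivalently the passage to $S_H$-modules, does the work. Once the equivariance is pinned down, the ring structure follows by the same argument as in Proposition \ref{prop:finitecalc}: the generating $G$-maps $\widetilde{gH}$ of $G/H_+$ induce, under $\smashprod_i$ and $\Phi^H$, the permutation maps of $W_G H^{\times_i}$, and composition corresponds to composition of $\qq W_G H$-module maps, giving the stated ring isomorphism.
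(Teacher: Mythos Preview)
Your opening sentence states the correct plan and it is exactly the paper's approach: decompose the domain via the Burnside lemma and reduce to the case $i=1$ already handled in Proposition~\ref{prop:finitecalc}. The difficulty is that your body does not carry this plan out; it switches to a different route that has a gap.

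The gap is the phrase ``pass to geometric fixed points as before'' applied to $F(G/H_+^{\smashprod_i}, -)^G$. In Proposition~\ref{prop:finitecalc} the passage to $\Phi^H$ went through the specific adjunction $F(G/H_+, Z)^G \cong Z^H$, which is only available when the domain is a single orbit $G/H_+$. For the $i$-fold smash there is no such adjunction, so you cannot simply apply $\Phi^H$ to both slots. Your fix --- compute $\Phi^H$ of each side, obtain $\hom_\qq$, then cut down to $\hom_{\qq W_G H}$ by ``tracking the residual $W_G H$-action'' --- amounts to asserting the isomorphism
\[
e_H[X,Y]^G_\qq \;\cong\; \big\{[\Phi^H X,\Phi^H Y]_\qq\big\}^{W_G H},
\]
which is precisely the content of \cite[Appendix~A]{gremay95} that this chapter is reproving by other means. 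Invoking it here is not self-contained.

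The paper instead executes the plan you announced. It splits the domain as $|W_G H|^{i-1}\cdot G/H \;+\; R$ with $R$ a wedge of $G/K$'s for $(K)\neq (H)$; the $R$-summand contributes $\pi_*(F(G/K_+, G/H_+^{\smashprod_j}\smashprod S_H)^G) \cong \iota_K^*(e_H)\,\pi_*^K(G/H_+^{\smashprod_j})\otimes\qq = 0$ because $\iota_K^*(e_H)=0$; and each of the $|W_G H|^{i-1}$ surviving summands is, via the $i=1$ argument, $e_H\pi_*^H(G/H_+^{\smashprod_j})\otimes\qq \cong \pi_*(\Phi^H G/H_+^{\smashprod_j})\otimes\qq \cong \qq(W_G H^{\times_j})$. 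On the algebraic side one uses $\hom_{\qq W_G H}(\qq W_G H, M)\cong M$ to match. No global $W_G H$-equivariance bookkeeping is needed: it is absorbed into the free-module identification on the algebraic side.
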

\begin{proof}
Using our understanding of the Burnside ring, we can write the
above term as
$$
\pi_* (F (R_+, G/H_+^{\smashprod_j} \smashprod S_H)^G )
\oplus
\left( \bigoplus_{|W_G H|^{i-1}}
\pi_* (F (G/H_+, G/H_+^{\smashprod_j} \smashprod S_H)^G ) \right)
$$
where $R$ is some wedge of spaces of form $G/K_+$ for
$(K) \neq (H)$.
We deal with the $R$-part first.
Consider $\pi_* (F (G/K_+, G/H_+^{\smashprod_j} \smashprod S_H)^G )$
for $(K) \neq (H)$, by arguments in the proof of
Proposition \ref{prop:finitecalc}
this is isomorphic to
$i_K^*(e_H) \pi_*^K( G/H_+^{\smashprod_j}) \otimes \qq$. Now
$i_K^*(e_H)=0$ whenever $(K) \neq (H)$, so this is zero.
Since all the terms of the $R$-part have this form,
$\pi_* (F (R_+, G/H_+^{\smashprod_j} \smashprod S_H)^G )=0$.
It remains to calculate
$e_H \pi_*^H( G/H_+^{\smashprod_j}) \otimes \qq$, which
by arguments in Proposition \ref{prop:finitecalc}
is isomorphic to
$\pi_*(\Phi^H G/H_+^{\smashprod_j}) \otimes \qq$
which is of course
$\pi_*( W_G H_+^{\smashprod_j}) \otimes \qq$.
In turn this is isomorphic to
$\qq ( W_G H^{\times_j})$ and the result follows immediately.
\end{proof}

\begin{proposition}\label{prop:homotopycalc}
There is an isomorphism of graded $\qq$-categories
$$\ecal_a^H \cong \h_*\ecal_t^H.$$
\end{proposition}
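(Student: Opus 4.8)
The plan is to assemble the isomorphism $\ecal_a^H \cong \h_* \ecal_t^H$ out of two inputs already established: the ring-theoretic calculations of the preceding propositions, which compute $\pi_*$ of the mapping spectra of $\ecal_{top}^H$, and Theorem \ref{thm:finiteEtopisEt}, which gives an isomorphism of graded $\qq$-categories $\pi_*(\ecal_{top}^H) \cong \h_* \ecal_t^H$. So it suffices to produce an isomorphism of graded $\qq$-categories $\ecal_a^H \cong \pi_*(\ecal_{top}^H)$, together with the observation that $\ecal_a^H$ has trivial differential and hence equals its own homology.

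First I would record the object-set matching. The objects of $\ecal_{top}^H$ are the smash powers $G/H_+^{\smashprod_i} \smashprod S_H$ ($i \geqslant 0$, including the unit $S_H$ at $i=0$), while the objects of $\ecal_a^H = \ecal_{a,W_G H}$ are the groups $\qq(W_G H^{\times_i})$. The specified isomorphism $\gcal_{top}^H \cong \gcal_a^H$ sends $G/H_+^{\smashprod_i} \smashprod S_H$ to $\qq(W_G H^{\times_i})$. Next I would compare morphisms degree by degree. On mapping objects, the two preceding propositions give, for $i,j \geqslant 1$, a $\qq$-module isomorphism
$$\pi_*\big(F_{S_H}(G/H_+^{\smashprod_i} \smashprod S_H, G/H_+^{\smashprod_j} \smashprod S_H)^G\big) \cong \hom_{\qq W_G H}\big(\qq(W_G H^{\times_i}), \qq(W_G H^{\times_j})\big),$$
and the right-hand side is exactly $\ecal_a^H(\qq(W_G H^{\times_i}), \qq(W_G H^{\times_j})) = \underhom_\qq(\qq(W_G H^{\times_i}), \qq(W_G H^{\times_j}))^{W_G H}$ by Lemma \ref{lem:GmapsareQG} and its evident generalisation to smash/tensor powers (a $W_G H$-map out of a free module is determined by the images of basis elements). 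The cases involving the unit object $i=0$ or $j=0$ are handled the same way, using that $S_H$ is the unit and $\qq$ (with trivial action) is the unit of $\ecal_a^H$, so the mapping objects reduce to $\pi_*((G/H_+^{\smashprod_j}\smashprod S_H)^H) \cong \qq(W_G H^{\times_j})$ on one side and to fixed points / invariants of the tensor powers on the other. In every case $\pi_*$ is concentrated in degree zero, since $\Phi^H \Sigma^\infty G/H_+ \simeq \Sigma^\infty W_G H$ is the suspension spectrum of a finite set and smash powers of such have homotopy concentrated in degree zero rationally; so $\pi_*(\ecal_{top}^H)$ is a graded $\qq$-category living in degree zero, matching the (ungraded, degree-zero) $\qq$-category $\ecal_a^H$.

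It then remains to check that the degreewise isomorphisms are compatible with composition, i.e. that they assemble into a functor of $\qq$-categories. The composition in $\ecal_{top}^H$ on $\pi_0$ is induced by smashing and composing maps of $S_H$-modules; under the identifications with $\Phi^H$ of smash powers of $G/H_+$, composition becomes composition of $W_G H$-set maps, which — via the ring isomorphisms of Proposition \ref{prop:finitecalc} sending $\widetilde{gH}$ to $g^{-1}H$ (and its multi-object analogue) — matches composition of $\qq W_G H$-module homomorphisms in $\ecal_a^H$. This is the step I expect to be the main obstacle: not that anything deep is involved, but that one must be careful that the several identifications (the $F_{S_H}(-,-)^G \cong (\,-\,)^H$ adjunctions, the passage to geometric fixed points $\Phi^H$, and the ring isomorphism which reverses group elements) are chosen coherently across all pairs of objects so that the resulting bijections on hom-sets respect units and composition simultaneously. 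Once this coherence is verified, combining $\ecal_a^H \cong \pi_*(\ecal_{top}^H)$ with Theorem \ref{thm:finiteEtopisEt}'s isomorphism $\pi_*(\ecal_{top}^H) \cong \h_* \ecal_t^H$ yields $\ecal_a^H \cong \h_* \ecal_t^H$, completing the proof.
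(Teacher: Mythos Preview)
Your proposal is correct and follows essentially the same route as the paper: reduce via Theorem~\ref{thm:finiteEtopisEt} to showing $\ecal_a^H \cong \pi_*(\ecal_{top}^H)$, match object sets, invoke the preceding propositions for the hom-wise $\qq$-module isomorphisms, and then verify compatibility with composition. The paper carries out the composition check by writing down explicit bases $(y,x,\widetilde{gH})$ and $(y,x,gH)$ on each side, computing the composition rule on both (it is $(z,y,*_2)\circ(y,x,*_1)=(z,x,*_1*_2)$ in each case), and taking the obvious bijection of bases; this is exactly the ``multi-object analogue'' you allude to, made concrete.
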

\begin{proof}
By Theorem \ref{thm:finiteEtopisEt} it suffices
to show that there is an isomorphism
$\ecal_a^H \cong \pi_*(\ecal_{top}^H)$
and we begin by proving
that the object sets of these categories are isomorphic.
In each case there
is an object $\sigma_1$ and a unit $\sigma_0$,
such that every non-unit object is
a product of copies of $\sigma_1$.
For $\ecal_a^H$ the unit is $\qq$
and $\sigma_1$ is $\qq W_G H$.
For $\ecal_{top}^H$ the unit is $S_H$
and $\sigma_1 = \cofrep(G/H_+) \smashprod S_H$.
Thus we define an isomorphism $\ob \ecal_a^H \to \ob \h_*\ecal_t^H$
by taking the $i$-fold product of $\qq W_G H$
(written $\sigma_i$) to the $i$-fold
product of $\cofrep(G/H_+) \smashprod S_H$.
We can consider these
graded $\qq$-categories to have the
object set: $\{ \sigma_i | i \geqslant 0 \}$.
The previous result implies that
$\pi_*(\ecal_{top}^H)(\sigma_i, \sigma_j)
\cong \ecal_{a}^H(\sigma_i, \sigma_j)$
as $\qq$-modules.
We must now show that this isomorphism is compatible
with the composition operation in these graded-$\qq$-categories.

We have the isomorphism
$\pi_*(\ecal_{top}^H)(\sigma_i, \sigma_j)
\cong [\sigma_i, \sigma_j]^G_*$
where the right hand side means graded maps in the homotopy category of
$S_H$-modules. This isomorphism specifies the composition rule of
the enriched category $\pi_*(\ecal_{top}^H)$.
Our calculations above allow us to write
$[\sigma_i, \sigma_j]^G_*$ as
$$\Big[ \bigvee_{|W_G H|^{i-1}} G/H_+ \smashprod S_H ,
\bigvee_{|W_G H|^{j-1}} G/H \smashprod S_H \Big]^G_*.$$
Then we define $(y,x,\widetilde{gH})$,
to be that map which takes the
$x$-factor of $\bigvee_{|W_G H|^{i-1}} G/H_+$
to the $y$-factor of $\bigvee_{|W_G H|^{j-1}} G/H_+$
by the rule $H \mapsto gH$.
This is a rational basis for $[\sigma_i, \sigma_j]^G_*$.
It is easy to check that composition behaves as follows:
$(z,y,\widetilde{g_2H}) \circ (y,x, \widetilde{g_1 H} ) = (z, x, \widetilde{g_1 g_2H})$.
Now we note that
$\hom_{\qq W_G H} (\qq (W_G H^{\times_i}), \qq (W_G H^{\times_j}))$
is isomorphic to
$$\hom_{\qq W_G H} \Big( \bigoplus_{|W_G H|^{i-1}} \qq W_G H,
\bigoplus_{|W_G H|^{j-1}} \qq W_G H \Big)$$
and write $(y,x,gH)$ for the map which takes
the $x$-factor of $\bigoplus_{|W_G H|^{i-1}} \qq W_G H$
to the $y$-factor of $\bigoplus_{|W_G H|^{j-1}} \qq W_G H$
by $H \mapsto gH$.
The isomorphism of the theorem is then just:
$(y,x,\widetilde{gH}) \mapsto (y,x,g H)$.
\end{proof}

\begin{theorem}\label{thm:finiteintrinsicformality}
If $\ecal$ is a $dg \qq$-category with
$\h_* \ecal$ concentrated in degree zero,
then $\ecal$ is quasi-isomorphic to $\h_* \ecal$
as $dg \qq$-categories.
\end{theorem}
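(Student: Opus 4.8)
The plan is to build a short zig-zag of $dg\qq$-functors, each the identity on objects and a homology isomorphism on every hom-complex,
$$\ecal \xleftarrow{\ \sim\ } \tau_{\geqslant 0}\ecal \xrightarrow{\ \sim\ } \h_*\ecal,$$
from which the statement is immediate. Two elementary observations are used repeatedly: the identity morphisms $\id_\sigma$ are $0$-cycles (apply the Leibniz rule to $\id_\sigma \circ \id_\sigma = \id_\sigma$ and work over $\qq$), and the subspace of cycles in each hom-complex is closed under composition (again Leibniz). Here $\tau_{\geqslant 0}$ denotes the good truncation, applied hom-object-wise.

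First I would form $\tau_{\geqslant 0}\ecal$: it has the same objects as $\ecal$, and $(\tau_{\geqslant 0}\ecal)(\sigma_1,\sigma_2)$ agrees with $\ecal(\sigma_1,\sigma_2)$ in positive degrees, equals the space of $0$-cycles $Z_0\ecal(\sigma_1,\sigma_2)$ in degree $0$, and is $0$ in negative degrees. This is a subcomplex; since $0$-cycles are closed under composition and contain the identities, it is a sub-$dg\qq$-category, and it is non-negatively graded. As $\h_*\ecal$ is concentrated in degree $0$, the inclusion $\iota \co \tau_{\geqslant 0}\ecal \to \ecal$ induces a homology isomorphism on each hom-complex: it is the identity on $\h_0$, and the homology of both sides vanishes in all other degrees. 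So $\iota$ is a quasi-isomorphism.

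Next, write $\dcal = \tau_{\geqslant 0}\ecal$. Since $\dcal$ is non-negatively graded, $d$ vanishes on the degree-$0$ part of each hom-complex, so every degree-$0$ morphism of $\dcal$ is a cycle. Let $\mathcal{K}(\sigma_1,\sigma_2) \subseteq \dcal(\sigma_1,\sigma_2)$ be the graded subspace consisting of all morphisms of positive degree together with the $0$-boundaries $B_0\dcal(\sigma_1,\sigma_2)$ in degree $0$. A one-line Leibniz computation — using that every degree-$0$ morphism $c$ satisfies $dc = 0$, so that $c\circ(dx) = \pm\, d(c\circ x)$ and $(dx)\circ c = \pm\, d(x\circ c)$ — shows that $\mathcal{K}$ is closed under pre- and post-composition by arbitrary morphisms of $\dcal$ and is preserved by $d$; hence it is a two-sided differential graded ideal. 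The quotient $\dcal/\mathcal{K}$ is therefore again a $dg\qq$-category with the same objects; it is concentrated in degree $0$, has zero differential, and in each hom-object equals $Z_0\dcal / B_0\dcal = \h_0\dcal(\sigma_1,\sigma_2)$ with the induced composition, so it is naturally isomorphic to $\h_*\ecal$. The projection $q \co \dcal \to \dcal/\mathcal{K} \cong \h_*\ecal$ is the identity on $\h_0$ and kills the (acyclic, by hypothesis) positively graded homology, hence is a quasi-isomorphism. Composing, $\ecal$ and $\h_*\ecal$ are linked by a zig-zag of quasi-isomorphisms of $dg\qq$-categories.

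The point that needs care is the order of operations. Quotienting $\ecal$ directly by everything in nonzero degree together with $B_0$ does \emph{not} yield a $dg\qq$-category: the relevant subspace fails to be a two-sided ideal precisely because a degree-$0$ morphism $c$ need not be a cycle, so $(dx)\circ c$ acquires a term $\pm\, x\circ(dc)$ lying in degree $0$ but not visibly in $B_0$. Passing to $\tau_{\geqslant 0}\ecal$ first removes exactly this obstruction, after which the ideal check is routine; the remaining verifications (the two homology computations, the identification of hom-objects and of units) are bookkeeping. One could alternatively invoke the $A_\infty$-structure transported to $\h_*\ecal$, whose higher products $m_n$ for $n \geqslant 3$ have degree $n-2$ and hence vanish for degree reasons, forcing formality; but the truncation argument keeps everything inside the category of strict $dg\qq$-categories.
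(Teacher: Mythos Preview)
Your proposal is correct and is essentially the same argument as the paper's: your good truncation $\tau_{\geqslant 0}$ is exactly the paper's $(-1)$-connective cover $C_0$, and your quotient by $\mathcal{K}$ is exactly the paper's map $C_0\ecal \to \h_0\ecal$ induced by the unit of the $(\h_0,\text{inclusion})$ adjunction. The only cosmetic difference is that the paper packages the ``closed under composition'' and ``ideal'' checks as monoidality of the functors $C_0$ and $\h_0$ and of the relevant (co)unit natural transformations, whereas you do the Leibniz-rule verifications by hand.
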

\begin{proof}
We will create a $dg \qq$-category $C_0 \ecal$
and a zig-zag of quasi-isomorphisms:
$\ecal \overset{\sim}{\longleftarrow} C_0 \ecal
\overset{\sim}{\longrightarrow} \h_0 \ecal = \h_* \ecal.$
Let $C_0$\index{C@$C_0$} be the $(-1)$-connective cover functor,
which is right adjoint to the inclusion of $dg \qq\leftmod_+$
into $dg \qq \leftmod$.
If $X$ is a $dg \qq$-module, then $(C_0 X)_n =X_n$
for $n >0$ and $(C_0 X)_0 = ker(\partial_0)$.
We have a counit $C_0 X \to X$ in $dg \qq \leftmod$ and this is a monoidal
natural transformation. Hence, given $X \smashprod Y \to Z$,
we have a map $C_0 X \smashprod C_0 Y \to C_0 Z$
and a commuting diagram
$$\xymatrix{
C_0 X \smashprod C_0 Y \ar[r] \ar[d] & C_0 Z \ar[d] \\
X \smashprod Y \ar[r] & Z.
}$$
Thus we have a $dg \qq$-category
$C_0 \ecal$ with a map of ringoids
$C_0 \ecal \to \ecal$. Since $\ecal$ has homology concentrated
in degree zero this is a quasi-isomorphism.

For $X$ a $dg \qq$-module we have a map $C_0 X \to \h_0 X$ which sends
$X_i$ to zero for $i > 0$ and sends
$\ker(\partial_0) \to \h_0 X$ by the quotient.
We can consider $\h_0$ as a functor
$dg \qq \leftmod_+ \to \qq \leftmod$,
this has a right adjoint
which includes $\qq \leftmod$ into $dg \qq \leftmod_+$ by taking a
$\qq$-module $M$ to the chain complex
with $M$ in degree zero and zeroes elsewhere.
The map $C_0 X \to \h_0 X$ is induced by the
unit of this adjunction.
The functor $\h_0$ is monoidal, as is the inclusion of
$\qq \leftmod$ into $dg \qq \leftmod_+$, thus we obtain
a $dg \qq \leftmod_+$-category $\h_0 \ecal$.
Furthermore, the map $C_0 X \to \h_0 X$ is induced
by the unit of the adjunction and is a monoidal
natural transformation.
Thus we obtain $C_0 \ecal \to \h_0 \ecal= \h_* \ecal$,
which is a quasi-isomorphism.
\end{proof}

\begin{corollary}\label{cor:finiteETtoEA}
There is a zig-zag of quasi-isomorphisms of
$dg \qq$-categories.
$$\ecal_t^H \overset{\sim}{\longleftarrow} C_0 \ecal_t^H
\overset{\sim}{\longrightarrow} \h_* \ecal_t^H \cong \ecal_a^H$$
hence there is a zig-zag of Quillen equivalences
of $dg \qq \leftmod$-model categories.
$$\rightmod \ecal_t^H
\overleftarrow{\longrightarrow} \rightmod C_0 \ecal_t^H
\overrightarrow{\longleftarrow} \rightmod \h_* \ecal_t^H
\cong \rightmod \ecal_a^H.$$
\end{corollary}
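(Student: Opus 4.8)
The plan is to feed the homology computation into the intrinsic formality theorem and then transport the resulting comparison of $dg\qq$-categories through their module categories by the formal enriched Morita machinery. First I would observe that $\h_*\ecal_t^H$ is concentrated in degree zero: by Proposition \ref{prop:homotopycalc} it is isomorphic as a graded $\qq$-category to $\ecal_a^H$, whose mapping objects $\ecal_a^H(\sigma_i,\sigma_j)=\underhom_\qq(\sigma_i,\sigma_j)^G$ are $\qq$-modules sitting in degree zero with trivial differential. Hence $\ecal_t^H$ satisfies the hypothesis of Theorem \ref{thm:finiteintrinsicformality}.

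Applying Theorem \ref{thm:finiteintrinsicformality} with $\ecal=\ecal_t^H$ produces the $dg\qq$-category $C_0\ecal_t^H$ (the objectwise $(-1)$-connective cover, with the induced composition) together with the zig-zag of quasi-isomorphisms $\ecal_t^H\overset{\sim}{\longleftarrow}C_0\ecal_t^H\overset{\sim}{\longrightarrow}\h_0\ecal_t^H=\h_*\ecal_t^H$, all of which are the identity on the common object set $\{\sigma_i\mid i\geqslant 0\}$. Composing the right-hand arrow with the isomorphism $\h_*\ecal_t^H\cong\ecal_a^H$ of Proposition \ref{prop:homotopycalc} gives the first displayed zig-zag, so no separate bookkeeping of object sets is needed.

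For the second statement I would invoke the general principle --- the $dg\qq\leftmod$-enriched Morita theory of \cite{greshi}, the same formal input referred to in the discussion of $\rightmod\underline A$ --- that a quasi-isomorphism $F\co\ccal\to\dcal$ of small $dg\qq$-categories which is the identity on objects induces a Quillen equivalence between $\rightmod\ccal$ and $\rightmod\dcal$, via restriction of scalars along $F$ and its left (Kan extension) adjoint, and that these form Quillen modules over $dg\qq\leftmod$. Applying this to each of the two quasi-isomorphisms out of $C_0\ecal_t^H$ gives $\rightmod\ecal_t^H\overleftarrow{\longrightarrow}\rightmod C_0\ecal_t^H$ and $\rightmod C_0\ecal_t^H\overrightarrow{\longleftarrow}\rightmod\h_*\ecal_t^H$, while the isomorphism $\h_*\ecal_t^H\cong\ecal_a^H$ of categories upgrades directly to an isomorphism of $dg\qq\leftmod$-model categories $\rightmod\h_*\ecal_t^H\cong\rightmod\ecal_a^H$.

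The one point requiring care --- and the main (minor) obstacle --- is that $C_0\ecal_t^H$ is a priori only a $dg\qq\leftmod_+$-category, not a $dg\qq\leftmod$-category, so one must check the Morita comparison still applies to it. I would resolve this by regarding it as a $dg\qq$-category via the lax monoidal inclusion $dg\qq\leftmod_+\hookrightarrow dg\qq\leftmod$ and equipping $\rightmod C_0\ecal_t^H$ with the projective model structure pulled back from $dg\qq\leftmod$; with that set-up the cited comparison applies verbatim, and the remainder of the argument is purely formal.
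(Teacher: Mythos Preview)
Your proposal is correct and takes essentially the same approach as the paper. The paper's proof is a one-line citation of Proposition~\ref{prop:spectralmaps} (the $dg\qq\leftmod$-enriched version of the restriction/extension Quillen equivalence for a stable equivalence of ringoids), taking the zig-zag of quasi-isomorphisms as already established by Theorem~\ref{thm:finiteintrinsicformality} and Proposition~\ref{prop:homotopycalc}; your argument unpacks exactly this, and your caution about $C_0\ecal_t^H$ being a priori $dg\qq\leftmod_+$-enriched is handled implicitly in the paper since the proof of Theorem~\ref{thm:finiteintrinsicformality} already regards $C_0\ecal$ as a $dg\qq$-category via the inclusion.
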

\begin{proof}
This follows from Proposition \ref{prop:spectralmaps}.
\end{proof}

\begin{rmk}
With reference to Remark \ref{rmk:monoidalissue},
we note that if $\rightmod \ecal_t^H$ was a monoidal
category, then $\h_* \ecal_t^H$ can be shown to have the same
monoidal structure as $\ecal_a^H$.
It would follow that the comparison between
$\ecal_t^H$ and $\ecal_a^H$ would preserve the monoidal
product on these categories. Hence, the zig-zag of the above result
would be a zig-zag of strong monoidal equivalences
by Proposition \ref{prop:monoidalspectralfunctor}.
We would then be able to conclude
that the zig-zag between $S_H \leftmod$
and $dg \qq W_G H \leftmod$
would consist of symmetric monoidal equivalences.
\end{rmk}

\begin{corollary}\label{cor:finiteclassification}
If $G$ is a finite group, then
the model category of rational $G$-spectra
is Quillen equivalent to the algebraic model for rational
$G$-spectra:
$$dg \mathcal{A}(G) =
\prod_{(H) \leqslant G} dg\qq W_G H \leftmod .$$
\end{corollary}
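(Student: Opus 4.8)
The plan is to assemble the classification from the chain of Quillen equivalences already established, one conjugacy class at a time, and then take products. First I would invoke the splitting theorem in the form of Theorem \ref{thm:finitesplitting}, which gives a strong symmetric monoidal Quillen equivalence
$$\Delta : G \mcal_\qq
\overrightarrow{\longleftarrow}
\prod_{(H) \leqslant G} L_{E \langle H \rangle} G \mcal_\qq : \prod,$$
together with Theorem \ref{thm:localisedtomodules} (and its proof) and Proposition \ref{prop:GMequivGIS+Q}, which identify $G \mcal_\qq$ with the other models of rational $G$-spectra used elsewhere in the thesis. This reduces the problem to producing, for each fixed conjugacy class $(H)$, a zig-zag of Quillen equivalences between $L_{E \langle H \rangle} G \mcal_\qq$ and $dg\qq W_G H \leftmod$.

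For a fixed $(H)$, the zig-zag is the concatenation of the following steps, in order: Proposition \ref{prop:GspecHtoSHmod} gives a strong symmetric monoidal Quillen equivalence $L_{E \langle H \rangle} G \mcal_\qq \simeq S_H \leftmod$; Theorem \ref{thm:finitemoritaequiv} gives a Quillen equivalence $S_H \leftmod \simeq \rightmod \ecal_{top}^H$ (here one needs that every object of $S_H \leftmod$ is fibrant and that $\cofrep(G/H_+) \smashprod S_H$ is a $G$-compact cofibrant generator, both of which are recorded above); Theorem \ref{thm:finiteEtopisEt} supplies a zig-zag of Quillen equivalences $\rightmod \ecal_{top}^H \simeq \rightmod \ecal_{t}^H$ passing from $Sp^\Sigma_+$-enriched categories to $dg\qq \leftmod$-enriched categories, together with the ring isomorphism $\pi_*(\ecal_{top}^H) \cong \h_* \ecal_t^H$; Proposition \ref{prop:homotopycalc} identifies this homology with $\ecal_a^H$, so $\h_* \ecal_t^H \cong \ecal_a^H$ is concentrated in degree zero; Theorem \ref{thm:finiteintrinsicformality} and Corollary \ref{cor:finiteETtoEA} then give the zig-zag of quasi-isomorphisms $\ecal_t^H \overset{\sim}{\longleftarrow} C_0 \ecal_t^H \overset{\sim}{\longrightarrow} \ecal_a^H$, which by Proposition \ref{prop:spectralmaps} induces the zig-zag of Quillen equivalences $\rightmod \ecal_t^H \simeq \rightmod C_0 \ecal_t^H \simeq \rightmod \ecal_a^H$; finally Proposition \ref{prop:FiniteAlgMorita} gives the Morita equivalence $\rightmod \ecal_a^H \simeq dg\qq W_G H \leftmod$. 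Splicing these together yields Theorem \ref{cor:finiteclassification}'s per-piece claim, which is really Corollary \ref{cor:finiteETtoEA} plus the surrounding identifications.

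To finish, I would take the product over all conjugacy classes $(H) \leqslant G$. A finite product of Quillen equivalences is again a Quillen equivalence (a map in a product model category is a weak equivalence, cofibration or fibration precisely when each coordinate is, so the relevant lifting/detection criteria are checked coordinatewise), so $\prod_{(H)} L_{E \langle H \rangle} G \mcal_\qq$ is Quillen equivalent by a zig-zag to $\prod_{(H)} dg\qq W_G H \leftmod = dg\mathcal{A}(G)$. Composing with the splitting equivalence $\Delta \dashv \prod$ from Theorem \ref{thm:finitesplitting} gives the stated zig-zag of Quillen equivalences between $G \mcal_\qq$ — equivalently the model category of rational $G$-spectra — and $dg\mathcal{A}(G)$.

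The only genuinely substantive inputs are Proposition \ref{prop:homotopycalc} (the ring-level homology computation matching $\h_* \ecal_t^H$ with $\ecal_a^H$, which rests on the Burnside-ring bookkeeping and the geometric fixed point calculation $\Phi^H \Sigma^\infty G/H_+ \simeq \Sigma^\infty W_G H$) and Theorem \ref{thm:finiteintrinsicformality} (intrinsic formality of $dg\qq$-categories with homology concentrated in degree zero); everything else is the formal concatenation of results already proved or imported from \cite{greshi}. The main obstacle in the write-up is therefore not any new mathematics but ensuring that the enrichments are matched correctly at each handoff — that the Morita and machine-transfer steps of \cite{greshi} really do apply to $S_H \leftmod$ with the chosen generator, i.e.\ that the hypotheses of Theorem \ref{thm:monoidalmorita}, Theorem \ref{thm:finiteEtopisEt} and Proposition \ref{prop:spectralmaps} are verified for each $(H)$ uniformly — and that the product step preserves all the structure being claimed.
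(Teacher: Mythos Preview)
Your proposal is correct and follows essentially the same approach as the paper: split via Theorem \ref{thm:finitesplitting}, then for each $(H)$ chain together Proposition \ref{prop:GspecHtoSHmod}, Theorem \ref{thm:finitemoritaequiv}, Theorem \ref{thm:finiteEtopisEt}, Corollary \ref{cor:finiteETtoEA}, and Proposition \ref{prop:FiniteAlgMorita}, and finally pass to the product. You have in fact unpacked Corollary \ref{cor:finiteETtoEA} into its constituent pieces (Proposition \ref{prop:homotopycalc}, Theorem \ref{thm:finiteintrinsicformality}, Proposition \ref{prop:spectralmaps}) and added the justification that a finite product of Quillen equivalences is a Quillen equivalence, which the paper leaves implicit.
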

\begin{proof}
We begin with Theorem \ref{thm:finitesplitting},
which splits rational $G$-spectra into the product
$\prod_{(H) \leqslant G} L_{E \langle H \rangle} G \mcal_\qq$.
Applying Proposition \ref{prop:GspecHtoSHmod}
to each factor of this category allows us to move to
$\prod_{(H) \leqslant G} S_H \leftmod$.
Next we use Theorem \ref{thm:finitemoritaequiv}
to move to modules over a spectral category,
$\prod_{(H) \leqslant G} \rightmod \ecal_{top}^H$.
We move to algebra ($\prod_{(H) \leqslant G} \rightmod \ecal_{t}^H$)
with Theorem \ref{thm:finiteEtopisEt},
and then use Corollary \ref{cor:finiteETtoEA}
to get to the category
$\prod_{(H) \leqslant G} \rightmod \ecal_a^H$.
Finally we use Proposition \ref{prop:FiniteAlgMorita}
to complete the result.
\end{proof}

\begin{rmk}
We can relate our work to \cite[Appendix A]{gremay95}
(specifically A.15 and Theorem A.16) as follows:
$$[X,Y]_\qq^{G|H} \cong [X,\fibrep_H Y]_\qq^{G} \cong [X,e_H Y]_\qq^{G}$$
where the first entry is maps in $\ho L_{E \langle H \rangle} G \mcal_\qq$
and $\fibrep_H$ is fibrant replacement in this model category.
The $G$-spectra $\fibrep_H Y$ and $e_H Y$ are rationally equivalent which
gives us the second isomorphism in the above.
Hence we can use the results of that paper to write the following,
where $[i^* \Phi^H X,i^* \Phi^H  Y]_\qq$ is the collection
of rational homotopy classes of non-equivariant maps of naive $W_G H$-spectra,
which is a $\qq W_G H$-module.
$$[X,Y]_\qq^{G} \cong \bigoplus_{(H)}[X,e_H Y]_\qq^{G}
\cong \bigoplus_{(H)} \{ [i^* \Phi^H X,i^* \Phi^H  Y] \}^{W_G H} $$
\end{rmk}

\chapter{Enriched categories}\label{chp:enrichcat}

The methods of \cite{greshi} rely heavily on
highly structured model categories.
We will need to use enrichments, tensorings,
cotensorings and algebra structures, so we
go through the definitions and basic constructions here.
This is largely a service chapter where we
introduce the language needed for the comparisons
of Section \ref{sec:finitecomp} and
Chapter \ref{chp:WskewSQmod}.
The first three sections are mainly providing definitions
and results from \cite{kell05}, \cite[Chapter 4]{hov99}
and \cite{ss03stabmodcat} respectively.
The final section is the Morita equivalence:
that one can replace a category by modules
over an endomorphism ringoid.
In \cite{greshi} they state that one can do so
in a monoidal fashion, we have given full details
of this result in Theorems \ref{thm:monoidalmorita}
and \ref{thm:monmodules}. Thus our work
in this chapter is mostly in relating the various
definitions and giving more details on monoidal
considerations.
As well as the above-named references
we will also make use of \cite{bor94}, from which
we take some more technical results on enriched categories.
For the model category considerations we use
\cite{dugshi} and \cite{dug06}, which overlap somewhat.

\section{$\nu$-Categories}\label{sec:nucats}
Throughout $\nu$ will be a monoidal category (symmetric when necessary),
the unit of $\nu$ will be $\mathcal{I}$ and the product will be $\otimes$.
When needed, we will let $\nu$ have an internal function object $\underhom$,
so that $\nu$ will be a closed monoidal category (see \cite[Section 1.5]{kell05}).

\begin{definition}
A \textbf{$\nu$-category}\index{nu-category@$\nu$-category} $\mathcal{A}$ is a class of objects with a
\textbf{Hom-object}\index{Hom-object} $\mathcal{A}(A,B)$ in $\nu$ for each pair of objects.
For each triple of objects there is a \textbf{composition law}\index{Composition law}:
$M \co \mathcal{A}(B,C) \otimes \mathcal{A}(A,B) \to \mathcal{A}(A,C)$, a morphism in $\nu$.
For each object there is an \textbf{identity element}\index{Identity element}:
$j \co \mathcal{I} \to \mathcal{A}(A,A)$, a morphism in $\nu$.
These must satisfy the usual five-fold associativity diagram and
a pair of triangles describing the identity elements.
\end{definition}

\begin{definition}
A \textbf{$\nu$-functor}\index{nu-functor@$\nu$-functor} $T \co \mathcal{A} \to \mathcal{B}$ of
$\nu$-categories is a functor $T \co \ob \mathcal{A} \to \ob \mathcal{B}$
with a map in $\nu$:
$T=T_{A,B} \co \mathcal{A}(A,B) \to \mathcal{B}(TA,TB)$
for each pair of objects of $\mathcal{A}$.
These maps must satisfy the relations $TM = M (T\otimes T)$
and $Tj =j$.
\end{definition}

When $\nu$ is the category of abelian groups, a $\nu$-category
is also known as a \textbf{ring with many objects}\index{Ring with many objects}.
A $\nu$-category $\mathcal{A}$ with one object $a$, is just a ring:
composition gives multiplication on the abelian group $\mathcal{A}(a,a)$.
Hence we will also refer to enriched categories as
\textbf{ringoids}\index{Ringoid} and a $\nu$-functor is then a
map of ringoids.

\begin{definition}
For $\nu$-functors $T,S \co \mathcal{A} \to \mathcal{B}$,
a \textbf{$\nu$-natural transformation}\index{nu-natural
transformation@$\nu$-natural transformation} $\alpha \co T \to S$
is an $\ob \mathcal{A}$-indexed family of \textbf{components}\index{Components}
$\alpha_A \co \mathcal{I} \to \mathcal{B}(TA,SA)$ satisfying
the naturality condition (in $\nu$)
$M(\alpha_B \otimes T)l^{-1} = M(S \otimes \alpha_A )r^{-1}$
where $l$ and $r$ are the left and right unit isomorphisms of
$\nu$.
\end{definition}

The composite of $\beta$ and $\alpha$ has components
$M(\beta_A \otimes \alpha_A)$. The composite $Q \alpha$
has components $Q \alpha_A$, and $\alpha P$
has components $(\alpha P)_D= \alpha_{PD}$.

\begin{definition}
A \textbf{$\nu$-adjunction}\index{nu-adjunction@$\nu$-adjunction}
of $\nu$-categories is an adjunction
$(F,G)$ consisting of $\nu$-functors
together with an isomorphism in $\nu$:
$\mathcal{B}(FA, B) \cong \mathcal{A}(A,GB)$.
\end{definition}

The above material was taken from
\cite[Section 1.2]{kell05}.

\begin{definition}
If $\nu$ is a symmetric monoidal category and
$\mathcal{A}$ is a $\nu$-category then
$\mathcal{A} \times \mathcal{A}$ is a $\nu$-category
with
$$\mathcal{A} \times \mathcal{A}\big( (a,b),(c,d) \big)
:= \mathcal{A}(a,c) \otimes \mathcal{A}(b,d)$$
and composition defined using the
symmetry of $\nu$ as follows
$$\xymatrix{
{\big({\mathcal{A}(b,c)} \otimes {\mathcal{A}(y,z)}\big)} {\otimes}
{\big({\mathcal{A}(a,b)} \otimes {\mathcal{A}(x,y)} \big)}
\ar[d]^{\id \otimes T \otimes \id} \\
{\big({\mathcal{A}(b,c)} \otimes {\mathcal{A}(a,b)}\big)} {\otimes}
{\big({\mathcal{A}(y,z)} \otimes {\mathcal{A}(x,y)} \big)}
\ar[d] \\
{\mathcal{A}(a,c)} \otimes {\mathcal{A}(x,z)} .}
$$
\end{definition}

The following is taken from \cite[Page 2]{day70}.
\begin{definition}
Let $\nu$ be a symmetric monoidal category.
A \textbf{monoidal $\nu$-category}\index{Monoidal nu-category@Monoidal $\nu$-category}
is a $\nu$-category $\mathcal{A}$ with a $\nu$-functor (the monoidal product)
$\smashprod \co \mathcal{A} \times \mathcal{A} \to \mathcal{A}$,
with a unit object $\sphspec \in \ob \mathcal{A}$ and
$\nu$-natural isomorphisms for associativity and the unit.
If there is a $\nu$-natural isomorphism
between the $\nu$-functors $\smashprod$ and $\smashprod \circ T$
where $T$ interchanges the factors of $\mathcal{A} \times \mathcal{A}$
then $\mathcal{A}$ is called \textbf{symmetric}.
A \textbf{monoidal $\nu$-functor}\index{Monoidal $\nu$-functor}
is a $\nu$-functor that preserves the monoidal structure.
That is, $\Psi \co \mathcal{A} \to \mathcal{B}$,
a $\nu$-functor, is
\textbf{monoidal}\index{Monoidal nu-functor@Monoidal $\nu$-functor}  if
$\smashprod_\mathcal{B} \circ (\Psi \times \Psi) \cong
\Psi \circ \smashprod_\mathcal{B}$ as $\nu$-functors.
If $\mathcal{A}$ and $\mathcal{B}$ are symmetric then we can
also require that $\Psi$ respects the symmetry isomorphisms.
\end{definition}
We spell out part of this definition, for any $a$, $b$, $c$ and $d$ in $\mathcal{A}$
we have a map
$$(\mathcal{A} \times \mathcal{A}) \big( (a,b),(c,d) \big)
:= \mathcal{A}(a,c) \otimes \mathcal{A}(b,d)
\longrightarrow \mathcal{A}(a \smashprod b, c \smashprod d  )$$
that satisfies various unital and associativity diagrams.
The symmetry of $\nu$ is used to relate the composition in
$\mathcal{A} \times \mathcal{A}$ to the composition in
$\mathcal{A}$.
We can now use the construction of \cite{day70},
as taken from \cite[Definition 3.5 and A.2]{greshi}.
\begin{definition}
Assume that $\mathcal{A}$ is a symmetric monoidal $\nu$-category.
Then we can define a \textbf{box product}\index{Box product} of
$\nu$-functors $T,S \co \mathcal{A} \to \nu$.
$$T \Box_\mathcal{A} S (a) =
\int^{x,y} \mathcal{A}(x \smashprod y, a ) \otimes T(x) \otimes S(y)$$
\end{definition}
The relevant coequaliser defining the above coend
uses the following maps, first we have the `action' on $T$ and $S$.
$$\mathcal{A}(u \smashprod v, a ) \otimes
\mathcal{A}(x , u ) \otimes
\mathcal{A}(y , v ) \otimes
T(x) \otimes S(y)
\to
\mathcal{A}(u \smashprod v, a ) \otimes T(u) \otimes S(v). $$
The second uses the monoidal structure on $\mathcal{A}$:
$\mathcal{A}(x , u ) \otimes \mathcal{A}(y , v ) \to
\mathcal{A}(x \smashprod y, u \smashprod v )$,
followed by composition.
There is an external product on the category
of $\nu$-functors $\mathcal{A} \to \nu$.
Take two such functors $T$ and $S$, then
$T \bar{\smashprod} S \co \mathcal{A} \times \mathcal{A} \to \nu$
is a $\nu$-functor with
$(T \bar{\smashprod} S)(a,b) = T(a) \otimes S(b)$.
We can describe the box product as
the left Kan extension of $\otimes$
along $\smashprod$, the monoidal product of $\mathcal{A}$.

\begin{proposition}\label{prop:MonFunctorsEnrichCat}
Let $F \co \nu \to \mu$ be a symmetric monoidal
functor and $\ccal$ be a symmetric monoidal
$\nu$-category. Then $F \ccal$ is a symmetric monoidal
$\mu$-category.
\end{proposition}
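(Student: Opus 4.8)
The plan is to transport all the structure defining a symmetric monoidal $\nu$-category across the symmetric monoidal functor $F$, and check that $F$ preserves the relevant commuting diagrams because it is (symmetric) monoidal. Recall that a symmetric monoidal functor $F \co \nu \to \mu$ comes equipped with a unit map $\nu \co \mathcal{I}_\mu \to F(\mathcal{I}_\nu)$ and natural maps $\phi_{X,Y} \co FX \otimes_\mu FY \to F(X \otimes_\nu Y)$ that are coherently associative, unital, and compatible with the symmetry isomorphisms of $\nu$ and $\mu$. First I would recall that $F\ccal$ is already a $\mu$-category: its Hom-objects are $F(\ccal(a,b))$, its composition is $F(M) \circ \phi$, and its identities are $F(j) \circ \nu$; this is the routine base change of an enriched category along a (lax) monoidal functor, and the associativity and unit axioms for $F\ccal$ follow from those for $\ccal$ together with the coherence of $\phi$ and $\nu$. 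I would treat this as standard (it is the $\mu$-category underlying the statement).

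Next I would produce the monoidal structure on $F\ccal$. The monoidal product is the $\mu$-functor $F(\smashprod) \co (F\ccal) \times (F\ccal) \to F\ccal$; here one must first observe that $F$ applied to a $\nu$-functor out of $\ccal \times \ccal$ gives a $\mu$-functor out of $(F\ccal) \times (F\ccal)$, which requires comparing $(F\ccal) \times (F\ccal)\big((a,b),(c,d)\big) = F(\ccal(a,c)) \otimes_\mu F(\ccal(b,d))$ with $F\big(\ccal(a,c) \otimes_\nu \ccal(b,d)\big) = F\big((\ccal \times \ccal)((a,b),(c,d))\big)$ via $\phi$, and checking that the composition laws match — this uses precisely the compatibility of $\phi$ with the symmetry $T$, since the composition in the product category is defined using $T$. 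The unit object of $F\ccal$ is the same object $\sphspec \in \ob\ccal = \ob(F\ccal)$. The associativity, unit, and symmetry $\nu$-natural isomorphisms of $\ccal$ are built from $\nu$-natural transformations whose components are maps $\mathcal{I}_\nu \to \ccal(\ldots,\ldots)$; applying $F$ and precomposing with $\nu \co \mathcal{I}_\mu \to F(\mathcal{I}_\nu)$ produces the corresponding components for $F\ccal$, and their $\mu$-naturality follows from $\nu$-naturality in $\ccal$ plus naturality of $\phi$.

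Then I would verify the coherence axioms (the pentagon and triangle identities, and the hexagon/$T^2 = \id$ for the symmetry) for $F\ccal$. Each such axiom is an equation between morphisms in $\mu$; applying $F$ to the corresponding axiom in $\ccal$ gives an equation in $\mu$, and one inserts the coherence diagrams for $(F,\phi,\nu)$ as a symmetric monoidal functor to rewrite the ends of these equations into the maps built from $F\smashprod$, $F$ of the structure isomorphisms, and $\nu$. The main obstacle — really the only genuine bookkeeping — is the compatibility of $F(\smashprod)$ with composition in the two product categories: one must unwind the definition of composition in $\ccal \times \ccal$ (the diagram with $\id \otimes T \otimes \id$ in the excerpt) and in $(F\ccal)\times(F\ccal)$, and see that $F$ carries one to the other, which is exactly where the hypothesis that $F$ respects the symmetry is used. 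Once that single diagram chase is done, everything else is a mechanical transport of coherence cells, and I would simply remark that the verifications are routine. Finally, if one wants $F\ccal$ symmetric as asserted, one notes that the symmetry $\nu$-natural isomorphism $\smashprod \cong \smashprod \circ T$ in $\ccal$ is sent by $F$ (with the above conventions) to a $\mu$-natural isomorphism $F\smashprod \cong F\smashprod \circ T$, completing the proof.
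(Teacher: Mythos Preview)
Your proposal is correct and follows essentially the same approach as the paper's proof: define $F\ccal$ by base change along the monoidal functor $F$, transport the monoidal product $\smashprod$ and the structure isomorphisms via $F$ and its coherence data $(\phi,\nu)$, and note that the one nontrivial point---showing that $F(\smashprod)$ is a $\mu$-functor out of $(F\ccal)\times(F\ccal)$---requires the symmetry of $F$ because composition in the product $\nu$-category involves the interchange $T$. The paper's proof is terser (it cites \cite[Proposition A.3(b)]{dugshi} for the underlying base-change construction and only flags the symmetry point), but your more detailed account matches it step for step.
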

\begin{proof}
We define $F \ccal$ to have the same class of objects as
$\ccal$. On maps we define $(F \ccal)(a,b)= F( \ccal(a,b))$.
The monoidality of $F$ gives us the composition rule
and identity elements. The monoidal structure
is created from the monoidal structure on $\ccal$,
but to prove that the monoidal product is a
$\mu$-functor we must use the symmetry of $F$.
The required unital and associativity
$\mu$-natural isomorphisms of the monoidal product then follow from those
in $\ccal$.
This result is an application of \cite[Proposition A.3(b)]{dugshi}
with monoidal structures considered.
\end{proof}

The need for symmetry in the above will cause us difficulty
in Chapter \ref{chp:WskewSQmod} when we create an enriched category
$D (\phi^* N \tilde{\qq} \ecal_{top})$ by applying the functor
$D$ to the enriched category $\phi^* N \tilde{\qq} \ecal_{top}$.

We now give a few results which give us some rules 
we can use when working with coends or ends.
Note that the Yoneda lemma states that the end
$\int_c \underhom(\ccal(x,c) , F(c)) $
exists, whereas
Lemma \ref{lem:endandcoend} and Corollary \ref{cor:endandcoendeval}
assume that the relevant coend exists. Of course, we will be
working in the context of model categories, whence 
the ends and coends we consider below will always 
exist.

\begin{lemma}[Enriched Yoneda]\label{lem:enrichyoneda}
Take $\nu$ to be a symmetric closed monoidal category and
let $\ccal$ be a small $\nu$-category. Then for any $x \in \ccal$
and any $\nu$-functor $F \co \ccal \to \nu$
the $\nu$-object $\nu \nat(\ccal(x,-), F):= \int_c \underhom(\ccal(x,c) , F(c)) $
exists and is naturally isomorphic to $F(x)$.
\end{lemma}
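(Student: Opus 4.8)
The plan is to verify the standard enriched Yoneda lemma by exhibiting the natural isomorphism explicitly and checking its two defining properties. First I would recall that an end $\int_c \underhom(\ccal(x,c), F(c))$ is by definition a universal wedge: an object $E$ together with components $\pi_c \co E \to \underhom(\ccal(x,c), F(c))$ which equalise the two evident maps built from the $\ccal$-action on $\ccal(x,-)$ and the $F$-action. Since $\nu$ is closed and $\ccal$ is small, this end exists (it can be constructed as an equaliser of products over the objects and morphism-objects of $\ccal$), so there is nothing to prove about existence. The substantive content is the identification with $F(x)$.

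Next I would produce the comparison map. In one direction, evaluation at the identity: using $j \co \mathcal{I} \to \ccal(x,x)$ and the adjunction isomorphism $\nu(E, \underhom(\ccal(x,x), F(x))) \cong \nu(E \otimes \ccal(x,x), F(x))$, the component $\pi_x$ of any wedge over $F(x)$ gives, after composing with $j$ and the unit isomorphism, a map $\mathrm{ev} \co \int_c \underhom(\ccal(x,c), F(c)) \to F(x)$. In the other direction, I would build a wedge with vertex $F(x)$: for each $c$ the required component $F(x) \to \underhom(\ccal(x,c), F(c))$ is the adjunct of the action map $F(x) \otimes \ccal(x,c) \to F(c)$ (equivalently, $\ccal(x,c) \otimes F(x) \to F(c)$ after applying the symmetry of $\nu$), which comes from the $\nu$-functor structure of $F$. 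Checking that this family is a wedge — i.e.\ that it equalises the two maps in the end diagram — is exactly the associativity/compatibility axiom for the action of $\ccal$ on $F$, so it follows formally from $F$ being a $\nu$-functor. By the universal property of the end this wedge factors uniquely through a map $\eta \co F(x) \to \int_c \underhom(\ccal(x,c), F(c))$.

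Then I would check $\mathrm{ev} \circ \eta = \id_{F(x)}$ and $\eta \circ \mathrm{ev} = \id$ on the end. The first composite is immediate: it evaluates the $c = x$ component of the wedge at $j$, which by the unit axiom for the $\nu$-functor $F$ is the identity on $F(x)$. The second composite is the one requiring care — one must show the two wedges (the identity wedge on the end, and $\eta \circ \mathrm{ev}$ postcomposed with each $\pi_c$) agree, and then invoke uniqueness in the universal property. Unwinding this comes down to the wedge condition satisfied by any element of the end together with, again, the unit axiom; this is the routine diagram chase at the heart of the Yoneda argument. Finally, naturality in $x$ (and in $F$) follows because every map in sight — $j$, the $F$-action, the adjunction isomorphism, the symmetry of $\nu$ — is natural. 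The main obstacle is purely notational: keeping track of the coherence isomorphisms of the closed symmetric monoidal structure of $\nu$ while transposing across the $(\otimes, \underhom)$ adjunction, so that the two triangle identities reduce cleanly to the unit and associativity axioms of the $\nu$-functor $F$; there is no conceptual difficulty, since this is the classical Yoneda lemma of \cite{kell05} applied verbatim.
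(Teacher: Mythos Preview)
Your proposal is correct and follows the standard proof of the enriched Yoneda lemma: build the wedge from $F(x)$ using the $\nu$-functor structure of $F$, define the inverse by evaluation at $j_x$, and verify the two composites using the unit and associativity axioms for $F$.

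The paper, however, does not give a proof at all: it simply cites \cite[Proposition 6.3.5]{bor94}. So there is nothing to compare at the level of argument; your sketch is essentially the proof one finds in Borceux (or in \cite{kell05}), written out in more detail than the paper chose to include. One minor remark: you say ``since $\nu$ is closed and $\ccal$ is small, this end exists''; this is the part that actually uses completeness of $\nu$, which is implicit in $\nu$ being a closed symmetric monoidal category with the relevant limits --- the paper's statement asserts existence as part of the conclusion, so strictly speaking you should either assume $\nu$ has equalisers of products (as the cited reference does) or note that in all the applications in the paper $\nu$ is a model category and hence complete.
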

\begin{proof}
This lemma is \cite[Proposition 6.3.5]{bor94}.
\end{proof}

\begin{lemma}\label{lem:endandcoend}
Consider a $\nu$-functor $F \co \ccal \to \nu$ between
$\nu$-categories (with $\ccal$ small) and let $x \in \ccal$, then whenever
the following coend exists,
there is an isomorphism (natural in $F$ and $x$) in $\nu$:
$$\int^{c \in \ccal} F(c) \otimes \ccal(c,x) \overset{\cong}{\longrightarrow} F(x) .$$
\end{lemma}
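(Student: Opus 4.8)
The plan is to prove the co-Yoneda (density) isomorphism $\int^{c} F(c) \otimes \ccal(c,x) \cong F(x)$ by exhibiting mutually inverse maps and checking the coend universal property. First I would construct the candidate map $\theta_x \co \int^{c} F(c) \otimes \ccal(c,x) \to F(x)$. By the universal property of the coend, $\theta_x$ is determined by a cowedge: a family of maps $F(c) \otimes \ccal(c,x) \to F(x)$ natural (in the wedge sense) in $c$. The obvious choice is the action of the $\nu$-functor $F$ on morphisms, namely the adjoint transpose of $\ccal(c,x) \to \underhom(F(c), F(x))$ (this transpose exists since $\nu$ is closed). One checks the cowedge condition directly from the functoriality axiom $FM = M(F \otimes F)$ for $F$ together with associativity of composition in $\ccal$; this is the routine diagram chase I would not spell out in full.

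Next I would construct the inverse. The candidate inverse $\eta_x \co F(x) \to \int^{c} F(c)\otimes \ccal(c,x)$ is the composite of $F(x) \xrightarrow{\cong} F(x) \otimes \mathcal{I} \xrightarrow{\id \otimes j} F(x) \otimes \ccal(x,x)$ followed by the canonical coprojection $\iota_x \co F(x) \otimes \ccal(x,x) \to \int^{c} F(c) \otimes \ccal(c,x)$, where $j \co \mathcal{I} \to \ccal(x,x)$ is the identity element. The composite $\theta_x \circ \eta_x = \id_{F(x)}$ follows immediately from the unit triangle axiom for the $\nu$-functor $F$ (which says $F$ sends $j$ to $j$, combined with the unit law in $\nu$).

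The genuinely delicate step — the main obstacle — is verifying $\eta_x \circ \theta_x = \id$ on the coend. Here one cannot argue pointwise on a single summand; instead I would use the universal property again: it suffices to show that $\eta_x \circ \theta_x \circ \iota_c = \iota_c$ for every coprojection $\iota_c \co F(c) \otimes \ccal(c,x) \to \int$. Precomposing, $\eta_x \circ \theta_x \circ \iota_c$ unwinds to the composite $F(c) \otimes \ccal(c,x) \to F(x) \otimes \ccal(x,x) \to \int$, and one must show this equals $\iota_c$. The trick is to exhibit both as factoring through $F(c) \otimes \ccal(c,x) \otimes \ccal(x,x)$ and to invoke precisely the coend identification (the very relation being coequalized), which identifies the two routes $\ccal(c,x) \otimes \ccal(x,x) \to \ccal(c,x)$ (compose, then coproject at $c$) versus $F(c) \otimes \ccal(c,x) \to F(x)$ then coproject at $x$. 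This is a moderately involved but entirely formal manipulation with the coend's defining coequalizer.

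Finally I would note naturality in both $F$ and $x$: naturality in $F$ is automatic since every construction used only $F$'s structure maps, and naturality in $x$ follows from the functoriality of $c \mapsto \ccal(c,x)$ and uniqueness in the universal property. For reference this is \cite[Proposition 6.3.5]{bor94} applied in the dual (coend) form, or one can cite that $\int^c F(c) \otimes \ccal(c,-)$ is the left Kan extension of $F$ along the identity, which is $F$ itself.
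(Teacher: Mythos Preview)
Your proof is correct and uses the same essential ingredients as the paper: the action map $F(c)\otimes\ccal(c,x)\to F(x)$ for the forward direction and the unit insertion $F(x)\cong F(x)\otimes\mathcal{I}\to F(x)\otimes\ccal(x,x)$ for the reverse. The only stylistic difference is that the paper verifies directly that $F(x)$ satisfies the universal property of the coequaliser (given a test cocone $\alpha$ into $T$, the factorisation through $F(x)$ is built from $\alpha_x$ composed with the unit insertion), whereas you construct an explicit inverse $\eta_x$ and check the two round-trip composites; these are two phrasings of the same argument, with your ``$\eta_x\circ\theta_x=\id$'' step corresponding exactly to the paper's uniqueness check. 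One small remark: your citation of \cite[Proposition 6.3.5]{bor94} at the end points to the end (Yoneda) form rather than this coend form --- the paper reserves that reference for the preceding lemma and gives the present coend statement a self-contained proof.
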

\begin{proof}
We write the coend as a coequaliser
of coproducts
$$\coprod_{c,d} F(c) \otimes \ccal(c,d) \otimes \ccal(d,x)
\overrightarrow{\longrightarrow}
\coprod_{e} F(e) \otimes \ccal(e,x).$$
One arrow corresponds to composition
and the other to the action of $\ccal(c,d)$ on $F(c)$
$$F(c) \otimes \ccal(c,d) \to F(c) \otimes \nu (F(c), F(d)) \to F(d).$$
The action map then induces a map in $\nu$:
$\coprod_{c} F(c) \otimes \ccal(c,x) \to F(x)$, that this coequalises
the two maps follows from the definition of a $\nu$-functor.
Given any test object $T$ for this coequaliser
we can use factor $x$ of the given map
$\alpha \co \coprod_{c} F(c) \otimes \ccal(c,x) \to T$
and the composite
$F(x) \cong F(x) \otimes \mathcal{I} \to F(x) \otimes \ccal(x,x)$
to give a map $F(x) \to T$.
It follows easily that this map is unique and satisfies the
required commutativity condition. Hence $F(x)$ is
isomorphic to the coend.
\end{proof}

\begin{corollary}\label{cor:endandcoendeval}
Consider a $\nu$-functor $F \co \ccal^{op} \to \nu$ between
$\nu$-categories (with $\ccal$ small) and let $x \in \ccal$, then whenever
the following coend exists,
there is an isomorphism (natural in $F$ and $x$) in $\nu$:
$$\int^{c \in \ccal} F(c) \otimes \ccal(x,c)
\overset{\cong}{\longrightarrow} F(x). $$
\end{corollary}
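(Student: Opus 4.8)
The statement is the ``opposite category'' version of Lemma~\ref{lem:endandcoend},
so the natural plan is to deduce it by the same argument applied to $\ccal^{op}$,
being careful about where the symmetry of $\nu$ is used. First I would observe
that $F \co \ccal^{op} \to \nu$ is precisely a $\nu$-functor out of the
$\nu$-category $\dcal := \ccal^{op}$, and that $\ccal(x,c) = \dcal(c,x)$ by the
definition of the opposite of a $\nu$-category (the Hom-objects are the same,
composition is reversed using the symmetry $T$ of $\nu$). Under this dictionary
the coend $\int^{c \in \ccal} F(c) \otimes \ccal(x,c)$ becomes
$\int^{c \in \dcal} F(c) \otimes \dcal(c,x)$, which is exactly the coend
appearing in Lemma~\ref{lem:endandcoend} with $\dcal$ in place of $\ccal$.
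Hence that lemma gives a natural isomorphism to $F(x)$, and we are done.

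In more detail, the key steps in order are: (i) recall that $\ccal^{op}$ is a
$\nu$-category with the same objects, $\ccal^{op}(a,b) = \ccal(b,a)$, and
composition law obtained from that of $\ccal$ by precomposing with the
interchange map $T \co \ccal(a,b) \otimes \ccal(b,c) \to \ccal(b,c) \otimes \ccal(a,b)$
--- this is where symmetry of $\nu$ enters, and it is the same ingredient already
used in the definition of $\ccal \times \ccal$ and of a monoidal $\nu$-category
above, so nothing new is needed; (ii) verify that a $\nu$-functor
$F \co \ccal^{op} \to \nu$ is literally the same data as a $\nu$-functor from
the $\nu$-category $\ccal^{op}$ to $\nu$, so that the hypotheses of
Lemma~\ref{lem:endandcoend} are met; (iii) rewrite the coend in the statement,
$\int^{c \in \ccal} F(c) \otimes \ccal(x,c)$, as $\int^{c \in \ccal^{op}} F(c) \otimes \ccal^{op}(c,x)$,
noting this is a relabelling of the indexing category and does not change the
(co)equaliser diagram; (iv) apply Lemma~\ref{lem:endandcoend} to
$\ccal^{op}$ and $F$ to obtain the natural isomorphism with $F(x)$; (v) unwind
the identifications to see that the resulting map $F(x) \to \int^{c} F(c) \otimes \ccal(x,c)$
is the expected one, induced by $F(x) \cong F(x) \otimes \mathcal{I} \to F(x) \otimes \ccal(x,x)$,
and that naturality in $F$ and $x$ is inherited.

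Alternatively, if one prefers not to invoke the opposite $\nu$-category formalism,
the proof can be written out directly by copying the argument of
Lemma~\ref{lem:endandcoend} verbatim: present the coend as the coequaliser
$\coprod_{c,d} F(d) \otimes \ccal(c,d) \otimes \ccal(x,c) \rightrightarrows
\coprod_{e} F(e) \otimes \ccal(x,e)$, where one arrow is composition in $\ccal$
and the other is the contravariant action $\ccal(c,d) \otimes F(d) \to F(c)$
(using $\ccal(c,d) \to \underhom(F(d), F(c))$ from the $\nu$-functor structure
of $F \co \ccal^{op} \to \nu$, and the symmetry of $\nu$ to match the tensor
ordering), then construct the comparison map into $F(x)$ from the unit
$\mathcal{I} \to \ccal(x,x)$ and check universality exactly as before.
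I do not expect a genuine obstacle here --- the whole point of stating this as
a corollary is that it is formally dual --- so the only thing requiring care is
bookkeeping the symmetry isomorphisms of $\nu$ when passing between
$\ccal$ and $\ccal^{op}$; getting the tensor-factor orderings to line up is the
one place a careless argument could go wrong.
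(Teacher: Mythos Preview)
Your proposal is correct and is exactly the approach the paper has in mind: the corollary is stated without proof precisely because it follows from Lemma~\ref{lem:endandcoend} by passing to $\ccal^{op}$ (or, equivalently, by rerunning the same coequaliser argument with the variance reversed), and your write-up spells this out carefully, including the one genuine subtlety of tracking the symmetry of $\nu$ when reordering tensor factors.
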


\section{$\ccal$-Modules}\label{sec:Cmodules}
We now wish to move from enrichments
to tensorings of categories (and then to tensorings, cotensorings
and enrichments all at the same time).
To avoid confusion with the previous information
and to match the notation of \cite[Chapter 4]{hov99}
we let $\ccal$ be a symmetric monoidal category
(\cite[Definition 4.1.4]{hov99})
with product $\otimes$ and unit $1_\ccal$.
\begin{definition}
A \textbf{$\ccal$-module}\index{c-module@$\ccal$-module} is a category $\dcal$ with a
functor $\ccal \times \dcal \to \dcal$.
The symmetry of $\ccal$ allows us to
use left-modules and right-modules interchangeably.
A \textbf{$\ccal$-module functor}\index{c-module functor@$\ccal$-module functor}
between $\ccal$-modules
$\dcal$ and $\ecal$ is a functor $F \co \dcal \to \ecal$
with a natural isomorphism $c \otimes Fd \cong F(c \otimes d)$
satisfying the following pair of coherence conditions.
Let $c$ and $c'$ be objects of $\ccal$ and $d$ an object of $\dcal$.
The first condition is that the two ways of
getting from $L( c \otimes (c' \otimes d))$ to
$c \otimes (c' \otimes Ld)$ must be equal.
The second is that the two ways to get from
$L(d \otimes 1_\ccal)$ to $Ld$ must agree.

A \textbf{(symmetric) $\ccal$-algebra}\index{c-algebra@$\ccal$-algebra}
is a category $\dcal$ with a
(symmetric) monoidal structure and a strong (symmetric) monoidal functor
$i_\dcal \co \ccal \to \dcal$.
A \textbf{(symmetric) $\ccal$-algebra functor}\index{c-algebra
functor@$\ccal$-algebra functor} between $\ccal$-algebras
$\dcal$ and $\ecal$ is a strong (symmetric) monoidal functor
$F \co \dcal \to \ecal$
with a monoidal natural isomorphism $F \circ i_\dcal \to i_\ecal$.
\end{definition}

This definition is an abbreviation of
\cite[Definitions 4.1.6 -- 4.1.9]{hov99}.
We now want to add still more structure: to consider categories
enriched, tensored and cotensored over another category.
We let $\ccal$ be a closed (symmetric) monoidal category \cite[4.1.13]{hov99}
with product $\otimes$ and unit $1_\ccal$, the internal function object of $\ccal$ will be
$\F_\ccal$ (recall that the adjective closed means that we have an adjunction
of two variables rather than just a bifunctor).

\begin{definition}
A \textbf{closed $\ccal$-module}\index{closed c-module@Closed $\ccal$-module}
is a category $\dcal$ with an
adjunction of two variables
$\ccal \times \dcal \to \dcal$.
Thus we have three bifunctors
\begin{displaymath}
\begin{array}{crcl}
\otimes    : & \ccal \times \dcal & \longrightarrow & \dcal \\
\underhom : & \ccal^{op} \times \dcal & \longrightarrow & \dcal \\
\underhom_\ccal : & \dcal^{op} \times \dcal & \longrightarrow & \ccal. \\
\end{array}
\end{displaymath}
In \cite{hov99} the notation $\hom_r$ is used for $\underhom_\ccal$
and $\hom_l$ for $\underhom$. We also have the adjunctions
$\ccal (c, \underhom_\ccal(d,d')) \cong
\dcal(c \otimes d, d') \cong
\dcal(d, \underhom(c,d'))$.
We call $\otimes$ the \textbf{tensor}\index{Tensor}
operation, $\underhom$ the \textbf{cotensor}\index{Cotensor}
and $\underhom_\ccal$ the \textbf{enrichment}\index{Enrichment}.
\end{definition}
Thus a closed $\ccal$-module is a $\ccal$-category,
with the enrichment given by $\underhom_\ccal$.
\begin{lemma}
Let $\dcal$ be a closed $\ccal$-module, then for any $d,d'$ in $\dcal$ and $c$ in $\ccal$,
there are canonical isomorphisms in $\ccal$
$$\F_\ccal(c, \underhom_\ccal(d,d')) \cong
\underhom_\ccal(d \otimes c, d') \cong
\underhom_\ccal(d, \underhom(c,d')) $$
which, after applying $\ccal(1_\ccal, -)$, reduce to the isomorphisms
of the definition above.
\end{lemma}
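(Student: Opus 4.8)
The plan is to deduce the three $\ccal$-object isomorphisms directly from the defining adjunctions of a closed $\ccal$-module, using the Yoneda lemma in $\ccal$ to promote external (hom-set) bijections to internal ($\ccal$-object) isomorphisms. First I would recall that, because $\ccal$ is closed symmetric monoidal, for objects $c,c'$ of $\ccal$ one has the natural isomorphism $\ccal(a \otimes c, c') \cong \ccal(a, \F_\ccal(c,c'))$, and dually that every object is recovered by the Yoneda embedding $c' \mapsto \ccal(-, c')$. So it suffices to produce, for each of the three claimed isomorphisms, a natural bijection of hom-sets $\ccal(a, \mathrm{LHS}) \cong \ccal(a, \mathrm{RHS})$ for all $a$ in $\ccal$, and then the Yoneda lemma gives the asserted isomorphism in $\ccal$; naturality of the constructed bijection in $a$, $d$, $d'$, $c$ is what lets us conclude the isomorphisms are themselves natural.

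For the first isomorphism $\F_\ccal(c, \underhom_\ccal(d,d')) \cong \underhom_\ccal(d \otimes c, d')$, I would chain:
\begin{displaymath}
\ccal(a, \F_\ccal(c, \underhom_\ccal(d,d')))
\cong \ccal(a \otimes c, \underhom_\ccal(d,d'))
\cong \dcal((a\otimes c) \otimes d, d')
\cong \dcal(a \otimes (c \otimes d), d')
\cong \ccal(a, \underhom_\ccal(c \otimes d, d')),
\end{displaymath}
where the first and last steps use the closed structure of $\ccal$ and the tensor--enrichment adjunction of the $\ccal$-module respectively, the middle step uses associativity of the action $\otimes : \ccal \times \dcal \to \dcal$ (part of the closed-$\ccal$-module structure), and I would use the symmetry of $\ccal$ to identify $c \otimes d$ with $d \otimes c$ so as to match the statement's $\underhom_\ccal(d \otimes c, d')$. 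For the second isomorphism $\underhom_\ccal(d \otimes c, d') \cong \underhom_\ccal(d, \underhom(c, d'))$, I would instead chain $\ccal(a, \underhom_\ccal(d\otimes c, d')) \cong \dcal(a \otimes (d \otimes c), d') \cong \dcal((a \otimes c) \otimes d, d') \cong \dcal(a \otimes d, \underhom(c, d')) \cong \ccal(a, \underhom_\ccal(d, \underhom(c,d')))$, again invoking associativity and symmetry of the action together with the tensor--cotensor adjunction $\dcal(e \otimes c, d') \cong \dcal(e, \underhom(c,d'))$.

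Finally I would check that applying $\ccal(1_\ccal, -)$ to these three isomorphisms recovers the hom-set isomorphisms stated in the definition of a closed $\ccal$-module: since $\ccal(1_\ccal, \F_\ccal(c, x)) \cong \ccal(c, x)$ and, via $a = 1_\ccal$ and $1_\ccal \otimes d \cong d$, $\ccal(1_\ccal, \underhom_\ccal(d, d')) \cong \dcal(d, d')$, this is a direct unwinding. The main obstacle is not conceptual but bookkeeping: one must verify the naturality of each composite bijection in all variables and the coherence of the associativity/unit constraints used (the ones packaged into the $\ccal$-module and closed-$\ccal$-module axioms), so that the resulting $\ccal$-object isomorphisms are canonical and compatible. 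I would handle this by citing the coherence theorem for (closed symmetric) monoidal categories and modules over them, so that all the structural isomorphisms I string together are unique once source and target are fixed, and then remark that the passage through $\ccal(a,-)$ and Yoneda is routine.
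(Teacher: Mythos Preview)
Your proposal is correct: the standard Yoneda argument you outline, chaining the defining adjunctions of a closed $\ccal$-module together with the closed structure on $\ccal$ and the associativity/symmetry of the action, is exactly how one proves such an enriched adjunction statement. The paper itself gives no argument at all and simply cites \cite[Lemma A.2]{dug06}, so there is nothing substantive to compare; your write-up is essentially what that reference does.

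One small point of bookkeeping: in your second chain the step $\dcal((a\otimes c)\otimes d, d') \cong \dcal(a\otimes d, \underhom(c,d'))$ is not a single application of the tensor--cotensor adjunction; you need first to rewrite $(a\otimes c)\otimes d \cong c\otimes(a\otimes d)$ via associativity and symmetry before adjuncting out $c$. You clearly know this (you invoke associativity and symmetry), but it would be cleaner to display the intermediate step explicitly.
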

\begin{proof}
This is \cite[Lemma A.2]{dug06}.
\end{proof}

\begin{lemma}
Take an adjunction $L : \mathcal{D} \leftrightarrows \mathcal{E} : R$
between closed $\ccal$-modules, then the following
statements are equivalent:
\begin{enumerate}
\item There are natural isomorphisms in $\ecal$,
$c \otimes Ld \cong L(c \otimes d)$,
which reduce to the canonical isomorphisms for $c =1_\ccal$.
\item There are natural isomorphisms in $\dcal$,
$\underhom(c, Re) \cong R \underhom(c,e)$
which reduce to the canonical isomorphisms for $c =1_\ccal$.
\item There are natural isomorphisms in $\ccal$,
$\underhom_\mathcal{C}(d,Re) \cong \underhom_\mathcal{C}(Ld,e)$
which after applying $\ccal(1_\ccal,-)$
reduce to the adjunction between $L$ and $R$.
\end{enumerate}
\end{lemma}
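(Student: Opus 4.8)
The plan is to show that all three conditions are equivalent to a single, symmetric statement: that the adjunction $L \dashv R$ is a $\ccal$-adjunction, in the sense that the three bifunctors $\otimes$, $\underhom$ and $\underhom_\ccal$ on $\dcal$ and $\ecal$ are compatibly matched by $L$ and $R$. I would not prove three separate implications; instead I would prove $(i) \Leftrightarrow (iii)$ and $(ii) \Leftrightarrow (iii)$ by the same mechanism, so that symmetry of the argument does the bookkeeping. Throughout, the key tool is the chain of adjunction isomorphisms in the definition of a closed $\ccal$-module together with the Yoneda lemma (the ordinary one, applied either in $\ccal$ or in $\dcal$), which lets us pass from a natural isomorphism of representable functors to a natural isomorphism of the representing objects.

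First I would record the derived adjunctions. For $d$ in $\dcal$, $e$ in $\ecal$ and $c$ in $\ccal$ we have, using the $\dcal$-side and $\ecal$-side adjunctions and the ordinary adjunction $L \dashv R$,
$$
\dcal(c \otimes d, Re) \cong \dcal(d, \underhom(c, Re)) \cong \dcal(c \otimes d, Re),
\qquad
\ecal(L(c \otimes d), e) \cong \dcal(c \otimes d, Re),
$$
and also $\ecal(c \otimes Ld, e) \cong \ecal(Ld, \underhom(c,e)) \cong \dcal(d, R\underhom(c,e))$. Comparing these, the existence of a natural isomorphism $c \otimes Ld \cong L(c \otimes d)$ in $\ecal$ is, by Yoneda in $\ecal^{op}$ (equivalently, by Yoneda in $\dcal$ after passing through $R$), equivalent to a natural isomorphism $\underhom(c, Re) \cong R\underhom(c,e)$ in $\dcal$; and unravelling what the canonical isomorphisms for $c = 1_\ccal$ become under these equivalences shows that the two normalisation conditions correspond. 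This gives $(i) \Leftrightarrow (ii)$.

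Next I would bring in the enrichment $\underhom_\ccal$. Using the preceding lemma (the strengthened adjunction isomorphisms in $\ccal$ for a closed $\ccal$-module, \cite[Lemma A.2]{dug06}) we have, for $c$ in $\ccal$, $d$ in $\dcal$, $e$ in $\ecal$,
$$
\F_\ccal(c, \underhom_\ccal(d, Re)) \cong \underhom_\ccal(c \otimes d, Re)
\quad\text{and}\quad
\F_\ccal(c, \underhom_\ccal(Ld, e)) \cong \underhom_\ccal(d, \underhom(c, e)).
$$
Applying $\ccal(1_\ccal, -)$ to the first and using $(i)$ turns $\underhom_\ccal(c \otimes d, Re)$ into something computed from $\underhom_\ccal(L(c\otimes d), e) \cong \underhom_\ccal(c \otimes Ld, e)$; applying $\ccal(1_\ccal,-)$ to $\F_\ccal(c, -)$ and using that $\ccal$ is closed, a natural isomorphism $\underhom_\ccal(d, Re) \cong \underhom_\ccal(Ld, e)$ in $\ccal$ is equivalent (again by Yoneda, now in $\ccal$) to the system of isomorphisms $\F_\ccal(c, \underhom_\ccal(d,Re)) \cong \F_\ccal(c, \underhom_\ccal(Ld,e))$ natural in $c$, and these match up exactly under $(i)$. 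The condition that after applying $\ccal(1_\ccal, -)$ the isomorphism in $(iii)$ reduces to the $L \dashv R$ adjunction is precisely the case $c = 1_\ccal$, which is the normalisation already carried along. Hence $(i) \Leftrightarrow (iii)$, and combining with the previous paragraph all three are equivalent.

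The main obstacle I anticipate is not the existence of the isomorphisms but the coherence/normalisation bookkeeping: one must check that the natural isomorphisms produced by Yoneda genuinely \emph{reduce} to the canonical ones when $c = 1_\ccal$, and that this single normalisation is preserved under each translation between $(i)$, $(ii)$ and $(iii)$. This is where one has to be careful that the unit isomorphisms of $\ccal$ and of the closed $\ccal$-module structures on $\dcal$ and $\ecal$ are used consistently; I would handle it by fixing, once and for all, the derived adjunction isomorphisms above as the definition of the comparison maps, so that the $c = 1_\ccal$ case is literally the $L \dashv R$ adjunction by construction, and then only naturality in $c$ and $d$ (resp. $e$) remains, which is a routine diagram chase using the coherence axioms for $\ccal$-modules.
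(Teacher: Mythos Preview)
Your approach is correct and is essentially the paper's own: both arguments chain the defining adjunctions for a closed $\ccal$-module with the ordinary adjunction $L\dashv R$ and then invoke Yoneda to pass between the three conditions. Two remarks. First, the paper keeps everything at the level of ordinary hom-sets $\ccal(c,-)$, $\dcal(-,-)$, $\ecal(-,-)$; your second paragraph instead brings in the internal hom $\F_\ccal$ via the preceding lemma, which works but is heavier than necessary. If you write
\[
\ccal(c,\underhom_\ccal(d,Re))\cong\dcal(c\otimes d,Re)\cong\ecal(L(c\otimes d),e)
\quad\text{and}\quad
\ccal(c,\underhom_\ccal(Ld,e))\cong\ecal(c\otimes Ld,e),
\]
then $(i)\Leftrightarrow(iii)$ is immediate by Yoneda in $c$ (one direction) and in $e$ (the other), and $(ii)\Leftrightarrow(iii)$ is analogous. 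Second, there are slips in your displays: the first chain in your first paragraph is circular as written, and in the second paragraph $\underhom_\ccal(d,\underhom(c,e))$ does not type-check since $d\in\dcal$ while $\underhom(c,e)\in\ecal$; you presumably meant $\underhom_\ccal(c\otimes Ld,e)$ or $\underhom_\ccal(Ld,\underhom(c,e))$.
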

\begin{proof}
This is \cite[Lemma A.7]{dug06} and we give a proof
of this simple result.
Take any $c,d,e$ in $\ccal,\dcal,\ecal$
respectively. Then
\begin{displaymath}
\begin{array}{rcccl}
\ecal(c \otimes Ld,e) & \cong & \ecal(Ld,\underhom(c,e))
 & \cong & \dcal(d,R\underhom(c,e)) \\
\ecal(L(c \otimes d),e) & \cong & \dcal(c \otimes d, Re)
 & \cong & \dcal(d,\underhom(c,Re)) \\
\end{array}
\end{displaymath}
so since an object in a category is determined (up to natural isomorphism)
by the maps out of (or the maps into) that object,
we can see that the first two conditions are equivalent.
The next two collections of isomorphisms show (respectively) that the first
and last conditions are equivalent and that the second and
third are equivalent.
\begin{displaymath}
\begin{array}{rcccl}
\ccal (c,\underhom_\ccal (d,Re)) & \cong & \dcal (c \otimes d, Re)
 & \cong & \ecal(L (c \otimes d),e) \\
\ccal (c,\underhom_\ccal (Ld,e)) & \cong & \ecal (c \otimes Ld, e) \\
\\
\ccal (c,\underhom_\ccal (d,Re)) & \cong & \dcal (c \otimes d, Re)
 & \cong & \dcal (d,\underhom (c,Re)) \\
\ccal (c,\underhom_\ccal (Ld,e)) & \cong & \ecal (Ld, \underhom (c, e))
& \cong & \dcal ( d , R\underhom (c, e) )
\end{array}
\end{displaymath}
The statement about reductions are similarly routine.
\end{proof}

\begin{definition}
An \textbf{adjunction of closed $\ccal$-modules}\index{Adjunction
of closed c-module@Adjunction of closed $\ccal$-modules} is an adjoint
pair such that the left adjoint is a $\ccal$-module functor.
A \textbf{closed (symmetric) $\ccal$-algebra}\index{closed c-algebra@Closed
$\ccal$-algebra} is a category $\dcal$ with a
closed (symmetric) monoidal structure and a strong (symmetric) monoidal adjunction
$i_\dcal : \ccal \leftrightarrows \dcal : j_\ccal$ \cite[Definition 4.1.14]{hov99}.
An \textbf{adjunction of closed (symmetric) $\ccal$-algebras}\index{adjunction
of closed c-algebra@Adjunction of $\ccal$-algebras} between $\ccal$-algebras
$\dcal$ and $\ecal$ is a strong (symmetric) monoidal adjunction
$L : \dcal \leftrightarrows \ecal : R$
with a monoidal natural isomorphism $L \circ i_\dcal \to i_\ecal$
(so $L$ is a $\ccal$-algebra functor).
\end{definition}

\begin{lemma}
Take a strong monoidal adjunction
$L : \ccal \leftrightarrows \dcal : R$.
Then a $\dcal$-module category $M$ can be given the structure
of a $\ccal$-module category by setting
$$X \otimes c = X \otimes Lc,
\quad
F_\ccal(c,Y)=F(Lc,Y)
\quad and \quad
\underhom_\ccal (X,Y) = R \underhom_\dcal (X,Y)$$
note that the direction of the adjunction is essential to this lemma.
\end{lemma}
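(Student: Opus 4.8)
The plan is to verify that $M$, equipped with the three stated bifunctors, satisfies the two defining properties of a closed $\ccal$-module: that $(X,c) \mapsto X \otimes c := X \otimes Lc$ is an associative, unital action of the monoidal category $\ccal$ on $M$, and that together with $F_\ccal$ and $\underhom_\ccal$ it forms an adjunction of two variables. Once this is done the induced $\ccal$-enriched category structure on $M$ comes for free, as noted just after the definition of a closed $\ccal$-module, so there is nothing further to check.

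First I would establish the adjunction of two variables, which is purely formal. Writing out the definition $X \otimes c = X \otimes Lc$ and composing the two-variable adjunction isomorphisms of the given closed $\dcal$-module structure with the hom-adjunction $L \dashv R$, one obtains, for $c \in \ccal$ and $X,Y \in M$, natural isomorphisms
\[
M(X \otimes Lc,\, Y) \;\cong\; \dcal\bigl(Lc,\, \underhom_\dcal(X,Y)\bigr) \;\cong\; \ccal\bigl(c,\, R\,\underhom_\dcal(X,Y)\bigr) \;=\; \ccal\bigl(c,\, \underhom_\ccal(X,Y)\bigr),
\]
and, on the other side, $M(X \otimes Lc,\, Y) \cong M\bigl(X,\, F(Lc,Y)\bigr) = M\bigl(X,\, F_\ccal(c,Y)\bigr)$. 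Naturality in all three variables, and functoriality of $\underhom_\ccal = R \circ \underhom_\dcal$, are inherited from the constituent isomorphisms. This already exhibits why the direction of $L \dashv R$ is essential: the hom-adjunction pairs the left adjoint on the domain with the right adjoint on the codomain, which forces $R$ onto the $\ccal$-valued enrichment and $L$ onto the $\ccal$-valued slots of the tensor and cotensor; with the roles of $L$ and $R$ interchanged the chain of isomorphisms above could not close up.

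Next I would verify the module axioms, where the strong monoidality of $L$ enters. The unit isomorphism is $X \otimes 1_\ccal = X \otimes L(1_\ccal) \cong X \otimes 1_\dcal \cong X$, using the unit constraint of $L$ followed by the unit axiom of the $\dcal$-action; the associativity isomorphism, for $c,c' \in \ccal$, is
\[
(X \otimes c) \otimes c' \;=\; (X \otimes Lc) \otimes Lc' \;\cong\; X \otimes (Lc \boxtimes Lc') \;\cong\; X \otimes L(c \otimes c') \;=\; X \otimes (c \otimes c'),
\]
where $\boxtimes$ denotes the monoidal product of $\dcal$, the first isomorphism is the associativity constraint of the $\dcal$-action, and the second is the monoidal structure map of $L$. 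It is precisely here that one needs $L$ rather than $R$: the right adjoint of a strong monoidal adjunction is in general only lax monoidal, so the corresponding comparison map built from $R$ need not be invertible, and the analogous formulas with $R$ in place of $L$ would fail to define a module action at all.

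Finally one must check the coherence conditions, namely the pentagon and the triangle identities for the composite associativity and unit isomorphisms constructed above. These follow by a routine diagram chase from the coherence of the given $\dcal$-module structure, the monoidal coherence axioms for the functor $L$ (the compatibility of its associativity constraint with those of $\ccal$ and $\dcal$, and the triangles for its unit constraint), and naturality throughout. I expect this bookkeeping to be the only laborious part of the argument, and it is entirely mechanical; indeed the whole lemma is an instance of the standard fact that a strong monoidal functor $L \co \ccal \to \dcal$ induces restriction of scalars from closed $\dcal$-modules to closed $\ccal$-modules, so no genuinely new input is needed.
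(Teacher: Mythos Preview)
Your proof is correct and complete; the paper itself does not give an argument but simply cites \cite[Lemma A.5]{dug06}, so you have supplied the standard verification that the reference contains. Your discussion of why the direction of the adjunction matters (strong monoidality of $L$ for the associator, and the $L\dashv R$ hom-adjunction for the enrichment) is exactly the content behind the lemma.
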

\begin{proof}
This is \cite[Lemma A.5]{dug06}.
\end{proof}

\begin{corollary}
If there is a strong (symmetric) monoidal adjunction
$L : \ccal \overrightarrow{\longleftarrow} \dcal : R$
between closed (symmetric) monoidal categories
then $\ccal$ and $\dcal$ are closed (symmetric) $\ccal$-algebras
and $(L,R)$ is an adjunction of closed (symmetric) $\ccal$-algebras.
\end{corollary}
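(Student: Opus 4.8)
The statement is a direct corollary of the previous lemma: given a strong (symmetric) monoidal adjunction $L : \ccal \overrightarrow{\longleftarrow} \dcal : R$ between closed (symmetric) monoidal categories, we must produce the structure of closed (symmetric) $\ccal$-algebras on both $\ccal$ and $\dcal$, and check that $(L,R)$ is an adjunction of such. The plan is to treat $\ccal$ and $\dcal$ first as $\dcal$-module categories over themselves and then transport along $L$ using the preceding lemma.

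First I would observe that any closed (symmetric) monoidal category $\dcal$ is a closed $\dcal$-module over itself, with tensor its own monoidal product $\otimes_\dcal$, cotensor and enrichment both given by its internal function object $\F_\dcal$; the required adjunction-of-two-variables axioms are exactly the closedness of $\dcal$. In particular $\dcal$ is a $\dcal$-module category in the sense needed to apply the previous lemma. Likewise $\ccal$ is a closed $\ccal$-module over itself, and via $L$ it is trivially also a $\dcal$-module category (set $c \otimes c' := c \otimes_\ccal Rc'$ is not quite what we want; rather, $\ccal$ receives a $\dcal$-action by $d \otimes c := Rd \otimes_\ccal c$, which is a $\dcal$-module structure because $R$ is lax monoidal).

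Next I would apply the previous lemma with the strong monoidal adjunction $(L,R)$ to the $\dcal$-module category $M = \dcal$: this endows $\dcal$ with a $\ccal$-module structure by $X \otimes c := X \otimes_\dcal Lc$, $\F_\ccal(c,Y) := \F_\dcal(Lc,Y)$ and $\underhom_\ccal(X,Y) := R\,\F_\dcal(X,Y)$, and one checks these assemble into a closed $\ccal$-module, indeed a closed $\ccal$-algebra with structure functor $i_\dcal = L$ (which is strong (symmetric) monoidal by hypothesis) and right adjoint $j_\ccal = R$. Applying the same lemma with $M = \ccal$ (as a $\dcal$-module via $R$, or directly since the identity adjunction on $\ccal$ makes $\ccal$ a closed $\ccal$-algebra with $i_\ccal = \id_\ccal$) gives $\ccal$ the evident closed $\ccal$-algebra structure. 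Finally, for the adjunction-of-$\ccal$-algebras claim, $L$ is already a strong (symmetric) monoidal functor by assumption, and the monoidal natural isomorphism $L \circ i_\ccal = L \circ \id_\ccal = L = i_\dcal$ is the identity; so $(L,R)$ is a strong (symmetric) monoidal adjunction equipped with a monoidal natural isomorphism $L \circ i_\ccal \to i_\dcal$, which is precisely the definition of an adjunction of closed (symmetric) $\ccal$-algebras.

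The only real content — and hence the main point to be careful about — is bookkeeping: verifying that the transported bifunctors $(\otimes, \underhom, \underhom_\ccal)$ on $\dcal$ genuinely form an adjunction of two variables with the coherence isomorphisms reducing correctly at $c = 1_\ccal$, and that $L$ as $i_\dcal$ respects the symmetry isomorphisms in the symmetric case. All of this is routine given the strong monoidality of $L$ (so $L(1_\ccal) \cong 1_\dcal$ and $L(c \otimes c') \cong Lc \otimes Lc'$ coherently) and the previous lemma, which already packages the needed natural isomorphisms; so I would simply cite \cite[Lemma A.5]{dug06} together with the self-enrichment of a closed monoidal category and leave the diagram chases to the reader.
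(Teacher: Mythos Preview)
Your approach is correct and matches the paper's: the corollary is stated without proof, as an immediate consequence of the preceding lemma (\cite[Lemma A.5]{dug06}) applied to $\dcal$ as a $\dcal$-module over itself, together with the observation that $i_\dcal = L$ and $i_\ccal = \id_\ccal$ so that $L \circ i_\ccal \cong i_\dcal$ trivially. Your aside about giving $\ccal$ a $\dcal$-module structure via $R$ is a red herring (lax monoidality of $R$ is not enough for the associativity coherence), but you correctly fall back to the identity adjunction on $\ccal$, so no harm done.
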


We now link these structures back to the definitions of \cite{kell05}.
\begin{proposition}\label{prop:adjunctenrich}
If $L : \dcal \overrightarrow{\longleftarrow} \ecal : R$
is an adjunction of closed $\ccal$-modules then both
$L$ and $R$ are $\ccal$-functors.
\end{proposition}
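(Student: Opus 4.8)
If $L : \dcal \overrightarrow{\longleftarrow} \ecal : R$ is an adjunction of closed $\ccal$-modules then both $L$ and $R$ are $\ccal$-functors.

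The plan is to exhibit, for each pair of objects, the structure map in $\ccal$ making $L$ (resp. $R$) into a $\ccal$-functor, and then to verify the two axioms of Definition of a $\nu$-functor (compatibility with composition and with identities). Recall that a closed $\ccal$-module is in particular a $\ccal$-category with Hom-object $\underhom_\ccal(-,-)$, so ``$\ccal$-functor'' here means a $\ccal$-functor in the sense of Section \ref{sec:nucats} with $\nu = \ccal$. The hypothesis gives us that $L$ is a $\ccal$-module functor, so there are natural isomorphisms $c \otimes Ld \cong L(c \otimes d)$ reducing to the canonical ones at $c = 1_\ccal$; by the preceding lemma this is equivalent to natural isomorphisms $\underhom_\ccal(d, Re) \cong \underhom_\ccal(Ld, e)$ reducing, after applying $\ccal(1_\ccal, -)$, to the adjunction $L \dashv R$.

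First I would construct the $\ccal$-functor structure on $R$. For objects $e, e' \in \ecal$ we want a map in $\ccal$
$$
R_{e,e'} \co \underhom_\ccal(e, e') \longrightarrow \underhom_\ccal(Re, Re').
$$
The natural candidate is obtained by adjunction: a map into $\underhom_\ccal(Re, Re')$ is, by the closed $\ccal$-module adjunction $\ccal(c, \underhom_\ccal(d,d')) \cong \dcal(c \otimes d, d')$, the same as a map $\underhom_\ccal(e,e') \otimes Re \to Re'$ in $\dcal$. Using that $L$ is a $\ccal$-module functor we have $\underhom_\ccal(e,e') \otimes Re \cong \cdots$; more directly, the counit $\varepsilon \co LR \to \id$ together with the evaluation map $\underhom_\ccal(e,e') \otimes e \to e'$ in $\ecal$ (the co-unit of the enrichment adjunction in $\ecal$) and the module-functor isomorphism $L(c \otimes d) \cong c \otimes Ld$ assemble into $L(\underhom_\ccal(e,e') \otimes Re) \cong \underhom_\ccal(e,e') \otimes LRe \to \underhom_\ccal(e,e') \otimes e \to e'$, whose adjoint is the desired $\dcal$-map $\underhom_\ccal(e,e') \otimes Re \to Re'$, hence $R_{e,e'}$. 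Dually, the $\ccal$-functor structure on $L$ is $L_{d,d'} \co \underhom_\ccal(d,d') \to \underhom_\ccal(Ld, Ld')$, built from the unit $\eta \co \id \to RL$ and the isomorphism $\underhom_\ccal(d, RLd') \cong \underhom_\ccal(Ld, Ld')$ supplied by the lemma, precomposed with $R_{*}$-type data — or simply: it is the composite $\underhom_\ccal(d,d') \xrightarrow{R_{*}} \underhom_\ccal(d, RLd') \cong \underhom_\ccal(Ld, Ld')$ where the first map uses $\eta_{d'}$ and functoriality of $\underhom_\ccal$ in the second variable.

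Next I would check the two $\ccal$-functor axioms. For composition we must show $R M_\ecal = M_\dcal (R \otimes R)$ as maps $\underhom_\ccal(e', e'') \otimes \underhom_\ccal(e, e') \to \underhom_\ccal(Re, Re'')$; this is a diagram chase reducing both sides, via the defining adjunctions, to the statement that the counit $\varepsilon$ is compatible with the evaluation/composition maps of the enrichments in $\dcal$ and $\ecal$, which holds because $\varepsilon$ is a natural transformation and the module-functor isomorphism of $L$ satisfies its own coherence (the two conditions in the definition of $\ccal$-module functor). For identities, $R j_e = j_{Re}$ follows because $j_e \co 1_\ccal \to \underhom_\ccal(e,e)$ is adjoint to the identity of $e$ and $\varepsilon$ is the identity after the relevant reductions. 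The argument for $L$ is formally dual, using $\eta$ in place of $\varepsilon$ and the coherence already recorded in the lemmas. The main obstacle — though it is bookkeeping rather than anything deep — is keeping straight which of the three bifunctors $\otimes$, $\underhom$, $\underhom_\ccal$ and which of the three equivalent forms of the module-functor isomorphism one is using at each step; once the identification $\underhom_\ccal(d, Re) \cong \underhom_\ccal(Ld, e)$ from the lemma is in hand, the coherence conditions defining a $\ccal$-module functor translate directly into the two $\ccal$-functor axioms, so no genuinely new input is needed. (One may alternatively appeal to the standard fact that a $\ccal$-module functor, together with its closed-module structure, automatically enriches over $\ccal$; I would cite \cite{bor94} for the ambient enriched-category formalism and \cite{dug06} for the three-way equivalence of structures.)
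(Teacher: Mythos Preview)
Your proposal is correct and takes essentially the same approach as the paper. Both construct the $\ccal$-functor structures on $L$ and $R$ via the unit and counit: the paper writes these as $\underhom_\ccal(e,e') \to \underhom_\ccal(LRe,e') \cong \underhom_\ccal(Re,Re')$ and $\underhom_\ccal(d',d) \to \underhom_\ccal(d',RLd) \cong \underhom_\ccal(Ld',Ld)$, which are exactly your constructions rephrased, and both defer the axiom verification to the same sources (\cite{dug06} for the diagram chase, \cite{bor94} for the general enriched formalism). The only cosmetic difference is that the paper deduces the statement for $L$ from that for $R$ via \cite[Proposition~6.7.2]{bor94} rather than arguing dually, but this is a matter of packaging, not substance.
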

\begin{proof}
Showing this for the right adjoint takes a rather large diagram chase,
see \cite[Proposition A.9]{dug06} for a proof of the statement.
That the left adjoint is a $\ccal$-functor follows from the statement for
the right adjoint and \cite[Proposition 6.7.2]{bor94}.
For $d \in \dcal$ there is a unit map $d \to RLd$ in $\dcal$
and for $e \in \ecal$ there is a counit map $LRe \to e$ in $\ecal$.
The $\ccal$-structures on $L$ and $R$ arise from these maps as below.
$$\underhom_\ccal(d',d) \to \underhom_\ccal(d',RLd) \cong \underhom_\ccal(Ld',Ld)$$
$$\underhom_\ccal(e,e') \to \underhom_\ccal(LRe,e') \cong \underhom_\ccal(Re,Re')$$
\end{proof}

\begin{rmk}
Let $\ccal$ be a (symmetric) monoidal category,
then we can give $\ccal$
the structure of a closed $\ccal$-module or
a closed (symmetric) $\ccal$-algebra.
If $\dcal$ is a closed (symmetric) $\ccal$-algebra, then it is
certainly a closed $\ccal$-module.
So now we can ask: when is a closed $\ccal$-module $\dcal$
a closed (symmetric) $\ccal$-algebra?
Obviously the primary requirement is that $\dcal$
should be a (symmetric) monoidal category,
but we also need an associativity relation:
$c \otimes (d \smashprod d') \cong (c \otimes d) \smashprod d'$
satisfying the appropriate coherence diagrams.
Then we have a strong monoidal (symmetric) adjunction
$$(-) \otimes 1_\dcal : \ccal \overrightarrow{\longleftarrow}
\dcal : \underhom_\ccal(1_\dcal, -)$$
making $\dcal$ into a closed (symmetric)
$\ccal$-algebra.
This is precisely analogous to the case of modules and
algebras over a ring. When considering model structures
the only additional condition is to require that
the adjunction is a Quillen pair, which is automatic when
$1_\dcal$ is cofibrant.
\end{rmk}

\section{Modules over an Enriched Category}\label{sec:Emodules}
This section gives the language needed to
compare our categories of $G$-spectra to
algebra, that is, it allows us to make
use of the results of \cite{shiHZ}.
We show how to move from spectra to algebra in
Section \ref{sec:movingaccross the machine}.
To help the notation we fix our `base category' to
be the category of symmetric spectra, thus we
state all definitions and results in terms
enrichments over symmetric spectra.
It should be clear how to replace the category of symmetric spectra
with any other closed symmetric monoidal model category
(which satisfies the monoid axiom) in the following.
In particular we will later use $Sp^\Sigma_+$,
$Sp^\Sigma(\sqq \leftmod)$, $Sp^\Sigma(dg \qq \leftmod_+)$
and $dg \qq \leftmod$ in the place of symmetric spectra.
Note that to prove Theorem \ref{thm:monmodules}
(the main result of this section)
we must either assume the unit is cofibrant or assume that
smashing with a cofibrant object
preserves weak equivalences.

We now give \cite[Definition 3.5.1]{ss03stabmodcat},
which takes the notion of
a closed $Sp^\Sigma$-module and adds model structure conditions.

\begin{definition}
A \textbf{spectral model category}\index{Spectral model category}
is a model category $\ccal$
which is a closed $Sp^\Sigma$-module and the action map
$\otimes \co Sp^\Sigma \times \ccal \to \ccal$ is a Quillen bifunctor
such that $\cofrep \sphspec \otimes X \to \sphspec \otimes X$
is a weak equivalence for all cofibrant $X \in M$.
A \textbf{spectral left Quillen functor}\index{Spectral left Quillen functor}
is a left Quillen functor that is also a $Sp^\Sigma$-module
functor.
A \textbf{spectral right Quillen functor}\index{Spectral right Quillen functor}
is a right Quillen functor $R \co \ccal \to \dcal$
between spectral model categories that preserves
(up to natural isomorphism in $\dcal$)
the cotensor operation:
$\underhom (K, RX) \cong R \underhom (K,X)$
and this isomorphism must be coherent in the sense
that the two maps from
$\underhom (L, \underhom (K, RX))$ to
$R\underhom (L, \underhom (K, X))$
must be equal.
\end{definition}

A spectral model category is a $Sp^\Sigma$-model category
in the sense of \cite[Definition 4.2.18]{hov99}.
As (briefly) mentioned after \cite[Definition 4.1.12]{hov99}
the correct notion of a `map of spectral model categories'
is a spectral adjunction of categories as defined in
\cite[Definition 3.9.2]{ss03stabmodcat}
which we give below.

\begin{definition}
Let $L : \ccal \overrightarrow{\longleftarrow} \dcal : R$
be an adjoint pair between spectral model categories $\ccal$ and $\dcal$.
A \textbf{spectral adjunction}\index{Spectral adjunction} is an adjunction of
closed $Sp^\Sigma$-modules.
We call such a pair a \textbf{spectral Quillen pair}\index{Spectral Quillen pair}
if the functors $(L,R)$ are also a Quillen pair.
A \textbf{spectral Quillen equivalence}\index{Spectral Quillen equivalence}
is a spectral Quillen pair that is also 
a Quillen equivalence.

A \textbf{closed (symmetric) monoidal spectral model
category} is a closed (symmetric) $Sp^\Sigma$-algebra
such that the adjunction
$Sp^\Sigma \ \overrightarrow{\longleftarrow} \ \dcal$
is a Quillen pair.
An \textbf{adjunction of closed (symmetric) monoidal spectral model
categories} is an adjunction of
closed (symmetric) $Sp^\Sigma$-algebras
that is a Quillen pair.
\end{definition}
Thus, a spectral adjunction is an adjunction with a
natural isomorphism of symmetric spectra
$\underhom_\mathcal{C}(A,RX) \cong \underhom_\mathcal{D}(LA,X)$.
Furthermore, $L$ is a spectral left Quillen functor and $R$
is spectral right Quillen functor.
We introduce some language for categories enriched over symmetric spectra
which don't necessarily have model structures or aren't
tensored or cotensored over symmetric spectra.

\begin{definition}\label{def:ocalmod}
A \textbf{spectral category}\index{Spectral category}
(\cite[Definition 3.3.1]{ss03stabmodcat})
is a category $\mathcal{O}$
enriched over the category of symmetric spectra $Sp^\Sigma$,
i.e. $\mathcal{O}$ is an $Sp^\Sigma$-category.
A \textbf{spectral functor}\index{Spectral functor} is an $Sp^\Sigma$-functor and
a \textbf{spectral adjunction}\index{Spectral adjunction} is an $Sp^\Sigma$-adjunction.
A (right) \textbf{$\mathcal{O}$-module}\index{O-module@$\mathcal{O}$-module}
is a contravariant spectral functor
$\mathcal{O} \to Sp^\Sigma$, these modules form a category
denoted $\rightmod \mathcal{O}$.
A \textbf{morphism of $\mathcal{O}$-modules}\index{Morphism of
O-module@Morphism of $\mathcal{O}$-modules} is
an $Sp^\Sigma$-natural transformation.
\end{definition}

We spell out the above requirements for an $\mathcal{O}$-module $M$.
For every object of $\mathcal{O}$ we have a symmetric spectrum $M(o)$ and we must
have coherently unital and associative maps of symmetric spectra
$M(o) \smashprod \mathcal{O} (o',o) \to M(o')$
for pairs of objects $o,o'$ in $\mathcal{O}$. A morphism of such modules
is a collection of maps of symmetric spectra $M(o) \to N(o)$ which are
strictly compatible with the above action.

\begin{definition}
For each $o \in \ocal$ we have a \textbf{free module}\index{Free module}
(\cite[Definition 3.3.1]{ss03stabmodcat}),
$F_o$. This is an $\ocal$-module defined by
$F_o(o')=\mathcal{O}(o',o)$ with $\ocal$-action given by composition.
\end{definition}

\begin{rmk}
The definitions above fit neatly into the framework of \cite[Section 6.2]{bor94},
If $\mathcal{O}$ is an $Sp^\Sigma$-category then so is
$\mathcal{O}^{op}$, \cite[Proposition 6.2.2]{bor94}.
Then we have the following definitions from \cite[Proposition 6.3.1]{bor94}:
$$\begin{array}{rccl}
\text{Category of left $\mathcal{O}$-modules}, &
\mathcal{O} \leftmod  &=& Sp^\Sigma[\mathcal{O}, Sp^\Sigma] \\
\text{Category of right $\mathcal{O}$-modules}, &
\rightmod \mathcal{O} &=& Sp^\Sigma[\mathcal{O}^{op}, Sp^\Sigma]
\end{array}$$
the categories of covariant and contravariant
spectral functors from $\mathcal{O}$ to $Sp^\Sigma$.
Morphisms in these categories are the $Sp^\Sigma$-natural
transformations.
\end{rmk}

\begin{theorem}\label{thm:modOmodelcat}
Let $\mathcal{O}$ be a spectral category, then the category of
$\mathcal{O}$-modules with object-wise weak equivalences
of $Sp^\Sigma$, object-wise fibrations of $Sp^\Sigma$
and cofibrations as necessary gives a
cofibrantly generated spectral model structure.
The collection of free modules give a set of compact generators.
\end{theorem}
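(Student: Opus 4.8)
The plan is to build the model structure on $\rightmod\ocal$ by transporting it from the product model structure along a forgetful/evaluation adjunction, exactly as in \cite[Theorem 6.1]{ss03stabmodcat}. First I would set up the ``underlying diagram'' adjunction. Let $\ob\ocal$ be the set of objects of $\ocal$; there is a functor $U\co\rightmod\ocal\to\prod_{o\in\ob\ocal} Sp^\Sigma$ sending an $\ocal$-module $M$ to the tuple $(M(o))_{o\in\ob\ocal}$. This $U$ has a left adjoint $F$, built object-wise from the free modules $F_o$: given a tuple $(X_o)$, set $F((X_o)) = \bigvee_{o} X_o \smashprod F_o$, where $X_o \smashprod F_o$ denotes the evident $\ocal$-module with $(X_o\smashprod F_o)(o') = X_o\smashprod\ocal(o',o)$. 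One checks directly from Definition \ref{def:ocalmod} and the enriched Yoneda lemma (Lemma \ref{lem:enrichyoneda}) that $(F,U)$ is an adjoint pair. The intended model structure on $\rightmod\ocal$ is then the one with weak equivalences and fibrations detected by $U$, i.e.\ the object-wise weak equivalences and object-wise fibrations of $Sp^\Sigma$, which is precisely the statement of the theorem.

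Next I would invoke the standard lifting (transfer) principle, \cite[Theorem 11.3.2]{hir03} or \cite[Theorem 2.1.19]{hov99}: given a cofibrantly generated model category $\mathcal{D}$ (here $\prod_o Sp^\Sigma$, with generating sets $\{I_o\}$ and $\{J_o\}$ obtained by placing the generating (acyclic) cofibrations of $Sp^\Sigma$ in each factor) and an adjunction $F\co\mathcal{D}\rightleftarrows\mathcal{C}\co U$ with $\mathcal{C}$ locally presentable (or with the requisite smallness), the images $FI = \{F i : i\in I_o,\ o\in\ob\ocal\}$ and $FJ$ generate a cofibrantly generated model structure on $\rightmod\ocal$ provided the two hypotheses hold: (a) $F$ takes $FJ$-cell complexes to weak equivalences, and (b) the domains of $FI$, $FJ$ are small relative to the respective cell complexes. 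Smallness (b) is routine since symmetric spectra are built from simplicial sets and $U$ commutes with filtered colimits (it is computed object-wise, and colimits of $\ocal$-modules are object-wise). So the crux is (a). Here I would use the monoid axiom for $Sp^\Sigma$ (Theorem in Section \ref{sec:modcat}, and the fact that $Sp^\Sigma$ satisfies it, \cite{hss00}): a relative $FJ$-cell complex is, object-wise, a transfinite composite of pushouts of maps of the form $(\text{generating acyclic cofibration})\smashprod\ocal(o',o)$, and the monoid axiom guarantees these are weak equivalences; pushouts and transfinite composites of such in $Sp^\Sigma$ remain weak equivalences, again by the monoid axiom. Thus $U$ of an $FJ$-cell complex is object-wise a weak equivalence, giving (a).

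Having the model structure, I would then verify the spectral enrichment. The category $\rightmod\ocal$ is tensored, cotensored and enriched over $Sp^\Sigma$: for a symmetric spectrum $K$ and module $M$ set $(K\otimes M)(o) = K\smashprod M(o)$, $(\underhom(K,M))(o) = F(K,M(o))$, and $\underhom_{Sp^\Sigma}(M,N)$ the equalizer of the two evident maps $\prod_o F(M(o),N(o)) \rightrightarrows \prod_{o,o'} F(M(o)\smashprod\ocal(o',o),N(o'))$. One checks these assemble into an adjunction of two variables, making $\rightmod\ocal$ a closed $Sp^\Sigma$-module in the sense of Section \ref{sec:Cmodules}. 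The Quillen bifunctor condition for $\otimes\co Sp^\Sigma\times\rightmod\ocal\to\rightmod\ocal$ follows from the corresponding statement in $Sp^\Sigma$ together with the object-wise description of (co)fibrations and weak equivalences, and the pushout-product axiom passes through the object-wise description since colimits and the generating cofibrations are computed factorwise. The unit condition ($\cofrep\sphspec\otimes M\to\sphspec\otimes M$ a weak equivalence) is inherited from $Sp^\Sigma$. Hence $\rightmod\ocal$ is a cofibrantly generated spectral model category.

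Finally, that the free modules $\{F_o\}_{o\in\ob\ocal}$ form a set of compact generators of the homotopy category: compactness follows because $[F_o,-]_* \cong \pi_*^s(\;-\;(o))$ on the level of homotopy (by enriched Yoneda, Lemma \ref{lem:enrichyoneda}, a map $F_o\to M$ in $\rightmod\ocal$ is the same as a point of $M(o)$, and smashing with spheres is accounted for by the grading), and $\pi_*^s$ of the symmetric spectrum $M(o)$ commutes with coproducts, so $[F_o,\bigvee_i M_i]_*\cong\bigoplus_i[F_o,M_i]_*$. They generate because a module $M$ with $[F_o,M]_*=0$ for all $o$ has each $M(o)$ weakly contractible, hence $M$ is object-wise trivial, hence trivial; by the Lemma of Schwede--Shipley (the Lemma quoted after Definition \ref{def:Estuff}, \cite[Lemma 2.2.1]{ss03stabmodcat}) this shows $\{F_o\}$ is a generating set. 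The main obstacle in all of this is verifying hypothesis (a) of the transfer theorem --- i.e.\ that pushing out and transfinitely composing the ``free'' acyclic cofibrations still yields weak equivalences --- and this is exactly where the monoid axiom for $Sp^\Sigma$ is doing the essential work; everything else is bookkeeping with the object-wise definitions.
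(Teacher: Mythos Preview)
Your proposal is correct and essentially reconstructs the argument behind the paper's proof, which is simply a citation to \cite[Theorem A.1.1]{ss03stabmodcat}. The transfer along the free/forgetful adjunction, with the monoid axiom supplying the acyclicity of $FJ$-cell complexes, is exactly the mechanism used there (and unpacked in the paper's proof of Theorem~\ref{thm:monmodules} via \cite[Theorem 6.1]{ss03monequiv}); note that your opening citation should be to \cite{ss03monequiv} rather than \cite{ss03stabmodcat}.
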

\begin{proof}
This is part of \cite[Theorem A.1.1]{ss03stabmodcat}.
\end{proof}

We take the definition below from
\cite[Section A.1]{ss03stabmodcat}.
\begin{definition}\label{def:indres modules}
For a spectral functor of spectral
categories $\Psi \co \mathcal{O} \longrightarrow \mathcal{R}$, we have
a \textbf{restriction of scalars}\index{Restriction of scalars} functor
$$\Psi^* \co \rightmod \mathcal{R} \longrightarrow
\rightmod \mathcal{O}, \quad M \mapsto M \circ \Psi.$$
This has a left adjoint $(-) \smashprod_\mathcal{O} \mathcal{R}$, the
\textbf{extension of scalars}\index{Extension of scalars} functor. It is defined as an enriched coend
and is similar to many other constructions in this work,
$$ N \smashprod_\mathcal{O} \mathcal{R} =\int^o N(o) \smashprod F_{\Psi(o)} $$
we give a construction below as a
coequaliser of a pair of $\mathcal{R}$-module homomorphisms.
$$\bigvee_{o,p \in \mathcal{O}}
N(p) \smashprod \mathcal{O}(o,p) \smashprod F_{\Psi(o)}
\overrightarrow{\longrightarrow}
\bigvee_{q \in \mathcal{O}}
N(q) \smashprod F_{\Psi(q)}$$
We call $\Psi$ a \textbf{stable equivalence}\index{Stable equivalence}
if each $\Psi_{o,p} \co \mathcal{O}(o,p) \to \mathcal{R}(o,p)$
is a weak equivalence of symmetric spectra.
\end{definition}

\begin{proposition}\label{prop:spectralmaps}
For a spectral functor of spectral
categories $\Psi \co \mathcal{O} \longrightarrow \mathcal{R}$
there is a Quillen pair
$$(-) \smashprod_\mathcal{O} \mathcal{R} :
\rightmod \ocal \overrightarrow{\longleftarrow}
\rightmod \mathcal{R} : \Psi^* .$$
Furthermore if $\Psi$ is a stable equivalence
this adjoint pair is a Quillen equivalence.
\end{proposition}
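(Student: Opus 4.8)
The plan is to verify the two halves separately: first that $((-)\smashprod_\ocal \rcal, \Psi^*)$ is a Quillen pair, and then, under the extra hypothesis that $\Psi$ is a stable equivalence, that it is a Quillen equivalence. For the Quillen pair half, recall from Theorem \ref{thm:modOmodelcat} that both $\rightmod \ocal$ and $\rightmod \rcal$ carry the model structures with object-wise weak equivalences and object-wise fibrations of symmetric spectra. The restriction functor $\Psi^*$ is defined by $M \mapsto M \circ \Psi$, so it preserves object-wise fibrations and object-wise acyclic fibrations essentially by definition. Hence $\Psi^*$ is a right Quillen functor and its left adjoint $(-)\smashprod_\ocal \rcal$ is a left Quillen functor; it only remains to note that the adjunction itself is the standard extension/restriction of scalars adjunction described in Definition \ref{def:indres modules}.

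For the Quillen equivalence half, I would use the criterion of \cite[Corollary 1.3.16(b)]{hov99} (the same criterion already invoked in Theorem \ref{thm:locfuncs}): it suffices to show that $\Psi^*$ reflects weak equivalences between fibrant objects, and that for every cofibrant $\ocal$-module $N$ the composite $N \to \Psi^*((-)\smashprod_\ocal \rcal)(N) \to \Psi^*\big(\fibrep (N\smashprod_\ocal \rcal)\big)$ is an object-wise weak equivalence. The first point is immediate: since $\Psi_{o,p}$ is a weak equivalence for all $o,p$, and weak equivalences of $\ocal$- and $\rcal$-modules are detected object-wise, a map $f$ in $\rightmod \rcal$ with $\Psi^* f$ an object-wise equivalence is itself an object-wise equivalence (the object sets agree). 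Since $\fibrep$ changes $N\smashprod_\ocal\rcal$ only up to object-wise weak equivalence, the second point reduces to showing the unit map $N \to \Psi^*(N \smashprod_\ocal \rcal)$ is an object-wise weak equivalence for cofibrant $N$.

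The main obstacle is precisely this last step. For the representable (free) module $N = F_o$ one computes directly from the coequaliser description in Definition \ref{def:indres modules} that $F_o \smashprod_\ocal \rcal \cong F_{\Psi(o)}$, so the unit map at an object $p$ is exactly $\Psi_{p,o}\co \ocal(p,o) \to \rcal(p,o)$, a weak equivalence by hypothesis. The plan is then to bootstrap from the free modules to all cofibrant modules: every cofibrant $\ocal$-module is a retract of a cell module built from the generating cofibrations $F_o \smashprod (\text{cofibration of } Sp^\Sigma)$, and both $(-)\smashprod_\ocal\rcal$ and $\Psi^*$ commute with the relevant colimits (coproducts, pushouts along cofibrations, transfinite composites). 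Using that $Sp^\Sigma$ satisfies the monoid axiom and that smashing a cofibrant symmetric spectrum with a weak equivalence preserves weak equivalences — together with the gluing/properness arguments for pushouts along $h$-cofibrations analogous to those used throughout the excerpt — one checks by transfinite induction that the unit map is an object-wise weak equivalence on each cell module, hence on retracts. The one technical caveat flagged in the text, that one must either assume the unit of the base category is cofibrant or that smashing with a cofibrant object preserves weak equivalences, enters exactly here to make the inductive step on pushouts and colimits go through; for $Sp^\Sigma$ this condition holds, so the argument closes. This is \cite[Theorem A.1.1]{ss03stabmodcat} (the ``stable equivalence'' clause), and I would simply cite it once the reduction to free modules has been made explicit.
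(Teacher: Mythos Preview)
Your proposal is correct and ultimately lands on the same citation the paper uses: the paper's entire proof is the single sentence ``This is part of \cite[Theorem A.1.1]{ss03stabmodcat}.'' You have simply unpacked the argument behind that citation --- the right-Quillen check on $\Psi^*$, the Hovey criterion, the computation on free modules, and the cell-induction bootstrap --- which is exactly how the cited theorem is proved.
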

\begin{proof}
This is part of \cite[Theorem A.1.1]{ss03stabmodcat}.
\end{proof}

We now add an extra condition on $\ocal$ to ensure that
$\rightmod \ocal$ is in fact a closed monoidal model category.
Note that we will use the fact that
in $Sp^\Sigma$ (or $Sp^\Sigma_+$) smashing with a
cofibrant object preserves weak equivalences.
\begin{theorem}\label{thm:monmodules}
If the category $\mathcal{O}$ is a symmetric monoidal $Sp^\Sigma$-category
then the category $\rightmod \mathcal{O}$ is a closed symmetric monoidal
model category satisfying the monoid axiom.
\end{theorem}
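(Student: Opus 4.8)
The plan is to construct the symmetric monoidal structure on $\rightmod \mathcal{O}$ by a Day-convolution-type construction, using the box product introduced earlier in this chapter. Since $\mathcal{O}$ is a symmetric monoidal $Sp^\Sigma$-category, we have a smash product $\smashprod \co \mathcal{O} \times \mathcal{O} \to \mathcal{O}$ and a unit $\sphspec \in \ob \mathcal{O}$. First I would define the monoidal product of two $\mathcal{O}$-modules $M$ and $N$ by the coend
$$(M \boxtimes_{\mathcal{O}} N)(o) = \int^{p,q \in \mathcal{O}} \mathcal{O}(o, p \smashprod q) \smashprod M(p) \smashprod N(q),$$
which is exactly the box product construction, realised as the left Kan extension of the external smash product $M \bar{\smashprod} N \co \mathcal{O}^{op} \times \mathcal{O}^{op} \to Sp^\Sigma$ along $\smashprod^{op}$. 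One checks that this is again a right $\mathcal{O}$-module via the action map on $\mathcal{O}(o, p \smashprod q)$, that the unit is the free module $F_{\sphspec} = \mathcal{O}(-, \sphspec)$ (using the enriched Yoneda lemma, Lemma \ref{lem:enrichyoneda}, together with the unit isomorphisms of $\mathcal{O}$), and that associativity and symmetry of $\boxtimes_{\mathcal{O}}$ descend from the symmetric monoidal structure on $\mathcal{O}$; the symmetry of $Sp^\Sigma$ is used here to compare composition in $\mathcal{O} \times \mathcal{O}$ with composition in $\mathcal{O}$, just as in the definition of a monoidal $\nu$-category. For closedness, the internal function object $\underhom_{\mathcal{O}}(M,N)$ is defined levelwise by $\underhom_{\mathcal{O}}(M,N)(o) = \rightmod\mathcal{O}(M \boxtimes_{\mathcal{O}} F_o, N)$ (or equivalently as an end), and the adjunction $\rightmod\mathcal{O}(M \boxtimes_{\mathcal{O}} N, P) \cong \rightmod\mathcal{O}(M, \underhom_{\mathcal{O}}(N,P))$ follows from the defining universal property of the coend together with the enriched Yoneda lemma.

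Next I would verify the pushout product axiom and the monoid axiom. The key observation is that $\boxtimes_{\mathcal{O}}$ interacts well with free modules: for objects $o_1, o_2 \in \mathcal{O}$ there is a natural isomorphism $F_{o_1} \boxtimes_{\mathcal{O}} F_{o_2} \cong F_{o_1 \smashprod o_2}$, again by Lemma \ref{lem:endandcoend}. Since $\rightmod\mathcal{O}$ has generating cofibrations of the form $i \smashprod \mathrm{id}_{F_o}$ with $i$ a generating cofibration of $Sp^\Sigma$ (by Theorem \ref{thm:modOmodelcat}), the pushout product of two such maps is, up to this identification, $(i \square j) \smashprod \mathrm{id}_{F_{o_1 \smashprod o_2}}$, which is a cofibration (an acyclic one when $i$ or $j$ is) because $Sp^\Sigma$ satisfies the pushout product axiom and because smashing with a cofibrant symmetric spectrum preserves weak equivalences. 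Checking this carefully requires knowing that $M \boxtimes_{\mathcal{O}} N$ is computed objectwise well enough to reduce to $Sp^\Sigma$; this is where we need the hypothesis that smashing with a cofibrant object preserves weak equivalences in $Sp^\Sigma$ (or, alternatively, that the unit is cofibrant), exactly as flagged before the statement of the theorem. The unit condition — that $\cofrep F_{\sphspec} \boxtimes_{\mathcal{O}} M \to F_{\sphspec} \boxtimes_{\mathcal{O}} M$ is a weak equivalence for cofibrant $M$ — follows similarly by reducing to free modules and then to $Sp^\Sigma$. The monoid axiom then follows by the same reduction: a map of the form $(f \smashprod \mathrm{id}_{F_o}) \boxtimes_{\mathcal{O}} \mathrm{id}_Z$ with $f$ an acyclic cofibration of $Sp^\Sigma$ becomes, after the free-module identifications and a cofibrant replacement argument, a map built from the monoid axiom in $Sp^\Sigma$, and transfinite compositions and pushouts of these are handled by the corresponding closure properties in $Sp^\Sigma$ together with the fact that the relevant colimits in $\rightmod\mathcal{O}$ are computed objectwise.

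The main obstacle I anticipate is the reduction-to-objectwise step: $M \boxtimes_{\mathcal{O}} N$ is defined by a coend over $\mathcal{O}$, so its value at an object $o$ is not simply $M(o) \smashprod N(o)$, and one must argue that smashing with a general $\mathcal{O}$-module preserves the relevant weak equivalences and $h$-cofibrations. The standard way around this is to first prove everything for modules that are cell complexes built from free modules (where $\boxtimes_{\mathcal{O}}$ of free modules is again free, hence objectwise a smash with a cofibrant spectrum), then extend to retracts, and then to arbitrary cofibrant modules using that every cofibrant module is a retract of a cellular one — this is precisely the pattern of the proof of Lemma \ref{lem:sModMonoid} and of \cite[Proposition 12.5]{mmss01}. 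I would cite the analogous result in \cite{ss03stabmodcat} or \cite{dugshi} for the construction of the Day convolution symmetric monoidal structure on enriched diagram categories and import the model-categorical verifications from there, since $Sp^\Sigma$ is a closed symmetric monoidal model category satisfying the monoid axiom with cofibrant-object-smashing preserving weak equivalences, which are exactly the hypotheses those sources require.
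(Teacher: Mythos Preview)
Your proposal is correct and follows essentially the same route as the paper: Day convolution (the box product) gives the monoidal structure with unit $F_{\sphspec}=\mathcal{O}(-,\sphspec)$, the internal hom is the end you describe, and both the pushout product axiom and the monoid axiom are reduced to $Sp^\Sigma$ via the identification $(A \smashprod F_{o_1}) \Box (B \smashprod F_{o_2}) \cong (A \smashprod B) \smashprod F_{o_1 \smashprod o_2}$ on generating (acyclic) cofibrations.

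The one place where you make life harder than necessary is your ``main obstacle'' paragraph. You worry that $(M \Box N)(o)$ is not objectwise a smash product and propose a cell-induction argument \`a la Lemma~\ref{lem:sModMonoid} to handle the monoid axiom. The paper avoids this entirely with the simple associativity identity
\[
\big((A \smashprod F_o) \Box M\big)(p) \;=\; A \smashprod (F_o \Box M)(p),
\]
valid for any $\mathcal{O}$-module $M$ (not just cofibrant or cellular ones), since the $Sp^\Sigma$-tensoring commutes with the coend defining $\Box$. Thus $(f \smashprod \id_{F_o}) \Box \id_M$ is, objectwise, $f \smashprod \id_{(F_o \Box M)(p)}$, and the monoid axiom in $Sp^\Sigma$ applies directly with $Z=(F_o \Box M)(p)$; pushouts and transfinite compositions are then computed objectwise. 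No cofibrant replacement of $M$ and no cell induction are needed. Your approach would work, but it is more effort than required.
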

\begin{proof}
This is \cite[Proposition 3.7]{greshi},
adjusted to the setting of an $Sp^\Sigma$-category,
with full details given.
The monoidal product is the box product
$$M \Box N (o) =
\int^{p,q} \mathcal{O}(o, p \smashprod q,) \otimes M(p) \otimes N(q).$$
We let $\smashprod$ be the monoidal product of $\ocal$
and $\sphspec$ the unit.
The unit of $\rightmod \ocal$ is the module 
$\underhom_{Sp^\Sigma}(-, \sphspec)$, which is easy to prove
using Corollary \ref{cor:endandcoendeval}.
It is then a matter of formality that the internal function object
exists, it is defined as
$$\underhom_\Box (M,N)(o) =
Sp^\Sigma \nat \big(M,N(o \smashprod -)\big)
= \int_p \underhom \big(M(p), N(o \smashprod p)\big).$$

Now we show that the pushout product and monoid axiom hold,
to do so we must understand the generating cofibrations.
Thus we write out part of the proof
of \cite[Theorem 6.1]{ss03monequiv}.
This proves the model structure exists by identifying
$\rightmod \ocal$ with a category of algebras over a triple.

Let $\mathbb{I}_\ocal$ be the $Sp^\Sigma$-category with the same objects
as $\ocal$ and maps given by
$\mathbb{I}_\ocal(o,o) = \sphspec$ and a point otherwise.
There is a canonical map of $Sp^\Sigma$-categories
$u \co \mathbb{I}_\ocal \to \ocal$,
which is given by the unit map
$\mathbb{I}_\ocal(o,o) \to \ocal(o,o)$.
The adjoint pair
of restriction and extension of scalars (Definition \ref{def:indres modules}),
gives the required triple $T$ (\cite[Chapter VI]{mac}) on $\rightmod \mathbb{I}_\ocal$
and the algebras
over this triple are $\ocal$-modules.
Now $\rightmod \mathbb{I}_\ocal$ is simply an $\ocal$-indexed product of
copies of $Sp^\Sigma$ hence we can give it the product model structure
\cite[1.1.6]{hov99}. The generating (acyclic) cofibrations for
$\rightmod \mathbb{I}_\ocal$
are given by maps $f \co A \to B$ concentrated in factor $o$
where $f$ is a generating (acyclic) cofibration for $Sp^\Sigma$.
Hence a generating (acyclic) cofibration for $\rightmod \ocal$
has form $A \smashprod F_o \to B \smashprod F_o$ where
$f$ is a generating (acyclic) cofibration for $Sp^\Sigma$.
In fact this argument shows that smashing with a free module
takes (acyclic) cofibrations of $Sp^\Sigma$ to
(acyclic) cofibrations of $\rightmod \ocal$.

The monoid axiom follows from the monoid axiom for
$Sp^\Sigma$ (for the positive case see \cite[Theorem 14.2]{mmss01}).
Take a generating acyclic cofibration
$A \smashprod F_o \to B \smashprod F_o$ and a module $M$,
for $p \in \ocal$
we have $((A \smashprod F_o) \Box M)(p)= A \smashprod (F_o \Box M)(p)$
hence $(A \smashprod F_o) \Box M \to  (B \smashprod F_o) \Box M$
is a weak equivalence in $\rightmod \ocal$. Since pushouts
and transfinite compositions of $\ocal$-modules are
constructed object-wise, the rest of the monoid axiom follows.

The pushout product axiom for $\rightmod \ocal$ uses the isomorphism
$(A \smashprod F_o) \Box (B \smashprod F_p) \cong
(A \smashprod B) \smashprod F_{o \smashprod p}$,
and the pushout product axiom for symmetric spectra.
We must also prove that for any cofibrant module $M$ the map
$$\cofrep \ocal(-, \sphspec) \Box M \to M$$ is a
weak equivalence. We actually prove this result holds without
the assumption that $M$ is cofibrant.
The unit of $Sp^\Sigma$
is cofibrant, hence so is $\ocal(-, \sphspec)$
and there is nothing to check.
In a category where the unit is not cofibrant,
such as $Sp^\Sigma_+$,
an alternative proof exists whenever
smashing with cofibrant objects
preserves weak equivalence.
The important step is in identifying
$\cofrep \ocal(-, \sphspec)$ with
$\cofrep \sphspec \smashprod \ocal(-, \sphspec )$, the latter
is cofibrant as an $\ocal$-module since $\cofrep \sphspec$
is cofibrant in $Sp^\Sigma_+$. The map
$$\cofrep \sphspec \smashprod \ocal(-, \sphspec )
\to \ocal(-, \sphspec)$$
is a weak equivalence since
$\cofrep \sphspec \smashprod X \to X$
is a weak equivalence for any symmetric spectrum $X$.
So we have reduced the problem to
proving that $\cofrep \sphspec \smashprod M \to M$
is a weak equivalence, but this is obvious from the above statements.
\end{proof}

\begin{proposition}\label{prop:monoidalspectralfunctor}
Let $\Psi \co \mathcal{O} \longrightarrow \mathcal{R}$
be a monoidal functor of monoidal spectral categories.
Then $\big((-) \smashprod_\mathcal{O} \mathcal{R},\Psi^* \big)$
is a strong monoidal Quillen adjunction. If the categories and $\Psi$
are also symmetric then the adjunction will be a strong symmetric monoidal
adjunction.
\end{proposition}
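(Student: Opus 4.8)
The plan is to build the strong monoidal structure on the extension-of-scalars functor $(-) \smashprod_\mathcal{O} \mathcal{R}$ directly from the monoidality of $\Psi$, and then to transfer the symmetry when present. First I would recall that by Theorem \ref{thm:monmodules} both $\rightmod \mathcal{O}$ and $\rightmod \mathcal{R}$ are closed symmetric monoidal model categories, with monoidal product the box product $\Box$ and unit the free module on the monoidal unit. Since we already know by Proposition \ref{prop:spectralmaps} that $\big((-) \smashprod_\mathcal{O} \mathcal{R}, \Psi^*\big)$ is a Quillen pair, the only work is to produce natural isomorphisms
$$(M \Box_\mathcal{O} N) \smashprod_\mathcal{O} \mathcal{R} \ \cong\
(M \smashprod_\mathcal{O} \mathcal{R}) \Box_\mathcal{R} (N \smashprod_\mathcal{O} \mathcal{R})
\quad \text{and} \quad
\mathcal{O}(-, \sphspec_\mathcal{O}) \smashprod_\mathcal{O} \mathcal{R} \ \cong\ \mathcal{R}(-, \sphspec_\mathcal{R})$$
compatible with the associativity and unit coherence data. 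The second isomorphism is easy: the free $\mathcal{O}$-module on a generator $o$ extends to the free $\mathcal{R}$-module on $\Psi(o)$ (this is immediate from the coequaliser description of extension of scalars in Definition \ref{def:indres modules}), so the unit free module $F^{\mathcal{O}}_{\sphspec_\mathcal{O}}$ goes to $F^{\mathcal{R}}_{\Psi(\sphspec_\mathcal{O})} = F^{\mathcal{R}}_{\sphspec_\mathcal{R}}$, using that $\Psi$ is monoidal and hence $\Psi(\sphspec_\mathcal{O}) = \sphspec_\mathcal{R}$.

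For the first isomorphism I would work entirely with the coend formulas. Both sides are built from coends over $\mathcal{O}$ (resp.\ $\mathcal{R}$) of tensor products of free modules, and the key is the Fubini-type manipulation of iterated coends together with Corollary \ref{cor:endandcoendeval} (the co-Yoneda reduction). Unwinding,
$$(M \Box_\mathcal{O} N) \smashprod_\mathcal{O} \mathcal{R}
= \int^{o} \Big( \int^{p,q} \mathcal{O}(o, p \smashprod q) \otimes M(p) \otimes N(q) \Big) \smashprod F^{\mathcal{R}}_{\Psi(o)},$$
and since extension of scalars is a left adjoint it commutes with the coend, so one can pull the $\int^o$ inside; the reduction $\int^o \mathcal{O}(o, p\smashprod q) \otimes F^{\mathcal{R}}_{\Psi(o)} \cong F^{\mathcal{R}}_{\Psi(p \smashprod q)}$ follows from co-Yoneda in $\rightmod \mathcal{R}$ via $\Psi^*$. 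Then the monoidal structure map of $\Psi$, namely $\mathcal{O}(p, -) \otimes \mathcal{O}(q, -) \to \mathcal{R}(\Psi p, -) \otimes \mathcal{R}(\Psi q, -) \to \mathcal{R}(\Psi p \smashprod \Psi q, -)$ combined with $\Psi(p \smashprod q) = \Psi p \smashprod \Psi q$, identifies $F^{\mathcal{R}}_{\Psi(p \smashprod q)} \cong F^{\mathcal{R}}_{\Psi p} \Box_\mathcal{R} F^{\mathcal{R}}_{\Psi q}$, and reassembling the coends over $p$ and $q$ gives exactly $(M \smashprod_\mathcal{O} \mathcal{R}) \Box_\mathcal{R} (N \smashprod_\mathcal{O} \mathcal{R})$. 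These maps are isomorphisms because $\Psi$ is (strong) monoidal, so every structure map used is invertible.

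The main obstacle will be bookkeeping the coherence: one must check that the associativity isomorphism of $\Box_\mathcal{O}$ transported along these identifications agrees with that of $\Box_\mathcal{R}$, and similarly for the unit isomorphisms — this is where the coherence axioms for a monoidal $\nu$-functor (as spelled out after the definition of monoidal $\nu$-functor in Section \ref{sec:nucats}) get used, and it is a multi-page diagram chase of the kind that \cite{day70} and \cite{greshi} treat as routine. I would organise it by noting that both sides of the claimed isomorphism are the left Kan extension of a monoidal functor along a monoidal functor, so the comparison is forced by the universal property, and then invoke the Day convolution machinery to get coherence for free rather than chasing diagrams by hand. Finally, for the symmetric case, the symmetry isomorphism of $\Box$ is built from the symmetry of the base category $Sp^\Sigma$ and that of the monoidal product of $\mathcal{O}$ (resp.\ $\mathcal{R}$); since a symmetric monoidal $\Psi$ respects these by hypothesis, the transported symmetry on $(M \smashprod_\mathcal{O}\mathcal{R}) \Box_\mathcal{R}(N\smashprod_\mathcal{O}\mathcal{R})$ matches the intrinsic one, giving a strong symmetric monoidal Quillen adjunction. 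The statement that this is a Quillen adjunction is already Proposition \ref{prop:spectralmaps}, so nothing more is needed there.
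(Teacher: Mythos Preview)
Your approach is essentially the same as the paper's: show the left adjoint is strong monoidal via coend manipulation, deduce the right adjoint is (lax) monoidal, and inherit the Quillen pair structure from Proposition~\ref{prop:spectralmaps}. The paper is considerably terser, calling the coend computation ``an easy exercise in manipulating coends'' and leaving the details you spell out to the reader.

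However, you have missed one point that the paper explicitly addresses. Being a \emph{strong monoidal Quillen pair}, in the sense used here, requires more than being a Quillen pair whose left adjoint is strong monoidal: one must still verify condition~(2) in the definition of a monoidal Quillen pair, namely that the composite
\[
L\big(\cofrep\,\mathcal{O}(-,\sphspec)\big) \longrightarrow L\big(\mathcal{O}(-,\sphspec)\big) \cong \mathcal{R}(-,\sphspec)
\]
is a weak equivalence. When the unit of the base category is cofibrant (as in $Sp^\Sigma$) the free module $\mathcal{O}(-,\sphspec)$ is already cofibrant and this is automatic; but the proposition is meant to apply over $Sp^\Sigma_+$ and other bases where the unit is \emph{not} cofibrant. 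The paper handles this by identifying $\cofrep\,\mathcal{O}(-,\sphspec)$ with $\cofrep\sphspec \smashprod \mathcal{O}(-,\sphspec)$ (as established in the proof of Theorem~\ref{thm:monmodules}), and then using that extension of scalars commutes with smashing by a fixed spectrum together with the isomorphism $\mathcal{O}(-,\sphspec) \smashprod_\mathcal{O} \mathcal{R} \cong \mathcal{R}(-,\sphspec)$. Your closing sentence, ``nothing more is needed there'', is thus not quite right---this unit condition is exactly the one remaining check.
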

\begin{proof}
That the left adjoint is strong monoidal is an easy
exercise in manipulating coends. Thus the right adjoint
has a weak monoidal structure and the left adjoint
has the required model structure properties.
When the unit is cofibrant this is all that is required.
In the $Sp^\Sigma_+$ case, the result follows from
the identification of
$\cofrep \ocal(-, \sphspec)$ with
$\cofrep \sphspec \smashprod \ocal(-, \sphspec )$
and the isomorphism
$\ocal(-, \sphspec ) \smashprod_\mathcal{O} \mathcal{R} \cong
\mathcal{R} (-, \sphspec )$.
\end{proof}

\begin{definition}
Given a pair of spectral functors $F,G \co \ccal \to \dcal$
between spectral categories
there is a symmetric spectrum of spectral natural transformations.
For brevity we call this
the \textbf{spectrum of natural transformations}\index{Spectrum of natural transformations}
and it is defined as
(\cite[Proposition 6.3.1]{bor94})
$$Sp^\Sigma \nat(F,G) = \int_c \underhom(Fc,Gc).$$
As an equaliser of products we have
$$Sp^\Sigma \nat(F,G) \longrightarrow \prod_{c \in \ccal} \underhom(Fc,Gc)
\overrightarrow{\longrightarrow} \prod_{c, c' \in \ccal}
\underhom (\ccal(c,c') \otimes Fc, Gc')$$
where $\underhom$ is the internal function object
for symmetric spectra.
\end{definition}

\begin{rmk}
The above is the general definition, but when working with right modules over
$\mathcal{O}$ one must remember to account for the change in variance.
So for right $\mathcal{O}$-modules $M$ and $N$ the \textbf{spectrum of
morphisms}\index{Spectrum of morphisms} of $\mathcal{O}$-modules is given below.
$$Sp^\Sigma \nat(M,N) \longrightarrow \prod_{c \in \ccal} \underhom(Mc,Nc)
\overrightarrow{\longrightarrow} \prod_{c', c \in \ccal}
\underhom (\ocal(c,c') \otimes Mc, Nc')$$
\end{rmk}

Hence $\rightmod \ocal$ is an $Sp^\Sigma$-category.

\begin{proposition}\label{prop:tencotenmodule}
The category of $\mathcal{O}$-modules is tensored and
cotensored over symmetric spectra.
\end{proposition}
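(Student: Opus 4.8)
The plan is to exhibit the tensor and the cotensor explicitly, built objectwise from those of $Sp^\Sigma$, and then to check the defining adjunction isomorphisms by unwinding the enrichment $Sp^\Sigma \nat(-,-)$ on $\rightmod \mathcal{O}$ recorded above. This is the module-theoretic shadow of the fact that a functor category into a complete and cocomplete closed symmetric monoidal category is automatically tensored and cotensored over the base, but since we want full details I would give the constructions directly.

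First I would define, for a symmetric spectrum $K$ and an $\mathcal{O}$-module $M$, a new $\mathcal{O}$-module $M \smashprod K$ by setting $(M \smashprod K)(o) = M(o) \smashprod K$; the required action map
$$(M \smashprod K)(o) \smashprod \mathcal{O}(o',o) = (M(o) \smashprod K) \smashprod \mathcal{O}(o',o) \longrightarrow (M(o') \smashprod K) = (M \smashprod K)(o')$$
is obtained from the action map of $M$ after rearranging the smash factors using the symmetry of $Sp^\Sigma$, and its coherence (associativity and unitality) follows from that of $M$ together with the coherence isomorphisms of $Sp^\Sigma$. Dually I would set $\underhom(K,M)(o) = \underhom(K, M(o))$, with action
$$\underhom(K, M(o)) \smashprod \mathcal{O}(o',o) \longrightarrow \underhom(K, M(o) \smashprod \mathcal{O}(o',o)) \longrightarrow \underhom(K, M(o'))$$
where the first map is the natural assembly map adjoint to evaluation and the second is $\underhom(K,-)$ applied to the action of $M$. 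One checks directly that both assignments are functorial in $K$ and in the module, so that $M \smashprod (-)$ and $\underhom(-,M)$ define functors $Sp^\Sigma \to \rightmod \mathcal{O}$ and $(Sp^\Sigma)^{op} \to \rightmod \mathcal{O}$.

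It then remains to produce natural isomorphisms of symmetric spectra
$$Sp^\Sigma \nat(M \smashprod K, N) \cong \underhom(K, Sp^\Sigma \nat(M, N)) \cong Sp^\Sigma \nat(M, \underhom(K, N)).$$
Recall that $Sp^\Sigma \nat(M,N)$ is computed as the equalizer of the two evident maps out of $\prod_{o} \underhom(M(o), N(o))$ into the product recording the two ways such a family can interact with the $\mathcal{O}$-action. Using the closed structure of $Sp^\Sigma$ we have $\underhom(M(o) \smashprod K, N(o)) \cong \underhom(K, \underhom(M(o), N(o)))$, compatibly with the corresponding identification of the terms of the second product, and since $\underhom(K,-)$ is a right adjoint it preserves the product and the equalizer; this gives the first isomorphism. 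The second follows from the same identities read in the other order, via $\underhom(M(o), \underhom(K, N(o))) \cong \underhom(M(o) \smashprod K, N(o))$. Applying $Sp^\Sigma(\sphspec,-)$ recovers the underlying hom-set adjunctions, so $M \smashprod K$ and $\underhom(K, M)$ serve as the tensor and cotensor of $\rightmod \mathcal{O}$ over $Sp^\Sigma$.

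The main obstacle is pure bookkeeping: one must verify that the objectwise constructions are genuine spectral functors (coherence of the action maps) and that each isomorphism used in the last paragraph is compatible with the equalizer diagrams defining $Sp^\Sigma \nat$, so that it descends to an isomorphism of morphism spectra of $\mathcal{O}$-modules. Nothing beyond the coherence axioms of a closed symmetric monoidal category and the definitions of Section \ref{sec:Emodules} is needed, and the same argument works verbatim with $Sp^\Sigma$ replaced by $Sp^\Sigma_+$, $Sp^\Sigma(\sqq \leftmod)$, $Sp^\Sigma(dg \qq \leftmod_+)$ or $dg \qq \leftmod$.
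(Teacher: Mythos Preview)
Your proposal is correct and follows essentially the same route as the paper: define the tensor and cotensor objectwise and verify the enriched adjunction by using the closed structure of $Sp^\Sigma$ and the fact that $\underhom(K,-)$ commutes with the end/equaliser defining $Sp^\Sigma\nat$. The paper phrases this last step via ends and cites \cite[Proposition 6.7.3]{bor94}, while you use the equivalent equaliser description and invoke that $\underhom(K,-)$ is a right adjoint; these are the same argument.
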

\begin{proof}
The tensor and cotensor are given object-wise; so for a symmetric spectrum
$A$ and a (right) $\mathcal{O}$-module $M$
$$(A \otimes M)(o) = A \smashprod M(o) \qquad \underhom(A,M)(o)=\underhom(A,M(o)).$$
Now we show that these are compatible with the enrichment, so that:
$$Sp^\Sigma \nat(M, \underhom(A,N)) \cong
Sp^\Sigma \nat(A \otimes M, N) \cong
\underhom(A, Sp^\Sigma \nat(M,N)).$$
Writing this out in terms of ends we require a pair of relations
\begin{eqnarray*}
\int_o \underhom(A \smashprod M(o),N(o)) & \cong &
\int_o \underhom(M(o), \underhom(A,N(o))) \\
\int_o \underhom(A, \underhom(M(o),N(o))) & \cong &
\underhom(A, \int_o \underhom(M(o),N(o)))
\end{eqnarray*}
the first of which is certainly true and
the second follows from \cite[Proposition 6.7.3]{bor94}:
the associativity of $Sp^\Sigma$
ensures that $(A \smashprod (-), \underhom(A,-))$
is a $Sp^\Sigma$-adjunction.
\end{proof}

In truth the above enrichment, tensoring and
cotensoring over symmetric spectra actually arises
from giving $\rightmod \ocal$ the structure of a
$Sp^\Sigma$-algebra.
The result as stated below holds in greater generality,
one can replace $Sp^\Sigma$ by any other
symmetric monoidal model category provided that
either the unit is cofibrant or that
smashing with a cofibrant object
preserves weak equivalences.
\begin{theorem}
If $\ocal$ is a symmetric monoidal $Sp^\Sigma$-category,
then there is a strong symmetric monoidal Quillen pair
$$(-) \otimes F_{\sphspec} :
 Sp^\Sigma \overrightarrow{\longleftarrow} \rightmod \ocal
: \underhom (F_{\sphspec}, -)$$
where $F_{\sphspec}$ is the free module on the unit of $\ocal$.
Thus $\rightmod \ocal$ is an $Sp^\Sigma$-algebra.
This structure is compatible with the enrichment, tensoring and cotensoring
above.
\end{theorem}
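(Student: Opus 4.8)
The plan is to verify the adjunction and its monoidal structure essentially by hand from the coend descriptions already in play, and then to read off the Quillen and compatibility statements from Theorem \ref{thm:monmodules} and the remark closing Section \ref{sec:Cmodules}. Conceptually, nothing here is surprising: the theorem is the instance ``$\ccal = Sp^\Sigma$, $\dcal = \rightmod\ocal$'' of the general principle that a closed symmetric monoidal category which is a closed $\ccal$-module with a suitably coherent associativity relation is a closed symmetric $\ccal$-algebra.

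First I would identify the right adjoint. The enriched Yoneda lemma (Lemma \ref{lem:enrichyoneda}, in the form for right $\ocal$-modules) gives a natural isomorphism of symmetric spectra $\underhom(F_o,N) = Sp^\Sigma\nat(F_o,N) \cong N(o)$, so $\underhom(F_{\sphspec},-)$ is, up to natural isomorphism, evaluation at the unit object $\sphspec\in\ocal$. Feeding this into the tensoring adjunction of Proposition \ref{prop:tencotenmodule} yields $Sp^\Sigma\nat(A\otimes F_{\sphspec},N)\cong Sp^\Sigma\nat(F_{\sphspec},\underhom(A,N))\cong\underhom(A,N)(\sphspec)=\underhom(A,N(\sphspec))$, which says precisely that $(-)\otimes F_{\sphspec}$ and $\underhom(F_{\sphspec},-)$ form a spectral adjoint pair; applying $Sp^\Sigma(\sphspec,-)$ recovers the underlying ordinary adjunction $\rightmod\ocal(A\otimes F_{\sphspec},N)\cong Sp^\Sigma(A,N(\sphspec))$.

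Next I would construct the strong symmetric monoidal structure. By the computation underlying the proof of Theorem \ref{thm:monmodules} (repeated use of Corollary \ref{cor:endandcoendeval} and Lemma \ref{lem:endandcoend}), $F_{\sphspec}=\ocal(-,\sphspec)$ is the unit of the box product, so $\sphspec\otimes F_{\sphspec}\cong F_{\sphspec}$ supplies the unit comparison. Since the tensoring is object-wise, $(A\otimes M)(o)=A\smashprod M(o)$, and smashing a symmetric spectrum with $A$ commutes with the coends defining $\Box$, there is a natural isomorphism $(A\otimes M)\Box(B\otimes N)\cong(A\smashprod B)\otimes(M\Box N)$; specialising $M=N=F_{\sphspec}$ and using $F_{\sphspec}\Box F_{\sphspec}\cong F_{\sphspec}$ produces the product comparison $(A\otimes F_{\sphspec})\Box(B\otimes F_{\sphspec})\cong(A\smashprod B)\otimes F_{\sphspec}$. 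It then remains to check that these comparisons satisfy the coherence pentagons and triangles and the square relating them to the symmetry isomorphisms of the two categories; both comparisons are assembled only from the unit, associativity and symmetry constraints of $Sp^\Sigma$ together with the symmetry of $\ocal$, so this reduces to a routine but lengthy diagram chase. This coherence bookkeeping is the main obstacle --- there is no real difficulty, only care needed to track the constraints through the coends.

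Finally, $(-)\otimes F_{\sphspec}$ is literally $(-)\smashprod F_{\sphspec}$, and the proof of Theorem \ref{thm:monmodules} already records that smashing with a free module carries (acyclic) cofibrations of $Sp^\Sigma$ to (acyclic) cofibrations of $\rightmod\ocal$, so the left adjoint is left Quillen. The unit condition of a strong monoidal Quillen pair holds because $(\cofrep\sphspec\smashprod F_{\sphspec})(o)=\cofrep\sphspec\smashprod\ocal(o,\sphspec)\to\ocal(o,\sphspec)$ is an object-wise weak equivalence --- this is exactly the point at which the standing hypothesis that $\cofrep\sphspec\smashprod X\to X$ is always a weak equivalence (equivalently, that the unit of $Sp^\Sigma$ is cofibrant) is used. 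For compatibility with the enrichment, tensoring and cotensoring, I would exhibit the associativity isomorphism $A\otimes(M\Box N)\cong(A\otimes M)\Box N$, which is again immediate from the object-wise formula and the commutation of $\smashprod$ with coends, and then invoke the remark at the end of Section \ref{sec:Cmodules}: a closed symmetric monoidal category that is a closed $Sp^\Sigma$-module with such a compatible associativity relation becomes a closed symmetric $Sp^\Sigma$-algebra via precisely the adjunction with left adjoint $(-)\otimes F_{\sphspec}$ and right adjoint $\underhom(F_{\sphspec},-)$, and the induced algebra structure agrees with the enrichment, tensoring and cotensoring of Proposition \ref{prop:tencotenmodule}.
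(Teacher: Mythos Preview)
Your proposal is correct and follows essentially the same approach as the paper: establish the adjunction via enriched Yoneda, read the Quillen property off Theorem \ref{thm:monmodules}, and verify the monoidal structure and compatibility by coend manipulations. The only minor difference is presentational: for compatibility the paper writes out the three identifications $A\otimes M\cong(A\otimes F_{\sphspec})\Box M$, $\underhom(A,M)\cong\underhom_\Box(A\otimes F_{\sphspec},M)$, and $\underhom(M,N)\cong\underhom(F_{\sphspec},\underhom_\Box(M,N))$ directly, whereas you route through the associativity relation $A\otimes(M\Box N)\cong(A\otimes M)\Box N$ and the general remark at the end of Section \ref{sec:Cmodules}; these amount to the same thing.
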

\begin{proof}
We clearly have a symmetric strong monoidal pair
and we have already shown that  $(-) \otimes F_{\sphspec}$
preserves (acyclic) cofibrations in
Theorem \ref{thm:monmodules}.
To recover the tensoring one simply needs to use the basic properties
of coends (such as commutation with left adjoints and interchange of factors)
and Corollary \ref{cor:endandcoendeval} to see
$A \otimes M \cong (A \otimes F_{\sphspec}) \Box M$.
For the cotensoring, the same kind of argument suffices to show
$\underhom (A, M) \cong \underhom_\Box (A \otimes F_{\sphspec}, M)$,
but we need the end version of Corollary \ref{cor:endandcoendeval}
since we are now in a `right-handed' case.
Finally, the enrichment can be identified as
$\underhom(M,N) \cong \underhom ( F_{\sphspec} , \underhom_\Box (M,N))$.
\end{proof}

Note that by the enriched Yoneda lemma
(Lemma \ref{lem:enrichyoneda}),
$\underhom (F_{\sphspec}, M) \cong M(\sphspec)$ for any
$\ocal$-module $M$.

\begin{rmk}\label{rmk:freemodules}
It is instructive to look at this
result in a little more generality.
For each $o \in \ocal$ there is a Quillen pair
$(-) \smashprod F_{o} :
 Sp^\Sigma \overrightarrow{\longleftarrow} \rightmod \ocal
: \ev_o$
where $\ev_o (M)=M(o)$, the evaluation functor.
To show that this is an adjoint pair
we use the enriched Yoneda lemma (Lemma \ref{lem:enrichyoneda}),
$$\rightmod \ocal(A \smashprod F_{o}, M)
\cong Sp^\Sigma(A, \underhom(F_{o}, M))
\cong Sp^\Sigma(A, M(o))$$
we can now use Theorem \ref{thm:monmodules} to see that this is a Quillen pair.
Alternatively, we can show this more directly:
since fibrations and weak equivalences are defined object-wise,
each $\ev_o$ is a right Quillen functor.
\end{rmk}


\section{Morita Equivalences}\label{sec:moritaequiv}
The theorem below is vital to our work,
it allows us to replace $G$-spectra by modules
over an $Sp^\Sigma$-category $\ecal(\gcal)$.

\begin{definition}
Consider a set of objects $\mathcal{G}$
in a spectral model category $\mathcal{D}$.
We define $\ecal(\gcal)$, the
\textbf{endomorphism ringoid of $\gcal$}\index{Endomorphism ringoid},
to be the spectral category with object set $\gcal$
and $\ecal(\gcal)(g,g') = \underhom_{Sp^\Sigma}(g,g')$.
\end{definition}

\begin{definition}\label{def:moritafunctors}
For a spectral model category $\mathcal{D}$ and a full subcategory
$\ecal(\mathcal{G})$ defined by an object set $\mathcal{G}$,
there is a functor (sometimes called the tautological functor)
$$ \underhom(\mathcal{G},-) \co \mathcal{D} \longrightarrow \rightmod \ecal(\mathcal{G}) $$
defined by $\underhom(\mathcal{G},d)(g)=\underhom_{Sp^\Sigma}(g,d)$
(this is \cite[Definition 3.9.1]{ss03stabmodcat}).
This has a left adjoint
$- \smashprod_{\ecal(\mathcal{G})} \mathcal{G}$
and for a module $M$ this
is given by a coend
$ \int^{g \in \gcal} M(g) \smashprod g$.
This can also be written in terms of a
coequaliser of coproducts:
$$\bigvee_{g,h \in \mathcal{G}}
M(h) \smashprod \ecal(\mathcal{G})(g,h) \smashprod g
\overrightarrow{\longrightarrow}
\bigvee_{g \in \mathcal{G}}
M(g) \smashprod g.$$
\end{definition}

\begin{theorem}\label{thm:monoidalmorita}
When $\dcal$ is a closed symmetric monoidal spectral category and
the object set $\mathcal{G}$ consists of cofibrant and fibrant objects
the adjunction
$$(-) \smashprod_{\ecal(\mathcal{G})} \mathcal{G} :
\rightmod \ecal(\mathcal{G})\overrightarrow{\longleftarrow}
\mathcal{D}
: \underhom(\mathcal{G},-)
\index{$E$@$((-) \smashprod_{\ecal(\mathcal{G})} \mathcal{G},
\underhom(\mathcal{G},-))$}
$$
is a spectral Quillen pair.
If $\mathcal{G}$ is a set of compact generators for $\mathcal{D}$
then this Quillen pair is a spectral Quillen equivalence.
If $\gcal$ is closed under the monoidal product, then this pair
is an adjunction of closed strong symmetric monoidal spectral model categories.
\end{theorem}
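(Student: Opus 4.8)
The plan is to verify the three claimed properties in turn, building on the general machinery of Section~\ref{sec:Emodules}. First I would establish the Quillen pair: the functor $\underhom(\gcal,-)$ lands in $\rightmod \ecal(\gcal)$ by definition, and it preserves fibrations and acyclic fibrations because these are detected object-wise in $\rightmod \ecal(\gcal)$ and each object $g \in \gcal$ is cofibrant in $\dcal$, so $\underhom_{Sp^\Sigma}(g,-)$ is a right Quillen functor $\dcal \to Sp^\Sigma$. That the adjunction is spectral follows from the fact that $\underhom(\gcal,-)$ preserves the cotensor (check object-wise using the $Sp^\Sigma$-adjunction on $\dcal$), so by the results of Section~\ref{sec:Emodules} we obtain a spectral Quillen pair.

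Next, for the Quillen equivalence when $\gcal$ is a set of compact generators: here I would invoke \cite[Theorem 3.9.3]{ss03stabmodcat} directly, since the hypotheses (spectral model category, object set of cofibrant-fibrant compact generators) match exactly; alternatively one checks that $\underhom(\gcal,-)$ reflects weak equivalences between fibrant objects (by compactness and the generator property, using Lemma on generators that $X$ is acyclic iff $[P,X]_*=0$ for all $P \in \gcal$) and that the derived unit $M \to \underhom(\gcal, (\cofrep M) \smashprod_{\ecal(\gcal)} \gcal)$ is a weak equivalence for cofibrant $M$ (reduce to free modules $F_g$, where it becomes the identity up to the cofibrant-fibrant replacements, then use that free modules generate $\rightmod \ecal(\gcal)$ and that both sides send cofibre sequences and filtered homotopy colimits of $h$-cofibrations appropriately).

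For the monoidal statement, suppose $\gcal$ is closed under $\smashprod$. Then $\ecal(\gcal)$ inherits a symmetric monoidal $Sp^\Sigma$-category structure: the monoidal product on objects is $\smashprod$, and on Hom-objects the map $\ecal(\gcal)(g,g') \otimes \ecal(\gcal)(h,h') \to \ecal(\gcal)(g\smashprod h, g'\smashprod h')$ is the smash-product-of-maps pairing $\underhom_{Sp^\Sigma}(g,g') \smashprod \underhom_{Sp^\Sigma}(h,h') \to \underhom_{Sp^\Sigma}(g\smashprod h, g'\smashprod h')$ available because $\dcal$ is a closed symmetric monoidal spectral category; the symmetry isomorphism of $\dcal$ supplies the symmetry of $\ecal(\gcal)$. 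By Theorem~\ref{thm:monmodules}, $\rightmod \ecal(\gcal)$ is then a closed symmetric monoidal spectral model category with the box product. One checks the left adjoint $(-) \smashprod_{\ecal(\gcal)} \gcal$ is strong symmetric monoidal by a coend manipulation: $(M \Box N) \smashprod_{\ecal(\gcal)} \gcal = \int^{g} (M\Box N)(g) \smashprod g$ and, expanding $M \Box N$ as the coend defining the box product and using Corollary~\ref{cor:endandcoendeval} together with Fubini for coends and the fact that $g \smashprod h \in \gcal$, this reduces to $(\int^p M(p)\smashprod p) \smashprod_\dcal (\int^q N(q) \smashprod q) = (M \smashprod_{\ecal(\gcal)} \gcal) \smashprod_\dcal (N \smashprod_{\ecal(\gcal)} \gcal)$; the unit $\ecal(\gcal)(-,\sphspec) = F_\sphspec$ maps isomorphically to $\sphspec_\dcal$ since $\sphspec \in \gcal$. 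Combined with the Quillen pair already established, this exhibits an adjunction of closed strong symmetric monoidal spectral model categories, as the right adjoint $\underhom(\gcal,-)$ automatically acquires a lax symmetric monoidal structure.

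The main obstacle I expect is the monoidal coend computation showing $(-)\smashprod_{\ecal(\gcal)}\gcal$ is \emph{strong} (not merely lax) monoidal: one must carefully track that the box product's defining coend $\int^{p,q} \ecal(\gcal)(-,p\smashprod q) \otimes M(p) \otimes N(q)$, after tensoring down along $\gcal$, genuinely collapses — this needs the closure of $\gcal$ under $\smashprod$ to identify the indexing, and the associativity/coherence of the $Sp^\Sigma$-pairing on the $\underhom_{Sp^\Sigma}$'s, which is where the symmetric (rather than merely monoidal) hypothesis on $\dcal$ is essential, paralleling the role of symmetry in Proposition~\ref{prop:MonFunctorsEnrichCat}. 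The model-category bookkeeping (cofibrancy of $F_\sphspec$, the unit axiom for $\rightmod\ecal(\gcal)$) is handled uniformly by Theorem~\ref{thm:monmodules} since in $Sp^\Sigma$ the unit is cofibrant; in the $Sp^\Sigma_+$ variants needed later one invokes the alternative argument in that theorem's proof that smashing with a cofibrant object preserves weak equivalences.
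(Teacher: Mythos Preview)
Your proposal is correct and follows essentially the same approach as the paper: both cite \cite[Theorem 3.9.3]{ss03stabmodcat} for the Quillen pair and Quillen equivalence, and both prove strong monoidality of the left adjoint via the same coend manipulation (Fubini plus Corollary~\ref{cor:endandcoendeval} applied to $\int^{g} \ecal(\gcal)(g, x\smashprod y)\smashprod g \cong x\smashprod y$), followed by the unit check using $\cofrep \ecal(\gcal)(-,\sphspec) \cong \cofrep\sphspec \smashprod \ecal(\gcal)(-,\sphspec)$. Your additional explanation of why the right adjoint is right Quillen (cofibrancy of each $g$) and your identification of the coend collapse as the crux are accurate elaborations of what the paper leaves implicit.
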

\begin{proof}
The first two statements are \cite[Theorem 3.9.3]{ss03stabmodcat}.
So we must consider the case where
$\gcal$ is closed under the monoidal product and show that this
adjunction is strong monoidal, this follows by \cite[Proposition 3.6]{greshi}
and we give an explicit proof below.
The left adjoint is strong symmetric monoidal by the following series of isomorphisms,
where we use the assumption that the collection of generators of $\dcal$
form a symmetric monoidal $Sp^\Sigma$-category.

\begin{eqnarray*}
(M \Box N) \smashprod_{\ecal(\mathcal{G})} \mathcal{G}
& = & \int^{g \in \mathcal{G}} \int^{x,y \in \mathcal{G}}
\big({\ecal(\mathcal{G})}(g, x \smashprod y)
\smashprod M(x) \smashprod N(y) \big) \smashprod g \\
& = & \int^{x,y \in \mathcal{G}} M(x) \smashprod N(y) \smashprod
\left( \int^{g \in \mathcal{G}}
{\ecal(\mathcal{G})}(g, x \smashprod y) \smashprod g \right) \\
& \cong & \int^{x,y \in \mathcal{G} } M(x) \smashprod N(y) \smashprod (x \smashprod y) \\
& \cong & \int^{x,y\in \mathcal{G} } (M(x) \smashprod x) \smashprod (N(y) \smashprod y) \\
& \cong & \int^{x \in \mathcal{G}} (M(x) \smashprod x)
\smashprod \int^{y \in \mathcal{G}} (N(y) \smashprod y) \\
& \cong & \left( \int^{x \in \mathcal{G}} M(x) \smashprod x \right)
\smashprod \left( \int^{y \in \mathcal{G}} N(y) \smashprod y \right) \\
& = & M \smashprod_{\ecal(\mathcal{G})} \mathcal{G}
\smashprod N \smashprod_{\ecal(\mathcal{G})} \mathcal{G}
\end{eqnarray*}
It is clear that the left adjoint preserves the
$Sp^\Sigma$-algebra structure, that is, for a symmetric spectrum $X$,
$$ (X \smashprod \ecal(\mathcal{G})(-, \sphspec)) \smashprod_{\ecal(\mathcal{G})} \mathcal{G}
\cong i_\dcal (X)$$ where $i_\dcal \co Sp^\Sigma \to \dcal$ is the left adjoint
of the Quillen pair giving $\dcal$ a
closed symmetric monoidal spectral category structure.
We also need the following to be a weak equivalence in $Sp^\Sigma$.
$$\cofrep \ecal(\mathcal{G})(-, \sphspec) \smashprod_{\ecal(\mathcal{G})} \mathcal{G}
\to \ecal(\mathcal{G})(-, \sphspec) \smashprod_{\ecal(\mathcal{G})} \mathcal{G} \cong \sphspec$$
We use the characterisation
$\cofrep \ecal(\mathcal{G})(-, \sphspec) =
\cofrep \sphspec \smashprod \ecal(\mathcal{G})(-, \sphspec)$
and the result follows immediately.
\end{proof}

In Chapter \ref{chp:WskewSQmod} we will need a slight alteration of this result
to a context the unit object in $\gcal$ is not cofibrant. We shall go through
the changes needed in Theorem \ref{thm:so2morita}.
It should be clear that the correct notion of compact for the above result
when working in a $G$-equivariant setting is $G$-compactness.

\begin{rmk}\label{rmk:whyfibrant}
The reader should note that the requirement that every object
of $\gcal$ is fibrant is essential
to know that $\ecal(\mathcal{G})(g,g')$ has the correct homotopy type.
It can be quite difficult to meet this requirement
\emph{and} have $\gcal$ closed under the
monoidal product since the smash product of a pair of
fibrant objects in a model category
is not usually fibrant.
Equally, there is no reason to expect
the monoidal product and fibrant replacement functor to be compatible.
That is, the fibrant replacement functor is not usually a monoidal functor.
The solution of \cite{greshi} and of our work
is to work in a category with every object fibrant.
This is the reason why we have constructed rational spectra
as $S_\qq$-modules in EKMM spectra.
Similarly for the finite case we considered
$S_H \leftmod$ for each conjugacy class of subgroups $(H)$.
Looking ahead to Part \ref{part:O(2)}, where we specialise
to the group $O(2)$, we take our model
for rational $SO(2)$-spectra to be $\iota^* S_\qq \leftmod$.
Every object of this category is fibrant, hence so is
every object of the associated skewed category.
\end{rmk}

\part{The Continuous Dihedral Group}\label{part:O(2)}
\chapter{Rational $O(2)$-Equivariant Spectra}\label{chp:O2spec}
We now concentrate on the group $O(2)$. We apply our splitting result
and state some of the homotopy level calculations
on rational $O(2)$-spectra from \cite{gre98a}.

\section{Basics}
The closed subgroups of $O(2)$ are $O(2)$ itself, $SO(2)$, the finite dihedral
groups and the finite cyclic groups.
We will use the notation $D^h_{2n}$ to represent the dihedral subgroup of order
$2n$ containing $h$, an element of $O(2) \setminus SO(2)$. The conjugacy class
of such subgroups for a fixed $n \geqslant 0$ will be written $D_{2n}$.
We write $C_n$ for the cyclic group of order $n$.
For the rest of Part \ref{part:O(2)}, we let $W=O(2) / SO(2)$, the group of order 2.
We define a \textbf{cyclic group} to be any subgroup of $SO(2)$,
and a \textbf{dihedral group} to be any group of form
$D^h_{2n}$ or $O(2)$ itself.
We take $\cscr$\index{C@$\cscr$} to be the family consisting of all cyclic subgroups
of $O(2)$. The cofamily associated to $\cscr$ consists of the dihedral groups,
we call this set $\dscr$\index{D@$\dscr$} and we shall write
$E\dscr$ for $E \widetilde{\cscr}$.

\begin{lemma}
The family of cyclic subgroups, $\cscr$, is an idempotent family.
\end{lemma}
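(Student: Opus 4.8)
The plan is to verify the two defining conditions of an idempotent family (Definition \ref{def:idemfamily}) for $\cscr$, the family of cyclic subgroups of $O(2)$: namely that $\cscr$ is a family (closed under conjugation and subgroups), and that it is an open and closed $G$-invariant subspace of $\scal_f G$ that is a union of $\sim$ classes. In fact, the quickest route is to observe that $\cscr$ is precisely the set of subgroups of the identity component $SO(2)$ of $O(2)$, so that the result follows immediately from Lemma \ref{lem:Geidemfamily} applied to the short exact sequence $1 \to SO(2) \to O(2) \to W \to 1$.

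First I would note that $SO(2)$ is the identity component of the compact Lie group $O(2)$, and recall (as in the statement of Lemma \ref{lem:Geidemfamily}) that the set $\fscr$ of subgroups of the identity component of a compact Lie group forms an idempotent family. Since by definition a cyclic subgroup of $O(2)$ is exactly a closed subgroup contained in $SO(2)$, we have $\cscr = \fscr$, and Lemma \ref{lem:Geidemfamily} gives the conclusion directly. I would also briefly indicate, for the reader's benefit, what the corresponding idempotent looks like: $\cscr$ corresponds via tom Dieck's isomorphism to the idempotent $e_\cscr \in A(O(2)) \otimes \qq$ supported on the conjugacy classes of cyclic subgroups with finite index in their normaliser, which are the classes of the finite cyclic groups $C_n$ together with $SO(2)$ itself (the dihedral conjugacy classes $D_{2n}$ and the class of $O(2)$ carry the complementary idempotent $e_\dscr = 1 - e_\cscr$).

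There is essentially no obstacle here, since the hard work has already been done in Lemma \ref{lem:Geidemfamily}; the only thing to check carefully is the identification $\cscr = \{\text{subgroups of } SO(2)\}$, which is immediate from the definition of cyclic subgroup given just before the lemma, and the fact that $SO(2)$ is normal in $O(2)$ (being of index two, or being the identity component), so that $\cscr$ is closed under conjugation. If one preferred a self-contained argument one would instead unwind the proof of Lemma \ref{lem:Geidemfamily} in this concrete case: $\cscr$ is a family since $SO(2)$ is normal; the subset $\cscr'$ of conjugacy classes with finite index in their normaliser is open in $\fcal O(2)/O(2)$ because any subgroup sufficiently close to a cyclic group with finite Weyl group is subconjugate to it and hence cyclic, and it is closed because any subgroup sufficiently close to a non-cyclic group meets the reflection coset $O(2) \setminus SO(2)$ nontrivially; thus the characteristic function of $\cscr'$ is a continuous map $\fcal O(2)/O(2) \to \qq$, giving the idempotent $e_\cscr$ with support $\cscr$.
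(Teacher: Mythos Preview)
Your proposal is correct and follows exactly the paper's approach: the paper's proof simply invokes Lemma \ref{lem:Geidemfamily} using that $SO(2)$ is the identity component of $O(2)$, and then illustrates the result with a picture of $\fcal O(2)/O(2)$. Your additional remarks about the explicit idempotent and the self-contained unwinding are accurate elaborations but not needed for the proof.
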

\begin{proof}
This follows from Lemma \ref{lem:Geidemfamily} since $SO(2)$
is the identity component of $O(2)$.
We illustrate this result with Figure \ref{phig}
below where we draw $\fcal O(2) / O(2)$.
\end{proof}

\begin{figure}[hbt]
\begin{center}
\setlength{\unitlength}{1cm}
\framebox[0.8\textwidth]{
\begin{picture}(6,3.5)(2,1)

\put(0,2){\circle*{0.1}}

\put(0,4){\circle*{0.1}}

\put(10,2){\circle*{0.1}}
\put(5,2){\circle*{0.1}}
\put(2.5,2){\circle*{0.1}}
\put(1.25,2){\circle*{0.1}}
\put(0.4,1.95){$\cdots$}

\put(-0.2,1.5){$O(2)$}
\put(-0.2,3.5){$SO(2)$}

\put(9.8,1.5){$D_{2}$}
\put(4.8,1.5){$D_{4}$}
\put(2.3,1.5){$D_{6}$}
\put(1.05,1.5){$D_{8}$}

\end{picture}}
\end{center}
\caption{\label{phig} $\fcal O(2) / O(2)$.}
\end{figure}

\begin{definition}\label{def:CandDspectra}
The model category of \textbf{cyclic spectra}\index{Cyclic spectra}
is $\cscr \mcal_\qq$ and the model category of
\textbf{dihedral spectra}\index{Dihedral spectra}
is $\widetilde{\cscr} \mcal_\qq$, which we write as $\dscr \mcal_\qq$
(see Definition \ref{def:familymodelcat}).
In most cases we will no longer explicitly mention
the fact that all categories are rationalised.
\end{definition}

\begin{theorem}\label{thm:O2splitting}
There is a strong monoidal Quillen equivalence
$$\Delta : O(2) \mcal _\qq \overrightarrow{\longleftarrow}
\cscr \mcal_\qq \times \dscr \mcal_\qq : \prod.$$
In particular, we have the following natural isomorphism for any $G$-spectra $X$ and $Y$
$$[X,Y]^{O(2)}_\qq \cong
[X \smashprod E \cscr_+, Y \smashprod E \cscr_+]^{O(2)}_\qq
\oplus
[X \smashprod E \dscr, Y \smashprod E \dscr]^{O(2)}_\qq.$$
Furthermore, we have a Quillen equivalence:
$$S_\qq \leftmod \overrightarrow{\longleftarrow}
L_{E \cscr_+} S_\qq \leftmod
\times
L_{E \dscr} S_\qq \leftmod.$$
\end{theorem}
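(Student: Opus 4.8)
The plan is to deduce the three statements of Theorem \ref{thm:O2splitting} from the general machinery already established, by checking that the cyclic family $\cscr$ satisfies the hypotheses needed. First I would observe that $\cscr$ is an idempotent family (the lemma immediately preceding the theorem in the excerpt), so Theorem \ref{thm:familysplitting} applies directly with $G = O(2)$ and $\fscr = \cscr$. This yields a strong monoidal Quillen equivalence
$$\Delta : \GIS_\qq \rightleftarrows \FS_\qq \times \FFS_\qq : \prod$$
where $\FS = L_{E\cscr_+}\GIS$ and $\FFS = L_{E\dscr}\GIS$, together with the stated natural isomorphism on $[X,Y]^{O(2)}_\qq$. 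To get the statement for $O(2)\mcal_\qq = S_\qq\leftmod$ rather than orthogonal spectra, I would invoke Theorem \ref{thm:comparisons}: since $E\cscr_+$ and $E\dscr$ are (or may be replaced by) positive cofibrant spectra satisfying the hypotheses — $E\cscr_+ \smashprod E\dscr$ is rationally acyclic by Lemma \ref{lem:familysmash}, and $E\cscr_+ \vee E\dscr \simeq_\qq \sphspec$ because $\cscr$ is idempotent (Corollary \ref{cor:equivs} together with Lemma \ref{lem:familiestoidempotents}) — the right-hand commuting square of Theorem \ref{thm:comparisons} gives the strong symmetric monoidal Quillen equivalences
$$S_\qq\leftmod \rightleftarrows L_{\nn E\cscr_+} S_\qq\leftmod \times L_{\nn E\dscr} S_\qq\leftmod,$$
which is the final displayed statement once one identifies $S_\qq\leftmod$ with $O(2)\mcal_\qq$ via Theorem \ref{thm:localisedtomodules}.

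For the middle statement — passing from orthogonal spectra to EKMM $S$-modules at the level of $O(2)\mcal_\qq$ itself — I would run through the diagram of Theorem \ref{thm:comparisons} (left-hand square) combined with Proposition \ref{prop:GMequivGIS+Q} and Proposition \ref{prop:GIS+QequivGISQ}, so that $O(2)\mcal_\qq$, $\GIS^+_\qq$ and $\GIS_\qq$ are all Quillen equivalent and the splitting is transported along these equivalences. The point is purely formal: every functor in sight is a strong (symmetric) monoidal left Quillen functor, and the squares of left adjoints commute on the nose, hence so do the squares of right adjoints, so a zig-zag of monoidal Quillen equivalences between $O(2)\mcal_\qq$ and $\cscr\mcal_\qq \times \dscr\mcal_\qq$ is assembled by composition.

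The only real content to verify, and hence the main obstacle, is the hypothesis $\bigvee E_i \simeq_\qq \sphspec$ of Theorem \ref{thm:generalsplitting} in this case, i.e. that the cofibre sequence $E\cscr_+ \to S^0 \to E\dscr$ actually gives a \emph{rational} splitting of the sphere. This is where idempotency of $\cscr$ is essential: by Lemma \ref{lem:familiestoidempotents} the map $E\cscr_+ \to S^0 \to e_\cscr S^0$ is a rational $\pi_*$-isomorphism and $E\dscr \simeq_\qq e_{\widetilde{\cscr}} S^0$, so $E\cscr_+ \vee E\dscr \simeq_\qq (e_\cscr \vee e_{\widetilde{\cscr}}) S^0 \simeq_\qq S^0$ since $e_\cscr + e_{\widetilde{\cscr}} = 1$ in $A(O(2))\otimes\qq$. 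Concretely one checks this on geometric fixed points using Proposition \ref{prop:geomfixedpoints}: $\Phi^H(E\cscr_+)$ is rationally equivalent to $\Phi^H(S^0)$ for $H$ cyclic and rationally acyclic for $H$ dihedral, and dually for $E\dscr$, so $\Phi^H(E\cscr_+ \vee E\dscr) \simeq_\qq \Phi^H(S^0)$ for every closed $H \leqslant O(2)$. The rational acyclicity of $E\cscr_+ \smashprod E\dscr$ is Lemma \ref{lem:familysmash}. With these two facts in hand the hypotheses of Theorem \ref{thm:generalsplitting} (and its comparison Theorem \ref{thm:comparisons}) are met, and the theorem follows; everything else is bookkeeping with the commuting diagrams of monoidal Quillen equivalences already proved in Chapter \ref{chp:splitting}.
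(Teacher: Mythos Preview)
Your proposal is correct and follows essentially the same approach as the paper, whose proof is simply ``This is an application of Theorem \ref{thm:familysplitting} and Theorem \ref{thm:comparisons}''; you invoke exactly these two results, and the extra verification you supply (rational acyclicity of $E\cscr_+ \smashprod E\dscr$, the rational equivalence $E\cscr_+ \vee E\dscr \simeq_\qq \sphspec$ via idempotents or geometric fixed points) is already contained in the proof of Theorem \ref{thm:familysplitting} and so is redundant but not wrong. One minor correction: $O(2)\mcal_\qq$ and $S_\qq\leftmod$ are distinct (Quillen equivalent) model categories, not equal, so the first and third displayed statements are separate claims handled respectively by the left and right squares of Theorem \ref{thm:comparisons}.
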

\begin{proof}
This is an application of Theorem \ref{thm:familysplitting}
and Theorem \ref{thm:comparisons}.
\end{proof}

Denote the determinant representation of $O(2)$
by $\delta$. This is a one dimensional
real representation of $O(2)$. For
$n > 0$, $n \delta$ is an $n$-dimensional real
representation with $(n \delta)^H$ equal
to $0$ for $H \in \dscr$ and $\rr^n$ for
$H \in \cscr$.

\begin{lemma}
The universal space for the family $\cscr$ is given by the
universal space for the group $W$, that is, $E \cscr = E W$.
A construction of universal space for the cofamily $\dscr$, $E\dscr$,
is $S^{\infty \delta}$.
\end{lemma}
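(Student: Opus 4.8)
The plan is to verify each claim by checking the defining universal property: a $G$-CW complex $X$ models $E\mathscr{F}$ precisely when $X^H$ is contractible for $H \in \mathscr{F}$ and empty for $H \notin \mathscr{F}$, and similarly $E\widetilde{\mathscr{F}}$ is characterised (up to weak equivalence) by $E\widetilde{\mathscr{F}}^H \simeq *$ for $H \in \mathscr{F}$ and $E\widetilde{\mathscr{F}}^H = S^0$ for $H \notin \mathscr{F}$, as recorded in the definition of the universal $\widetilde{\mathscr{F}}$-space. So the whole proof is a fixed-point computation for the two specific spaces $EW$ (regarded as an $O(2)$-space via the quotient $O(2) \to W$) and $S^{\infty\delta}$.

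For the first statement: $EW$ is an $O(2)$-space by pulling back along $O(2) \to W = O(2)/SO(2)$, so its $H$-fixed points depend only on the image of $H$ in $W$. If $H \in \cscr$, i.e. $H \leqslant SO(2)$, then $H$ maps to the trivial subgroup of $W$, so $(EW)^H = EW$, which is contractible since $EW$ is a universal free $W$-space. If $H \in \dscr$, then $H$ contains a reflection, hence maps \emph{onto} $W$, so $(EW)^H = (EW)^W = \emptyset$ because the $W$-action on $EW$ is free. One must also note $EW$ can be built as an $O(2)$-CW complex with cells of orbit type $O(2)/K$ for $K \in \cscr$ (pull back the free $W$-CW structure; each free $W$-cell pulls back to an $O(2)$-cell with stabiliser a cyclic subgroup). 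This exactly matches the universal property of $E\cscr$, so $E\cscr \simeq EW$ as $O(2)$-spaces.

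For the second statement: $S^{\infty\delta} = \mathrm{colim}_n S^{n\delta}$, and $(S^{n\delta})^H = S^{(n\delta)^H}$. Using the description of $n\delta$ recorded just before the lemma, $(n\delta)^H = \rr^n$ for $H \in \cscr$ and $(n\delta)^H = 0$ for $H \in \dscr$. Passing to the colimit, $(S^{\infty\delta})^H = S^{\infty} \simeq *$ for $H \in \cscr$, and $(S^{\infty\delta})^H = S^0$ for $H \in \dscr$. This is precisely the universal property of $E\widetilde{\cscr} = E\dscr$. To be careful one should observe that $S^{\infty\delta}$ is an $O(2)$-CW complex (standard: the inclusions $S^{n\delta} \hookrightarrow S^{(n+1)\delta}$ are $O(2)$-cofibrations with relative cells built from the sphere of the extra copy of $\delta$) and that the basepoint-preserving map $E\cscr_+ \to S^0 \to S^{\infty\delta}$ realises it as the cofibre $C\varepsilon$ up to weak equivalence — but since $E\widetilde{\cscr}$ is only well-defined up to weak equivalence and the fixed-point computation pins down the homotopy type on every $H$, Proposition \ref{prop:geomfixedpoints} (or just the characterisation of $G$-equivalences by fixed points for $G$-CW complexes) finishes the identification.

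The only mild obstacle is the bookkeeping that these two spaces genuinely have $O(2)$-CW structures with the correct cell types, rather than merely the correct fixed-point homotopy types; but this is entirely standard — pull back a $W$-CW structure in the first case, and filter by the representation spheres $S^{n\delta}$ in the second — so no serious difficulty arises. Everything else is the elementary fixed-point arithmetic above together with the already-established uniqueness of universal ($\mathscr{F}$- and $\widetilde{\mathscr{F}}$-)spaces up to weak equivalence.
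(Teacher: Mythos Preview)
Your proof is correct and follows essentially the same approach as the paper: both arguments consist of pulling back $EW$ along the quotient $O(2) \to W$ and computing fixed points, and for the cofamily both compute $(S^{\infty\delta})^H$ by passing to the colimit of $(S^{n\delta})^H$ using the already-recorded values of $(n\delta)^H$. Your additional remarks on the $O(2)$-CW structure are more careful than the paper's version, which simply declares the universal property satisfied once the fixed-point homotopy types match; the reference to Proposition~\ref{prop:geomfixedpoints} is slightly off (that result is for spectra), but your parenthetical about fixed-point characterisation of $G$-equivalences for $G$-CW complexes is the correct justification.
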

\begin{proof}
Consider $EW$ as an $O(2)$-space by letting
$O(2)$ act via the quotient homomorphism $O(2) \to O(2) / SO(2) =W$.
Then we note that $H \in \cscr$ acts trivially and
$H \in \dscr$ acts through $W$, so that $EW$ has the required universal property.

Using the inclusion of $n \delta$ into $(n+1) \delta$
we have $S^{n \delta} \to S^{(n+1) \delta}$ and the colimit
of these maps is $S^{\infty \delta}$. Since each map is
an inclusion, $(S^{\infty \delta})^H = \colim_n (S^{n \delta})^H$
and hence is equal to $S^0$ for $H \in \dscr$
and $S^{\infty}$ for $H \in \cscr$.
Since $S^{\infty}$ is
the infinite sphere it is
weakly equivalent to a point.
Hence $S^{\infty \delta}$ has the required
universal property.
\end{proof}

We compare our splitting above to \cite[Proposition 3.1]{gre98a} and hence
relate maps in $\ho \cscr \mcal_\qq$ to maps
of $\ho SO(2) \mcal_\qq$ with a $W$-action.
In Chapter \ref{chp:cyclicspectra} we improve on this
and obtain a model category version of this result.
Note that since $SO(2)$ is a normal subgroup of $O(2)$,
$\cscr \mcal$ is Quillen equivalent to
$O(2) \mcal (SO(2))$, the $\fscr(SO(2))$-model structure
on $O(2) \mcal$, see Theorem \ref{thm:Fmodelstructure}.

\begin{proposition}\label{prop:relatetoO2cohom}
The forgetful map induces a natural isomorphism
$[X,Y]^{O(2)}_{\cscr,\qq} \cong \big\{ [X,Y]^{SO(2)}_\qq \big\}^W$,
where $\big\{ [X,Y]^{SO(2)}_\qq \big\}^W $ is the set of maps in the homotopy category of
rational $SO(2)$-spectra with a homotopy action of $W$.
Furthermore, $\ho \cscr \mcal_\qq$ is equivalent to
$\ho (SO(2) \mcal_\qq)^W$, the homotopy category of rational
$SO(2)$-spectra with a homotopy action of $W$.
\end{proposition}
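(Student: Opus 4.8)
The statement is really two claims, and the second is a direct corollary of the first together with the $O(2)$-splitting theorem (Theorem \ref{thm:O2splitting}). So the main task is the natural isomorphism $[X,Y]^{O(2)}_{\cscr,\qq} \cong \{ [X,Y]^{SO(2)}_\qq \}^W$, where the right-hand side is to be interpreted as the homotopy fixed points of the $W$-action that arises from the fact that $SO(2)$ is normal in $O(2)$ with quotient $W$. The approach is to reduce to \cite[Proposition 3.1]{gre98a} by identifying both sides of the claimed isomorphism with terms that appear in Greenlees's circle-group calculations. First I would use the splitting: Theorem \ref{thm:O2splitting} gives $[X,Y]^{O(2)}_\qq \cong [X \smashprod E\cscr_+, Y \smashprod E\cscr_+]^{O(2)}_\qq \oplus [X \smashprod E\dscr, Y \smashprod E\dscr]^{O(2)}_\qq$, and by Theorem \ref{thm:familylocalisations} the first summand is exactly $[X,Y]^{O(2)}_{\cscr,\qq}$, maps in the homotopy category of $\cscr\mcal_\qq$. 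So the left-hand side of the Proposition is already identified with a $\ho\, \cscr\mcal_\qq$ mapping set.

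\textbf{Key steps.} The second step is to produce the $W$-action on $[X,Y]^{SO(2)}_\qq$. Since $SO(2) \trianglelefteqslant O(2)$, the group $W = O(2)/SO(2)$ acts on the category of $SO(2)$-spectra by conjugation (this is the involution $\tau$ that will be studied in Chapter \ref{chp:catwithinv}; concretely it is pullback along the automorphism $j\co SO(2)\to SO(2)$, $t\mapsto t^{-1}$, suitably promoted to spectra), and hence $W$ acts on the homotopy category $\ho(SO(2)\mcal_\qq)$ and on each graded mapping set $[X,Y]^{SO(2)}_\qq$ (for $X, Y$ spectra with chosen $O(2)$-structure, so that the $W$-action is defined on the nose). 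The notation $\{[X,Y]^{SO(2)}_\qq\}^W$ then means the invariants of this action, and $\ho(SO(2)\mcal_\qq)^W$ means the category of objects equipped with a homotopy $W$-action. The third and central step is the comparison: one shows that the restriction functor $\iota^*\co \cscr\mcal_\qq \to SO(2)\mcal_\qq$ (more precisely, via the Quillen equivalence $\cscr\mcal \simeq O(2)\mcal(SO(2))$ and then forgetting to $SO(2)\mcal$) induces the asserted isomorphism. This is precisely the content of \cite[Proposition 3.1 and Corollary 3.2]{gre98a}: since we are working rationally, $[X,Y]^{O(2)}_{\cscr,\qq}$ is computed from the $SO(2)$-equivariant mapping set together with descent data for the $W = O(2)/SO(2)$ quotient, and rationally the relevant descent spectral sequence (or transfer argument) collapses because $|W| = 2$ is invertible — the averaging operator $\tfrac12(1 + w)$ splits off the invariants, killing the higher cohomology of $W$. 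So $[X,Y]^{O(2)}_{\cscr,\qq}$ is just the $W$-invariants of $[X,Y]^{SO(2)}_\qq$. Iterating/naturalising this across all pairs $(X,Y)$ and using that a fully faithful functor onto the $W$-homotopy-fixed subcategory which is also essentially surjective (every rational $SO(2)$-spectrum with a homotopy $W$-action is, rationally, the restriction of a cyclic $O(2)$-spectrum — again by the order-two, rational averaging argument) gives the equivalence of categories $\ho\,\cscr\mcal_\qq \simeq \ho(SO(2)\mcal_\qq)^W$.

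\textbf{The main obstacle.} The genuinely delicate point is making precise what $\ho(SO(2)\mcal_\qq)^W$ and "homotopy action of $W$" mean, and checking that the comparison functor lands there and is an equivalence — i.e. the bookkeeping of the $W$-action at the homotopy-category level rather than the point-set level. Because the paper does not (at this stage) develop a theory of homotopy fixed points of a group acting on a model category (this is flagged as future work in the introduction, under "The Homotopy Category of a Skewed Category"), the honest move here is to \emph{cite} \cite[Proposition 3.1 and Corollary 3.2]{gre98a} for the statement and simply record how our splitting and model categories match up with Greenlees's setup — verifying that his $O(2)$-cohomology-theory calculation is exactly the computation of maps in $\ho\,\cscr\mcal_\qq$ via Theorem \ref{thm:familylocalisations} and Theorem \ref{thm:rathomotopymaps}, and that his forgetful-to-$SO(2)$ functor is ours. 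The rational collapse argument ($|W|$ invertible) is the only mathematical input beyond translation, and it is standard, so the proof is essentially: (i) rewrite the left side via the splitting and Theorem \ref{thm:familylocalisations}; (ii) invoke \cite[Proposition 3.1]{gre98a} for the mapping-set isomorphism; (iii) invoke \cite[Corollary 3.2]{gre98a} for the equivalence of categories, noting our categories have the homotopy categories required. I would keep the write-up short and reference-heavy, since a self-contained treatment of homotopy $W$-actions on model categories is deferred.
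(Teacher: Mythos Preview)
Your proposal is correct and takes essentially the same approach as the paper: identify $[X,Y]^{O(2)}_{\cscr,\qq}$ via the splitting theorem with maps involving $E\cscr_+$, then invoke \cite[Proposition 3.1]{gre98a} (using $E\cscr_+ = EW_+$) for the mapping-set isomorphism and \cite[Corollary 3.2]{gre98a} for the equivalence of categories. The paper's write-up is even terser than you anticipate---it routes through $e_\cscr [X,Y]^{O(2)}_\qq \cong [E\cscr_+ \smashprod X, Y]^{O(2)}_\qq$ rather than $[X \smashprod E\cscr_+, Y \smashprod E\cscr_+]^{O(2)}_\qq$, but this is an inessential variation.
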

\begin{proof}
By Theorem \ref{thm:O2splitting}
(see also Remark \ref{rmk:splittingidempotents})
it is clear that
$[X,Y]^{O(2)}_{\cscr,\qq}$ is isomorphic to
$e_\cscr [X,Y]^{O(2)}_{\qq}$.

The proof of
Theorem \ref{thm:generalsplitting} implies that
this is isomorphic to $[E \cscr_+ \smashprod X,Y]^{O(2)}_{\qq}$.
Since $E \cscr_+ = EW_+$ we can apply
\cite[Proposition 3.1]{gre98a} to obtain the result.
The final statement follows from
\cite[Corollary 3.2]{gre98a}.
\end{proof}

\begin{proposition}\label{prop:cylicSQmodarecyclicspectra}
Let $S_\qq \leftmod(\cscr)$ denote the category of
$S_\qq$-modules in $O(2) \mcal$ with model structure
created from the underlying category $O(2) \mcal(SO(2))$.
Then the identity functor $S_\qq \leftmod(\cscr) \to
L_{E \cscr_+} S_\qq \leftmod$ is the left adjoint of a Quillen
equivalence.
\end{proposition}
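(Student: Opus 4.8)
The plan is to show that the two model structures on $S_\qq \leftmod(\cscr)$ and $L_{E \cscr_+} S_\qq \leftmod$ share the same cofibrations and that the identity functor preserves (acyclic) cofibrations, and then establish the Quillen equivalence by comparing weak equivalences directly. First I would recall that both categories have the same underlying category, namely $S_\qq$-modules in $O(2)\mcal$; what differs is the model structure. On the source, $S_\qq \leftmod(\cscr)$ is built from the underlying category $O(2)\mcal(SO(2))$, i.e. the $\fscr(SO(2))$-model structure of Theorem \ref{thm:Fmodelstructure}, lifted over the free $S_\qq$-module functor; on the target, $L_{E \cscr_+} S_\qq \leftmod$ is the $E\cscr_+$-localisation of $S_\qq \leftmod$. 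Since $SO(2)$ is the identity component of $O(2)$ and $\cscr = \fscr(SO(2))$ is an idempotent family (by Lemma \ref{lem:Geidemfamily}), the two model structures on $O(2)\mcal$ itself agree up to Quillen equivalence: the $\fscr(SO(2))$-model structure $O(2)\mcal(SO(2))$ is Quillen equivalent to $L_{E\cscr_+}O(2)\mcal$ by the combination of Theorem \ref{thm:Fmodelstructure} (identity is a left Quillen equivalence $O(2)\mcal(SO(2)) \to \cscr\mcal$) and the identification $\cscr\mcal = L_{E\cscr_+}O(2)\mcal$.

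The key steps, in order, are: (1) identify the generating cofibrations on both sides — for $S_\qq \leftmod(\cscr)$ these are $S_\qq$ smashed with the generating $\fscr(SO(2))$-cofibrations of $O(2)\mcal$, for $L_{E\cscr_+}S_\qq\leftmod$ the cofibrations are exactly the cofibrations of $S_\qq\leftmod$ (Proposition \ref{prop:rmodlocal}), which in turn are $S_\qq$ smashed with generating cofibrations of $O(2)\mcal_\qq$; (2) observe that the identity is a left Quillen functor by checking it preserves these generating (acyclic) cofibrations, which reduces to the corresponding statement for the underlying spectra established in the remark after Lemma \ref{lem:modelequality} and in Theorem \ref{thm:Fmodelstructure}; (3) apply the criterion of \cite[Corollary 1.3.16]{hov99} (as used in the proof of Theorem \ref{thm:locfuncs}): since cofibrant replacement is essentially unaffected and every object of $S_\qq\leftmod$ is fibrant, it suffices to show that the identity reflects weak equivalences between cofibrant objects. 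For this I would compare the two notions of weak equivalence: a map in $L_{E\cscr_+}S_\qq\leftmod$ is a weak equivalence iff it is an $E\cscr_+$-equivalence of underlying $S_\qq$-modules, i.e. iff $\pi_*^H(f)\otimes\qq$ is an isomorphism for all $H \in \cscr$ (Proposition \ref{prop:familyequiv}), while a map in $S_\qq\leftmod(\cscr)$ is a weak equivalence iff it is an $\fscr(SO(2))$-equivalence, which by definition is exactly the condition that $\pi_*^H(f)$ is an isomorphism for all subgroups $H$ of $SO(2)$, i.e. all $H \in \cscr$. Since $S_\qq$-modules already have rational homotopy groups (Lemma \ref{lem:RQmodareSQlocal}), the $\otimes\qq$ is automatic, so the two classes of weak equivalences literally coincide.

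Once steps (1)–(3) are in place, the identity adjunction is a Quillen pair with the same weak equivalences on both sides, hence trivially a Quillen equivalence. The main obstacle I expect is step (1)–(2): carefully matching up the generating cofibrations, since $S_\qq \leftmod(\cscr)$ is defined via a lifted model structure over $O(2)\mcal(SO(2))$ rather than directly as a localisation, one must verify that this lifted structure coincides (with the same cofibrations) with the localised structure on $S_\qq\leftmod$, using that $S_\qq$ is $S^0_\mcal\qq$-local and cellular and that localisation does not change cofibrations. This is the same bookkeeping as in Lemma \ref{lem:modelequality} and Proposition \ref{prop:GspecHtoSHmod}, and should go through by the same arguments; the genuinely new content is merely recording that $\fscr(SO(2))$-equivalences of $S_\qq$-modules are precisely the $E\cscr_+$-equivalences, which as noted is immediate from rationality of the homotopy groups.
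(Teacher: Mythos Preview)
Your approach is essentially the same as the paper's, though you have expanded what the paper compresses into three sentences. The paper simply cites that the identity $O(2)\mcal(SO(2)) \to \cscr\mcal$ is a left Quillen equivalence (Theorem \ref{thm:Fmodelstructure}), notes this lifts to a Quillen pair on $S_\qq$-modules, and observes ``by inspection'' that the weak equivalences agree; your step (3) unpacks this inspection via Proposition \ref{prop:familyequiv} and rationality of homotopy groups of $S_\qq$-modules, which is exactly the intended content.

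One small correction: in your final paragraph you say you must verify that the lifted structure ``coincides (with the same cofibrations)'' with the localised one. The cofibrations do \emph{not} coincide: $S_\qq\leftmod(\cscr)$ has only the $\fscr(SO(2))$-cofibrations (built from cells $G/H_+$ with $H \leqslant SO(2)$), which form a proper subclass of the cofibrations of $L_{E\cscr_+}S_\qq\leftmod$. This is harmless for the argument, since you only need the identity to be \emph{left} Quillen, and your steps (1)--(2) already handle that correctly. The equivalence then follows purely from the coincidence of weak equivalences, as you argue.
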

\begin{proof}
As stated in \cite[Chapter IV, Theorem 6.9]{mm02}
the identity functor from $O(2) \mcal (SO(2))$ to
$\cscr \mcal$ is the left adjoint of a Quillen
equivalence. This gives a
Quillen pair between the module categories.
By inspection these categories have the same
weak equivalences, hence we have the Quillen
equivalence as claimed.
\end{proof}

We give the construction of the derived category
of dihedral spectra from \cite[Section 4]{gre98a}.
For each $n \geqslant 1$ let $V_n$ be a graded $\qq W$-module
and let $V_\infty= \lim_i \prod_{n \geqslant i} V_n$,
a graded $\qq W$-module.
Let $V_0$ be a graded $\qq$-module and
$\sigma \co V_0 \to V_\infty$ be a $W$-map.
Call such information a graded dihedral Mackey functor.
A map of graded dihedral Mackey functors
is a collection of $W$-maps
$f_k \co V_k \to V_k'$ for $k \geqslant 0$ such that
the obvious square relating $\sigma \co V_0 \to V_\infty$
to $\sigma' \co V_0' \to V_\infty'$ commutes.

\begin{proposition}
The homotopy category of dihedral spectra is equivalent
to the category of graded dihedral Mackey functors.
\end{proposition}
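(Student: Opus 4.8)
The plan is to combine the splitting theorem for $O(2)$-spectra with the description of each piece. By Theorem~\ref{thm:O2splitting} the category of rational $O(2)$-spectra splits (up to a zig-zag of monoidal Quillen equivalences) as $\cscr \mcal_\qq \times \dscr \mcal_\qq$, so the homotopy category of rational $O(2)$-spectra is the product $\ho \cscr \mcal_\qq \times \ho \dscr \mcal_\qq$. The cyclic part $\ho \cscr \mcal_\qq$ is, by Proposition~\ref{prop:relatetoO2cohom}, equivalent to $\ho(SO(2)\mcal_\qq)^W$, and this in turn matches the $\sigma \co V_0 \to V_\infty$ data of a graded dihedral Mackey functor in the range where $V_0$ is the $\qq$-module recording the $W$-action glued to the $V_\infty$ assembled from the cyclic subgroups. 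The dihedral part $\ho \dscr \mcal_\qq$ should account for the remaining $W$-modules $V_n$ for $n \geqslant 1$ indexed by the conjugacy classes $D_{2n}$ together with $V_\infty$ and the top-level behaviour at $O(2)$. So the proof is a matter of identifying these two homotopy categories precisely and checking that the gluing map $\sigma$ is exactly what relates them.

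First I would make the cyclic side precise: following \cite[Section 4]{gre98a}, use Proposition~\ref{prop:relatetoO2cohom} to identify $\ho \cscr \mcal_\qq$ with $\ho(SO(2)\mcal_\qq)^W$, and then invoke the classification of rational $SO(2)$-spectra (the homotopy category is the category of graded objects in the algebraic model $dg\mathcal{A}(SO(2))$ of \cite{gre99}) to describe this with a $W$-action. The $W$-action on the generators $O(2)/H_+$ for $H$ a proper cyclic subgroup, together with the contribution of $SO(2)$ itself, produces the $V_\infty = \lim_i \prod_{n \geqslant i} V_n$ piece as a graded $\qq W$-module with the inverse-limit topology reflecting the geometry of $\fcal O(2)/O(2)$ near $SO(2)$ (Figure~\ref{phig}). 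Second I would treat the dihedral side: here the objects $O(2)/D^h_{2n+}$ and $O(2)/O(2)_+$ are the relevant generators, their geometric fixed points are concentrated appropriately, and since the associated Weyl groups $W_{O(2)} D_{2n}$ are finite one expects $\ho \dscr \mcal_\qq$ to behave like the finite case (Corollary~\ref{cor:finiteclassification}), giving the graded $\qq$-module $V_0$ and the individual $W$-modules $V_n$. Third I would verify that the map $\sigma \co V_0 \to V_\infty$ is precisely the comparison data produced by the cofibre sequence $E\cscr_+ \to S^0 \to E\dscr$ underlying the splitting — that is, $\sigma$ is the assembly/completion map coming from inverting the idempotent $e_\cscr$ versus $e_\dscr$ — and that morphisms of graded dihedral Mackey functors correspond exactly to maps in the split homotopy category under this identification.

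The main obstacle will be the careful bookkeeping on the dihedral side and, in particular, pinning down the exact form of the structure map $\sigma$ and the topology on $V_\infty$. The excerpt itself flags (Section on ``Dihedral Spectra'' and Remark~\ref{rmk:nextsteps}) that the dihedral part is \emph{not} studied in detail in this thesis and its algebraic model is only conjectured (a sheaf over a topological space), so a fully rigorous proof of this proposition would require either importing the corresponding result from \cite{gre98a} wholesale or redoing that analysis. Consequently I would present this proposition essentially as a citation to \cite[Section 4]{gre98a}, using the splitting theorem (Theorem~\ref{thm:O2splitting}) and Proposition~\ref{prop:relatetoO2cohom} to show it is consistent with the model-category framework developed here, rather than giving an independent derivation; the genuine content reproved in this thesis is the cyclic half, and the dihedral half (with its Mackey-functor description) is the folklore input whose algebraic enhancement is left to future work.
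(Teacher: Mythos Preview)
You have misread what is being claimed and what the data in a graded dihedral Mackey functor represents. The proposition concerns only $\ho \dscr \mcal_\qq$, the homotopy category of \emph{dihedral} spectra --- the $E\dscr$-local piece after the splitting --- and the entirety of the Mackey-functor data $(V_0,\{V_n\}_{n\geqslant 1},\sigma)$ lives on that side. Concretely, $V_n$ for $n\geqslant 1$ records the contribution from the conjugacy class $D_{2n}$ (whose Weyl group is $W$, hence the $\qq W$-module structure), $V_0$ records the contribution from $O(2)$ itself (whose Weyl group is trivial, hence just a $\qq$-module), and the structure map $\sigma\co V_0\to V_\infty=\lim_i\prod_{n\geqslant i}V_n$ encodes the topology of $\fcal O(2)/O(2)$ in Figure~\ref{phig}, where the points $D_{2n}$ accumulate at $O(2)$ as $n\to\infty$. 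None of this has anything to do with the cyclic part: the splitting of Theorem~\ref{thm:O2splitting} is into \emph{orthogonal} idempotents, so there is no gluing map between $\cscr\mcal_\qq$ and $\dscr\mcal_\qq$, and invoking Proposition~\ref{prop:relatetoO2cohom} or the $SO(2)$-classification is beside the point.

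Your final sentence --- present this as a citation to \cite{gre98a} --- is exactly what the paper does: the proof is the single line ``This is a combination of \cite[Summary 4.1 and Corollary 5.5]{gre98a}.'' So you arrive at the right action, but the outlined strategy to get there would not work and reflects a misunderstanding of which subgroups the $V_i$ correspond to.
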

\begin{proof}
This is a combination of \cite[Summary 4.1 and Corollary 5.5]{gre98a}.
\end{proof}

We have reduced our study of rational $O(2)$-spectra
to looking at cyclic spectra and dihedral spectra.
For cyclic spectra we can use any of the model categories
$\cscr \mcal_\qq$, $L_{E \cscr_+} S_\qq \leftmod$
or $S_\qq \leftmod(\cscr)$, since these are all Quillen
equivalent. For dihedral spectra we can use
$\dscr \mcal_\qq$ or $L_{E \dscr_+} S_\qq \leftmod$.
The homotopy structure of
dihedral spectra as described above is
quite simple.
I plan (in future work) to give a classification of
dihedral spectra based on the results of
Chapter \ref{chp:fingroups}.

The remainder of the thesis concentrates on the more subtle
case of cyclic spectra. We have a description of the homotopy
category of cyclic spectra, one can think of this description as saying
that cyclic spectra are $SO(2)$-spectra with some extra structure
(a homotopy action of $W$).
We wish to make this notion precise at the model category level
so that we will have a better understanding of cyclic spectra.
We first investigate the general idea in Chapter \ref{chp:catwithinv}
and then apply this to the specific case of cyclic spectra in
Chapter \ref{chp:cyclicspectra}.

\chapter{Categories With Involution}\label{chp:catwithinv}

In order to study cyclic $O(2)$-spectra we
consider the relation between $O(2)$-spaces
and $SO(2)$-spaces (see Example \ref{ex:o2spaces}).
In the first section we have abstracted this relation to the notion of
a category with involution (the analogue of $SO(2)$-spaces)
and its associated skewed category (the analogue of $O(2)$-spaces).
In the second section we have considered monoidal structures
on these categories. In Section \ref{sec:invmodcat} we have given
conditions for a model structure on the
category with involution to pass to the skewed category.
We have given some examples in Section \ref{sec:involuteexamples}
and we recommend that the impatient reader reads up to the
definition of a skewed category and
moves straight to these examples.

\section{Categories with Involution}\label{sec:catinvolute}
We give the basic definitions and constructions
of categories with involution and their skewed categories.
We then investigate the conditions necessary for a
functor or an adjoint pair between categories with involution
to pass to the skewed categories.

\begin{definition}\label{def:catwithinv}
A \textbf{category with involution}\index{Category with involution}
$(\ccal,\sigma)$ is a category $\ccal$ with a functor
$\sigma \co \ccal \to \ccal$ such that $\sigma^2=\id_\ccal$.
We call such a functor $\sigma$
an \textbf{involution}\index{Involution}.
\end{definition}
It follows, of course, that $\sigma$ is both a left and a right adjoint.
We could relax this definition by requiring
that $\sigma^2$ is naturally isomorphic to $\id_\ccal$.
In the following work we would then need to replace any use
of the equality $\sigma^2=\id_\ccal$ by the (specified and fixed)
natural isomorphism $\sigma^2 \to \id_\ccal$.
Note that our involution is a covariant self-functor
of $\ccal$, this differs from some of the literature where
an involution means a functor
$\ccal^{op} \to \ccal$ which is self-inverse.

\begin{definition}
In a category with involution $(\ccal,\sigma)$, a
\textbf{map of order two}\index{Map of order two}
is a map $f \co A \to \sigma A$
such that $\sigma f = f^{-1}$.
\end{definition}

\begin{definition}\label{def:skewcat}
Given a category with involution $(\ccal,\sigma)$, we define the associated
\textbf{skewed category}\index{Skewed category}
$\sigma \# \ccal$ to be the category with
objects $w \co A \to \sigma A$ (also denoted $(A,w)$),
such that $w \circ \sigma w = \id_{\sigma A}$ (so $w$ is a map of order two).
A morphism of such objects is a commutative square:
$$\xymatrix@C+0.2cm{
A \ar[d]^w \ar[r]^f & B \ar[d]^{w'} \\
\sigma A \ar[r]^{\sigma f} & \sigma B }$$
and we denote such a morphism by $f$.
\end{definition}

\begin{proposition}
Let $(\ccal, \tau)$ be a category with involution
and assume that $\ccal$ has coproducts, denoted $\vee$.
Then there is an adjoint pair of functors
$$\mathbb{D} : \ccal \overrightarrow{\longleftarrow} \sigma \# \ccal : \mathbb{P}.$$
The left adjoint is the free functor, and it acts on objects as
$\mathbb{D} X = X \vee \sigma X \to \sigma X \vee X$ with twist $w$
the interchange of factors map. The right adjoint is projection onto the first factor
$\mathbb{P} (w \co A \to \sigma A) = A$.
These functors act on morphisms in the obvious way.
\end{proposition}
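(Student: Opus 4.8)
The plan is to verify the two defining properties one needs: that $\mathbb{D}$ and $\mathbb{P}$ are well-defined functors (in particular that $\mathbb{D} X$ really is an object of $\sigma \# \ccal$), and that there is a natural bijection $\sigma \# \ccal(\mathbb{D} X, (A,w)) \cong \ccal(X, \mathbb{P}(A,w))$. Everything reduces to unwinding the definition of the skewed category together with the universal property of the coproduct.

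First I would check that $\mathbb{D} X = (X \vee \sigma X, w_X)$ is a legitimate object. The twist map $w_X \co X \vee \sigma X \to \sigma(X \vee \sigma X)$ should be built as follows: since $\sigma$ is a left adjoint it preserves coproducts, so $\sigma(X \vee \sigma X) \cong \sigma X \vee \sigma \sigma X = \sigma X \vee X$ (using $\sigma^2 = \id_\ccal$); composing with the interchange-of-factors isomorphism $X \vee \sigma X \to \sigma X \vee X$ gives $w_X$. One then has to confirm $w_X \circ \sigma w_X = \id$, which is the statement that applying the swap twice (with the bookkeeping coming from $\sigma$ preserving coproducts and $\sigma^2=\id$) is the identity; this is a diagram chase using naturality of the coproduct isomorphisms and the coherence of the symmetry. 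On morphisms, $\mathbb{D} f = f \vee \sigma f$, and one checks the square in Definition \ref{def:skewcat} commutes, again by naturality of the interchange map. Functoriality of $\mathbb{P}$ (projection to the first factor, sending a morphism $f$ of $\sigma\#\ccal$ to $f$ itself viewed as the top arrow) is immediate from the definition of morphisms in the skewed category.

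Next I would establish the adjunction. Given $(A,w)$ in $\sigma \# \ccal$ and $X$ in $\ccal$, a morphism $\mathbb{D} X \to (A,w)$ is a map $g \co X \vee \sigma X \to A$ making the evident square commute; by the universal property of $\vee$, $g$ corresponds to a pair $(g_1, g_2)$ with $g_1 \co X \to A$ and $g_2 \co \sigma X \to A$. The commuting-square condition forces $g_2$ to be determined by $g_1$: spelling out $w \circ g = \sigma g \circ w_X$ and restricting along the two coproduct inclusions, one inclusion recovers no constraint beyond $g_1$ arbitrary, and the other pins down $g_2 = \sigma(w \circ g_1) = \sigma(w)\circ \sigma(g_1)$ — i.e. $g_2$ is forced. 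Hence the data of $g$ is exactly the data of $g_1 \co X \to A = \mathbb{P}(A,w)$, giving the bijection. Naturality in both variables is a routine check against the coproduct structure and the square conditions, and the unit/counit can be written down explicitly: the unit $X \to \mathbb{P}\mathbb{D} X = X \vee \sigma X$ is the first inclusion, and the counit $\mathbb{D}\mathbb{P}(A,w) = A \vee \sigma A \to A$ is $(\id_A, \sigma w)$ — or equivalently the fold map precomposed appropriately.

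The main obstacle is the purely bookkeeping one: keeping the identifications $\sigma(X \vee \sigma X) \cong \sigma X \vee X$ and the symmetry isomorphism straight so that the order-two condition $w_X \circ \sigma w_X = \id$ comes out correctly, and verifying the commuting square for $\mathbb{D} f$. Since the paper works with categories whose involutions satisfy $\sigma^2 = \id$ on the nose (rather than up to isomorphism), these coherence diagrams collapse considerably, so I expect the verification to be short; the only genuine content is the restriction-along-inclusions argument that shows the second component $g_2$ of a morphism out of $\mathbb{D} X$ is determined by the first.
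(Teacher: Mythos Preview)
Your proposal is correct and follows essentially the same argument as the paper: decompose a map out of $\mathbb{D}X$ into a pair $(g_1,g_2)$ via the coproduct, then show the commuting-square condition forces $g_2=\sigma(w)\circ\sigma(g_1)$. The paper is terser (it defers the well-definedness of $\mathbb{D}X$ to Lemma~\ref{lem:limcolim}) and is slightly more careful on one point: restricting along the two inclusions actually gives \emph{two} constraints, and the paper explicitly checks they are consistent using $w'\circ\sigma w'=\id$, which is where the order-two hypothesis on the twist enters.
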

\begin{proof}
See Lemma \ref{lem:limcolim} for a more detailed construction of
$\mathbb{D} X$.
First let us understand what information is contained in the map
$$\xymatrix@C+0.4cm{
X \vee \sigma X \ar[d]^w \ar[r]^{(f,g)} & A \ar[d]^{w'} \\
\sigma X \vee X \ar[r]^{(\sigma f, \sigma g)} & \sigma A }$$
Thus we have the requirement $w' \circ (f,g) = (\sigma f, \sigma g) \circ w$,
but $(\sigma f, \sigma g) \circ w = (\sigma g, \sigma f)$,
hence $\sigma g = w' f$, so $g=\sigma (w' f)$.
Note that $w'g= w' \circ \sigma w' \circ \sigma f = \sigma f$ since $w' \sigma w' =\id$.
It follows that a map as above determines and is determined by a
map $f \co X \to A$ in $\ccal$, hence we have our adjunction.
\end{proof}

\begin{lemma}
Let $(\ccal, \tau)$ be a category with involution
and assume that $\ccal$ has products, denoted $\prod$.
Then $\mathbb{P}$ has a right adjoint.
$$\mathbb{P} : \sigma \# \ccal \overrightarrow{\longleftarrow} \ccal : \mathbb{D}'$$
The right adjoint acts on objects as
$\mathbb{D}' X = X \prod \sigma X \to \sigma X \prod X$, with twist $w$
the interchange of factors map, it acts
on morphisms in the obvious way.
\end{lemma}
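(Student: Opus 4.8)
The claim is a duality mirror of the preceding proposition: where $\mathbb{D}$ was the free (left-adjoint) functor built from coproducts, $\mathbb{D}'$ should be the cofree (right-adjoint) functor built from products. The plan is to construct the unit and counit of the adjunction $\mathbb{P} \dashv \mathbb{D}'$ directly, using the same kind of diagram-chase that established $\mathbb{D} \dashv \mathbb{P}$, and then verify the triangle identities. First I would make the definition of $\mathbb{D}'$ fully precise on objects and morphisms: for $X \in \ccal$, set $\mathbb{D}' X$ to be the pair $(X \prod \sigma X, \, w_X)$ where $w_X \co X \prod \sigma X \to \sigma(X \prod \sigma X) \cong \sigma X \prod X$ is the interchange-of-factors isomorphism (using that $\sigma$ preserves products, since $\sigma$ is a left adjoint); one checks $\sigma w_X = w_X^{-1}$ so $(X\prod \sigma X, w_X)$ is genuinely an object of $\sigma \# \ccal$. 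On a morphism $h \co X \to Y$ in $\ccal$, put $\mathbb{D}' h = h \prod \sigma h$, and check this commutes with the twists $w_X, w_Y$ — routine, because interchange is natural.

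Next I would identify the natural bijection $\sigma\#\ccal\big((A,w),\mathbb{D}'X\big) \cong \ccal(A, \mathbb{P}(A,w)) = \ccal(A,X)$. Given a morphism $(f_1,f_2) \co A \to X \prod \sigma X$ in $\ccal$ that is compatible with the twists $w$ on $A$ and $w_X$ on $\mathbb{D}'X$, the same computation as in the proposition above (the twist-compatibility forces $f_2 = \sigma(w_X \circ \cdots)$, i.e. $f_2$ is determined by $f_1$ and $w$) shows that such a morphism of skewed objects is exactly the data of a single map $f_1 \co A \to X$ in $\ccal$. Concretely: the condition $w_X \circ (f_1,f_2) = \sigma(f_1,f_2)\circ w = (\sigma f_1, \sigma f_2)\circ w$, combined with $w_X$ being interchange, gives $f_2 = \sigma f_1 \circ w$ (up to the identification $\sigma(X\prod\sigma X)\cong\sigma X\prod X$), and conversely any $f_1$ produces a valid morphism by setting $f_2 := \sigma f_1 \circ w$. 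Naturality in both variables is then a diagram chase. This is the dual of the argument already carried out, so I would keep it brief and cite the pattern of the preceding proof.

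The step I expect to be the (mild) main obstacle is bookkeeping the canonical isomorphism $\sigma(X \prod \sigma X) \cong \sigma X \prod \sigma^2 X = \sigma X \prod X$: one must be careful that $\sigma$ genuinely preserves products on the nose (it does, being a left adjoint, but if one only has $\sigma^2 \cong \id$ rather than $\sigma^2 = \id$, the identifications must be threaded through consistently), and that the twist $w_X$ so defined really satisfies $\sigma w_X \circ w_X = \id$. Everything else — functoriality of $\mathbb{D}'$, naturality of the bijection, the triangle identities — is formal once the object-level data is pinned down. I would finish by remarking that $\mathbb{P}$ thus has both a left adjoint ($\mathbb{D}$) and a right adjoint ($\mathbb{D}'$), so $\mathbb{P}$ preserves all limits and colimits that exist in $\sigma\#\ccal$, which will be convenient later.
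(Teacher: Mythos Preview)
The paper omits the proof entirely, leaving it as the evident dual of the preceding proposition; your argument is exactly that dual and is correct in outline. Two small corrections: first, the bijection should read $\sigma\#\ccal\big((A,w),\mathbb{D}'X\big)\cong\ccal(\mathbb{P}(A,w),X)=\ccal(A,X)$, not $\ccal(A,\mathbb{P}(A,w))$; second, and more substantively, your justification that $\sigma$ preserves products ``since $\sigma$ is a left adjoint'' is backwards --- left adjoints preserve colimits, not limits. The correct reason is that $\sigma$ is also a \emph{right} adjoint (to itself, since $\sigma^2=\id$), and right adjoints preserve products. The conclusion stands, but fix the reasoning.
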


\begin{lemma}
Consider a general category $\ccal$, if
$\ccal$ has equalisers and coequalisers, then
there we have a triple of functors $(\orb,\varepsilon,\fix)$
arranged into adjoint pairs as below.
$$
\varepsilon :  \ccal
\ \overrightarrow{\longleftarrow} \
\id \# \ccal  : \fix
\quad \quad
\orb :  \id \# \ccal
\ \overrightarrow{\longleftarrow} \
 \ccal : \varepsilon
$$
\end{lemma}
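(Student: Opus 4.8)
The plan is to recognise that when $\sigma = \id$, the skewed category $\id \# \ccal$ is precisely the category of $W$-objects in $\ccal$, where $W$ is the group of order two (as remarked immediately before the statement). An object is then a map $w \co A \to A$ with $w \circ w = \id_A$, i.e. a $W$-action on $A$, and a morphism is a $W$-equivariant map. The triple $(\orb, \varepsilon, \fix)$ is then the usual orbit–trivial action–fixed point triple familiar from equivariant homotopy theory, and the two adjunctions are the standard ones. So the work is to write down the functors carefully and verify the adjunction isomorphisms.

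First I would define $\varepsilon \co \ccal \to \id \# \ccal$ by $\varepsilon(X) = (X, \id_X)$, the object $X$ with the trivial $W$-action, acting as the identity on morphisms; this is well-defined since $\id_X \circ \id_X = \id_X$. Next I would define $\fix \co \id \# \ccal \to \ccal$ by sending $(A, w)$ to the equaliser of the pair $w, \id_A \co A \rightrightarrows A$; on a morphism $f \co (A,w) \to (B,w')$ the commuting square $w' f = f w = f$ (note here $\sigma f = f$) means $f$ restricts to a map of equalisers, giving $\fix(f)$. Dually, $\orb \co \id \# \ccal \to \ccal$ sends $(A,w)$ to the coequaliser of $w, \id_A \co A \rightrightarrows A$, with the evident action on morphisms. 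Then I would check the two adjunctions. For $\varepsilon \dashv \fix$: a morphism $\varepsilon(X) \to (A,w)$ in $\id \# \ccal$ is a map $g \co X \to A$ with $w g = g$, which by the universal property of the equaliser is exactly a map $X \to \fix(A,w)$ in $\ccal$; naturality in both variables is routine. For $\orb \dashv \varepsilon$: a morphism $(A,w) \to \varepsilon(X)$ is a map $h \co A \to X$ with $h = h w$ (the commuting square again, using $\sigma h = h$), which by the universal property of the coequaliser is exactly a map $\orb(A,w) \to X$ in $\ccal$; naturality is again routine.

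The verifications are essentially formal once the functors are pinned down, so there is no genuine obstacle here. The one point requiring a little care — and the step I would flag as the place a reader might stumble — is that in $\id \# \ccal$ the functor $\sigma$ is the identity, so the general defining condition "$w' \circ f = \sigma f \circ w$" for a morphism $(A,w) \to (B,w')$ collapses to "$w' f = f w$", and when the target (resp. source) carries the trivial action this becomes "$h = h w$" (resp. "$w' g = g$"). Getting these two specialisations right is exactly what matches up the morphism sets with those given by the equaliser and coequaliser universal properties. I would also remark that functoriality of $\fix$ and $\orb$ uses that equalisers and coequalisers are functorial in the diagram, which is why the hypothesis that $\ccal$ has equalisers and coequalisers is needed; the adjunction units and counits can then be described explicitly (the unit of $\orb \dashv \varepsilon$ at $(A,w)$ is the coequaliser projection $A \to \orb(A,w)$ viewed as a $W$-map into the trivial action, and so on), which makes the triangle identities transparent.
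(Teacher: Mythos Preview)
Your proposal is correct and takes essentially the same approach as the paper: the paper defines $\varepsilon A = (A,\id_A)$, $\fix(B,u)$ as the equaliser of $u,\id_B$, and $\orb(B,u)$ as the coequaliser of $u,\id_B$, then simply asserts ``it is easy to see that these are adjoint pairs.'' You have written out exactly these definitions and supplied the verification via universal properties that the paper omits.
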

\begin{proof}
For an object $A$ of $\ccal$, or $(B,u) \in \id \# \ccal$ we
have the following definitions
$$ \begin{array}{rcccl}
& & \varepsilon A & = &  \id \co A \to A \in \id \# \ccal \\
\fix(B,u) & = &  B^u & =& \textrm{Eq} \big(B
\underset{\id}{\overset{u}{\overrightarrow{\longrightarrow}}} B  \big)
\in \ccal\\
\orb(B,u) & = & B/u & =& \textrm{Coeq} \big(B
\underset{\id}{\overset{u}{\overrightarrow{\longrightarrow}}} B  \big)
\in \ccal
\end{array}$$
it is easy to see that these are adjoint pairs.
\end{proof}

\begin{rmk}\label{rmk:eqfunc}
The category $\id \# \ccal$ is the category of $C_2$-objects
and $C_2$-equivariant maps in $\ccal$. We have recovered the usual pair of triples:
the forgetful functor $\id \# \ccal \to \ccal$
with its left and right adjoints and
the trivial action functor
$\ccal \to \id \# \ccal$
with its left and right adjoints.
\end{rmk}

\begin{definition}
An \textbf{involutary functor}\index{Involutary functor}
$(F, \alpha) \co (\ccal, \sigma) \to (\dcal, \tau)$ consists of a functor
$F \co \ccal \to \dcal$  and a natural transformation
$\alpha \co F \sigma \to \tau F$
such that $\tau \alpha \circ \alpha \sigma =\id_F$.
\end{definition}

\begin{lemma}
An involutary functor
$(F,\alpha) \co (\ccal, \sigma) \longrightarrow (\dcal, \tau)$
passes to a functor
$(\sigma, \tau) \# F \co  \sigma \# \ccal \longrightarrow \tau \# \dcal$
which we call the \textbf{skewed functor}\index{Skewed functor},
we will often just call this functor $F$.
\end{lemma}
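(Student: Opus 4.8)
The plan is to define the skewed functor explicitly on objects and morphisms, and then check the two conditions required for an object of $\tau \# \dcal$ (namely that the structure map is well-defined into $\tau F$ and that it is a map of order two) together with functoriality. First I would fix notation: given an involutary functor $(F,\alpha)$ with $\alpha \co F\sigma \to \tau F$ satisfying $\tau\alpha \circ \alpha\sigma = \id_F$, and an object $(A,w)$ of $\sigma\#\ccal$ (so $w \co A \to \sigma A$ with $w \circ \sigma w = \id_{\sigma A}$), I would set
$$(\sigma,\tau)\# F\, (A,w) = \big( FA \overset{\alpha_A \circ Fw}{\longrightarrow} \tau FA \big).$$
The first task is to verify this is an object of $\tau\#\dcal$, i.e.\ that $u := \alpha_A \circ Fw$ satisfies $u \circ \tau u = \id_{\tau FA}$. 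For this I would expand $\tau u = \tau\alpha_A \circ \tau Fw$, use naturality of $\alpha$ applied to the map $\sigma w \co \sigma A \to A$ (which gives $\alpha_A \circ F(\sigma w) = \tau F(\sigma w) \circ \alpha_{\sigma A}$ up to bookkeeping with $\sigma^2 = \id$), combine with $w \circ \sigma w = \id$, and finally invoke the order-two condition $\tau\alpha \circ \alpha\sigma = \id_F$ to collapse the composite to the identity. This is the one genuinely fiddly diagram chase, and it is where I expect the main obstacle to lie — keeping the variance and the two uses of $\sigma^2 = \id_\ccal$ straight while juggling naturality of $\alpha$.

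Next I would define $(\sigma,\tau)\# F$ on a morphism $f \co (A,w) \to (B,w')$ (which is just a map $f \co A \to B$ in $\ccal$ with $w' \circ f = \sigma f \circ w$) to be $Ff \co FA \to FB$. The square to check in $\tau\#\dcal$ is $(\alpha_B \circ Fw') \circ Ff = \tau(Ff) \circ (\alpha_A \circ Fw)$; this follows by applying $F$ to the defining square of $f$ to get $Fw' \circ Ff = F(\sigma f) \circ Fw$, then post-composing with $\alpha$ and using naturality of $\alpha$ at $f$ to rewrite $\alpha_B \circ F(\sigma f) = \tau(Ff) \circ \alpha_A$. Functoriality (preservation of identities and composites) is then immediate from functoriality of $F$, since the skewed functor agrees with $F$ on underlying objects and morphisms.

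Finally I would remark that the construction is compatible with the projection and free functors in the evident way — i.e.\ $\mathbb{P} \circ \big((\sigma,\tau)\# F\big) = F \circ \mathbb{P}$ — which is what justifies abbreviating $(\sigma,\tau)\# F$ to $F$ in later chapters; this is a one-line observation from the definitions. No nontrivial results beyond the definitions of involutary functor and skewed category are needed, so the whole argument is a self-contained diagram chase; the only care required is the order-two verification in the first step.
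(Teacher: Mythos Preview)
Your proposal is correct and follows exactly the same approach as the paper: you define $(\sigma,\tau)\# F$ on objects by $\alpha_A \circ Fw$ and on morphisms by $Ff$, then verify the order-two condition via naturality of $\alpha$ together with $\tau\alpha \circ \alpha\sigma = \id_F$, and check morphisms via naturality of $\alpha$ at $f$. The paper records the order-two check as an explicit commutative diagram rather than the prose chase you sketch, but the content is identical; your extra remark on compatibility with $\mathbb{P}$ is not part of the paper's proof of this lemma but appears shortly afterwards in the treatment of skewed adjunctions.
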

\begin{proof}
We define $(\sigma, \tau) \# F$ on an object $w \co A \to \sigma A$
to be the composite map $\alpha_A \circ Fw \co FA \to \tau FA$.
To see that this is an object of the category we draw the following
commutative diagram.
$$\xymatrix@R+0.4cm@C+0.4cm{
& FA \ar[d]^{Fw} \ar[dr] \ar[ddl]_= \\
& F \sigma A \ar[r]^{\alpha_A} \ar[d]^{F \sigma w} & \tau F A \ar[d]^{\tau F w} \ar[dr] \\
FA \ar[r]^= & F \sigma^2 A \ar[r]^{\alpha_{\sigma A}} &
\tau F \sigma A \ar[r]^{\tau \alpha_A} & \tau^2 F  A \ar[r]^= & FA }$$
For a map $f \co (w \co A \to \sigma A) \to
(w' \co A' \to \sigma A')$ in $\sigma \# \ccal$,
we make the definition: $((\sigma, \tau) \# F) f = Ff$
(labelling maps by their first factor).
This is a morphism of $\sigma \# \ccal$
by the naturality of $\alpha$, as the
diagram below demonstrates.
$$\xymatrix{
FA  \ar[d]^{Fw} \ar[r]^{Ff} &
FA' \ar[d]^{Fw'} \\
F \sigma A  \ar[d]^{\alpha_A} \ar[r]^{F \sigma f} &
F \sigma A' \ar[d]^{\alpha_{A'}} \\
\tau FA  \ar[r]^{ \tau Ff} &
\tau FA' }$$
\end{proof}

\begin{definition}
For a pair of involutary functors
$(F,\alpha) \co (\ccal, \sigma) \longrightarrow (\dcal, \tau)$
and
$(G,\beta) \co (\dcal, \tau) \longrightarrow (\ecal,\rho)$
we define the \textbf{composite involutary functor}\index{Composite involutary functor} to be
$$(G \circ F, \beta F \circ G \alpha) \co (\ccal, \sigma) \longrightarrow  (\ecal,\rho).$$
\end{definition}

\begin{lemma}
The composite involutary functor of a composable pair of involutary functors is an
involutary functor.
\end{lemma}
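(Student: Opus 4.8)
The plan is to verify the two defining conditions of an involutary functor for the pair
$$(G \circ F,\ \beta F \circ G \alpha) \co (\ccal, \sigma) \longrightarrow (\ecal, \rho).$$
Write $\gamma = \beta F \circ G\alpha$ for the candidate natural transformation; explicitly, for an object $A$ of $\ccal$ its component is $\gamma_A = \beta_{FA} \circ G(\alpha_A) \co GF\sigma A \to \rho GF A$. First I would check that $\gamma$ is indeed a natural transformation $GF\sigma \to \rho GF$: this is immediate since it is a vertical composite of the natural transformations $G\alpha \co GF\sigma \to G\tau F$ and $\beta F \co G\tau F \to \rho GF$ (the latter being $\beta$ whiskered by $F$), and a composite of natural transformations is natural.

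The substantive point is the order-two condition $\rho\gamma \circ \gamma\sigma = \id_{GF}$. First I would expand both sides at an object $A$. We have $\gamma_{\sigma A} = \beta_{F\sigma A} \circ G(\alpha_{\sigma A})$ and $\rho(\gamma_A) = \rho\beta_{FA} \circ \rho G(\alpha_A)$, so the composite in question is
$$\rho\beta_{FA} \circ \rho G(\alpha_A) \circ \beta_{F\sigma A} \circ G(\alpha_{\sigma A}) \co GF\sigma^2 A \longrightarrow \rho^2 GF A,$$
and since $\sigma^2 = \id_\ccal$ and $\rho^2 = \id_\ecal$ this is a map $GFA \to GFA$. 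To rearrange it, I would use naturality of $\beta$ applied to the morphism $\alpha_A \co F\sigma A \to \tau FA$ of $\dcal$: the square relating $\beta_{F\sigma A}$, $\beta_{\tau FA}$, $G\tau(\alpha_A)$ and $\rho G(\alpha_A)$ commutes, giving $\rho G(\alpha_A) \circ \beta_{F\sigma A} = \beta_{\tau F A} \circ G\tau(\alpha_A)$. Substituting, the composite becomes
$$\rho\beta_{FA} \circ \beta_{\tau FA} \circ G\tau(\alpha_A) \circ G(\alpha_{\sigma A})
= \big(\rho\beta_{FA} \circ \beta_{\tau F A}\big) \circ G\big(\tau(\alpha_A) \circ \alpha_{\sigma A}\big).$$
Now the involutary condition for $(G,\beta)$, namely $\rho\beta \circ \beta\tau = \id_G$, evaluated at $FA$ gives $\rho\beta_{FA} \circ \beta_{\tau FA} = \id_{GFA}$; and the involutary condition for $(F,\alpha)$, namely $\tau\alpha \circ \alpha\sigma = \id_F$, evaluated at $A$ gives $\tau(\alpha_A) \circ \alpha_{\sigma A} = \id_{FA}$, whence $G$ of it is $\id_{GFA}$. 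Hence the whole composite is $\id_{GFA}$, as required.

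The main obstacle — really the only place care is needed — is getting the naturality square for $\beta$ applied to $\alpha_A$ oriented correctly, since $\beta$ is a natural transformation of functors $\dcal \to \ecal$ and $\alpha_A$ is a morphism of $\dcal$, and one must be sure the whiskerings $\rho G(\alpha_A)$ versus $G\tau(\alpha_A)$ land on the correct sides. Once that square is in hand, the two order-two identities of the constituent functors finish the argument mechanically. I would present the computation as the single displayed chain above, perhaps with a commutative diagram interpolating the naturality square, rather than writing out every bracketing.
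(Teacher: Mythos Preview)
Your proof is correct and takes essentially the same approach as the paper, which simply says ``by drawing a larger version of the diagram defining the skewed functor one can see that $\rho(\beta F \circ G\alpha) \circ (\beta F \circ G\alpha)\sigma = \id_{G\circ F}$'' and leaves the chase to the reader. Your explicit algebraic computation, with the naturality square for $\beta$ at $\alpha_A$ providing the key interchange, is precisely that diagram chase written out, and your identification of this as the only delicate step is apt.
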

\begin{proof}
By drawing a larger version of the diagram
defining the skewed functor one can see that
$$\rho (\beta F \circ G \alpha) \circ (\beta F \circ G \alpha) \sigma
= \id_{G \circ F}.$$
We draw the diagram below to explain the definition.
Let $f \co (A,w) \to (A',w')$ be a map in $\sigma \# \ccal$.

$$\xymatrix@R+0.2cm@C+0.4cm{
GFA  \ar[d]^{GFw} \ar[r]^{GFf} &
GFA' \ar[d]^{GFw'} \\
GF \sigma A  \ar[d]^{G \alpha_A} \ar[r]^{GF \sigma f} &
GF \sigma A' \ar[d]^{G \alpha_{A'}} \\
G \tau FA  \ar[d]^{\beta_{FA}} \ar[r]^{G \tau Ff} &
G \tau FA' \ar[d]^{\beta_{FA'}} \\
\rho GFA \ar[r]^{\rho GFf} &
\rho GFA' }$$
\end{proof}

\begin{definition}
An \textbf{involutary natural transformation}\index{Involutary natural transformation}
$\eta \co (F, \alpha) \longrightarrow (G, \beta)$
between involutary functors
$(F, \alpha), \ (G, \beta) \co (\ccal, \sigma) \longrightarrow (\dcal, \tau)$
is a natural transformation $\eta \co F \to G$
such that $\tau \eta \circ \alpha = \beta \circ \eta \sigma$.
\end{definition}

\begin{lemma}
An involutary natural transformation
$\eta \co (F, \alpha) \longrightarrow (G, \beta)$
passes to a natural transformation
between the skewed functors
$$\eta = (\sigma, \tau) \# \eta \co (\sigma, \tau) \# F
\longrightarrow (\sigma, \tau) \# G$$
called the \textbf{skewed natural transformation}\index{Skewed
natural transformation}.
\end{lemma}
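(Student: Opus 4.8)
The plan is to construct the components of the skewed natural transformation directly from those of $\eta$ and then verify that the required naturality square commutes. Recall that for an object $(A,w)$ of $\sigma \# \ccal$, the skewed functor $(\sigma,\tau)\# F$ sends it to the object $\alpha_A \circ Fw \co FA \to \tau FA$ of $\tau \# \dcal$, and similarly $(\sigma,\tau)\# G$ sends it to $\beta_A \circ Gw \co GA \to \tau GA$. So I would \emph{define} the component of the skewed natural transformation at $(A,w)$ to be simply $\eta_A \co FA \to GA$ (labelling, as the paper does throughout this chapter, a morphism of a skewed category by its first factor).

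The only thing to check is that this $\eta_A$ is actually a morphism in $\tau \# \dcal$ from $\alpha_A \circ Fw$ to $\beta_A \circ Gw$; that is, the square
$$\xymatrix@C+0.4cm{
FA \ar[d]_{\alpha_A \circ Fw} \ar[r]^{\eta_A} & GA \ar[d]^{\beta_A \circ Gw} \\
\tau FA \ar[r]^{\tau \eta_A} & \tau GA }$$
commutes. First I would use naturality of $\eta \co F \to G$ applied to the morphism $w \co A \to \sigma A$ of $\ccal$, giving $Gw \circ \eta_A = \eta_{\sigma A} \circ Fw$. Then I would use the defining condition of an involutary natural transformation, $\tau \eta \circ \alpha = \beta \circ \eta \sigma$, evaluated at $A$, which reads $\tau \eta_A \circ \alpha_A = \beta_A \circ \eta_{\sigma A}$. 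Composing: $(\beta_A \circ Gw) \circ \eta_A = \beta_A \circ \eta_{\sigma A} \circ Fw = \tau \eta_A \circ \alpha_A \circ Fw = \tau \eta_A \circ (\alpha_A \circ Fw)$, which is exactly the commutativity required. Hence $\eta_A$ is a legitimate morphism in $\tau \# \dcal$.

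Finally I would record that this assignment $(A,w) \mapsto \eta_A$ is natural in $(A,w)$: given a morphism $f \co (A,w) \to (A',w')$ of $\sigma \# \ccal$, the skewed functors send $f$ to $Ff$ and $Gf$ respectively, and naturality of the skewed transformation amounts to the square with $Ff$, $Gf$, $\eta_A$, $\eta_{A'}$ commuting in $\dcal$ --- but this is just the original naturality of $\eta \co F \to G$ at $f$, regarded as a morphism of $\ccal$. I do not anticipate any genuine obstacle here; the content is entirely formal, and the one mildly delicate point is simply keeping straight that a morphism of a skewed category is being recorded by its first factor, so that ``the same map $\eta_A$'' really does carry the needed data. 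In writing it up I would present the square-chase of the previous paragraph as the single substantive verification.
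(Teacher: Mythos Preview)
Your proof is correct and takes essentially the same approach as the paper: the paper presents the argument as a single stacked diagram whose top square is naturality of $\eta$ at $w$ and whose bottom square is the involutary condition $\tau\eta\circ\alpha=\beta\circ\eta\sigma$, while you write out the same two facts and compose them as equations. The content is identical.
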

\begin{proof}
All we need do is draw the diagram below, which will give the
definition of $(\sigma, \tau) \# \eta$, explain the
requirement $\tau \eta \circ \alpha = \beta \circ \eta \sigma$
and make it clear (through the naturality of each square)
that this will be a natural transformation
between the skewed functors.
$$\xymatrix@R+0.2cm@C+0.4cm{
FA \ar[d]^{Fw} \ar[r]^{\eta_A} &
GA \ar[d]^{Gw} \\
F \sigma A \ar[d]^{\alpha_A} \ar[r]^{\eta_{\sigma A}} &
G \sigma A \ar[d]^{\beta_A} \\
\tau F A \ar[r]^{\tau \eta_A} &
\tau G A }$$
\end{proof}

\begin{definition}
An \textbf{involutary adjunction}\index{Involutary adjunction}
$\big( (F, \alpha), (G, \beta), \eta, \varepsilon \big)$
between two categories with involution
$(\ccal, \sigma)$ and $(\dcal, \tau)$ is
an adjunction
$( F, G, \eta, \varepsilon )$
consisting of involutary functors
$(F, \alpha) \co (\ccal, \sigma) \longrightarrow (\dcal, \tau)$
and
$(G, \beta) \co (\dcal, \tau) \longrightarrow (\ccal, \sigma)$
and involutary natural transformations
$\eta \co \id_{\ccal} \longrightarrow (GF, \beta F \circ G \alpha)$
and
$\varepsilon \co (FG, \alpha G \circ F \beta) \longrightarrow \id_{\dcal}$.
\end{definition}

\begin{lemma}\label{lem:skewadjunct}
An involutary adjunction $\big( (F, \alpha), (G, \beta), \eta, \varepsilon \big)$
passes to an adjunction of the skewed categories.
$\big( (\sigma, \tau) \# F, (\tau, \sigma) \# G,
(\sigma, \tau) \# \eta, (\tau, \sigma) \# \varepsilon \big)$.
Furthermore, this gives the commutative square
(see below) of adjoint functors below.
$$\xymatrix@R+1cm@C+2cm{
\sigma \# \ccal
\ar@<+0.7ex>[r]^{(\sigma, \tau) \# F} \ar@<+0ex>[d]|{\mathbb{P}} &
\tau \# \dcal
\ar@<+0.7ex>[l]^{(\tau, \sigma) \# G} \ar@<+0ex>[d]|{\mathbb{P}} \\
\ccal
\ar@<+0.7ex>[r]^{F} \ar@<+2ex>[u]^{\mathbb{D}} \ar@<-2ex>[u]_{\mathbb{D}'} &
\dcal
\ar@<+0.7ex>[l]^{G} \ar@<+2ex>[u]^{\mathbb{D}} \ar@<-2ex>[u]_{\mathbb{D}'}
}$$
\end{lemma}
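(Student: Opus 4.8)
The plan is to verify that the passage from an involutary adjunction to the skewed adjunction is entirely formal, unwinding the definitions and checking the triangle identities, then separately checking that the resulting square of adjoints commutes with the $\mathbb{P}$, $\mathbb{D}$ and $\mathbb{D}'$ functors. First I would set up the two skewed functors $(\sigma,\tau)\#F$ and $(\tau,\sigma)\#G$ (which exist by the Lemma on skewed functors) and the two skewed natural transformations $(\sigma,\tau)\#\eta$ and $(\tau,\sigma)\#\varepsilon$. Here the key observation is that for an object $(A,w)\in\sigma\#\ccal$ the unit $\eta_A\co A\to GFA$ is a morphism $(A,w)\to(GFA,\,\beta_{FA}\circ G\alpha_A\circ GFw)$ in $\sigma\#\ccal$: this is exactly the content of the hypothesis that $\eta$ is an involutary natural transformation $\id_\ccal\to(GF,\beta F\circ G\alpha)$, which supplies the required commuting square. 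Likewise $\varepsilon$ gives a morphism in $\tau\#\dcal$. Since morphisms in a skewed category are just morphisms of the underlying category satisfying a compatibility square (which is automatic here), the triangle identities $(\,(\tau,\sigma)\#G\,)\varepsilon\circ\eta(\,(\tau,\sigma)\#G\,)=\id$ and $\varepsilon(\,(\sigma,\tau)\#F\,)\circ(\,(\sigma,\tau)\#F\,)\eta=\id$ hold because they hold in $\ccal$ and $\dcal$ after applying $\mathbb{P}$, and $\mathbb{P}$ is faithful (it is the identity on morphisms). So I would phrase the core of the argument as: apply $\mathbb{P}$, observe the triangle identities there are just those of the original adjunction, and conclude.

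Next I would turn to the commuting square of adjoints. The content is that $\mathbb{P}\circ((\tau,\sigma)\#G)=G\circ\mathbb{P}$ and $((\sigma,\tau)\#F)\circ\mathbb{D}=\mathbb{D}\circ F$ (and the $\mathbb{D}'$ version), together with the statement that these identifications are compatible with the adjunction data, i.e.\ the mate of the identity $2$-cell is again an identity. On objects this is immediate: $\mathbb{P}$ is projection onto the first factor, so $\mathbb{P}((\tau,\sigma)\#G)(v\co B\to\tau B)=\mathbb{P}(\beta_B\circ Gv)=GB=G\mathbb{P}(B,v)$. For the left adjoints, $((\sigma,\tau)\#F)(\mathbb{D}X)=((\sigma,\tau)\#F)(X\vee\sigma X,\ \text{twist})$; one computes that $F$ applied to the twist, post-composed with $\alpha_{X\vee\sigma X}$ and using that $F$ preserves coproducts and $\alpha$ is natural, gives precisely the twist map on $FX\vee\sigma' FX$, so the result is $\mathbb{D}(FX)$. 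The $\mathbb{D}'$ case is dual, using products in place of coproducts. I would then note that each of these identities of functors is in fact an identity of the relevant adjoint pairs: the unit of $(\mathbb{D},\mathbb{P})$ transported along these isomorphisms is the unit of $(\mathbb{D},\mathbb{P})$ again, because all the maps involved (coproduct inclusions, twists, components of $\alpha$) are the ones appearing in the defining diagrams.

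The main obstacle — really the only place where a little care is needed rather than pure bookkeeping — is checking that the structure $2$-cell $\alpha$ for the composite involutary functor and the naturality of $\alpha$ interact correctly when one unwinds $((\sigma,\tau)\#F)\circ\mathbb{D}$: one must make sure the twist map $X\vee\sigma X\to\sigma X\vee X$ is sent under $F$ (plus the $\alpha$-correction) to the honest interchange-of-factors map $FX\vee\sigma' FX\to\sigma' FX\vee FX$, not merely to something naturally isomorphic to it. This follows from $F$ being a functor that strictly preserves the chosen coproducts and from the compatibility $\tau'\alpha\circ\alpha\sigma=\id_F$, but it is the step I would write out in full. Everything else I would dispatch by the phrase ``by applying $\mathbb{P}$ and using that $\mathbb{P}$ is faithful, this reduces to the corresponding statement for $(F,G,\eta,\varepsilon)$''.

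Finally, for completeness I would remark that the square of adjoints being commutative in the stated sense means: the left adjoints commute up to the canonical natural isomorphisms described above, the right adjoints commute up to the mate isomorphisms, and these two are compatible. Having checked the object-level identifications and that they respect the adjunction units and counits, the commutativity of the full square follows formally. I expect the whole proof to be about a page, dominated by the diagram chases already indicated in the statements of the skewed-functor and skewed-natural-transformation lemmas, with no new ideas required.
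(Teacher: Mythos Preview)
Your proposal is correct and follows essentially the same line as the paper for the first part: the skewed functors and skewed natural transformations exist by the earlier lemmas, and the triangle identities reduce (via $\mathbb{P}$, which is the identity on morphisms) to those of the original adjunction.

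For the commutative square, however, the paper is more economical than your plan. You propose to verify all four natural isomorphisms directly, and you flag the $\mathbb{D}$ case as the one requiring care --- specifically whether $F$ applied to the twist, corrected by $\alpha$, gives the \emph{honest} interchange map rather than merely something isomorphic to it. The paper avoids this entirely: it observes that the two isomorphisms involving $\mathbb{P}$ are immediate (since $\mathbb{P}$ is projection onto the first factor and the skewed functors act on underlying objects by $F$ and $G$), and then notes that this is all one needs to check. The point is that $\mathbb{D}\dashv\mathbb{P}\dashv\mathbb{D}'$, so the isomorphism $\mathbb{P}\circ(\tau,\sigma)\#G\cong G\circ\mathbb{P}$ of right adjoints forces $\mathbb{D}\circ F\cong(\sigma,\tau)\#F\circ\mathbb{D}$ by uniqueness of left adjoints, and similarly the isomorphism $\mathbb{P}\circ(\sigma,\tau)\#F\cong F\circ\mathbb{P}$ of left adjoints forces the $\mathbb{D}'$ isomorphism. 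Your worry about strict preservation of coproducts therefore never arises, and the computation you single out as the main obstacle is unnecessary.
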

\begin{proof}
Since $F$ and $G$ are involutary they
pass to the skewed categories.
We have a unit and counit for the skewed categories
by the assumption that $\eta$ and $\varepsilon$
are involutary.
We must check the following pair of equations of
natural transformations:
$$\big( (\tau, \sigma) \# G \ (\tau, \sigma) \# \varepsilon \big)
\circ \big((\sigma, \tau) \# \eta \ (\tau, \sigma) \# G \big)
= \id_{(\tau, \sigma) \# G}$$
$$\big((\sigma, \tau) \# F \ (\sigma, \tau) \# \eta \big)
\circ  \big( (\tau, \sigma) \# \varepsilon \ (\sigma, \tau) \# F \big)
= \id_{(\sigma, \tau) \#F}$$
but this is immediate from our definitions and the fact that we started with an
adjunction.
By the word commutative, we are claiming that
there are four natural isomorphisms as below.
$$\xymatrix{
{\mathbb{D} \circ F \cong  (\sigma, \tau) \# F \circ \mathbb{D}  }&
{\mathbb{P} \circ (\tau, \sigma) \# G \cong G \circ \mathbb{P}   } \\
{\mathbb{P} \circ (\sigma, \tau) \# F \cong F \circ \mathbb{P}   }&
{(\tau, \sigma) \# G \circ \mathbb{D}' \cong \mathbb{D}' \circ G }
}$$
It is easy to see that the isomorphisms containing
$\mathbb{P}$ exist. This is all that we need to check.
\end{proof}

\begin{rmk}
Consider a diagram of involutary adjunctions
that commutes up to natural isomorphism.
Then, in order to obtain a commuting diagram of
skewed categories, one must require that the
natural isomorphisms giving
the commutativity are involutary.
\end{rmk}

The following lemma shows that in the case of an adjunction,
one only has to check that one of the functors involved
is involutary to deduce that the other functor is involutary.
\begin{lemma}\label{lem:onlyneedhalf}
Consider an adjunction $(F,G, \eta, \varepsilon)$
between two categories with involution
$(\ccal, \sigma)$ and $(\dcal, \tau)$.
Then there is a natural transformation
$\alpha \co F \sigma \to \tau F$
such that $\tau \alpha \circ \alpha \sigma =\id_F$
if and only if there is a natural transformation
$\beta \co G \tau \to \sigma G$
such that $\sigma \beta \circ \beta \tau =\id_G$.
\end{lemma}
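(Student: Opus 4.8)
The plan is to obtain $\beta$ as the \emph{mate} (conjugate) of $\alpha$ under a pair of adjunctions built from $F \dashv G$ and from the self-adjunctions $\sigma \dashv \sigma$, $\tau \dashv \tau$, and then to check that the two ``order two'' conditions match up under the conjugate bijection. First I would record an elementary reformulation of the hypothesis: $\tau\alpha \circ \alpha\sigma = \id_F$ holds if and only if $\alpha$ is a natural isomorphism whose inverse is the whiskered transformation $\tau\alpha\sigma \co \tau F \to F\sigma$, i.e. at an object $A$ the inverse of $\alpha_A \co F\sigma A \to \tau F A$ is $\tau(\alpha_{\sigma A})$. This is a two-line diagram argument using only $\sigma^2 = \id_\ccal$, $\tau^2 = \id_\dcal$, and functoriality of $\tau$, and the symmetric statement is the desired condition on $\beta$.

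Next, $F \dashv G$ together with $\sigma \dashv \sigma$ and $\tau \dashv \tau$ yield two adjunctions between $\ccal$ and $\dcal$, namely $F\sigma \dashv \sigma G$ and $\tau F \dashv G\tau$, whose units and counits are assembled from $\eta$, $\varepsilon$ and the identities $\id = \sigma^2$, $\id = \tau^2$. Since $F\sigma$ and $\tau F$ are parallel functors $\ccal \to \dcal$, the classical conjugate correspondence gives a bijection between natural transformations $F\sigma \to \tau F$ and natural transformations $G\tau \to \sigma G$; I would simply \emph{define} $\beta$ to be the conjugate of $\alpha$, which unwinds to the composite
$$G\tau \longrightarrow \sigma G F\sigma G\tau \xrightarrow{\ \sigma G\,\alpha\,G\tau\ } \sigma G\tau F G\tau \longrightarrow \sigma G,$$
the first arrow coming from the unit of $F\sigma \dashv \sigma G$ and the last from the counit of $\tau F \dashv G\tau$.

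I would then invoke the standard facts that the conjugate correspondence is a bijection, compatible with vertical composition and identities, and compatible with whiskering by adjoint functors (left-whiskering $\alpha$ by $\tau$ corresponds to right-whiskering $\beta$ by the right adjoint $\tau$, and right-whiskering $\alpha$ by $\sigma$ corresponds to left-whiskering $\beta$ by $\sigma$). Compatibility with composition forces $\beta$ to be a natural isomorphism, since conjugation carries the inverse pair $(\alpha, \tau\alpha\sigma)$ to an inverse pair for $\beta$. Compatibility with whiskering identifies the conjugate of the operation $\alpha \mapsto \tau\alpha\sigma$ with the operation $\beta \mapsto \sigma\beta\tau$. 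Hence the conjugate of $\alpha^{-1} = \tau\alpha\sigma$ is on the one hand $\beta^{-1}$ and on the other hand $\sigma\beta\tau$, so $\sigma\beta\tau = \beta^{-1}$, which after reindexing with $\tau^2 = \id$ is exactly $\sigma\beta \circ \beta\tau = \id_G$. Running the construction backwards, starting from a $\beta$ of order two, returns a suitable $\alpha$ because conjugation is a bijection; this yields the converse. (One can observe in passing that this lemma is an instance of doctrinal adjunction for the strict $2$-action of the group of order two.)

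The main obstacle will be the bookkeeping in the last step: keeping straight on which side each of $\sigma$, $\tau$ gets whiskered, so that the conjugate of $\tau(-)\sigma$ comes out as $\sigma(-)\tau$ rather than, say, $\tau(-)\sigma$ on the $G$-side, and so that the reindexing by $\tau^2=\id$ turns $\sigma\beta\tau = \beta^{-1}$ into the stated equation. If one prefers not to appeal to the general theory of mates, the same conclusion can be reached by a direct diagram chase: take $\beta_D$ to be the explicit composite displayed above and verify $\sigma\beta \circ \beta\tau = \id_G$ by hand, using the triangle identities for $F \dashv G$, the hypothesis $\tau\alpha \circ \alpha\sigma = \id_F$, and naturality of $\alpha$, $\eta$ and $\varepsilon$; this is longer but entirely routine, and it makes the statement of Lemma \ref{lem:skewadjunct} available.
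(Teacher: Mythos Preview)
Your proposal is correct and takes essentially the same approach as the paper: both construct $\beta$ as the mate (conjugate) of $\alpha$ under the adjunctions $F\sigma \dashv \sigma G$ and $\tau F \dashv G\tau$, and then check that the composites $\tau\alpha \circ \alpha\sigma$ and $\sigma\beta \circ \beta\tau$ correspond under this bijection. The paper packages the whiskering-compatibility step into a single commutative rectangle of hom-sets rather than invoking the general properties of mates, but the content is the same.
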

\begin{proof}
Assume that $\alpha$ exists, this implies that
$F \sigma$ and $\tau F$ are naturally isomorphic
functors. Hence their right adjoints
$\sigma G$ and $G \tau$ are isomorphic, so we have
a natural transformation
$\beta \co G \tau \to \sigma G$.
The following diagram must commute
$$ \xymatrix{
\dcal(\tau^2 LX,Y) \ar[r]^= \ar[d]^{\tau \alpha \circ \alpha \sigma} &
\dcal(LX, \tau^2 Y) \ar[r]^\phi &
\ccal(X, R \tau^2 Y) \ar[d]^{\sigma \beta \circ \beta \tau} \\
\dcal(L \sigma^2 X,Y) \ar[r]^\phi &
\ccal( \sigma^2 X,RY) \ar[r]^= &
\ccal(X,\sigma^2 RY)
}$$
hence if $\tau \alpha \circ \alpha \sigma =\id_F$
then $\sigma \beta \circ \beta \tau =\id_G$.
It is clear that the converse is also true.
\end{proof}

\section{Involutary Monoidal Categories}\label{sec:invmonoidalcat}
We consider the conditions necessary for a monoidal
product on a category with involution to pass
to a monoidal product on the skewed category.

\begin{definition}
An \textbf{involutary monoidal category}\index{Involutary monoidal category}
is a category with a closed monoidal product
$(\otimes, \hom, \phi)$,
a functor $\sigma$ such that $(\ccal, \sigma)$
is a category with involution and
\begin{enumerate}
\item a natural transformation
$m \co \sigma (-) \otimes \sigma (-) \to \sigma (- \otimes -)$
such that $\sigma m \circ m(\sigma \otimes \sigma) = \id_{- \otimes -}$,
\item an isomorphism $i \co \mathbb{I} \to \sigma \mathbb{I}$
such that $\sigma i \circ i =\id_{\mathbb{I}}$ ($\mathbb{I}$ the unit of $\otimes$).
\end{enumerate}
That is, we require $\sigma$ to be a strong monoidal functor -- $(\sigma, m , i)$,
such that $m$ behaves in a similar way to an involutary natural
transformation and $i$ is a map of order two. If $\ccal$
is a symmetric monoidal category and $\sigma$ is a symmetric monoidal
functor then $(\ccal, \sigma)$ is an
\textbf{involutary symmetric monoidal category}.
\end{definition}

\begin{theorem}
The skewed category of an involutary symmetric monoidal category
is a symmetric monoidal category.
\end{theorem}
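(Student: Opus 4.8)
The plan is to construct the monoidal product on $\sigma \# \ccal$ directly from the data of the involutary symmetric monoidal structure on $(\ccal,\sigma)$ and then verify the coherence axioms. First I would define the product of two objects $(A,w)$ and $(B,v)$ of $\sigma \# \ccal$. The underlying object is $A \otimes B$, and the twist map should be the composite
$$A \otimes B \overset{w \otimes v}{\longrightarrow} \sigma A \otimes \sigma B \overset{m_{A,B}}{\longrightarrow} \sigma(A \otimes B).$$
I would check that this is a map of order two: applying $\sigma$ and composing, the relation $\sigma m \circ m(\sigma \otimes \sigma) = \id$ together with $\sigma w \circ w = \id_{\sigma A}$ and $\sigma v \circ v = \id_{\sigma B}$ (and naturality of $m$) gives $\sigma(m_{A,B} \circ (w\otimes v)) \circ (m_{A,B}\circ(w\otimes v)) = \id_{\sigma(A\otimes B)}$, by the same style of diagram chase as in the proof that an involutary functor passes to a skewed functor. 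The unit object is $(\mathbb{I}, i)$, which is a map of order two by hypothesis. On morphisms, if $f \co (A,w)\to(A',w')$ and $g \co (B,v)\to(B',v')$ are morphisms of $\sigma\#\ccal$, I would set $f \otimes g$ to be $f\otimes g$ in $\ccal$; the compatibility square for $(A\otimes B, m\circ(w\otimes v))$ follows from naturality of $m$ and the squares for $f$ and $g$.

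Next I would produce the associativity and unit isomorphisms. These are simply the associator $\phi$ and the left/right unitors of $\ccal$, viewed as morphisms in $\sigma\#\ccal$; what must be checked is that each of them \emph{is} a morphism in the skewed category, i.e. commutes with the respective twist maps. For associativity this amounts to a hexagon-type diagram built from two instances of $m$ and one instance of the associator on the $\sigma$ side, which commutes because $(\sigma, m, i)$ is a (strong) monoidal functor — the coherence of a monoidal functor says precisely that $m$ interacts correctly with the associators. For the unit isomorphisms one uses the unitality coherence of the monoidal functor $\sigma$ together with $i$ being the specified unit constraint. The symmetry isomorphism is the symmetry $T$ of $\ccal$; that it is a morphism in $\sigma\#\ccal$ uses that $\sigma$ is a \emph{symmetric} monoidal functor, so $\sigma T \circ m = m \circ T$ (after the appropriate reindexing), which is exactly what makes the square relating $m\circ(w\otimes v)$ and $m\circ(v\otimes w)$ commute. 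The relation $T_{Y,X}\circ T_{X,Y} = \id$ is inherited verbatim from $\ccal$.

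Finally I would note that the pentagon, triangle, and hexagon axioms for $(\sigma\#\ccal, \otimes, (\mathbb{I},i), \phi, T)$ hold because the forgetful functor $\mathbb{P} \co \sigma\#\ccal \to \ccal$ is faithful and strictly preserves $\otimes$, the unit, the associator, the unitors and the symmetry; since all these coherence diagrams commute in $\ccal$ and $\mathbb{P}$ reflects commutativity of diagrams (being faithful), they commute in $\sigma\#\ccal$. This reduces the whole verification to the two points above: (i) the proposed structure maps are genuinely objects/morphisms of $\sigma\#\ccal$ (the order-two and square conditions), and (ii) faithfulness of $\mathbb{P}$. The main obstacle is organising the order-two verification for the product twist map $m_{A,B}\circ(w\otimes v)$ cleanly — it requires simultaneously invoking the ``involutary-like'' identity $\sigma m\circ m(\sigma\otimes\sigma)=\id$ and naturality of $m$ to slide the $w,v$ past $m$, and one must be careful that the various instances of $\sigma^2 = \id_\ccal$ are applied consistently; everything else is bookkeeping that rides on the coherence of the monoidal functor $\sigma$ and the faithfulness of $\mathbb{P}$.
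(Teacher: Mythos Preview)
Your proposal is correct and matches the paper's construction of the symmetric monoidal structure: the product is $m_{A,B}\circ(w\otimes v)$, the unit is $(\mathbb{I},i)$, and the coherence isomorphisms are inherited from $\ccal$ (the paper invokes Mac Lane's coherence theorem where you use faithfulness of $\mathbb{P}$, which is a clean and equally valid shortcut). One point worth noting: although the theorem statement only says ``symmetric monoidal'', the paper's proof goes on to construct the internal hom and show the skewed category is \emph{closed}, using the induced natural isomorphism $f_{A,B}\co \hom(\sigma A,\sigma B)\to\sigma\hom(A,B)$ and a careful adjunction diagram chase; you omit this, which is fine for the statement as written, but be aware that the closedness is used downstream (e.g.\ for the pushout product axiom in $\sigma\#M$).
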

\begin{proof}
Let $(\ccal, \sigma)$ be a the category with involution,
with closed monoidal structure $( \otimes, \hom, \phi, m, i)$
such that $\ccal$ is an involuntary monoidal category.
We will use $( \otimes, \hom, \phi)$ to denote the monoidal
product on the skewed category as decorating these
symbols further would be horrific.
We begin by defining the action of the functor
$\otimes$ on a pair of objects
$u \co A \to \sigma A$ and $v \co B \to \sigma B$ to be
$$m_{A,B} \circ u \otimes v \co A \otimes B \to \sigma A \otimes \sigma B \to \sigma (A \otimes B).$$
This is an object of the skewed category
since $m$ is an involutary natural transformation.
The action on maps is obvious from this definition.
The unit is the object $i \co \mathbb{I} \to \sigma \mathbb{I}$,
since $i$ is a map of order two, this is also an object of the category.

Now one must check that $i \co \mathbb{I} \to \sigma \mathbb{I}$ defines a unit,
that the product as above is associative and that certain coherence diagrams
are satisfied see either \cite[VII]{mac} or \cite[Chapter 4]{hov99}.
This is all routine and follows from our assumption that $\sigma$ is strong monoidal,
most coherence diagrams are easy to check, but one can use the `coherence theorem'
of \cite[VII]{mac} to note that all the diagrams that we are checking commute.
Thus we have a monoidal product on $\sigma \# \ccal$ and it is clear that
this product is symmetric provided $\otimes$ is symmetric on $\ccal$ and $\sigma$
is a strong symmetric monoidal functor.

Now we move to showing that this monoidal structure is closed,
so we construct an internal function object.
Let $B$ be an object of $\ccal$.
The functors $- \otimes \sigma B $ and $\sigma(\sigma (-) \otimes B)$
are naturally isomorphic, hence (by Lemma \ref{lem:onlyneedhalf})
there is a canonical natural isomorphism of order two
$\hom(\sigma B, -) \to \sigma \hom (B, \sigma -)$.
A small amount of adjustment will give
$f_{A,B} \co \hom (\sigma A, \sigma B) \to \sigma \hom (A,B)$
and it follows that $\sigma f \circ f (\sigma, \sigma) =\id$,
that is, $f$ is an involutary
natural transformation in the same way that $m$ is.

Alternatively one can construct this natural transformation via
the following diagram (note that since $\sigma^2=\id$, a general object
of $\ccal$ can be written as $\sigma A$ for suitable $A$).
Take $A= \sigma \hom ( \sigma B, \sigma C)$ and follow the identity map round the diagram
to obtain the natural transformation $f \co \hom (\sigma -, \sigma -) \to \sigma \hom (-,-)$.
$$\xymatrix@R+0.2cm@C+0.5cm{
\hom(\sigma A, \hom ( \sigma B, \sigma C))
\ar[r]_(0.55){\phi_{\sigma A, \sigma B, \sigma C}^{-1}} &
\hom(\sigma A \otimes \sigma B, \sigma C)
\ar[r]_{(\sigma m_{\sigma A, \sigma B})^*} &
\hom(\sigma (A \otimes B), \sigma C)
\ar[d]_(0.45)\sigma \\
\hom(\sigma A, \sigma \hom ( B, C)) &
\hom( A , \hom ( B, C))
\ar[l]^\sigma &
\hom( A \otimes B, C)
\ar[l]^(0.45){\phi_{A, B, C}} }$$
Let $(A,u)$, $(B,v)$ and $(C,w)$ be objects of the skewed category,
we define the object $\hom ((A,u), (B,v))$ to be
$$f_{A,B} \circ \hom(\sigma u, v) \co \hom(A,B) \to
\hom(\sigma A, \sigma B) \to \sigma \hom(A,B). $$
We must prove that maps
$(A,u) \otimes (B,v) \to (C,w)$
are in natural bijection with maps
$(A,u) \to \hom \big( (B,v), (C,w) \big)$.
Consider the following pair of triangles, it follows from the construction of
the natural transformation $f$ above
that the left hand triangle commutes if and only if the right hand triangle commutes.
$$\xymatrix@R+0.2cm@C+0.8cm{
\sigma A \otimes \sigma B \ar[r]^g \ar[d]_{m_{A,B}} & \sigma C
& \sigma A \ar[r]^{\phi g} \ar[dr]_{\sigma \phi k} & \hom (\sigma B, \sigma C) \ar[d]^{f_{B,C}}\\
\sigma ( A \otimes B) \ar[ur]_{\sigma k} & & &
\sigma \hom (B,C) }$$
We draw another picture showing three squares. If any one of these
commutes, so do the other two.
$$\xymatrix{
A \otimes B \ar[dd]_{u \otimes v} \ar[r]^k & C \ar[dd]^w
& A \ar[r]^{\phi k} \ar[dd]_u & \hom (B,C) \ar[d]^{\hom (\id,w)}
& A \ar[r]^{\phi k} \ar[dd]_u & \hom (B,C) \ar[dd]^{\hom (\sigma v,w)} \\
& & & \hom (B, \sigma C) \\
\sigma A \otimes \sigma B \ar[r]_g & \sigma C
& \sigma A \ar[r]_(0.4){\phi g} & \hom (\sigma B, \sigma C) \ar[u]_{\hom (v,\id)}
& \sigma A \ar[r]_(0.4){\phi g} & \hom (\sigma B, \sigma C) }$$
Fix $k$, then by the above
$g= w \circ k \circ (\sigma u \otimes \sigma v)$.
We combine these two collections of diagrams
(and remove $g$ from the result) to obtain
the pair of diagrams below.
The left hand diagram commutes if and only if the
right hand diagram commutes.
Thus we have our adjunction on the skewed category.
$$\xymatrix{
A \otimes B \ar[d]_{u \otimes v} \ar[r]^k & C \ar[dd]^w
& A \ar[r]^{\phi k} \ar[dd]_u & \hom (B,C) \ar[d]^{\hom (\sigma v,w)} \\
\sigma A \otimes \sigma B \ar[d]_{m_{A,B}} & & & \hom (\sigma B, \sigma C) \ar[d]^{f_{B,C}}\\
\sigma ( A \otimes B) \ar[r]_{\sigma k} & \sigma C
& \sigma A \ar[r]_{\sigma \phi k} & \sigma \hom (B,C) }$$
\end{proof}

One can remove the assumption that $\otimes$ is symmetric.
The above proof would suffice to show that one obtains
a monoidal skewed category, but one would have to take care
over the fact that there are now two (possibly different)
right adjoints to $\otimes$ (these are $\hom_l$ and $\hom_r$
from Definition \ref{def:Qbifunctor}).

\begin{definition}
An \textbf{involutary monoidal adjunction}\index{Involutary monoidal adjunction}
is an involutary adjunction
$(L,\alpha) : (\ccal, \sigma) \overrightarrow{\longleftarrow} (\dcal, \tau) :(R,\beta)$
such that $L$ is op-monoidal and $R$ is monoidal
and $\beta$ is a monoidal natural transformation $R \tau \to \sigma R$.
An \textbf{involutary symmetric monoidal adjunction}
between involutary symmetric monoidal categories
is an involutary monoidal adjunction
that is a symmetric monoidal adjunction.
\end{definition}

\begin{lemma}\label{lem:invmonadjskewed}
An involutary monoidal adjunction passes
to a monoidal adjunction on the skewed categories.
If this adjunction is strong monoidal or symmetric monoidal
then so is the adjunction on the skewed category.
\end{lemma}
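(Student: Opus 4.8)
\textbf{Proof plan for Lemma \ref{lem:invmonadjskewed}.}

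The plan is to build on the two facts already available: first, that an involutary adjunction passes to an adjunction of the skewed categories (Lemma \ref{lem:skewadjunct}), and second, that an involutary (symmetric) monoidal category has a (symmetric) monoidal skewed category (the theorem immediately preceding this lemma). So the skewed categories $\sigma \# \ccal$ and $\tau \# \dcal$ are already monoidal, and $(\sigma,\tau) \# L$ and $(\tau,\sigma) \# R$ already form an adjoint pair. What remains is to equip the skewed functors with monoidal (resp.\ op-monoidal) structure maps and to verify the compatibility between the adjunction unit/counit and those structure maps, i.e.\ that we have a genuine monoidal adjunction on the skewed level.

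First I would treat the right adjoint $R$. It comes with structure maps $R X \otimes R Y \to R(X \otimes Y)$ and $\mathbb{I}_\ccal \to R \mathbb{I}_\dcal$ in $\ccal$. For objects $(X,v)$ and $(Y,w)$ of $\tau \# \dcal$, the skewed tensor is $m^\dcal_{X,Y} \circ (v \otimes w)$, and I would check that the $\ccal$-morphism $R X \otimes R Y \to R(X \otimes Y)$ actually is a morphism in $\sigma \# \ccal$ from $(\tau,\sigma)\#R(X,v) \otimes (\tau,\sigma)\#R(Y,w)$ to $(\tau,\sigma)\#R\big((X,v)\otimes(Y,w)\big)$. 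This is a diagram chase whose commutativity uses exactly the hypothesis that $\beta \co R\tau \to \sigma R$ is a \emph{monoidal} natural transformation, together with the compatibility $\sigma\beta \circ \beta\tau = \id_R$ and the fact that $m^\ccal, m^\dcal$ are strong-monoidal structure maps for $\sigma,\tau$. The same sort of check applies to the unit map $\mathbb{I}_\ccal \to R\mathbb{I}_\dcal$, using that $i_\ccal, i_\dcal$ are maps of order two compatible with $\beta$. This makes $(\tau,\sigma)\#R$ a monoidal functor. For the left adjoint $(\sigma,\tau)\#L$ one argues dually (or invokes Lemma \ref{lem:onlyneedhalf} and the standard fact that the op-monoidal structure on a left adjoint is mated to the monoidal structure on the right adjoint), obtaining op-monoidal structure maps $L(A \otimes B) \to LA \otimes LB$ and $L\mathbb{I}_\ccal \to \mathbb{I}_\dcal$ that are again checked to be morphisms in $\tau \# \dcal$.

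Finally I would verify that the unit $(\sigma,\tau)\#\eta$ and counit $(\tau,\sigma)\#\varepsilon$ are compatible with these structure maps, so that the skewed adjunction is monoidal in the sense of the definitions of Section \ref{sec:nucats}; since the underlying morphisms of $(\sigma,\tau)\#\eta$ and $(\tau,\sigma)\#\varepsilon$ are just $\eta$ and $\varepsilon$ themselves, and $\mathbb{P}$ creates the relevant diagrams, all of these identities follow directly from the corresponding identities for the original monoidal adjunction $(L,R)$ on $\ccal$ and $\dcal$. If $(L,R)$ is strong monoidal, the structure maps above are isomorphisms in $\ccal$ and $\dcal$ and hence isomorphisms of skewed objects, so the skewed adjunction is strong monoidal; if $(L,R)$ is symmetric, the extra compatibility with the commutativity isomorphisms transfers because the symmetry on a skewed category is built from the symmetry on the underlying category. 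The main obstacle, and the only place real work is needed, is the diagram chase showing that $R$'s structure maps respect the order-two twists --- i.e.\ that $\beta$ being monoidal is exactly the condition that makes $(\tau,\sigma)\#R$ monoidal; everything else is either a dual restatement or an immediate consequence of Lemma \ref{lem:skewadjunct}.
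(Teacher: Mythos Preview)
Your proposal is correct and follows essentially the same approach as the paper. The paper's proof is much terser --- it simply writes out the two diagrams encoding the assumption that $\beta$ is a monoidal natural transformation, observes that these are precisely what is needed for $R$'s monoidal structure maps to be morphisms in the skewed category, and then declares the strong and symmetric cases obvious --- but your more detailed outline (including the treatment of $L$ via mates and the unit/counit check) expands on exactly the same argument, and you correctly identify the diagram chase for $R$ as the only substantive step.
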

\begin{proof}
Take an involutary monoidal adjunction
$(L,\alpha) : (\ccal, \sigma) \overrightarrow{\longleftarrow} (\dcal, \tau) :(R,\beta).$
The assumptions on $\beta$ are the same as requiring that the
diagrams below commute.
$$ \xymatrix{
{\mathbb{I}}_\ccal \ar[r]^\nu \ar[dd]_{i_\ccal}  &
R{\mathbb{I}}_\ccal \ar[d]^{R i_\dcal} & &
R\tau d \otimes R\tau d'
\ar[d]_{\beta_d \otimes \beta_{d'}}
\ar[r] &
R\tau (d \otimes d')
\ar[d]^{\beta_{d \otimes d'}} \\
&  R \tau {\mathbb{I}}_\ccal \ar[d]^{\beta} &&
\sigma Rd \otimes \sigma Rd'
\ar[r] &
\sigma R( d \otimes d') \\
\sigma {\mathbb{I}}_\ccal \ar[r]^{\sigma \nu}  &
\sigma R {\mathbb{I}}_\ccal } $$
Thus $R$ passes to a monoidal functor on the skewed category,
hence the skewed adjunction is monoidal.
The statement regarding strong monoidal adjunctions
is obvious. For the symmetric monoidal statement
it is easy to check that if $L$ and $R$
are symmetric then so are $\sigma \# L$
and $\tau \# R$.
\end{proof}

\begin{definition}
Let $(\ccal, \sigma)$ be an involutary (symmetric) monoidal model category.
Then $(\ccal, \sigma)$ is an
\textbf{involutary (symmetric) closed $\nu$-algebra}\index{Involutary
closed nu-algebra@involutary closed $\nu$-algebra}
if there is a strong (symmetric) monoidal involutary adjunction
$i_\ccal : \nu \overrightarrow{\longleftarrow} \ccal : j_\ccal$
with respect to the involution  $\id_\nu$ on $\nu$.
\end{definition}

\begin{lemma}
The skewed category of an involutary (symmetric) closed $\nu$-algebra
is a closed (symmetric) $\id \# \nu$-algebra. Furthermore it is
a closed $\nu$-algebra.
\end{lemma}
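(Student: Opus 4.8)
The statement is: if $(\ccal, \sigma)$ is an involutary (symmetric) closed $\nu$-algebra, then the skewed category $\sigma \# \ccal$ is a closed (symmetric) $\id \# \nu$-algebra, and moreover a closed $\nu$-algebra. The plan is to unpack the hypotheses, apply the results already in this section, and then chase through the definitions. First I would recall that an involutary closed $\nu$-algebra comes equipped with a strong (symmetric) monoidal involutary adjunction $i_\ccal : \nu \rightleftarrows \ccal : j_\ccal$ with respect to the trivial involution $\id_\nu$ on $\nu$. Since $\sigma$ makes $\ccal$ an involutary (symmetric) monoidal category, the previous theorem (the skewed category of an involutary symmetric monoidal category is symmetric monoidal) gives that $\sigma \# \ccal$ is a (symmetric) closed monoidal category. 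So the monoidal structure on the target is already in hand; what remains is to produce the strong (symmetric) monoidal adjunction exhibiting it as a $(\id \# \nu)$-algebra.

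For that I would invoke Lemma \ref{lem:invmonadjskewed}: an involutary monoidal adjunction passes to a monoidal adjunction on the skewed categories, and preserves strength and symmetry. Applying this to $i_\ccal : \nu \rightleftarrows \ccal : j_\ccal$, with the involution $\id_\nu$ on $\nu$, I obtain a strong (symmetric) monoidal adjunction
$$\id \# i_\ccal : \id \# \nu \overrightarrow{\longleftarrow} \sigma \# \ccal : \sigma \# j_\ccal.$$
The only point requiring a word of justification is that $(i_\ccal, j_\ccal)$ is genuinely an involutary monoidal adjunction in the sense required: the natural transformation $\alpha \co i_\ccal \id_\nu \to \sigma i_\ccal$ and the comonoidal/monoidal compatibilities are exactly the data packaged into the hypothesis "strong (symmetric) monoidal involutary adjunction", and by Lemma \ref{lem:onlyneedhalf} the existence of $\alpha$ is equivalent to the existence of the corresponding $\beta \co j_\ccal \sigma \to \id_\nu j_\ccal$. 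So the hypotheses of Lemma \ref{lem:invmonadjskewed} are met, and the skewed adjunction is strong (symmetric) monoidal. This gives the first assertion: $\sigma \# \ccal$ is a closed (symmetric) $\id \# \nu$-algebra.

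For the final clause — that $\sigma \# \ccal$ is also a closed $\nu$-algebra — I would compose the above adjunction with the trivial-action adjunction relating $\nu$ and $\id \# \nu$. Recall from the earlier lemma (and Remark \ref{rmk:eqfunc}) that there is an adjoint triple $(\orb, \varepsilon, \fix)$ between $\nu$ and $\id \# \nu$, where $\varepsilon \co \nu \to \id \# \nu$ is the trivial-action functor, which is strong symmetric monoidal (it sends the unit to the unit and respects $\otimes$ since $\id \# \nu$ inherits its product objectwise). Composing $\varepsilon : \nu \rightleftarrows \id \# \nu : \fix$ with $\id \# i_\ccal : \id \# \nu \rightleftarrows \sigma \# \ccal : \sigma \# j_\ccal$ yields a strong symmetric monoidal adjunction $\nu \rightleftarrows \sigma \# \ccal$, exhibiting $\sigma \# \ccal$ as a closed $\nu$-algebra. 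The main obstacle I anticipate is purely bookkeeping: checking that $\varepsilon$ is strong (symmetric) monoidal and that the composite of strong monoidal adjunctions is strong monoidal, and keeping straight which involution is acting where (the trivial one $\id_\nu$ on $\nu$, the induced $\id \# (\id_\nu) = \id$ on $\id \# \nu$, and $\sigma$ on $\ccal$). None of this is deep, but one must be careful that the involutary structure maps really are the identity on the $\nu$-side so that "involutary adjunction over $\id_\nu$" reduces to "ordinary adjunction", which is what makes the composition with $\varepsilon$ legitimate.
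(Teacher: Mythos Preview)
Your proposal is correct and follows essentially the same approach as the paper: invoke Lemma \ref{lem:invmonadjskewed} to pass the involutary strong monoidal adjunction $(i_\ccal, j_\ccal)$ to a strong monoidal adjunction $\id \# \nu \rightleftarrows \sigma \# \ccal$, then compose with the strong monoidal adjunction $(\varepsilon, \fix)$ to obtain the closed $\nu$-algebra structure. The paper's proof is just a two-sentence compression of exactly this argument.
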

\begin{proof}
The first statement of the lemma is simply that an involutary adjunction
passes to an adjunction of the skewed categories.
The second statement follows from composing $(i,j)$
with the strong monoidal adjunction $(\varepsilon, \fix)$.
\end{proof}

\section{Involutary Model Categories}\label{sec:invmodcat}
The next logical step in developing
categories with involution is to consider model structures.
We provide criteria for model structures
and Quillen functors on involutary model categories
to pass to the skewed categories.

\begin{definition}
An \textbf{involutary model category}\index{Involutary model category}
$(M,\sigma)$ is a cofibrantly generated model category
$M$ with left Quillen functor $\sigma \co M \to M$ such that $\sigma^2=\id_M$.
That is, $(M, \sigma)$ is a category with involution.
\end{definition}
It follows, of course, that $\sigma$ is also a right Quillen functor
and that $\sigma$ preserves all weak equivalences (since a weak equivalence is a composite
of an acyclic cofibration followed by an acyclic fibration).

\begin{definition}
An \textbf{involutary Quillen functor}\index{Involutary Quillen functor}
is an involutary functor that is a Quillen functor.
\end{definition}

\begin{lemma}\label{lem:limcolim}
The category $\sigma \# M$ has all small limits and colimits.
\end{lemma}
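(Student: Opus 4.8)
The statement to prove is that $\sigma \# M$ has all small limits and colimits, for $(M,\sigma)$ an involutary model category (though the proof only uses that $M$ is complete and cocomplete and that $\sigma^2 = \id_M$). My plan is to reduce the computation of (co)limits in $\sigma \# M$ to (co)limits in $M$, using the fact that an object of $\sigma \# M$ is precisely an object of $M$ together with extra structure (a map of order two), and that $\sigma$, being an involution, commutes with all limits and colimits of $M$.

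First I would set up notation: given a small diagram $F \co J \to \sigma \# M$, write $F(j) = (A_j, w_j)$ where $w_j \co A_j \to \sigma A_j$ with $\sigma w_j \circ w_j = \id_{\sigma A_j}$, and for each arrow $\alpha \co j \to j'$ in $J$ let $F(\alpha)$ be represented (in the convention of Definition \ref{def:skewcat}) by a map $f_\alpha \co A_j \to A_{j'}$ in $M$ satisfying $w_{j'} \circ f_\alpha = \sigma f_\alpha \circ w_j$. Forgetting the twists and applying $\mathbb{P}$ (projection to the first factor) gives an ordinary diagram $\mathbb{P}F \co J \to M$, which has a colimit $C = \colim_J A_j$ since $M$ is cocomplete. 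The key point is that since $\sigma$ is a left adjoint it preserves this colimit, so $\sigma C \cong \colim_J \sigma A_j$; the maps $w_j \co A_j \to \sigma A_j$ then assemble into a cone over the diagram $j \mapsto \sigma A_j$ (here I would check the compatibility square, which is exactly the morphism condition $w_{j'} f_\alpha = \sigma f_\alpha \, w_j$), hence induce a canonical map $w \co C \to \sigma C$ by the universal property. I would then verify that $\sigma w \circ w = \id_{\sigma C}$: this follows because $\sigma w \circ w$ and $\id_{\sigma C}$ agree after precomposition with every colimit coprojection $A_j \to C$ (using $\sigma w_j \circ w_j = \id_{\sigma A_j}$ and that $\sigma^2 = \id$), so they are equal by uniqueness in the universal property of $C$. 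This produces an object $(C, w) \in \sigma \# M$, and one checks the coprojections $A_j \to C$ are morphisms in $\sigma \# M$ and that $(C,w)$ satisfies the universal property of $\colim_J F$ — a test cocone in $\sigma \# M$ is in particular a test cocone in $M$ under $\mathbb{P}$, inducing a unique map $C \to X$ in $M$, and one checks this map is compatible with the twists, hence a morphism in $\sigma \# M$.

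Dually, for limits I would use that $\sigma$, being a right adjoint as well, preserves limits: form $L = \lim_J A_j$ in $M$, obtain $\sigma L \cong \lim_J \sigma A_j$, and the $w_j$ induce a map $w \co L \to \sigma L$ by the universal property of $\sigma L$ as a limit; the order-two condition and universality are verified just as above but with arrows reversed. (Alternatively, the limit case also follows formally from the adjunction $\mathbb{P} \dashv \mathbb{D}'$ of the second lemma after the definition of the skewed category, since right adjoints create limits, but I would prefer to give the direct construction for symmetry with the colimit case and to avoid appealing to the existence of products in $M$ — which is anyway guaranteed.) I do not expect any genuine obstacle here: the only mild subtlety is bookkeeping the order-two / morphism-compatibility conditions under the universal properties, and these are routine diagram chases. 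The one thing worth stating carefully is the use of $\sigma$ being \emph{both} a left and a right Quillen functor (hence preserving both colimits and limits), which is exactly why $\sigma$ being an involution — and not merely a Quillen equivalence — is the right hypothesis.
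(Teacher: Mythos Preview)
Your proposal is correct and follows essentially the same approach as the paper: form the (co)limit of $\mathbb{P}F$ in $M$, use that $\sigma$ is both a left and right adjoint to commute it past the (co)limit, assemble the $w_j$ into a twist on the (co)limit, and verify the order-two condition via the universal property. The paper's write-up is terser (it omits the explicit check of the universal property that you sketch, and phrases the order-two verification as ``there is a unique isomorphism between any two colimits of a diagram''), but the substance is the same.
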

\begin{proof}
Take a diagram $D$ (i.e. a small category) and a functor $F \co D \to \sigma \# M$.
We can form $\colim_D \mathbb{P} F$ (since $M$ is bicomplete)
and since $\sigma$ is a left adjoint we have a canonical isomorphism
$\sigma \colim_D \mathbb{P} F \cong \colim_D \sigma \mathbb{P} F$.
We also have a map $\colim_D w \co \colim_D \mathbb{P} F \to \colim_D \sigma \mathbb{P} F$
we combine these maps to give an object of the skewed category
in the diagram below.
$$\xymatrix@R+0.2cm@C+0.4cm{
& \colim_D \mathbb{P} F
\ar[d]^{\colim_D w}
\ar[dr]
\ar[ddl]_= \\
& \colim_D \sigma \mathbb{P} F
\ar[r]
\ar[d]^{\colim_D \sigma w} &
\sigma \colim_D \mathbb{P} F
\ar[d]^{\sigma \colim_D w}
\ar[dr] \\
\colim_D \mathbb{P} F \ar[r]^=
& \colim_D \sigma^2 \mathbb{P} F
\ar[r] &
\sigma \colim_D \sigma \mathbb{P} F
\ar[r]
& \sigma^2 \colim_D \mathbb{P} F
\ar[d]^= \\
& & & \colim_D \mathbb{P} F}$$
The composite
$\colim_D \mathbb{P} F \to \sigma^2 \colim_D \mathbb{P} F$
is the identity since there is a unique isomorphism between any two
colimits of a diagram.
The case for a limit is identical
since $\sigma$ is also a right adjoint.
\end{proof}

\begin{definition}
Let $(M,\sigma)$ be an involutary model category, we define
a \textbf{weak equivalence} (respectively \textbf{fibration}) of $\sigma \# M$
to be a map $f$ such that $\mathbb{P}f$ is a weak equivalence
(respectively fibration).
\end{definition}

\begin{proposition}
These weak equivalences and fibrations define a model structure
on $\sigma \# M$ and we call this category and model structure the
\textbf{skewed model category}\index{Skewed model category}.
\end{proposition}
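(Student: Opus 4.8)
The plan is to transfer the cofibrantly generated model structure on $M$ to $\sigma \# M$ via the adjoint pair $(\mathbb{D}, \mathbb{P})$, using the standard lifting theorem (e.g. \cite[Theorem 2.1.19]{hov99}, usually attributed to Kan). Concretely, I would take the generating cofibrations of $\sigma \# M$ to be $\mathbb{D}I = \{ \mathbb{D} i \mid i \in I \}$ and the generating acyclic cofibrations to be $\mathbb{D}J = \{ \mathbb{D} j \mid j \in J \}$, where $I$ and $J$ are the generating (acyclic) cofibrations of $M$. Since $\mathbb{P}$ creates weak equivalences and fibrations by definition, the crux is to verify the hypotheses of the lifting theorem: that $\mathbb{D}$ sends the domains of $I$ and $J$ to small objects in $\sigma \# M$, and that relative $\mathbb{D}J$-cell complexes are weak equivalences.

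The smallness condition is the easy part. For a map $f \colon B \to B'$ in $\sigma \# M$ and a map $i \colon A \to A'$ in $M$, adjointness gives $(\sigma \# M)(\mathbb{D}A, (B,w)) \cong M(A, \mathbb{P}(B,w)) = M(A,B)$, and this isomorphism is natural and compatible with the colimits appearing in the smallness definition (filtered colimits in $\sigma \# M$ are computed by $\mathbb{P}$ on underlying objects, as in the proof of Lemma \ref{lem:limcolim}, together with the twist). Hence if $A$ is $\kappa$-small with respect to the underlying maps of a $\mathbb{D}I$-cell (resp. $\mathbb{D}J$-cell) complex, then $\mathbb{D}A$ is $\kappa$-small in $\sigma \# M$; this reduces immediately to smallness in $M$, which holds by cofibrant generation of $M$. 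Similarly, the right adjoint characterisation of fibrations and acyclic fibrations follows directly from the adjunction: a map $g$ in $\sigma \# M$ has the right lifting property with respect to $\mathbb{D}J$ iff $\mathbb{P}g$ has the right lifting property with respect to $J$ in $M$, i.e. iff $\mathbb{P}g$ is a fibration, i.e. iff $g$ is a fibration; and likewise for $\mathbb{D}I$ and acyclic fibrations.

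The main obstacle, as usual for transferred model structures, is the acyclicity condition: every relative $\mathbb{D}J$-cell complex must be a weak equivalence in $\sigma \# M$, i.e. $\mathbb{P}$ of it must be a weak equivalence in $M$. The key observation is that $\mathbb{P}$, being a right adjoint, does not obviously commute with the pushouts and transfinite compositions defining cell complexes — but here $\mathbb{D}$ has the special form $X \mapsto X \vee \sigma X$, so for any map $j \colon C \to D$ in $M$ the pushout in $\sigma \# M$ of $\mathbb{D}j$ along a map $\mathbb{D}C \to (B,w)$ is, on underlying objects, the pushout in $M$ of $j \vee \sigma j$ along the adjoint map $C \vee \sigma C \to B$; that is, $\mathbb{P}$ applied to such a pushout is the pushout of $j \vee \sigma j$ in $M$. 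Since $\sigma$ is a left Quillen functor, $\sigma j$ is an acyclic cofibration whenever $j$ is, hence so is $j \vee \sigma j$ (coproducts of acyclic cofibrations are acyclic cofibrations in any model category). Therefore $\mathbb{P}$ of each stage of a $\mathbb{D}J$-cell complex is obtained by cobase change along an acyclic cofibration of $M$, and $\mathbb{P}$ of the transfinite composite is the corresponding transfinite composite in $M$ of acyclic cofibrations, hence an acyclic cofibration and in particular a weak equivalence. This establishes the acyclicity condition.

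Once the lifting theorem applies, the remaining model category axioms are automatic: limits and colimits exist by Lemma \ref{lem:limcolim}; the two-out-of-three and retract axioms for weak equivalences, fibrations and cofibrations follow from the corresponding axioms in $M$ applied via $\mathbb{P}$ (noting that cofibrations in $\sigma \# M$ are exactly the maps with the left lifting property against acyclic fibrations, hence against $\mathbb{P}$-detected acyclic fibrations, and this class is closed under retracts); and the factorisation and lifting axioms come from the small object argument applied to $\mathbb{D}I$ and $\mathbb{D}J$, with functoriality inherited from the functorial small object argument in the cofibrantly generated category $M$. I would also record at this point that, by construction, $(\mathbb{D},\mathbb{P})$ is a Quillen pair, since $\mathbb{P}$ preserves fibrations and acyclic fibrations by definition; this will be useful later. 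Finally, I would remark that $\sigma \# M$ is again cofibrantly generated, with generating sets $\mathbb{D}I$ and $\mathbb{D}J$, so the construction can be iterated and is compatible with the monoidal and enriched structures discussed in Sections \ref{sec:invmonoidalcat} and \ref{sec:invmodcat}.
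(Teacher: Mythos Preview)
Your proposal is correct and follows essentially the same transfer-along-$(\mathbb{D},\mathbb{P})$ strategy as the paper, with the same key computation $\mathbb{P}\mathbb{D}k = k \vee \sigma k$. The one simplification you miss is that $\mathbb{P}$ is also a \emph{left} adjoint (it has the right adjoint $\mathbb{D}'$ constructed just before this proposition), so it preserves all colimits outright; this replaces your manual analysis of $\mathbb{P}$ on pushouts of $\mathbb{D}j$ with the immediate observation that $\mathbb{P}$ of a relative $\mathbb{D}J$-cell complex is a transfinite composition of pushouts of maps $k \vee \sigma k$ in $M$, hence an acyclic cofibration.
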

\begin{proof}
All that is required is to check that the lifting lemma (below) applies in the case of
$$\mathbb{D} : M \overrightarrow{\longleftarrow} \sigma \# M : \mathbb{P}.$$
We have already shown that $\mathbb{P}$ is a left adjoint and so preserves all colimits.
Let $I$ and $J$ denote the generating cofibrations and acyclic cofibrations of
$M$.
Now we must check that every relative $\mathbb{D}J$-cell complex is a weak equivalence,
Take $k \co A \to B$ a map in $J$, then
$$\mathbb{P} \mathbb{D} k = k \vee \sigma k \co A \vee \sigma A \to B \vee \sigma B $$
and since $\sigma$ is a left Quillen functor (on $M$), it follows that
$\mathbb{P} \mathbb{D} k$ is an acyclic cofibration in $M$.
A relative $\mathbb{D}J$-cell complex in $\sigma \# M$ is
a transfinite composition of pushouts of $\mathbb{D} J$.
Since $\mathbb{P}$ preserves these constructions (it is a left adjoint) and the
set of acyclic cofibrations (of $M$) are closed under these operations
(\cite[proof of 2.2.10]{hov99}), the result follows.
\end{proof}

The original reference for the lifting lemma
is of course \cite[II.4]{quil67}.
The lemma below is a variation on \cite[Theorem 11.3.2]{hir03}
where we assume that the right adjoint preserves
filtered colimits so that the required smallness
conditions hold.
\begin{lemma}[Lifting Lemma]\index{Lifting lemma}\label{lem:lift}
Let $F : M \rightleftarrows N : G$ be an adjoint pair of functors
($F$ is the left adjoint) with $M$ a cofibrantly generated model category.
Let $I$ be the generating cofibrations and
$J$ the generating acyclic cofibrations for $M$.
Assume that $N$ has all small colimits and limits.
Define a map $f$ in $N$ to be a weak equivalence or a fibration
if and only if $Gf$ is so. Define the cofibrations of $N$ to be those maps in $N$
with the correct lifting property. Then this construction
defines a cofibrantly generated model structure
on $N$ provided:
\begin{enumerate}
\item $G$ preserves filtered colimits,
\item every relative $FJ$-cell complex is a weak equivalence.
\end{enumerate}
The sets $FI$ and $FJ$ are the generating cofibrations
and generating acyclic cofibrations for $N$.
\end{lemma}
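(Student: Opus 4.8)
The plan is to prove the Lifting Lemma (Lemma \ref{lem:lift}) by reducing it to the standard recognition theorem for cofibrantly generated model categories, \cite[Theorem 2.1.19]{hov99}. Since we already know $N$ is bicomplete, the strategy is to verify each numbered hypothesis of that theorem for the sets $FI$ and $FJ$, using the adjunction $(F,G)$ to transfer the corresponding properties from $M$.

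First I would dispose of the smallness conditions. The domains of $I$ and $J$ are small in $M$; I want the domains of $FI$ and $FJ$ to be small in $N$ relative to $FI$-cell and $FJ$-cell. This is where hypothesis (1) is used: given a relative $FI$-cell complex, applying $G$ gives a filtered-colimit diagram in $M$ (since $G$ preserves filtered colimits, and transfinite compositions of pushouts are filtered colimits), and the adjunction $N(Fa, X) \cong M(a, GX)$ converts the smallness of $a$ in $M$ into the smallness of $Fa$ in $N$. I would also note the standard fact that $G$ of a pushout of $Fi$ along a map need not be a pushout, but the colimit over the whole transfinite sequence is still a filtered colimit, so one does not need $G$ to commute with pushouts, only with the total colimit; this is exactly the point of assuming $G$ preserves filtered colimits rather than all colimits.

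Next I would identify the two classes of maps defined by lifting. By construction, the fibrations of $N$ are the maps $p$ with $Gp$ a fibration of $M$, equivalently $Gp$ has the right lifting property with respect to $J$, equivalently (by adjunction) $p$ has the right lifting property with respect to $FJ$. Similarly the acyclic fibrations of $N$ (fibrations that are also weak equivalences) are exactly the maps with the right lifting property with respect to $FI$: one uses that $Gp$ being an acyclic fibration in $M$ is equivalent to $Gp$ having the RLP with respect to $I$, then transfers across the adjunction. This gives hypotheses (3) and (4) of the recognition theorem, and also shows the cofibrations of $N$ (defined by LLP against acyclic fibrations) coincide with the retracts of relative $FI$-cell complexes. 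The two-out-of-three axiom and the retract axiom for weak equivalences and (co)fibrations in $N$ follow immediately from the corresponding axioms in $M$, since $G$ preserves retracts and compositions and reflects isomorphisms into weak equivalences by definition.

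The main obstacle — and the only genuinely non-formal input — is showing that every relative $FJ$-cell complex is a weak equivalence, which is precisely hypothesis (2) that we have assumed. So in fact the proof has no hard step left once (2) is granted: the remaining work is to check that, with (2) in hand, every acyclic cofibration of $N$ is a relative $FJ$-cell complex up to retract, hence a weak equivalence, so that the acyclic cofibrations are exactly the maps with LLP against fibrations. This is the standard argument: factor a map using the small object argument on $FJ$ to get (relative $FJ$-cell) followed by (RLP against $FJ$) $=$ (fibration); the first factor is a weak equivalence by (2); then a retract argument (as in \cite[proof of Lemma 2.1.19]{hov99}) shows an acyclic cofibration is a retract of its relative-$FJ$-cell factor. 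Assembling these verifications, the recognition theorem yields the cofibrantly generated model structure on $N$ with generating (acyclic) cofibrations $FI$ and $FJ$, completing the proof. I would remark in passing that this is the form in which the lemma is applied in Lemma \ref{lem:limcolim}'s companion result and in Theorem \ref{thm:Fmodelstructure}, where $G$ is a change-of-groups or evaluation functor that visibly preserves filtered colimits.
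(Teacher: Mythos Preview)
The paper does not give its own proof of this lemma; it is stated as a known result, with a remark immediately preceding it that the original reference is \cite[II.4]{quil67} and that the statement is a variation on \cite[Theorem 11.3.2]{hir03}, the filtered-colimit hypothesis on $G$ being inserted precisely to guarantee the smallness conditions. Your proof via Hovey's recognition theorem \cite[Theorem 2.1.19]{hov99} is correct and is exactly the standard argument those references encode; Hovey's and Hirschhorn's recognition theorems are interchangeable here, so your approach and the paper's cited approach coincide.
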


\begin{lemma}
Let $(M,\sigma)$ be a model category with involution,
then $\mathbb{P} \co \sigma \# M \to M$ preserves cofibrations
and hence is a left Quillen functor.
\end{lemma}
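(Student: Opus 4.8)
The plan is to unwind the definitions and exploit the adjunction $\mathbb{D} \dashv \mathbb{P}$ together with the characterisation of cofibrations by the left lifting property. Recall that by the previous proposition, $\sigma \# M$ carries a model structure in which weak equivalences and fibrations are created by $\mathbb{P}$, with generating cofibrations $\mathbb{D}I$ and generating acyclic cofibrations $\mathbb{D}J$. A map in $\sigma \# M$ is a cofibration precisely when it has the left lifting property with respect to all acyclic fibrations of $\sigma \# M$; equivalently (by the small object argument applied in the cofibrantly generated setting just established) it is a retract of a relative $\mathbb{D}I$-cell complex.

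First I would take $(f,\sigma f) \co (A,u) \to (B,v)$ a cofibration of $\sigma \# M$ and aim to show $\mathbb{P}(f,\sigma f) = f$ is a cofibration of $M$. One route: since $(f,\sigma f)$ is a retract of a relative $\mathbb{D}I$-cell complex, and $\mathbb{P}$ preserves retracts and all colimits (being a left adjoint), it suffices to show that $\mathbb{P}$ applied to a generating cofibration $\mathbb{D}i$ (for $i \co C \to D$ in $I$) is a cofibration of $M$. But $\mathbb{P}\mathbb{D}i = i \vee \sigma i \co C \vee \sigma C \to D \vee \sigma D$. Since $\sigma$ is a left Quillen functor on $M$, $\sigma i$ is a cofibration, and the coproduct of two cofibrations is a cofibration; hence $\mathbb{P}\mathbb{D}i$ is a cofibration of $M$. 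Transfinite compositions and pushouts of cofibrations are cofibrations and are preserved by $\mathbb{P}$, so every relative $\mathbb{D}I$-cell complex is sent by $\mathbb{P}$ to a cofibration, and retracts finish the argument. An alternative, perhaps cleaner, route is purely by lifting: given an acyclic fibration $p \co X \to Y$ in $M$, one wants a lift in any square from $f$ to $p$; transposing across $\mathbb{D} \dashv \mathbb{P}$ turns this into a lifting problem against $\mathbb{D}p$ in $\sigma \# M$, and $\mathbb{D}p$ is an acyclic fibration there because $\mathbb{P}\mathbb{D}p = p \vee \sigma p$ is an acyclic fibration in $M$ (again using that $\sigma$ is right Quillen and that coproducts of acyclic fibrations between... — here one must be slightly careful, since coproducts of acyclic fibrations need not be acyclic fibrations in general, so the cell-complex route is the safer one to write out).

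Having shown $\mathbb{P}$ preserves cofibrations, I would then note $\mathbb{P}$ preserves acyclic cofibrations for the same reason ($\mathbb{P}\mathbb{D}J$ consists of acyclic cofibrations of $M$ by the identical argument, using that $\sigma$ is left Quillen so $\sigma j$ is an acyclic cofibration), so that $\mathbb{P}$ is a left Quillen functor; its right adjoint $\mathbb{D}'$ (the product construction, available since $M$ is bicomplete) is then the corresponding right Quillen functor. The main obstacle is the temptation to argue directly that $f$ lifts against acyclic fibrations of $M$: this does not transpose cleanly because $\mathbb{D}$ of an acyclic fibration of $M$ need not be an acyclic fibration of $\sigma \# M$ (coproducts do not preserve acyclic fibrations). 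The robust fix is to work with the explicit description of cofibrations as retracts of relative $\mathbb{D}I$-cell complexes — which is legitimate precisely because the skewed model structure was built via the Lifting Lemma with generating set $\mathbb{D}I$ — and then use only that $\mathbb{P}$ is a left adjoint (preserves colimits and retracts) and that $\sigma$ is left Quillen on $M$.
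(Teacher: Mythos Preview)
Your argument is correct, but the paper's proof is a one-liner that goes the other way round: rather than analysing $\mathbb{P}$ on cofibrations via the cell-complex description, it simply observes that the right adjoint $\mathbb{D}'$ preserves fibrations and acyclic fibrations. Indeed, $\mathbb{P}\mathbb{D}'f = f \prod \sigma f$, and since $\sigma$ is right Quillen and finite products of (acyclic) fibrations are (acyclic) fibrations in any model category, $\mathbb{D}'f$ is an (acyclic) fibration in $\sigma\# M$ whenever $f$ is one in $M$. This immediately gives that $\mathbb{P}$ is left Quillen. Your route via retracts of relative $\mathbb{D}I$-cell complexes works and is a good illustration of how the lifted model structure is built, but it is considerably longer; the asymmetry you noticed (coproducts of acyclic fibrations versus products of acyclic fibrations) is exactly why the paper's dual approach is cleaner here, since products are the right-handed construction and interact well with the right-handed classes.
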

\begin{proof}
The right adjoint $\mathbb{D}'$ preserves fibrations and acyclic fibrations.
\end{proof}

\begin{definition}
An \textbf{involutary Quillen pair}\index{Involutary Quillen pair}
is an involutary adjunction that is also a Quillen pair.
\end{definition}

\begin{lemma}
An involutary Quillen pair between model categories with involution,
passes to a Quillen pair between the skewed model categories.
\end{lemma}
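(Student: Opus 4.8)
The plan is to build on Lemma \ref{lem:skewadjunct}, which already shows that an involutary adjunction $\big((L,\alpha),(R,\beta),\eta,\varepsilon\big)$ passes to an adjunction $\big((\sigma,\tau)\#L,(\tau,\sigma)\#R,(\sigma,\tau)\#\eta,(\tau,\sigma)\#\varepsilon\big)$ between the skewed categories, and which in particular records the natural isomorphism $\mathbb{P}\circ(\tau,\sigma)\#R \cong R\circ\mathbb{P}$. So the adjunction part is done, and the only thing left is to verify the Quillen condition; I would do this through the right adjoint, since fibrations and weak equivalences of a skewed model category are detected by $\mathbb{P}$ whereas cofibrations are not.

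First I would recall that, by definition of the skewed model structure, a map $g$ in $\tau\#\dcal$ is a fibration (respectively a weak equivalence, hence respectively an acyclic fibration) precisely when $\mathbb{P}g$ is a fibration (respectively a weak equivalence) in $\dcal$, and likewise for $\sigma\#\ccal$ in place of $\tau\#\dcal$. Now take $g$ a fibration in $\tau\#\dcal$. Then $\mathbb{P}g$ is a fibration in $\dcal$, so $R(\mathbb{P}g)$ is a fibration in $\ccal$ because $(L,R)$ is a Quillen pair; using $\mathbb{P}((\tau,\sigma)\#R)(g)\cong R(\mathbb{P}g)$ we conclude that $\mathbb{P}((\tau,\sigma)\#R)(g)$ is a fibration in $\ccal$, i.e. $((\tau,\sigma)\#R)(g)$ is a fibration in $\sigma\#\ccal$. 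The same argument with ``acyclic fibration'' in place of ``fibration'' — using that $R$ also preserves acyclic fibrations and that $\mathbb{P}$ detects weak equivalences — shows $((\tau,\sigma)\#R)(g)$ is an acyclic fibration whenever $g$ is. Hence $(\tau,\sigma)\#R$ is a right Quillen functor, so the skewed adjunction is a Quillen pair.

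There is essentially no obstacle here; the one point worth care is that cofibrations in a skewed model category are defined by a lifting property and are not detected by $\mathbb{P}$, so one should not try to argue directly that $(\sigma,\tau)\#L$ preserves cofibrations by pushing them through $\mathbb{P}$ — working with the right adjoint sidesteps this. Alternatively, one could observe that $(\sigma,\tau)\#L$ is a left adjoint which, by the commuting square of Lemma \ref{lem:skewadjunct}, sends the generating cofibrations $\mathbb{D}I_\ccal$ and generating acyclic cofibrations $\mathbb{D}J_\ccal$ of $\sigma\#\ccal$ to $\mathbb{D}(LI_\ccal)$ and $\mathbb{D}(LJ_\ccal)$; since $L$ is left Quillen and $\mathbb{D}$ preserves colimits and retracts, these land among the cofibrations and acyclic cofibrations of $\tau\#\dcal$, whence $(\sigma,\tau)\#L$ is left Quillen. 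Either route gives the result.
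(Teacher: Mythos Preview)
Your proof is correct and follows essentially the same route as the paper: both verify the Quillen condition by showing the skewed right adjoint preserves fibrations and acyclic fibrations, using the identification $\mathbb{P}\circ(\tau,\sigma)\#R = R\circ\mathbb{P}$ from Lemma~\ref{lem:skewadjunct} together with the fact that fibrations and acyclic fibrations in the skewed model structure are detected by $\mathbb{P}$. Your additional remark about why one should work with the right adjoint rather than the left, and the alternative argument via generating cofibrations, go slightly beyond what the paper records but are entirely in its spirit.
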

\begin{proof}
Let $(F, \alpha) : (M, \sigma) \rightleftarrows (N, \tau) : (G, \beta)$
be an involutary Quillen pair.
We will show that $(\tau, \sigma) \# G$
is a right Quillen functor.
By Lemma \ref{lem:skewadjunct} we have the equation
$\mathbb{P}_M \circ (\tau, \sigma) \# G = G \circ \mathbb{P}_N$.
Take a map $f$ in $\tau \# N$, if $f$ is a fibration
or acyclic fibration then so is $G (\mathbb{P}_N (f))$.
Hence $(\tau, \sigma) \# G(f)$ is a fibration or
acyclic fibration by the definition of the model structure on
$\sigma \# M$.
\end{proof}

\begin{definition}
An \textbf{involutary Quillen equivalence}\index{Involutary Quillen equivalence}
is an involutary Quillen pair that is also a Quillen equivalence.
\end{definition}

\begin{proposition}
An involutary Quillen equivalence
passes to Quillen equivalence of the skewed model categories.
\end{proposition}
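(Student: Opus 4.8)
The plan is to combine the two facts already available to us. First, the excerpt has shown (in the immediately preceding lemma) that an involutary Quillen pair $(F,\alpha):(M,\sigma)\rightleftarrows(N,\tau):(G,\beta)$ passes to a Quillen pair $\big((\sigma,\tau)\#F,\ (\tau,\sigma)\#G\big)$ between the skewed model categories. So the only thing left to verify is the Quillen \emph{equivalence} condition: for every cofibrant object $(A,u)$ in $\sigma\#M$ and every fibrant object $(B,v)$ in $\tau\#N$, a map $f\co (A,u)\to (\tau,\sigma)\#G(B,v)$ is a weak equivalence in $\sigma\#M$ if and only if its adjunct $\widetilde f\co (\sigma,\tau)\#F(A,u)\to (B,v)$ is a weak equivalence in $\tau\#N$.

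The key observation is that weak equivalences in the skewed model categories are \emph{detected by the projection functors} $\mathbb{P}$: by definition, a map $g$ in $\sigma\#M$ is a weak equivalence exactly when $\mathbb{P}_M g$ is a weak equivalence in $M$, and likewise in $\tau\#N$. So the plan is: given $f\co (A,u)\to (\tau,\sigma)\#G(B,v)$, apply $\mathbb{P}_M$ to get $\mathbb{P}_M f\co \mathbb{P}_M(A,u)\to \mathbb{P}_M\,(\tau,\sigma)\#G(B,v)$. Using the identity $\mathbb{P}_M\circ(\tau,\sigma)\#G=G\circ\mathbb{P}_N$ from Lemma~\ref{lem:skewadjunct}, this is a map $\mathbb{P}_M(A,u)\to G\,\mathbb{P}_N(B,v)$ in $M$. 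One checks by a short adjunction chase that this map is precisely the adjunct (under the original adjunction $(F,G)$) of $\mathbb{P}_N\widetilde f\co F\,\mathbb{P}_M(A,u)\to \mathbb{P}_N(B,v)$; here one uses the compatibility $\mathbb{P}_N\circ(\sigma,\tau)\#F\cong F\circ\mathbb{P}_M$, also recorded in Lemma~\ref{lem:skewadjunct}, together with the fact that the unit and counit of the skewed adjunction are $\mathbb{P}$-compatible with those of $(F,G)$ (this is built into the definition of an involutary adjunction).

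Now I would invoke that the original pair $(F,G)$ is a Quillen equivalence. For this I need $\mathbb{P}_M(A,u)$ to be cofibrant in $M$ and $\mathbb{P}_N(B,v)$ to be fibrant in $N$. The first holds because $\mathbb{P}_M\co\sigma\#M\to M$ is a left Quillen functor (shown in the excerpt, via its right adjoint $\mathbb{D}'$ preserving fibrations and acyclic fibrations), so it carries the cofibrant object $(A,u)$ to a cofibrant object; the second holds because the fibrations of $\tau\#N$ are by definition detected by $\mathbb{P}_N$, so $(B,v)$ fibrant means $\mathbb{P}_N(B,v)\to *$ is a fibration, i.e.\ $\mathbb{P}_N(B,v)$ is fibrant in $N$. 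Therefore, by the Quillen equivalence $(F,G)$, the map $\mathbb{P}_M f$ is a weak equivalence in $M$ if and only if $\mathbb{P}_N\widetilde f$ is a weak equivalence in $N$. Translating back through the definition of weak equivalences in the skewed categories, $f$ is a weak equivalence in $\sigma\#M$ iff $\widetilde f$ is a weak equivalence in $\tau\#N$, which is exactly the Quillen equivalence criterion.

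The main obstacle I anticipate is purely bookkeeping rather than conceptual: one must be careful that the adjunct of $f$ computed \emph{in the skewed adjunction} really does project, under $\mathbb{P}$, to the adjunct of $\mathbb{P}_N\widetilde f$ computed in the original adjunction. This requires tracing the unit/counit of the skewed adjunction — which by the construction in Lemma~\ref{lem:skewadjunct} are the skewed natural transformations $(\sigma,\tau)\#\eta$ and $(\tau,\sigma)\#\varepsilon$ — and checking that $\mathbb{P}$ carries them to $\eta$ and $\varepsilon$. This commutativity is essentially the content of the square of adjoint functors displayed in Lemma~\ref{lem:skewadjunct}, so it is available, but writing the diagram chase cleanly is the only slightly delicate point. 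Everything else is a formal consequence of ``weak equivalences and cofibrations in the skewed category are created by $\mathbb{P}$'' plus the hypothesis that $(F,G)$ is a Quillen equivalence.
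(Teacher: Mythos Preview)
Your proposal is correct and follows essentially the same route as the paper: project via $\mathbb{P}$, use that $\mathbb{P}(A,u)$ is cofibrant and $\mathbb{P}(B,v)$ is fibrant, invoke the Quillen equivalence of $(F,G)$, and translate back using that weak equivalences in the skewed categories are created by $\mathbb{P}$. The paper is considerably more terse and does not pause over the bookkeeping you flag about adjuncts, but that concern evaporates once you notice that by construction $(\sigma,\tau)\#F$ and $(\tau,\sigma)\#G$ act on underlying morphisms exactly as $F$ and $G$ do, and the skewed unit and counit are literally $\eta$ and $\varepsilon$ on underlying objects; so the adjunct of $f$ in the skewed adjunction has $\mathbb{P}$-image equal to the adjunct of $\mathbb{P}f$ in the original adjunction, with no diagram chase needed.
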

\begin{proof}
Take an involutary Quillen equivalence
$(F, \alpha) : (M, \sigma) \rightleftarrows (N, \tau) : (G, \beta)$
then consider a cofibrant $c \to \sigma c$ in $\sigma \# M$
and a fibrant $d \to \tau d$ in $\tau \# N$.
Then $c$ is cofibrant in $M$ and $d$ is fibrant in $N$
so a map $Fc \to d$ is a weak equivalence if and only if
$c \to Gd$ is a weak equivalence.
But this is precisely the statement that
a map $(Fc \to \tau Fc) \to (d \to \tau d)$
is a weak equivalence if and only if
$(c \to \sigma c) \to (Gd \to \sigma Gd)$ is a weak equivalence.
\end{proof}

\begin{definition}
An \textbf{involutary (symmetric) monoidal model category}\index{Involutary monoidal
model category} is a category with involution $(M, \sigma)$
such that:
\begin{enumerate}
\item $M$ is a (symmetric) monoidal model category,
\item $(M, \sigma)$ is an involutary model category,
\item $(M, \sigma)$ is an involutary (symmetric) monoidal category.
\end{enumerate}
\end{definition}

\begin{lemma}
If $(M, \sigma)$ is an involutary (symmetric) monoidal model category
then $\sigma \# M$ is a (symmetric) monoidal model category. Furthermore,
if $M$ satisfies the monoid axiom, so does $\sigma \# M$.
\end{lemma}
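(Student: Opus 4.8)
The plan is to verify the pushout product axiom, the unit axiom and the monoid axiom for $\sigma \# M$ by reducing each to the corresponding statement in $M$ via the functor $\mathbb{P} \colon \sigma \# M \to M$. The key observation is that colimits in $\sigma \# M$ are created by $\mathbb{P}$ (Lemma \ref{lem:limcolim}), so $\mathbb{P}$ commutes with pushouts and transfinite compositions; moreover weak equivalences and fibrations in $\sigma \# M$ are \emph{defined} by applying $\mathbb{P}$. The generating cofibrations and generating acyclic cofibrations of $\sigma \# M$ are $\mathbb{D}I$ and $\mathbb{D}J$ (by the Lifting Lemma \ref{lem:lift}, as in the construction of the skewed model structure), where $I$, $J$ are those of $M$, and $\mathbb{P}\mathbb{D}k = k \vee \sigma k$. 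So everything about (acyclic) cofibrations in $\sigma\# M$ can be traced back, through $\mathbb{P}$, to (acyclic) cofibrations of $M$ built from $k$ and $\sigma k$, and $\sigma$ is a left Quillen functor on $M$.

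First I would treat the pushout product axiom. Given cofibrations $f \colon (A,u)\to(B,v)$ and $g\colon (C,s)\to(D,t)$ in $\sigma\# M$, one forms the pushout product $f\square g$ using the monoidal product $\otimes$ on $\sigma\# M$ constructed in Section \ref{sec:invmonoidalcat}. Since the underlying object of $(X,w)\otimes(Y,w')$ is $X\otimes Y$ with twist $m_{X,Y}\circ w\otimes w'$, and since $\mathbb{P}$ preserves the pushout defining $f\square g$, we get $\mathbb{P}(f\square g) = \mathbb{P}f \square \mathbb{P}g$ (pushout product computed in $M$). It then suffices to know that $\mathbb{P}f$ and $\mathbb{P}g$ are cofibrations of $M$, and acyclic when $f$ or $g$ is; this holds because $\mathbb{P}$ is a left Quillen functor (it has the right adjoint $\mathbb{D}'$ which preserves fibrations and acyclic fibrations). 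Then the pushout product axiom in $M$ gives that $\mathbb{P}(f\square g)$ is a cofibration of $M$, acyclic when appropriate, which by definition of the model structure on $\sigma\# M$ says exactly that $f\square g$ is an (acyclic) cofibration there. For the unit axiom, one takes a cofibrant replacement $q\colon \cofrep \mathbb{I} \to \mathbb{I}$ of the unit $(\mathbb{I},i)$ in $\sigma\# M$ and must check $\cofrep\mathbb{I}\otimes(X,w)\to \mathbb{I}\otimes(X,w)$ is a weak equivalence for cofibrant $(X,w)$; applying $\mathbb{P}$ reduces this to the unit axiom in $M$ for $\mathbb{P}q$ and $\mathbb{P}(X,w)$, which is cofibrant in $M$.

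For the monoid axiom, one must show that every map in $P$-cell is a weak equivalence, where $P$ is the class of maps $j \otimes \id_{(Z,w)}$ with $j$ an acyclic cofibration of $\sigma\# M$. Applying $\mathbb{P}$ and using $\mathbb{P}(j\otimes\id_{(Z,w)}) = \mathbb{P}j \otimes \id_Z$ (because the underlying object of the $\otimes$ in $\sigma\#M$ is the $\otimes$ of $M$), together with the fact that $\mathbb{P}j$ is an acyclic cofibration of $M$ and that $\mathbb{P}$ preserves pushouts and transfinite compositions, one lands in $P$-cell for $M$; since $M$ satisfies the monoid axiom, $\mathbb{P}$ of the given map is a weak equivalence, hence so is the map in $\sigma\# M$. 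The symmetric case is identical since the symmetry on $\sigma\#M$ is inherited from that on $M$. The main obstacle I anticipate is not any single step but the bookkeeping in identifying $\mathbb{P}$ of the monoidal product of two skewed objects with the monoidal product in $M$ of their underlying objects --- one must make sure the twist maps $m$, and the unit comparison $i$, play no role at the level of underlying objects, which is precisely the content of how $\otimes$ on $\sigma\#M$ was defined; once this identification is pinned down, all three axioms fall out mechanically from the corresponding properties of $M$.
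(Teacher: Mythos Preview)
Your overall strategy --- reduce to $M$ via $\mathbb{P}$ --- is right, and your treatment of the unit axiom and the monoid axiom is correct, since in both cases you only need to detect \emph{weak equivalences}, and those are defined via $\mathbb{P}$. The gap is in the pushout product axiom. You assert that $\mathbb{P}(f\square g)$ being a cofibration in $M$ ``says exactly that $f\square g$ is a cofibration'' in $\sigma\# M$, but this is not how the skewed model structure is defined: only weak equivalences and fibrations are created by $\mathbb{P}$; cofibrations are defined by the left lifting property. You know $\mathbb{P}$ \emph{preserves} cofibrations (it is left Quillen), but there is no reason it \emph{reflects} them: given an acyclic fibration $(X,r)\to(Y,s)$ in $\sigma\# M$ and a lift $h\colon B\to X$ in $M$, nothing forces $h$ to satisfy $r\circ h=\sigma h\circ v$, so $h$ need not be a morphism in $\sigma\# M$.

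The paper avoids this obstacle by using the \emph{dual} form of the pushout product axiom (\cite[Lemma 4.2.2]{hov99}): for a cofibration $f$ and a fibration $g$ in $\sigma\# M$, show the induced map
\[
\hom((D,v),(X,r))\longrightarrow \hom((D,v),(Y,s))\prod_{\hom((C,u),(Y,s))}\hom((C,u),(X,r))
\]
is a fibration, acyclic when $f$ or $g$ is. Since $\mathbb{P}$ preserves pullbacks (it is also a right adjoint) and $\mathbb{P}$ of the internal hom in $\sigma\# M$ is the internal hom in $M$, applying $\mathbb{P}$ reduces this to the pushout product axiom in $M$. This works precisely because \emph{fibrations} are detected by $\mathbb{P}$. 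If you prefer to stay with your direct form, you would have to argue on generating cofibrations and show explicitly that $\mathbb{D}i\square\mathbb{D}j$ is a cofibration in $\sigma\# M$ (e.g.\ by identifying it as $\mathbb{D}$ of a cofibration in $M$), which is more work.
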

\begin{proof}
By the machinery above we know that $\sigma \# M$ is
a (symmetric) monoidal category. Hence we must show that
this is a monoidal model category. To do so we use the following
alternative form of the pushout product axiom
(see \cite[Lemma 4.2.2]{hov99}).
Take $f \co (C,u) \to (D,v)$ a cofibration in $\sigma \# M$
and $g \co (X,r) \to (Y,s)$ a fibration.
Then we must prove that the induced map
$$\alpha \co \hom((D,v),(X,r)) \to
\hom((D,v),(Y,s)) \prod_{\hom((C,u),(Y,s))} \hom((C,u),(X,r))$$
is a fibration that is acyclic whenever
$f$ or $g$ is.
Since the codomain of $\alpha$ is a pullback
we can apply $\mathbb{P}$ and obtain a
pullback diagram in $M$, we draw this below..
$$\xymatrix{
\hom(D,X)
\ar[dr]^(0.7){\mathbb{P} \alpha}
\ar[drr]
\ar[ddr] \\
& P
\ar[d]
\ar[r]
& \hom(C,X)
\ar[d] \\
& \hom(D,Y)
\ar[r]
& \hom(C,Y) }$$
By the definition of the model structure on
$\sigma \# M$ we must check that
$\mathbb{P} \alpha$ is a fibration that is acyclic whenever
$\mathbb{P} f$ or $\mathbb{P} g$ is.
Since $\mathbb{P} f$ is a cofibration and $\mathbb{P}g$
is a fibration the result follows by the pushout product axiom
for $M$.

We must also prove a result
concerning the cofibrant replacement of $\mathbb{I}$.
Let $\cofrep$ be cofibrant replacement in $\sigma \# M$, the diagram
$$\xymatrix{
*+<0.3cm>{*}
\ar@{>->}[r]
\ar[d]
& \cofrep {\mathbb{I}}
\ar[d]^{\cofrep i}
\ar@{->>}[r]^\sim
& {\mathbb{I}}
\ar[d]^i \\
*+<0.3cm>{*}
\ar@{>->}[r]
& \sigma \cofrep {\mathbb{I}}
\ar@{->>}[r]^\sim
& \sigma {\mathbb{I}} }$$
gives a cofibrant replacement of the unit in $\sigma \# M$.
Thus $* \to \cofrep \mathbb{I} \to \mathbb{I}$
is a cofibrant replacement of the unit in $M$.
Let $(X,r)$ be a cofibrant object of $\sigma \# M$,
then $X = \mathbb{P}(X,r)$ is cofibrant in $M$.
Since $M$ is a monoidal model category,
$\cofrep \mathbb{I} \otimes X \to \mathbb{I} \otimes X$
is a weak equivalence. Thus
$$(\cofrep \mathbb{I}, \cofrep i) \otimes  (X,r)
\to (\mathbb{I}, i) \otimes  (X,r)$$
is a weak equivalence in $\sigma \# M$.
The statement regarding the monoid axiom holds by an
equally straightforward argument using the fact that
$\mathbb{P}$ preserves cofibrations.
\end{proof}

\begin{definition}
An \textbf{involutary (symmetric) monoidal Quillen pair}\index{Involutary monoidal
Quillen pair}
is an involutary (symmetric) monoidal adjunction
that is also a monoidal Quillen pair.
\end{definition}

\begin{proposition}
An involutary monoidal Quillen pair induces
a monoidal Quillen pair on the skewed categories.
If the involutary pair is strong monoidal or symmetric then
so is the skewed adjunction.
\end{proposition}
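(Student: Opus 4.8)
The plan is to assemble this result from the machinery already built in Sections \ref{sec:invmonoidalcat} and \ref{sec:invmodcat}, with essentially no new computation required. The statement asserts two things: first, that an involutary monoidal Quillen pair passes to a monoidal Quillen pair on the skewed categories; second, that strong monoidality or symmetry is preserved. Since an involutary monoidal Quillen pair is by definition both an involutary monoidal adjunction and a monoidal Quillen pair, we already have two partial conclusions in hand: by Lemma \ref{lem:invmonadjskewed}, the underlying involutary monoidal adjunction passes to a monoidal adjunction of the skewed categories, and by the lemma following the definition of an involutary Quillen pair, the underlying involutary Quillen pair passes to a Quillen pair of the skewed model categories. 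The only real content is to check that these two structures on the skewed adjunction are compatible, i.e. that the same adjoint pair $(\tau,\sigma)\#F$, $(\sigma,\tau)\#G$ simultaneously witnesses both, so that together they constitute a monoidal Quillen pair in the sense of the definition from \cite{ss03monequiv}.

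Concretely, first I would recall that both constructions produce literally the same functors: Lemma \ref{lem:skewadjunct} builds $(\sigma,\tau)\#F$ and $(\tau,\sigma)\#G$ from the involutary functors $(F,\alpha)$ and $(G,\beta)$, and Lemma \ref{lem:invmonadjskewed} shows this adjunction is monoidal (with $(\tau,\sigma)\#G$ monoidal, using that $\beta$ is a monoidal natural transformation), while the Quillen-pair lemma shows the very same adjunction is a Quillen pair (with $(\tau,\sigma)\#G$ right Quillen, via $\mathbb{P}_M\circ(\tau,\sigma)\#G = G\circ\mathbb{P}_N$ and the definition of the skewed model structure). So the skewed adjunction is an adjunction between the skewed monoidal model categories whose right adjoint is a monoidal functor and whose underlying pair is Quillen; this is exactly the data of a monoidal Quillen pair. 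The two remaining axioms of a monoidal Quillen pair — that $L(A\otimes B)\to LA\otimes LB$ is a weak equivalence on cofibrant objects, and that $L\cofrep S\to S$ is a weak equivalence — follow because weak equivalences and cofibrant objects in $\sigma\#M$ and $\tau\#N$ are detected by $\mathbb{P}$, which is a left Quillen functor commuting with the monoidal products up to the canonical isomorphisms $\mathbb{P}((A,u)\otimes(B,v)) = \mathbb{P}(A,u)\otimes\mathbb{P}(B,v)$ coming from the construction of the skewed monoidal product; thus the relevant maps in the skewed categories are sent by $\mathbb{P}$ to the corresponding maps for the original monoidal Quillen pair $(F,G)$, which are weak equivalences by hypothesis, and hence are weak equivalences in the skewed categories.

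For the second sentence: if the involutary monoidal Quillen pair is strong monoidal, then $L(A\otimes B)\to LA\otimes LB$ and $LS\to S$ are isomorphisms; applying $(\sigma,\tau)\#$ (equivalently, since these are isomorphisms detected objectwise and $\mathbb{P}$ is faithful on the skewed structure, checking after $\mathbb{P}$) shows the corresponding maps for the skewed adjunction are isomorphisms, so $(\sigma,\tau)\#F$ is a strong monoidal functor and we have a strong monoidal Quillen pair. If the pair is symmetric, then by the symmetric-monoidal case of Lemma \ref{lem:invmonadjskewed} the skewed adjunction is a symmetric monoidal adjunction — it is straightforward that $(\sigma,\tau)\#F$ and $(\tau,\sigma)\#G$ respect the symmetry isomorphisms since those are built from the symmetry of $M$ and $N$ together with the (symmetric) monoidal structures of $\sigma$ and $\tau$ — and combined with the Quillen property this gives a symmetric monoidal Quillen pair.

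I do not expect any serious obstacle here; the result is a formal bookkeeping assembly. The one point requiring a little care — and the place I would spend most effort — is verifying the two non-formal axioms of a monoidal Quillen pair (the weak-equivalence conditions on $L(A\otimes B)\to LA\otimes LB$ and $L\cofrep S\to S$), because these are homotopical rather than purely categorical statements; the argument is to transport them along the left Quillen functor $\mathbb{P}$, using that $\mathbb{P}$ reflects weak equivalences (by the very definition of weak equivalence in $\sigma\#M$) and preserves cofibrant objects and the monoidal product, so that each such map becomes, after applying $\mathbb{P}$, exactly the corresponding map for $(F,G)$ which is a weak equivalence by assumption. Everything else is a matter of citing Lemmas \ref{lem:skewadjunct}, \ref{lem:invmonadjskewed}, and the intervening Quillen-pair lemma.
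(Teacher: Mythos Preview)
Your proposal is correct and follows essentially the same approach as the paper: invoke Lemma~\ref{lem:invmonadjskewed} for the monoidal adjunction structure, the intervening lemma for the Quillen pair, and then verify the two homotopical axioms of a monoidal Quillen pair by noting that $\mathbb{P}$ detects weak equivalences and that cofibrant objects of $\sigma\#M$ have cofibrant underlying objects in $M$. The paper's proof is terser but makes exactly this argument.
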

\begin{proof}
Let $(F, \alpha) : (M, \sigma) \rightleftarrows (N, \tau) : (G, \beta)$
be our involutary monoidal pair.
By Lemma \ref{lem:invmonadjskewed} $(F,G)$
passes to a monoidal adjunction on the skewed category.
This skewed pair will be strong monoidal or symmetric
when $(F,G)$ is. All that remains to check
is that two technical conditions of
a monoidal Quillen pair hold for the
skewed adjunction.  That is, we must check
that if $(X,u)$ and $(Y,v)$
are cofibrant objects of $\sigma \# M$
then the map $$F \big( (X,u) \otimes (Y,v) \big) \to
F(X,u) \otimes F(Y,v)$$ is a weak
equivalence.
Let $(\mathbb{I}_M, i_M)$ be the unit of the
skewed category $\sigma \# M$.
We must also check that the map below is
a weak equivalence for a cofibrant replacement
$(\cofrep \mathbb{I}_M, \cofrep i_M)$ of
$(\mathbb{I}_M, i_M)$.
$$F (\cofrep \mathbb{I}_M, \cofrep i_M) \to F(\mathbb{I}_M, i_M)
\to (\mathbb{I}_N, i_N)$$
Both of these conditions hold since
$(F,G)$ is a monoidal Quillen pair and
the underlying object $X$ of a cofibrant
object $(X,u) \in \sigma \# M$ is cofibrant in $M$.
\end{proof}

\begin{proposition}\label{prop:skewedmodules}
Take $(M, \sigma)$ an involutary symmetric monoidal model
category satisfying the monoid axiom and let
$\alpha \co R \to \sigma R$ a commutative ring object in $\sigma \# M$.
Then $(R \leftmod , \alpha^* \sigma )$ is an involutary symmetric monoidal model
category and is equal to $(R, \alpha) \leftmod$, the category of
$(R, \alpha)$-modules in $\sigma \# M$.
\end{proposition}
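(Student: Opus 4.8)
The plan is to verify the three separate claims in turn, using the structural results already established and the (standard) machinery of modules over a ring object.

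First I would construct the involution on $R \leftmod$. Since $\alpha \co R \to \sigma R$ is a map of order two in $\sigma \# M$ making $R$ into a commutative ring object, restriction of scalars along $\alpha$ gives a functor $\alpha^* \co \sigma R \leftmod \to R \leftmod$. Composing with the functor $\sigma$ on module categories (which sends an $R$-module $N$ to $\sigma N$, an $\sigma R$-module via the strong monoidal structure $m$ of $\sigma$) yields $\alpha^* \sigma \co R \leftmod \to R \leftmod$. I would check that $(\alpha^* \sigma)^2 = \id$ using $\sigma^2 = \id_M$, the order-two condition $\sigma \alpha \circ \alpha = \id_R$, and the coherence identity $\sigma m \circ m(\sigma \otimes \sigma) = \id$ for the monoidal structure on $\sigma$. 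The fact that $\sigma$ is a left Quillen functor on $M$, together with the description (from the excerpt) of the model structure on $R \leftmod$ via the monoid axiom (weak equivalences and fibrations created in $M$), shows $\alpha^* \sigma$ is a left Quillen functor; hence $(R \leftmod, \alpha^* \sigma)$ is an involutary model category.

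Next I would upgrade this to an involutary symmetric monoidal model category. The monoidal product on $R \leftmod$ is $- \otimes_R -$; I would produce the natural transformation $m_R \co (\alpha^*\sigma)(-) \otimes_R (\alpha^*\sigma)(-) \to (\alpha^*\sigma)(- \otimes_R -)$ from the corresponding $m$ on $M$, using that $\sigma$ is strong (symmetric) monoidal so it carries the coequaliser defining $\otimes_R$ to the coequaliser defining $\otimes_{\sigma R}$, and then applying $\alpha^*$; similarly the unit isomorphism $R \to \sigma R$ is exactly $\alpha$, already a map of order two by hypothesis. The symmetry and associativity coherence diagrams follow from those in $M$ by the coherence theorem, exactly as in the proof that the skewed category of an involutary symmetric monoidal category is symmetric monoidal. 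Combined with the previous paragraph and the fact that $R \leftmod$ inherits the monoid axiom and pushout product axiom from $M$ (as recorded in the excerpt), this gives that $(R \leftmod, \alpha^* \sigma)$ is an involutary symmetric monoidal model category satisfying the monoid axiom.

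Finally, and this is the step I expect to require the most care, I would identify $(R \leftmod, \alpha^* \sigma)$ with $(R,\alpha) \leftmod$, the category of modules over the commutative ring object $(R,\alpha)$ in $\sigma \# M$. An $(R,\alpha)$-module is an object $(N, w) \in \sigma \# M$ with an action $(R,\alpha) \otimes (N,w) \to (N,w)$ in $\sigma \# M$; unwinding the definition of the monoidal product on the skewed category, this is precisely an $R$-module $N$ in $M$ with a map $w \co N \to \sigma N$ of order two such that the action map $R \otimes N \to N$ intertwines $w$ with the map $\alpha \otimes w$ followed by $m$ — which is exactly the datum of an object $N \in R\leftmod$ together with an isomorphism $w \co N \to (\alpha^*\sigma) N$ of order two, i.e. an object of the skewed category $(\alpha^*\sigma) \# (R\leftmod)$. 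One then checks this correspondence is an isomorphism of categories compatible with the monoidal structures and the model structures (both are created from $M$ via $\mathbb{P}$ and the forgetful functor to $M$, which commute). The bookkeeping matching up the twist $w$ on the module side with the skew structure on the ring-object side, and checking the intertwining condition is the same on both sides, is the genuine content; everything else is formal.
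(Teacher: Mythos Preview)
Your proposal is correct and follows essentially the same approach as the paper. The paper's proof is far terser: it focuses almost entirely on your third step (the identification of the skewed category of $R$-modules with $(R,\alpha)$-modules in $\sigma \# M$), drawing the commuting diagram that exhibits the $R$-action on $\alpha^*\sigma X$ as exactly the condition for $(X,w)$ to be an $(R,\alpha)$-module, and then disposes of your first two steps in one sentence by citing that $R$ commutative implies $R\leftmod$ is a monoidal model category satisfying the monoid axiom. Your ordering (build the involution, check monoidal compatibility, then identify) reverses the paper's emphasis, but the substance and the key diagram are the same.
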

\begin{proof}
An object of $(R \leftmod , \alpha^* \sigma )$ is an $R$-module $X$
and an $R$-module map of order two $w \co X \to \alpha^* \sigma X$.
It is important to recall that $\alpha^*$ only affects the
$R$-module structure of an object of $R \leftmod$, the underlying
object of $X$ is unchanged.
Thus we have the diagram
$$ \xymatrix{
R \smashprod X \ar[rr]^\nu \ar[d]^{\id \smashprod w} & & X \ar[dd]^w \\
R \smashprod \alpha^* \sigma X  \ar[d]^{\alpha \smashprod \id} \\
\sigma R \smashprod \sigma x  \ar[r]^{m}
& \sigma (R \smashprod X)  \ar[r]^{\sigma \nu} & \alpha^* \sigma X \\
}$$
where we note that $\sigma \nu \circ m \circ \alpha \smashprod \id$
is the $R$-module action on $\alpha^* \sigma X$.
It is clear that this precisely the requirement that
$X$ is an $(R, \alpha)$-module in $\sigma \# M$.
Equally one can see that the morphisms in these two categories agree.
The (acyclic) fibrations for each of the above categories
come from the model structure on $M$ and hence are the same.
Since $R$ is commutative, $R \leftmod$ is a monoidal model category
satisfying the monoid axiom, hence so is
$(R \leftmod , \alpha^* \sigma )$.
\end{proof}

\begin{rmk}\label{rmk:extendG}
We have constructed all the above for the case of
a functor $\sigma$ such that $\sigma^2= \id$. So
an involutary category $\ccal$ is a category
with an action of $C_2$.
The above can be generalised to
a category with an action of a group $G$.
That is a category $\ccal$, with a group
homomorphism $G \to \textrm{Aut}(\ccal)$ (invertible
functors $\ccal \to \ccal$).
Extending all of the above to general $G$ should
be formal, though the notation would have to be revised.
\end{rmk}

\section{Examples}\label{sec:involuteexamples}
\begin{ex}
Our first example of a category with involution
is a category $\ccal$ with the identity functor.
As described in Remark \ref{rmk:eqfunc} the skewed category
is the category of $C_2$-objects and maps in $\ccal$.
Thus $\id \# \ccal$
is the category of functors $C_2$ to $\ccal$,
where $C_2$ is the one-object category of a group.
Moreover, if $\ccal$
has a monoidal product, then $(\ccal, \id)$
is clearly an involutary monoidal category.
The monoidal product on $\id \# \ccal$ is then
the usual product of $C_2$-objects in $\ccal$.
Similarly any functor $F \co \ccal \to \dcal$
passes to the skewed categories
$F \co \id \# \ccal \to \id \# \dcal$
since $F$ preserves $C_2$-objects and $C_2$-maps.

When one considers model structures
there is a subtlety to consider,
which will reoccur in Example \ref{ex:o2spaces}.
A map $f \co (X,u) \to (Y,v)$ is
a weak equivalence or fibration
if and only if $f \co X \to Y$ is so in $\ccal$.
If $\ccal$ is the category of
topological spaces then
the usual model structure for $C_2$-spaces
has weak equivalences and fibrations those maps
$f \co X \to Y$ such that
$f$ and $f^{C_2} \co X^{C_2} \to Y^{C_2}$
are weak equivalences or fibrations in $\tscr_*$.
The model structure we have constructed on $\id \# \tscr_*$
is the $\fscr$-model structure on $C_2$-spaces
as mentioned on \cite[Page 70]{mm02},
for $\fscr$ the family consisting of the identity subgroup.
This is also known as the model category of
\textbf{free $C_2$-spaces}.
\end{ex}

\begin{ex}\label{ex:o2spaces}
This should be regarded as our motivating example:
it describes $O(2)$-spaces as $SO(2)$-spaces with extra structure.
For $t \in SO(2)$ and $k \in O(2) \setminus SO(2)$
we have the equation $ktkt=1$ in $O(2)$,
($t$ is a rotation and $k$ is a reflection).
Conjugation by $k$
considered as an automorphism of $O(2)$
restricts to the inversion automorphism $j$ of $SO(2)$,
$j(t) = t^{-1}$.
For a general group homomorphism
$f \co G \to H$, we can pull $X$, an $H$-space, back to a $G$-space
by $g * x = f(g) \cdot x$. Since this is a contravariant construction,
we use an upper asterisk and call this $G$-space $f^*X$.
The underlying set of $f^*X$ is the same and
for a $G$-map $g \co X \to Y$,
the underlying set map of $f^*g$
is the same as that for $g$.
Thus the group homomorphism $j$ gives $j^*$,
an involution on $SO(2)$-spaces.
We thus have the skewed category
$j^* \# SO(2) \tscr_*$, this is a
involutary symmetric monoidal model category.
We now see how this category relates to $O(2)$-spaces.

Take $X$ an $O(2)$-space then we can
consider $\iota^*X$, the restriction of $X$ to an $SO(2)$-space
along $\iota \co SO(2) \to O(2)$.
For the rest of this example $h$
will be a fixed reflection.
Since $X$ is an $O(2)$-space
we have a map $h \co \iota^*X \to j^*\iota^*X$.
We show that this is an $SO(2)$ map,
take $t \in SO(2)$ and let $\cdot$
be the $SO(2)$ action on
$\iota^*X$ and $*$ the action
of $SO(2)$ on $j^*\iota^*X$.
We have the equation
$$t*(h \cdot x)=t^{-1} \cdot (h \cdot x)=
(t^{-1} h) \cdot x =  (h t ) \cdot x
= h \cdot (t \cdot x).$$
We can also consider $h$ as a map
$j^*\iota^*X \to \iota^*X$ and it is
clear that $h$ is a map
of order two. Hence for an $O(2)$-space
$X$, $(\iota^*X,h)$ is an object of $j^* \# SO(2) \tscr_*$.
Thus we have a functor $I \co O(2) \tscr_* \to j^* \# SO(2) \tscr_*$.

It should be clear that an object of
$j^* \# SO(2) \tscr_*$ defines an $O(2)$-space.
Let $(X,w)$ be an object of the skewed category,
then we let $O(2)$ act on $X$ by defining
$h \co X \to X$ to be the map $w$.
This defines a functor $C \co
j^* \# SO(2) \tscr_* \to  O(2)\tscr_*$.
It is easy to see that $(C,I)$ are an adjoint
equivalence between $j^* \# SO(2) \tscr_*$
and $O(2) \tscr_*$, furthermore $C$ is a
strong monoidal functor.
The choice of $h$ is unimportant,
as one would expect since any two reflections
are conjugate by a rotation.
Different choices of $h$ will give
different adjoint pairs between
$O(2)$-spaces and $j^* \# SO(2) \tscr_*$.
These can be compared using the change of groups
functor on $O(2)$-spaces by considering
conjugation by a rotation as a group
homomorphism of $O(2)$.
Note that for a $SO(2)$-space $Y$ there won't be usually be an
$SO(2)$-map of order two $Y \to j^*Y$.

Now we turn to model structures,
we can put a model structure
on $O(2)$-spaces where a
map $f$ is a weak equivalence
or fibration if and only if
$\iota^* f$ is so in $SO(2)$-spaces.
This model structure is the $\cscr$-model structure
on $O(2) \tscr_*$ as mentioned on \cite[Page 70]{mm02},
for $\cscr$ the family of subgroups of $SO(2)$.
If we call this model structure $\cscr \tscr_*$,
then we can summarise this example in the statement:
there is a strong symmetric monoidal Quillen equivalence
between $j^* \# SO(2) \tscr_*$ and $\cscr \tscr_*$.
\end{ex}

\begin{ex}
We now consider a well-known algebraic example.
We take the following definition from \cite[5.4]{mccr01}:
for a ring $R$ and a group $G$
with $G$ acting on $R$ by $r \to r^g$,
the \textbf{skew group ring}\index{Skew group ring},
$R \# G$, is the free $R$-module with $G$ as a basis and
multiplication defined as $(hr)(gs)=(hg)(r^gs)$.
Note that this ring is not commutative.

Let $R$ be a commutative ring and $w \co R \to R$
a ring map such that $w^2 = \id_R$ and we obtain
$R \# W$. We also have an involution
$w^*$ (pullback along $w$)
on the category of $R$-modules.
It is easy to see that the category of
$R \# W$-modules and $w^* \# (R \leftmod)$
are isomorphic. One can show that
$R \# W$ is a co-associative, co-commutative co-algebra.
Furthermore the co-product ($\Delta$) is compatible with the ring multiplication
and unit. We use this to define a monoidal product,
for $R \# W$-modules $M$ and $N$ their product is
$M \otimes_R N$ with $R \# W$-module structure given by the composite
$$ \begin{array}{rcl}
R \# W \otimes_R (M \otimes_R N)
& \overset{\Delta}{\to} &
(R \# W \otimes_R R \# W) \otimes_R ( M \otimes_R N) \\
& \overset{T}{\to} &
(R \# W \otimes_R M) \otimes_R (R \# W \otimes_R N) \\
& \to &
M \otimes_R N.
\end{array}$$
This monoidal product on $R \# W \leftmod$ corresponds precisely
to the monoidal product on $w^* \# (R \leftmod)$.
We have model categories of $dg R \# W$-modules
and $dg R$-modules, using the projective model structure.
All of the previous material of this example still applies
and the isomorphism of categories between
$dg R \# W$-modules and $w^* \# (dg R \leftmod)$
is an isomorphism of (monoidal) model structures.
We must mention that \cite[Page 504]{ss00}
briefly considers skew group rings, but doesn't mention
the monoidal structure that we have considered and uses
a different kind of model structure.
This example is the reason why we have chosen the notation
$\sigma \# \ccal$ and the name skewed category.
\end{ex}

\begin{ex}
In \cite{atiyahKR} a cohomology theory $KR$
is defined, it relates to vector bundles of the following
form. Let $X$ be a space with $\zz/2$-action
($u \co X \to X$). Consider
a complex vector bundle $E$ over $X$ with a map of order two
$f \co E \to u^* \bar{E}$, where $\bar{E}$ is
the conjugate bundle of $E$.
Then for suitably nice spaces $X$, $KR(X)$
is the Grothendieck group of isomorphism
classes of vector bundles over $X$
with a map of order two $f \co E \to u^* \bar{E}$.
We have an involution $\tau$ on vector bundles
over $X$, given by $\tau E = u^* \bar{E}$.
The group $KR(X)$ is then the Grothendieck
group of isomorphism classes of objects in the
skewed category.
A $\zz/2$-spectrum $KR$ representing this cohomology theory
is constructed in \cite{duggerKR}.
This construction begins by noting that
one can put a $\zz/2$-action on $\zz \times BU$ using the conjugation
action on $U$. One can then use this action to give a $\zz/2$-spectrum
in a similar way to Corollary \ref{cor:SO2toO2}, where
we give $SO(2)$-spectra with extra information the structure of
$O(2)$-spectra.
We are simply noting that these constructions are
similar to those in this thesis; it could be interesting to study the relation
between the skewed categories of vector bundles and $KR$
in greater detail.
\end{ex}

\begin{rmk}
There is an adjunction
$\fun(\ccal \times \dcal, \ecal) \cong  \fun(\ccal, \fun(\dcal, \ecal))$
for (small) categories $\ccal, \ \dcal, \ \ecal$.
Applying this in the case of the group categories $G$ and $H$
(these each have one object and morphisms correspond to group elements),
we see that the category of $G \times H$-objects in $\ccal$,
$\fun (G \times H, \ccal)$, is isomorphic to the category
of $G$-objects in the category of $H$-objects of $\ccal$
$\fun(G, \fun(H, \ccal))$.
Now consider a semi-direct product $G \ltimes H$
(so $G$ acts on $H$). The skewed category
(or rather, its generalisation to general $G$, see Remark \ref{rmk:extendG})
provides us with an equivalence
between $G \# \fun(H, \ccal)$ and
$\fun (G \ltimes H, \ccal)$.
We can also link this example back to skew group rings,
by \cite[1.5.7]{mccr01} there is an isomorphism or rings
$R (G \ltimes H) \cong RH \# G$.
Thus $R (G \ltimes H) \leftmod$ is equivalent to
$(RH \# G) \leftmod$ and by the example above on skew
group rings this is equivalent to $G \# (RH \leftmod)$.
\end{rmk}

\chapter{Cyclic $O(2)$-Spectra}\label{chp:cyclicspectra}
We study cyclic $O(2)$-spectra and relate
this category to rational $SO(2)$-spectra.
The first section considers $SO(2)$-spectra in general
and proves that this category has an involution
(Lemma \ref{lem:SO(2)involution}).
The second section applies this work to
construct an involution on $\iota^* S_\qq \leftmod$,
a particular model for rational $SO(2)$-spectra.
We have chosen this model as every object is fibrant,
which is necessary for Theorem \ref{thm:so2morita}.
The highlight of this chapter is Theorem \ref{thm:so2involution}
which describes cyclic $O(2)$-spectra in
terms of the skewed category of $\iota^* S_\qq \leftmod$.

\section{An Involution on $SO(2)$-Spectra}\label{sec:SO2specinvolute}
We want to understand cyclic $O(2)$-spectra
by proving an analogue of Example \ref{ex:o2spaces}.
This section makes substantial progress in that direction
with Theorem \ref{thm:extrastructure} and Corollary \ref{cor:SO2toO2}.
We will use these in the next section to prove this chapter's
main result: Theorem \ref{thm:so2involution}.
We have to work harder than our space level example to
relate $SO(2)$-spectra to $O(2)$-spectra
since $G$-equivariant spectra
aren't simply $G$-objects in the category of
non-equivariant spectra.

We now need to specify the universe,
so we write $G \mcal (U)$ for
$G$-equivariant $S$-modules indexed on the $G$-universe $U$.
Fix $U$ to be a complete $O(2)$-universe and write
$\iota^*$ for $\iota^*_{SO(2)}$, this is a
functor $O(2) \mcal (U) \to SO(2) \mcal (\iota^* U)$,
note that $\iota^*U$ is a complete $SO(2)$-universe.
We have the change of groups functor
$j^* \co SO(2) \mcal(\iota^* U) \to SO(2) \mcal(j^* \iota^* U)$
induced from $j$ ($j^*\iota^*U$ is $\iota^*U$ with the opposite $SO(2)$ action).
Since an involution must have the domain equal to the codomain,
we cannot just use $j^*$, we need to change back
to the universe $\iota^* U$.
For $G$-universes $U$ and $U'$,
we have a space $\iscr(U, U')$ of linear isometries $U \to U'$,
with $G$ acting by conjugation.
An object of $G \mcal(U)$ is a $G$-May
spectrum with an action of $\iscr(U, U) \ltimes (-)$
that is also an $S$-module.

\begin{definition}
For universes $U$ and $U'$ we define
$I_U^{U'} = \iscr(U, U') \ltimes_{\iscr(U,U)} (-)$
(see \cite[Definition 2.1]{em97} or \cite[Chapter XXIV, Definition 3.4]{may96}).
\end{definition}
There is a strong symmetric monoidal pair
(\cite[Theroem 1.1]{em97} or \cite[Chapter XXIV, Theorem 3.7]{may96})
$$I_U^{U'} : G \mcal(U) \overrightarrow{\longleftarrow}
G \mcal(U') : I_{U'}^U.
\index{$I$@$(I_U^{U'},I_{U'}^{U})$ }$$
So our first candidate for an involution is $I^{\iota^*U}_{j^* \iota^*U} j^*$.
But this is not good enough for our work, so we introduce a more
classical version of change of universe that is easy to work with.
Since we have used an asterisk to denote change of groups
functors we use a $\dag$ to indicate change of universe
functors. The following is \cite[Chapter I, Proposition 2.5]{lms86}.

\begin{definition}
Let $f \co U \to U'$ be a $G$-linear isometry between
universes. For a $G$-spectrum $X$ indexed on $U'$
we define $f^\dag X$ a $G$-spectrum indexed on $U$
by $(f^\dag X)(V) = X(f(V))$, for $V$ an indexing space of $U$.
For $Y$, a $G$-spectrum indexed on $U$,
we let $(f_\dag Y)(W) = Y(f^{-1}(W))$
($W$ an indexing space of $U'$)
and we have $f_\dag Y$ indexed on $U'$.
There is an adjoint pair
$$f_\dag : G \mcal(U) \overrightarrow{\longleftarrow}
G \mcal(U') : f^\dag
$$\index{$F$@$(f_\dag,f^\dag)$}.
\end{definition}

The following result relates these two definitions,
in our case it tells us that we can replace
the complicated functor $I^{\iota^*U}_{j^* \iota^*U}$ with the more
intuitive change of universe functors $f_\dag$ and $f^\dag$.
We also note \cite[Appendix A, Proposition 5.3]{EKMM97}
which states that $f \ltimes X = f_\dag X$ (considering
$f$ as a point in $\iscr(U,U')$).

\begin{lemma}\label{lem:fdagisI}
Let $f \co U \to U'$ be an isomorphism of $G$-universes. Then there are natural isomorphisms
$f_\dag E \cong I_U^{U'} E$ and $f^\dag E' \cong I^U_{U'} E'$
for $E \in G\sscr U[\mathbb{L}]$ and $E' \in G\sscr U'[\mathbb{L}']$.
\end{lemma}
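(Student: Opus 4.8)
The plan is to reduce the statement to the canonical comparison between the two flavours of change-of-universe functors. First I would recall the setup: $I_U^{U'} = \iscr(U,U') \ltimes_{\iscr(U,U)} (-)$ builds in a twisted half-smash product over the whole contractible space of isometries, whereas $f_\dag$ uses a single chosen isometry $f \co U \to U'$. The key observation, cited as \cite[Appendix A, Proposition 5.3]{EKMM97}, is that for a point $f \in \iscr(U,U')$ one has $f \ltimes X \cong f_\dag X$. So the strategy is to exhibit a natural transformation from $f_\dag$ to $I_U^{U'}$ induced by the inclusion of the point $f$ into $\iscr(U,U')$, and then argue this is an isomorphism when $f$ is itself an isomorphism of universes.

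Concretely, for $E$ an $\mathbb{L}$-spectrum indexed on $U$, the inclusion $\{f\} \hookrightarrow \iscr(U,U')$ induces a map $f \ltimes E \to \iscr(U,U') \ltimes E$, and since $\iscr(U,U)$ acts on the left factor one passes to the quotient to get $f_\dag E \cong f \ltimes E \to \iscr(U,U') \ltimes_{\iscr(U,U)} E = I_U^{U'} E$. To see this is an isomorphism I would use that $\iscr(U,U)$ acts freely and transitively (up to homotopy — but here genuinely, once $f$ is an isomorphism) on $\iscr(U,U')$ via precomposition with $f^{-1}$: every isometry $g \co U \to U'$ can be written $g = (g f^{-1}) \circ f$ with $g f^{-1} \in \iscr(U',U')$, and since $f$ is an isomorphism $\iscr(U',U') \cong \iscr(U,U)$ compatibly with the actions. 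Thus the orbit space $\iscr(U,U')/\iscr(U,U)$ is a single point, represented by $f$, and the twisted half-smash product collapses: $I_U^{U'}E \cong f_\dag E$. The same argument with $f^{-1} \co U' \to U$ (or by passing to right adjoints and using uniqueness of adjoints) gives $f^\dag E' \cong I^U_{U'} E'$ for $E' \in G\sscr U'[\mathbb{L}']$.

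The main obstacle I anticipate is bookkeeping around the half-smash product construction and the $\mathbb{L}$-module (respectively $S$-module) structures: one must check that the collapsing isomorphism is compatible with the action maps of $\mathbb{L}$ and $\mathbb{L}'$, and that the isomorphism $\iscr(U',U') \cong \iscr(U,U)$ induced by $f$ intertwines the monad structures correctly so that $\mathbb{L}$-spectra go to $\mathbb{L}'$-spectra. This is where naturality in $E$ is verified and where one invokes the fact (from \cite{em97} or \cite[Chapter XXIV]{may96}) that the functors $I_U^{U'}$ are already known to be strong symmetric monoidal equivalences, so the abstract comparison is forced once one identifies the underlying spaces/spectra. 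I would also remark that the statement is essentially folklore and the proof is a matter of unwinding definitions; the only genuine input beyond \cite[Appendix A, Proposition 5.3]{EKMM97} is the transitivity of the $\iscr(U,U)$-action when $f$ is invertible, which is elementary linear algebra of (countably infinite dimensional) inner product spaces.
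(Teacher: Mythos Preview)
Your argument is correct in outline: when $f$ is an isomorphism the right $\iscr(U,U)$-action on $\iscr(U,U')$ (by precomposition) is free and transitive, so the coequaliser defining $I_U^{U'}E = \iscr(U,U') \ltimes_{\iscr(U,U)} E$ collapses to $\{f\} \ltimes E$, which by \cite[Appendix A, Proposition 5.3]{EKMM97} is $f_\dag E$. The bookkeeping you flag about $\mathbb{L}$-structures is indeed the only place one has to be careful, but it works.

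The paper, however, takes a different and much shorter route: it simply cites the result as \cite[Lemma 3.5]{may98} and gives no argument of its own. So you have supplied more than the paper does. Your sketch is essentially what that cited lemma proves, so there is no conflict; you have just filled in what the thesis delegates to the literature.
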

\begin{proof}
This result is taken from \cite[Lemma 3.5]{may98}.
\end{proof}

Any reflection $h \in O(2)$ gives an $SO(2)$-equivariant isomorphism
of universes $h \co  \iota^*U \to j^* \iota^*U$. As $h$ changes one obtains different
isomorphisms, but since any two reflections are conjugate by a rotation
one can see how these maps will be related.
Thus we have a change of universe functor
$h^\dag \co SO(2) \mcal(j^* \iota^*U) \to SO(2) \mcal(\iota^*U)$
that is isomorphic to $I^{j^* \iota^*U}_{\iota^*U}$.
From here onwards we let $h$ be some fixed
reflection in $O(2)$, Theorem \ref{thm:so2involution} will show
that this choice is not important.

\begin{theorem}\label{thm:extrastructure}
For an $O(2)$-spectrum $X$ indexed on an $O(2)$-universe $U$,
we have an $SO(2)$-map (natural in $O(2)$-maps of $X$)
$h \co \iota^*X \to h^\dag j^*\iota^* X$. The composite
$(h^\dag j^* h) \circ h$ is the identity map of $X$.
\end{theorem}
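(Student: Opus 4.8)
The plan is to unwind the definitions of the two change-of-universe/change-of-groups functors and verify the required $SO(2)$-equivariance and order-two condition by direct diagram chasing, exactly mirroring the space-level computation in Example~\ref{ex:o2spaces}. First I would set up notation carefully: $\iota \co SO(2) \to O(2)$ is the inclusion, $j \co SO(2) \to SO(2)$ is $t \mapsto t^{-1}$, and $h$ is a fixed reflection. The key algebraic identity in $O(2)$ is that conjugation by $h$ restricts on $SO(2)$ to $j$, i.e. $hth^{-1} = t^{-1}$ for $t \in SO(2)$, together with $h^2 = 1$. Since $X$ is an $O(2)$-spectrum, acting by $h$ gives a self-map of the underlying nonequivariant spectrum; the content of the theorem is that this self-map becomes a well-defined $SO(2)$-map once we twist the target by $j^*$ and then correct the universe by $h^\dag$.

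Next I would make explicit the map $h \co \iota^* X \to h^\dag j^* \iota^* X$. On indexing spaces, $\iota^* X$ evaluated at an indexing $SO(2)$-space $V \subseteq \iota^* U$ is $X(V)$ with $SO(2)$ acting by restriction; the target $h^\dag j^* \iota^* X$ evaluated at $V$ is, by definition of $h^\dag$ and $j^*$, the space $X(hV)$ with $SO(2)$-action obtained from the $O(2)$-action via $t \mapsto \iota(t^{-1})$ followed by the identification implemented by $h$. The map $h$ is then the action of the group element $h \in O(2)$, which carries $X(V)$ to $X(hV)$ since the structure maps of an orthogonal/May $O(2)$-spectrum are equivariant. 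The $SO(2)$-equivariance is the spectrum-level version of the space-level computation $t \ast (h \cdot x) = t^{-1} \cdot (h \cdot x) = (ht) \cdot x = h \cdot (t \cdot x)$: one checks that for $t \in SO(2)$, the action of $t$ in the target corresponds under $h$ to the action of $h^{-1} t h = t^{-1}$ composed with $h$, which is exactly $t \cdot$ followed by $h$. Naturality in $O(2)$-maps $f \co X \to Y$ is immediate because $f$ commutes with the group action and with all the structure maps, and the change-of-universe functors $h^\dag, j^*$ are defined levelwise by precomposition with isometries, hence manifestly natural.

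For the order-two statement I would compute the composite $\iota^* X \xrightarrow{h} h^\dag j^* \iota^* X \xrightarrow{h^\dag j^* h} h^\dag j^* h^\dag j^* \iota^* X$ and identify the codomain with $\iota^* X$. The point is that $h^\dag j^* h^\dag j^* = (jh)^\dag$-type composite collapses: $j^2 = \mathrm{id}_{SO(2)}$ so $j^* j^* = \mathrm{id}$, and $h \circ h = h^2 = \mathrm{id}$ as an isometry $\iota^* U \to \iota^* U$ (using that $h^\dag$ for the composite isometry is the composite of the $h^\dag$'s, which follows from the definition $(f^\dag X)(V) = X(f(V))$ being functorial in $f$). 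On underlying spaces, the composite self-map is the action of $h \cdot h = h^2 = e$, i.e. the identity. One also has to check these collapses respect the $\mathbb{L}$-spectrum/$S$-module structure, but that is formal once one invokes Lemma~\ref{lem:fdagisI}, which identifies $f^\dag$ with $I^U_{U'}$, a functor already known (from \cite{em97}, \cite{may96}) to be strong symmetric monoidal and to compose well.

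The main obstacle I anticipate is bookkeeping rather than conceptual: keeping straight which $SO(2)$-action is in play at each stage (the twist by $j$ is easy to lose) and verifying that the isometries $h \co \iota^* U \to j^* \iota^* U$ really do compose to the identity on the nose rather than merely up to the $\iscr(U,U)$-action — this is where Lemma~\ref{lem:fdagisI} and the remark that $f \ltimes X = f_\dag X$ from \cite[Appendix A, Proposition 5.3]{EKMM97} are essential, since they let me replace the homotopically-flavoured $I_U^{U'}$ functors by the strict levelwise functors $f^\dag$ for which $(g \circ f)^\dag = f^\dag \circ g^\dag$ holds literally. Once that identification is in hand, the equivariance and the order-two relation both reduce to the two identities $h^2 = 1$ and $hth^{-1} = t^{-1}$ in $O(2)$, exactly as at the space level.
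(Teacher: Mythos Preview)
Your approach is essentially the paper's: define $h$ levelwise by the group action, verify equivariance via $hth^{-1}=t^{-1}$, and collapse the square to the identity using $h^2=1$ and $j^2=\id$. The order-two argument and the naturality are handled exactly as the paper does.

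There is, however, one step you underweight. You only mention the $\mathbb{L}$-spectrum and $S$-module structure in the context of the \emph{composite} collapsing, and you call it ``formal once one invokes Lemma~\ref{lem:fdagisI}''. That lemma tells you $h^\dag\cong I^{\iota^*U}_{j^*\iota^*U}$ is a good \emph{functor} on $\mathbb{L}$-spectra; it does not tell you that the particular morphism $h\co\iota^*X\to h^\dag j^*\iota^*X$ --- which is not the image of any map under that functor --- is itself an $\mathbb{L}$-map. The paper treats this as the most delicate step: one must first describe the $\iscr(\iota^*U,\iota^*U)$-action $\rho'$ on $h^\dag j^*\iota^*X$ explicitly (it arises from the $\iscr(j^*\iota^*U,j^*\iota^*U)$-action on $j^*\iota^*X$ via conjugation by $h$ on the isometry space), and then check that $h$ intertwines $\iota^*\rho$ with $\rho'$. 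The reason this works is again that the original $\rho\co\iscr(U,U)\ltimes X\to X$ is $O(2)$-equivariant, with $h$ acting by conjugation on $\iscr(U,U)$ --- so it is the same identity $h\theta h^{-1}$ that drives the space-level check, but now at the level of linear isometries rather than group elements. The $S$-module check is then genuinely easy (the action map $\nu$ is $O(2)$-equivariant), but the $\mathbb{L}$-step deserves more than ``bookkeeping''.
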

\begin{proof}
We begin with the definition of the map on the levels of
$\iota^* X$, then show this is a map of spectra, a
map of $\iota^* \mathbb{L}$-spectra and of
$SO(2)$-equivariant $S$-modules in turn.
Take an indexing space $\iota^*V$ of $\iota^*U$,
then we have a map
$$h \co (\iota^*X)(\iota^*V)
= \iota^*(X(V)) \longrightarrow j^*\iota^*(X(V))
= (h^\dag j^*\iota^*X)(\iota^*V).$$
The last equality is simple: $V$ is a sub-inner-$O(2)$-product space
of $U$, hence $hV = V$ as sets, but as usual the action of $SO(2)$ is inverted,
so $hV =j^*V$ as $SO(2)$-inner product spaces.
Now we show that this gives a map of spectra:
take $\iota^*W \subset \iota^*V$ and recall that the structure maps
for $h^\dag j^*\iota^*X$ are given by $j^*\iota^* \sigma \circ h \smashprod \id$
as shown below.
$$
\begin{array}{rcl}
S^{\iota^*W-\iota^*V} \smashprod (h^\dag j^*\iota^*X)(\iota^*V) &
\overset{h \smashprod \id}{\longrightarrow} &
S^{j^*\iota^*W-j^*\iota^*V} \smashprod (h^\dag j^*\iota^*X)(\iota^*V) \\
& = &
j^*\iota^*(S^{W-V} \smashprod X(V)) \\
& \overset{j^*\iota^* \sigma}{\longrightarrow} &
j^*\iota^*(X(W)) \\
\end{array}
$$
The relevant equation to check that $h$ is a map of spectra is
$j^*\iota^* \sigma \circ h \smashprod \id \circ \id \smashprod h = h \iota^* \sigma$.
As maps of the underlying sets, $j^*\iota^* \sigma = \iota^* \sigma = \sigma$,
so the above equation is precisely the requirement that the
structure maps $\sigma$ of $X$ are $O(2)$-equivariant.

Now we show that $h$ is a map of $\iota^* \mathbb{L}$-spectra.
We begin with a diagram which may help to explain the
following work.
We use $\rho$ to denote the action of $\iscr(U, U)$ on $X$.
The idea behind a
half-twisted smash product is that for each
$\theta \in \iscr(\iota^*U, \iota^*U)$
we have a map of non-equivariant spectra
$\rho(\theta) \co X \to \theta^\dag X$.
The diagram below commutes
because $\rho$ is an $O(2)$-equivariant map,
noting that $h^\dag \theta^\dag = (\theta h)^\dag$
which is equal to $(h h \theta h)^\dag = (h \theta h)^\dag h^\dag$.
$$
\xymatrix@!C{
X \ar[r]^{\rho (\theta)} \ar[d]^h
& \theta^\dag X \ar[d]^h \\
h^\dag X \ar[r]^{\rho (h \theta h)}
& (\theta h)^\dag X }
$$
From $\rho$ we have $\iota^* \rho$, the action of
$\iscr(\iota^* U, \iota^* U)$ on $\iota^* X$.
We now define $\rho'$, the action of $\iscr(\iota^*U, \iota^*U)$
on $h^\dag j^*\iota^*X$. We have an isomorphism of functors
$$\big(\iscr(\iota^*U, \iota^*U) \times \{h^{-1}\} \big) \ltimes (-)
\cong \iscr(\iota^*U, \iota^*U) \ltimes h^\dag (-). $$
We can apply conjugation by $h$ (more properly by $h^{-1}$)
to obtain a homeomorphism of $SO(2)$-spaces:
$$C_h \co \iscr(\iota^*U, \iota^*U) \times \{h^{-1}\}
\to \{h^{-1}\} \times \iscr(j^*\iota^*U, j^*\iota^*U).$$
We combine these to give the structure map $\rho'$ as follows, using
$j^* \iota^* \rho$, the action of
$\iscr(j^* \iota^* U, j^* \iota^* U)$ on $j^* \iota^* X$.
$$\iscr(\iota^*U, \iota^*U) \ltimes h^\dag j^*\iota^*X \to
\big( \{h^{-1}\} \times \iscr(j^*\iota^*U, j^*\iota^*U) \big) \ltimes j^*\iota^*X
\to h^\dag j^*\iota^*X$$
The following diagram commutes since
$\rho \co \iscr(U, U) \ltimes X \to X$ is an equivariant map
(note that $h$ acts by conjugation on $\iscr(U, U)$).
$$
\xymatrix@!C{
\iscr(\iota^*U, \iota^*U) \ltimes
\iota^*X \ar[r]^(0.6){\iota^*\rho } \ar[d]^{\id \smashprod h} &
\iota^*X \ar[d]^{h} \\
\iscr(\iota^*U, \iota^*U) \ltimes
h^\dag j^*\iota^*X \ar[r]^(0.61){\rho '} &
h^\dag j^*\iota^*X  }
$$

Thankfully we can now turn to easier considerations and
show that $h$ is a map of ${SO(2)}$-equivariant $S$-modules.
Since the functor $h^\dag j^*$ is isomorphic to
the strong monoidal functor
$I^{\iota^*U}_{j^* \iota^*U} j^*$,
$h^\dag j^*\iota^*X$ is a module
over $h^\dag j^*\iota^*S$.
We have a map of ring spectra $h \co \iota^*S \to h^\dag j^*\iota^*S$
and we can use this map to give $h^\dag j^*\iota^*X$ an $\iota^*S$-module structure.
With this in place we have to check that the following diagram commutes.
$$
\xymatrix@!C{
\iota^*S \smashprod \iota^*X \ar[r]^(0.6){\iota^*\nu} \ar[d]^{\id \smashprod h} &
\iota^*X \ar[d]^{h} \\
\iota^*S \smashprod h^\dag j^*\iota^*X \ar[r]^(0.61){\nu'} &
h^\dag j^*\iota^*X  }
$$
Where $\nu' =\iota^* \nu (h \smashprod \id)$
is the $\iota^*S$-action map on $h^\dag j^*\iota^*X$.
It is clear that this diagram commutes
precisely when $\nu$ is $O(2)$-equivariant.
Thus $h \co \iota^*X \to h^\dag j^*\iota^*X$
is a map of $SO(2)$-spectra, we now prove it
is a map of order two.
Consider the composition
$$(h^\dag j^* h) \circ h \co \iota^*(X(V)) \to j^*\iota^*(X(V)) \to \iota^*(X(V))$$
and we note that (on the level of sets) this is simply a double application of the
automorphism $h$ to (the $O(2)$-space) $X(V)$, hence the composite
$(h^\dag j^* h) \circ h$ is the identity map of $X$.
\end{proof}

\begin{corollary}\label{cor:SO2toO2}
An $SO(2)$ spectrum $Y$ with a map
$f \co Y \to h^\dag j^*Y$ such that
$(h^\dag j^* f) \circ f = \id_Y$ can
be given the structure of an $O(2)$ spectrum.
This construction depends naturally
on $Y$ and $f$.
\end{corollary}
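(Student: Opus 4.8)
The plan is to construct the structure $O(2)$-spectrum by "undoing" the move to $SO(2)$-spectra, exactly reversing the process of Theorem \ref{thm:extrastructure}. First I would build the underlying $O(2)$-May spectrum from $Y$ and $f$: at an indexing space $V \subseteq U$ of the complete $O(2)$-universe, set $(\widetilde{Y})(V) = Y(\iota^* V)$ as a set, with the $SO(2)$-action from $Y$, and use the map $f$ (which is an $SO(2)$-map of order two $Y \to h^\dag j^* Y$) to define the action of the chosen reflection $h$: namely $h$ acts on $Y(\iota^* V) = (\widetilde{Y})(V)$ by the level map $f_{\iota^* V} \co Y(\iota^*V) \to (h^\dag j^* Y)(\iota^*V) = j^*Y(\iota^* V)$, which on underlying sets has the same underlying space $Y(\iota^*V)$ with inverted $SO(2)$-action. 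Since $SO(2)$ together with any single reflection generates $O(2)$, and since the equation $(h^\dag j^* f) \circ f = \id_Y$ is precisely the relation $h^2 = 1$ at the spectrum level, while the compatibility $f(tx) = t^{-1} f(x)$ (built into $f$ being a map $Y \to h^\dag j^* Y$) is the relation $h t h^{-1} = t^{-1}$, this data determines a well-defined $O(2)$-action on the levels $Y(\iota^* V)$ compatible with the inclusions $\iota^* W \subseteq \iota^* V$; one reads off that the structure maps $S^{V-W} \smashprod \widetilde Y(W) \to \widetilde Y(V)$ are $O(2)$-equivariant by the same diagram chase as in Theorem \ref{thm:extrastructure}, run in reverse.

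Next I would promote this $O(2)$-May spectrum to an object of $O(2) \mcal(U)$, i.e. install the $\iscr(U,U) \ltimes (-)$-action and the $S$-module structure. Here the key input is Lemma \ref{lem:fdagisI} identifying $h^\dag j^*$ with the strong symmetric monoidal change-of-universe functor $I^{\iota^* U}_{j^* \iota^* U} j^*$, so that $f$ is in fact a map of $SO(2)$-equivariant $S$-modules and $\iota^* \mathbb{L}$-spectra. Conjugation by $h$ gives the homeomorphism $C_h \co \iscr(\iota^* U, \iota^* U) \to \iscr(j^* \iota^* U, j^* \iota^* U)$, and together with the $\iota^* \mathbb{L}$-action on $Y$ and the twist provided by $f$, one assembles an $O(2)$-equivariant $\iscr(U,U)$-action on the levels $Y(\iota^* V)$; similarly the $S$-module structure map $\iota^* S \smashprod Y \to Y$ and the ring map $h \co \iota^* S \to h^\dag j^* \iota^* S$, combined via $f$, yield an $O(2)$-equivariant $S$-action. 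All of these verifications are the mirror images of the corresponding diagram chases in the proof of Theorem \ref{thm:extrastructure}: the commuting squares established there say that when $X$ is genuinely $O(2)$-equivariant the induced $f = h$ is compatible with all the structure; reading the same squares as \emph{definitions} shows that an abstract $(Y,f)$ with the order-two relation produces genuinely $O(2)$-equivariant structure maps, $\iscr(U,U)$-action, and $S$-module action.

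Finally I would check naturality. Given a morphism $(g \co Y \to Y')$ in the skewed category, meaning an $SO(2)$-map $g$ with $f' \circ g = (h^\dag j^* g) \circ f$, I would define $\widetilde g \co \widetilde Y \to \widetilde Y'$ by the same underlying level maps $g_{\iota^* V}$; this is automatically $SO(2)$-equivariant, and it commutes with the reflection action precisely because the skewed-category square for $g$ is the statement that $g$ intertwines $f$ and $f'$, hence intertwines the two reflection actions. Equivariance with respect to the $\iscr(U,U)$-action and the $S$-action is inherited from $g$ being an $\iota^* \mathbb{L}$-map and $\iota^* S$-module map. Thus $Y \mapsto \widetilde Y$, $g \mapsto \widetilde g$ is a functor $\tau \# SO(2)\mcal \to O(2)\mcal$ (where $\tau = h^\dag j^*$), which is the content of the corollary; in fact one sees immediately that it is inverse, up to natural isomorphism, to the functor $I$ of Theorem \ref{thm:extrastructure}, since both are the identity on underlying sets of spectrum levels. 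The main obstacle is purely bookkeeping: one must be scrupulous that the universe-change functor $h^\dag j^*$ genuinely has $(h^\dag j^*)^2 = \id$ on the nose (using that $h$ is a fixed reflection with $h^2 = 1$ in $O(2)$ and that $h \co \iota^* U \to j^* \iota^* U$ is the \emph{same} underlying isometry read with opposite actions), so that the relation $(h^\dag j^* f)\circ f = \id_Y$ literally encodes $h^2 = 1$ rather than a relation up to coherent isomorphism; once that identification is pinned down, every remaining verification is a diagram already drawn in the proof of Theorem \ref{thm:extrastructure}, now read backwards.
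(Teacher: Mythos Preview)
Your proposal is correct and follows essentially the same approach as the paper: the paper's proof simply states that the construction is contained in the proof of Theorem \ref{thm:extrastructure} and refers forward to Theorem \ref{thm:so2involution} for details, and those details are exactly what you have spelled out --- define $\widetilde{Y}(V) = Y(\iota^*V)$ with reflection action given by $f$, then promote the structure maps, $\mathbb{L}$-action, and $S$-module action by reading the commuting squares of Theorem \ref{thm:extrastructure} as definitions rather than verifications. Your treatment is in fact more explicit than the paper's own account.
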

\begin{proof}
This is contained in the proof of the above theorem,
see the proof of Theorem \ref{thm:so2involution}
for some details.
\end{proof}

\begin{lemma}\label{lem:hjisquillen}
The functor $h^\dag j^*$ preserves weak equivalences,
cofibrations and fibrations of $G \mcal$.
\end{lemma}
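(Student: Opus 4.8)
The plan is to prove the lemma by analysing the two functors $j^*$ (change of groups along the inversion automorphism $j$ of $SO(2)$) and $h^\dagger$ (change of universe along the $SO(2)$-linear isometric isomorphism $h\co \iota^*U \to j^*\iota^*U$) separately, and then composing. First I would recall that $j\co SO(2)\to SO(2)$ is a group \emph{isomorphism}; consequently the change of groups functor $j^*$ is an isomorphism of categories $SO(2)\mcal(\iota^*U) \to SO(2)\mcal(j^*\iota^*U)$ with inverse $(j^{-1})^* = j^*$ (since $j^2=\id$). Because the weak equivalences, cofibrations and fibrations of $G\mcal$ are all defined in terms of underlying prespectra and fixed-point data, and $j^*$ merely relabels the $SO(2)$-action while leaving the underlying nonequivariant spectrum untouched, it takes each of the three classes of maps to itself: the fixed points $X^H$ are carried to $X^{j(H)}$, and as $H$ ranges over all closed subgroups of $SO(2)$ so does $j(H)$, so $\pi_*$-isomorphisms, generalised cofibrations and restricted $q$-fibrations are all preserved. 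In short, $j^*$ is an isomorphism of model categories.

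Next I would deal with $h^\dagger$. Here $h\co \iota^*U \to j^*\iota^*U$ is an \emph{isomorphism} of $SO(2)$-universes, so by Lemma \ref{lem:fdagisI} the functor $h^\dagger$ is naturally isomorphic to $I^{\iota^*U}_{j^*\iota^*U}$, the change of universe functor appearing in the strong symmetric monoidal Quillen equivalence $(I_U^{U'}, I_{U'}^U)$ of \cite[Theorem 1.1]{em97}. Since $h$ is an isomorphism, this adjunction is in fact an adjoint equivalence of categories with $h^\dagger$ one of the two functors; more concretely $h^\dagger$ has inverse $(h^{-1})^\dagger = h_\dagger$. The standard facts about change of universe along an isomorphism (see \cite[Chapter XXIV]{may96} or \cite{em97}) give that $h^\dagger$ preserves and reflects $\pi_*$-isomorphisms, generalised cell structures and restricted $q$-fibrations; the key point is that $h^\dagger X$ has the same underlying nonequivariant data as $X$, reindexed by the isometry $h$, and the model structure on $SO(2)\mcal$ is insensitive to such reindexing because the homotopy groups $\pi_r^H$ are defined as colimits over \emph{all} indexing spaces of the universe and $h$ is cofinal on indexing spaces. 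Thus $h^\dagger$ is also an isomorphism of model categories, and in particular a left (and right) Quillen functor.

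Finally I would assemble the two pieces: $h^\dagger j^*$ is a composite of two functors each of which preserves cofibrations, fibrations and weak equivalences, hence so does the composite. One clean way to state this is that $h^\dagger j^*$ is itself an isomorphism of model categories (with inverse $j^* h_\dagger$), which is stronger than the assertion of the lemma but obtained with no extra work. I expect the main obstacle to be purely bookkeeping: making precise, for the EKMM category $G\mcal$ rather than for Lewis--May spectra, that change of universe along an isomorphism is homotopically invisible. This is handled by invoking Lemma \ref{lem:fdagisI} to pass to $I^{\iota^*U}_{j^*\iota^*U}$, for which \cite[Theorem 1.1]{em97} already supplies the Quillen equivalence, and then noting that when the isometry is an isomorphism the Quillen equivalence is an equivalence of categories compatible with the three distinguished classes of maps. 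No genuinely new argument is required; the content is to cite the right facts about $j^*$ and about change of universe and observe that both leave the weak equivalences, cofibrations and fibrations strictly unchanged.
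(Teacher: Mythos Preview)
Your approach is correct but takes a different route from the paper. You decompose $h^\dag j^*$ into the two factors $j^*$ and $h^\dag$, argue that each is an isomorphism of model categories (the first because $j$ is a group automorphism and the second because $h$ is an isomorphism of universes, so that $h^\dag = (h^{-1})_\dag$ is simultaneously a left and a right Quillen functor), and then compose. This is perfectly sound, though your justification that $h^\dag$ preserves cofibrations leans on the identification $h^\dag = (h^{-1})_\dag$ rather than on the Quillen equivalence of \cite{em97} per se; the latter by itself only tells you $h^\dag$ is right Quillen.

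The paper instead works with the composite $h^\dag j^*$ as a single functor and exploits the fact (proved immediately afterwards) that $(h^\dag j^*)^2 = \id$, so that $h^\dag j^*$ is its own right adjoint. It is therefore enough to show that $h^\dag j^*$ takes generating (acyclic) cofibrations to (acyclic) cofibrations, and the paper does this by exhibiting an explicit isomorphism $E \to h^\dag j^* E$ for each generalised sphere spectrum $E$, coming from the inversion isomorphism $SO(2)/H_+ \to j^* SO(2)/H_+$. Preservation of fibrations and weak equivalences then follows formally from self-adjointness. Your argument is more structural and generalises readily to other automorphisms; the paper's is shorter, entirely self-contained, and avoids any appeal to external change-of-universe results.
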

\begin{proof}
All we need to show is that $h^\dag j^*$
takes generating (acyclic) cofibrations
to (acyclic) cofibrations since
the right adjoint of $h^\dag j^*$
is $h^\dag j^*$ itself.
We let $E$ be a generalised sphere spectrum for $SO(2)$.
Inversion gives an isomorphism $SO(2)/H_+ \to j^* SO(2)/H_+$
and hence we have an isomorphism $E \to h^\dag j^*E$. This gives
an isomorphism of maps between the generating cofibration
$E \to CE$ and its image $h^\dag j^* E \to h^\dag j^* CE$.
Thus the map $h^\dag j^* E \to h^\dag j^* CE$
is a cofibration. We have proven that $h^\dag j^*$ preserves cofibrations,
the same argument suffices to show that
it also preserves acyclic cofibrations.
\end{proof}

\begin{lemma}\label{lem:SO(2)involution}
The functor $h^\dag j^*$ is an involution on
$SO(2) \mcal$, hence the category $(SO(2) \mcal, h^\dag j^*)$
is an involutary model category.
\end{lemma}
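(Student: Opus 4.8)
The statement asks to show that $h^\dag j^*$ is an involution on $SO(2)\mcal$, that is, that it is a left (equivalently right) Quillen functor with $(h^\dag j^*)^2 = \id$. Lemma \ref{lem:hjisquillen} already supplies the Quillen functor part (in fact it shows $h^\dag j^*$ preserves cofibrations, fibrations and weak equivalences simultaneously, which is exactly the ambidextrous situation recorded in the comment after Definition \ref{def:catwithinv}). So the only thing genuinely left to verify is the strict equation $(h^\dag j^*)\circ(h^\dag j^*) = \id_{SO(2)\mcal}$, and then to quote the definition of an involutary model category.

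The plan is as follows. First I would unwind the composite $h^\dag j^* h^\dag j^*$ as a functor. Since $j^*$ and $j$-induced change-of-groups commute with the change-of-universe functors up to the evident identifications, and since $j$ is the inversion automorphism of $SO(2)$ with $j^2 = \id$, we have $j^* j^* = \id$ on $SO(2)$-spectra (this is a strict equality on underlying objects, with the $SO(2)$-action restored after two inversions). Likewise $h\colon \iota^*U \to j^*\iota^*U$ and, applying $j^*$, the isometry $j^* h\colon j^*\iota^*U \to j^*j^*\iota^*U = \iota^*U$ satisfy $(j^*h)\circ h = h^2 = \id_{\iota^*U}$ because $h$ is a reflection, so $h^2 = 1$ in $O(2)$ and in particular $h$ squares to the identity isometry of the universe. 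Tracing through the definition of $f^\dag$ levelwise, $(h^\dag j^* h^\dag j^* X)(V) = X\big((j^*h)(h(V))\big) = X(V)$, with structure maps and $\mathbb{L}$-action and $S$-module structure all matching on the nose by the same bookkeeping already carried out in the proof of Theorem \ref{thm:extrastructure}. Hence $h^\dag j^* h^\dag j^* = \id$ as an honest equality of functors on $SO(2)\mcal$, not merely up to natural isomorphism.

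The second, shorter step is purely formal: having established that $h^\dag j^*$ is a left Quillen functor (Lemma \ref{lem:hjisquillen}) which is strictly self-inverse, it is by definition an involution, and $SO(2)\mcal$ together with this functor is an involutary model category in the sense of Section \ref{sec:invmodcat}. Recall also that $SO(2)\mcal$ is cofibrantly generated (Theorem \ref{thm:ekmmmodel}), which is part of the requirement in the definition of an involutary model category, so nothing further needs checking.

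I expect the main obstacle — to the extent there is one — to be purely notational: keeping careful track of which universe each functor lands in, so that the composite $h^\dag j^* h^\dag j^*$ really returns to $SO(2)\mcal(\iota^*U)$ and not to some other $SO(2)$-universe. The key identities $j^2 = \id$ and $h^2 = 1$ are what make the universes on the nose equal rather than merely isometric, and the levelwise description of $f^\dag$ from \cite[Chapter I, Proposition 2.5]{lms86} makes the equation of functors literal. No serious homotopical input is needed beyond Lemma \ref{lem:hjisquillen}; everything else is an exercise in the point-set definitions already assembled above.
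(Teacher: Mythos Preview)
Your proposal is correct and follows essentially the same route as the paper. The paper's proof is terser: it simply observes that $h^\dag j^*$ ``preserves the underlying sets of $SO(2)$-spectra'' and hence squares to the identity, then cites Lemma~\ref{lem:hjisquillen} for the Quillen functor property. Your unwinding of the composite via $j^2=\id$ and $h^2=1$ is exactly what that one-line remark abbreviates, and your explicit mention of cofibrant generation (which the paper leaves implicit, having recorded it in Theorem~\ref{thm:ekmmmodel}) is a welcome bit of hygiene.
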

\begin{proof}
The functor $h^\dag j^*$ preserves the underlying sets of $SO(2)$-spectra,
hence $h^\dag j^* \circ h^\dag j^* =\id$ and we have an involution on
$SO(2) \mcal$. Lemma \ref{lem:hjisquillen}
proves that $h^\dag j^*$ is a left Quillen functor, so we have a model category
with involution.
\end{proof}

\newpage

\section{The category $\tau \# \iota^*S_\qq \leftmod$}\label{sec:SQinvolute}
We prove that the category of cyclic spectra
is Quillen equivalent to the skewed category of
rational $SO(2)$-spectra (Theorem \ref{thm:so2involution}).
Our model for cyclic spectra
will be modules over $S_\qq \in O(2) \mcal(U)$
(for $U$ a complete $O(2)$-universe)
with model structure lifted from
the $\cscr$-model structure on $O(2) \mcal(U)$.
This is written as $S_\qq \leftmod(\cscr)$
and is Quillen equivalent
to $\cscr \mcal_\qq$ by
Proposition \ref{prop:cylicSQmodarecyclicspectra}.
Our model for rational $SO(2)$-spectra will be modules
over $\iota^* S_\qq \leftmod$\label{app:SQmod}
in $SO(2) \mcal(\iota^* U)$.
There is a ring map of order two
$\alpha \co \iota^* S_\qq \to h^\dag j^*\iota^* S_\qq$,
thus we can consider the functor $\tau = \alpha^* h^\dag j^*$,
this will be our involution on
$\iota^* S_\qq \leftmod$.
This section will prove
the claim that cyclic $O(2)$-spectra are rational
$SO(2)$-spectra with extra structure. In detail, this
structure is a map of order two $X \to \tau X$,
so $\tau \# \iota^*S_\qq \leftmod$ will be Quillen equivalent
to $S_\qq \leftmod(\cscr)$. The proof of this statement
is quite long, so we break down the construction of
$\tau \# \iota^*S_\qq \leftmod$ into several results.

We have made this choice of categories so that every object
is fibrant, thus meeting one technical condition of
Theorem \ref{thm:so2morita}.
It is no more difficult to prove
Theorem \ref{thm:so2involution} with this choice
of categories than with any other.
The difficult part of this section is proving that
$h^\dag j^*$ is a monoidal involution. We would want
this result in any case, to know that our description
of cyclic spectra as a skewed category is a \emph{monoidal}
Quillen equivalence.

\begin{lemma}
The identity map is a natural transformation
$h^\dag j^* \to j^* h^\dag$.
\end{lemma}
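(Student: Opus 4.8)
The plan is to unwind both functors explicitly on the level of indexing spaces and on underlying sets, and check they agree. Recall that $j \co SO(2) \to SO(2)$ is the inversion automorphism and $h$ is a fixed reflection in $O(2)$, giving an $SO(2)$-isometry $h \co \iota^*U \to j^*\iota^*U$. First I would observe that, because $h$ acts on $O(2)$ by conjugation and conjugation by $h$ restricts to $j$ on $SO(2)$, the isometry $h$ is simultaneously an isometry $\iota^*U \to j^*\iota^*U$ and (after applying $j^*$ to source and target) the \emph{same} underlying linear map is an isometry $j^*\iota^*U \to \iota^*U$. Concretely, for an indexing $SO(2)$-space $V$, one has $hV = V$ as a set with the $SO(2)$-action reversed, i.e. $h^{-1}W = W$ as sets again, so the composite $h^\dag$ and $j^*$ only reshuffle which group acts and which indexing space is used, never the underlying object.

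Next I would compute $(h^\dag j^* X)(V)$ and $(j^* h^\dag X)(V)$ for an $SO(2)$-spectrum $X$ and an indexing space $V$ of $\iota^*U$. By the definition of the change-of-universe functor, $(h^\dag j^* X)(V) = (j^* X)(hV) = j^*\bigl(X(hV)\bigr)$, while $(j^* h^\dag X)(V) = j^*\bigl((h^\dag X)(V)\bigr) = j^*\bigl(X(hV)\bigr)$; here I am using that $h^\dag$ and $j^*$ each leave the underlying object of a level untouched, $j^*$ only reversing the $SO(2)$-action and $h^\dag$ only reindexing. So the two functors send $X$ to the same collection of based $SO(2)$-spaces. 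I would then check that the structure maps agree: the structure map of $h^\dag j^* X$ from level $V$ to level $V'$ is $j^*\iota^*$ (or rather $j^*$) applied to the structure map of $X$ between $hV$ and $hV'$, and the same is true of $j^* h^\dag X$ since both $h^\dag$ (reindexing) and $j^*$ (reversing the action) are applied in the two possible orders to the \emph{same} data. The $\mathbb{L}$-action and $S$-module structure are handled identically, as both $h^\dag$ and $j^*$ act on these by conjugating the isometries space by $h$ and by $j$ respectively, and these two operations commute on the underlying point-set level. Hence the identity map on underlying sets is a well-defined natural isomorphism $h^\dag j^* X \xrightarrow{\ \cong\ } j^* h^\dag X$, visibly natural in $X$ since it is the identity on morphisms as well.

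The only mild subtlety — and the one point I would be careful about — is bookkeeping the actions: $j^*$ reverses the $SO(2)$-action whereas $h^\dag$ reindexes, so one must confirm that performing these in the opposite order genuinely yields the \emph{same} $SO(2)$-action on each level, not merely an isomorphic one. This comes down to the elementary group-theoretic identity $h t h = t^{-1}$ for $t \in SO(2)$ (so conjugation by $h$ equals $j$ on $SO(2)$), which makes "reverse the action, then reindex by $h$" and "reindex by $h$, then reverse the action" literally coincide. Once that is in hand there is nothing left to prove; the remainder is a routine verification that the identity map respects structure maps, the $\mathbb{L}$-action, and the $S$-module structure, which I would state and leave to the reader as these all reduce to the same commutation.
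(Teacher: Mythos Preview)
Your proposal is correct and follows the same route as the paper. The paper's proof consists solely of drawing the commutative square of categories and functors and declaring it ``obviously commutes''; you have supplied the explicit levelwise computation that justifies this, which is exactly the content the paper leaves implicit.
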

\begin{proof}
We draw the following diagram which obviously commutes.
$$\xymatrix{
SO(2) \mcal(\iota^* U) \ar[r]^{h^\dag} \ar[d]^{j^*}
& SO(2) \mcal(j^* \iota^* U) \ar[d]^{j^*} \\
SO(2) \mcal(j^* \iota^* U) \ar[r]^{h^\dag}
& SO(2) \mcal( \iota^* U)
}$$
\end{proof}

In the following we will need the category
$G \iscr(U' ; U)$,
as defined in \cite[Appendix A, Section 2]{EKMM97}, for
$G$-universes $U$ and $U'$.
An object $\mathscr{E} $ of this category is a collection
of spectra $\mathscr{E}_V \in G \mcal(U')$
where $V$ runs over indexing spaces of $U$, with
a transitive system of isomorphisms
$\Sigma^{W-V} \mathscr{E}_W \to \mathscr{E}_V$.
Morphisms are then just a family of morphisms
in $G \mcal(U')$ compatible with the structure maps.
We let $X \bar{\smashprod} Y$ denote the external smash product
of a pair of spectra $X$ and $Y$ in $G \mcal(U)$,
it is a spectrum indexed on a universe $U \oplus U$
defined by $X \bar{\smashprod} Y(V \oplus W) = X(V) \smashprod Y(W)$.

\begin{lemma}
The functor $h^\dag j^*$ is a strong monoidal functor.
\end{lemma}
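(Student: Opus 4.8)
The plan is to show that the three structure maps making $h^\dag j^*$ strong monoidal — a unit isomorphism $\sphspec \to h^\dag j^* \sphspec$, natural isomorphisms $h^\dag j^* X \smashprod h^\dag j^* Y \to h^\dag j^*(X \smashprod Y)$, and the compatibility with $S$-module structures — all come from applying $h^\dag j^*$ levelwise and observing that on underlying sets $j^*$ and $h^\dag$ do nothing. Since $\sphspec(V) = S^V$ and $hV = j^*V$ as $SO(2)$-inner-product spaces (the reflection $h$ acts as identity on the underlying set of $V \subset U$ but inverts the $SO(2)$-action), we have $(h^\dag j^*\sphspec)(\iota^*V) = j^*\iota^*(S^V) = S^{j^*V} = S^{hV}$, and the identity on underlying sets gives the required isomorphism of $SO(2)$-spectra, compatible with the unit maps because these are $O(2)$-equivariant after restriction.

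First I would treat the monoidal product. Recall from the excerpt that the internal smash product in $G \mcal$ is built by left Kan extension from the external smash product $X \bar{\smashprod} Y$ indexed on $U \oplus U$, via a chosen isometry $U \oplus U \to U$. The key point is that $h^\dag j^*$ is induced by an isometry, so it commutes with both the external smash product and with the change-of-universe/Kan-extension step used to internalise it: $h^\dag j^*(X) \bar{\smashprod} h^\dag j^*(Y)$ is canonically $(h \oplus h)^\dag (j \oplus j)^* (X \bar{\smashprod} Y)$, and pushing this through the internalisation functor (which, being itself a change of universe along an isometry, commutes with $h^\dag j^*$ up to the canonical natural isomorphism of Lemma \ref{lem:fdagisI} relating $f_\dag$ and $I_U^{U'}$) yields $h^\dag j^*(X \smashprod Y)$. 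Equivalently, and more cheaply, one can use that $h^\dag j^* \cong I^{\iota^*U}_{j^*\iota^*U} \circ j^*$ as noted just before this lemma, and both $j^*$ (change of groups along a group isomorphism) and $I$ (change of universe) are already known to be strong symmetric monoidal — $j^*$ because it is pullback along an isomorphism, and $I_U^{U'}$ by \cite[Theorem 1.1]{em97} (cited in the excerpt); the composite of strong monoidal functors is strong monoidal. This second route is the one I would actually write down, since it reduces the statement to two results already quoted.

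The coherence and associativity diagrams for the composite structure maps then hold automatically, being composites of the coherence diagrams for $j^*$ and for $I$. For the $S$-module compatibility: an object of $SO(2)\mcal(\iota^*U)$ is an $S_{\iota^*U}$-module, and the strong monoidal structure carries $h^\dag j^* S_{\iota^*U}$ to a ring isomorphic to $S_{\iota^*U}$ (indeed the ring map of order two appearing in Theorem \ref{thm:extrastructure}), so $h^\dag j^*$ restricted to $S$-modules lands in $S$-modules with the module action transported along this ring isomorphism, exactly as in the module-level argument already carried out in the proof of Theorem \ref{thm:extrastructure}. The main obstacle — and the only place requiring genuine care rather than invocation — is checking that the natural isomorphism $h^\dag j^* \cong I^{\iota^*U}_{j^*\iota^*U} j^*$ is itself \emph{monoidal}, i.e.\ compatible with the monoidal structures on both sides, so that strong monoidality really does transfer; this is where Lemma \ref{lem:fdagisI} and the naturality of the half-twisted smash product construction under the isometry group do the work, and I would spell out that one compatibility square before declaring the rest routine.
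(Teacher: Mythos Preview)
Your proposal is correct, and your ``cheap route'' via $h^\dag \cong I^{\iota^*U}_{j^*\iota^*U}$ is exactly the one-line justification the paper gives first: since $h^\dag$ is naturally isomorphic to the change-of-universe functor $I$, and both $I$ and $j^*$ are strong symmetric monoidal, so is the composite.

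The difference is one of emphasis and downstream need. After that one line, the paper goes on to construct the monoidal structure map $m$ explicitly, by unpacking $h^\dag j^* X \smashprod h^\dag j^* Y$ as a half-twisted smash product $\iscr(\iota^*U \oplus \iota^*U, \iota^*U) \ltimes_{\iscr(\iota^*U,\iota^*U)^2} (h^\dag j^* X \bar{\smashprod} h^\dag j^* Y)$ and then repeatedly conjugating by $h$ on the $\iscr$-factors to produce $m = h^\dag m_2 \circ m_1$, where $m_1$ handles the $h^\dag$ part and $m_2$ the $j^*$ part. The paper does this not because the cheap route is insufficient for the lemma itself, but because the very next proposition must verify the involutary condition $h^\dag j^* m \circ m(h^\dag j^* \smashprod h^\dag j^*) = \id$, and that check requires knowing $m$ concretely enough to see that $m_1$ and $m_2$ separately satisfy analogous conditions and commute with each other. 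Your approach proves the lemma cleanly but would leave you needing to unpack $m$ anyway at the next step; the paper front-loads that work here. Your flagged concern about the natural isomorphism $h^\dag \cong I$ being monoidal is a fair point of rigour, though the paper does not pause over it.
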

\begin{proof}
Since $h^\dag$ is naturally isomorphic to $I_{j^* \iota^*U}^{\iota^*U}$,
we know that $h^\dag j^*$ is strong monoidal.
We give a direct proof of this, as we will later need
to show that $h^\dag j^*$ is a monoidal involution.
We require a
natural transformation
$m \co h^\dag j^* (-) \smashprod  h^\dag j^* (-)
\longrightarrow  h^\dag j^* (- \smashprod -)$.
Take $SO(2)$-spectra $X$ and $Y$,
we compare ${h^\dag} j^* X \smashprod {h^\dag} j^* Y$
to ${h^\dag} j^* (X \smashprod Y)$.
The object ${h^\dag} j^* X \smashprod {h^\dag} j^* Y$
is defined to be the first term below
and we have an isomorphism
$$\xymatrix{
\iscr (\iota^* U \oplus \iota^* U,\iota^* U)
\ltimes_{\iscr (\iota^* U,\iota^* U)^2 }
({h^\dag} j^* X \bar{\smashprod} {h^\dag} j^* Y)
\ar[d]^\cong \\
\iscr (\iota^* U \oplus \iota^* U,\iota^* U)
\ltimes_{(\iscr (\iota^* U,\iota^* U) \times \{ h\} )^2}
(j^* X \bar{\smashprod} j^* Y).
}$$
We then apply conjugation by $h$ to obtain
an isomorphism from the above to
$$\iscr (\iota^* U \oplus \iota^* U,\iota^* U)
\ltimes_{( \{ h\} \times \iscr (j^* \iota^* U, j^* \iota^* U)  )^2}
(j^* X \bar{\smashprod} j^* Y).
$$
Because this is a coequaliser this is isomorphic to
$$(\iscr (\iota^* U \oplus \iota^* U,\iota^* U) \times \{ h \oplus h  \}  )
\ltimes_{\iscr (j^* \iota^* U,j^*\iota^* U)  ^2}
(j^* X \bar{\smashprod} j^* Y).
$$
Another application of conjugation by $h$ gives the first term below
and then we have the isomorphism
$$\xymatrix{ ( \{h \} \times \iscr (j^* \iota^* U \oplus j^* \iota^* U, j^*\iota^* U)  )
\ltimes_{\iscr (j^* \iota^* U,j^* \iota^* U)  ^2}
(j^* X \bar{\smashprod} j^* Y)
\ar[d]^\cong \\
h^\dag \iscr (j^* \iota^* U \oplus j^* \iota^* U,j^* \iota^* U)
\ltimes_{\iscr (j^* \iota^* U,j^* \iota^* U)  ^2}
(j^* X \bar{\smashprod} j^* Y).
}$$
We combine these maps to obtain
$m_1 \co {h^\dag} j^* X \smashprod {h^\dag} j^* Y \to
h^\dag (j^* X \smashprod j^* Y)$.
We know that the change of groups functor is strong monoidal
hence there is a natural transformation
$m_2 \co {h^\dag} (j^* X \smashprod  j^* Y) \to
h^\dag j^* (X \smashprod Y)$.
The composite of $m_2$ and $m_1$ gives $m$.
\end{proof}

\begin{proposition}
The category $(SO(2) \mcal, h^\dag j^*)$
is an involutary monoidal model category.
\end{proposition}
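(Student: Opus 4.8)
The plan is to verify the three defining conditions of an involutary monoidal model category (Definition in Section \ref{sec:invmodcat}) for the pair $(SO(2)\mcal, h^\dag j^*)$, drawing on results already established. The first condition, that $SO(2)\mcal$ is a symmetric monoidal model category, is exactly Theorem \ref{thm:ekmmmodel} (applied with $G = SO(2)$). The second condition, that $(SO(2)\mcal, h^\dag j^*)$ is an involutary model category, is precisely Lemma \ref{lem:SO(2)involution}. So the real content is the third condition: that $(SO(2)\mcal, h^\dag j^*)$ is an involutary monoidal category in the sense of Section \ref{sec:invmonoidalcat}, i.e. that $h^\dag j^*$ is a strong symmetric monoidal functor $(\sigma, m, i)$ with $m$ satisfying $\sigma m \circ m(\sigma \otimes \sigma) = \id$ and $i$ a map of order two.

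First I would observe that the natural transformation $m \co h^\dag j^*(-) \smashprod h^\dag j^*(-) \to h^\dag j^*(- \smashprod -)$ has already been constructed in the previous proposition, and that it is an isomorphism (each of $m_1$ and $m_2$ was built as a composite of isomorphisms and the strong monoidal structure map of the change-of-groups functor). The unit isomorphism $i \co S \to h^\dag j^* S$ comes from the ring map of order two noted in the text (the unit of $SO(2)\mcal(\iota^* U)$ pulls back, under inversion and change of universe, to itself up to canonical isomorphism, since $S(V) = S^V$ and $hV = j^*V$ as $SO(2)$-inner product spaces). Symmetry of $h^\dag j^*$ as a monoidal functor follows because both $j^*$ and $h^\dag$ are compatible with the symmetry isomorphism $T$: the change-of-groups functor $j^*$ does not alter underlying spaces or the twist map, and $h^\dag$ merely reindexes, so the hexagon/square comparing $m$ with the symmetries commutes. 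This is the kind of diagram chase that the text elsewhere dispatches by invoking the coherence theorem of \cite[VII]{mac}.

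Then I would check the order-two conditions. For $i$, the composite $S \to h^\dag j^* S \to (h^\dag j^*)^2 S = S$ is, on underlying sets, a double application of the reflection automorphism $h$, hence the identity — exactly the argument used in the proof of Theorem \ref{thm:extrastructure} for the structure maps. For $m$, one needs $h^\dag j^* m \circ m(h^\dag j^* \otimes h^\dag j^*) = \id$; this again reduces, after unwinding the half-twisted smash products and using that conjugation by $h$ is an involution of $\iscr(U,U)$, to the statement that applying $h$ twice to the relevant isometry spaces and spectra returns to the start. Since $(h^\dag j^*)^2$ is literally the identity functor (not just naturally isomorphic to it, as remarked after Lemma \ref{lem:SO(2)involution}), these identities hold essentially on the nose.

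The main obstacle will be bookkeeping rather than conceptual difficulty: one must be careful that the various isomorphisms used to build $m$ (conjugation by $h$ applied to linear isometries spaces, the identification $h^\dag \theta^\dag \cong (h\theta h)^\dag h^\dag$, and the strong monoidal structure of the change-of-groups functor) are genuinely natural and that they compose to give back the identity under the involution, rather than some nontrivial automorphism. I would handle this by noting that all the functors in sight ($j^*$, $h^\dag$, $I_U^{U'}$) act as the identity on underlying point-sets and merely permute indexing data by the order-two operation of conjugation/inversion by $h$, so the square $(h^\dag j^*)^2 = \id$ propagates to all the coherence data. With the three conditions verified, the pair $(SO(2)\mcal, h^\dag j^*)$ is an involutary monoidal model category, and since the symmetry is present throughout, it is in fact an involutary symmetric monoidal model category, which is what is needed downstream for Theorem \ref{thm:so2involution}.
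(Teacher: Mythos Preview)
Your proposal is correct and follows essentially the same approach as the paper: reduce to the involutary condition on $m$ and the order-two condition on the unit map, then argue via the underlying point-set behaviour of $h^\dag j^*$. The paper's execution differs only in that it carries out the $m$-check in more explicit detail, decomposing $m = h^\dag m_2 \circ m_1$ (where $m_1$ comes from conjugation-by-$h$ on the isometry spaces and $m_2$ from the strong monoidal structure of $j^*$), verifying the order-two condition for each factor separately, and then showing $m_1$ and $m_2$ commute; your point-set heuristic is the right intuition behind why each of those pieces works, but the paper regards this as genuinely technical and writes out the colimit description of the half-twisted smash product to nail it down. For the unit map, the paper takes a slightly slicker route than yours: rather than arguing directly that the composite is a double application of $h$, it observes that $S_{SO(2)} = \iota^* S_{O(2)}$ and invokes Theorem \ref{thm:extrastructure} wholesale to obtain the ring map of order two.
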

\begin{proof}
We have already shown that
$(SO(2) \mcal, h^\dag j^*)$
is an involutary model category,
and $h^\dag j^*$ is a strong monoidal functor.
Thus, all that remains is to prove is the involutary condition
$h^\dag j^* m \circ m(h^\dag j^* \smashprod
h^\dag j^*) = \id$ and to specify a map of order two
$S_{SO(2)} \to h^\dag j^* S_{SO(2)}$.
This second condition is simple:
consider the $O(2)$-equivariant sphere spectrum $S_{O(2)}$.
The unit of $SO(2) \mcal$ is $\iota^* S_{O(2)} = S_{SO(2)}$.
Hence, by Theorem \ref{thm:extrastructure},
we have a ring map of order two
$\alpha \co \iota^* S_{O(2)} \to h^\dag j^* \iota^* S_{O(2)}$.

What remains is a technical proof, one that requires us to look
in great detail at the smash product of $SO(2) \mcal$.
We must check the equation:
$h^\dag j^* m \circ m(h^\dag j^* \smashprod
h^\dag j^*) = \id$.
Our method of proof is as follows. Since $m= h^\dag m_2 \circ m_1$,
we prove a
similar condition for each of the factors in turn.
Let $X$ and $Y$ be in $SO(2) \mcal(\iota^* U)$, then
by the proof that $h^\dag$ is a monoidal functor we see that
$$\id_{X \smashprod Y} = h^\dag m_1 \circ m_1 (h^\dag \smashprod h^\dag ) \co
h^\dag h^\dag X \smashprod h^\dag h^\dag Y \longrightarrow
h^\dag h^\dag (X \smashprod Y).$$
We will prove below a similar condition on $m_2$:
$$\id_{X \smashprod Y} = j^* m_2 \circ m_2 (j^* \smashprod j^*) \co
j^* j^* X \smashprod j^* j^* Y \longrightarrow
j^* j^* (X \smashprod Y).$$
Then assuming that $m_1$ and $m_2$ commute, we have the
following commutative diagram.
$$\xymatrix{
h^\dag j^* h^\dag j^* X \smashprod h^\dag j^* h^\dag j^* Y
\ar[r]^= \ar[d]_{m_1} &
h^\dag h^\dag j^* j^* X \smashprod h^\dag h^\dag j^* j^* Y
\ar[d]_{m_1} \\
h^\dag (j^* h^\dag j^* X \smashprod j^* h^\dag j^* Y)
\ar[r]^= \ar[d]_{h^\dag m_2} &
h^\dag (h^\dag j^* j^* X \smashprod h^\dag j^* j^* Y)
\ar[d]_{h^\dag m_1} \\
h^\dag j^* (h^\dag j^* X \smashprod h^\dag j^* Y)
\ar[d]_{h^\dag j^* m_1}
& h^\dag h^\dag (j^* j^* X \smashprod j^* j^* Y)
\ar[d]_{h^\dag h^\dag m_2} \\
h^\dag j^* h^\dag( j^* X \smashprod j^* Y)
\ar[d]_{h^\dag j^* h^\dag  m_2} \ar[r]^=
& h^\dag h^\dag j^*( j^* X \smashprod   j^* Y)
\ar[d]_{h^\dag  h^\dag j^* m_2} \\
h^\dag j^* h^\dag j^* (X \smashprod Y) \ar[r]^=
& h^\dag h^\dag j^* j^* (X \smashprod Y)
}$$
The left hand vertical composite is
$h^\dag j^* m \circ m(h^\dag j^* \smashprod
h^\dag j^*)$, so once we have shown that
$m_1$ and $m_2$ commute and that $m_2$
satisfies the above condition, we will have our result.

We need a more explicit description of $m_2$,
so we write the half-twisted smash product in terms
of a colimit over pairs of indexing
spaces $V$ and $W$ of the spectra
$\mathscr{E}_{V \oplus W} \smashprod  X(V) \smashprod Y(W)$
for a particular
$\mathscr{E} \in SO(2) \iscr(\iota^*U \oplus \iota^* U ; \iota^*U)$.
That is,
$$X \smashprod Y =  \colim_{V \oplus W} \Big( \mathscr{E}_{V \oplus W}
\smashprod  X(V) \smashprod Y(W) \Big)$$
Now we can describe the action of $m_2$ in more detail,
consider
$$j^* X \smashprod j^* Y =  \colim_{V \oplus W}
\Big( j^* (\mathscr{E}_{V \oplus W})
\smashprod  j^* X(V) \smashprod j^* Y(W) \Big)$$
$m_2$ acts on this colimit termwise, using the obvious isomorphism:
$$n \co j^* (\mathscr{E}_{V \oplus W})
\smashprod  j^* X(V) \smashprod j^* Y(W)
\cong j^* \Big ((\mathscr{E}_{V \oplus W})
\smashprod   X(V) \smashprod  Y(W) \Big).$$
Applying this isomorphism twice we obtain an isomorphism
$$n^2 \co j^*j^* (\mathscr{E}_{V \oplus W})
\smashprod  j^*j^* X(V) \smashprod j^*j^* Y(W)
\cong j^*j^* \Big ((\mathscr{E}_{V \oplus W})
\smashprod   X(V) \smashprod  Y(W) \Big).$$
and this map is the identity. Hence the above condition
for $m_2$ holds.
Now we note that the natural transformations $m_1$
and $m_2$ commute, in the description of the smash product above,
the map $m_1$ only acts on the terms $\mathscr{E}_{V \oplus W}$.
Hence by the naturality properties of a colimit,
the claim follows.
Thus, since $m$ is the composite of the commuting maps $m_1$
and $m_2$ it satisfies the involutary condition as desired.
\end{proof}

\newpage

We define $\tau = \alpha^* h^\dag j^*$, an involution on
$\iota^* S_\qq \leftmod$, using the ring map given in the above proof
$\alpha \co \iota^* S_\qq \to h^\dag j^*\iota^* S_\qq$.
\begin{proposition}\label{prop:SQmodinvolutary}
The category of rational $SO(2)$-spectra, $\iota^* S_\qq \leftmod$,
is an involutary symmetric monoidal model category with involution $\tau$.
\end{proposition}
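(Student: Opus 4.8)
The strategy is to bootstrap from the general machinery of Section \ref{sec:invmonoidalcat} and Section \ref{sec:invmodcat} together with the structural results just proved about $(SO(2)\mcal, h^\dag j^*)$. First I would invoke Proposition \ref{prop:skewedmodules}: we have shown $(SO(2)\mcal, h^\dag j^*)$ is an involutary symmetric monoidal model category satisfying the monoid axiom (the monoid axiom for $SO(2)\mcal$ comes from Theorem \ref{thm:ekmmmodel}, and it is preserved because $h^\dag j^*$ is an isomorphism on underlying categories, so $P$-cell maps are sent to $P$-cell maps). The ring map of order two $\alpha \co \iota^*S_\qq \to h^\dag j^*\iota^*S_\qq$ constructed in Theorem \ref{thm:extrastructure} (applied to $S_\qq \in O(2)\mcal(U)$, using that $\iota^*$ of the $O(2)$-ring spectrum $S_\qq$ is the unit $\iota^*S_\qq$ of $SO(2)\mcal$) makes $(\iota^*S_\qq, \alpha)$ a commutative ring object in the skewed category $h^\dag j^* \# SO(2)\mcal$. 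Here I must check $S_\qq$ is genuinely a commutative ring spectrum in $O(2)\mcal$ — this is exactly Definition \ref{def:S_qq} applied with $G = O(2)$ — and that $\alpha$ is a ring map, which is part of the naturality statement of Theorem \ref{thm:extrastructure} (the map $h$ there is a map of $S$-modules, hence respects the multiplication).

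Then Proposition \ref{prop:skewedmodules} directly gives that $(\iota^*S_\qq\leftmod, \alpha^* h^\dag j^*) = (\iota^*S_\qq\leftmod, \tau)$ is an involutary symmetric monoidal model category, and moreover identifies it with the category of $(\iota^*S_\qq,\alpha)$-modules in the skewed category $h^\dag j^*\# SO(2)\mcal$. The fibrations and weak equivalences are inherited from the underlying category $SO(2)\mcal$ as required, and the symmetric monoidal structure with monoid axiom is transported along the construction. So at the formal level the statement is essentially a corollary of results already in hand.

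The one point that requires genuine attention — and I expect it to be the main obstacle — is verifying that $\tau = \alpha^* h^\dag j^*$ really is an \emph{involution} on $\iota^*S_\qq\leftmod$, i.e. that $\tau^2 = \id$ as functors, not merely up to isomorphism, and that this is compatible with the $\iota^*S_\qq$-module structure twisted by $\alpha$. On underlying spectra $h^\dag j^* h^\dag j^* = \id$ since these change-of-universe and change-of-groups functors preserve underlying sets; the subtlety is that $\alpha^*$ retwists the module action, so one must check $\alpha^*(h^\dag j^* \alpha)^* (h^\dag j^*(-)) $ gives back the original module structure. This is precisely where the involutary condition on $\alpha$ — that $(h^\dag j^* \alpha)\circ \alpha = \id$, established in Theorem \ref{thm:extrastructure} via the observation that the composite $(h^\dag j^* h)\circ h$ is a double application of the automorphism $h$ and hence the identity — does the work. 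I would spell out the diagram showing that the two module structures on $\tau^2 X$ agree, using $(h^\dag j^*\alpha)\circ\alpha = \id_{\iota^*S_\qq}$ together with the fact that $h^\dag j^*$ is a monoidal involution (so it commutes appropriately with the action maps $\iota^*S_\qq \smashprod X \to X$).

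Finally, the symmetric monoidal and model-category axioms for the skewed structure are exactly the content of the general lemmas in Section \ref{sec:invmonoidalcat} and Section \ref{sec:invmodcat}: once $(\iota^*S_\qq\leftmod, \tau)$ is recognised as an involutary monoidal model category in the sense of those sections, nothing further is needed. I would close by remarking that the choice of reflection $h$ is immaterial (two reflections being conjugate by a rotation gives an isomorphism of the resulting involutions), so the notation $\tau$ is unambiguous, deferring the precise comparison to Theorem \ref{thm:so2involution}.
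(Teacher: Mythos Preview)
Your proposal is correct and follows exactly the same route as the paper: invoke Proposition \ref{prop:skewedmodules} with $(M,\sigma) = (SO(2)\mcal, h^\dag j^*)$ and $(R,\alpha) = (\iota^* S_\qq, \alpha)$. The paper's proof is a single sentence to this effect; your additional verification that $\tau^2 = \id$ is already packaged into the conclusion of Proposition \ref{prop:skewedmodules} (whose statement asserts $(R\leftmod, \alpha^*\sigma)$ is involutary), so that paragraph is not strictly needed here, though it does no harm.
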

\begin{proof}
This follows from Proposition \ref{prop:skewedmodules}
since $(SO(2) \mcal, h^\dag j^*)$
is an involutary monoidal model category.
\end{proof}

Thus, we have a model category $\tau \# \iota^*S_\qq \leftmod$.
Using notation introduced below we can also write $I(S_\qq) \leftmod$ for
$\tau \# \iota^* S_\qq \leftmod$.
Recall that $h$ is our fixed reflection in $O(2)$,
$j^*$ is the change of groups functor (from the map
$t \to t^{-1}$ of $SO(2)$), $h^\dag$ the change of
universe functor induced by $h$ and
$\alpha \co \iota^* S_\qq \to h^\dag j^*\iota^* S_\qq$
is a ring map. We use the category
$S_\qq \leftmod(\cscr)$, of $S_\qq$-modules in
$O(2) \mcal$, with weak equivalences those maps
which are $\pi_*^H$-isomorphisms for $H \leqslant SO(2)$.

\begin{theorem}\label{thm:so2involution}
There is a strong symmetric monoidal Quillen equivalence
$$C : \tau \# \iota^* S_\qq \leftmod
\overrightarrow{\longleftarrow}
S_\qq \leftmod(\cscr) : I
\index{$C$@$(C,I)$} $$
where $I X = h \co \iota^*X \to \alpha^* h^\dag j^*\iota^*X$, $If = \iota^*f$
and $C$ is defined in the proof.
\end{theorem}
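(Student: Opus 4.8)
The plan is to build the right adjoint $I$ essentially by the construction already described in Theorem \ref{thm:extrastructure} and Corollary \ref{cor:SO2toO2}, then produce the left adjoint $C$ formally, and finally check the Quillen equivalence and monoidality conditions. First I would define $I$: for an $S_\qq$-module $X$ in $S_\qq\leftmod(\cscr)$, Theorem \ref{thm:extrastructure} gives the $SO(2)$-map of order two $h\co\iota^*X\to h^\dag j^*\iota^*X$, and one checks this is a map of $\iota^*S_\qq$-modules where the target has the module structure twisted by $\alpha$; that is, $IX$ is precisely the object $h\co\iota^*X\to\alpha^*h^\dag j^*\iota^*X = \tau(\iota^*X)$ of $\tau\#\iota^*S_\qq\leftmod$, with $If=\iota^*f$. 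I would then construct $C$ as the left adjoint. Since $\tau\#\iota^*S_\qq\leftmod$ has all small colimits (Lemma \ref{lem:limcolim}) and $S_\qq\leftmod(\cscr)$ is cocomplete, one approach is to exhibit $C$ directly via the "build an $O(2)$-spectrum from an $SO(2)$-spectrum with order-two data" construction sketched in Corollary \ref{cor:SO2toO2}: given $(Y,w\co Y\to\tau Y)$, one reassembles the $\iscr(U,U)$-action, the $S$-module structure, and the $O(2)$-action on levels $X(V)$ using $w$ to supply the reflection, obtaining an object of $S_\qq\leftmod(\cscr)$; this is natural in $(Y,w)$ by construction, so $C$ is a functor. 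That $(C,I)$ is an adjoint pair (indeed an adjoint equivalence of underlying categories) follows by the same bookkeeping as in Example \ref{ex:o2spaces}: on underlying sets nothing changes, so the unit and counit are identities at the set level, and one verifies they are maps in the respective categories.

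Next I would verify the Quillen pair property. By construction the weak equivalences and fibrations of $\tau\#\iota^*S_\qq\leftmod$ are detected by $\mathbb{P}$, i.e. on underlying rational $SO(2)$-spectra, while the weak equivalences and fibrations of $S_\qq\leftmod(\cscr)$ are the $\pi_*^H$-isomorphisms and the appropriate fibrations for $H\leqslant SO(2)$, i.e. also detected on $\iota^*$ of the underlying spectrum. Since $I$ is, up to the twist by $\alpha$ and the application of $h^\dag j^*$ (which preserves all weak equivalences and fibrations by Lemma \ref{lem:hjisquillen}), just the restriction functor $\iota^*$ followed by recording the order-two map, $I$ preserves fibrations and acyclic fibrations; hence $(C,I)$ is a Quillen pair. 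For the Quillen equivalence I would use that $(C,I)$ is already an equivalence of the underlying categories (this is the content of the $O(2)$-spaces analogue, now lifted to spectra via Theorem \ref{thm:extrastructure} and Corollary \ref{cor:SO2toO2}), and that it matches weak equivalences on both sides; concretely, for cofibrant $(Y,w)$ and fibrant $X$, a map $CY\to X$ in $S_\qq\leftmod(\cscr)$ is a weak equivalence iff its restriction along $\iota^*$ is a $\pi_*^{SO(2)}$-level equivalence iff the adjoint $(Y,w)\to IX$ is a weak equivalence in $\tau\#\iota^*S_\qq\leftmod$, since both are read off the same underlying map of $SO(2)$-spectra. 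Monoidality then comes from Proposition \ref{prop:SQmodinvolutary} and the skewed-category machinery: $(\iota^*S_\qq\leftmod,\tau)$ is an involutary symmetric monoidal model category, so $\tau\#\iota^*S_\qq\leftmod$ is symmetric monoidal, and one checks that the adjoint equivalence $(C,I)$ respects the monoidal products — $I$ is (strong symmetric) monoidal because $\iota^*$ is strong symmetric monoidal and the order-two data is carried along compatibly, so $C$ is strong symmetric monoidal and $(C,I)$ is a strong symmetric monoidal Quillen pair, hence a strong symmetric monoidal Quillen equivalence.

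The main obstacle I anticipate is not the formal adjunction or the model-category bookkeeping but rather making Corollary \ref{cor:SO2toO2} fully precise: namely checking carefully that the data of an order-two $\iota^*S_\qq$-module map $w\co Y\to\tau Y$ really does reassemble into a genuine $O(2)$-equivariant structure on all the relevant layers simultaneously — the $G$-May spectrum level maps $X(V)$, the $\iscr(U,U)\ltimes(-)$-action, and the $S$-module structure — and that this reassembly is inverse to the construction of $I$ on the nose. This requires running the diagrams in the proof of Theorem \ref{thm:extrastructure} backwards and verifying compatibility with the change-of-universe identifications $h^\dag j^*\cong I^{\iota^*U}_{j^*\iota^*U}j^*$ (Lemma \ref{lem:fdagisI}); the subtlety is that $h^\dag$ changes the indexing universe, so one must be careful that $CY$ is indexed on the complete $O(2)$-universe $U$ and that the $O(2)$-action built from $w$ and the residual $SO(2)$-action is associative and unital. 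Once this identification is nailed down, the rest — that $\mathbb{P}$-detection of weak equivalences on the skewed side matches $\iota^*$-detection on the cyclic side, and that everything is symmetric monoidal — follows routinely from the results already established in Chapter \ref{chp:catwithinv} and the preceding sections of this chapter.
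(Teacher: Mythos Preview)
Your proposal is correct and follows essentially the same route as the paper: construct $I$ from Theorem \ref{thm:extrastructure}, build $C$ explicitly as in Corollary \ref{cor:SO2toO2}, observe that $(C,I)$ is an adjoint equivalence of underlying categories, check the Quillen conditions via $\mathbb{P}\circ I = \iota^*$ and the diagram of forgetful functors, and verify monoidality. The only cosmetic difference is that the paper proves $C$ is strong symmetric monoidal directly (by comparing the $O(2)$-actions on $C(X,u)\smashprod C(Y,v)$ and $C((X,u)\smashprod(Y,v))$ level by level), whereas you deduce it from $I$ being strong symmetric monoidal; since the adjunction is an equivalence of categories this is equivalent, and you have correctly located the only real work in making the reassembly of the $O(2)$-structure from the order-two datum precise.
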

\begin{proof}
Almost all of the work has been done in Theorem \ref{thm:extrastructure}
and Corollary \ref{cor:SO2toO2}.
We are merely going to repeat the arguments to construct a map
of $O(2)$-spectra from a map in the skewed category and then check
that we have a well-behaved adjunction.
But first, let us recap the construction of an $O(2)$-spectrum from an object
of the skewed category. Take $h \co X \to \alpha^* h^\dag j^* X$
($X$ an $SO(2)$-spectrum and $h$ a map of order two)
then $CX(V)= (X \circ \iota^*)(V)=X(\iota^*V)$ as a topological space. We give $CX(V)$ an
$O(2)$-action by letting $h \in O(2)$ our chosen reflection act as
$h \co X(\iota^*V) \to j^*(X(\iota^*V))$. Now we must add the structure of a prespectrum:
$\sigma \co S^{W-V} \smashprod CX(V) \to CX(W)$
is defined by the corresponding structure map of $X$,
that this is an $O(2)$-map is encoded in the fact that $h$ is a map of prespectra.
It is clear that this gives a spectrum (that is, the adjoints of the structure maps
are equivariant homeomorphisms).

We define the $\mathbb{L}$-spectrum action
using the underlying set of the structure map for $X$
and then check that this
gives an $O(2)$-map. Repeating this process twice more
we see that $CX$ is an $O(2)$-equivariant $S$-module and an
$S_\qq$-module.
Now for the action of $C$ on maps,
with the above construction we can simply define
$Cf_V=f_V$ (as a set map) for $f$ a map as below.
$$\xymatrix@C+0.2cm{
X \ar[d]^{h_X} \ar[r]^f & Y \ar[d]^{h_Y} \\
h^\dag j^* X \ar[r]^{h^\dag j^* f} & h^\dag j^* Y }$$
It is routine to check that $Cf$ defines an $O(2)$-equivariant map of
$S_\qq$-modules.
That these are adjoint functors is immediate,
the pair $(C,I)$ is actually an equivalence of categories.
Now we consider the model structures,
for which we need the diagram below
(left adjoints will be on the top and left).
Recall that $O(2) \mcal (SO(2))$
is the category $O(2) \mcal$ with
the $\cscr$-model structure as defined in
Theorem \ref{thm:Fmodelstructure}.
It should be clear that the right adjoints
of the square and triangle commute, hence
so do the left adjoints (up to natural isomorphism).
The pairs $(S_\qq \smashprod (-),U_1)$
and $(\iota^* S_\qq \smashprod (-),U_2)$ are
the free module and underlying spectrum pairs.
$$\xymatrix@R+0.6cm@C+0.2cm{
& \tau \# \iota^* S_\qq \leftmod
\ar@<-0.5ex>[dl]_C
\ar@<-0.5ex>[dr]_{\mathbb{P}} \\
S_\qq \leftmod (\cscr)
\ar@<-0.5ex>[ur]_I
\ar@<-0.5ex>[rr]_{\iota^*}
\ar@<+0.5ex>[d]^{U_1} &&
\iota^* S_\qq \leftmod
\ar@<-0.5ex>[ul]_{\mathbb{D}}
\ar@<-0.5ex>[ll]_{0(2)_+ \smashprod_{SO(2)}(-)}
\ar@<+0.5ex>[d]^{U_2} \\
O(2) \mcal (SO(2))
\ar@<+0.5ex>[u]^{S_\qq \smashprod (-)}
\ar@<-0.5ex>[rr]_{\iota^*} &&
SO(2) \mcal
\ar@<+0.5ex>[u]^{\iota^* S_\qq \smashprod (-)}
\ar@<-0.5ex>[ll]_{0(2)_+ \smashprod_{SO(2)}(-)} }$$
We show that $I$ is a right Quillen functor. Take a fibration or
acyclic fibration $f$ in
$S_\qq \leftmod(\cscr)$, then $If$ is a fibration or acyclic fibration
exactly when $\mathbb{P}If = \iota^*f$ in $\iota^* S_\qq \leftmod$ is.
Now $\iota^*f$ is a fibration or acyclic fibration in
$\iota^* S_\qq \leftmod$ precisely when it is so in
$SO(2) \mcal$. Since
$U_1 \co S_\qq \leftmod(\cscr) \to O(2) \mcal(SO(2))$ and
$\iota^* \co O(2) \mcal(SO(2)) \to SO(2) \mcal $
are a right Quillen functors, this result follows.
Now we must show that $(C,I)$ is a Quillen equivalence,
the above argument shows that
$I$ detects and preserves all weak equivalences
(since $U_1$ and $\iota^* \co O(2) \mcal(SO(2)) \to SO(2) \mcal $
do so). In fact, $C$ also preserves all weak equivalences,
since $\iota^* \circ C = \mathbb{P}$.
Now we show that the composite
$X \to ICX \to I \fibrep C X$ is a weak equivalence in
$\tau \# \iota^* S_\qq \leftmod$, the first arrow is an isomorphism,
the second is $I$ applied to a weak equivalence,
hence a weak equivalence.

We prove that the left adjoint $C$ is strong symmetric monoidal. Let $(X,u)$
and $(Y,v)$ be objects of the skewed category.
The underlying $SO(2)$-spectra of $C (X,u) \smashprod C (Y,v) $
and $C( (X,u) \smashprod (Y,v))$ are clearly isomorphic
and given by $\iscr(U \oplus U, U) \ltimes_{\iscr(U,U)^2} X \bar{\smashprod} Y$.
The action of $O(2)$ in each case is given by conjugation on
$\iscr(U \oplus U, U)$, by $u$ on $X$ and by $v$ on $Y$.
Thus we see that these objects are isomorphic as $O(2)$-spectra.
The symmetry statement is clear
and $C(\iota^* S_\qq, \alpha) \cong S_\qq$
so $C$ is strong symmetric monoidal.
It remains to check that
if $(\cofrep \iota^* S_\qq, \cofrep \alpha)$ is a cofibrant replacement
of the unit, then
$C (\cofrep \iota^* S_\qq,\cofrep \alpha) \to C (\iota^* S_\qq,\alpha)$
is a weak equivalence. This holds since $C$ preserves all weak equivalences.
\end{proof}

\begin{theorem}\label{thm:spectralalgebras}
The categories $\mcal$, $SO(2) \mcal$, $O(2) \mcal$,  $\iota^* S_\qq \leftmod$ and
$S_\qq \leftmod$
are all closed symmetric $Sp^\Sigma_+$-algebras.
\end{theorem}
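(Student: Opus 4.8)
The plan is to reduce the statement to a single source result (\cite[Theorem 3.12]{ss03monequiv} and the machinery of \cite{ss03stabmodcat}, together with the monoidal model structures already established in this chapter and in Chapter \ref{chp:locals}) and then assemble the five cases from Quillen pairs that have already been exhibited. Recall that a closed symmetric $Sp^\Sigma_+$-algebra means a closed symmetric monoidal model category $\dcal$ equipped with a strong symmetric monoidal Quillen pair $Sp^\Sigma_+ \overrightarrow{\longleftarrow} \dcal$; equivalently, by the remarks following Definition \ref{def:Qbifunctor}, $\dcal$ must be a closed symmetric monoidal model category that receives a strong symmetric monoidal left Quillen functor from $Sp^\Sigma_+$. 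So the proof breaks into: (i) verify that each category on the list is a closed symmetric monoidal model category, and (ii) produce for each a strong symmetric monoidal left Quillen functor out of $Sp^\Sigma_+$.

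For part (i): $\mcal$ is the case $G = \{e\}$ of $G \mcal$, which is a closed symmetric monoidal model category by Theorem \ref{thm:ekmmmodel}; $SO(2) \mcal$ and $O(2) \mcal$ are the cases $G = SO(2)$, $G = O(2)$ of the same theorem. For $\iota^* S_\qq \leftmod$ and $S_\qq \leftmod$, these are module categories over commutative ring objects, so by \cite[Chapter IV, Theorem 2.11]{mm02} (as quoted in Section \ref{sec:SQmod} and Section \ref{sec:SQinvolute}) they are proper closed symmetric monoidal model categories, $S_\qq$ being commutative by Definition \ref{def:S_qq} and $\iota^* S_\qq$ being commutative since $\iota^*$ is strong symmetric monoidal. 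For part (ii): by Theorem \ref{thm:ekmmmodel} there is a strong symmetric monoidal Quillen equivalence $\nn : \GIS_+ \overrightarrow{\longleftarrow} G \mcal : \nn^\#$; precomposing with the standard strong symmetric monoidal Quillen functor $Sp^\Sigma_+ \to \GIS_+$ (the unique-up-to-homotopy symmetric monoidal functor sending $S^1$ to $S^1$, which exists because $Sp^\Sigma_+$ is the initial such category — cf.\ \cite{mmss01}) gives the $Sp^\Sigma_+$-algebra structure on $G \mcal$ for each of $G = \{e\}, SO(2), O(2)$. Composing further with the strong symmetric monoidal left Quillen functors $S_\qq \smashprod (-) : G\mcal_\qq \to S_\qq \leftmod$ of Theorem \ref{thm:localisedtomodules} (and its $SO(2)$-analogue $\iota^* S_\qq \smashprod (-)$, which is again strong symmetric monoidal by the construction of module model structures) yields the $Sp^\Sigma_+$-algebra structures on $\iota^* S_\qq \leftmod$ and $S_\qq \leftmod$; note that a localisation does not disturb this since cofibrations are unchanged and the left adjoint is the identity, so $G \mcal_\qq$ is also a closed symmetric $Sp^\Sigma_+$-algebra by Proposition \ref{prop:pushaxiom} and Theorem \ref{thm:locfuncs}.

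The one point requiring a little care — and the place I expect the main obstacle — is checking that the composite of strong symmetric monoidal left Quillen functors is again one, i.e.\ that the various compatibility squares of monoidal and symmetry structure maps commute. This is purely formal (composition of strong symmetric monoidal functors is strong symmetric monoidal, and composition of left Quillen functors is left Quillen), but one must make sure the model-categorical conditions in Definition \ref{def:Qbifunctor} and the monoidal Quillen pair conditions (Section \ref{sec:modcat}) propagate along composites: the relevant fact is that for a strong monoidal left adjoint the comparison map $L(A \otimes B) \to LA \otimes LB$ is an isomorphism, so condition (1) of a monoidal Quillen pair is automatic, and condition (2) reduces to the cofibrant-replacement-of-unit condition which holds because each category here has cofibrant unit (in the EKMM categories after passing to cell replacements, or directly in $Sp^\Sigma_+$) or, where it does not, smashing with a cofibrant object preserves weak equivalences by Lemma \ref{lem:sModMonoid} and Proposition \ref{prop:pushaxiom}. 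Thus the proof is essentially a citation assembly: state the composable chain
$$Sp^\Sigma_+ \longrightarrow \GIS_+ \overset{\nn}{\longrightarrow} G \mcal \overset{(S_\qq) \smashprod (-)}{\longrightarrow} S_\qq \leftmod$$
for the appropriate $G$, observe each arrow is a strong symmetric monoidal left Quillen functor by the theorems listed, and conclude. I would write this up in one short paragraph per family of cases, with the bulk being a single sentence verifying that composites of strong symmetric monoidal left Quillen functors inherit the defining properties.
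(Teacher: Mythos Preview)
Your overall strategy matches the paper's: exhibit each category as a closed symmetric monoidal model category and then produce a chain of strong symmetric monoidal left Quillen functors out of $Sp^\Sigma_+$. The paper does exactly this, and its write-up is indeed little more than the citation assembly you describe.

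Where your proposal is too loose is the middle of the chain. You invoke ``the standard strong symmetric monoidal Quillen functor $Sp^\Sigma_+ \to \GIS_+$'' and justify it by a universality remark citing \cite{mmss01}, but that paper is entirely non-equivariant and contains no such functor; nor does the thesis anywhere assume $Sp^\Sigma_+$ is initial in the sense you use. The paper instead builds the composite explicitly:
\[
Sp^\Sigma_+ \xrightarrow{\;|-|\;} Sp^\Sigma_+(\tscr_*) \xrightarrow{\;\mathbb{P}\;} \iscr\sscr_+ \xrightarrow{\;i_* \varepsilon^*_G\;} G\iscr\sscr_+ \xrightarrow{\;\nn\;} G\mcal \xrightarrow{\;S_\qq \smashprod (-)\;} S_\qq\leftmod,
\]
using geometric realisation, the $(\mathbb{P},\mathbb{U})$ comparison of \cite{mmss01}, and then the inflation/change-of-universe pair $(i_*\varepsilon^*_G,(i^*(-))^G)$ from \cite[Chapter V, Proposition 3.4]{mm02} to pass to the equivariant world. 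That last step is the one your sketch omits, and it is where the equivariant content actually enters. Also, your remark that ``each category here has cofibrant unit \dots\ or directly in $Sp^\Sigma_+$'' is false: neither $Sp^\Sigma_+$ nor the EKMM categories have cofibrant unit; you are saved only by your parenthetical fallback that smashing with a cofibrant object preserves weak equivalences, which is the argument the paper relies on throughout.
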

\begin{proof}
Each of these categories is a closed symmetric monoidal category,
hence they are algebras over themselves, we can now use the following
series of adjoints to pull this structure back to
$Sp^\Sigma_+$, symmetric spectra of simplicial sets with
the positive model structure.
The geometric realisation and
singular complex adjunction $(|-|, \sing)$\index{$(|-|, \sing)$}
is a strong symmetric monoidal Quillen equivalence between
$Sp^\Sigma_+$ and $Sp^\Sigma_+ (\tscr_*)$\label{app:topsymspec} (\cite[Figure 7.1]{ss03monequiv}).
We have the strong symmetric monoidal Quillen equivalence
$$\mathbb{P} : Sp^\Sigma_+ (\tscr_*) \overrightarrow{\longleftarrow}
\iscr \sscr_+ : \mathbb{U}$$
from \cite{mmss01}.
Combining these gives $\iscr \sscr_+$ the structure of a closed
symmetric $Sp^\Sigma_+$-algebra. Thus we can use the adjunction
$(\nn, \nn^\#)$ to see that $\mcal$ is a closed
symmetric $Sp^\Sigma_+$-algebra.

When working $G$-equivariantly we use the composite functor
$i_* \varepsilon^*_G \co \iscr \sscr_+ \to \GIS_+$
as defined in \cite[Chapter V, Proposition 3.4]{mm02}, which
states that this functor is part of a Quillen pair $(i_* \varepsilon^*_G, (i^* (-))^G)$.
This is a strong symmetric monoidal adjunction, as
noted on \cite[Chapter V, Page 80]{mm02}.
Finally, for $S_\qq \leftmod$ and $\iota^* S_\qq \leftmod$ we use the free
module functor as well. We illustrate for $S_\qq \leftmod$ below.
$$ \xymatrix@C+0.6cm{
Sp^\Sigma_+
\ar@<+0.4ex>[r]^(0.6){\mathbb{P} \circ |-|} &
\iscr \sscr_+
\ar@<+0.4ex>[r]^(0.5){i_* \varepsilon^{*}_{O(2)}}
\ar@<+0.4ex>[l]^(0.4){\sing \circ \mathbb{U}} &
O(2) \iscr \sscr_+
\ar@<+0.4ex>[r]^(0.6){\nn }
\ar@<+0.4ex>[l]^(0.5){(i^* (-))^{O(2)} } &
O(2) \mcal
\ar@<+0.4ex>[r]^(0.5){S_\qq \smashprod (-)}
\ar@<+0.4ex>[l]^(0.4){\nn^\# } &
{S_\qq} \leftmod
\ar@<+0.4ex>[l]^(0.5){U}
 } $$
For a pair of $S_\qq$-modules $X$ and $Y$, the
$Sp^\Sigma_+$-function object
$\underhom (X,Y) \in Sp^\Sigma_+$
is given by $\sing \mathbb{U} (i^* \nn^\# U F_{S_\qq}(X,Y))^{O(2)}$.
\end{proof}

\begin{lemma}\label{lem:inflationandinvolution}
The Quillen equivalence $(C,I)$ is an adjunction of closed
symmetric $\id \# Sp^\Sigma_+$-algebras.
\end{lemma}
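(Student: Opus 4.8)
\textbf{Proof proposal for Lemma \ref{lem:inflationandinvolution}.}

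The plan is to leverage the fact, established in Theorem \ref{thm:spectralalgebras}, that both $\iota^* S_\qq \leftmod$ and $S_\qq \leftmod(\cscr)$ are closed symmetric $Sp^\Sigma_+$-algebras, together with the skewed-category machinery of Section \ref{sec:invmonoidalcat}. The first observation is that giving $Sp^\Sigma_+$ the trivial involution $\id$ makes $(Sp^\Sigma_+, \id)$ an involutary symmetric monoidal model category (trivially: every axiom involving $\sigma$ becomes vacuous or an identity). Moreover, by the first example in Section \ref{sec:involuteexamples}, $\id \# Sp^\Sigma_+$ is the category of $C_2$-objects in $Sp^\Sigma_+$. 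So the statement we must prove is that $C$ (equivalently $I$) respects the $\id \# Sp^\Sigma_+$-algebra structures, i.e. respects the closed symmetric $Sp^\Sigma_+$-algebra structures \emph{in a way compatible with the involutions}.

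First I would verify that the adjunction $i_\ccal : \nu \overrightarrow{\longleftarrow} \ccal : j_\ccal$ making $\iota^* S_\qq \leftmod$ an $Sp^\Sigma_+$-algebra is an \emph{involutary} adjunction with respect to $\id$ on $Sp^\Sigma_+$ and $\tau$ on $\iota^* S_\qq \leftmod$. This amounts to producing a natural transformation $\alpha \co i_\ccal \circ \id \to \tau \circ i_\ccal$ with $\tau\alpha \circ \alpha = \id$; since the inflation-type left adjoint $Sp^\Sigma_+ \to \iota^* S_\qq \leftmod$ factors through $\varepsilon^*_{SO(2)}$, and $\tau = \alpha^* h^\dag j^*$ acts trivially on underlying objects of spectra coming from inflation (a $W$-trivially-acted spectrum is fixed by $j^*$ and by the change of universe $h^\dag$ up to the canonical isomorphism), this natural transformation is the evident canonical isomorphism, and the order-two condition is routine. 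By Lemma \ref{lem:onlyneedhalf} it suffices to check this for either adjoint. The analogous check for $S_\qq \leftmod(\cscr)$ with the identity involution is immediate since $C I \cong \id$ and the inflation functor lands in $O(2)$-spectra with the $W$-action visible only through the reflection, which $C$ builds from the skewed structure. Then I would invoke Lemma \ref{lem:invmonadjskewed} (and the strong symmetric monoidal refinement recorded there): an involutary symmetric monoidal adjunction passes to a symmetric monoidal adjunction on the skewed categories, so the $Sp^\Sigma_+$-algebra structure on $\iota^* S_\qq \leftmod$ descends to $\tau \# \iota^* S_\qq \leftmod$ as an $\id \# Sp^\Sigma_+$-algebra structure, and similarly for $S_\qq \leftmod(\cscr) \simeq \id \# (S_\qq \leftmod(\cscr))$ (the latter being the skewed category for the identity involution, which by the first example in Section \ref{sec:involuteexamples} is the $C_2$-objects, but here with trivial $C_2$-action the skewed category is the base itself).

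Finally I would assemble the pieces: $C$ is already shown strong symmetric monoidal in Theorem \ref{thm:so2involution}, and by the naturality of the constructions in Theorems \ref{thm:extrastructure} and \ref{thm:spectralalgebras} the square comparing the two inflation functors through $C$ commutes up to a \emph{monoidal} natural isomorphism; since both inflation functors are compatible with the involutions as above, this isomorphism is involutary, so by the Remark following Lemma \ref{lem:skewadjunct} we get a genuinely commuting diagram of skewed $\id \# Sp^\Sigma_+$-algebra functors. Hence $(C,I)$ is an adjunction of closed symmetric $\id \# Sp^\Sigma_+$-algebras. The main obstacle I anticipate is purely bookkeeping: checking that the various coherence isomorphisms (associativity of the $h^\dag j^*$-monoidal structure interacting with the $Sp^\Sigma_+$-algebra unit $i_\ccal$, and the order-two condition $\tau\alpha\circ\alpha = \id$ at the level of the algebra unit) are compatible, i.e. that $\alpha$ for the algebra unit is the one induced from $\alpha$ for $S_\qq$ itself via the free-module functor. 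This is where one must be careful that change of universe $h^\dag$, which is only isomorphic rather than equal to $I^{j^*\iota^*U}_{\iota^*U}$, does not introduce an obstruction to strict compatibility — but Lemma \ref{lem:fdagisI} and the fact that all the relevant natural transformations are the canonical ones mean the compatibility holds on the nose up to the specified coherence data.
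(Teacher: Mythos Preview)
Your core idea matches the paper's: the inflation $i_{SO(2)} \co Sp^\Sigma_+ \to \iota^* S_\qq \leftmod$ is involutary because it factors as $\iota^* \circ i_{O(2)}$, and Theorem~\ref{thm:extrastructure} then supplies the required map of order two on anything in the image of $\iota^*$ (the paper states this explicitly, referring forward to Lemma~\ref{lem:tauspectral}). This makes $\tau \# \iota^* S_\qq \leftmod$ a closed symmetric $\id \# Sp^\Sigma_+$-algebra, exactly as you say.

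Your treatment of the $S_\qq \leftmod(\cscr)$ side, however, contains a genuine error. You write ``$S_\qq \leftmod(\cscr) \simeq \id \# (S_\qq \leftmod(\cscr))$ \ldots\ with trivial $C_2$-action the skewed category is the base itself.'' This is false: as the first example in Section~\ref{sec:involuteexamples} records, $\id \# \ccal$ is the category of $C_2$-objects in $\ccal$, which is strictly larger than $\ccal$. So your route to an $\id \# Sp^\Sigma_+$-algebra structure on $S_\qq \leftmod(\cscr)$ does not go through as written, and the subsequent compatibility check is comparing against a structure you have not actually produced.

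The paper sidesteps this entirely. Once $\tau \# \iota^* S_\qq \leftmod$ is an $\id \# Sp^\Sigma_+$-algebra, the strong symmetric monoidal Quillen equivalence $(C,I)$ of Theorem~\ref{thm:so2involution} lets one \emph{transport} the algebra structure: $S_\qq \leftmod(\cscr)$ becomes a closed symmetric $\tau \# \iota^* S_\qq \leftmod$-algebra via $(C,I)$, hence an $\id \# Sp^\Sigma_+$-algebra by composing. With the target algebra structure defined this way, $(C,I)$ is tautologically an adjunction of $\id \# Sp^\Sigma_+$-algebras, and all your bookkeeping about independently verifying both sides and then matching them up becomes unnecessary.
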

\begin{proof}
The composite functor $i_{SO(2)} \co Sp^\Sigma_+ \to \iota^* S_\qq \leftmod$,
as defined above, is an involutary functor.
This can be seen most easily by noting that
$\iota^*_{SO(2)} \circ i_{O(2)} \cong i_{SO(2)}$
(where $i_{O(2)}$ is the functor
$Sp^\Sigma_+$ to $S_\qq \leftmod$ as constructed above)
and applying Theorem \ref{thm:extrastructure}, see Lemma \ref{lem:tauspectral}.
It follows that $\tau \# \iota^* S_\qq \leftmod$ is a
closed symmetric $\id \# Sp^\Sigma_+$-algebra.
The result then follows since $O(2) \mcal (SO(2))$
is a closed symmetric $\tau \# \iota^* S_\qq \leftmod$-algebra.
\end{proof}

\chapter{Understanding $ \tau \# \iota^* S_\qq \leftmod$}\label{chp:WskewSQmod}
This chapter begins the work of proving that
the methods of \cite{greshi} are compatible with
the involution on $SO(2)$-spectra.
We are able to prove that there is a zig-zag of involutary Quillen
equivalences between $\iota^* S_\qq \leftmod$
and a category $\rightmod \ecal_t$. Thus we
have extended \cite[Theorem 4.1]{greshi}
(which is similar to Theorem \ref{thm:finiteEtopisEt})
to the case of cyclic $O(2)$-spectra.
Furthermore, it should be possible to continue this work
and extend the rest of \cite{greshi} to the case of cyclic spectra,
see Remark \ref{rmk:nextsteps}.
We begin by proving that $ \rightmod \ecal_{top} $
is a category with involution.
In Section \ref{sec:invmoritaequiv} we prove that
the Morita equivalence
of Theorem \ref{thm:monoidalmorita} is involutary in
the case of $SO(2)$-spectra. In
Section \ref{sec:movingaccross the machine}
we then prove that the functors of \cite{shiHZ} are
compatible with the involutions. The last section is another Morita
equivalence, which is involutary by the work of the second section.
The conclusion of this chapter is Corollary \ref{cor:cyclicsummary}.

\section{An Involution on $ \rightmod \ecal_{top} $}\label{sec:Etopinv}

The method of \cite{greshi} begins by replacing
rational $SO(2)$-spectra by $\rightmod \ecal_{top}$,
which may be called the category of
topological $SO(2)$-Mackey functors.
We show that this category has an involution in
Proposition \ref{prop:modEtopinvolution}.

Consider the homogenous spaces $\Sigma^\infty SO(2)/H_+$ where $H$ runs over all subgroups of
$SO(2)$, these are a set of generators for $SO(2) \mcal$.
Now we smash these with the rational $SO(2)$-equivariant sphere spectrum,
$\iota^* S_\qq$, to obtain a set of generators for $\iota^* S_\qq \leftmod$.
We can now apply the idempotents and take cofibrant
replacements (in the category of $\iota^*S_\qq$-modules) to
obtain the basic cells $\sigma_H = \cofrep e_H (SO(2) / H_+) \smashprod \iota^* S_\qq$,
since all spectra are fibrant in this model category, we have a set of
cofibrant-fibrant objects which we call $\mathcal{BC}$.
The following result implies that this collection
is a generating set.

\begin{lemma}\label{lem:build}
The homogenous spaces can be obtained from the basic cells.
$$\Sigma^\infty SO(2)/H_+ \smashprod \iota^* S_\qq
 \simeq \bigvee_{K \subseteq H} \sigma_K$$
\end{lemma}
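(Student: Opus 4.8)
The plan is to decompose the equivariant sphere spectrum $\iota^* S_\qq$ using the idempotents of the rational Burnside ring of $SO(2)$, and then to reassemble the homogeneous space from the resulting pieces. First I would recall from tom Dieck's isomorphism and the discussion preceding Definition \ref{def:idemfamily} that, working rationally, $SO(2)$ has a collection of orthogonal idempotents $e_H$ in $A(SO(2)) \otimes \qq$, one for each conjugacy class of subgroups, and that $\iota^* S_\qq$ is rationally equivalent to the wedge $\bigvee_{H} e_H \iota^* S_\qq$ (using Corollary \ref{cor:possiblesplittings} and the fact that the unit decomposes as a sum of these idempotents, at least over any finite sub-collection which suffices since $SO(2)/H$ is built from finitely many orbit types once we restrict to subgroups of $H$). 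Smashing the cofibre sequences defining $\iota^* S_\qq$ with $\Sigma^\infty SO(2)/H_+$ and using Proposition \ref{prop:rathomgps} together with Corollary \ref{cor:comrel}, we get $\pi_*^K(\Sigma^\infty SO(2)/H_+ \smashprod \iota^* S_\qq) \cong \iota_K^*(e) \, \pi_*^K(\Sigma^\infty SO(2)/H_+) \otimes \qq$ for the relevant idempotents.

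Next I would identify which idempotents act nontrivially on $\Sigma^\infty SO(2)/H_+$. The key computational input is that $\pi_*^K(\Sigma^\infty SO(2)/H_+) \otimes \qq$ is nonzero precisely when $K$ is subconjugate to $H$, i.e.\ $K \subseteq H$ up to conjugacy in $SO(2)$ (but $SO(2)$ is abelian, so this is just $K \subseteq H$). This means that $e_K (\Sigma^\infty SO(2)/H_+ \smashprod \iota^* S_\qq)$ is rationally acyclic unless $K \subseteq H$, and hence in the wedge decomposition $\Sigma^\infty SO(2)/H_+ \smashprod \iota^* S_\qq \simeq \bigvee_K e_K(\Sigma^\infty SO(2)/H_+ \smashprod \iota^* S_\qq)$ only the terms with $K \subseteq H$ survive. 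Each surviving term should then be shown to be rationally equivalent to $e_K (SO(2)/H_+) \smashprod \iota^* S_\qq$, and in turn rationally equivalent to $e_K (SO(2)/K_+) \smashprod \iota^* S_\qq$ — the point being that $e_K (SO(2)/H_+) \simeq e_K(SO(2)/K_+)$ rationally, since applying the idempotent $e_K$ picks out the part of the orbit spectrum concentrated at orbit type $(K)$, and the $H$-fixed points versus $K$-fixed points agree after applying $e_K$ (this is a geometric fixed point computation, analogous to the one in the proof of Proposition \ref{prop:finitecalc}). Finally, since $\sigma_K = \cofrep e_K(SO(2)/K_+) \smashprod \iota^* S_\qq$ is a cofibrant replacement of exactly this object, each term is weakly equivalent to $\sigma_K$.

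The main obstacle I expect is making the identification $e_K(SO(2)/H_+) \simeq \sigma_K$ precise and natural: one has to check that the splitting of $SO(2)/H_+ \smashprod \iota^* S_\qq$ induced by the $e_K$ really does match up the $(K)$-summand with the basic cell $\sigma_K$ rather than some other piece, and that this holds simultaneously for all $K \subseteq H$ so that the wedge decomposition is coherent. The cleanest route is probably to compute geometric fixed points: $\Phi^K$ applied to $e_L(SO(2)/H_+ \smashprod \iota^* S_\qq)$ is rationally nontrivial only when $L = K$ and $K \subseteq H$, in which case it is rationally $\Phi^K(SO(2)/H_+) \simeq \Sigma^\infty (SO(2)/H)^K$, and one checks (using that $SO(2)$ is abelian, so $(SO(2)/H)^K$ is either empty or all of $SO(2)/H$, and when $K\subseteq H$ it is a point rationally after the relevant localisation) that this matches $\Phi^K(\sigma_K)$. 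Then Corollary to Proposition \ref{prop:geomfixedpoints} (the rational geometric fixed points detection result) upgrades the levelwise geometric-fixed-point equivalences to a genuine rational equivalence $\Sigma^\infty SO(2)/H_+ \smashprod \iota^* S_\qq \simeq \bigvee_{K \subseteq H} \sigma_K$. A secondary, more bookkeeping-level concern is handling the infinitely many subgroups of $SO(2)$, but since $SO(2)/H$ has only finitely many orbit types (namely $SO(2)/K$ for $K \subseteq H$, of which there are finitely many containing any fixed point), only finitely many idempotents are relevant and the finite splitting theorem applies directly.
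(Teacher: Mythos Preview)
There is a genuine gap in your approach: you assume that $A(SO(2)) \otimes \qq$ contains a family of orthogonal idempotents $e_H$ indexed by the closed subgroups of $SO(2)$, but this is false. By tom~Dieck's isomorphism, $A(SO(2)) \otimes \qq \cong C(\fcal SO(2)/SO(2), \qq)$, and since $SO(2)$ is abelian every subgroup has normaliser equal to $SO(2)$ itself, so the only subgroup with finite index in its normaliser is $SO(2)$. Hence $\fcal SO(2)$ is a single point and $A(SO(2)) \otimes \qq \cong \qq$ has no nontrivial idempotents. The sphere $\iota^* S_\qq$ therefore does not split in the way you propose, and neither Corollary~\ref{cor:possiblesplittings} nor the idempotent-splitting machinery of Chapter~\ref{chp:splitting} applies to $SO(2)$-spectra globally.

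The idempotents $e_H$ appearing in the definition of the basic cells $\sigma_H$ do not live in $A(SO(2)) \otimes \qq$; they come instead from $A(H) \otimes \qq$ for the \emph{finite} subgroup $H = C_n$, where one genuinely has $A(C_n) \otimes \qq \cong \prod_{d \mid n} \qq$. These act on $SO(2)/H_+$ via the identification $SO(2)/H_+ \cong SO(2)_+ \smashprod_H S^0$ and the induction functor: one splits the $H$-equivariant sphere as $\bigvee_{K \subseteq H} e_K S^0$ in rational $H$-spectra, applies $SO(2)_+ \smashprod_H (-)$, and then identifies each resulting summand with the appropriate basic cell. This is the route taken in \cite[Lemma~2.1.5]{gre99}, which is what the paper cites. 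Your geometric fixed-point strategy can be made to work once the idempotents are located correctly, though note that $\Phi^K(SO(2)/H_+)$ for $K \subseteq H$ is a circle (not a point), and you must match this against $\Phi^K(\sigma_K)$ which is likewise $\Sigma^\infty (SO(2)/K)_+$ rationally.
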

\begin{proof}
See \cite[Lemma 2.1.5]{gre99}.
\end{proof}

We have an alternative construction of the basic cells, which will be of use later.
We will perform some of this work in the categories
of rational $D_{2n}$-spectra and rational $C_n$-spectra.
By similar arguments to the $SO(2)$ case we can
use the inversion map of $C_n$ to create an involution on
rational $C_n$-spectra. We can then construct a
model category of cyclic $D_{2n}$-spectra and see that this
is Quillen equivalent to the skewed category
$\tau \# C_n \mcal_\qq$.
There is an idempotent
$e^{D_{2n}}_{C_n} \in [S,S]^{D_{2n}}_\qq$
corresponding to the subgroup $C_n$ of $D_{2n}$.
This gives an idempotent
in $[S \smashprod EW_+,S \smashprod EW_+]^{D_{2n}}_\qq$.
This group is isomorphic to
self maps of $S$ in the homotopy category
of cyclic $D_{2n}$-spectra.
Thus, we have an idempotent $e_{C_n}' \in
\ho (\tau \# C_n \mcal_\qq)(S,S)$,
maps in the homotopy category of the
skewed category of $C_n$-spectra
from the unit to itself.

We can choose a map
in the skewed category of $C_n$-spectra representing $e_{C_n}'$.
By applying $SO(2) \smashprod_{C_n} (-)$ we obtain
an idempotent map (up to homotopy) of
$SO(2)/C_n \to h^\dag j^* SO(2)/C_n$ in $\tau \# SO(2) \mcal$.
We take homotopy colimits to construct the skewed object
$e_{C_n} SO(2)/C_n \to h^\dag j^* e_{C_n} SO(2)/C_n$.
By smashing with $(\iota^* S_\qq, \alpha)$ and taking a cofibrant
replacement we obtain a basic cell
$w_H \co \sigma_H \to \tau \sigma_H$ in
$\tau \# \iota^* S_\qq \leftmod$. Since a cofibrant
object of the skewed category is cofibrant in the underlying category,
$\sigma_H$ is a construction of a basic cell for
$\iota^* S_\qq \leftmod$.

Now define $\bar{\mathcal{BC}}$ to be the closure of $\mathcal{BC}$
under smash products with the unit included. By monoidality all non-unit objects are
cofibrant and all objects are fibrant. Recall that we are using the smash product of
$\iota^* S_\qq$-modules for this definition.
The full subcategory of $\iota^* S_\qq \leftmod$ with object set
$\gcal_{top}:=\bar{\mathcal{BC}}$ will be denoted $\ecal_{top}$.

\begin{theorem}\label{thm:so2morita}
The Quillen pair
$$(-) \smashprod_{\ecal_{top}} \mathcal{G}_{top} :
\iota^* S_\qq \leftmod \overrightarrow{\longleftarrow}
\rightmod \ecal_{top} : \underhom(\mathcal{G}_{top},-)$$
is a strong symmetric monoidal Quillen equivalence.
\end{theorem}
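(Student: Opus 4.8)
The plan is to apply the general Morita equivalence theorem (Theorem \ref{thm:monoidalmorita}) to the spectral model category $\iota^* S_\qq \leftmod$ with the generating set $\gcal_{top} = \bar{\mathcal{BC}}$, but with two modifications: first, the unit object $\iota^* S_\qq$ of $\gcal_{top}$ is fibrant but \emph{not} cofibrant, so I must re-examine where cofibrancy of the unit is used; second, I must verify that $\gcal_{top}$ is a set of compact generators. Before all of that I would note that $\iota^* S_\qq \leftmod$ is a closed symmetric $Sp^\Sigma_+$-algebra by Theorem \ref{thm:spectralalgebras}, hence a spectral model category with fibrant objects; and $\ecal_{top}$ is a symmetric monoidal $Sp^\Sigma_+$-category because $\gcal_{top}$ is closed under the smash product of $\iota^* S_\qq$-modules, with unit the spectrum $\iota^* S_\qq$ itself.

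The main body of the argument then breaks into three pieces. (1) The pair $\big((-) \smashprod_{\ecal_{top}} \gcal_{top}, \underhom(\gcal_{top},-)\big)$ is a Quillen pair: this is immediate from Theorem \ref{thm:monoidalmorita}, since every object of $\gcal_{top}$ is fibrant, the non-unit ones are cofibrant, and one checks directly (as in the cited proof) that the left adjoint preserves (acyclic) cofibrations using that free modules take (acyclic) cofibrations of $Sp^\Sigma_+$ to (acyclic) cofibrations of $\rightmod \ecal_{top}$ — this part does not use cofibrancy of the unit. (2) It is a Quillen equivalence: for this I need $\gcal_{top}$ to be a set of compact generators of $\iota^* S_\qq \leftmod$. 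Compactness follows because each $\sigma_H = \cofrep e_H(SO(2)/H_+) \smashprod \iota^* S_\qq$ is built from a $G$-compact $SO(2)$-spectrum by smashing with $\iota^* S_\qq$ and applying the right adjoint $U$, which commutes with filtered colimits (compare the corresponding lemma in the finite case), and smash products of compact objects are compact; that $\gcal_{top}$ generates follows from Lemma \ref{lem:build}, which expresses the standard generators $\Sigma^\infty SO(2)/H_+ \smashprod \iota^* S_\qq$ as finite wedges of basic cells $\sigma_K$, together with Lemma \ref{lem:EequivElocal}-style arguments identifying weak equivalences via the idempotents. Then $\gcal_{top}$ generates $\iota^* S_\qq \leftmod$ and the Quillen equivalence follows from the second clause of Theorem \ref{thm:monoidalmorita}.

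(3) Strong symmetric monoidality: the left adjoint $(-) \smashprod_{\ecal_{top}} \gcal_{top}$ is strong symmetric monoidal by the coend computation in the proof of Theorem \ref{thm:monoidalmorita}, which uses only that $\gcal_{top}$ is closed under $\smashprod$ and Corollary \ref{cor:endandcoendeval}; the symmetry is inherited from the symmetry of the monoidal product on $\iota^* S_\qq \leftmod$. Here is the one genuinely new point: in Theorem \ref{thm:monoidalmorita} one must check that $\cofrep \ecal_{top}(-,\iota^* S_\qq) \smashprod_{\ecal_{top}} \gcal_{top} \to \ecal_{top}(-,\iota^* S_\qq) \smashprod_{\ecal_{top}} \gcal_{top} \cong \iota^* S_\qq$ is a weak equivalence, and when the unit is not cofibrant the simple argument (``nothing to check'') fails. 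I would replace it by the alternative argument already flagged in the proof of Theorem \ref{thm:monmodules} and in Theorem \ref{thm:monoidalmorita} itself: identify $\cofrep \ecal_{top}(-,\iota^* S_\qq)$ with $\cofrep \sphspec \smashprod \ecal_{top}(-,\iota^* S_\qq)$, which is cofibrant as an $\ecal_{top}$-module since $\cofrep \sphspec$ is cofibrant in $Sp^\Sigma_+$, and then use that $\cofrep \sphspec \smashprod X \to X$ is a weak equivalence for every $X \in Sp^\Sigma_+$ (i.e., that smashing with a cofibrant object preserves weak equivalences in $Sp^\Sigma_+$). This reduces the question to the obvious statement that $\cofrep \sphspec \smashprod M \to M$ is a weak equivalence.

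The main obstacle I expect is this last point — making precise that, although the unit of $\gcal_{top}$ is not cofibrant, the hypotheses of Theorem \ref{thm:monoidalmorita} still hold after substituting ``smashing with a cofibrant object preserves weak equivalences'' for ``the unit is cofibrant.'' The proof is therefore a careful recitation: state that $\iota^* S_\qq \leftmod$ is a spectral model category with all objects fibrant, that $\gcal_{top}$ consists of cofibrant-fibrant objects (with the lone exception of the fibrant, non-cofibrant unit) closed under $\smashprod$ and forms a set of compact generators, then invoke Theorem \ref{thm:monoidalmorita} with the noted modification to its final paragraph. I would phrase the proof as: ``This follows from Theorem \ref{thm:monoidalmorita} together with the observations on compactness and generation above and the replacement of the cofibrancy-of-the-unit hypothesis by the fact that smashing with a cofibrant object preserves weak equivalences in $Sp^\Sigma_+$ (see the proof of Theorem \ref{thm:monmodules}); the symmetric monoidality of both the adjunction and the enrichment is as in the proof of Theorem \ref{thm:monoidalmorita}.''
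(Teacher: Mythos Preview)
Your proposal is correct and follows essentially the same approach as the paper: invoke Theorem~\ref{thm:monoidalmorita} and then handle the single deviation from its hypotheses, namely that the unit $\iota^* S_\qq \in \gcal_{top}$ is fibrant but not cofibrant. The paper's proof is terser and places its emphasis slightly differently --- it verifies that $\underhom_{\iota^* S_\qq}(\iota^* S_\qq, -)$ preserves fibrations and all weak equivalences (since every object is fibrant), and it tracks explicitly that the free modules $F_\sigma$ in $\rightmod \ecal_{top}$ are not cofibrant (because the unit of $Sp^\Sigma_+$ is not) and that $\cofrep \sphspec \smashprod F_\sigma$ serves as a cofibrant replacement, so that the left derived functor still sends $F_\sigma$ to something weakly equivalent to $\sigma$ --- whereas you spend more words on compactness, generation, and the monoidal unit condition. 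These are the same adjustments viewed from the two sides of the adjunction, and both are valid.
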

\begin{proof}
This is part of \cite[Theorem 4.1]{greshi}, we give some details of the proof.
The result is essentially an application of Theorem \ref{thm:monoidalmorita}
but we must adjust the proof slightly since now the unit $\iota^* S_\qq \in \gcal_{top}$
is not cofibrant.
The functor $\underhom_{\iota^* S_\qq} (\iota^* S_\qq, -)$
preserves fibrations and all weak equivalences (since every object of
$\iota^* S_\qq \leftmod$ is fibrant),
hence the above adjunction is a Quillen pair.
It is a Quillen equivalence by the same arguments
of \cite[Theorem 3.9.3]{ss03stabmodcat}, with the following alterations.

The free modules $F_\sigma$ are no longer cofibrant, however, as mentioned in
Theorem \ref{thm:monmodules}
$\cofrep \iota^* S_\qq \smashprod_{\iota^* S_\qq} F_\sigma$
is a cofibrant replacement. The left
derived functor, $(-) \smashprod_{\ecal_{top}}^L \mathcal{G}$,
takes $F_\sigma$ to $\cofrep \iota^* S_\qq \smashprod_{\iota^* S_\qq} \sigma$.
Since $\sigma$ is either $\iota^* S_\qq$ or cofibrant, this
is weakly equivalent to $\sigma$.
We also note that since $\iota^* S_\qq \leftmod$ is a monoidal model category,
the map $\cofrep \iota^* S_\qq \smashprod_{\iota^* S_\qq} M
\to \iota^* S_\qq \smashprod_{\iota^* S_\qq} M$ is a weak equivalence
for any cofibrant module $M$. Thus, by \cite[Lemma 4.2.7]{hov99}
the map $\underhom_{\iota^* S_\qq} (\iota^* S_\qq, M) \to
\underhom_{\iota^* S_\qq} (\cofrep \iota^* S_\qq, M)$
is a weak equivalence for all $\iota^* S_\qq$-modules $M$.
Hence $\underhom_{\iota^* S_\qq} (\iota^* S_\qq, M)$
has the correct homotopy type.
\end{proof}

\begin{lemma}\label{lem:tauspectral}
The functor $\tau= \alpha^* h^\dag j^*$ is a spectral functor,
moreover $(\tau, \tau)$ is an adjunction
of closed symmetric monoidal spectral functors.
\end{lemma}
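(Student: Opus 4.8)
The plan is to show that $\tau = \alpha^* h^\dag j^*$ respects the $Sp^\Sigma_+$-enrichment, tensoring and cotensoring on $\iota^* S_\qq \leftmod$, and that this action is compatible with the symmetric monoidal structure. First I would recall from Theorem \ref{thm:spectralalgebras} that the enrichment on $\iota^* S_\qq \leftmod$ is built by pulling back its internal function object along the composite left adjoint $i_{SO(2)} \co Sp^\Sigma_+ \to \iota^* S_\qq \leftmod$ (realisation, inflation-from-the-point, $\nn$, and free $\iota^* S_\qq$-module). The key observation, already used in Lemma \ref{lem:inflationandinvolution}, is that $i_{SO(2)}$ is an involutary functor: since $\iota^*_{SO(2)} \circ i_{O(2)} \cong i_{SO(2)}$ and $i_{O(2)}$ lands in $O(2) \mcal$, Theorem \ref{thm:extrastructure} supplies a natural map of order two making $i_{SO(2)}$ commute with $\tau$ (with the trivial involution $\id$ on $Sp^\Sigma_+$). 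Concretely, for a symmetric spectrum $A$ there is a canonical isomorphism $\tau i_{SO(2)}(A) \cong i_{SO(2)}(A)$ coming from the underlying-object identity and the fact that the $O(2)$-action on $i_{O(2)}(A)$ restricts to give this order-two map.

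Given this, the plan is: (i) use the natural isomorphism $h^\dag j^* \circ h^\dag j^* = \id$ on underlying sets to produce, for modules $M,N$, an isomorphism of symmetric spectra $\underhom(M,N) \to \underhom(\tau M, \tau N)$ — this is exactly the spectral structure map making $\tau$ an $Sp^\Sigma_+$-functor, and it follows formally from the fact that $\tau$ preserves the internal function object $F_{\iota^* S_\qq}(-,-)$ up to a coherent isomorphism (itself a consequence of $h^\dag$ being naturally isomorphic to the strong monoidal $I^{\iota^* U}_{j^* \iota^* U}$ and $j^*$ being strong monoidal, as established in the preceding chapter), together with the involutary structure on $i_{SO(2)}$; (ii) check that the square expressing compatibility of this map with composition commutes, which reduces to the naturality of the underlying-set identification and the involutary condition $h^\dag j^* m \circ m(h^\dag j^* \smashprod h^\dag j^*) = \id$ already proved; (iii) observe that since $\tau = \alpha^* h^\dag j^*$ is simultaneously its own left and right adjoint, the enriched adjunction $(\tau, \tau)$ automatically satisfies $\underhom(\tau M, N) \cong \underhom(M, \tau N)$, and this is an isomorphism of closed symmetric monoidal spectral functors because $\tau$ is strong symmetric monoidal (Proposition \ref{prop:SQmodinvolutary}) and the enrichment, tensoring and cotensoring all arise from the $Sp^\Sigma_+$-algebra structure, which $\tau$ preserves by Lemma \ref{lem:inflationandinvolution}. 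Finally I would invoke Proposition \ref{prop:adjunctenrich} to conclude that both functors in the adjunction $(\tau,\tau)$ are spectral functors.

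The routine but slightly delicate points are the coherence of the isomorphism $\tau F_{\iota^* S_\qq}(M,N) \cong F_{\iota^* S_\qq}(\tau M, \tau N)$ — one needs the two maps from $\underhom(L, \underhom(K, \tau M))$ to agree, in the sense of the definition of a spectral right Quillen functor — and tracking the order-two twist through the singular-complex/realisation and $\nn$/$\nn^\#$ adjunctions; both reduce to the fact that these comparison functors are strong symmetric monoidal and that $i_{SO(2)}$ is involutary, so no new ideas are required.

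The main obstacle I expect is the need for symmetry at the step where one transports the monoidal-$\nu$-category structure along $\tau$: as flagged in Proposition \ref{prop:MonFunctorsEnrichCat} and the remark following it, producing the box-product and the composition rule on the transported enriched category genuinely uses that $\tau$ is a \emph{symmetric} monoidal functor, not merely monoidal. So the heart of the proof is to verify carefully that the order-two map $\alpha$ is compatible with the symmetry isomorphism of the smash product on $SO(2) \mcal$ — i.e. that $(\iota^* S_\qq, \alpha)$ is a \emph{commutative} ring object in the involutary symmetric monoidal category $(SO(2) \mcal, h^\dag j^*)$, which is immediate from commutativity of $S_\qq$ in $O(2) \mcal$ — and then to feed this through Proposition \ref{prop:MonFunctorsEnrichCat} to get that $\tau \# \iota^* S_\qq \leftmod$ inherits the symmetric monoidal spectral structure, so that $(\tau,\tau)$ is indeed an adjunction of closed symmetric monoidal spectral functors.
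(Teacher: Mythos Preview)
Your approach is essentially the same as the paper's: the core observation is that the structural left adjoint $i_{SO(2)} \co Sp^\Sigma_+ \to \iota^* S_\qq \leftmod$ is involutary (via $\iota^*_{SO(2)} \circ i_{O(2)} \cong i_{SO(2)}$ and Theorem~\ref{thm:extrastructure}), and from this one deduces that $\tau$ is a map of closed symmetric $Sp^\Sigma_+$-algebras. The paper's proof records exactly this and nothing more.

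Two minor points. First, your citation of Lemma~\ref{lem:inflationandinvolution} is a forward reference: in the paper that lemma comes \emph{after} the present one and in fact cites it, so you should drop that cross-reference and rely directly on Theorem~\ref{thm:extrastructure} (as you already do in the same sentence). Second, your final paragraph worrying about symmetry via Proposition~\ref{prop:MonFunctorsEnrichCat} is somewhat off-target: that proposition concerns transporting monoidal structure on an \emph{enriched category} along a symmetric monoidal functor, whereas what is needed here is simply that $\tau$ commutes (up to the order-two isomorphism) with the algebra structure map $i_{SO(2)}$. Once $\tau \circ i_{SO(2)} \cong i_{SO(2)}$ is established, the spectral-functor and closed-symmetric-monoidal compatibility follow from the general framework of Section~\ref{sec:Cmodules} without a separate symmetry check on $\alpha$; your discussion of the commutativity of $(\iota^* S_\qq,\alpha)$ is correct but not the crux.
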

\begin{proof}
We prove that $\tau$ is an adjunction of closed
symmetric monoidal $Sp^\Sigma_+$-algebras.
Let $K$ be a symmetric spectrum, then
$K \mapsto \iota^* S_\qq \smashprod \iota^* \nn i_* \varepsilon^*_{O(2)} \mathbb{P} |K|$
defines a symmetric monoidal Quillen functor from
$Sp^\Sigma_+$ to $\iota^* S_\qq \leftmod$.
Furthermore, there is a natural isomorphism
of order two
$\iota^* S_\qq \smashprod \iota^* \nn i_* \varepsilon^*_{O(2)} \mathbb{P} |K|
\to
\alpha^* h^\dag j^* (\iota^* S_\qq \smashprod \iota^* \nn i_* \varepsilon^*_{O(2)} \mathbb{P} |K|)$,
this comes from the map $\alpha$ on $\iota^* S_\qq$ and
the $O(2)$ structure of $\nn i_* \varepsilon^*_{O(2)} \mathbb{P} |K|$.
\end{proof}

\begin{corollary}
The functor $\tau$ is a self-inverse map of
ringoid spectra $\tau \co \ecal_{top} \to \tau \ecal_{top}$.
\end{corollary}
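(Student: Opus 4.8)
The statement to prove is that $\tau \co \ecal_{top} \to \tau \ecal_{top}$ is a self-inverse map of ringoid spectra (that is, a self-inverse spectral functor on $\ecal_{top}$ in the sense of Proposition \ref{prop:MonFunctorsEnrichCat}). The plan is to assemble this from three ingredients already in place: first, Theorem \ref{thm:extrastructure} and Proposition \ref{prop:SQmodinvolutary}, which give the involution $\tau = \alpha^* h^\dag j^*$ on $\iota^* S_\qq \leftmod$ together with the fact that $\tau^2 = \id$; second, Lemma \ref{lem:tauspectral}, which tells us $\tau$ is a spectral functor (indeed an adjunction of closed symmetric monoidal $Sp^\Sigma_+$-algebras); and third, the construction of the basic cells $w_H \co \sigma_H \to \tau \sigma_H$ as maps of order two in $\tau \# \iota^* S_\qq \leftmod$, which shows the generating object set $\gcal_{top} = \bar{\mathcal{BC}}$ is preserved by $\tau$ up to the canonical identification.

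First I would check that $\tau$ sends $\gcal_{top}$ to itself. Since $\tau$ is strong symmetric monoidal (Lemma \ref{lem:tauspectral}) it preserves smash products, so it suffices to show $\tau \sigma_H$ is again (isomorphic to) a basic cell. This follows from the alternative construction of the basic cells just before Theorem \ref{thm:so2morita}: the skewed basic cell $w_H \co \sigma_H \to \tau \sigma_H$ lives in $\tau \# \iota^* S_\qq \leftmod$, and the underlying map $w_H$ exhibits $\tau \sigma_H$ as the target of a map of order two out of $\sigma_H$, hence (applying $\tau$ and using $\tau^2 = \id$) $\tau \sigma_H$ is itself a basic cell with order-two structure map $\tau w_H$. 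Thus $\tau$ restricts to a self-map of the object set of $\ecal_{top}$, self-inverse on objects because $\tau^2 = \id$ on $\iota^* S_\qq \leftmod$.

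Next I would verify the enriched/spectral compatibility. Since $\ecal_{top}$ is the full $Sp^\Sigma_+$-subcategory of $\iota^* S_\qq \leftmod$ on $\gcal_{top}$, with mapping spectra $\ecal_{top}(X,Y) = \underhom(X,Y) = \sing \mathbb{U}(i^* \nn^\# U F_{\iota^* S_\qq}(X,Y))^{O(2)}$, and $\tau$ is a spectral functor on $\iota^* S_\qq \leftmod$ by Lemma \ref{lem:tauspectral}, restricting $\tau$ to this full subcategory gives a spectral functor $\ecal_{top} \to \tau\ecal_{top}$ — this is exactly the content of applying the (symmetric monoidal) functor $\tau$ to the enriched category $\ecal_{top}$ in the manner of Proposition \ref{prop:MonFunctorsEnrichCat}, and the symmetry of $\tau$ is what makes the construction of $\tau\ecal_{top}$ as a spectral category legitimate. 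The self-inverse property on morphism spectra then reduces to $\tau^2 = \id$ on the function objects $\underhom(X,Y)$, which holds since $\tau^2 = \id$ as an endofunctor of $\iota^* S_\qq \leftmod$ and the spectral enrichment is functorial.

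I expect the only mild obstacle to be bookkeeping around the phrase ``self-inverse'': $\tau$ as written has source $\ecal_{top}$ and target $\tau\ecal_{top}$, and $\tau\ecal_{top}$ is a priori a new spectral category, so ``self-inverse'' must be read as saying that the composite $\ecal_{top} \xrightarrow{\tau} \tau\ecal_{top} \xrightarrow{\tau} \tau^2\ecal_{top} = \ecal_{top}$ is the identity spectral functor. Unwinding the definition of $\tau\ecal_{top}$ (same objects, morphism spectra $\tau(\ecal_{top}(a,b))$ under the identification from Proposition \ref{prop:MonFunctorsEnrichCat}) this is immediate from $\tau^2 = \id$ on objects, morphisms, and the monoidal/symmetry data; no genuinely new computation is needed. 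The rest of the argument is formal consequence of results already established, so this corollary is essentially a matter of correctly citing Theorem \ref{thm:extrastructure}, Proposition \ref{prop:SQmodinvolutary}, Lemma \ref{lem:tauspectral}, and Proposition \ref{prop:MonFunctorsEnrichCat}.
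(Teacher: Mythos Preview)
Your core argument is right: since $\tau$ is a spectral involution on $\iota^*S_\qq\leftmod$ (Lemma \ref{lem:tauspectral} together with $\tau^2=\id$), restricting it to the full subcategory on $\gcal_{top}$ gives a spectral functor $\ecal_{top}\to\tau\ecal_{top}$ whose square is the identity. That is exactly what the paper does, and its proof is a single line: it \emph{defines} $\tau\ecal_{top}$ to be the full subcategory of $\iota^*S_\qq\leftmod$ on the object set $\tau\gcal_{top}$ and declares the result obvious.

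Two places where your write-up drifts off course:

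\textbf{The first step is unnecessary and not quite correct.} You argue that $\tau\sigma_H$ is again a basic cell so that $\tau$ ``restricts to a self-map of the object set of $\ecal_{top}$''. The map $w_H$ is indeed an isomorphism $\sigma_H\cong\tau\sigma_H$, but that does not make $\tau\sigma_H$ an \emph{element} of $\gcal_{top}$; in general $\tau\gcal_{top}\neq\gcal_{top}$ as sets. This does not matter for the corollary: the target is $\tau\ecal_{top}$, not $\ecal_{top}$, so there is no need for $\tau$ to preserve $\gcal_{top}$. (The isomorphisms $w_H$ \emph{are} used, but only in the next lemma, to build the separate ringoid map $W\co\ecal_{top}\to\tau\ecal_{top}$.)

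\textbf{The citation of Proposition \ref{prop:MonFunctorsEnrichCat} is misplaced.} That proposition takes a symmetric monoidal functor $F\co\nu\to\mu$ on the \emph{enriching} category and produces a $\mu$-category $F\ccal$ with the same objects and hom-objects $F(\ccal(a,b))$. Here $\tau$ is not a functor on the enriching category $Sp^\Sigma_+$; it is a $Sp^\Sigma_+$-enriched endofunctor of $\iota^*S_\qq\leftmod$. So your description ``same objects, morphism spectra $\tau(\ecal_{top}(a,b))$'' does not typecheck: $\ecal_{top}(a,b)$ lives in $Sp^\Sigma_+$, where $\tau$ does not act. The correct picture, matching the paper, is that $\tau\ecal_{top}$ has object set $\tau\gcal_{top}$ and hom-spectra $\underhom(\tau a,\tau b)$, and the spectral functor $\tau$ acts on hom-spectra via the enriched-functor structure maps $\tau_{a,b}\co\underhom(a,b)\to\underhom(\tau a,\tau b)$ coming from Lemma \ref{lem:tauspectral}.
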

\begin{proof}
We define the set $\tau \ecal_{top}$ to be the full subcategory of $\iota^* S^\qq \leftmod$
with object set $\tau \gcal_{top}$. The result is then obvious and we do not introduce
any new notation for the inverse functor $\tau \co \tau \ecal_{top} \to \ecal_{top}$.
\end{proof}

\begin{lemma}
There is an invertible map of ringoid spectra $W \co \ecal_{top} \to \tau \ecal_{top}$.
\end{lemma}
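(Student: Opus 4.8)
The plan is to build the invertible map of ringoid spectra $W \co \ecal_{top} \to \tau \ecal_{top}$ directly from the basic cells, using the alternative construction of the $\sigma_H$ described above. Recall that each basic cell $\sigma_H$ arises (after cofibrant replacement and smashing with $(\iota^* S_\qq, \alpha)$) from a skewed object $w_H \co \sigma_H \to \tau \sigma_H$ in $\tau \# \iota^* S_\qq \leftmod$; in particular every object of $\gcal_{top} = \bar{\mathcal{BC}}$ comes equipped with a canonical map of order two into its image under $\tau$, obtained by smashing the $w_H$ together using the monoidal structure $m$ on $\tau$ (Lemma \ref{lem:tauspectral} tells us $\tau$ is strong symmetric monoidal, so this is well-defined and again a map of order two). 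First I would record that this assignment $\sigma \mapsto (w_\sigma \co \sigma \to \tau \sigma)$ extends over the whole of $\gcal_{top}$, including the unit, where $w$ is just $\alpha$.

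Next I would define $W$ on objects by $W(\sigma) = \tau \sigma$ and on morphism spectra by conjugation with the $w_\sigma$: for $\sigma, \sigma' \in \gcal_{top}$,
$$W_{\sigma, \sigma'} \co \ecal_{top}(\sigma, \sigma') \to \tau \ecal_{top}(\tau\sigma, \tau\sigma'),
\qquad f \longmapsto \tau(w_{\sigma'}) \circ \tau(f) \circ w_\sigma^{-1}$$
interpreted at the level of the symmetric spectra $\underhom_{\iota^* S_\qq}(-,-)$, using that $w_\sigma$ is invertible. Since $\tau$ is a spectral functor this is a map of symmetric spectra, and the verification that $W$ respects composition and identities is a routine diagram chase using $\tau(w_\sigma)\circ w_\sigma = \id$ (the order-two condition) and functoriality of $\tau$; I would not grind through it. The key point is that $W$ is really the spectral analogue of the skewed-functor construction $(\sigma,\tau)\#(-)$ applied to the identity involutary adjunction, restricted to the ringoid $\ecal_{top}$.

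For invertibility I would exhibit the inverse $W' \co \tau \ecal_{top} \to \ecal_{top}$ by the symmetric formula $W'_{\tau\sigma, \tau\sigma'}(g) = \tau(w_{\sigma'}^{-1}) \circ \tau(g) \circ w_\sigma$ (using the identification $\tau^2 = \id$ and that $\tau$ on $\tau\ecal_{top}$ lands back in $\ecal_{top}$). Composing $W$ and $W'$ in either order collapses to the identity by cancelling the $w_\sigma$'s and using $\tau^2 = \id$; again this is a short computation. I expect the only genuine subtlety — the step I would treat most carefully — is bookkeeping the domains and codomains correctly: $W$ sends $\ecal_{top}$ to $\tau\ecal_{top}$ (the full subcategory on $\tau\gcal_{top}$), so one must be scrupulous about when $\tau$ is being regarded as $\ecal_{top} \to \tau\ecal_{top}$ versus $\tau\ecal_{top} \to \ecal_{top}$, and that the $w_\sigma$ are maps in the ambient category $\iota^* S_\qq \leftmod$ rather than in either ringoid. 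Once the variances and the order-two identities are pinned down, the statement follows formally; this is the spectral-level shadow of the fact that in a category with involution the twist maps assemble into an isomorphism between a subcategory and its $\sigma$-image.
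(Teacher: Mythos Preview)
Your approach is exactly the paper's: define $W$ as $\tau$ on objects and as conjugation by the order-two maps $w_\sigma$ on hom-spectra, where the $w_\sigma$ for a general $\sigma \in \gcal_{top}$ are built from the $w_H$ via the monoidal structure on $\tau$. The paper writes this as $\underhom_{Sp^\Sigma}(\tau w_\sigma, w_{\sigma'})$, with inverse $\underhom_{Sp^\Sigma}(w_\sigma, \tau w_{\sigma'})$.

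However, your explicit formula $f \mapsto \tau(w_{\sigma'}) \circ \tau(f) \circ w_\sigma^{-1}$ does not type-check: $w_\sigma^{-1} = \tau w_\sigma \co \tau\sigma \to \sigma$, but $\tau(f) \co \tau\sigma \to \tau\sigma'$, so these do not compose. The point is that you should \emph{not} apply the spectral functor $\tau$ to $f$; the map is simply $f \mapsto w_{\sigma'} \circ f \circ \tau w_\sigma$, i.e.\ pre- and post-composition in $\iota^* S_\qq \leftmod$. That this is a map of symmetric spectra follows from the enrichment (it is literally $\underhom(\tau w_\sigma, w_{\sigma'})$), not from $\tau$ being a spectral functor. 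With this correction your argument goes through and coincides with the paper's; the compatibility with composition is, as you say, routine, and the inverse is $g \mapsto \tau w_{\sigma'} \circ g \circ w_\sigma$.
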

\begin{proof}
This is where we use our new construction of the basic cells
which come with maps of order two: $w_H \co \sigma_H \to \tau \sigma_H$.
On objects, $W$ acts as $\tau$, so $W \sigma = \tau \sigma$. On
the homomorphism spectra $W$ acts as
$$\underhom_{Sp^\Sigma}(\tau w, w')
\co \underhom_{Sp^\Sigma}(\sigma, \sigma')
\to \underhom_{Sp^\Sigma}(\tau \sigma, \tau \sigma')$$
(recall that $\sigma$ is some smash product
of the basic cells and we have defined $w_H$ for
each basic cell $\sigma_H$). It should be obvious that this
defines a ringoid map.
We denote inverse of this map as $W^{-1}$
and this acts $\tau$ on objects and
acts as on homomorphism spectra as $\underhom_{Sp^\Sigma}(w,\tau w')$.
\end{proof}

To simplify our notation we now write $\underhom(X,Y)$ in the place of
$\underhom_{Sp^\Sigma}(X,Y)$, for $X$ and $Y$ in
$\iota^* S_\qq \leftmod$. We also write
$\ecal_{top}(a,b)$ for $\underhom_{Sp^\Sigma}(a,b)$
when $a,b \in \gcal_{top}$

\begin{lemma}\label{lem:TWaremonoidal}
The functors $\tau$ and $W$ as defined above are morphisms
of symmetric monoidal $Sp^\Sigma_+$-categories.
\end{lemma}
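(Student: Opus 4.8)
The plan is to verify that each of $\tau$ and $W$ is a symmetric monoidal $Sp^\Sigma_+$-functor, i.e.\ that it respects the monoidal product $\smashprod$ (via the box product $\Box$ description coming from the $Sp^\Sigma_+$-algebra structure of $\ecal_{top}$), the unit, and the symmetry isomorphism, and that these structure maps are compatible with the enrichment. For $\tau$ almost everything is already available: Lemma \ref{lem:tauspectral} tells us $\tau$ is an adjunction of closed symmetric monoidal spectral functors, and Proposition \ref{prop:MonFunctorsEnrichCat} (applied to the symmetric monoidal functor underlying $\tau$) says that applying $\tau$ to the symmetric monoidal $Sp^\Sigma_+$-category $\ecal_{top}$ produces the symmetric monoidal $Sp^\Sigma_+$-category $\tau\ecal_{top}$, with $\tau \co \ecal_{top} \to \tau\ecal_{top}$ a monoidal $Sp^\Sigma_+$-functor. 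Concretely, the monoidal structure on $\ecal_{top}$ is inherited from the smash product of $\iota^* S_\qq$-modules, which is preserved strongly by $h^\dag j^*$ and hence by $\tau = \alpha^* h^\dag j^*$ (the $\alpha^*$ only affects module structure, not the underlying objects); the monoidality constraint $\ecal_{top}(a,c)\otimes\ecal_{top}(b,d) \to \ecal_{top}(a\smashprod b, c\smashprod d)$ is carried to the corresponding map for $\tau\ecal_{top}$ by the natural transformation $m$ of Theorem \ref{thm:extrastructure}, and the symmetry isomorphism of $\iota^* S_\qq \leftmod$ is likewise preserved because the change-of-universe and change-of-groups functors are symmetric monoidal.

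For $W$, I would factor the statement through $\tau$. Recall $W$ acts as $\tau$ on objects and acts on hom-spectra by $\underhom(\tau w, w')$, where the $w_H \co \sigma_H \to \tau\sigma_H$ are the order-two maps attached to the basic cells. The key point is that the $w_H$ assemble, under $\smashprod$, into order-two maps $w_\sigma \co \sigma \to \tau\sigma$ for every object $\sigma\in\gcal_{top}$ (each such $\sigma$ being a smash of basic cells), and that these maps are \emph{monoidal} in the sense $w_{\sigma\smashprod\sigma'} = m \circ (w_\sigma \smashprod w_{\sigma'})$ — this is precisely the coherence built into the skewed-object structure on $(\iota^* S_\qq, \alpha)$ together with the involutary monoidal structure of Proposition \ref{prop:SQmodinvolutary}. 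Granting this, $W = \underhom(\tau(-),-)\text{-conjugation}$ is nothing but conjugation of the strong monoidal $\tau$ by a monoidal natural isomorphism of order two, and so $W$ inherits a symmetric monoidal $Sp^\Sigma_+$-functor structure: its monoidality constraint is the composite of $m$ with the appropriate $\underhom$ of the $w$'s, and the hexagon/symmetry diagrams commute because those for $\tau$ do and the $w$'s are monoidal. Compatibility with the $Sp^\Sigma_+$-enrichment follows because $W$ fixes the unit object (the unit of $\gcal_{top}$ is $\iota^* S_\qq$, and $w_{\iota^* S_\qq} = \alpha$, a ring map of order two) and the $Sp^\Sigma_+$-algebra structure maps are built from $\underhom$ with the unit, which $W$ respects.

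Concretely the steps are: (1) record that $\tau$ is strong symmetric monoidal as a spectral functor by Lemma \ref{lem:tauspectral}, and invoke Proposition \ref{prop:MonFunctorsEnrichCat} to conclude $\tau \co \ecal_{top}\to\tau\ecal_{top}$ is a symmetric monoidal $Sp^\Sigma_+$-functor; (2) verify that the order-two cell maps $w_H$ are monoidal, so that for general $\sigma = \sigma_{H_1}\smashprod\cdots\smashprod\sigma_{H_k}$ the map $w_\sigma$ obtained by smashing is again of order two and satisfies $w_{\sigma\smashprod\sigma'} = m\circ(w_\sigma\smashprod w_{\sigma'})$ and $w_{\iota^* S_\qq}=\alpha$; (3) deduce that $W$, defined on homs by $\underhom(\tau w_\sigma, w_{\sigma'})$, carries the monoidality constraint of $\ecal_{top}$ to that of $\tau\ecal_{top}$, preserves the unit, and intertwines the symmetry isomorphisms, by chasing the relevant naturality squares built from $m$ and the $w$'s; (4) note that compatibility with the $Sp^\Sigma_+$-enrichment and the induced $Sp^\Sigma_+$-algebra structure is automatic from (3) since these are expressed via $\underhom$ and the unit object. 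The main obstacle is step (2): one must be careful that the chosen representatives $w_H$ — which are only defined up to homotopy at the level of the homotopy category of the skewed category, then rigidified by a cofibrant replacement — can be (and have been) chosen coherently with respect to $\smashprod$, so that the strict monoidality identity $w_{\sigma\smashprod\sigma'} = m\circ(w_\sigma\smashprod w_{\sigma'})$ holds on the nose; this is where the construction of $\gcal_{top}$ as the monoidal closure $\bar{\mathcal{BC}}$ of the basic cells, together with the involutary monoidal structure of Proposition \ref{prop:SQmodinvolutary}, does the work, but it requires tracking the coherence isomorphisms carefully rather than a one-line argument.
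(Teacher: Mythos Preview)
Your proposal is correct and follows essentially the same approach as the paper: the paper's proof is the one-line observation that $\tau$ is strong monoidal and $W$ is built from the $w_H$'s so ``it is obvious that these maps are compatible with the monoidal structure on $\ecal_{top}$'', whereas you have unpacked exactly what this means.

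Two small remarks. First, your invocation of Proposition~\ref{prop:MonFunctorsEnrichCat} is slightly off: that result is about a symmetric monoidal functor between \emph{base} categories $\nu\to\mu$ inducing a change of enrichment on a $\nu$-category, whereas here $\tau$ is a self-functor of the ambient category $\iota^* S_\qq\leftmod$ restricting to the full subcategory $\ecal_{top}$, with the enrichment over $Sp^\Sigma_+$ unchanged. The conclusion you want follows directly from $\tau$ being a strong symmetric monoidal spectral functor (Lemma~\ref{lem:tauspectral}), without that proposition. Second, the ``main obstacle'' you flag in step~(2) is not really an obstacle: by construction $\gcal_{top}=\bar{\mathcal{BC}}$ is the \emph{free} monoidal closure of the basic cells, so the $w_\sigma$ for a general $\sigma=\sigma_{H_1}\smashprod\cdots\smashprod\sigma_{H_k}$ is \emph{defined} to be $m\circ(w_{H_1}\smashprod\cdots\smashprod w_{H_k})$, and the identity $w_{\sigma\smashprod\sigma'}=m\circ(w_\sigma\smashprod w_{\sigma'})$ holds on the nose by associativity of $m$. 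This is why the paper can simply say ``by the definition of $W$''.
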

\begin{proof}
Because $\tau$ is a strong monoidal functor and
by the definition of $W$ it is obvious that these maps are compatible with
the monoidal structure on $\ecal_{top}$.
Hence we have a commuting diagram for $F=\tau$
and $F=W$.
$$\xymatrix{
\ecal_{top}(\sigma_2, \sigma_3)
\smashprod
\ecal_{top}(\sigma_1, \sigma_2)
\ar[r] \ar[d]^F &
\ecal_{top}(\sigma_1, \sigma_3)
\ar[d]^F \\
(\tau \ecal_{top})(\tau \sigma_2, \tau \sigma_3)
\smashprod
(\tau \ecal_{top})(\tau \sigma_1, \tau \sigma_2)
\ar[r] &
(\tau \ecal_{top})(\tau \sigma_1, \tau \sigma_3)
}$$
\end{proof}

\begin{definition}
We define an involution on $\rightmod \ecal_{top} $
by $\rho=(\tau W)^*$.
That this functor is self-inverse follows immediately
from the relation
$$\tau \circ \ecal_{top}(\tau w, w')=
\ecal_{top}(w, \tau w') \circ \tau.$$
\end{definition}

\begin{proposition}\label{prop:modEtopinvolution}
The pair $(\rightmod \ecal_{top}, \rho)$ give a
monoidal model category with involution that satisfies the monoid axiom.
\end{proposition}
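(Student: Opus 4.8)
The plan is to verify each of the three claims in the proposition --- that $(\rightmod \ecal_{top}, \rho)$ is (i) a model category with involution, (ii) an involutary monoidal model category, and (iii) that it satisfies the monoid axiom --- by reducing everything to properties of $\ecal_{top}$ as a symmetric monoidal $Sp^\Sigma_+$-category that have already been established. First I would recall that $\rightmod \ecal_{top}$ is a cofibrantly generated model category by Theorem \ref{thm:modOmodelcat}, and indeed a closed symmetric monoidal model category satisfying the monoid axiom by Theorem \ref{thm:monmodules} (applicable here in the $Sp^\Sigma_+$ setting since smashing with a cofibrant object preserves weak equivalences). So the only new content is that $\rho = (\tau W)^*$ is a suitable involution compatible with all this structure.

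The core observation is that $\tau W \co \ecal_{top} \to \ecal_{top}$ is a self-inverse morphism of symmetric monoidal $Sp^\Sigma_+$-categories: it is monoidal because both $\tau$ (Lemma \ref{lem:TWaremonoidal}) and $W$ (Lemma \ref{lem:TWaremonoidal}) are, and it is self-inverse by the displayed relation $\tau \circ \ecal_{top}(\tau w, w') = \ecal_{top}(w, \tau w') \circ \tau$ noted just before the proposition. Now I would invoke Proposition \ref{prop:spectralmaps}: a spectral functor $\Psi \co \ocal \to \rcal$ induces a Quillen pair $(\Psi \smashprod_\ocal \rcal\, ,\, \Psi^*)$, which is a Quillen equivalence when $\Psi$ is a stable equivalence (here $\Psi = \tau W$ is an isomorphism, hence certainly a stable equivalence, but for an involution we only need that $\rho = (\tau W)^*$ is a left Quillen functor, which follows directly from Proposition \ref{prop:spectralmaps} since restriction of scalars along $\tau W$ preserves the object-wise weak equivalences, object-wise fibrations, and --- being also a left adjoint --- cofibrations; cf. the argument in Lemma \ref{lem:hjisquillen}). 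Since $(\tau W)^2 = \id$ on $\ecal_{top}$ we get $\rho^2 = \id$ on $\rightmod \ecal_{top}$, so $(\rightmod \ecal_{top}, \rho)$ is an involutary model category in the sense of the definition in Section \ref{sec:invmodcat}.

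For the monoidal part I would check that $\rho$ is an involutary monoidal functor, i.e. that $(\rho, m, i)$ makes $\rho$ a strong monoidal functor with $m$ behaving like an involutary natural transformation and $i \co \sphspec \to \rho\sphspec$ a map of order two. The monoidal structure maps come from Proposition \ref{prop:monoidalspectralfunctor}, which tells us that a monoidal functor of monoidal spectral categories induces a strong (symmetric) monoidal Quillen adjunction; applying this to the monoidal functor $\tau W$ gives that $\rho = (\tau W)^*$ carries a (weak) monoidal structure and its left adjoint a strong monoidal structure, and since $(\tau W)^2 = \id$ the induced structure on $\rho$ is strong. The involutary conditions on $m$ and $i$ --- namely $\rho m \circ m(\rho \smashprod \rho) = \id$ and $\rho i \circ i = \id$ --- reduce, via the identification of these structure maps with the ones on $\ecal_{top}$, to the corresponding identities for $\tau W$ as a self-inverse monoidal functor; these hold because $\tau$ satisfies the involutary monoidal conditions (it is the involution underlying $(SO(2)\mcal, h^\dag j^*)$ via $\alpha$, cf. Proposition \ref{prop:SQmodinvolutary}) and $W$ is built compatibly from the order-two maps $w_H$. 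Together with the fact that $\rightmod \ecal_{top}$ is already a monoidal model category satisfying the monoid axiom (Theorem \ref{thm:monmodules}), this establishes that $(\rightmod \ecal_{top}, \rho)$ is a monoidal model category with involution satisfying the monoid axiom, exactly as claimed.

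The main obstacle I anticipate is bookkeeping rather than conceptual: one must be careful that the symmetric monoidal structure used in Proposition \ref{prop:monoidalspectralfunctor} (the box product and its internal hom on $\rightmod \ecal_{top}$) is genuinely the one with respect to which $\rho$ is being asserted monoidal, and that the coherence isomorphisms $m$, $i$ really do descend from $\ecal_{top}$ under restriction of scalars without needing $\gcal_{top}$ to be closed under $\smashprod$ in a stronger sense than is already assumed. A second delicate point is making sure the involutary identity for $m$ is stated and checked at the level of the spectral category $\ecal_{top}$ --- i.e. tracing through that $m$ on $\rightmod \ecal_{top}$ is assembled from the monoidal structure maps $\ecal_{top}(\sigma,\sigma') \smashprod \ecal_{top}(\tau,\tau') \to \ecal_{top}(\sigma \smashprod \tau, \sigma' \smashprod \tau')$ and that the required double-application-is-identity statement is inherited from the corresponding statement for $\tau$ proved in Section \ref{sec:SQinvolute}. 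Neither of these requires new ideas; they are diagram chases of the same flavour as those in Chapter \ref{chp:catwithinv}.
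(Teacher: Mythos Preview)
Your proposal is correct and follows essentially the same route as the paper. Both arguments observe that the underlying monoidal model structure and monoid axiom on $\rightmod \ecal_{top}$ are already supplied by Theorem \ref{thm:monmodules}, that $\rho$ is strong monoidal via Proposition \ref{prop:monoidalspectralfunctor} and Lemma \ref{lem:TWaremonoidal}, and that the only genuinely new check is the involutary monoidal condition $\rho m \circ m(\rho \Box \rho) = \id$. The paper carries out this last check by writing $(\rho M \Box \rho N)(x)$ explicitly as a coequaliser and observing that the comparison map to $\rho(M \Box N)(x)$ is induced by applying $\tau W$ to each $\ecal_{top}$-factor, which is visibly of order two; you describe the same reduction more abstractly as inheriting the identity from $\tau W$ being a self-inverse monoidal functor, and correctly flag the concrete verification as a routine diagram chase.
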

\begin{proof}
Proposition \ref{prop:monoidalspectralfunctor} and
Lemma \ref{lem:TWaremonoidal} shows that $\rho$ is a
strong monoidal functor.
We must prove that $\rho$ is involutary monoidal
and then the rest follows via the machinery
of involutary categories. We must prove that
for $\ecal_{top}$-modules $M$ and $N$,
the map $\rho M \square \rho N \to \rho(M \square N)$
is a map of order two. We draw
$(\rho M \square \rho N)(x)$
as the coequaliser of the diagram below. The left hand vertical map
is induced by the $\ecal_{top}$-action map of $M$ and $N$
and the right hand map by
the monoidal product and composition of $\ecal_{top}$.
$$\xymatrix{ {\bigvee_{a,b,c,d \in \gcal_{top}}}
\rho M(a) \smashprod  \rho N(b) \smashprod
\ecal_{top}(a,c) \smashprod \ecal_{top}(b,d)
\smashprod \ecal_{top}(x, c \smashprod d)
\ar@<+0.8ex>[d] \ar@<-0.8ex>[d] \\
{\bigvee_{e,f \in \gcal_{top}}}
\rho M(e) \smashprod  \rho N(f) \smashprod
\ecal_{top}(x, e \smashprod f)
}$$
Note that $\rho M(e) =M(e)$ as symmetric spectra,
the $\rho$ is to indicate that the
$\ecal_{top}$-action is different.
Our map from this expression to $\rho(M \square N)$
is induced by the maps
$\id_M \smashprod \id_N \smashprod \tau W_{(a,c)}
\smashprod \tau W_{(b,d)} \smashprod \tau W_{(x,c \smashprod d)}$
and $\id_M \smashprod \id_N \smashprod \tau W_{(x,e \smashprod f)}$.
It is clear from the description that our
involution is monoidal.
\end{proof}

\section{An Involutary Morita Equivalence}\label{sec:invmoritaequiv}

In Proposition \ref{prop:skewequivSQtoEt}
we prove that the Morita equivalence
is involutary, so that the category of cyclic $O(2)$-spectra
is Quillen equivalent to the skewed category
of $\rightmod \ecal_{top}$.

\begin{lemma}
The following square commutes up to a natural isomorphism,
$\beta$. The pair $(\underhom(\gcal_{top},-),\beta)$
define an involutary functor.
$$\xymatrix@C+1.2cm{
\iota^* S_\qq \leftmod
\ar[r]^{\underhom(\gcal_{top},-)}
\ar[d]^\tau &
\rightmod \ecal_{top}
\ar[d]^\rho \\
\iota^* S_\qq \leftmod
\ar[r]^{\underhom(\gcal_{top},-)} &
\rightmod \ecal_{top } }$$
\end{lemma}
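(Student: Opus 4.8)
The plan is to unwind the definitions of $\tau$, $\rho$ and $\underhom(\gcal_{top},-)$ and exhibit, for each $\iota^*S_\qq$-module $X$, an isomorphism of $\ecal_{top}$-modules $\beta_X\co \rho\,\underhom(\gcal_{top},\tau X)\xrightarrow{\ \cong\ }\underhom(\gcal_{top}, \tau X)$ — wait, more precisely the natural isomorphism should compare $\rho\circ\underhom(\gcal_{top},-)$ with $\underhom(\gcal_{top},-)\circ\tau$. So first I would write out both functors on objects. On the right-hand path, $\underhom(\gcal_{top},\tau X)$ is the $\ecal_{top}$-module sending $\sigma\mapsto \underhom_{Sp^\Sigma}(\sigma,\tau X)$. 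On the left-hand path, $\rho\,\underhom(\gcal_{top},X)$ has underlying symmetric spectra $\sigma\mapsto \underhom_{Sp^\Sigma}(\tau W\sigma, X)=\underhom_{Sp^\Sigma}(\tau\sigma, X)$ (since $W$ acts as $\tau$ on objects), but with the $\ecal_{top}$-action twisted along $\tau W$. So the natural candidate for $\beta_X$ at object $\sigma$ is the isomorphism $\underhom_{Sp^\Sigma}(\tau\sigma, X)\cong\underhom_{Sp^\Sigma}(\sigma,\tau X)$ which exists because $\tau$ is a self-inverse spectral functor (Lemma \ref{lem:tauspectral}): applying $\tau$ to a map $\tau\sigma\to X$ gives a map $\sigma=\tau\tau\sigma\to\tau X$, and this is an isomorphism of symmetric spectra with inverse given by applying $\tau$ again.

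Next I would check that $\beta_X$ is a morphism of $\ecal_{top}$-modules, i.e.\ that it intertwines the twisted action on $\rho\,\underhom(\gcal_{top},X)$ with the (untwisted, but with $\tau X$ in place of $X$) action on $\underhom(\gcal_{top},\tau X)$. This is precisely where the key relation
$$\tau\circ\ecal_{top}(\tau w, w') = \ecal_{top}(w,\tau w')\circ\tau$$
used to define $\rho$ (and recorded just before Proposition \ref{prop:modEtopinvolution}) comes in: the twisted action map of $\rho\,\underhom(\gcal_{top},X)$ is built from $W$ applied to $\ecal_{top}$-compositions, which is built from $\ecal_{top}(\tau w, w')$, and passing through $\beta=\tau$ converts this to $\ecal_{top}(w,\tau w')$, matching the action on the $\tau X$ side. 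This is a diagram chase using the spectral-functoriality of $\tau$ and the definition of $W$ on homomorphism spectra; naturality in $X$ is immediate since everything is built from $\tau$ applied functorially.

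Finally I would verify the involutary-functor condition $\rho\beta\circ\beta\tau=\id$, which amounts to the statement that applying $\tau$ twice is the identity on the relevant function spectra — this follows at once from $\tau^2=\id$ and the analogous self-inverse identity for $W$ already recorded in the excerpt. I do not expect any of this to be genuinely hard: the main (and only real) obstacle is bookkeeping, namely keeping straight which of the two $\ecal_{top}$-actions (twisted vs.\ untwisted) lives on which functor and checking that $\beta$ respects the action on the \emph{nose} rather than just up to homotopy. Since $\beta_X$ is literally $\tau$ applied to function spectra, and $\tau$ is a strict spectral functor with $\tau^2=\id$, the compatibility is strict, and the only care needed is to track variances correctly through the contravariant functor $\underhom(\gcal_{top},-)$. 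I would therefore organise the proof as: (i) define $\beta_X$ objectwise as the isomorphism induced by $\tau$; (ii) check the $\ecal_{top}$-module compatibility using the defining relation of $\rho$ and the definition of $W$; (iii) check naturality in $X$; (iv) check $\rho\beta\circ\beta\tau=\id$ from $\tau^2=\id$.
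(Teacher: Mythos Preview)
Your overall four-step plan is exactly right and matches the paper's argument, but you have miscomputed what $\rho$ does at the level of underlying spectra, and this propagates to the wrong candidate for $\beta$.

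Recall $\rho=(\tau W)^*$ where both $\tau$ and $W$ act as $\tau$ on objects. Hence the composite $\tau W$ is the \emph{identity} on objects (since $\tau^2=\id$), while being nontrivial on hom-spectra. Therefore $(\rho\,\underhom(\gcal_{top},X))(\sigma)=\underhom(\sigma,X)$ as a symmetric spectrum, \emph{not} $\underhom(\tau\sigma,X)$ as you wrote. The paper records this explicitly in the proof of Proposition~\ref{prop:modEtopinvolution}: ``$\rho M(e)=M(e)$ as symmetric spectra, the $\rho$ is to indicate that the $\ecal_{top}$-action is different.'' Consequently your proposed $\beta_X$, namely the bare isomorphism $\underhom(\tau\sigma,X)\cong\underhom(\sigma,\tau X)$ induced by $\tau$, is a map between the wrong objects.

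The correct natural isomorphism (the paper calls it $\beta'$) at $A\in\gcal_{top}$ is the composite
\[
\underhom(A,\tau X)\xrightarrow{\ \tau\ }\underhom(\tau A,X)\xrightarrow{\ w_A^*\ }\underhom(A,X),
\]
i.e.\ $\underhom(w,\id_X)\circ\tau$: one needs \emph{both} the spectral functor $\tau$ and precomposition with the structure map $w_A\co A\to\tau A$ coming from the basic-cell construction. The compatibility with the twisted $\ecal_{top}$-action then follows from the diagram in the paper's proof, and the involutary condition from $\tau^2=\id$ together with $w$ being of order two. Once you fix the formula for $\beta$, your steps (ii)--(iv) go through exactly as you outlined.
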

\begin{proof}
We consider an $\iota^* S_\qq$-module $X$, moving along the top this gives
the module $\rho \underhom(-, X)$, the bottom route
produces $\underhom(-, \tau X)$.
We define a natural isomorphism
$\beta' \co \rho \underhom(-, \tau X) \to \underhom(-, X)$
by $\underhom(w,\id_X) \circ \tau$.
Naturality of $\beta'$ is clear
and since the diagram below obviously commutes,
$\beta'$ is a map of $\ecal_{top}$-modules.
The top horizontal composition is the action
of $\ecal_{top}$ on $\rho \underhom(-, \tau X)$
and the bottom is the action of
$\ecal_{top}$ on $\underhom(-, X)$.
From $\beta'$ we have a natural transformation
$\beta \co \underhom(-, \tau X) \to \rho \underhom(-, X)$.
Since applying $\beta'$ twice gives the
identity map $(\underhom(\gcal_{top},-),\beta)$
is an involutary functor.

$$ \xymatrix{
\underhom(A, \tau X) \smashprod \ecal_{top} (B,A)
\ar[r]^{\id \smashprod \tau \circ W}
\ar[d]^{\tau \smashprod \id} &
\underhom(A, \tau X) \smashprod \ecal_{top} (B,A)
\ar[r] \ar[d]^{\tau \smashprod \tau} &
\underhom(B, \tau X)
\ar[d]^\tau \\
\underhom(\tau A, X) \smashprod \ecal_{top} (B,A)
\ar[d]^{w_A^* \smashprod \id} \ar[r]^{\id \smashprod W} &
\underhom(\tau A,  X) \smashprod (\tau \ecal_{top}) (\tau B, \tau A)
\ar[r]
&
\underhom(\tau B, X)
\ar[d]^{w_B^*} \\
\underhom(A, X) \smashprod \ecal_{top} (B,A)
\ar[rr] &&
\underhom(B,X) }$$
\end{proof}

Now it follows by Lemma \ref{lem:onlyneedhalf}
that the left adjoint to $\underhom(\gcal_{top},-)$ is an involutary functor.
We construct a natural transformation $\alpha$
to prove this directly, since this type of construction
will occur again.
Pick $M$, an object of $\rightmod \ecal_{top}$,
then $M \smashprod_{\ecal_{top}} \gcal_{top}$
is given by the coequaliser in Definition \ref{def:moritafunctors}. The
natural transformation
$$\alpha \co \rho (-) \smashprod_{\ecal_{top}} \gcal_{top}
\longrightarrow \tau((-) \smashprod_{\ecal_{top}} \gcal_{top})$$
is defined by the map of coequalisers given in the diagram below,
where $\mu$ is the action of $\ecal_{top}$ on $M$.
$$ \xymatrix@C+1cm{
{\bigvee}_{g,h \in \mathcal{G}}
M(h) \smashprod \ecal_{top}(g,h) \smashprod g
\ar@<+0.5ex>[r]^(0.6){\mu \circ \tau \circ W_{g,h}}
\ar@<-0.5ex>[r]_(0.6){eval}
\ar[d]_{\id \smashprod \tau \circ W_{g,h} \smashprod w_g} &
{\bigvee}_{g \in \mathcal{G}}
M(g) \smashprod g
\ar[d]^{\id \smashprod w_g} \\
{\bigvee}_{g,h \in \mathcal{G}}
M(h) \smashprod \ecal_{top}(g,h) \smashprod \tau g
\ar@<+0.5ex>[r]^(0.6){\mu}
\ar@<-0.5ex>[r]_(0.6){\tau (eval)\circ \gamma} &
{\bigvee}_{g \in \mathcal{G}} M(g) \smashprod \tau g
}$$
To prove that the above does define a map of coequalisers
it suffices to show that the two diagrams below commute.
$$ \xymatrix@C+1cm{
M(h) \smashprod \ecal_{top}(g,h)
\ar[r]^(0.6){\mu \circ \tau \circ W_{g,h}}
\ar[d]_{\id \smashprod \tau \circ W_{g,h}} &
M(g) \ar[d]^\id &
\ecal_{top}(g,h) \smashprod g
\ar[r]^(0.6){eval}
\ar[d]_{\tau \circ W_{g,h} \smashprod w_g} &
h
\ar[d]_{w_h} \\
M(h) \smashprod \ecal_{top}(g,h)
\ar[r]^(0.6){\mu } &
M(g) &
\ecal_{top}(g,h) \smashprod \tau g
\ar[r]^(0.6){\tau (eval) \circ \gamma}
& \tau h
}$$
The left hand diagram automatically commutes.
To see that the right hand diagram commutes
we need Lemma \ref{lem:evalsquare}. Which we have included
to show how the various maps in the diagram are related.
Note that this natural transformation $\alpha$
clearly satisfies the necessary
condition for it to be an involutary natural transformation.

\newpage

\begin{lemma}\label{lem:evalsquare}
The diagram below commutes for all $g \in \gcal_{top}$ and
is natural in $\iota^* S_\qq$-modules $X$.

$$ \xymatrix@u{
& \tau g \smashprod \underhom(g,X)
\ar@{}[drrrr]|(0.375){(2)}
\ar[dl]^{\gamma}
\ar[dr]^{\id \smashprod \tau}
\ar[rrr]^{\tau w_g \smashprod \id} &&&
g \smashprod \underhom(g,X)
\ar[dl]^{\id \smashprod \tau}
\ar[dr]^{\id \smashprod (\tau w_g)^*} \\
\tau(g \smashprod \underhom(g,X))
\ar@{}[rr]|{(4)}
\ar@{}[drrrr]|(0.625){(3)}
\ar[dr]^{\tau (eval)} &&
\tau g \smashprod \underhom(\tau g,\tau X)
\ar[dl]^{eval}
\ar[r]^{\tau w_g \smashprod \id} &
g \smashprod \underhom(\tau g,\tau X)
\ar@{}[rr]|{(1)}
\ar[dr]^{\id \smashprod (w_g)^*} &&
g \smashprod \underhom(\tau g,X)
\ar[dl]^{\id \smashprod \tau} \\
& \tau X
&&&
g \smashprod \underhom(g,\tau X)
\ar[lll]^{eval} }$$
\end{lemma}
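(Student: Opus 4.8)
The plan is to verify the commutativity of the large diagram by decomposing it into the labelled subregions $(1)$--$(4)$ and the two outer triangles, checking each one separately, and then pasting them together. The regions fall into two kinds: those that commute by pure formal nonsense about the evaluation map and the adjunction defining $\underhom$, and those that commute because $w_g$ is a map of order two (i.e. $\tau w_g \circ w_g = \id_g$) together with naturality of $eval$ and of $\tau$.

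First I would dispatch the formal regions. Region $(1)$ is the naturality square for $\underhom(\tau g, -)$ applied to the map of order two $w_g$: since $w_g \circ \tau w_g = \id$ after the appropriate twist, the composite $(\tau w_g)^* $ followed by $(w_g)^*$ on $\underhom(\tau g, -)$ reduces correctly, and the vertical $\id \smashprod \tau$ maps just track the functoriality of $\tau$ on the $\iota^* S_\qq$-module in the second slot; so $(1)$ commutes. Regions $(2)$ and $(3)$ are instances of the fact that $eval \co g \smashprod \underhom(g, X) \to X$ is natural in $g$ and in $X$; concretely $(2)$ says that precomposing $eval$ with $\tau w_g$ in the left factor agrees with postcomposing by the induced map $(\tau w_g)^*$ in the $\underhom$ factor, which is exactly the definition of how $eval$ interacts with maps in the contravariant variable. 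Region $(3)$ is the corresponding statement for the covariant variable: applying $\tau$ to $eval$ and following by nothing is the same as using the $\iota^* S_\qq$-module structure map $\tau$ on $X$ in the codomain. Region $(4)$ is the compatibility of $\gamma$ (the strong-monoidal structure isomorphism $\tau(-) \smashprod \tau(-) \to \tau(- \smashprod -)$ coming from $\tau$ being a strong monoidal involution) with $eval$; this is precisely the statement that $\tau$, being a strong monoidal functor, carries the evaluation/counit of the closed structure to the evaluation after applying $\gamma$, which is part of what it means for a closed monoidal functor to be closed.

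Next I would handle the two outer triangles. The upper-left triangle relates $\gamma$, $\id \smashprod \tau$, and $\tau w_g \smashprod \id$; it commutes because the map $\tau g \smashprod \underhom(g,X) \to \tau(g \smashprod \underhom(g,X))$ factors through $\tau g \smashprod \underhom(\tau g, \tau X)$ exactly by applying $\tau$ to the first factor and using $\gamma$ — this is the definition of the natural transformation $\beta'$ from the previous lemma unwound one step, together with the fact that $\tau w_g \smashprod \id$ is the inverse-on-one-side of the order-two map. The upper-right triangle is the analogous bookkeeping on the other end, using that $(\tau w_g)^* \circ \tau = \tau \circ (w_g)^*$ holds after accounting for the twist, which is just the relation $\tau \circ \ecal_{top}(\tau w, w') = \ecal_{top}(w, \tau w') \circ \tau$ displayed just before the definition of $\rho$, specialised to hom-objects $\underhom(\tau g, X)$. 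Finally, naturality of the whole diagram in the $\iota^* S_\qq$-module $X$ is automatic: every arrow in sight is built from $eval$, $\tau$, $\gamma$, and the fixed maps $w_g$, all of which are natural in $X$ (the $w_g$ do not involve $X$ at all).

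I expect the main obstacle to be region $(4)$ — the interaction of the strong-monoidal coherence isomorphism $\gamma$ with the evaluation map. Getting the directions and the coherence diagrams exactly right requires unwinding what ``$\tau$ is a strong monoidal closed functor'' says about the counit of the $\underhom$ adjunction, and making sure the instance of $\gamma$ appearing here is the correct one (rather than its inverse, or $\gamma$ for a permuted pair of arguments). Once that single compatibility is pinned down precisely, the remaining regions are routine naturality and the order-two relation $\tau w_g \circ w_g = \id$, and pasting all six regions gives commutativity of the outer square, which is the assertion of the lemma.
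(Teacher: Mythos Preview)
Your overall strategy---decompose into the labelled regions and verify each---is exactly what the paper does. But several details are off and would lead you astray if you tried to carry them out.

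First, there are no ``two outer triangles'': the four labelled regions $(1)$--$(4)$ exhaust the diagram. An Euler-characteristic count (eight vertices, eleven arrows) confirms there are exactly four internal cells.

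Second, you have misidentified $\gamma$. It is \emph{not} the strong-monoidal coherence $\tau(-)\smashprod\tau(-)\to\tau(-\smashprod-)$; the paper says explicitly that $\gamma$ is the natural transformation $\tau X\otimes A\to\tau(X\otimes A)$ for a symmetric spectrum $A$ and an $\iota^*S_\qq$-module $X$, i.e.\ the compatibility of $\tau$ with the $Sp^\Sigma$-\emph{tensoring}. Consequently region $(4)$ commutes because $\tau$ is a \emph{spectral functor} (the paper cites \cite[Proposition~6.4.5]{bor94}), not because of any closed-monoidal-functor axiom. This is actually the easy direction, contrary to your expectation.

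Third, your assignment of arguments to regions is scrambled. Regions $(1)$ and $(2)$ contain no evaluation map at all: $(2)$ is the trivial bifunctoriality square $(\tau w_g\smashprod\id)\circ(\id\smashprod\tau)=(\id\smashprod\tau)\circ(\tau w_g\smashprod\id)$, and $(1)$ is enriched functoriality of $\tau$ together with $\tau^2=\id$ (so that $\tau(\tau w_g)=w_g$). The paper just says these ``obviously commute''. The region that actually needs care is $(3)$, not $(4)$: it uses the extranaturality of $eval$ in its source variable (what the paper calls ``evaluation and composition are compatible'') together with the order-two relation $w_g\circ\tau w_g=\id_{\tau g}$.

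So the skeleton of your plan is right, but you should redo the region-by-region analysis with the correct reading of $\gamma$ and the correct cell boundaries before the argument will go through.
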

\begin{proof}
The map labelled $\gamma$ is the natural transformation
between $\tau X \otimes A  \to \tau (X \otimes A)$
for a symmetric spectrum $A$ and a rational
$\iota^* S_\qq$-module $X$.
Squares (1) and (2) obviously commute, square (4)
commutes since $\tau$ is a spectral functor
(see \cite[Proposition 6.4.5]{bor94}).
The fact that Square (3) commutes requires a little more consideration
but essentially follows from axiom that evaluation and
composition are compatible.
\end{proof}

\begin{proposition}\label{prop:skewequivSQtoEt}
The adjunction
$((-) \smashprod_{\ecal_{top}} \mathcal{G}_{top},
\underhom(\mathcal{G}_{top},-) )$
is involutary.
\end{proposition}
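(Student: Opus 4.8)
The plan is to verify the three defining conditions for an involutary adjunction between the categories with involution $(\iota^* S_\qq \leftmod, \tau)$ and $(\rightmod \ecal_{top}, \rho)$, using the natural transformations $\alpha$ and $\beta$ that have already been constructed in the preceding discussion. Concretely, I would package the data as the quadruple $\big( ((-) \smashprod_{\ecal_{top}} \gcal_{top}, \alpha), (\underhom(\gcal_{top},-), \beta), \eta, \varepsilon \big)$ where $(\eta, \varepsilon)$ are the unit and counit of the Morita adjunction of Theorem \ref{thm:so2morita}. The first step is to recall that $\alpha$ and $\beta$ are already known to be involutary natural transformations: the paragraph preceding Lemma \ref{lem:evalsquare} observes that $\alpha$ ``clearly satisfies the necessary condition'', and the lemma before that notes $\beta'$ applied twice is the identity, so $(\underhom(\gcal_{top},-), \beta)$ is an involutary functor. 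So what remains is purely the compatibility of $\eta$ and $\varepsilon$ with the involutions.

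The core of the argument is to check that the unit $\eta \co \id \to \underhom(\gcal_{top},-) \circ ((-) \smashprod_{\ecal_{top}} \gcal_{top})$ and counit $\varepsilon \co ((-) \smashprod_{\ecal_{top}} \gcal_{top}) \circ \underhom(\gcal_{top},-) \to \id$ are involutary natural transformations in the sense of Definition (involutary natural transformation). For $\eta$ this means checking that $\rho \, \eta_M \circ \beta_{(\text{composite})} = (\text{involutary structure on } \underhom \circ \smashprod_{\ecal_{top}}) \circ \eta_{\rho M}$; for $\varepsilon$ the dual condition with $\tau$ in place of $\rho$. I would prove these by writing out the composite involutary structure on $\underhom(\gcal_{top},-) \circ ((-) \smashprod_{\ecal_{top}} \gcal_{top})$, which by the definition of composite involutary functor is $\beta \circ \underhom(\gcal_{top}, \alpha)$ (suitably interpreted as a natural transformation), and similarly $\alpha \circ ((-) \smashprod_{\ecal_{top}}\gcal_{top})(\beta)$ on the other side, and then verifying the required square commutes. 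Since $\eta_M \co M \to \underhom(\gcal_{top}, M \smashprod_{\ecal_{top}} \gcal_{top})$ is the map adjoint to the identity, and $\alpha$, $\beta$ are each built from the order-two maps $w_g \co \sigma \to \tau\sigma$ and the spectral functoriality of $\tau$, the verification reduces to a diagram chase in which every square commutes either by naturality, by the axiom relating evaluation and composition (as in Lemma \ref{lem:evalsquare}), or by the compatibility of $\tau$ with the monoidal structure (Lemma \ref{lem:TWaremonoidal}, Lemma \ref{lem:tauspectral}).

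I would then invoke Lemma \ref{lem:skewadjunct}, which says an involutary adjunction passes to an adjunction of the skewed categories, to conclude; but since the statement here is just that the Morita adjunction \emph{is} involutary, the proof ends once the four conditions are in place. Alternatively — and this is likely the cleanest route — I would appeal to Lemma \ref{lem:onlyneedhalf}: it suffices to know that \emph{one} of the two functors in the adjunction is involutary, and this has already essentially been shown for $\underhom(\gcal_{top},-)$ via $\beta$. So the proof can be shortened to: observe $(\underhom(\gcal_{top},-), \beta)$ is involutary by the lemma preceding Lemma \ref{lem:evalsquare}, then apply Lemma \ref{lem:onlyneedhalf} to deduce the left adjoint carries a compatible involutary structure (which is the $\alpha$ already exhibited), and check the unit/counit compatibility — which for an adjunction between categories with involution is automatic from the fact that $\alpha$ and $\beta$ correspond to one another under the adjunction isomorphism, exactly as in the proof of Lemma \ref{lem:onlyneedhalf}.

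\textbf{Main obstacle.} The genuine difficulty is the bookkeeping in the unit/counit compatibility: one must track how $\alpha$ (built from $\mu$, the evaluation maps, $\gamma$, and the maps $\tau \circ W_{g,h}$, $w_g$) interacts with $\beta$ (built from $w^*$ and $\tau$) through the adjunction, and in particular confirm that the coequaliser description of $(-) \smashprod_{\ecal_{top}} \gcal_{top}$ plays well with the end description of $\underhom(\gcal_{top}, -)$ under $\rho$ and $\tau$. This is precisely the sort of coherence that Lemma \ref{lem:evalsquare} was set up to handle, so I expect the argument to go through, but it will require care that the order-two property $\tau \circ \ecal_{top}(\tau w, w') = \ecal_{top}(w, \tau w') \circ \tau$ (used to define $\rho$) is applied consistently at each node of the diagram. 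The rest is routine given the infrastructure already assembled.
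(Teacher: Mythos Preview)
Your first route is correct and is essentially what the paper does: it verifies directly that the unit and counit are involutary natural transformations, reducing the counit square to Lemma \ref{lem:evalsquare} and the unit square to a coevaluation analogue (two small squares involving $coeval$, $\tau$, $\gamma$, and $w_k$). Your identification of the main obstacle is accurate.

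Your proposed shortcut via Lemma \ref{lem:onlyneedhalf}, however, does not actually bypass the work. That lemma only says that \emph{some} involutary structure on the left adjoint exists once one is given on the right adjoint; it does not say that an independently constructed pair $(\alpha,\beta)$ automatically makes $\eta$ and $\varepsilon$ involutary. Here both $\alpha$ and $\beta$ have been written down explicitly (from the $w_g$ and the spectral structure of $\tau$), and the assertion that they ``correspond under the adjunction isomorphism'' is precisely the statement that the unit and counit squares commute --- so invoking that correspondence to avoid the diagram chase is circular. The paper therefore carries out the two checks in full, and that is the content of the proposition.
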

\begin{proof}
We must prove that the unit $\eta$ and
counit $\varepsilon$ are involutary natural
transformations. For the counit, let $X$ be an
$\iota^* S_\qq$-module. We must prove that the following
diagram commutes.

$$\xymatrix@C+1cm{
{\int^{a \in \gcal_{top}} \underhom(a, \tau X) \smashprod a}
\ar[d]_{\int^{a \in \gcal_{top}} \beta_X(a) \smashprod \id_a} \ar[r]^{\varepsilon_{\tau X}}
& \tau X  \\
{\int^{a \in \gcal_{top}} \rho \underhom(a, X) \smashprod a}
\ar[r]^{\alpha_{\underhom(\gcal_{top}, X)}}
& \tau {\int^{a \in \gcal_{top}} \underhom(a, X) \smashprod a}
\ar[u]^{ \tau(\varepsilon_X) }
}$$
This is routine to check using Lemma \ref{lem:evalsquare}.
Now we consider the unit, let $M$ be an $\ecal_{top}$-module.
We must prove that the diagram below commutes.
$$\xymatrix@C+1cm{
\rho M
\ar[r]^{\eta_{\rho M}}
&
{\underhom( \gcal_{top}, \int^g \rho M(g) \smashprod g)}
\ar[d]_{\underhom( \gcal_{top},\alpha_{M})} \\
{\rho \underhom( \gcal_{top}, \int^g M(g) \smashprod g)}
\ar[u]^{\rho \eta_{M}}
&
{\underhom( \gcal_{top}, \tau \int^g M(g) \smashprod g)}
\ar[l]_{\beta_{\int^g M(g) \smashprod g}}
}$$
Checking that this diagram commutes reduces to proving that
the diagram below commutes.
$$ \xymatrix@C+1cm{
M(k)
\ar[r]^{coeval}
\ar[d]_{coeval} &
\underhom(k, k \smashprod M(k))
\ar[d]^{(w_k \smashprod \id)_*} \\
\underhom(k, k \smashprod M(k))
\ar[r]^{\gamma^{-1} \circ (w_k)^* \circ \tau} &
\underhom(k, \tau k \smashprod M(k)) } $$
This is the coevaluation version of Lemma \ref{lem:evalsquare}
and it commutes since the pair of squares below commute.
$$ \xymatrix@R-0.2cm@C+0.1cm{
M(k)
\ar[r]^{coeval_k}
\ar[d]_{coeval_{\tau k}}
&
\underhom(k, k \smashprod M(k))
\ar[d]^{(w_k \smashprod \id)_*} \\
\underhom(\tau k, \tau k \smashprod M(k))
\ar[r]^{(w_k)^*}
&
\underhom(k, \tau k \smashprod M(k)) \\
M(k)
\ar[r]^{coeval_k}
\ar[d]_{coeval_{\tau k}}
&
\underhom(k, k \smashprod M(k))
\ar[d]^\tau \\
\underhom(\tau k, \tau k \smashprod M(k))
\ar[r]^{\gamma_*}
&
\underhom(\tau k, \tau (k \smashprod M(k))) } $$
\end{proof}

\begin{corollary}\label{cor:invSO2morita}
There is a strong symmetric monoidal Quillen equivalence
between the skewed categories
$$\tau \# (\iota^* S_\qq \leftmod)
\overrightarrow{\longleftarrow}
\rho \# (\rightmod \ecal_{top}).$$
Furthermore this is an adjunction of
closed symmetric $\id \# Sp^\Sigma_+$-algebras.
\end{corollary}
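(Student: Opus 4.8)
The plan is to package together three facts that have essentially all been proven already. First, Theorem~\ref{thm:so2involution} gives a strong symmetric monoidal Quillen equivalence $(C,I)$ between $\tau \# \iota^* S_\qq \leftmod$ and $S_\qq \leftmod(\cscr)$, and by Proposition~\ref{prop:cylicSQmodarecyclicspectra} this category is in turn Quillen equivalent to the category of cyclic spectra; so it suffices to produce the claimed equivalence at the level of the involutary categories $(\iota^* S_\qq \leftmod, \tau)$ and $(\rightmod \ecal_{top}, \rho)$ and then pass to skewed categories. Second, Theorem~\ref{thm:so2morita} already supplies the underlying strong symmetric monoidal Quillen equivalence of the unskewed categories $\iota^* S_\qq \leftmod \rightleftarrows \rightmod \ecal_{top}$, and Proposition~\ref{prop:skewequivSQtoEt} shows that this adjunction is involutary (the unit and counit are involutary natural transformations, via the natural isomorphisms $\beta$ and $\alpha$ constructed in this section). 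Third, Proposition~\ref{prop:modEtopinvolution} and Proposition~\ref{prop:SQmodinvolutary} show both sides are involutary symmetric monoidal model categories satisfying the monoid axiom.

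So the proof runs as follows. First I would invoke the general machinery of Chapter~\ref{chp:catwithinv}: an involutary symmetric monoidal Quillen equivalence passes to a symmetric monoidal Quillen equivalence of the skewed categories (this is the combination of Lemma~\ref{lem:invmonadjskewed}, the skewed model structure proposition, and the proposition that an involutary Quillen equivalence gives a Quillen equivalence of skewed model categories; for the strong symmetric monoidal statement one uses that $\sigma \# L$ and $\tau \# R$ inherit strong symmetric monoidality from $L$ and $R$). I need to check the two technical conditions of a monoidal Quillen pair survive skewing, but these reduce to the corresponding conditions for the underlying functors because the underlying object of a cofibrant object in a skewed category is cofibrant — this is exactly the content of the skewing proposition for monoidal Quillen pairs, so it is already available. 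Applying this to the involutary pair $((-) \smashprod_{\ecal_{top}} \gcal_{top}, \underhom(\gcal_{top},-))$ of Proposition~\ref{prop:skewequivSQtoEt} immediately gives the strong symmetric monoidal Quillen equivalence $\tau \# (\iota^* S_\qq \leftmod) \rightleftarrows \rho \# (\rightmod \ecal_{top})$.

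For the final sentence — that this is an adjunction of closed symmetric $\id \# Sp^\Sigma_+$-algebras — I would argue as in Lemma~\ref{lem:inflationandinvolution}. By Lemma~\ref{lem:tauspectral} the functor $\tau$ on $\iota^* S_\qq \leftmod$ is an adjunction of closed symmetric monoidal spectral functors, and the inflation-type composite $Sp^\Sigma_+ \to \iota^* S_\qq \leftmod$ is involutary (it factors through $\iota^*_{SO(2)}$ of an $O(2)$-object, which carries the order-two datum of Theorem~\ref{thm:extrastructure}); hence $\tau \# (\iota^* S_\qq \leftmod)$ is a closed symmetric $\id \# Sp^\Sigma_+$-algebra. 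Correspondingly, $\ecal_{top}$ is a symmetric monoidal $Sp^\Sigma_+$-category with the involutary structure provided by $\tau$ and $W$ (Lemma~\ref{lem:TWaremonoidal}), so Theorem~\ref{thm:monmodules} together with Proposition~\ref{prop:monoidalspectralfunctor} makes $\rho \# (\rightmod \ecal_{top})$ a closed symmetric $\id \# Sp^\Sigma_+$-algebra; and the Morita functors, being strong symmetric monoidal and $Sp^\Sigma_+$-spectral, respect these algebra structures.

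The main obstacle is bookkeeping rather than mathematics: one must verify that the various structure maps — the spectral enrichment, the monoidal transformation $m$ for $\tau$, the ringoid map $W$, and the natural isomorphisms $\alpha,\beta$ — are all mutually compatible in the precise sense required for a morphism of $\id \# Sp^\Sigma_+$-algebras, i.e. that the skewed left adjoint is a $\id \# Sp^\Sigma_+$-algebra functor. This is the point where Lemma~\ref{lem:evalsquare} and the coevaluation diagrams of Proposition~\ref{prop:skewequivSQtoEt} do the real work, so the honest proof is mostly a matter of citing those results and checking that skewing preserves the relevant coherence, which it does by the general involutary machinery. I expect the write-up to be short: assemble the three inputs, cite the skewing lemmas, and note the algebra-structure statement follows as in Lemma~\ref{lem:inflationandinvolution}.
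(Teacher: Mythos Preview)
Your proposal is correct and follows essentially the same route as the paper: cite Proposition~\ref{prop:skewequivSQtoEt} to see the Morita adjunction is involutary, invoke the skewing machinery of Chapter~\ref{chp:catwithinv} to pass to a strong symmetric monoidal Quillen equivalence of skewed categories, and then verify the $\id \# Sp^\Sigma_+$-algebra compatibility. The paper's proof differs only in being slightly more explicit at the final step --- it draws the triangle of right adjoints and reduces the algebra-structure check to two concrete commuting squares (one for $\ev_{\iota^* S_\qq}$ and one for the coend side) --- whereas you appeal more abstractly to Lemma~\ref{lem:inflationandinvolution} and the spectral structure; both amount to the same verification. Your opening detour through Theorem~\ref{thm:so2involution} and cyclic spectra is unnecessary here, since the corollary is purely about the skewed Morita equivalence and does not pass through $S_\qq \leftmod(\cscr)$.
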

\begin{proof}
The results of this section prove that the Quillen equivalence
of Theorem \ref{thm:so2morita} is an involutary Quillen equivalence.
Furthermore this is a monoidal involutary adjunction,
though we omit the routine proof that
the natural transformation $\beta$ is a monoidal
natural transformation.
It remains to prove that in the diagram below (which gives the algebra
structure on the skewed categories)
the right adjoints (shown on the bottom)
commute up to natural isomorphism.
\begin{displaymath}
\xymatrix@R+0.5cm{
\rho \# (\rightmod \ecal_{top})
\ar@<+0.31ex>[rr]
\ar@<-0.31ex>[dr]_(0.45){\ev_{\iota^* S_\qq}} & &
\tau \# \iota^* S_\qq \leftmod
\ar@<+0.31ex>[ll]
\ar@<0.31ex>[dl]^j \\
& \id \# Sp^\Sigma_+
\ar@<-0.31ex>[ul]_(0.35){F_{\iota^* S_\qq}}
\ar@<0.31ex>[ur]^i }
\end{displaymath}
Since the corresponding statement is true for the underlying categories
all we need check is that the involutions are compatible, in the sense that
for $x \in Sp^\Sigma_+$ and $E \in \iota^* S_\qq \leftmod$
the following diagrams commute.
\begin{displaymath}
\xymatrix@R-0.0cm@C+0.3cm{
{\int^g} (\underhom(g, \iota^* S_\qq) \smashprod x) \smashprod g
\ar[r]
\ar[d] &
\iota^* S_\qq \smashprod x
\ar[dd] \\
{\int^g} \rho(\underhom(g, \iota^* S_\qq) \smashprod x) \smashprod g
\ar[d] \\
\tau {\int^g} (\underhom(g, \iota^* S_\qq) \smashprod x) \smashprod g
\ar[r] &
\tau (\iota^* S_\qq \smashprod x) }
\xymatrix{
\underhom(\iota^* S_\qq, \tau E)
\ar[r]
\ar[d] &
j \tau E
\ar[d] \\
\rho \underhom(\iota^* S_\qq, E)
\ar[r] &
j E }
\end{displaymath}
The left hand diagram commutes by the same arguments we used above
to show that $(-) \smashprod_{\ecal_{top}} \gcal_{top}$
is an involutary functor.
The right hand diagram commutes because
$\tau$ does not alter the underlying symmetric spectrum
of an $\iota^* S_\qq$-module.
\end{proof}

\section{Moving to $ \rightmod \Theta'' \ecal_{top} $ }\label{sec:movingaccross the machine}

Having shown that the equivalence between rational $SO(2)$-spectra and
$\rightmod \ecal_{top}$ is involutary, we now must show that the
zig-zag of equivalences between $\rightmod \ecal_{top}$
and $ \rightmod \Theta'' \ecal_{top} $
(as proved in \cite[Theorem 4.1]{greshi}) is involutary.
There are several steps in this section, but they are
all essentially the same. We begin with the easiest case
where we move from enrichments over positive symmetric spectra to
symmetric spectra. The main result of this section is
Theorem \ref{thm:invmoduleadjunctions}.

\begin{lemma}
The Quillen equivalence between the category of
$\ecal_{top}$-modules over the
model category of positive symmetric spectra
and the category of
$\ecal_{top}$-modules over the
model category of symmetric spectra is
an involutary strong symmetric monoidal Quillen equivalence.
\end{lemma}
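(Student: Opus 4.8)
The plan is to verify that the identity adjunction $\id : \rightmod \ecal_{top}^+ \overrightarrow{\longleftarrow} \rightmod \ecal_{top} : \id$ (where the superscript indicates enrichment over $Sp^\Sigma_+$) respects the involution $\rho = (\tau W)^*$ already constructed. Since the involution $\rho$ is defined purely by restriction of scalars along the ringoid map $\tau W$, and this map is the same whether we regard $\ecal_{top}$ as enriched over $Sp^\Sigma_+$ or over $Sp^\Sigma$ (the homomorphism spectra $\ecal_{top}(\sigma,\sigma')$ and the maps $\tau W$ between them are identical; only the ambient model structure changes), the functor $\rho$ is literally the same functor on both categories, and the identity functor strictly commutes with it. Thus the natural transformations $\alpha$ and $\beta$ required for an involutary adjunction can both be taken to be identities, and the involutary conditions on the unit and counit (which are identities) hold trivially.

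First I would recall that the identity adjunction between $\rightmod \ecal_{top}^+$ and $\rightmod \ecal_{top}$ is a strong symmetric monoidal Quillen equivalence: this is an instance of the comparison already used in Theorem \ref{thm:finiteEtopisEt} and Proposition \ref{prop:GIS+QequivGISQ}, obtained from the Quillen equivalence $\id : Sp^\Sigma_+ \to Sp^\Sigma$ by passing to module categories via Proposition \ref{prop:spectralmaps} (or directly, noting the weak equivalences and cofibrations of $\ecal_{top}$-modules are defined object-wise from the respective model structures on symmetric spectra, and the positive model structure has more cofibrations). The strong symmetric monoidal statement follows since $\id : Sp^\Sigma_+ \to Sp^\Sigma$ is strong symmetric monoidal and the box product on $\rightmod \ecal_{top}$ is computed the same way in both cases. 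Next I would observe that $\rho$, being $(\tau W)^*$, commutes on the nose with the identity functor, since restriction of scalars along a fixed ringoid map is insensitive to the choice of model structure on the base category; hence $(\id, \id)$ is an involutary functor in both directions, and by Lemma \ref{lem:skewadjunct} the adjunction passes to the skewed categories. Finally I would note, using Proposition \ref{prop:skewedmodules} and the machinery of involutary monoidal categories (Lemma \ref{lem:invmonadjskewed}), that since $\rho$ is a strong monoidal involution on each of $\rightmod \ecal_{top}^+$ and $\rightmod \ecal_{top}$ and the identity functor is strict monoidal, the induced adjunction on skewed categories is an involutary strong symmetric monoidal Quillen equivalence.

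The only point requiring a moment's care — and the one I expect to be the main obstacle, though it is minor — is checking that $(\rightmod \ecal_{top}^+, \rho)$ and $(\rightmod \ecal_{top}, \rho)$ are genuinely involutary \emph{monoidal model} categories in the sense of Section \ref{sec:invmodcat}: that is, that $\rho$ is a left Quillen functor in each case and that the involutary monoidal axioms hold. The Quillen property of $\rho$ follows as in Lemma \ref{lem:hjisquillen} / the discussion after Lemma \ref{lem:tauspectral}, since $\tau W$ is an invertible ringoid map so $(\tau W)^*$ has itself (up to the evident isomorphism) as both adjoints and preserves object-wise (co)fibrations and weak equivalences. The involutary monoidal axiom — that $\rho M \Box \rho N \to \rho(M \Box N)$ is a map of order two — is exactly Proposition \ref{prop:modEtopinvolution}, whose proof is phrased entirely in terms of the box product and the maps $\tau W$, and so applies verbatim with either base model structure. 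With these observations in place the statement reduces to formal bookkeeping, and I would present it as such rather than rederiving the module-category comparison from scratch.
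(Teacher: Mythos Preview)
Your proposal is correct and takes essentially the same approach as the paper: both observe that the involution $\rho=(\tau W)^*$ is defined by the same ringoid map regardless of the base model structure, so the identity adjunction commutes with it on the nose, and the remaining checks (Quillen equivalence, monoidality) reduce to the base-level comparison $Sp^\Sigma_+ \to Sp^\Sigma$. One small slip: you write that ``the positive model structure has more cofibrations,'' but in fact it has fewer (the identity $Sp^\Sigma_+ \to Sp^\Sigma$ is the \emph{left} adjoint, so positive cofibrations form a subclass of stable cofibrations); this does not affect your argument, since the Quillen equivalence of module categories follows either way.
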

\begin{proof}
Recall that there is a Quillen equivalence
$\id : Sp^\Sigma_+ \overrightarrow{\leftarrow} Sp^\Sigma : \id$.
We perform two operations,
first we consider $\ecal_{top}$ as a category enriched over
symmetric spectra and secondly we
consider the category of $Sp^\Sigma$-functors from
$\ecal_{top}^{op}$ to $Sp^\Sigma$.
Thus we have the model category of $\ecal_{top}$-modules
over $Sp^\Sigma_+$.
The involution on this category is as before,
it is given by a map of the ringoid
$\ecal_{top} \to \ecal_{top}$, so we have an involutary adjunction.

Since the only difference between this new model category
and $\rightmod \ecal_{top}$ as before
is in the model structures, there is little to check in order to
prove the result.
The right adjoint preserves fibrations
and the weak equivalences are the same for either
model structure so we have a Quillen equivalence.
\end{proof}

Since the category of $\ecal_{top}$-modules over the
model category of symmetric spectra is
only used briefly, we do not introduce any new notation for it,
but it is essential for the remaining results that we are now using
this model structure on $\ecal_{top}$-modules. In particular,
the unit is now cofibrant.
We now give some of the material in \cite{shiHZ}
since we will need to examine this in some detail to
prove that the equivalence between
$\ecal_{top}$-modules and $\ecal_t$-modules
is involutary.
We give the proposition first and then explain the terms in it.
\begin{proposition}
The following series of adjoint pairs
are Quillen equivalences.
$$
\begin{array}{rcl}
Q :  \h \qq \leftmod & \overrightarrow{\longleftarrow} &
Sp^\Sigma(\sqq \leftmod) : U_1 \\
L : Sp^\Sigma(dg \qq \leftmod_+)  & \overrightarrow{\longleftarrow} &
Sp^\Sigma(\sqq \leftmod) : \phi^*N \\
D :  Sp^\Sigma(dg \qq \leftmod_+) & \overrightarrow{\longleftarrow} &
dg \qq \leftmod : R .
\end{array}
$$
Furthermore, the pair $(Q,U_1)$\index{$Q$@$(Q,U_1)$}
is strong symmetric monoidal,
$(L, \phi^*N)$\index{$L$@$(L, \phi^*N)$}
is symmetric monoidal
and $(D,R)$\index{$D$@$(D,R)$}
is strong monoidal. Each of the right
adjoints preserve all weak equivalences.
\end{proposition}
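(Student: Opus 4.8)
The statement collects three Quillen equivalences whose existence is essentially the content of \cite[Corollary 2.16]{shiHZ} (see also the discussion preceding \cite[Theorem 4.1]{greshi}). The plan is to treat each adjoint pair in turn, each time following the same two-step pattern: first identify the model structures and check the adjunction is a Quillen pair, then upgrade to a Quillen equivalence, and finally record the monoidal refinement. For $(Q, U_1)$: here $Q$ is the functor from $\h\qq$-modules to symmetric spectra in simplicial $\qq$-modules, which is strong symmetric monoidal by construction since $\h\qq$ is the Eilenberg--Mac Lane ring spectrum of $\qq$ and $\sqq\leftmod$ is its category of modules realised inside spectra; the equivalence is \cite[Corollary 2.16]{shiHZ} (or rather the relevant step of its proof). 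For $(L, \phi^*N)$: here $N$ is the normalisation functor from simplicial $\qq$-modules to non-negatively graded chain complexes (Dold--Kan), $\phi$ is the lax symmetric monoidal shuffle-type transformation making $N$ into a lax symmetric monoidal functor, and $L$ is the induced left adjoint on the level of symmetric spectra; the Quillen equivalence and the fact that $(L,\phi^*N)$ is symmetric monoidal (but only weakly, since $N$ is only lax monoidal, so $\phi$ need not be an isomorphism) are again from \cite{shiHZ}. For $(D, R)$: here one uses that symmetric spectra in $dg\qq\leftmod_+$ built on the suspension coordinate can be stabilised to ordinary (unbounded) rational chain complexes, with $D$ strong monoidal because the suspension object is invertible in $dg\qq\leftmod$; this is the stabilisation equivalence, again \cite[Corollary 2.16]{shiHZ}.

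First I would set up notation carefully: recall that $\sqq\leftmod$, $Sp^\Sigma(\sqq\leftmod)$, $Sp^\Sigma(dg\qq\leftmod_+)$ and $dg\qq\leftmod$ all carry the model structures described in Section~\ref{sec:spacesandspectra} and the paragraph on symmetric spectra in general model categories, and that each of these is a cofibrantly generated symmetric monoidal model category satisfying the monoid axiom (so that the later enriched-category constructions apply). Then for each pair I would invoke the cited results of \cite{shiHZ}, checking only that the hypotheses there are met in our rational setting; since everything is over the field $\qq$ the flatness and cofibrancy conditions in \cite{shiHZ} are automatic, which is precisely why the chain of equivalences works so cleanly. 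For the monoidality claims I would recall the definitions of monoidal and strong monoidal Quillen pair from Section~\ref{sec:modcat} and verify the two conditions: that $L$ (resp.\ $Q$, $D$) of a tensor product maps by a weak equivalence to the tensor product of the images on cofibrant objects, and that the unit condition holds. For $(Q,U_1)$ and $(D,R)$ the structure maps $L(A\otimes B)\to LA\otimes LB$ and $LS\to S$ are isomorphisms by construction, giving the strong statements; for $(L,\phi^*N)$ only the weaker statement holds because $\phi\co N(-)\otimes N(-)\to N(-\otimes-)$ (the Eilenberg--Zilber/shuffle map) is a weak equivalence but not an isomorphism.

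The final assertion — that each right adjoint $U_1$, $\phi^*N$, $R$ preserves all weak equivalences — is the cleanest part. For $U_1$ this is immediate since weak equivalences in $Sp^\Sigma(\sqq\leftmod)$ and in $\h\qq\leftmod$ are both detected on underlying homotopy groups, which $U_1$ preserves. For $\phi^*N$ one uses that $N$ is the normalisation functor, which sends a weak equivalence of simplicial $\qq$-modules (a $\pi_*$-isomorphism, equivalently a quasi-isomorphism of the associated chain complexes by Dold--Kan) to a quasi-isomorphism of non-negatively graded chain complexes, hence a level weak equivalence, hence a stable weak equivalence of symmetric spectra in $dg\qq\leftmod_+$; one must also note $\phi^*$ only changes the module structure and not the underlying object. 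For $R$ one uses that $R$ evaluates at the zeroth level (up to the stabilisation), and a stable equivalence of symmetric spectra in $dg\qq\leftmod_+$ restricts to a quasi-isomorphism on this level.

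\textbf{Main obstacle.} I do not expect a serious obstacle, since the heavy lifting is all imported from \cite{shiHZ}; the only point requiring genuine care is bookkeeping the \emph{monoidal} decorations, specifically being precise about \emph{which} of the three pairs is strong monoidal and which is merely weakly (lax) monoidal — the middle pair $(L,\phi^*N)$ fails to be strong precisely because the shuffle map is not an isomorphism, and it is important that this does not propagate to break the composite's usefulness (it does not, because a composite of a strong monoidal and a monoidal Quillen equivalence is still a monoidal Quillen equivalence in the sense of Section~\ref{sec:modcat}, which is all that is needed downstream). A secondary, purely verificational, point is confirming that the model structure on $Sp^\Sigma(dg\qq\leftmod_+)$ used here is the one with weak equivalences and fibrations detected in the underlying category, as flagged in Remark~\ref{rmk:monoidalnoneqcase}, so that $R$'s preservation of weak equivalences is transparent.
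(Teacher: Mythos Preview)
Your approach is essentially the same as the paper's: both defer the proof to Shipley's work. The paper's entire proof is a single citation to \cite[Proposition 2.10]{shiHZ} together with the remark that $D$ is \emph{not} symmetric monoidal, as noted in \cite{HZcorrection}. Your elaboration of how each equivalence works is more than the paper provides, but is not the wrong approach.

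Two concrete corrections are worth flagging. First, your description of $R$ has the direction reversed: $R$ goes from $dg\qq\leftmod$ to $Sp^\Sigma(dg\qq\leftmod_+)$, sending a chain complex $Y$ to the symmetric spectrum with $n$th level $C_0(Y\otimes\qq[n])$; it does not ``evaluate at the zeroth level'' (that would be $\ev_0$, which appears in the alternative four-step comparison of Remark~\ref{rmk:monoidalissue}). Your argument that $R$ preserves weak equivalences therefore needs to be rewritten: a quasi-isomorphism $Y\to Y'$ gives level quasi-isomorphisms $C_0(Y\otimes\qq[n])\to C_0(Y'\otimes\qq[n])$, hence a level and thus stable equivalence of symmetric spectra.

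Second, and more importantly, your ``main obstacle'' misidentifies the genuinely delicate monoidal point. You correctly note that $(L,\phi^*N)$ is only weak monoidal because the shuffle map is not an isomorphism, but the subtlety the paper actually singles out is that $(D,R)$, while \emph{strong} monoidal, fails to be \emph{symmetric} monoidal --- this is the content of \cite{HZcorrection} and is precisely why $D\phi^*N\tilde{\qq}\ecal_{top}$ is not a monoidal enriched category (Remark~\ref{rmk:monoidalissue}). The statement itself signals this by saying $(D,R)$ is ``strong monoidal'' rather than ``strong symmetric monoidal''; your plan treats all three pairs as symmetric and so would miss this.
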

\begin{proof}
This is \cite[Proposition 2.10]{shiHZ},
where we note that $D$ is not symmetric monoidal,
as explained in \cite{HZcorrection}.
\end{proof}

We begin with the functor
$\tilde{\qq} \co \SSET \to \sqq \leftmod$.
For a simplicial set $X$, we define $(\tilde{\qq} X)_n$
to be the free $\qq$-module on the non-basepoint
simplices of $X_n$, with each $0.s$ identified
with the basepoint for $s \in X_n$.
We fix the object $\tilde{Q} S^1$ and use this
as the suspension object to create
the category $Sp^\Sigma(\sqq \leftmod)$\label{app:simpQsymspec} of
symmetric spectra in simplicial $\qq$-modules.
The functor $\tilde{\qq}$ induces an
adjoint pair
$\tilde{\qq} : Sp^\Sigma \overrightarrow{\longleftarrow}
Sp^\Sigma(\sqq \leftmod) : U$\index{$Q$@$(\tilde{\qq},U)$}.
Let $\h \qq$\index{H Q@${\h \qq}$}
be the symmetric spectrum with level $n$
given by $\tilde{\qq} S^n$. This is a commutative ring spectrum,
hence we have the category of $\h \qq$-modules in symmetric spectra,
$\h \qq \leftmod$\label{app:hqmod}.
There is a  forgetful functor
$U_1 \co Sp^\Sigma(\sqq \leftmod)
\to \h \qq \leftmod$. This has a left adjoint
$Q$, but this is not needed for the work below
since $\ecal_{top} \leftmod$ is enriched over
$Sp^\Sigma$.

The category of symmetric spectra in non-negatively
graded chain complexes, written as
$Sp^\Sigma(dg \qq \leftmod_+)$\label{app:dgQ+symspec},
has suspension object $\qq[1]$
(one copy of $\qq$ in degree 1).
The normalisation functor
$N \co \sqq \leftmod \to dg \qq \leftmod_+$
induces a functor
$\phi^* N \co Sp^\Sigma(\sqq \leftmod)
\to Sp^\Sigma(dg \qq \leftmod_+)$,
with left adjoint $L$.
The functor $R$
takes a chain complex $Y$
to the symmetric spectrum with
$RY_n = C_0 (Y \otimes \qq[m])$ and has a left adjoint $D$.

In the following result we will use the pair
$\tilde{\qq} : Sp^\Sigma \overrightarrow{\longleftarrow}
Sp^\Sigma(\sqq \leftmod) : U$
which are a Quillen pair, but not a Quillen equivalence.
This result is a part of \cite[Theorem 4.1]{greshi}.
\begin{proposition}\label{prop:moduleadjunctions}
For each of the adjoint pairs
$(\tilde{\qq}, U)$, $(L,\phi^*N)$
and $(D,R)$,
the induced adjunction below is a Quillen equivalence.
\begin{eqnarray*}
\tilde{\qq} : \rightmod \ecal_{top}
& \overrightarrow{\longleftarrow} &
\rightmod \tilde{\qq} \ecal_{top} : U' \\
L' : \rightmod\phi^* N \tilde{\qq}  \ecal_{top}
& \overrightarrow{\longleftarrow} &
\rightmod\tilde{\qq} \ecal_{top} : \phi^*N \\
D   : \rightmod \phi^* N \tilde{\qq} \ecal_{top}
& \overrightarrow{\longleftarrow} &
\rightmod D \phi^* N \tilde{\qq} \ecal_{top} : R'
\end{eqnarray*}
\end{proposition}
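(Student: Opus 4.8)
The plan is to derive each of the three Quillen equivalences of Proposition~\ref{prop:moduleadjunctions} from the corresponding Quillen equivalence of the previous proposition, using the general machinery of modules over an enriched category developed in Section~\ref{sec:Emodules}. The key observation is that each of the three adjunctions $(\tilde{\qq},U)$, $(L,\phi^*N)$ and $(D,R)$ is a (lax symmetric) monoidal Quillen pair between symmetric monoidal model categories, so in each case the right adjoint applied object-wise to a spectral category (in the appropriate sense) produces a new enriched category, and the restriction/extension of scalars construction of Definition~\ref{def:indres modules} and Proposition~\ref{prop:spectralmaps} gives a Quillen pair on module categories. Thus the first step is to explain why applying $\tilde{\qq}$, $L$, $D$ to the ringoid $\ecal_{top}$ (respectively its images) makes sense: these are strong or lax monoidal left adjoints, so by Proposition~\ref{prop:MonFunctorsEnrichCat} they take symmetric monoidal enriched categories to symmetric monoidal enriched categories. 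The categories $\tilde{\qq}\ecal_{top}$, $\phi^*N\tilde{\qq}\ecal_{top}$ and $D\phi^*N\tilde{\qq}\ecal_{top}$ are defined in exactly this way.

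Next I would address how a monoidal Quillen pair $(F,G)$ between base categories induces a Quillen pair between module categories over an enriched category $\ocal$ and over $F\ocal$. The mechanism is: the right adjoint $G$ applied to an $F\ocal$-module (a spectral functor $(F\ocal)^{op}\to \text{base}'$) gives an $\ocal$-module, because $G$ is lax monoidal and there is a canonical map $\ocal(a,b)\to G F\ocal(a,b)$; this is exactly the situation of a spectral functor $\ocal \to G F\ocal$ together with restriction of scalars. The left adjoint is then the composite of extension of scalars along $\ocal \to GF\ocal$ with the object-wise left adjoint $F$. Concretely, for $(\tilde{\qq},U)$ one uses the unit $\ecal_{top}\to U\tilde{\qq}\ecal_{top}$ of the $(\tilde{\qq},U)$-adjunction, which is a map of spectral categories, and Proposition~\ref{prop:spectralmaps} together with Theorem~\ref{thm:modOmodelcat} gives the model structures and the Quillen pair; since the unit $\ecal_{top}\to U\tilde{\qq}\ecal_{top}$ need not be a stable equivalence, one cannot conclude directly from Proposition~\ref{prop:spectralmaps}, so instead one checks the Quillen equivalence criterion by hand. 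For $(L,\phi^*N)$ and $(D,R)$ the pattern is identical but now the base-level adjunctions \emph{are} Quillen equivalences with right adjoints preserving all weak equivalences, which makes the module-level verification much easier.

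The heart of the proof is checking that each module-level adjunction is a Quillen equivalence. I would use the characterisation of \cite[Lemma 4.1.7]{hss00} (as in the proof of Theorem~\ref{thm:generalsplitting}): the right adjoint detects and preserves weak equivalences between fibrant objects, and the derived unit is a weak equivalence on cofibrant objects. The first condition is immediate since weak equivalences and fibrations of modules are defined object-wise and the base-level right adjoints preserve (and for the last two, detect) weak equivalences. For the derived unit, one reduces to free modules $F_o$ (which generate $\rightmod\ocal$): a module map is a weak equivalence iff it is so object-wise, and one computes that the left adjoint applied to $F_o$ is, up to weak equivalence, the free module on the image object, using that $F$ applied to the representable $\ocal(-,o)$ is $F\ocal(-,Fo)$ up to the relevant cofibrant-replacement subtlety. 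The main obstacle is precisely this cofibrancy bookkeeping: in the $Sp^\Sigma$-enriched setting the unit is cofibrant (the lemma preceding this section arranged that), but one must still track cofibrant replacements of free modules through the object-wise left adjoints, and show that $F$ takes the cells $\sigma\in\gcal_{top}$ (and their smash products) to objects with the expected homotopy type. Since $\tilde{\qq}$, $L$, $D$ are all left Quillen and the base-level pairs are Quillen equivalences (for the last two) or at least well-behaved Quillen pairs, these computations go through; I expect the $(\tilde{\qq},U)$ step to require the most care because $(\tilde{\qq},U)$ is only a Quillen pair and the Quillen equivalence on modules is genuinely a new statement, proved essentially as in \cite[Corollary 2.16]{shiHZ} adapted to the ringoid setting.
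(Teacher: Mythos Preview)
Your proposal is essentially correct and follows the same strategy as the paper: construct the enriched categories via Proposition~\ref{prop:MonFunctorsEnrichCat}, build the induced module-level adjunctions from the base-level (lax) monoidal Quillen pairs, and then verify that each is a Quillen equivalence. Your identification of $(\tilde{\qq},U')$ as the delicate case, with the other two being consequences of the base-level Quillen equivalences, matches the paper exactly.

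Two points of comparison are worth noting. First, for $(L',\phi^*N)$ and $(D,R')$ the paper does not verify the Quillen equivalence by hand as you propose; it simply cites \cite[Theorem~6.5]{ss03monequiv}, which gives precisely this lifting of a weak monoidal Quillen equivalence to module categories over enriched categories. Your direct verification via free modules and the criterion of \cite[Lemma~4.1.7]{hss00} would work, but the cited theorem packages this argument once and for all. Second, for $(\tilde{\qq},U')$ you correctly recognise that $(\tilde{\qq},U)$ is not itself a Quillen equivalence on the base, so something extra is needed; however you stop short of naming the key fact. The paper's argument is that the free modules $F_\sigma = \ecal_{top}(-,\sigma)$ have \emph{rational} homotopy groups (since $\ecal_{top}$ is built from $\iota^*S_\qq$-modules), and on rational objects the derived unit and counit of $(\tilde{\qq},U)$ are equivalences. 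This rationality is the substantive input that makes the first adjunction a Quillen equivalence; your reference to \cite[Corollary~2.16]{shiHZ} points in the right direction, but you should make this explicit rather than leaving it inside the citation.
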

\begin{proof}
We use the construction
of Proposition \ref{prop:MonFunctorsEnrichCat}
(which is a simplification of \cite[Proposition A.3b]{dugshi})
to obtain the categories $\tilde{\qq} \ecal_{top}$,
$\phi^* N \tilde{\qq} \ecal_{top}$
and $D \phi^* N \tilde{\qq} \ecal_{top}$
which are enriched over symmetric spectra
in simplicial $\qq$-modules,
symmetric spectra in positive chain complexes of
$\qq$-modules and chain complexes
of $\qq$-modules respectively.
The induced adjunctions are defined
in \cite[Section 3]{ss03monequiv} and we give brief details below.
Since $\tilde{\qq}$ and $D$ are strong monoidal
these pass to the categories of modules
as above without change. The right adjoint $\phi^*N$
also passes directly to the module categories
whereas all the other functors must be replaced.
The right adjoints $U'$ and $R'$ are defined via the unit map,
we demonstrate for $U'$. Take a
$\tilde{\qq} \ecal_{top}$-module $M$, we must then give maps
$$\ecal_{top}(\sigma', \sigma) \smashprod U'M (\sigma)
\to U'M(\sigma').$$
We do so by applying the unit map
$\ecal_{top}(\sigma', \sigma) \to U'\tilde{\qq} \ecal_{top}(\sigma', \sigma)$
and then using the monoidality of $U'$ and the action map of $M$.
We will define $L'$ in the proof of Theorem \ref{thm:invmoduleadjunctions}.

The pair $(\tilde{\qq},U')$ induce a Quillen pair
between $\rightmod \ecal_{top}$ and
$\rightmod \tilde{\qq} \ecal_{top}$.
The free modules are a set of generators
for these categories and these
free modules have rational homotopy groups.
It follows that the unit and counit
for the derived adjunctions
are equivalences on these generators,
hence $(\tilde{\qq},U')$ is a Quillen equivalence.
The other two pairs are Quillen equivalences by
\cite[Theorem 6.5]{ss03monequiv}.
\end{proof}

\begin{theorem}\label{thm:invmoduleadjunctions}
For each of the adjoint pairs
$(\tilde{\qq}, U)$, $(L,\phi^*N)$
and $(D,R)$
the induced adjunction
$(\tilde{\qq},U')$, $(L',\phi^*N)$
and $(D,R')$ is an involutary Quillen equivalence
on the categories of modules.
\end{theorem}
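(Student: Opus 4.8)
The plan is to show that each of the three Quillen equivalences of Proposition \ref{prop:moduleadjunctions} respects the involutions, by applying in each case the general machinery of Chapter \ref{chp:catwithinv} together with the observation that the involution $\rho$ on $\rightmod \ecal_{top}$ (and its analogues on the three new module categories) is induced by a self-inverse map of ringoid spectra $\tau W$, so it suffices to track how $\tau W$ behaves under the functors $\tilde{\qq}$, $L$, $\phi^*N$, $D$, $R$. First I would note that the functors $\tilde\qq$, $\phi^*N$ and $D$ are strong symmetric (resp.\ strong, in the case of $D$) monoidal, so by Proposition \ref{prop:MonFunctorsEnrichCat} they produce ringoid maps $\tilde\qq\ecal_{top} \to \tilde\qq(\tau\ecal_{top})$ etc.\ carrying $\tilde\qq W$, $\phi^*NW$, $DW$; and these are again self-inverse since $W$ (and $\tau$) are. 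Thus each target module category $\rightmod \tilde\qq \ecal_{top}$, $\rightmod \phi^*N\tilde\qq\ecal_{top}$, $\rightmod D\phi^*N\tilde\qq\ecal_{top}$ carries a pullback involution, which I will denote $\rho$ in each case by abuse of notation, making it an involutary (indeed involutary monoidal) model category exactly as in Proposition \ref{prop:modEtopinvolution}.

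Next I would verify that each adjoint pair is an involutary functor in the sense of Definition \ref{def:catwithinv} and the surrounding development. For the left adjoints $\tilde\qq$ and $D$ this is immediate: they commute strictly with the ringoid maps, hence the square relating them to the $\rho$'s commutes on the nose, giving $\alpha = \id$. For the right adjoint $\phi^*N$ the same strictness holds. The pairs $(\tilde\qq,U')$ and $(D,R')$ then require the construction, as in Section \ref{sec:invmoritaequiv}, of the natural transformation $\beta$ witnessing that the right adjoint $U'$ (resp.\ $R'$) is involutary: since $U'$ and $R'$ are built out of the \emph{lax} monoidal structure on $U$ and $R$ via the unit map of the respective adjunction, $\beta$ is assembled from the unit map and the monoidality isomorphisms, and the relevant coherence diagrams commute because $\tau$ and $W$ are morphisms of symmetric monoidal $Sp^\Sigma$-categories (Lemma \ref{lem:TWaremonoidal} and its analogues). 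By Lemma \ref{lem:onlyneedhalf} it actually suffices to check this for one functor in each pair, so I would do the easier side and invoke that lemma. Finally, for the middle pair $(L',\phi^*N)$, I would define $L'$ as the unique left adjoint to $\phi^*N$ on module categories (cf.\ \cite[Section 3]{ss03monequiv}), so that it is automatically involutary by Lemma \ref{lem:onlyneedhalf} once $\phi^*N$ is shown to be so.

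Having established that each pair is an involutary adjunction, the conclusion that it is an involutary \emph{Quillen equivalence} is then purely formal: Proposition \ref{prop:moduleadjunctions} already gives the Quillen equivalence, and an involutary adjunction which is a Quillen equivalence is by definition an involutary Quillen equivalence. I would then remark, as in Corollary \ref{cor:invSO2morita}, that composing the skewed versions of these three equivalences with the skewed Morita equivalence yields a zig-zag of involutary (strong symmetric monoidal, where applicable) Quillen equivalences between $\tau\#(\iota^*S_\qq\leftmod)$ and $\rho\#(\rightmod D\phi^*N\tilde\qq\ecal_{top})$, writing $\ecal_t := D\phi^*N\tilde\qq\ecal_{top}$.

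The main obstacle, as in the non-involutary case of \cite{greshi}, is the failure of $D$ to be symmetric monoidal (see \cite{HZcorrection}), which was flagged in Section \ref{sec:invmonoidalcat} as causing difficulty precisely because constructing the involutary structure on $D\ecal_{top}$ via Proposition \ref{prop:MonFunctorsEnrichCat} needs $D$ to be symmetric. The way around this is the same as for the monoidal issue generally: the pair $(D,R')$ is only claimed to be an involutary Quillen equivalence, not an involutary \emph{monoidal} one, so for this last step I would work only with the (non-monoidal) involutary structure, which requires only that $D$ be a functor commuting with the self-inverse ringoid map $W$ — and this it does, since $W$ and $\tau$ are built from the strong monoidal inflation-type functors and $D$ is at least lax monoidal enough to push forward a ringoid. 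The careful bookkeeping of the coherence diagram for $\beta$ in the $(D,R')$ case, analogous to Lemma \ref{lem:evalsquare}, is the one genuinely laborious verification, but it is routine once the pattern from Section \ref{sec:invmoritaequiv} is in place.
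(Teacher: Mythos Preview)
Your approach is essentially the same as the paper's: establish the involution on each target module category via the functorially induced self-inverse ringoid maps $\tilde{\qq}\tau W$, $\phi^*N\tilde{\qq}\tau W$, $D\phi^*N\tilde{\qq}\tau W$, then verify that each adjoint pair is involutary. The paper treats $(\tilde{\qq},U')$ and $(D,R')$ exactly as you do, observing that both adjoints strictly commute with the involutions so that $\alpha=\beta=\id$ and the adjunction is ``obviously'' involutary.

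There are two small points worth noting. First, your reliance on Lemma~\ref{lem:onlyneedhalf} for $(L',\phi^*N)$ is slightly optimistic: that lemma only produces the natural transformation $\alpha$ for $L'$ from the strict $\beta$ for $\phi^*N$; the definition of involutary adjunction additionally requires the unit and counit to be involutary natural transformations, and the paper checks this separately (it writes out $L'$ explicitly as a coequaliser, constructs the order-two map, and then verifies the unit/counit condition, referring back to the Morita case). This is routine once one has the mate construction, but it is not literally contained in Lemma~\ref{lem:onlyneedhalf} as stated. Second, your remark about Proposition~\ref{prop:MonFunctorsEnrichCat} requiring symmetry of $D$ is slightly off target: the symmetry is needed to transport the \emph{monoidal} structure on the enriched category, not the involution; the ringoid map $D\phi^*N\tilde{\qq}\tau W$ exists simply because $D$ is a (lax monoidal) functor, so the involutary structure on $\rightmod D\phi^*N\tilde{\qq}\ecal_{top}$ is unproblematic. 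The failure of symmetry only blocks the monoidal upgrade, exactly as in Remark~\ref{rmk:monoidalissue}. Finally, your identification $\ecal_t := D\phi^*N\tilde{\qq}\ecal_{top}$ is not how the paper defines $\ecal_t$; there is a further Morita step (Section~\ref{sec:anothermorita}) in which one takes cofibrant replacements of the images of the basic cells inside $\gspect$.
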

\begin{proof}
Because the construction of the ringoids
$\tilde{\qq} \ecal_{top}$,
$\phi^* N \tilde{\qq} \ecal_{top}$ and
$D \phi^* N \tilde{\qq} \ecal_{top}$
is functorial these
ringoids come with self-inverse maps as follows.
$$
\begin{array}{rclcl}
\tilde{\qq} \tau W & : & \tilde{\qq} \ecal_{top}
& \longrightarrow & \tilde{\qq} \ecal_{top} \\
\phi^* N \tilde{\qq} \tau W & : & \phi^* N \tilde{\qq} \ecal_{top}
& \longrightarrow & \phi^* N \tilde{\qq} \ecal_{top} \\
D \phi^* N \tilde{\qq} \tau W & : &  D \phi^* N \tilde{\qq} \ecal_{top}
& \longrightarrow & D \phi^* N \tilde{\qq} \ecal_{top}
\end{array}
$$
We have the following diagram of adjoint pairs for
the pair $(\tilde{\qq}, U')$.
$$ \xymatrix@C+0.5cm@R+0.5cm{
\rightmod \ecal_{top}
\ar@<+0.3ex>[r]^{\tilde{\qq}}
\ar@<+0.3ex>[d]^{\tau W^*} &
\rightmod \tilde{\qq} \ecal_{top}
\ar@<+0.3ex>[l]^{U'}
\ar@<+0.3ex>[d]^{\tilde{\qq}\tau W^*} \\
\rightmod \ecal_{top}
\ar@<+0.3ex>[r]^{\tilde{\qq}}
\ar@<+0.3ex>[u]^{\tau W^*} &
\rightmod \tilde{\qq} \ecal_{top}
\ar@<+0.3ex>[l]^{U'}
\ar@<+0.3ex>[u]^{\tilde{\qq}\tau W^*} &
} $$
The vertical adjunctions are restriction and extension of scalars
in the case where the map of ringoids is an isomorphism.
This diagram commutes in the sense that the square
consisting of left adjoints
(on top and left) commutes and the square of right adjoints
commutes.
We obtain similar diagrams for $(L', \phi^*N)$
and $(D, R')$, we must now show that these pairs are involutary.
In the case of $(\tilde{\qq}, U')$
the above squares of left adjoints and right adjoints
commute precisely, as we now show.
Take an $\ecal_{top}$-module $M$, then
we have the following commutative diagram
$$ \xymatrix@C-1cm{
& \tilde{\qq} M(a) \smashprod \tilde{\qq} \ecal_{top}(b,a)
\ar[dr]^\cong \\
\tilde{\qq} M(a) \smashprod \tilde{\qq} \ecal_{top}(b,a)
\ar[ur]^{\id \smashprod \tilde{\qq} \tau W}
\ar[dr]_\cong &&
\tilde{\qq} (M(a) \smashprod \ecal_{top}(b,a))
\ar[rr]
 & {\phantom{fred}} &
\tilde{\qq} M(b) \\
& \tilde{\qq} (M(a) \smashprod \ecal_{top}(b,a))
\ar[ur]_{\tilde{\qq} (\id \smashprod \tau W)} } $$
with the top path the action map for
$(\tilde{\qq} \tau W)^* \tilde{\qq}M$
and the lower path the action map for
$\tilde{\qq}( (\tau W)^* M)$. So we have shown that
$\tilde{\qq}$ is an involutary functor since it strictly
commutes with the involutions $(\tau W)^*$ and
$(\tilde{\qq} \tau W)^*$.
Now we consider the module-level right adjoint $U'$.
For a $\tilde{\qq} \ecal_{top}$ module $N$, the following composition
defines the action of $\ecal_{top}$ on $U'N$,
from this it is clear that $U'$ also strictly
commutes with the involutions.
$$
U'N(a) \smashprod \ecal_{top}(b,a)
\overset{\id \smashprod \eta}{\rightarrow}
U'N(a) \smashprod U' \tilde{\qq} \ecal_{top}(b,a)
\rightarrow
U'( N(a) \smashprod \tilde{\qq} \ecal_{top}(b,a))
\rightarrow
U'N(b)
$$
It is then obvious that $(\tilde{\qq}, U')$
is an involutary adjunction that is also a
Quillen equivalence. The case
$(D, R')$ is exactly the same, which leaves us with only
the adjunction $(L', \phi^*N)$ to consider.
The right adjoint $\phi^*N$ strictly commutes with the involutions
via the same arguments as for
$\tilde{\qq}$ and $D$. We investigate $L'$
in some detail, take a $\phi^* N \tilde{\qq} \ecal_{top}$-module
$M$, then $L'M(a)$ is defined as the coequaliser of the following diagram
(we describe the maps below).
$$
\bigvee_{b,c} L \big( M(b) \smashprod \phi^* N \tilde{\qq} \ecal_{top}(c,b) \big)
\smashprod \tilde{\qq} \ecal_{top}(a,c)
\overrightarrow{\longrightarrow}
\bigvee_{d} LM(d) \smashprod \tilde{\qq} \ecal_{top}(a,d)
$$
One map is induced by the action of
$\phi^* N \tilde{\qq} \ecal_{top}$ on $M$
and the other is the composite of the op-monoidal structure on $L$,
the counit of $(L, \phi^*N)$ and composition.
We can induce a map of coequalisers
$$L' (\phi^*N \tilde{\qq} \tau W)^*M \longrightarrow
(\tilde{\qq} \tau W)^* L' M$$
by acting as
$L(\id \smashprod \phi^*N \tilde{\qq} \tau W) \smashprod \tilde{\qq} \tau W$
on the first factor and by
$\id \smashprod \tilde{\qq} \tau W$
on the second. This is clearly a morphism of modules and is
a map order two as required. It remains to check that the unit and counit
are involutary natural transformations, these
maps are induced from the unit and counit of $(L, \phi^*N)$
and it is easy to check the required condition.
The proof is very much like that for unit and counit of
the Morita equivalence and we omit it for that reason.
\end{proof}

\begin{lemma}
The categories $\tilde{\qq} \ecal_{top}$ and
$\phi^* N \tilde{\qq} \ecal_{top}$ are monoidal
enriched categories.
Furthermore, the adjunctions $(\tilde{\qq}, U')$ and $(L',\phi^*N)$
are involutary symmetric monoidal Quillen equivalences
and hence induce symmetric monoidal Quillen equivalences on the skewed categories.
\end{lemma}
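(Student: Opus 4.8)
The plan is to verify that the two symmetric monoidal enrichments behave well under the functors $\tilde{\qq}$ and $L$, and then to invoke the general machinery already assembled: Proposition \ref{prop:MonFunctorsEnrichCat} for transporting monoidal $\nu$-categories along symmetric monoidal functors, the involutary considerations of Theorem \ref{thm:invmoduleadjunctions}, and Lemma \ref{lem:invmonadjskewed} (together with the involutary model category results of Section \ref{sec:invmodcat}) for passing to the skewed categories. First I would observe that $\ecal_{top}$ is a symmetric monoidal $Sp^\Sigma$-category (this is part of the content of Theorem \ref{thm:so2morita} / Proposition \ref{prop:modEtopinvolution}) and that the functors $\tilde{\qq} \co Sp^\Sigma \to Sp^\Sigma(\sqq \leftmod)$ and $L \co Sp^\Sigma(\sqq \leftmod) \to Sp^\Sigma(dg \qq \leftmod_+)$ are symmetric monoidal (the first strong symmetric monoidal, the second symmetric monoidal, as recalled just above). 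Applying Proposition \ref{prop:MonFunctorsEnrichCat} twice then makes $\tilde{\qq} \ecal_{top}$ a symmetric monoidal $Sp^\Sigma(\sqq \leftmod)$-category and $\phi^* N \tilde{\qq} \ecal_{top}$ a symmetric monoidal $Sp^\Sigma(dg \qq \leftmod_+)$-category.

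Next I would upgrade the module-level adjunctions of Theorem \ref{thm:invmoduleadjunctions} to symmetric monoidal ones. Since $\tilde{\qq}$ and $L$ are (op-)monoidal, the induced functors $\tilde{\qq}$ on $\rightmod \ecal_{top}$ and $L'$ on $\rightmod \phi^*N \tilde{\qq} \ecal_{top}$ are strong/op-monoidal with respect to the box products (this is the coend manipulation in the proof of Proposition \ref{prop:monoidalspectralfunctor}, applied in the enriched setting of \cite{ss03monequiv}), and the right adjoints $U'$ and $\phi^*N$ inherit weak monoidal structures. Combined with the fact that these are already Quillen equivalences, this gives monoidal Quillen equivalences; symmetry follows because both $\tilde{\qq}$ and $L$ (hence all the induced functors) respect the symmetry isomorphisms, as does the underlying transport in Proposition \ref{prop:MonFunctorsEnrichCat}. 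At this point one checks that the natural transformations $\beta$ (for $U'$) and the op-monoidal structure on $\tilde{\qq}$, $L'$ are compatible with the involutions $(\tau W)^*$, $(\tilde{\qq}\tau W)^*$, $(\phi^*N\tilde{\qq}\tau W)^*$ exhibited in Theorem \ref{thm:invmoduleadjunctions} --- i.e.\ that we have involutary \emph{monoidal} adjunctions; this is where one verifies that $\beta$ is a monoidal natural transformation, exactly the routine check omitted in Corollary \ref{cor:invSO2morita}.

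Finally I would feed the resulting involutary symmetric monoidal Quillen equivalences into Lemma \ref{lem:invmonadjskewed} (for the monoidal/symmetric structure on the skewed adjunction) together with the model-category half, namely the proposition after Definition of involutary monoidal Quillen pair in Section \ref{sec:invmodcat}, to conclude that the adjunctions descend to symmetric monoidal Quillen equivalences $\rho \# (\rightmod \ecal_{top}) \rightleftarrows \rho' \# (\rightmod \tilde{\qq}\ecal_{top})$ and $\rho' \# (\rightmod \phi^*N\tilde{\qq}\ecal_{top}) \rightleftarrows \rho' \# (\rightmod \tilde{\qq}\ecal_{top})$, where $\rho'$ denotes the involution $(\tilde{\qq}\tau W)^*$ (resp.\ $(\phi^*N\tilde{\qq}\tau W)^*$) on the relevant module category. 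The main obstacle I anticipate is not conceptual but bookkeeping: one must keep straight that the underlying model structure used here is the one enriched over $Sp^\Sigma$ (not $Sp^\Sigma_+$), so that the unit $F_{\sphspec}$ is cofibrant and the hypotheses of Theorem \ref{thm:monmodules} and Proposition \ref{prop:monoidalspectralfunctor} are met without the extra ``smashing with a cofibrant object preserves weak equivalences'' fudge --- and that the coend computations identifying $L'(M \Box N)$ with $L'M \Box L'N$ (and similarly for $\tilde{\qq}$) are genuinely op-monoidal, using that $L$ and $\tilde{\qq}$ commute with the Kan extensions defining the box products. The remaining diagram chases showing the unit and counit are monoidal involutary natural transformations parallel those in Proposition \ref{prop:skewequivSQtoEt} and Corollary \ref{cor:invSO2morita} and can be cited rather than repeated.
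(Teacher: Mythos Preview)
Your treatment of the first claim and of the pair $(\tilde{\qq},U')$ is essentially the paper's: Proposition~\ref{prop:MonFunctorsEnrichCat} applied to the symmetric monoidal functors $\tilde{\qq}$ and $\phi^*N$ gives the monoidal enriched categories, and since $\tilde{\qq}$ is \emph{strong} symmetric monoidal and a left adjoint, the coend manipulation really does give an isomorphism $\tilde{\qq}(M\Box N)\cong \tilde{\qq}M\Box\tilde{\qq}N$, so $(\tilde{\qq},U')$ is strong monoidal on the nose.

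The gap is in your handling of $(L',\phi^*N)$. You assert that ``combined with the fact that these are already Quillen equivalences, this gives monoidal Quillen equivalences,'' and later speak of ``coend computations identifying $L'(M\Box N)$ with $L'M\Box L'N$'' via $L$ ``commuting with the Kan extensions.'' Neither holds. The functor $L$ is only \emph{weak} symmetric monoidal, so $L'$ is only op-monoidal: the comparison map $L'(X\Box Y)\to L'X\Box L'Y$ is not an isomorphism, and being a Quillen equivalence does not force it to be a weak equivalence on cofibrants. Proposition~\ref{prop:monoidalspectralfunctor}, which you cite, concerns \emph{strong} monoidal maps of ringoids and does not apply here. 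The paper therefore does real work at this point: it uses the square of adjunctions
\[
\xymatrix@C+0.4cm{
\rightmod \phi^* N \tilde{\qq} \ecal_{top}
\ar@<+0.4ex>[r]^{L'} \ar@<+0.4ex>[d]^{\ev_g} &
\rightmod \tilde{\qq} \ecal_{top}
\ar@<+0.4ex>[l]^{\phi^* N} \ar@<+0.4ex>[d]^{\ev_g} \\
Sp^\Sigma(dg \qq \leftmod_+)
\ar@<+0.4ex>[r]^{L} \ar@<+0.4ex>[u]^{F_g \smashprod (-)} &
Sp^\Sigma(\sqq \leftmod)
\ar@<+0.4ex>[l]^{\phi^* N} \ar@<+0.4ex>[u]^{F_g \smashprod (-)}
}
\]
to identify $L'F_g$ with $F_g\smashprod L\sym(\qq[1])$, checks the unit condition from this and the fact that smashing with a cofibrant object in $Sp^\Sigma(\sqq\leftmod)$ preserves weak equivalences, and then runs a generator argument (passing through $\hom_\Box$) to reduce the verification of $L'(X\Box Y)\simeq L'X\Box L'Y$ for all cofibrant $X,Y$ to the case $X=F_g$, $Y=F_k$, where it follows from $F_g\Box F_k\cong F_{g\smashprod k}$ and the computation of $L'$ on free modules. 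None of this is formal bookkeeping; it is exactly the content your proposal is missing.
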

\begin{proof}
The first statement is an application of
Proposition \ref{prop:MonFunctorsEnrichCat}
since both $\tilde{\qq}$ and $\phi^* N$ are
symmetric monoidal functors.
We prove that $(\tilde{\qq}, U')$ and $(L',\phi^*N)$
are monoidal pairs on the involutary categories.
It is easy to check that the two
right adjoints $U'$ and $\phi^* N$
are monoidal functors.
Since $\tilde{\qq}$ is strong symmetric
monoidal on the base categories
it passes to a strong symmetric monoidal
functor $\tilde{\qq} \co \rightmod \ecal_{top}
\to \rightmod \tilde{\qq} \ecal_{top}$.
Thus $(\tilde{\qq}, U')$ is a strong symmetric monoidal pair.

The left adjoint $L'$ is harder to deal with,
we must show that $\eta \co L' (\phi^* N \tilde{\qq} \ecal_{top}(-, \sphspec))
\to \tilde{\qq} \ecal_{top}(-, \sphspec)$ is a weak equivalence
and that for cofibrant
$\phi^* N \tilde{\qq} \ecal_{top}$-modules $X$ and $Y$,
the map $m \co L' (X \square Y) \to L'X \square L' Y$
is a weak equivalence.
For each $g \in \ecal_{top}$ there is
a $\phi^* N \tilde{\qq} \ecal_{top}$-module
$F_g = \phi^* N \tilde{\qq} \ecal_{top}( -, g)$.
By Remark \ref{rmk:freemodules} these give a Quillen pair
$F_g \smashprod (-) : Sp^\Sigma(dg \qq \leftmod_+)
\overrightarrow{\longleftarrow}
\rightmod \phi^* N \tilde{\qq} \ecal_{top} : \ev_g$,
where $\ev_g(M) =M(g)$ and
the left adjoint takes $X \in Sp^\Sigma(dg \qq \leftmod_+)$ to
$F_g \smashprod X$.
Since the unit of $Sp^\Sigma(dg \qq \leftmod_+)$
is cofibrant, it follows that each $F_g$
is cofibrant as a
$\phi^* N \tilde{\qq} \ecal_{top}$-module.
The unit is also a free module,
it is given by $F_{\iota^* S_\qq}$.
For each $g \in \ecal_{top}$,
there is a square as below. This will allow us to translate results
from the base categories to the module
categories and helps us understand the functor $L'$.
The right adjoints are on the right and
bottom and clearly commute.
$$\xymatrix@C+0.6cm@R+0.6cm{
\rightmod \phi^* N \tilde{\qq} \ecal_{top}
\ar@<+0.4ex>[r]^{L'}
\ar@<+0.4ex>[d]^{\ev_g}
&
\rightmod \tilde{\qq} \ecal_{top}
\ar@<+0.4ex>[l]^{\phi^* N}
\ar@<+0.4ex>[d]^{\ev_g} \\
Sp^\Sigma(dg \qq \leftmod_+)
\ar@<+0.4ex>[r]^{L}
\ar@<+0.4ex>[u]^{F_g \smashprod (-)}
&
Sp^\Sigma(\sqq \leftmod)
\ar@<+0.4ex>[l]^{\phi^* N}
\ar@<+0.4ex>[u]^{F_g \smashprod (-)}
}$$
From this square we can check that $\eta$ is a weak equivalence.
Let $\sym(\qq[1])$ denote the unit of
$Sp^\Sigma(dg \qq \leftmod_+)$ and
$\sym(\tilde{\qq} S^1)$ the unit of $Sp^\Sigma(\sqq \leftmod)$,
these are cofibrant objects.
Then $L' F_{\iota^* S_\qq} \cong F_{\iota^* S_\qq} \smashprod L \sym(\qq[1])$,
so the map $\eta$ is given by
$F_{\iota^* S_\qq} \smashprod L \sym(\qq[1]) \to
F_{\iota^* S_\qq} \smashprod \sym(\tilde{\qq} S^1)$.
It follows that this map is an object-wise weak equivalence
since cofibrant objects in $Sp^\Sigma(\sqq \leftmod)$
preserve all weak equivalences
(this is in the proof of \cite[Corollary 3.4]{shiHZ}).

We now give an argument to prove that
we only need check that $m$ is a weak
equivalence on the free modules.
We are required to prove that
for all cofibrant $X$ and $Y$,
the map $m \co L'( X \square Y) \to L'X \square L'Y$
is a weak equivalence.
This is equivalent to proving
that for all fibrant $Z$ the map
$[ L'X \square L'Y ,Z] \to [L'( X \square Y), Z]$
is a weak equivalence. By the standard adjunctions
(such as the isomorphisms below)
this occurs for all $X$ exactly when
$\phi^* N \hom_\square (L'Y, Z) \to \hom_\square (Y, \phi^* N Z)$
is a weak equivalence.
To prove this, it suffices to show that the composite
$$[F_g, \phi^* N \hom_\square (L'Y, Z)] \cong
[ L'F_g \square L'Y ,Z] \to [L'( F_g \square Y), Z]
\cong [F_g, \hom_\square (Y, \phi^* N Z)] $$
is an isomorphism for the collection of free modules
$F_g = \phi^* N \tilde{\qq} \ecal_{top}( -, g)$
as $g$ runs over the set of objects in $\ecal_{top}$.
Thus, we have shown that we only need prove that
$L'( F_g \square Y) \to L'F_g \square L'Y$
is a weak equivalence for all $g$
and all cofibrant $Y$.
Applying the above argument once more we see
that it suffices to prove that
$L' (F_g \square F_k) \to L'F_g \square L' F_k$
is a weak equivalence for each $g$ and $k$
in $\ecal_{top}$.
We do so now. Using our understanding of $L'$ on free modules
and the isomorphism
$F_g \square F_k \to F_{(g \smashprod k)}$
we must show that
$$F_{(g \smashprod k)} \smashprod L \sym(\qq[1])
\to F_{(g \smashprod k)} \smashprod L \sym(\qq[1])
\smashprod L \sym(\qq[1])$$
is a weak equivalence. This follows
from the corresponding result in
$Sp^\Sigma(\sqq \leftmod)$.
One must also check that
the natural transformations giving
an involutary structure on $\phi^* N$ and
$U'$ are monoidal. This is quite
straightforward due to the nature of the involutions.
\end{proof}

\begin{rmk}\label{rmk:monoidalissue}
Because $D$ is not symmetric, as is stated in
\cite{HZcorrection}, the $dg \qq \leftmod$-category
$D \phi^* N \tilde{\qq} \ecal_{top}$
is not a monoidal $dg \qq \leftmod$-category.
In turn, the category $\rightmod \ecal_t$
cannot be monoidal.
This issue could be resolved by using the
four stage comparison of \cite[Remark 2.11]{shiHZ}
and altering the fibrant replacement functor.
In detail, we replace the pair $(D,R)$
by the functors
$$
\xymatrix@C+0.5cm{
dg \qq \leftmod \ar@<+0.3ex>[r]^{F_0} &
{Sp^\Sigma} (dg \qq \leftmod) \ar@<+0.3ex>[l]^{\ev_0}
\ar@<-0.3ex>[r]_{C_0} &
{Sp^\Sigma} (dg \qq \leftmod_+)
\ar@<-0.3ex>[l]_{i} }
$$
The pair $(F_0, \ev_0)$ are the suspension
and zeroth space adjunction.
The inclusion of positive chain complexes
$i \co dg \qq \leftmod_+ \to dg \qq \leftmod$ has a right adjoint
$C_0$. These are strong symmetric monoidal
Quillen equivalences and $i$ preserves all weak
equivalences.
We can then use these functors to create
a symmetric monoidal enriched category
$\ev_0 i \phi^* N \tilde{\qq} \ecal_{top}$.
Unfortunately, since $\ev_0$ doesn't preserve
all weak equivalences this will not have the correct
homotopy type. One gets round this by inserting
a fibrant replacement functor of
$Sp^\Sigma  (dg \qq \leftmod_+)$-enriched categories
as given by \cite[Proposition 6.3]{ss03monequiv}.
Thus $\rightmod \ecal_{top}$ is Quillen equivalent
to $\rightmod \ev_0 \fibrep i \phi^* N \tilde{\qq} \ecal_{top}$.
But this is not a monoidal category, as $\fibrep$
will not preserve the monoidal product.
It should be possible to alter this
fibrant replacement so that it does preserve
symmetric monoidal structures on enriched categories
and ensure that
$\ev_0  \fibrep i \phi^* N \tilde{\qq} \ecal_{top}$
has the correct homotopy type.
All of the functors relating
$\rightmod \ecal_{top}$ and $\rightmod \ecal_t$
would then be both monoidal and involutary.
An extra step would then be necessary: an adjunction
of extension and restriction of scalars
induced by the quasi-isomorphism
$i \phi^* N \tilde{\qq} \ecal_{top} \to
\fibrep i \phi^* N \tilde{\qq} \ecal_{top}$, but this
would present no difficulty.
\end{rmk}

\begin{lemma}\label{lem:tiwstedadjointmodule}
If $L : M \overrightarrow{\leftarrow} N : R$
is a Quillen module over
$F : \ccal \overrightarrow{\leftarrow} \dcal : G$
and $(L,R)$ is an involutary adjunction, then
$\sigma \# L : \sigma \# M \overrightarrow{\leftarrow}
\tau \# N : \tau \# R$
is a Quillen module over $(\id \# F, \id \# G)$ provided
the following holds.
\begin{enumerate}
\item There is a natural transformation of order two
$(\sigma m) \otimes c \to \sigma (m \otimes c)$.
\item There is a natural transformation of order two
$(\tau n) \otimes d \to \tau (n \otimes d)$.
\item The diagram below commutes.
$$
\xymatrix{
L(\sigma m \otimes c)
\ar[d]
\ar[r] &
L\sigma m \otimes Fc
\ar[d] \\
L\sigma (m \otimes c)
\ar[d] &
\tau Lm \otimes Fc
\ar[d] \\
\tau L(m \otimes c)
\ar[r] &
\tau (Lm \otimes Fc)
} $$
\end{enumerate}
\end{lemma}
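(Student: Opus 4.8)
The plan is to show that each piece of structure making $(L,R)$ a Quillen module over $(F,G)$ descends to the skewed categories: the closed $\ccal$-module structure on $M$, the closed $\dcal$-module structure on $N$, the module-functor isomorphism for $L$, and the model-theoretic compatibilities.

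First I would make $\sigma \# M$ into a closed $\id \# \ccal$-module. Given $(c,u)$ in $\id \# \ccal$ (so $u \co c \to c$ with $u^2 = \id_c$) and $(m,w)$ in $\sigma \# M$, define the tensor $(m,w) \otimes (c,u)$ to have underlying object $m \otimes c$ and twist the composite
$$m \otimes c \overset{w \otimes u}{\longrightarrow} \sigma m \otimes c \longrightarrow \sigma(m \otimes c),$$
where the second arrow is the order-two natural transformation of hypothesis~(1). A short diagram chase --- using that $w$, $u$ and that natural transformation are each of order two, together with the naturality of the last --- shows the composite is again of order two, so this is a legitimate object of $\sigma \# M$; on morphisms the action is the evident one. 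The associativity and unit isomorphisms are induced by those of the $\ccal$-module $M$, and checking they are morphisms of $\sigma \# M$ is routine naturality. The cotensor $\underhom((c,u),(m,w))$ and the enrichment $\underhom_\ccal((m,w),(m',w'))$ are handled in the same way: the order-two transformation of hypothesis~(1) induces, by adjunction, order-two transformations on $\underhom$ and $\underhom_\ccal$, exactly as in the proof that the skewed category of an involutary closed monoidal category is again closed monoidal (Section~\ref{sec:invmonoidalcat}). Thus $\sigma \# M$ is a closed $\id \# \ccal$-module, and the same argument with hypothesis~(2) makes $\tau \# N$ a closed $\id \# \dcal$-module.

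Next I would verify that $\sigma \# L$ is an $\id \# \ccal$-module functor relative to $\id \# F$. The underlying isomorphism $L(m \otimes c) \cong Lm \otimes Fc$ is already available since $L$ is a $\ccal$-module functor; what must be checked is that it is a morphism in $\tau \# N$ between $(\sigma \# L)\big((m,w) \otimes (c,u)\big)$ and $(\sigma \# L)(m,w) \otimes (\id \# F)(c,u)$, i.e. that it intertwines the two twists. Writing the twists out, this is exactly the commuting diagram of hypothesis~(3), combined with the naturality of the order-two transformation $\alpha \co L\sigma \to \tau L$ and of the transformations of hypotheses~(1) and~(2). That it reduces to the canonical isomorphism at the unit of $\id \# \ccal$ follows from the corresponding fact for $M$. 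By the general theory of adjunctions between closed module categories (Section~\ref{sec:Cmodules}), the right adjoint $\tau \# R$ then acquires the dual structure, so $(\sigma \# L, \tau \# R)$ is an adjunction of closed $\id \# \ccal$-modules.

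Finally, everything is compatible with the model structures. By the results of Section~\ref{sec:invmodcat}, $\sigma \# M$ and $\tau \# N$ carry the skewed model structures (their bicompleteness is Lemma~\ref{lem:limcolim}), and an involutary Quillen pair --- which $(L,R)$ is --- passes to a Quillen pair $(\sigma \# L, \tau \# R)$. The requirement that the action $(\id \# \ccal) \times (\sigma \# M) \to \sigma \# M$ be a Quillen bifunctor, together with the unit condition, reduces via the left Quillen functor $\mathbb{P} \co \sigma \# M \to M$ --- which preserves cofibrations and both creates and detects weak equivalences and fibrations --- to the corresponding statements for the $\ccal$-model category $M$, and similarly for $\tau \# N$. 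Hence $(\sigma \# L, \tau \# R)$ is a Quillen module over $(\id \# F, \id \# G)$. The main obstacle is the second step: one must juggle the order-two transformations $\alpha$, that of hypothesis~(1) and that of hypothesis~(2) simultaneously, and assemble from the small coherence cells of the $\ccal$-module structures the large compatibility square that hypothesis~(3) is designed to make commute; the first step is conceptually routine but involves reducing several coherence diagrams, one by one, to the unskewed case.
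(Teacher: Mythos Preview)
Your proof is correct and follows essentially the same approach as the paper's: hypotheses (1) and (2) are used to give $\sigma \# M$ and $\tau \# N$ their module structures over the skewed base categories, hypothesis (3) ensures the module-functor isomorphism $L(m\otimes c)\cong Lm\otimes Fc$ is a morphism in $\tau \# N$, and the model-theoretic conditions reduce to the underlying ones. The paper's version is terser, citing the specific conditions of \cite[Propositions 3.5, 3.6, 3.7]{dugshi} rather than unpacking them, but the content is the same.
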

\begin{proof}
The first two conditions ensure that $\sigma$ and $\tau$
are enriched functors, so that $\sigma \# M$
and $\tau \# N$ are modules over $\id \# \ccal$ and
$\id \# \dcal$ respectively.
One must check the conditions of
\cite[Propositons 3.5, 3.6, 3.7]{dugshi} to see that we have a Quillen
adjoint module of the skewed categories.
These conditions all hold due to their counterparts
in $M$ and $N$ provided that the natural transformation
$L(m \otimes c) \to Lm \otimes Fc$ induces a map
on the skewed category, which is the third condition.
Thus $(\sigma \# L, \tau \# R)$ is a Quillen adjoint module over
$(\id \# F, \id \# G)$.
\end{proof}

\begin{corollary}
Each of the adjunctions of the skewed module categories
of Proposition \ref{prop:moduleadjunctions}
is a Quillen module over the skewed base categories.
\end{corollary}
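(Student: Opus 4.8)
The plan is to deduce the corollary directly from Lemma \ref{lem:tiwstedadjointmodule}, applied in turn to each of the three adjunctions
$(\tilde{\qq}, U')$, $(L', \phi^*N)$ and $(D, R')$ of Proposition \ref{prop:moduleadjunctions}, between module categories enriched over (positive) symmetric spectra, symmetric spectra in simplicial $\qq$-modules, symmetric spectra in positive chain complexes, and $dg\qq\leftmod$. For each one the lemma requires three inputs, which I would supply as follows. First, each adjunction is a Quillen module over the corresponding adjunction $(\tilde{\qq}, U)$, $(L, \phi^*N)$, $(D, R)$ of the underlying $Sp^\Sigma$-type base categories: this is part of \cite[Theorem 4.1]{greshi}, the $SO(2)$-analogue of the clause in Theorem \ref{thm:finiteEtopisEt} that the comparison functors are Quillen modules over the appropriate enrichments, where the module structures in play are the object-wise tensor, cotensor and enrichment of $\rightmod\ocal$ over its base category from Proposition \ref{prop:tencotenmodule}. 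Second, each adjunction is involutary by Theorem \ref{thm:invmoduleadjunctions}, with the involutions $\rho=(\tau W)^*$ on $\rightmod\ecal_{top}$ and the analogous functors $(\tilde{\qq}\tau W)^*$, $(\phi^*N\tilde{\qq}\tau W)^*$, $(D\phi^*N\tilde{\qq}\tau W)^*$ on the other module categories, while the base categories carry the trivial involution $\id$; so the skewed module categories are modules over the skewed base categories $\id\#Sp^\Sigma$, etc.

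The remaining work is to check conditions (1)--(3) of Lemma \ref{lem:tiwstedadjointmodule}. For condition (1), given a module $m$ in one of the module categories and an object $c$ of its base category, the tensor is formed object-wise, $(m\otimes c)(a)=m(a)\smashprod c$, whereas the involution $(\tau W)^*$ (or its analogue) only re-indexes objects by $\tau$ and twists the action maps. Hence $((\rho m)\otimes c)(a)=m(\tau a)\smashprod c=(\rho(m\otimes c))(a)$, so $(\rho m)\otimes c$ and $\rho(m\otimes c)$ are literally equal as modules and the required order-two natural transformation is the identity. Condition (2) holds for exactly the same reason on the target side. For condition (3), the op-monoidal (resp. monoidal) structure map $L(m\otimes c)\to Lm\otimes Fc$ of each comparison functor and its involutary structure $2$-cell are both induced object-wise from the op-monoidal/monoidal structure of the base functor $\tilde{\qq}$, $L$ or $D$ together with its interaction with the (trivial) base involution; since in the base categories the square already commutes --- this is precisely the compatibility used in the proof of Theorem \ref{thm:invmoduleadjunctions} --- the square of condition (3) commutes object-wise, hence commutes.

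The only point requiring care is the $L'$-case of condition (3), since $L'$ is defined via a coequaliser rather than object-wise, so one must trace through the coequaliser diagram defining $L'$ and its op-monoidal and involutary structure maps. But this is essentially the same diagram chase already carried out when establishing that $(L',\phi^*N)$ is an involutary symmetric monoidal Quillen equivalence, so no genuinely new computation is needed; one simply restricts attention to the free modules $F_g=\phi^*N\tilde{\qq}\ecal_{top}(-,g)$, where $L'F_g\cong F_g\smashprod L\,\mathrm{sym}(\qq[1])$, and reduces to the already-verified base-category statement. Assembling the three applications of Lemma \ref{lem:tiwstedadjointmodule} then gives that $(\tilde{\qq},U')$, $(L',\phi^*N)$ and $(D,R')$ pass to Quillen modules on the skewed categories over $(\id\#\tilde{\qq},\id\#U)$, $(\id\#L,\id\#\phi^*N)$ and $(\id\#D,\id\#R)$ respectively, which is the assertion of the corollary.
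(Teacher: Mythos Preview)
Your proposal is correct and follows essentially the same approach as the paper: invoke the underlying Quillen-module structure from \cite{greshi} (the paper cites Section~10 there and \cite[Propositions 4.7, 4.8]{dugshi}), then verify the three hypotheses of Lemma~\ref{lem:tiwstedadjointmodule} for each adjunction, noting that conditions (1)--(3) are trivial for $(\tilde{\qq},U')$ and $(D,R')$ since the involutions commute strictly with the functors, while the $(L',\phi^*N)$ case is a routine check. Your write-up in fact supplies more detail than the paper's own proof, which simply records these observations in a sentence each.
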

\begin{proof}
That the functors of Proposition \ref{prop:moduleadjunctions}
are Quillen modules at the level of involutary
categories follows from \cite[Section 10]{greshi}.
We have worked from a more recent redraft of this paper,
where this section has been altered to take into
account \cite[Propositions 4.7 and 4.8]{dugshi}
which consider Quillen modules of categories of modules
over enriched categories.

It remains to prove that the assumptions of
Lemma \ref{lem:tiwstedadjointmodule}
hold for the three adjunctions of
Proposition \ref{prop:moduleadjunctions}.
This is easy for $(\tilde{\qq}, U')$
and $(D,R')$ since these adjoint pairs
strictly commute with the involutions.
The proof for $(L', \phi^* N)$ is routine.
\end{proof}

\section{Another Involutary Morita Equivalence}\label{sec:anothermorita}
We show that one last Quillen equivalence
is involutary: the Morita equivalence between
$\rightmod D \phi^* N \tilde{\qq} \ecal_{top}$
and $\rightmod \ecal_t$.

\begin{definition}
We define the category $\gspect$
to be $\rightmod D \phi^* N \tilde{\qq} \ecal_{top}$
and we will use $\lambda$ for the involution on this category,
hence we have the skewed category $\lambda \# \gspect$.
As in \cite[Theorem 4.1]{greshi} we define $\mathcal{BC}_t$ to be
cofibrant replacements of the images of the basic cells of $
SO(2)$-spectra under the composite functor from $SO(2)$-spectra to
$\gspect$. The closure of $\mathcal{BC}_t$ under the monoidal product will
be written $\bar{\mathcal{BC}_t}$. Define $\ecal_t$ to be the
full subcategory of $\gspect$ with object set $\bar{\mathcal{BC}_t}$.
The category $\ecal_t$\index{e calt@$\ecal_t$}\label{app:ecalt}
is enriched over differential graded $\qq$-modules.
\end{definition}

Thus the objects of $\mathcal{BC}_t$ have the form
$(\sigma_H)_t = \cofrep D \phi^* N \tilde{\qq} Sp^\Sigma \underhom(-, \sigma_H)$
for $\sigma_H$ a basic cell,
where $\cofrep$ is cofibrant replacement
in the skewed category.

For each $H$ there is a map of order two
$u_H \co (\sigma_H)_t \to \lambda (\sigma_H)_t$ which is induced by
the map $\sigma_H \to \tau \sigma_H$ and the natural
transformations of the involutary functors in the composite.
As with $\ecal_{top}$ we have a ringoid $\lambda \ecal_t$
and conjugation by the $u_H$ gives a map of
ringoids $U \co \ecal_t \to \lambda \ecal_t$.

\begin{lemma}
The category $\rightmod \ecal_t$ is a model
category with involution $(\lambda U)^*$.
\end{lemma}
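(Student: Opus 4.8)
The plan is to mirror exactly the argument already used for $\rightmod \ecal_{top}$ with its involution $\rho = (\tau W)^*$ (Proposition \ref{prop:modEtopinvolution}). First I would check that $(\lambda U)^*$ is genuinely a self-inverse functor on $\rightmod \ecal_t$. The functor $\lambda$ is self-inverse on the underlying category $\gspect$ (it comes from applying the composite of the involutary functors $\tau$, $\tilde{\qq}$, $\phi^*N$, $D$ together with the ringoid map $W$, each step of which was shown in Section \ref{sec:movingaccross the machine} to be involutary), and $U \co \ecal_t \to \lambda \ecal_t$ is conjugation by the order-two maps $u_H$. So the key identity to verify is the analogue of $\tau \circ \ecal_{top}(\tau w, w') = \ecal_{top}(w, \tau w') \circ \tau$, namely $\lambda \circ \ecal_t(\lambda u, u') = \ecal_t(u, \lambda u') \circ \lambda$ on homomorphism objects; this follows formally from the fact that each $u_H$ is a map of order two (so $\lambda u_H = u_H^{-1}$) and that $\lambda$ is strictly self-inverse on the underlying symmetric-spectrum-level mapping objects. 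Granting this, $(\lambda U)^* \circ (\lambda U)^* = \mathrm{id}$ on $\rightmod \ecal_t$ precisely as for $\rho$.

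Next I would invoke the general machinery of involutary categories. Since $\ecal_t$ is a $dg \qq \leftmod$-enriched category and $\rightmod \ecal_t$ carries the cofibrantly generated model structure of Theorem \ref{thm:modOmodelcat} (object-wise weak equivalences and fibrations, cofibrations as needed), it suffices to observe that the involution $(\lambda U)^*$ is a left Quillen functor: restriction of scalars along an \emph{isomorphism} of ringoids preserves object-wise weak equivalences, fibrations and cofibrations (indeed it is both a left and a right Quillen functor, exactly as with $h^\dag j^*$ in Lemma \ref{lem:SO(2)involution} and $\tau W^*$ in Proposition \ref{prop:modEtopinvolution}). Therefore $(\rightmod \ecal_t, (\lambda U)^*)$ satisfies the definition of an involutary model category, and Lemma \ref{lem:limcolim} together with the construction of the skewed model structure gives the model category with involution together with its associated skewed model category $\lambda U \# \rightmod \ecal_t$.

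The one point requiring genuine care — the main obstacle — is checking that the maps $u_H$ really are of order two and are compatible with the composition in $\ecal_t$, i.e.\ that conjugation by them defines a \emph{map of ringoids} and not merely a collection of maps on hom-objects. This is where one has to track the natural transformations $\alpha$, $\beta$ of the various involutary functors $(\tilde{\qq}, U')$, $(L', \phi^*N)$, $(D, R')$ from Theorem \ref{thm:invmoduleadjunctions} through the composite and see that they paste together coherently; the argument is the coevaluation/evaluation square bookkeeping of Lemma \ref{lem:evalsquare}, applied one level down. I expect this to be routine but notationally heavy, and I would state it as a short lemma (``conjugation by the $u_H$ is a self-inverse map of ringoids $\ecal_t \to \lambda \ecal_t$'') whose proof simply quotes the involutary structure established in Section \ref{sec:movingaccross the machine}. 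With that in hand the theorem is immediate: the model structure on $\rightmod \ecal_t$ is the one from Theorem \ref{thm:modOmodelcat}, and the involution is the left Quillen functor $(\lambda U)^*$, so $(\rightmod \ecal_t, (\lambda U)^*)$ is an involutary model category in the sense of Section \ref{sec:invmodcat}.
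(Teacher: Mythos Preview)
Your approach is correct and matches the paper's: the paper's proof is a one-line reference back to the construction of $\rho=(\tau W)^*$ on $\rightmod\ecal_{top}$ in Section~\ref{sec:Etopinv}, which is exactly the argument you propose to mirror. Note that the facts you flag as the ``main obstacle'' (that each $u_H$ is of order two and that conjugation by them defines a ringoid map $U\co\ecal_t\to\lambda\ecal_t$) are asserted in the paragraph immediately preceding the lemma rather than inside its proof, so the lemma itself only needs the formal observation that restriction along the self-inverse ringoid isomorphism $\lambda U$ is a left Quillen involution.
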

\begin{proof}
The follows from the construction of
the involution $\rho = (\tau W)^*$ on $\rightmod \ecal_{top}$
in Section \ref{sec:Etopinv}.
\end{proof}

\begin{theorem}
The functors of Theorem \ref{thm:monoidalmorita}
induce an involutary Quillen equivalence
between $\rightmod \ecal_t$ and $\gspect$.
Thus there is a Quillen equivalence
between the skewed categories
$$(\lambda U)^* \# (\rightmod \ecal_t)
\overrightarrow{\longleftarrow}
\lambda \# \gspect$$
and this is an adjunction of
($\id \# dg \qq \leftmod$)-modules.
\end{theorem}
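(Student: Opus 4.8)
The plan is to apply the general Morita machinery of Theorem \ref{thm:monoidalmorita} (and its $dg \qq \leftmod$-analogue) together with the involutary-category toolkit of Chapter \ref{chp:catwithinv}, mirroring exactly the argument used for $\ecal_{top}$ in Section \ref{sec:invmoritaequiv} (Proposition \ref{prop:skewequivSQtoEt} and Corollary \ref{cor:invSO2morita}). First I would set up the Morita adjunction $(-) \smashprod_{\ecal_t} \gcal_t : \rightmod \ecal_t \rightleftarrows \gspect : \underhom(\gcal_t,-)$, where $\gcal_t = \bar{\mathcal{BC}_t}$. Since $\gspect = \rightmod D \phi^* N \tilde{\qq} \ecal_{top}$ is a $dg \qq \leftmod$-model category in which every object is cofibrant-fibrant in the relevant sense (the generators $\mathcal{BC}_t$ are chosen as cofibrant replacements, all objects are fibrant since fibrations are object-wise), the hypotheses of Theorem \ref{thm:monoidalmorita} apply: $\gcal_t$ is a set of $G$-compact generators for $\gspect$ by construction (it is the image of the generating basic cells of $SO(2)$-spectra under the composite zig-zag of Quillen equivalences), so the Morita pair is a Quillen equivalence. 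Because $\gcal_t$ need not be closed under the monoidal product in a way that makes the adjunction monoidal — and indeed $\rightmod \ecal_t$ is not monoidal, cf. Remark \ref{rmk:monoidalissue} — I would only claim a Quillen equivalence of $dg \qq \leftmod$-module categories, not a monoidal one, which is consistent with the statement as phrased.

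Next I would produce the involutary structure. The ringoid $\lambda \ecal_t$ is the full subcategory of $\gspect$ on $\lambda \gcal_t$; the maps of order two $u_H \co (\sigma_H)_t \to \lambda (\sigma_H)_t$ (induced from $w_H \co \sigma_H \to \tau \sigma_H$ through the composite of involutary functors of Section \ref{sec:movingaccross the machine}) assemble, by conjugation, into an invertible map of ringoids $U \co \ecal_t \to \lambda \ecal_t$, exactly as $W \co \ecal_{top} \to \tau \ecal_{top}$ was built from the $w_H$. I would then check that the tautological functor $\underhom(\gcal_t,-)$ is involutary: there is a natural iso $\beta$ fitting it against $\lambda$ on the right and $(\lambda U)^*$ on $\rightmod \ecal_t$, given by $\underhom(u,\id) \circ \lambda$, the $dg \qq \leftmod$-analogue of the map in the lemma preceding Proposition \ref{prop:skewequivSQtoEt}; by Lemma \ref{lem:onlyneedhalf} the left adjoint is then automatically involutary, with explicit involutary natural transformation $\alpha$ on the coend formula as in Definition \ref{def:moritafunctors}. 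Verifying that the unit and counit of the Morita adjunction are involutary natural transformations reduces, as in Proposition \ref{prop:skewequivSQtoEt}, to the evaluation/coevaluation compatibility diagrams (the $dg \qq \leftmod$-version of Lemma \ref{lem:evalsquare}), which commute by the axioms relating evaluation, coevaluation and composition in a closed symmetric monoidal ($dg \qq \leftmod$)-category. Having an involutary Quillen equivalence, Chapter \ref{chp:catwithinv} (the propositions on involutary Quillen pairs and involutary Quillen equivalences passing to skewed categories) immediately yields the Quillen equivalence $(\lambda U)^* \# (\rightmod \ecal_t) \rightleftarrows \lambda \# \gspect$.

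Finally, for the statement that this is an adjunction of $(\id \# dg \qq \leftmod)$-modules, I would invoke Lemma \ref{lem:tiwstedadjointmodule}: the underlying Morita pair is a $dg \qq \leftmod$-module adjunction (this is part of the content of Theorem \ref{thm:monoidalmorita}, the $dg \qq$-enriched version, since $\rightmod \ecal_t$ and $\gspect$ are both $dg \qq \leftmod$-modules and the Morita functors preserve the enrichment, tensoring and cotensoring), and the three bookkeeping conditions of that lemma — the natural transformations of order two $(\lambda'(-)) \otimes c \to \lambda'(-\otimes c)$ on each side, and commutativity of the compatibility hexagon — hold because the involutions $(\lambda U)^*$ and $\lambda$ are restriction-of-scalars-type functors that do not alter the underlying $dg \qq$-module and hence strictly commute with the tensoring. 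The main obstacle is the last paragraph of Proposition \ref{prop:skewequivSQtoEt}'s analogue: checking that the unit/counit are involutary natural transformations, i.e. chasing the evaluation and coevaluation diagrams in the $dg \qq \leftmod$-enriched setting. This is routine but fiddly — it is the step where one genuinely uses that the $u_H$ are maps of order two and that the Morita functors are built coend-theoretically — so I would model it verbatim on Lemma \ref{lem:evalsquare} and the two diagram chases in the proof of Proposition \ref{prop:skewequivSQtoEt}, pointing out that nothing in those arguments used properties of $Sp^\Sigma_+$ beyond those shared by any closed symmetric monoidal ($dg \qq \leftmod$)-category. The conclusion of the chapter, Corollary \ref{cor:cyclicsummary}, then follows by composing this with the involutary Quillen equivalences of the previous sections.
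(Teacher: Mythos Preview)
Your proposal is correct and takes essentially the same approach as the paper: the paper's proof is the single line ``This follows from the proof of Corollary \ref{cor:invSO2morita}'', and you have spelled out exactly that transplant of the Section~\ref{sec:invmoritaequiv} argument (Proposition~\ref{prop:skewequivSQtoEt}, Lemma~\ref{lem:evalsquare}, Corollary~\ref{cor:invSO2morita}) into the $dg\qq\leftmod$-enriched setting. One small imprecision: $\gcal_t=\bar{\mathcal{BC}_t}$ \emph{is} defined as the closure under product; the reason monoidality fails is not that $\gcal_t$ is not closed but that $D$ is not symmetric, so $D\phi^*N\tilde{\qq}\ecal_{top}$ is not a monoidal $dg\qq\leftmod$-category (Remark~\ref{rmk:monoidalissue}) --- you cite the right remark, just misattribute the cause.
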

\begin{proof}
This follows from the proof of
Corollary \ref{cor:invSO2morita}.
\end{proof}

\begin{corollary}\label{cor:cyclicsummary}
There is a zig-zag of Quillen equivalences
between $\tau \# \iota^* S_\qq \leftmod$
and $(\lambda U)^* \# (\rightmod \ecal_t)$.
\end{corollary}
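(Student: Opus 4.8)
The plan is to simply concatenate the chain of involutary Quillen equivalences established throughout Part~\ref{part:O(2)}, each of which identifies a skewed category of one category with involution with the skewed category of the next. The key observation is that an involutary Quillen equivalence passes to a Quillen equivalence of skewed model categories (this is the proposition in Section~\ref{sec:invmodcat}, following from the fact that a cofibrant object in a skewed category is cofibrant in the underlying category and fibrancy is detected by $\mathbb{P}$), so it suffices to chain these together. I would list the three links explicitly.

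First, Corollary~\ref{cor:invSO2morita} gives a strong symmetric monoidal involutary Quillen equivalence between $(\iota^*S_\qq\leftmod,\tau)$ and $(\rightmod\ecal_{top},\rho)$, hence a Quillen equivalence $\tau\#(\iota^*S_\qq\leftmod)\to\rho\#(\rightmod\ecal_{top})$. Second, Theorem~\ref{thm:invmoduleadjunctions} (together with the lemma identifying the intermediate enrichments as involutary and the Quillen-module statements) provides a zig-zag of involutary Quillen equivalences
$$\rightmod\ecal_{top}\ \overrightarrow{\longleftarrow}\ \rightmod\tilde{\qq}\ecal_{top}\ \overleftarrow{\longrightarrow}\ \rightmod\phi^*N\tilde{\qq}\ecal_{top}\ \overrightarrow{\longleftarrow}\ \gspect,$$
where $\gspect=\rightmod D\phi^*N\tilde{\qq}\ecal_{top}$ carries the involution $\lambda$; passing to skewed categories yields a zig-zag between $\rho\#(\rightmod\ecal_{top})$ and $\lambda\#\gspect$. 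Third, the final theorem of Section~\ref{sec:anothermorita} gives an involutary Quillen equivalence between $\gspect$ and $\rightmod\ecal_t$ (with involution $(\lambda U)^*$), hence a Quillen equivalence $\lambda\#\gspect\to(\lambda U)^*\#(\rightmod\ecal_t)$. Splicing these three pieces together produces the required zig-zag of Quillen equivalences between $\tau\#(\iota^*S_\qq\leftmod)$ and $(\lambda U)^*\#(\rightmod\ecal_t)$.

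Concretely I would write: ``By the proposition of Section~\ref{sec:invmodcat}, an involutary Quillen equivalence induces a Quillen equivalence of the associated skewed model categories. Applying this to Corollary~\ref{cor:invSO2morita}, to each stage of Theorem~\ref{thm:invmoduleadjunctions}, and to the final theorem of Section~\ref{sec:anothermorita}, and composing the resulting zig-zag, gives the claim.'' There is essentially nothing left to prove here since all the hard work---checking that each adjunction is involutary, and that the involutions on the intermediate ringoids are the functorially induced ones---has been done in the preceding sections.

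The only point needing a word of care is bookkeeping: one must check that at each junction the involution on the \emph{target} of one equivalence literally agrees with the involution on the \emph{source} of the next (e.g.\ that the $\rho$ appearing as the target involution in Corollary~\ref{cor:invSO2morita} is the same $\rho=(\tau W)^*$ used as the source involution in Theorem~\ref{thm:invmoduleadjunctions}, and similarly for $\lambda$ on $\gspect$ and $(\lambda U)^*$ on $\rightmod\ecal_t$). This is immediate from the definitions, since in every case the involution on a category of modules over a ringoid is restriction of scalars along the functorially induced self-inverse map of ringoids, and the constructions are chosen compatibly. I do not anticipate any genuine obstacle; the mild annoyance, if any, is merely that the zig-zag is somewhat long and the reader must track which functors are left versus right adjoints through the chain. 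Hence the proof is simply the observation that all the constituent equivalences have already been upgraded to the involutary setting, so Corollary~\ref{cor:cyclicsummary} follows by composition.
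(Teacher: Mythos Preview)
Your proposal is correct and matches the paper's approach: the paper gives no explicit proof of this corollary, treating it as immediate from concatenating the involutary Quillen equivalences established in Corollary~\ref{cor:invSO2morita}, Theorem~\ref{thm:invmoduleadjunctions} (together with the positive-to-stable lemma at the start of Section~\ref{sec:movingaccross the machine}), and the final theorem of Section~\ref{sec:anothermorita}. Your write-up is in fact more explicit than the paper, which simply states the corollary without further comment.
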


\begin{rmk}\label{rmk:nextsteps}
Now we have a zig-zag of involutary equivalences
between cyclic $O(2)$-spectra and
$\rightmod \ecal_t$.
We describe our plan for future work.
The standard category $\mathcal{A}(SO(2))$
is `formed' from copies of the graded ring
$\qq[c] \cong \h^* (\text{B}SO(2))$ with $c$ of degree $2$
see \cite{gre99}.
The inversion map $j \co SO(2) \to SO(2)$, $j(t) = t^{-1}$,
induces a ring map $\qq[c] \to \qq[c]$
which sends $c$ to $-c$.
Thus, we hope to create an
involution $\Upsilon \co \mathcal{A}(SO(2)) \to \mathcal{A}(SO(2))$
based on this ring map.
Following the proof of Corollary \ref{cor:invSO2morita}
we should then be able to prove that
$\mathcal{A}(SO(2))$ is Quillen equivalent to
$\rightmod \ecal_a$ and that this
equivalence is involutary monoidal.
From here we aim to take
the equivalences between
$\ecal_t$ and $\ecal_a$ of \cite{greshi}
and show that these are involutary
(and monoidal according to the outcome of
Remark \ref{rmk:monoidalissue}).
This would complete the classification
of cyclic $O(2)$-spectra in terms of the
skewed category $\Upsilon \# \mathcal{A}(SO(2))$.
\end{rmk}

\appendix

\chapter{List of Model Categories}\label{chp:listmodcat}

We list the model categories used in this thesis. 
We have divided this list into four collections, 
those which are categories of spectra, those we have constructed
as localisations of such categories, categories
of modules over enriched categories and
a miscellany of basic categories. 

\subsection*{Categories of Spectra}
\begin{tabular}{|p{7.8cm}|p{3cm}|p{1.8cm}|}
\hline
Name & Symbol & Page \\
\hline
Symmetric spectra in based simplicial sets
& $Sp^\Sigma$ & \pageref{para:symspec}\\
$Sp^\Sigma$ with the positive model structure
& $Sp^\Sigma_+$ & \pageref{app:possymspec}\\
Symmetric spectra in $\tscr_*$ &
$Sp^\Sigma_+ (\tscr_*)$ &
\pageref{app:gensymspec}, \pageref{app:topsymspec} \\
Symmetric spectra in $dg \qq \leftmod_+$
& $Sp^\Sigma(dg \qq \leftmod_+)$ &
\pageref{app:gensymspec}, \pageref{app:dgQ+symspec}\\
Symmetric spectra in $\sqq \leftmod$
& $Sp^\Sigma(\sqq \leftmod)$ &
\pageref{app:gensymspec}, \pageref{app:simpQsymspec}\\
$G$-equivariant orthogonal spectra & $\GIS$ &
\pageref{para:orthogspec}, \pageref{thm:orthogmodel} \\
$G$-equivariant $S$-modules & $G \mcal$ &
\pageref{para:ekmm}, \pageref{thm:ekmmmodel} \\
$\GIS$ with the positive model structure & $\GIS_+$ &
\pageref{thm:positiveorthogmodel} \\
Modules over $S_\qq$ in $O(2) \mcal$
& $S_\qq \leftmod$ &
\pageref{def:S_qq}, \pageref{sec:SQmod}\\
Modules over $S_H$ in $O(2) \mcal$
& $S_H \leftmod$ &
\pageref{lem:SHobject} \\
Modules over $\iota^* S_\qq$ in $SO(2) \mcal$
& $\iota^* S_\qq \leftmod$ & \pageref{app:SQmod}\\
The involutary category of $\iota^* S_\qq$
& $(\iota^* S_\qq \leftmod, \tau)$ &
\pageref{prop:SQmodinvolutary} \\
Modules over $\h \qq$ in symmetric spectra
& $\h \qq \leftmod$ & \pageref{app:hqmod}\\
\hline
\end{tabular}

\subsection*{Categories of Localised Spectra}

\begin{tabular}{|p{7.8cm}|p{3cm}|p{1.8cm}|}
\hline
Name & Symbol & Page \\
\hline
$\GIS$ localised at $E$,  $L_E \GIS$ & $\GIS_E$  & 
 \pageref{thm:GSlocal} \\
$G \mcal$ localised at $E$, $L_E G \mcal$ & $G \mcal_E$ &
\pageref{thm:GSlocal} \\
$\GIS$ localised at $S^0 \qq$
& $\GIS_\qq$ &
\pageref{sec:GISQ} \\
$\GIS_+$ localised at $S^0 \qq_+$
& $\GIS_\qq^+$ &
\pageref{sec:GISQ} \\
$G \mcal$ localised at $S^0_\mcal \qq$
& $G \mcal_\qq$ &
\pageref{sec:GISQ} \\
$S_\qq \leftmod$ localised at $E \smashprod S_\qq$
& $L_E S_\qq \leftmod$ &
\pageref{prop:rmodlocal}\\
$L_{E \fscr_+} \GIS$ & $\fscr \iscr \sscr$ &
\pageref{def:familymodelcat} \\
$L_{E \widetilde{\fscr}} \GIS$ & $\widetilde{\fscr} \iscr \sscr$ &
\pageref{def:familymodelcat}\\
$L_{E \fscr_+} G \mcal$ &  $\fscr \mcal$ &
\pageref{def:familymodelcat} \\
$L_{E \widetilde{\fscr}} G \mcal$ &  $\widetilde{\fscr} \mcal$ &
\pageref{def:familymodelcat} \\
$G \mcal$ with $\fscr(N)$-model structure
& $G \mcal(N)$ &
\pageref{app:GMNmodel} \\
Cyclic $O(2)$-spectra & $\cscr \mcal_\qq$ &
\pageref{def:CandDspectra} \\
Dihedral $O(2)$-spectra & $\dscr \mcal_\qq$ &
\pageref{def:CandDspectra}\\
Cyclic $S_\qq$-modules & $S_\qq \leftmod(\cscr)$ &
\pageref{prop:cylicSQmodarecyclicspectra}\\
\hline
\end{tabular}

\subsection*{Modules over an Enriched Category}

\begin{tabular}{|p{7.8cm}|p{3cm}|p{1.8cm}|}
\hline
Name & Symbol & Page \\
\hline
Right $\ecal_a^H$-modules in $dg \qq \leftmod$
& $\rightmod \ecal_a^H$ &
\pageref{prop:FiniteAlgMorita} \\
Right $\ecal_{top}^H$-modules in $Sp^\Sigma_+$
& $\rightmod \ecal_{top}^H$ &
\pageref{def:finitegcalH} \\
Right modules over $\ocal$ in $Sp^\Sigma$
& $\rightmod \ocal $ &
\pageref{def:ocalmod}, \pageref{thm:modOmodelcat} \\
Right $\ecal_{top}$-modules in $Sp^\Sigma_+$
& $\rightmod \ecal_{top}$ &
\pageref{thm:so2morita}\\
Right $\tilde{\qq} \ecal_{top}$-modules in $Sp^\Sigma(\sqq \leftmod)$
& $\rightmod \tilde{\qq} \ecal_{top}$ &
\pageref{prop:moduleadjunctions}\\
Right $\phi^* N \tilde{\qq} \ecal_{top}$-modules in $Sp^\Sigma_+(dg \qq \leftmod_+)$
& $\rightmod \phi^* N \tilde{\qq} \ecal_{top}$ &
\pageref{prop:moduleadjunctions} \\
Right $D \phi^* N \tilde{\qq} \ecal_{top}$-modules in $dg \qq \leftmod$
& $\rightmod D \phi^* N \tilde{\qq} \ecal_{top}$ &
\pageref{prop:moduleadjunctions} \\
Right $\ecal_t$-modules in $dg \qq \leftmod$
& $\rightmod \ecal_t$ & \pageref{app:ecalt}
\\
\hline
\end{tabular}

\subsection*{Miscellaneous}

\begin{tabular}{|p{7.8cm}|p{3cm}|p{1.8cm}|}
\hline
Name & Symbol & Page \\
\hline
Based topological spaces
& $\tscr_*$ & \pageref{para:spaces}\\
Based $G$-equivariant topological spaces
& $G \tscr_*$ & \pageref{para:Gspaces}\\
Based simplicial sets
& $\SSET_*$ &
\pageref{para:simpsets}\\
Simplicial $\qq$-modules
& $\sqq \leftmod$ &
\pageref{app:simpQmod}\\
Chain complexes of $R$-modules
& $dg R \leftmod$ &
\pageref{para:dgrmod} \\
Positive chain complexes of $R$-modules
& $dg R \leftmod_+$ &
\pageref{app:dgrmod+} \\
Chain complexes of $\qq G$-modules
& $dg \qq G \leftmod$ &
\pageref{def:QGtensorproduct} \\
A category with involution
& $(\ccal, \sigma)$ &
\pageref{def:catwithinv}
\\
The skewed category of $(\ccal, \sigma)$
& $\sigma \# \ccal$ &
\pageref{def:skewcat} \\
\hline
\end{tabular}

\backmatter
\addcontentsline{toc}{chapter}{Index}
\printindex

\cleardoublepage
\addcontentsline{toc}{chapter}{Bibliography}
\bibliography{davebib}
\bibliographystyle{alpha}

\end{document}